\setlist{topsep = 1.5ex, itemsep = 0.6ex} 
\numberwithin{equation}{section}
\theoremstyle{definition}
\newtheorem{exm}{Example}[section]
\newtheorem{dfn}[exm]{Definition}
\newtheorem{rem}[exm]{Remark}
\theoremstyle{plane}
\newtheorem{lem}[exm]{Lemma}
\newtheorem{prop}[exm]{Proposition}
\newtheorem{thm}[exm]{Theorem}
\newtheorem{cor}[exm]{Corollary}
\newtheorem{assum}[exm]{Assumption}
\DeclareMathOperator\id{id}
\DeclareMathOperator\IdFunct{Id}
\DeclareMathOperator\dom{dom}
\DeclareMathOperator\Hom{Hom}
\DeclareMathOperator{\ob}{Ob}
\DeclareMathOperator{\closure}{cl}
\DeclareMathOperator{\diam}{diam}
\newcommand{\NN}{\mathbb{N}}
\newcommand{\ZN}{\mathbb{Z}}
\newcommand{\ZNp}{\mathbb{Z}_{+}}
\newcommand{\RN}{\mathbb{R}} 
\newcommand{\RNp}{\mathbb{R}_{\geq 0}}
\newcommand{\RNpp}{\mathbb{R}_{>0}}
\newcommand{\bcmAB}{\mathsf{bcm}}
\newcommand{\Compact}[1]{\mathcal{C}_c(#1)}
\newcommand{\Closed}[1]{\mathcal{C}(#1)}
\newcommand{\HausMet}[1]{d_{#1}^H}
\newcommand{\lHausMet}[1]{d_{#1}^{F}}
\newcommand{\Image}[1]{\mathrm{Im}_{#1}}
\newcommand{\Prob}[1]{\mathcal{P}(#1)}
\newcommand{\finMeas}[1]{\mathcal{M}_{\mathrm{fin}}(#1)}
\newcommand{\cptMeas}[1]{\mathcal{M}_{\mathrm{cpt}}(#1)}
\newcommand{\Meas}[1]{\mathcal{M}(#1)}
\newcommand{\ProhMet}[1]{d_{#1}^P}
\newcommand{\Vague}[1]{d_{#1}^{V}}
\newcommand{\graphmap}{\mathfrak{g}}
\newcommand{\cC}{\mathcal{C}}
\newcommand{\frakC}{\mathfrak{C}}
\newcommand{\frakD}{\mathfrak{D}}
\newcommand{\rootedBCM}{\mathfrak{M}_{\bullet}}
\newcommand{\BCM}{\mathfrak{M}}
\newcommand{\frakK}{\mathfrak{K}}
\newcommand{\rfrakK}{\mathfrak{K}_\bullet}
\newcommand{\cX}{\mathcal{X}}
\newcommand{\cY}{\mathcal{Y}}
\newcommand{\MTopcat}{\mathbf{MTop}}  
\newcommand{\Metcat}{\mathbf{Met}}
\newcommand{\BCMcat}{\mathbf{BCM}}
\newcommand{\CMcat}{\mathbf{CM}}
\newcommand{\rBCMcat}{\mathbf{rBCM}}
\newcommand{\rCMcat}{\mathbf{rCM}}
\newcommand{\CatC}{\mathscr{C}}
\newcommand{\CatD}{\mathscr{D}}
\newcommand{\CatE}{\mathscr{E}}
\newcommand{\FMet}[1]{#1^{\mathrm{m}}}
\newcommand{\FS}[1]{#1^{\mathrm{sr}}}
\newcommand{\FSMet}[1]{#1^{\mathrm{srm}}}
\newcommand{\FE}[1]{#1^{\mathrm{er}}}
\newcommand{\FEMet}[1]{#1^{\mathrm{erm}}}
\newcommand{\ForgetRoot}{\Gamma_1}
\newcommand{\BCInclusion}{\Gamma_2}
\newcommand{\ForgetMetric}{\Gamma_3}
\newcommand{\STFunct}{\Psi}
\newcommand{\RootSystem}{\mathfrak{r}}
\newcommand{\MeasFunct}{\tau_{\mathcal{M}}}
\newcommand{\finMeasFunct}{\tau_{\mathcal{M}_{\mathrm{fin}}}}
\newcommand{\Dist}[1]{\Theta_{#1}}
\newcommand{\RFMet}{d_{\rootedBCM}}
\newcommand{\cRFMet}{d_{\rfrakK}}
\newcommand{\RVMet}{\check{d}_{\rootedBCM}}
\newcommand{\cRVMet}{\check{d}_{\rfrakK}}
\title{Metrization of Gromov--Hausdorff-type topologies on boundedly-compact metric spaces} 
\date{}
\author{Ryoichiro Noda\thanks{Research Institute for Mathematical Sciences, Kyoto University, Kyoto, 606-8502,
JAPAN. E-mail: sgrndr@kurims.kyoto-u.ac.jp}}
\begin{document}

\maketitle
\begin{abstract}
  We present a new general framework for metrization of Gromov--Hausdorff-type topologies on non-compact metric spaces equipped with additional structures.
  We also give easy-to-check conditions for Polishness
  and hence the measure theoretic requirements are provided
  to study convergence of random spaces with additional random objects.
  In particular,
  our framework enables us to define a metric inducing a suitable Gromov--Hausdorff-type topology 
  on the space of rooted $\bcmAB$ spaces with laws of stochastic processes 
  and/or random fields,
  which was not clear how to do in previous frameworks.
  In addition to general theory,
  this paper includes several examples of Gromov--Hausdorff-type topologies,
  verifying that classical examples such as the Gromov--Hausdorff topology and the pointed Gromov--Hausdorff--Prohorov topology 
  are contained within our framework.
\end{abstract}

\tableofcontents

\section{Introduction}

The \textit{Gromov--Hausdorff metric} (see \eqref{eq: the GH metric} below) 
defines a distance between compact metric spaces
and was originally introduced by Gromov \cite{Gromov_07_Metric} for group theoretic purposes.
However,
it has found important applications in probability theory as well
since it provides a framework for discussing convergence of random compact metric spaces,
such as the scaling limit of critical Galton-Watson trees \cite{LeGall_06_Random},
the critical random graph \cite{Berry_Broutin_Goldschmidt_12_The_continuum},
random planar maps \cite{LeGall_Miermont_12_Scaling}
and percolation on some (random) graphs \cite{Archer_Croydon_23_Scaling, Renaudie_Broutin_Nachmias_pre_The_scaling}.
In many examples,
one's interest is in not only the geometry of spaces but also additional structures on spaces,
such as measures \cite{Abraham_Delmas_Hoscheit_13_A_note}, compact subsets \cite{Miermont_09_Tessellations}
and heat-kernel-type functions \cite{Croydon_Hambly_Kumagai_12_Convergence}.
Moreover, there are many examples of random non-compact metric spaces
for the study of which Gromov--Hausdorff-type topologies have been useful.
These include the uniform spanning tree on $\mathbb{Z}^{d}$
\cite{Angel_Croydon_Hernandez-Torres_Shiraishi_21_Scaling, Barlow_Croydon_Kumagai_17_Subsequential},
the uniform half-plane quadrangulation \cite{Gwynne_Miller_17_Scaling},
and the incipient infinite cluster of critical percolation on $\mathbb{Z}^{d}$
\cite{Arous_Cabezas_Fribergh_19_Scaling_theory}.

In consideration of such metric spaces equipped with additional structures,
various generalizations of the Gromov--Hausdorff metric have been introduced and studied in the literature
\cite{Abraham_Delmas_Hoscheit_13_A_note,Athreya_Lohr_Winter_16_The_gap}.
Recently,  
Khezeli \cite{Khezeli_23_A_unified} proposed a general method for defining a Gromov--Hausdorff-type metric  
on a collection of metric spaces equipped with additional structures.  
In the case where the underlying metric spaces are compact,  
his method requires only mild conditions and can accommodate a wide range of additional structures.  
However, in the non-compact setting, his approach involves certain technical conditions,  
which limit its applicability.  

In this paper,  
we introduce a new method for the non-compact case,  
which naturally extends Khezeli’s compact case framework while retaining similarly mild assumptions.  
Our method allows a broader class of examples to be treated,  
and thus provides a foundation for the study of metric spaces equipped with various additional structures.  
In particular, it offers a new topological framework  
for analyzing convergence of random spaces equipped with random objects such as stochastic processes.

In Section~\ref{sec: introduction to GH-type metrics}, 
we recall some Gromov--Hausdorff-type metrics that serve as a basis for our discussion. 
In the next subsection, we introduce Khezeli's framework, 
which generalizes these metrics using tools from category theory. 
In Section~\ref{subsec: Contributions}, 
we give an overview of our framework and clarify how it differs from Khezeli's approach. 
Finally, we explain the organization of the paper and describe some notational conventions used throughout.
For the purposes of our discussions below,
we set $a \wedge b \coloneqq \min\{a, b\}$ 
for $a, b \in \mathbb{R} \cup \{ \pm \infty \}$,
and, given a metric space $(X, d_X)$,
we write, for each $\rho \in X$ and $r > 0$,
\begin{equation}
  D_X(\rho, r) \coloneqq \{x \in X \mid d_X(\rho, x) \leq r\}.
\end{equation}


\subsection{Introduction to Gromov--Hausdorff-type metrics}  \label{sec: introduction to GH-type metrics}

\textbf{The Gromov--Hausdorff metric.}
As already introduced,
the Gromov--Hausdorff metric defines the distance between compact metric spaces.
The idea used to define the distance is to embed different compact metric spaces 
isometrically into a common compact metric space 
and measure the distance between them using the Hausdorff metric in the ambient space.
(The definition of the Hausdorff metric is recalled in Section~\ref{sec: the Fell topology}.)
More precisely,  
the distance between two compact metric spaces $(K_1, d_{K_1})$ and $(K_2, d_{K_2})$ is defined by  
\begin{equation}  \label{eq: the GH metric}
  d_{\mathrm{GH}} ( K_1, K_2)
  \coloneqq 
  \inf_{f_1, f_2, K} \HausMet{K}( f_1(K_1), f_2(K_2) ),
\end{equation}
where the infimum is taken over all compact metric spaces $K$  
and all isometric embeddings (i.e., distance-preserving maps) $f_i \colon K_i \to K$, $i=1,2$,  
and where $\HausMet{K}$ denotes the Hausdorff metric between compact subsets of $K$.
The Gromov--Hausdorff metric is a separable and complete metric 
on the collection of isometric equivalence classes of compact metric spaces,
and the induced topology is called the \textit{Gromov--Hausdorff topology} \cite{Gromov_07_Metric}.

\begin{rem} \label{1. rem: the collection of equivalence classes}
  One should note that it is not possible to consider the ``set''
  of compact metric spaces nor isometric equivalence classes of compact metric spaces
  from the rigorous viewpoint of set theory.
  Indeed, any two singletons are isometric as compact metric spaces,
  but the collection of all singletons is not a set.
  However, as discussed in \cite{Burago_Burago_Ivanov_01_A_course},
  it is possible to regard the collection of isometric equivalence classes 
  as a legitimate set.
  This is true even when we consider the collection of non-compact metric spaces equipped with additional structures
  (see Sections~\ref{sec: the local GH topology} and~\ref{sec: main results}).
\end{rem}

\textbf{The pointed Gromov--Hausdorff--Prohorov metric.}
One generalization of the Gromov--Hausdorff metric is the \textit{pointed Gromov--Hausdorff--Prohorov metric} $d_{\mathrm{pGHP}}$
(see \eqref{eq: the GHP metric} below),
which gives the distance between two rooted-and-measured compact metric spaces.
Note that a \textit{rooted-and-measured compact metric space} $(K, \rho, \mu)$
is a compact metric space $K$ equipped with a distinguished element $\rho \in K$ called the \textit{root}
and a finite Borel measure $\mu$ on $K$.
The metric $d_{\mathrm{pGHP}}$ was introduced in \cite{Abraham_Delmas_Hoscheit_13_A_note} (and \cite{Abraham_Delmas_Hoscheit_14_Exit})
to study a measured-tree-valued process,
and it is defined in the same spirit as the Gromov--Hausdorff metric.
(See also \cite{Athreya_Lohr_Winter_16_The_gap} for a similar version that is discussed in Remark~\ref{rem: local GHV and GHV} below.)
In particular,
for two rooted-and-measured compact metric spaces $\mathcal{K}_{i} = (K_i, \rho_i, \mu_i)$, $i=1,2$,
the distance between them is given by setting 
\begin{equation}  \label{eq: the GHP metric}
  d_{\mathrm{pGHP}}( \mathcal{K}_{1}, \mathcal{K}_{2} )
  \coloneqq 
  \inf_{f_1, f_2, K} 
  \left\{
    \HausMet{K}( f_1(K_1), f_2(K_2) ) 
    \vee
    \ProhMet{K}( \mu_{1} \circ f_1^{-1}, \mu_{2} \circ f_2^{-1} )
    \vee d_{K}( f_1(\rho_{1}), f_2(\rho_{2}) )
  \right\},
\end{equation}
where the infimum is taken over all compact metric spaces $K$ 
and isometric embeddings $f_{i}: K_i \to K$, $i=1,2$,
and $\ProhMet{K}$ denotes the Prohorov metric between finite Borel measures on $K$
(see Section~\ref{sec: the vague topology} for the definition).
Similar to the Gromov--Hausdorff metric,
the pointed Gromov--Hausdorff--Prohorov metric is a separable and complete metric 
on the collection of equivalence classes of measured compact metric spaces,
and the induced topology is called the \textit{pointed Gromov--Hausdorff--Prohorov topology}.

\textbf{The local Gromov--Hausdorff-vague metric.}
In various applications,  
it is desirable to relax the assumption of compactness.  
For that purpose,  
it is convenient to consider \textit{boundedly-compact} metric spaces (or $\bcmAB$ spaces for short),  
that is, metric spaces in which every closed ball of finite radius is compact.  
The \textit{local Gromov--Hausdorff-vague} metric $d_{\mathrm{GHV}}$ (defined in \eqref{eq: the GHV metric} below)  
is an extension of the pointed Gromov--Hausdorff--Prohorov metric.  
It is a metric on the collection of equivalence classes of rooted $\bcmAB$ spaces equipped with Radon measures,  
called \textit{rooted-and-measured $\bcmAB$ spaces},  
and was first introduced in \cite{Abraham_Delmas_Hoscheit_13_A_note}.  
Although \cite{Abraham_Delmas_Hoscheit_13_A_note} focused on a subclass of $\bcmAB$ spaces known as length spaces,  
\cite{Khezeli_20_Metrization} later verified that the metric $d_{\mathrm{GHV}}$ is well-defined on the full space.  

The idea behind the definition of $d_{\mathrm{GHV}}$ is that  
two rooted-and-measured $\bcmAB$ spaces are close  
if their restrictions to balls of finite radius centered at the roots are close  
with respect to the pointed Gromov--Hausdorff--Prohorov metric $d_{\mathrm{pGHP}}$  
(for Lebesgue-almost every radius).  
More precisely,  
for two rooted-and-measured $\bcmAB$ spaces  
$\cX_{i} = (X_i, \rho_i, \mu_i)$, $i=1,2$,  
where $\rho_i$ is the root and $\mu_i$ is a Radon measure on $X_i$,  
the distance between $\cX_{1}$ and $\cX_{2}$ is given by  
\begin{equation}  \label{eq: the GHV metric}
  d_{\mathrm{GHV}} (\cX_{1}, \cX_{2})
  \coloneqq 
  \int_{0}^{\infty} e^{-r} \left( 1 \wedge d_{\mathrm{pGHP}}( \cX_{1}^{(r)}, \cX_{2}^{(r)} ) \right) dr,
\end{equation}
where $\cX_i^{(r)} = (X_i^{(r)}, \rho_i^{(r)}, \mu_i^{(r)})$ is defined as follows:  
$X_i^{(r)}$ is the closed ball in $X_i$ centered at $\rho_i$ with radius $r$;  
$\rho_i^{(r)} \coloneqq \rho_i$;  
and $\mu_i^{(r)}$ is the restriction of $\mu_i$ to $X_i^{(r)}$.  
In \cite{Khezeli_20_Metrization},  
it is shown that $d_{\mathrm{GHV}}$ is a complete and separable metric,  
and the induced topology is referred to as the \textit{local Gromov--Hausdorff-vague topology}.

\begin{rem} \label{rem: local GHV and GHV}
  The terms the ``local Gromov--Hausdorff-vague metric'' and the ``local Gromov--Hausdorff-vague topology'' 
  are not in common use and are only used in the present paper as a matter of convenience.
  Moreover,
  one should note that the local Gromov--Hausdorff-vague topology is different 
  from the Gromov--Hausdorff-vague topology introduced in \cite{Athreya_Lohr_Winter_16_The_gap}
  in that the local Gromov--Hausdorff-vague topology takes into account the metric structure of the entire underlying space 
  while the Gromov--Hausdorff-vague topology ignores the metric structure outside the support of the measure.
\end{rem}


\subsection{Khezeli's framework} \label{subsec: Khezeli's framework}

Khezeli \cite{Khezeli_23_A_unified} uses functors from category theory,  
which allow the construction of Gromov--Hausdorff-type metrics for general additional structures in a unified manner.  
In this subsection, we briefly recall his framework.  
We emphasize that no prior knowledge of category theory is required of the reader.

\textbf{The compact case.}
We begin with a functor $\tau^c$ on compact metric spaces,
that is, 
$\tau^c$ assigns to each compact metric space $X$ a metric space $(\tau^c(X), d^{\tau^c}_X)$,  
and to each isometric embedding $f \colon X \to Y$ an isometric embedding $\tau^c_f \colon \tau^c(X) \to \tau^c(Y)$.  
(The precise definition of functors is given in Definition~\ref{dfn: functor} below.) 
For example, to model compact metric spaces equipped with finite Borel measures,  
we consider the functor $\tau^c = \tau^c_{\mathcal{M}_{\mathrm{fin}}}$ given as follows:
we set $\tau^c(X) = \finMeas{X}$, i.e., the space of finite Borel measures on $X$ equipped with the Prohorov metric,  
and define $\tau^c_f$ to be the pushforward map induced by $f$.

Let $\frakK_\bullet(\tau^c)$ consisting of (equivalence classes of) triples $(X, \rho, a)$,  
where $(X, \rho)$ is a rooted compact metric space and $a \in \tau^c(X)$.  
For $\mathcal{K}_i = (K_i, \rho_i, a_i) \in \frakK_\bullet(\tau^c)$, $i=1,2$,  
the distance between them is defined by generalizing \eqref{eq: the GHP metric},  
that is,  
\begin{equation}
  d^{\tau^c}_{\frakK_\bullet}(\mathcal{K}_1, \mathcal{K}_2) 
  \coloneqq 
  \inf_{f_1, f_2, K} 
  \left\{
    \HausMet{K}( f_1(K_1), f_2(K_2) ) 
    \vee
    d_K( f_1(\rho_1), f_2(\rho_2) )
    \vee
    d^{\tau^c}_K( \tau^c_{f_1}(a_1), \tau^c_{f_2}(a_2) )
  \right\},
\end{equation}
where the infimum is taken over all compact metric spaces $K$  
and all isometric embeddings $f_i \colon K_i \to K$, $i=1,2$.  
The conditions on $\tau^c$ required by Khezeli can be summarized roughly as follows:
\begin{enumerate} [label = (K\arabic*), leftmargin = *, series = Khezeli conditions]
  \item \label{item: Khezeli cond 1}
    the assignment $f \mapsto \tau^c_f$ is ``continuous'' (\cite[Definition~2.7]{Khezeli_23_A_unified});
  \item \label{item: Khezeli cond 2}
    the assignment $X \mapsto \tau^c(X)$ is ``continuous'' (\cite[Definition~2.11 and Remark~2.14]{Khezeli_23_A_unified}).
\end{enumerate}
Khezeli showed that, under \ref{item: Khezeli cond 1}, 
$d^{\tau^c}_{\frakK_\bullet}$ defines a metric on $\frakK_\bullet(\tau^c)$,
and he investigated topological properties such as Polishness under the additional condition \ref{item: Khezeli cond 2}.
The induced topology is characterized in terms of embeddings as follows:
for $\mathcal{K}_n = (K_n, \rho_n, a_n)$, $n \in \NN \cup \{\infty\}$,
$\mathcal{K}_n \to \mathcal{K}_\infty$ in $\frakK_\bullet(\tau^c)$ if and only if 
\begin{equation} \label{eq: conv in K(tau)}
  \begin{minipage}[c]{0.9\linewidth}
    there exist a compact metric space $K$ and isometric embeddings $f_n \colon K_n \to K$ 
    such that $f_n(K_n) \to f_\infty(K_\infty)$ in the Hausdorff topology, $f_n(\rho_n) \to f_\infty(a_\infty)$ in $K$, and
    $\tau^c_{f_n}(a_n) \to \tau^c_{f_\infty}(a_\infty)$ in $\tau^c(K)$,
  \end{minipage}
\end{equation}
where the Hausdorff topology refers to the topology induced by the Hausdorff metric.

\textbf{The boundedly-compact case.}
Khezeli's framework for boundedly-compact spaces follows the philosophy of the local Gromov--Hausdorff-vague metric given in \eqref{eq: the GHV metric}.
To accurately describe his framework, 
we need some further notions from category theory, such as contra-variant functors and inverse limits. 
However, for our purpose here, they are not essential, so we will proceed by making appropriate simplifications of his framework. 
See \cite[Section~3]{Khezeli_23_A_unified} for details.

We begin with a functor $\tau$ that assigns to each $\bcmAB$ space $X$ a topological space $\tau(X)$, 
and to each isometric embedding $f \colon X \to Y$ a topological embedding $\tau_f \colon \tau(X) \to \tau(Y)$
(i.e., a homeomorphism onto its image).    
The space of interest is the set $\rootedBCM(\tau)$ consisting of (equivalence classes of) triples $(X, \rho, a)$,  
where $(X, \rho)$ is a rooted boundedly-compact metric space and $a \in \tau(X)$.  
To metrize this space, we assume that $\tau$ can be truncated to a functor $\tau^c$ on compact metric spaces.  
In particular, we assume that, for each $a \in \tau(X)$ and $r > 0$, there exists a natural truncation $a^{(r)} \in \tau^c(X^{(r)})$.

Given $\cX = (X, \rho, a) \in \rootedBCM(\tau)$, we write $\cX^{(r)} \coloneqq (X^{(r)}, \rho^{(r)}, a^{(r)})$, which is an element of $\frakK_\bullet(\tau^c)$.  
Then the distance between $\cX_1$ and $\cX_2$ is defined by  
\begin{equation}  \label{eq: the Khezeli's metric}
  d(\mathcal{X}_{1}, \mathcal{X}_{2}) 
  \coloneqq 
  \int_{0}^{\infty} e^{-r} \left( 1 \wedge d^{\tau^c}_{\frakK_\bullet}( \mathcal{X}_{1}^{(r)}, \mathcal{X}_{2}^{(r)} )\right) dr.
\end{equation}  
For example, when $\tau(X) = \Meas{X}$,  
one can take $\tau^c = \tau^c_{\mathcal{M}_{\mathrm{fin}}}$
and $a^{(r)}$ to be the restriction of the measure $a$ to $X^{(r)}$.  
In this setting, the metric \eqref{eq: the Khezeli's metric} coincides with $d_{\mathrm{GHV}}$.

In order to show that the function above defines a metric inducing a suitable topology,  
Khezeli imposes additional conditions on $\tau^c$ and the truncations $\cX^{(r)}$ \cite[Assumptions~3.10 and 3.11]{Khezeli_23_A_unified},  
which are more technical than the previous conditions \ref{item: Khezeli cond 1} and \ref{item: Khezeli cond 2}.  
Although it is difficult to state the precise conditions here,  
they can be summarized, roughly speaking, by the following property:
\begin{enumerate}[resume* = Khezeli conditions]
  \item \label{item: Khezeli cond 3}
    Fix $\cX_i \in \frakK_\bullet(\tau^c)$, $i = 1,2$, and set $\varepsilon = d^{\tau^c}_{\frakK_\bullet}(\cX_1, \cX_2)$.  
    For each $\cX_1^{(r)}$ with $r > 2\varepsilon$, there exists $\tilde{\cX}_2 \in \frakK_\bullet(\tau^c)$ such that  
    $d^{\tau^c}_{\frakK_\bullet}(\cX_1^{(r)}, \tilde{\cX}_2) \leq \varepsilon$  
    and $\tilde{\cX}_2$ is a truncation of $\cX_2$ while containing $\cX_2^{(r-2\varepsilon)}$ as a subspace in a suitable sense.
\end{enumerate}
Under the above mentioned conditions, 
in \cite[Lemma~3.28]{Khezeli_23_A_unified},
it is proven that 
the induced topology is characterized in terms of embeddings as follows:
for $\cX_n = (X_n, \rho_n, a_n)$, $n \in \NN \cup \{\infty\}$,
$\cX_n \to \cX_\infty$ in $\rootedBCM(\tau)$ if and only if 
\begin{equation} \label{eq: Khezeli conv in non-cpt case}
  \begin{minipage}[c]{0.9\linewidth}
    there exist a $\bcmAB$ space $M$ and isometric embeddings $f_n \colon X_n \to M$ 
    such that $f_n(X_n) \to f_\infty(X_\infty)$ in the Fell topology, 
    $f_n(\rho_n) \to f_\infty(\rho_\infty)$ in $M$, 
    and $\tau_{f_n}(a_n) \to \tau_{f_\infty}(a_\infty)$ in $\tau(M)$,
  \end{minipage}
\end{equation}
where the Fell topology is an extension of the Hausdorff topology 
(see \cite[Appendix~C]{Molchanov_17_Theory} and also Section~\ref{sec: the Fell topology}).

As mentioned just after \cite[Example~3.12]{Khezeli_23_A_unified},  
to fulfill \ref{item: Khezeli cond 3},  
the metric space $\tau^c(X)$ should be carefully chosen,  
and, in some cases, a standard metric space is not suited.  
For example, to consider the case where $\tau(X) = D(\RNp, X)$, i.e., the space of $X$-valued cadlag functions,  
one needs to define $\tau^c(X)$ to be the space of killed cadlag functions (see \cite[Example~3.45]{Khezeli_23_A_unified}).  
Moreover, checking \ref{item: Khezeli cond 3} can be far from a trivial exercise for some important examples.  
In fact, even in the relatively simple case where $\tau(X) = \Meas{X}$,  
verifying the condition requires some technical arguments (see \cite[Lemma~3.12]{Khezeli_20_Metrization}).  
A more serious difficulty arises when $\tau(X) = \Prob{D(\RNp, X)}$,  
i.e., the space of probability measures on $D(\RNp, X)$,  
which is crucial for discussing convergence of stochastic processes on varying spaces
(cf.\ \cite{Athreya_Lohr_Winter_17_Invariance, Croydon_18_Scaling}).
In this case, it is not clear whether the condition is satisfied.
Our framework resolves these issues,  
as will be explained in the following subsection.


\subsection{The framework presented in the present paper} \label{subsec: Contributions}

Our framework follows the philosophy underlying the formulation of the pointed Gromov--Hausdorff--Prohorov metric given in \eqref{eq: the GHP metric}, 
and provides a natural extension of Khezeli's framework for compact metric spaces. 
It recovers his results under assumptions analogous to \ref{item: Khezeli cond 1} and \ref{item: Khezeli cond 2},
without relying on the truncation operation or the technical condition \ref{item: Khezeli cond 3}.
This relaxation allows us to treat a broader class of additional structures. 
In what follows, we give a brief overview of our framework. 
A detailed description is provided in Section~\ref{sec: main results}.

We begin with the same functor $\tau$ as before. 
Namely, it assigns to each $\bcmAB$ space $X$ a topological space $\tau(X)$, 
and to each isometric embedding $f \colon X \to Y$ a topological embedding $\tau_f \colon \tau(X) \to \tau(Y)$. 
We assume that there exists a \emph{metrization} $\FMet{\tau}$ of $\tau$, 
that is, for each $X$, there exists a metric $d^{\FMet{\tau}}_X$ on $\tau(X)$ 
such that $\tau_f$ is distance-preserving. 

\begin{exm}
  If we set $\tau(X) \coloneqq D(\RNp, X)$ for each $\bcmAB$ space $X$,
  then its metrization is given by equipping $\tau(X)$ with the usual Skorohod metric.
  This yields the space $\rootedBCM(\tau)$, consisting of (equivalence classes of) $\bcmAB$ spaces equipped with cadlag curves.
  The details are provided in Section~\ref{sec: structure for cadlag curves}.
\end{exm}

\begin{rem}
  The topological space $\tau(X)$ may not admit a canonical metric unless a root is specified. 
  For instance, when $\tau(X) = \Meas{X}$ is equipped with the vague topology, 
  its metrization requires a root in $X$ (see Section~\ref{sec: the vague topology}). 
  In such cases, we assume that for each rooted $\bcmAB$ space $(X, \rho)$, 
  there exists a suitable metric on $\tau(X)$. 
  Such functors can also be treated within our framework 
  by using the notion of \emph{rooted metrization} of $\tau$, 
  which is introduced in Section~\ref{sec: Metrization of structures}.
\end{rem}

In our framework, we introduce two metrizations of $\rootedBCM(\tau)$ as follows. 
(The advantages of these will be discussed in Remark~\ref{rem: advantages of two metrizations} below.)
For $\cX_i = (X_i, \rho_i, a_i) \in \rootedBCM(\tau)$, $i=1,2$,  
we define the distance between them by
\begin{equation} \label{eq: intro. metric for preserved roots}
  \RFMet^{\FMet{\tau}}(\cX_1, \cX_2) 
  \coloneqq 
  \inf_{f_1, f_2, (M, \rho)} 
  \left\{
    \lHausMet{M, \rho}( f_1(X_1), f_2(X_2) ) 
    \vee
    d^{\FMet{\tau}}_M( \tau_{f_1}(a_1), \tau_{f_2}(a_2) )
  \right\},
\end{equation}
where the infimum is taken over all rooted $\bcmAB$ spaces $(M, \rho)$  
and all root-preserving isometric embeddings $f_i \colon X_i \to M$, $i=1,2$.  
Here, $\lHausMet{M, \rho}$ is an extension of the Hausdorff metric that measures the distance between closed subsets of $M$,  
as defined in \eqref{2. eq: the local Hausdorff metric} below.
We also define another distance by
\begin{equation}  \label{eq: intro. metric for non-preserved roots}
  \RVMet^{\FMet{\tau}}(\cX_1, \cX_2) 
  \coloneqq 
  \inf_{f_1, f_2, M} 
  \left\{
    \lHausMet{M}\bigl( (f_1(X_1), f_1(\rho_1)), (f_2(X_2), f_2(\rho_2)) \bigr) 
    \vee
    d^{\FMet{\tau}}_M( \tau_{f_1}(a_1), \tau_{f_2}(a_2) )
  \right\},
\end{equation}
where the infimum is taken over all $\bcmAB$ spaces $M$  
and all isometric embeddings $f_i \colon X_i \to M$, $i=1,2$.  
Here, $\lHausMet{M}$ is another extension of the Hausdorff metric,  
which measures the distance between rooted closed subsets of $M$,  
and is defined in \eqref{eq: local Hausdorff metric on product} below.

We prove that Khezeli's results in the compact case also hold in our framework,  
under conditions analogous to \ref{item: Khezeli cond 1} and \ref{item: Khezeli cond 2}.  
The metric $\RVMet^{\FMet{\tau}}$ induces the same convergence as in \eqref{eq: Khezeli conv in non-cpt case},  
whereas $\RFMet^{\FMet{\tau}}$ induces, in general, a different mode of convergence, which can be described as follows:  
for $\cX_n = (X_n, \rho_n, a_n)$, $n \in \NN \cup \{\infty\}$,  
we have $\cX_n \to \cX_\infty$ with respect to $\RFMet^{\FMet{\tau}}$ if and only if  
\begin{equation} \label{eq: intro. conv with prerved roots}
  \begin{minipage}[c]{0.9\linewidth}
    there exist a rooted $\bcmAB$ space $(M, \rho)$ and root-preserving isometric embeddings $f_n \colon X_n \to M$  
    such that $f_n(X_n) \to f_\infty(X_\infty)$ in the Fell topology, and  
    $\tau_{f_n}(a_n) \to \tau_{f_\infty}(a_\infty)$ in $\tau(M)$.
  \end{minipage}
\end{equation}
The difference between the two notions lies in the treatment of roots.  
In \eqref{eq: intro. conv with prerved roots},  
the roots $\rho_n$ are mapped to the root $\rho$ of the ambient space $M$,  
whereas in \eqref{eq: Khezeli conv in non-cpt case},  
the roots are not fixed to be a single point in $M$.  
However, we will show that, under an additional mild assumption on $\tau$,  
the two convergence notions are in fact equivalent (see Section~\ref{sec: Coincidence of the two topologies}).  
This assumption requires, roughly speaking, the following property:
\begin{enumerate} [label = \textup{(N)}]
  \item \label{item: condition N}
    Fix a topological space $X$, and suppose there exist two boundedly-compact metrics $d_X$ and $\tilde{d}_X$ on $X$ that induce the same topology.  
    If $|d_X(x,y) - \tilde{d}_X(x,y)| \leq \varepsilon$ for all $x, y \in X$, then  
    \begin{equation}
      |d^{\FMet{\tau}}_X(a,b) - \tilde{d}^{\FMet{\tau}}_X(a,b)| \leq \Dist{\FMet{\tau}}(\varepsilon),
      \quad 
      \forall a,b \in \tau(X),
    \end{equation}  
    for some function $\Dist{\FMet{\tau}}$ satisfying $\lim_{\varepsilon \to 0} \Dist{\FMet{\tau}}(\varepsilon) = 0$, depending only on $\FMet{\tau}$.  
    Here, $d^{\FMet{\tau}}_X$ and $\tilde{d}^{\FMet{\tau}}_X$ denote the metrics on $\tau(X)$ associated to $d_X$ and $\tilde{d}_X$, respectively.
\end{enumerate}
(See Definition~\ref{dfn: stability} for the precise formulation.)  
Heuristically, this condition means that the metric on $\tau(X)$ is stable under small deformations of the underlying space $X$.  
As shown in Section~\ref{sec: Examples of functors} below,  
this property is easily verified for most functors.

\begin{rem} \label{rem: advantages of two metrizations}
  A commonly used method for estimating Gromov--Hausdorff-type distances between metric spaces $X$ and $Y$ 
  (equipped with additional structures) is to use a correspondence between $X$ and $Y$
  (see \cite[Section~7.3.3]{Burago_Burago_Ivanov_01_A_course} for details).
  Indeed, a correspondence provides isometric embeddings of $X$ and $Y$ into their disjoint union 
  $Z \coloneqq X \sqcup Y$, equipped with a suitable metric.
  In this setting, it is natural to use the metric $\RVMet^{\FMet{\tau}}$.
  Under condition~\ref{item: condition N}, 
  these isometric embeddings can be upgraded to root-preserving isometric embeddings 
  (by identifying the roots of $X$ and $Y$ in $Z$), which is convenient for applications.
\end{rem}

Our framework applies to all the examples considered by Khezeli~\cite{Khezeli_23_A_unified}.  
Moreover, since the technical condition~\ref{item: Khezeli cond 3} is removed,  
it allows us to treat various new types of additional structures.  
For example,  
our framework applies to the case where $\tau(X) = \Prob{D(\RNp, X)}$,  
and thus provides a suitable topological setting  
for discussing the convergence of stochastic processes on varying spaces  
(see Section~\ref{sec: Laws of structures}).  
Furthermore,  
in~\cite{Noda_pre_Convergence,Noda_pre_Scaling},  
following the framework developed in this paper,  
a Gromov--Hausdorff-type topology is introduced,  
which enables us to treat convergence of laws of Markov processes and their associated local times  
on varying spaces.  
We expect that our framework will also be applicable to a wide range of problems in random geometry,  
such as the quantum zipper (cf.\ \cite{Sheffield_16_Conformal}).


\subsection{Organization of the paper and notational conventions} \label{subsec: Organization of the paper and notational conventions}

The remainder of the article is organized as follows.  
As we have already seen in \eqref{eq: intro. metric for preserved roots} and \eqref{eq: intro. metric for non-preserved roots} above,  
the extensions $\lHausMet{M, \rho}$ and $\lHausMet{M}$ of the Hausdorff metric play a crucial role in our framework.  
We establish a general method for extending metrics in Section~\ref{sec: Metric for non-compact objects}.  
In the following section, we apply the method to concrete examples, and in particular the above-mentioned metrics are defined.  
In Section~\ref{sec: the local GH topology}, we introduce the local Gromov--Hausdorff topology,  
which is an extension of the (pointed) Gromov--Hausdorff topology  
and provides a suitable topological setting to discuss convergence of rooted $\bcmAB$ spaces.  
To deal with $\bcmAB$ spaces equipped with additional structures,  
we need some notions from category theory,  
and they are introduced in Section~\ref{sec: Preliminaries on category theory}.  
Then the main results of this paper are presented in Section~\ref{sec: main results}.  
Section~\ref{sec: Functorial operations and preservation of properties} verifies that properties of functors required by our framework  
are preserved under some functorial operations,  
which enables us to consider complex additional structures.  
In the last section, Section~\ref{sec: Examples of functors},  
we present various example additional structures to which our main results apply.

Below, we remark about the notation and terminology used in the rest of this paper.
\begin{enumerate} [label = \textup{(\arabic*)}]
  \item \label{note: real number}
    We write $\RNp \coloneqq [0, \infty)$ and $\RNpp \coloneqq (0, \infty)$,
    and equip each of these spaces with the usual Euclidean metric.
  \item \label{note: max min}
    For $a, b \in \RN \cup \{ \pm \infty \}$,
    we write $a \wedge b \coloneqq \min\{a, b\}$ and $a \vee b \coloneqq \max\{a, b\}$.
  \item \label{note: metric space is non-empty}
    When we say that $X$ is a metric space, we always assume that $X$ is non-empty and the associated metric is written as $d_X$.
  \item \label{note: balls in metric space}
    Given a metric space $X$, we write 
    \begin{equation}
      B_X(x, r) 
      \coloneqq
      \{ y \in X \mid d_X(x,y) < r \},
      \quad 
      D_X(x,r)
      \coloneqq 
      \{ y \in X \mid d_X(x,y) \leq r \}.
    \end{equation}
  \item \label{note: closure}
    Given a topological space $X$ and a subset $A$ of $X$,
    we denote by $\closure(A) = \closure_X(A)$ the  closure of $A$ in $X$.
  \item \label{note: identity}
    For a set $A$,
    we denote the identity map from $A$ to itself by $\id_{A}$.
  \item \label{note: product map}
    Given maps $f_i \colon X_i \to Y_i$, $i = 1,2$, 
    we define $f_1 \times f_2 \colon X_1 \times X_2 \to Y_1 \times Y_2$ by setting $(f_1 \times f_2)(x_1, x_2) \coloneqq (f_1(x_1), f_2(x_2))$.
  \item \label{note: product space}
    When $X$ and $Y$ are topological spaces, we always equip $X \times Y$ with the product topology.
    Moreover, if $X$ and $Y$ are metric spaces, then we always equip $X \times Y$ with the \emph{max product metric} defined as follows: 
    \begin{equation}
      d_{X \times Y}((x_1, y_1), (x_2, y_2)) \coloneqq d_X(x_1, x_2) \vee d_Y(y_1, y_2).
    \end{equation}
  \item \label{note: topological embedding}
    We say that a map $f \colon X \to Y$ between topological spaces is a \emph{topological embedding} 
    if and only if it is a homeomorphism onto its image with the relative topology.
  \item \label{note: isometric embedding}
    We say that a map $f \colon X \to Y$ between metric spaces is an \emph{isometric embedding} (resp.\ \emph{isometry})
    if and only if it is distance-preserving (resp.\ and bijective).

  \item \label{note: bcm}
    We abbreviate “boundedly-compact metric space” as \emph{$\bcmAB$ space}.
\end{enumerate}


\section{An extension of metrics to non-compact objects} \label{sec: Metric for non-compact objects}
  In this section,  
  we present a method for extending a metric defined only for ``compact'' objects  
  to a metric for ``non-compact'' objects.  
  The idea is inspired by Khezeli's framework in the boundedly-compact case \cite[Section~3]{Khezeli_23_A_unified}.
  The method introduced here will be used in the next section
  to extend certain metrics, such as Hausdorff metric and Prohorov metric.
  Readers who are only interested in the definitions and properties of these metrics  
  may skip this section.

  The arguments in this section are very abstract,
  so we first give a concrete example and then state the aim of this section.
  Fix a non-empty boundedly-compact metric space $X$.
  Let $\frakC(X)$ (resp.\ $\frakD(X)$) be the set of compact (resp.\ closed) subsets of $X$ (including the empty set).
  A commonly used metric $d^\frakC_X$ on $\frakC(X)$ is the Hausdorff metric (see \eqref{eq: dfn of Hausdorff metric} below).
  To extend $d^\frakC_X$ to a metric on $\frakD(X)$,
  we consider a restriction system $(R_x^{(r)})_{r > 0, x \in X}$ given by
  \begin{equation}
    R_x^{(r)}(A) = A|_x^{(r)} \coloneqq A \cap D_X(x, r).
  \end{equation}
  Our aim of this subsection is to construct a metric on $\frakD(X)$ which induces the following convergence:
  $A_n \to A$ if and only if,
  for any $x_n \in X$ converging to $x \in X$,
  \begin{equation}
    A_n|_{x_n}^{(r)} \to A|_{x}^{(r)}\ \text{with respect to}\ d^\frakC_X\ \text{for all but countably many}\ r > 0.
  \end{equation}
  We will also investigate some properties of the metric such as Polishness and precompactness.

  Now, we state the general setting where we will work.
  Let $X$ be a metric space,
  and $\frakC(X)$ and $\frakD(X)$ be associated sets such that $\frakC(X) \subseteq \frakD(X)$,
  and $d^{\frakC}_X$ be an extended metric on $\frakC(X)$.
  (NB. An extended metric is a metric that is allowed to take the value $\infty$.) 
  The following is a generalization of the restriction system defined above for closed subsets.

  \begin{dfn}[{Restriction system}] \label{dfn: restriction system}
    Let $R_x^{(r)}: \frakD(X) \to \frakC(X)$ be a map for each $r>0$ and $x \in X$.
    We call $R =(R_x^{(r)})_{r > 0, x \in X}$ a \textit{restriction system} 
    from $\frakC(X)$ to $\frakD(X)$
    if it satisfies the following:
    \begin{enumerate} [label = (RS\,\arabic*), series=RS, leftmargin=*]
      \item \label{dfn item: RS. 1}
        For any $s,r>0$ and $x \in X$, $R_x^{(r)} \circ R_x^{(s)} = R_x^{(s \wedge r)}$.
      \item \label{dfn item: RS. 2}
        For given $x \in X$ and $a, b \in \frakD(X)$, if $R_x^{(r)}(a) = R_x^{(r)}(b)$ for all $r>0$, then $a = b$.
      \item \label{dfn item: RS. 3}
        For any $\rho \in X$ and $a \in \frakC(X)$, 
        there exists $r > 0$ such that $a|_\rho^{(r)} = a$.
      \item \label{dfn item: RS. 4}
        For all $x, y \in X$ and $s, r > 0$,
        if $s \geq d_X(x,y)$,
        then $R_y^{(r)} \circ R_x^{(s+r)} = R_y^{(r)}$.
    \end{enumerate}
  \end{dfn}

  We fix a restriction system $R =(R_x^{(r)})_{r > 0, x \in X}$,
  and simply write $R_x^{(r)}(a) = a|_x^{(r)}$.
  We first consider a metrization of $\frakD(X) \times X$ instead of $\frakD(X)$ itself.
  Once a metric on $\frakD(X) \times X$ is defined,
  the metrization of $\frakD(X)$ is given by specifying the root $\rho$ of $X$.
  This is because the space $\frakD(X)$ can be regarded as the subset $\frakD(X) \times \{\rho\}$ of $\frakD(X) \times X$.
  At this point,
  it may seem roundabout to consider metrization of $\frakD(X) \times X$, 
  but this has an advantage that it does not require specifying the root of the underlying space $X$.
  This helps the metrization of a certain class of Gromov--Hausdorff-type topologies,
  discussed in Section~\ref{sec: main results} later.

  In the same spirit as the local Gromov--Hausdorff-vague metric given in \eqref{eq: the GHV metric},
  we define the distance between $(a, x), (b, y) \in \frakD(X) \times X$ by setting 
  \begin{equation}  \label{eq: metric on D times X}
    d_X^\frakD \bigl( (a, x), (b, y)\bigr)
    \coloneqq 
    d_X(x,y) \vee    
    \Bigl\{
      \int_{0}^{\infty} e^{-r} \bigl( 1 \wedge d^\frakC_X(a|_x^{(r)}, b|_y^{(r)}) \bigr)\, dr
    \Bigr\}.
  \end{equation}
  To ensure that $d_X^\frakD$ is well-defined
  (i.e., the integrand in \eqref{eq: metric on D times X} is measurable),
  and to investigate its properties,
  we assume the following conditions.

  \begin{assum} \label{assum: metrization of D} \leavevmode
    \begin{enumerate} [label = \textup{(\roman*)}, leftmargin = *]   
      \item \label{assum item: 1. metrization of D}
        Fix $(a, x) \in \frakD \times X$.
        Then the map $(0, \infty) \ni r \mapsto a|_x^{(r)} \in \frakC(X)$ 
        is continuous for all but countably many $r>0$.
      \item \
        Let  $(a_n, x_n)$, $n \in \NN \cup \{\infty\}$, be elements of $\frakD(X) \times X$
        such that $x_n \to x_\infty$,
        and 
        $(r_n)_{n \geq 1}$ be an increasing sequence of positive numbers with $r_n \uparrow \infty$.
        \begin{enumerate} [label = \mbox{\textup{(ii-\alph*)}}]
          \item \label{assum item: 2. metrization of D}
            If $d^\frakC_X(a_n|_{x_n}^{(r_n)}, a_\infty|_{x_\infty}^{(r_n)}) \to 0$,
            then $d^\frakC_X(a_n|_{x_n}^{(r)}, a_\infty|_{x_\infty}^{(r)}) \to 0$ for all but countably many $r>0$.
          \item \label{assum item: 3. metrization of D}
            For each $r > 0$, if $d^\frakC_X(a_n|_{x_n}^{(r)}, a_\infty|_{x_\infty}^{(r)}) \to 0$,
            then $\{a_n|_{x_n}^{(s)}\}_{n \in \NN}$ is precompact in $\frakC(X)$ for all $s \in (0, r]$.
          \item \label{assum item: 4. metrization of D} 
            If $d^\frakC_X(a_n|_{x_n}^{(r_n)}, a_{n+1}|_{x_{n+1}}^{(r_n)}) < 2^{-n}$ for all sufficiently large $n$,
            then $\{a_n|_{x_n}^{(r)}\}_{n \in \NN}$ is precompact in $\frakC(X)$ for all $r > 0$. 
        \end{enumerate}
    \end{enumerate}
  \end{assum}

  We note that \ref{assum item: 4. metrization of D} implies \ref{assum item: 3. metrization of D}.
  This can be easily checked by using \ref{dfn item: RS. 2}.
  Assumption~\ref{assum: metrization of D}\ref{assum item: 1. metrization of D} is used to ensure that 
  the integrand in \eqref{eq: metric on D times X} is measurable,
  \ref{assum item: 2. metrization of D} is used to show that the induced topology on $\frakD(X) \times X$ is a natural extension of 
  the product topology on $\frakC(X) \times X$,
  \ref{assum item: 3. metrization of D} is used to derive a precompactness criterion,
  and \ref{assum item: 4. metrization of D} is related to the completeness.
  We say that the restriction system $R$ satisfies
  \begin{itemize}
    \item \textbf{Condition 1} if Assumption~\ref{assum: metrization of D}\ref{assum item: 1. metrization of D} is satisfied,
    \item \textbf{Condition 2} if Assumption~\ref{assum: metrization of D}\ref{assum item: 1. metrization of D} 
      and \ref{assum item: 2. metrization of D} are satisfied,
    \item \textbf{Condition 3} if Assumption~\ref{assum: metrization of D}\ref{assum item: 1. metrization of D}, 
      \ref{assum item: 2. metrization of D}, and \ref{assum item: 3. metrization of D} are satisfied, 
    \item \textbf{Condition 4} if Assumption~\ref{assum: metrization of D}\ref{assum item: 1. metrization of D}, 
      \ref{assum item: 2. metrization of D}, and \ref{assum item: 4. metrization of D}
      (and hence \ref{assum item: 3. metrization of D}) are satisfied.
  \end{itemize}

  \begin{rem}
    The above conditions are based on those required by Khezeli for the metrization of Gromov--Hausdorff-type topologies in the boundedly-compact case (see \cite[Assumptions~3.11 and 3.17]{Khezeli_23_A_unified}).
    An important difference from Khezeli’s setting is that the underlying space $X$ is fixed here,
    whereas he assumes similar conditions where $X$ varies.
    This makes our conditions easier to verify.
    We also note that our conditions are weaker than his,
    as they focus on what appears to be the essential part of the argument.
  \end{rem}

  \begin{prop}  \label{2. prop: d_D is a metric}
    If $R$ satisfies Condition~1,
    then the function $d_X^\frakD$ is a well-defined metric on $\frakD(X) \times X$.
  \end{prop}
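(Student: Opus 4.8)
The plan is to verify the defining properties of a metric in the standard order: (1) well-definedness of the formula (measurability of the integrand and finiteness), (2) symmetry, (3) the identity of indiscernibles, and (4) the triangle inequality. Symmetry is immediate from the symmetry of $d_X$ and of $d^\frakC_X$ in \eqref{eq: metric on D times X}, so the real work is in (1), (3), and (4).

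For well-definedness, the key point is that the integrand $r \mapsto 1 \wedge d^\frakC_X(a|_x^{(r)}, b|_y^{(r)})$ is measurable. By Assumption~\ref{assum: metrization of D}\ref{assum item: 1. metrization of D}, for fixed $(a,x)$ the map $r \mapsto a|_x^{(r)}$ is continuous for all but countably many $r > 0$, and likewise for $(b,y)$; hence $r \mapsto (a|_x^{(r)}, b|_y^{(r)})$ is continuous off a countable set, so composing with the continuous map $d^\frakC_X$ and then with $t \mapsto 1 \wedge t$ yields a function that is continuous off a countable set, in particular Borel measurable. Since the integrand is bounded by $1$ and $e^{-r}$ is integrable on $(0,\infty)$, the integral is finite and lies in $[0,1]$, so $d_X^\frakD$ takes finite values in $[0,\infty)$ and is well-defined.

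For the identity of indiscernibles, suppose $d_X^\frakD((a,x),(b,y)) = 0$. Then $d_X(x,y) = 0$, so $x = y$, and the integral vanishes; since the integrand is nonnegative and continuous off a countable (hence Lebesgue-null) set, it must vanish for all but countably many $r$, so $d^\frakC_X(a|_x^{(r)}, b|_x^{(r)}) = 0$, i.e.\ $a|_x^{(r)} = b|_x^{(r)}$, for all but countably many $r > 0$. To upgrade this to \emph{all} $r > 0$: given an arbitrary $r_0$, pick $r > r_0$ with $a|_x^{(r)} = b|_x^{(r)}$ in the cofinite-below set, and apply $R_x^{(r_0)}$ to both sides; by \ref{dfn item: RS. 1}, $R_x^{(r_0)}(a|_x^{(r)}) = a|_x^{(r_0 \wedge r)} = a|_x^{(r_0)}$ and similarly for $b$, so $a|_x^{(r_0)} = b|_x^{(r_0)}$. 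Thus $a|_x^{(r)} = b|_x^{(r)}$ for all $r > 0$, and \ref{dfn item: RS. 2} gives $a = b$, hence $(a,x) = (b,y)$. Conversely $d_X^\frakD((a,x),(a,x)) = 0$ is clear.

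For the triangle inequality, I would take $(a,x),(b,y),(c,z)$ and bound each piece of the maximum separately. The $d_X$-component satisfies the triangle inequality directly. For the integral component, it suffices to show the pointwise bound $1 \wedge d^\frakC_X(a|_x^{(r)}, c|_z^{(r)}) \le \bigl(1 \wedge d^\frakC_X(a|_x^{(r)}, b|_y^{(r)})\bigr) + \bigl(1 \wedge d^\frakC_X(b|_y^{(r)}, c|_z^{(r)})\bigr)$ for a.e.\ $r$, which follows from the triangle inequality for $d^\frakC_X$ together with the elementary fact that $1 \wedge (s+t) \le (1 \wedge s) + (1 \wedge t)$; integrating against $e^{-r}\,dr$ then gives the bound for the integral terms, and finally one uses that for nonnegative reals $p_1 \le q_1 + r_1$ and $p_2 \le q_2 + r_2$ implies $(p_1 \vee p_2) \le (q_1 \vee q_2) + (r_1 \vee r_2)$ to combine the two components into the triangle inequality for $d_X^\frakD$. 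The main obstacle is really step (3): extracting $a = b$ from equality of restrictions only off a countable set requires carefully chaining \ref{dfn item: RS. 1} and \ref{dfn item: RS. 2}, and one must be slightly careful that the exceptional countable set depends on the pair $(a,x),(b,y)$ but is harmless because it is Lebesgue-null and one can always find good radii above any prescribed level. Everything else is routine.
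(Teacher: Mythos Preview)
Your proof is correct and follows essentially the same approach as the paper's. The paper phrases the measurability step via the reverse triangle inequality for $1 \wedge d^\frakC_X$ (bounding $|d(a|_x^{(r')}, b|_y^{(r')}) - d(a|_x^{(r)}, b|_y^{(r)})|$ by $d(a|_x^{(r')}, a|_x^{(r)}) + d(b|_y^{(r')}, b|_y^{(r)})$), which is just an explicit unwinding of your ``compose with the continuous map $d^\frakC_X$'' step; for symmetry and the triangle inequality the paper simply declares them obvious, and for positive definiteness it invokes \ref{dfn item: RS. 1} and \ref{dfn item: RS. 2} exactly as you do, though with less detail on the upgrade from almost every $r$ to all $r$.
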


  \begin{proof}
    Write $d(c, d) \coloneqq 1 \wedge d_X^\frakC(c, d)$ for each $c, d \in \frakC(X)$.
    Then $d$ is a metric on $\frakC$.
    Fix $(a, x), (b, y) \in \frakD \times X$.
    By the triangle inequality,
    we deduce that  
    \begin{align}
      &\left|
        d( a|_x^{(r')}, b|_y^{(r')}) - d( a|_x^{(r)}, b|_y^{(r)})
      \right|\\
      \leq &
      \left|
        d( a|_x^{(r')}, b|_y^{(r')}) - d( b|_y^{(r')}, a|_x^{(r)})
      \right|
      +
      \left|
        d( b|_y^{(r')}, a|_x^{(r)}) - d( a|_x^{(r)}, b|_y^{(r)})
      \right|\\
      \leq &
      d( a|_x^{(r')}, a|_x^{(r)}) + d( b|_y^{(r')}, b|_y^{(r)}).
    \end{align}
    This, combined with Assumption~\ref{assum: metrization of D}\ref{assum item: 1. metrization of D},
    implies that the map $r \mapsto 1 \wedge d_X^\frakC(a|_x^{(r)}, b|_y^{(r)}) \in \RNp$ 
    is continuous for all but countably many $r>0$.
    Hence, $d_X^\frakD$ is well-defined.
    Symmetry and the triangle inequality are obvious. 
    If $d_X^\frakD\bigl((a, x), (b,y)\bigr)=0$,
    then $x = y$ and $a|_x^{(r)} = b|_y^{(r)}$ for Lebesgue-almost every $r>0$.
    By \ref{dfn item: RS. 1} and \ref{dfn item: RS. 2},
    we obtain $a = b$.
    Thus, $d^\frakD_X$ is positive definite.
  \end{proof}

  We will give a characterization of convergence with respect to $d^\frakD_X$ in Theorem~\ref{thm: convergence in D times X} below.
  To this end,
  we use the following result.

  \begin{lem} \label{2. lem: convergence of restricted objects}
    Assume that $R$ satisfies Condition~2.
    Fix elements $(a_n, x_n) \in \frakD(X) \times X$, $n \in \NN \cup \{\infty\}$, and $r > 0$.
    If $a_n|_{x_n}^{(r)} \to a_\infty|_{x_\infty}^{(r)}$ in $\frakC(X)$,
    then, for all but countably many $s \in (0, r]$, 
    $a_n|_{x_n}^{(s)} \to a|_x^{(s)}$ in $\frakC(X)$. 
  \end{lem}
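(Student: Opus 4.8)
The plan is to reduce the statement to Assumption~\ref{assum: metrization of D}\ref{assum item: 2. metrization of D} (which is at our disposal since $R$ satisfies Condition~2) by first truncating all of the data to radius $r$.

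First I would set $b_n \coloneqq a_n|_{x_n}^{(r)}$ for each $n \in \NN \cup \{\infty\}$; these lie in $\frakC(X)$, and the hypothesis says exactly that $b_n \to b_\infty$ in $\frakC(X)$. Using~\ref{dfn item: RS. 1}, for every $s \in (0, r]$ we have $b_n|_{x_n}^{(s)} = \bigl(a_n|_{x_n}^{(r)}\bigr)|_{x_n}^{(s)} = a_n|_{x_n}^{(s \wedge r)} = a_n|_{x_n}^{(s)}$, and in the same way $b_\infty|_{x_\infty}^{(s)} = a_\infty|_{x_\infty}^{(s)}$. Hence it suffices to prove that $b_n|_{x_n}^{(s)} \to b_\infty|_{x_\infty}^{(s)}$ in $\frakC(X)$ for all but countably many $s \in (0, r]$.

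Next I would apply Assumption~\ref{assum: metrization of D}\ref{assum item: 2. metrization of D} to the sequence $(b_n, x_n)$, using that $x_n \to x_\infty$. The only thing that needs to be checked is the existence of an increasing sequence $r_n \uparrow \infty$ along which $d^\frakC_X\bigl(b_n|_{x_n}^{(r_n)}, b_\infty|_{x_\infty}^{(r_n)}\bigr) \to 0$. Since $b_n = a_n|_{x_n}^{(r)}$, axiom~\ref{dfn item: RS. 1} gives $b_n|_{x_n}^{(t)} = b_n$ for all $t \geq r$; and since $b_\infty$ is compact, \ref{dfn item: RS. 3} provides $r' > 0$ with $b_\infty|_{x_\infty}^{(r')} = b_\infty$, whence $b_\infty|_{x_\infty}^{(t)} = b_\infty$ for all $t \geq r'$, again by~\ref{dfn item: RS. 1}. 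Taking $r_n \coloneqq \max\{r, r'\} + n$ then gives $d^\frakC_X\bigl(b_n|_{x_n}^{(r_n)}, b_\infty|_{x_\infty}^{(r_n)}\bigr) = d^\frakC_X(b_n, b_\infty) \to 0$. Assumption~\ref{assum: metrization of D}\ref{assum item: 2. metrization of D} now yields $b_n|_{x_n}^{(s)} \to b_\infty|_{x_\infty}^{(s)}$ for all but countably many $s > 0$, hence for all but countably many $s \in (0, r]$, and together with the previous step this is the claim.

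I expect the only genuine obstacle to be the idea in the first step of truncating \emph{before} invoking Condition~2: for the original sequence one has no control over $a_n|_{x_n}^{(t)}$ as $t$ grows, so no usable sequence $r_n \uparrow \infty$ presents itself, whereas after replacing $a_n$ by $b_n = a_n|_{x_n}^{(r)}$ the restrictions stabilise for radii $\geq r$ (and $\geq r'$ for the limit object), which collapses $d^\frakC_X\bigl(b_n|_{x_n}^{(r_n)}, b_\infty|_{x_\infty}^{(r_n)}\bigr)$ to $d^\frakC_X(b_n, b_\infty)$. Everything else is a direct manipulation of the restriction-system axioms~\ref{dfn item: RS. 1} and~\ref{dfn item: RS. 3}.
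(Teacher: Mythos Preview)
Your argument is correct and is essentially the approach the paper has in mind: truncate at radius $r$ to turn the hypothesis into the input needed for Assumption~\ref{assum: metrization of D}\ref{assum item: 2. metrization of D}, then read off the conclusion for $s\le r$ via \ref{dfn item: RS. 1}. The paper compresses this to a one-line reference (citing \ref{dfn item: RS. 2} and Assumption~\ref{assum: metrization of D}\ref{assum item: 2. metrization of D}); your write-up makes the mechanism explicit.

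Two small remarks. First, the detour through \ref{dfn item: RS. 3} is unnecessary: since $b_\infty = a_\infty|_{x_\infty}^{(r)}$, the same computation you did for $b_n$ via \ref{dfn item: RS. 1} gives $b_\infty|_{x_\infty}^{(t)} = b_\infty$ for all $t \ge r$, so you may take $r_n = r + n$ directly. Second, you use $x_n \to x_\infty$, which is not written in the lemma statement but is required by Assumption~\ref{assum: metrization of D}\ref{assum item: 2. metrization of D}; the paper's own proof relies on it as well, so it is an implicit hypothesis (and holds in every application of the lemma in the paper).
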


  \begin{proof}
    This is a immediate consequence of \ref{dfn item: RS. 2} and Assumption~\ref{assum: metrization of D}\ref{assum item: 2. metrization of D}.
  \end{proof}

  \begin{thm} [{Convergence with respect to $d_X^\frakD$}]
    \label{thm: convergence in D times X}
    Assume that $R$ satisfies Condition~2.
    Let $(a_n, x_n)$, $n \in \NN \cup \{\infty\}$, be elements of $\frakD(X) \times X$.
    The following are equivalent:
    \begin{enumerate} [label = \textup{(\roman*)}, leftmargin = *]
      \item \label{2. thm item: convergence w.r.t. d_D}
        $(a_n, x_n) \to (a_\infty, x_\infty)$ with respect to $d_X^\frakD$;
      \item \label{2. thm item: convergence w.r.t. d_C for a.e r}
        $x_n \to x_\infty$ in $X$, and $a_n|_{x_n}^{(r)} \to a_\infty|_{x_\infty}^{(r)}$ with respect to $d^\frakC_X$
        for all but countably many $r > 0$;
      \item \label{2. thm item: convergence w.r.t. d_C for unbounded rs}
        $x_n \to x_\infty$ in $X$, and there exists an increasing sequence $(r_{k})_{k \geq 1}$ with $r_{k} \uparrow \infty$
        such that $a_n|_{x_n}^{(r_{k})} \to a_\infty|_{x_\infty}^{(r_{k})}$ as $n \to \infty$ with respect to $d^\frakC_X$ for each $k$;
      \item \label{2. thm item: convergence w.r.t d_C with increasing radii}
        $x_n \to x_\infty$ in $X$, and there exists a sequence $(r_{n})_{n \geq 1}$ of positive numbers with $r_{n} \uparrow \infty$ 
        such that $d^\frakC_X(a_n|_{x_n}^{(r_n)}, a_\infty|_{x_\infty}^{(r_n)}) \to 0$.
    \end{enumerate}
  \end{thm}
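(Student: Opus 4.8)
The plan is to prove the chain of implications $\ref{2. thm item: convergence w.r.t. d_D} \Rightarrow \ref{2. thm item: convergence w.r.t. d_C for a.e r} \Rightarrow \ref{2. thm item: convergence w.r.t. d_C for unbounded rs} \Rightarrow \ref{2. thm item: convergence w.r.t d_C with increasing radii} \Rightarrow \ref{2. thm item: convergence w.r.t. d_D}$, which is the most economical route since \ref{2. thm item: convergence w.r.t. d_C for unbounded rs} and \ref{2. thm item: convergence w.r.t d_C with increasing radii} are intermediate statements that follow easily from their neighbours. For \ref{2. thm item: convergence w.r.t. d_D} $\Rightarrow$ \ref{2. thm item: convergence w.r.t. d_C for a.e r}, note first that $d_X^\frakD((a_n,x_n),(a_\infty,x_\infty)) \to 0$ immediately forces $d_X(x_n, x_\infty) \to 0$, and also forces $\int_0^\infty e^{-r}\bigl(1 \wedge d^\frakC_X(a_n|_{x_n}^{(r)}, a_\infty|_{x_\infty}^{(r)})\bigr)\,dr \to 0$. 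Since the integrands are nonnegative, this gives convergence to $0$ in $L^1$ of $[0,\infty)$ with weight $e^{-r}\,dr$, hence convergence to $0$ in Lebesgue measure on every bounded interval, hence along a subsequence $n_k$ convergence to $0$ for Lebesgue-almost every $r$. To upgrade "a subsequence" to "the full sequence, for all but countably many $r$" I would argue as follows: pick a sequence $r_k \uparrow \infty$ of radii at which the almost-everywhere convergence along the subsequence holds; then $a_{n_k}|_{x_{n_k}}^{(r_k')} \to a_\infty|_{x_\infty}^{(r_k')}$ for such radii, and invoking Lemma~\ref{2. lem: convergence of restricted objects} together with a diagonal extraction one transfers this to convergence at a cofinal family of radii; but the cleanest device is to observe that if the full sequence did \emph{not} converge at some $r$ in a set of positive measure, one could extract a further subsequence bounded away from $a_\infty|_{x_\infty}^{(r)}$, contradicting the $L^1$-convergence forced by \ref{2. thm item: convergence w.r.t. d_D}. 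Combining this with Assumption~\ref{assum: metrization of D}\ref{assum item: 2. metrization of D} (via Lemma~\ref{2. lem: convergence of restricted objects}) removes the exceptional null set up to a countable set.

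The implications \ref{2. thm item: convergence w.r.t. d_C for a.e r} $\Rightarrow$ \ref{2. thm item: convergence w.r.t. d_C for unbounded rs} and \ref{2. thm item: convergence w.r.t. d_C for unbounded rs} $\Rightarrow$ \ref{2. thm item: convergence w.r.t d_C with increasing radii} are the straightforward ones. For the first, since the exceptional set is countable, it has empty interior in $(0,\infty)$, so one can select an increasing sequence $r_k \uparrow \infty$ avoiding it. For the second, given $a_n|_{x_n}^{(r_k)} \to a_\infty|_{x_\infty}^{(r_k)}$ for each fixed $k$, a standard diagonal argument produces a sequence $r_n \uparrow \infty$ (taking $r_n = r_{k(n)}$ where $k(n)$ increases slowly enough) with $d^\frakC_X(a_n|_{x_n}^{(r_n)}, a_\infty|_{x_\infty}^{(r_n)}) \to 0$; here one uses that for each fixed $k$ the quantity $d^\frakC_X(a_n|_{x_n}^{(r_k)}, a_\infty|_{x_\infty}^{(r_k)})$ is eventually small, so choosing $k(n) = \max\{k \le n : d^\frakC_X(a_m|_{x_m}^{(r_j)}, a_\infty|_{x_\infty}^{(r_j)}) < 1/j \text{ for all } m \ge n, j \le k\}$ works.

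The implication \ref{2. thm item: convergence w.r.t d_C with increasing radii} $\Rightarrow$ \ref{2. thm item: convergence w.r.t. d_D} is where the real work lies, and I expect it to be the main obstacle. Given $r_n \uparrow \infty$ with $d^\frakC_X(a_n|_{x_n}^{(r_n)}, a_\infty|_{x_\infty}^{(r_n)}) \to 0$ and $x_n \to x_\infty$, I need to show the integral term $\int_0^\infty e^{-r}(1 \wedge d^\frakC_X(a_n|_{x_n}^{(r)}, a_\infty|_{x_\infty}^{(r)}))\,dr \to 0$. The subtlety is that convergence at the single growing radius $r_n$ does not by itself control the integrand at \emph{smaller} fixed radii $s < r_n$; this requires relating $a_n|_{x_n}^{(s)}$ to $a_n|_{x_n}^{(r_n)}$ via the restriction system. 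The plan is: fix $\varepsilon > 0$ and split the integral at a large threshold $T$, so that $\int_T^\infty e^{-r}\,dr < \varepsilon$ handles the tail uniformly in $n$; on $[0,T]$, for $n$ large enough that $r_n > T$, use \ref{dfn item: RS. 1} to write $a_n|_{x_n}^{(s)} = (a_n|_{x_n}^{(r_n)})|_{x_n}^{(s)} = R_{x_n}^{(s)}(a_n|_{x_n}^{(r_n)})$ for $s \le T < r_n$, and similarly for $a_\infty$; then invoke Lemma~\ref{2. lem: convergence of restricted objects} with the fixed radius $T$ — but we do not yet have convergence at radius $T$, only at $r_n$. The correct path is to first establish convergence at radius $T$ (for all but countably many $T$) directly from the hypothesis: by Assumption~\ref{assum: metrization of D}\ref{assum item: 2. metrization of D} applied to the sequence and the radii $r_n$, the hypothesis $d^\frakC_X(a_n|_{x_n}^{(r_n)}, a_\infty|_{x_\infty}^{(r_n)}) \to 0$ yields $d^\frakC_X(a_n|_{x_n}^{(r)}, a_\infty|_{x_\infty}^{(r)}) \to 0$ for all but countably many $r > 0$. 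This is exactly the content of \ref{2. thm item: convergence w.r.t. d_C for a.e r}, so in fact \ref{2. thm item: convergence w.r.t d_C with increasing radii} $\Rightarrow$ \ref{2. thm item: convergence w.r.t. d_C for a.e r} directly via Assumption~\ref{assum: metrization of D}\ref{assum item: 2. metrization of D}, and then dominated convergence (with integrable dominating function $e^{-r}$) gives the integral term, and $d_X(x_n,x_\infty) \to 0$ is assumed, so $d_X^\frakD \to 0$. I would therefore reorganize slightly and prove \ref{2. thm item: convergence w.r.t d_C with increasing radii} $\Rightarrow$ \ref{2. thm item: convergence w.r.t. d_C for a.e r} $\Rightarrow$ \ref{2. thm item: convergence w.r.t. d_D}, closing the cycle; the use of Assumption~\ref{assum: metrization of D}\ref{assum item: 2. metrization of D} here is precisely why Condition~2 is the standing hypothesis.
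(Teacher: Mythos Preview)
Your implications \ref{2. thm item: convergence w.r.t. d_C for a.e r} $\Rightarrow$ \ref{2. thm item: convergence w.r.t. d_C for unbounded rs} $\Rightarrow$ \ref{2. thm item: convergence w.r.t d_C with increasing radii} $\Rightarrow$ \ref{2. thm item: convergence w.r.t. d_C for a.e r} $\Rightarrow$ \ref{2. thm item: convergence w.r.t. d_D} are all correct and essentially agree with the paper (the paper runs the cycle in the opposite direction, proving \ref{2. thm item: convergence w.r.t d_C with increasing radii} $\Rightarrow$ \ref{2. thm item: convergence w.r.t. d_C for unbounded rs} $\Rightarrow$ \ref{2. thm item: convergence w.r.t. d_C for a.e r} $\Rightarrow$ \ref{2. thm item: convergence w.r.t. d_D}, but this is immaterial).

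The gap is in your direct argument for \ref{2. thm item: convergence w.r.t. d_D} $\Rightarrow$ \ref{2. thm item: convergence w.r.t. d_C for a.e r}. Writing $g_n(r) \coloneqq 1 \wedge d^\frakC_X(a_n|_{x_n}^{(r)}, a_\infty|_{x_\infty}^{(r)})$, you correctly observe that $g_n \to 0$ in $L^1(e^{-r}\,dr)$ and hence along a subsequence almost everywhere. But your ``cleanest device'' to upgrade this to the full sequence is flawed: $L^1$ convergence does \emph{not} imply full-sequence pointwise convergence almost everywhere, as the typewriter sequence shows. Your proposed contradiction (``extract a further subsequence bounded away, contradicting $L^1$-convergence'') fails because the bad subsequence varies with $r$, and there is no single subsequence that is bad on a set of positive measure. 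Lemma~\ref{2. lem: convergence of restricted objects} does not rescue this either, since the countable exceptional set it produces may depend on the particular sub-subsequence.

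The paper sidesteps this entirely by proving \ref{2. thm item: convergence w.r.t. d_D} $\Rightarrow$ \ref{2. thm item: convergence w.r.t d_C with increasing radii} via a Chebyshev-type bound: with $\varepsilon_n \coloneqq d^\frakD_X\bigl((a_n,x_n),(a_\infty,x_\infty)\bigr)$ and $s_n \coloneqq -\log\sqrt{\varepsilon_n}$, one has $\int_0^\infty e^{-r} g_n(r)\,dr \le \varepsilon_n = \sqrt{\varepsilon_n}\,e^{-s_n}$, so there must exist $r_n > s_n$ with $g_n(r_n) \le \sqrt{\varepsilon_n}$ (else the integral over $(s_n,\infty)$ alone would already exceed $\sqrt{\varepsilon_n}\,e^{-s_n}$). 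Since $s_n \to \infty$ and $\sqrt{\varepsilon_n} \to 0$, this gives \ref{2. thm item: convergence w.r.t d_C with increasing radii} directly, after which your own chain \ref{2. thm item: convergence w.r.t d_C with increasing radii} $\Rightarrow$ \ref{2. thm item: convergence w.r.t. d_C for a.e r} (via Assumption~\ref{assum: metrization of D}\ref{assum item: 2. metrization of D}) closes the cycle.
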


  \begin{proof} 
    The implications
    \ref{2. thm item: convergence w.r.t d_C with increasing radii}
    $\Rightarrow$
    \ref{2. thm item: convergence w.r.t. d_C for unbounded rs}
    and 
    \ref{2. thm item: convergence w.r.t. d_C for unbounded rs}
    $\Rightarrow$
    \ref{2. thm item: convergence w.r.t. d_C for a.e r}
    follow from Assumption \ref{assum: metrization of D}\ref{assum item: 2. metrization of D}
    and Lemma~\ref{2. lem: convergence of restricted objects}, respectively.
    The dominated convergence theorem yields 
    the implication \ref{2. thm item: convergence w.r.t. d_C for a.e r} $\Rightarrow$ \ref{2. thm item: convergence w.r.t. d_D}.
    Assume that \ref{2. thm item: convergence w.r.t. d_D}  holds.
    By the definition of $d^\frakD_X$, it holds that $x_n \to x$.
    Write $\varepsilon_n \coloneqq d^\frakD_X\bigl((a_n, x_n), (a_\infty, x_\infty)\bigr)$
    and $s_n \coloneqq |\log \sqrt{\varepsilon_n}|$.
    For all sufficiently large $n$,
    we have that $s_n = - \log \sqrt{\varepsilon_n}$ and $\sqrt{\varepsilon_n} e^{-s_n} = \varepsilon_n$.
    Thus, 
    \begin{equation}
      \int_{0}^{\infty} e^{-r} \left( 1 \wedge d_X^\frakC( a_n|_{x_n}^{(r)}, a_\infty|_{x_\infty}^{(r)} ) \right) dr 
      \leq
      \sqrt{\varepsilon_n} e^{-s_n},
    \end{equation}
    which implies that, for each such $n$,
    there exists $r_n > s_n$ such that $d_X^\frakC( a_n|_{x_n}^{(r_n)}, a_\infty|_{x_\infty}^{(r_n)} ) \leq \sqrt{\varepsilon_n}$.
    Hence, \ref{2. thm item: convergence w.r.t d_C with increasing radii} holds.
  \end{proof}

  We next investigate the separability and the completeness of $d^\frakD_X$.

  \begin{thm} [{Separability}]\label{2. thm: separability of d_D}
    Assume that $R$ satisfies Condition~2.
    If both $X$ and $\frakC(X)$ are separable,
    then the topology on $\frakD(X) \times X$ induced by $d^{\frakD}_X$ is also separable.
  \end{thm}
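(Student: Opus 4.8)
The plan is to produce an explicit countable dense set from two observations: first, that every $(a, x) \in \frakD(X) \times X$ is the $d^\frakD_X$-limit of its compact truncations $(a|_x^{(r)}, x)$ as $r \to \infty$; and second, that the inclusion map $\iota \colon \frakC(X) \times X \to \frakD(X) \times X$, $(c, x) \mapsto (c, x)$, is continuous when $\frakC(X) \times X$ carries the product topology of $(\frakC(X), d^\frakC_X)$ and $(X, d_X)$. Granting these, if $C$ is a countable dense subset of the (separable) metric space $\frakC(X) \times X$, then by continuity the closure of $\iota(C)$ in $\frakD(X) \times X$ contains $\iota\bigl(\frakC(X) \times X\bigr)$, which by the first observation is dense in $\frakD(X) \times X$; hence $\iota(C)$ is a countable dense subset, and the induced topology is separable.

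For the first observation, fix $(a, x) \in \frakD(X) \times X$ and $r > 0$. By~\ref{dfn item: RS. 1}, $(a|_x^{(r)})|_x^{(s)} = a|_x^{(s \wedge r)}$ for every $s > 0$, which equals $a|_x^{(s)}$ whenever $s \le r$. Therefore, in the integral appearing in~\eqref{eq: metric on D times X} evaluated at the pair $(a|_x^{(r)}, x)$ and $(a, x)$, the integrand vanishes on $(0, r]$ and is bounded by $1$ on $(r, \infty)$, so $d^\frakD_X\bigl((a|_x^{(r)}, x), (a, x)\bigr) \le e^{-r} \to 0$ as $r \to \infty$.

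For the second observation, since both spaces are metrizable it suffices to check sequential continuity. Let $(c_n, x_n) \to (c, x)$ in $\frakC(X) \times X$, i.e.\ $d^\frakC_X(c_n, c) \to 0$ and $x_n \to x$ in $X$. By~\ref{dfn item: RS. 3} (applied with $\rho = x_n$ and $a = c_n$) together with~\ref{dfn item: RS. 1}, for each $n \in \NN$ there is $t_n > 0$ with $c_n|_{x_n}^{(r)} = c_n$ for all $r \ge t_n$, and similarly there is $t_\infty > 0$ with $c|_x^{(r)} = c$ for all $r \ge t_\infty$. Pick a non-decreasing sequence $r_n \ge n \vee t_n \vee t_\infty$ with $r_n \uparrow \infty$; then $d^\frakC_X\bigl(c_n|_{x_n}^{(r_n)}, c|_x^{(r_n)}\bigr) = d^\frakC_X(c_n, c) \to 0$. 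Since $R$ satisfies Condition~2, criterion~\ref{2. thm item: convergence w.r.t d_C with increasing radii} of Theorem~\ref{thm: convergence in D times X} applies and gives $(c_n, x_n) \to (c, x)$ with respect to $d^\frakD_X$, proving continuity of $\iota$.

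It remains only to note that $\frakC(X) \times X$ is a separable metric space: by hypothesis $\frakC(X)$ is separable (the genuine metric $1 \wedge d^\frakC_X$ induces the same topology as the extended metric $d^\frakC_X$), $X$ is separable, and a product of two separable metric spaces is separable. Taking a countable dense $C$ there and combining with the two observations completes the argument. I expect the continuity of $\iota$ to be the only genuinely delicate point: one cannot fix a single large radius $r$ and hope that $c_n|_{x_n}^{(r)} \to c|_x^{(r)}$, since the radius past which $c_n$ is unaffected by restriction depends on $n$; this is exactly why the moving-radius characterization~\ref{2. thm item: convergence w.r.t d_C with increasing radii}, together with the existence of such a stabilization radius guaranteed by~\ref{dfn item: RS. 3}, is needed.
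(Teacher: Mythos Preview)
Your proof is correct and follows essentially the same approach as the paper: show that $\frakC(X)\times X$ is $d^\frakD_X$-dense in $\frakD(X)\times X$ via the truncations $(a|_x^{(r)},x)$, and that a countable set dense in the product topology on $\frakC(X)\times X$ remains dense for $d^\frakD_X$ via~\ref{dfn item: RS. 3} and the convergence characterization. Your treatment is slightly more detailed than the paper's (you give the explicit bound $d^\frakD_X\bigl((a|_x^{(r)},x),(a,x)\bigr)\le e^{-r}$ and spell out the sequential continuity of the inclusion), but the substance is the same.
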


  \begin{proof}
    Let $D$ be a countable subset of $\frakC(X) \times X$ that is dense in the product topology.
    By \ref{dfn item: RS. 3} and Theorem~\ref{thm: convergence in D times X},
    $D$ is also dense in the relative topology on $\frakC(X) \times X$ induced by $d^{\frakD}_X$. 
    Thus, it suffices to show that any element of $\frakD(X) \times X$ is approximated by a sequence in $\frakC(X) \times X$.
    By Theorem~\ref{thm: convergence in D times X},
    for any $(a,x) \in \frakD(X) \times X$,
    $(a|_x^{(r)}, x) \to (a, x)$ as $r \to \infty$ in $\frakD(X) \times X$.
    Since $(a|_x^{(r)}, x) \in \frakC(X) \times X$, 
    we deduce that $D$ is dense in $\frakD(X) \times X$.
  \end{proof} 

  To prove the completeness,
  we assume a condition to ensure that $\frakD(X)$ contains sufficiently many elements.

  \begin{dfn} [{Complete restriction system}] \label{dfn: complete RS}
    Let $x \in X$ be an element, $(a_{k})_{k \geq 1}$ be a sequence in $\frakC$,
    and $(r_{k})_{k \geq 1}$ be an increasing sequence of non-negative numbers with $r_{k} \uparrow \infty$.
    A sequence $(a_{k}, r_{k})_{k \geq 1}$ is said to be a \textit{compatible sequence rooted at $x$} if and only if 
    $a_{k} = a_{k'}|_x^{(r_{k})}$ for all $k \leq k'$.
    The restriction system $R$ is said to be \textit{complete} 
    if it satisfies the following.
    \begin{enumerate} [resume* = RS]
      \item \label{2. dfn item: existence of the inverse limit}
        For any $x \in X$ and any compatible sequence $(a_{k}, r_{k})_{k \geq 1}$ rooted at $x$ ,
        there exists $a \in \frakD$ such that $a_{k} = a|_x^{(r_{k})}$.
    \end{enumerate}
  \end{dfn}

  \begin{lem} \label{lem: existence of limit}
    Assume that $R$ is complete and satisfies Condition~2.
    Let $(a_n, x_n)_{n \geq 1}$ be a sequence in $\frakD(X) \times X$,
    $(\alpha_k)_{k \geq 1}$ be a sequence in $\frakC(X)$,
    and $(r_k)_{k \geq 1}$ be an increasing sequence of positive numbers with $r_{k} \uparrow \infty$.
    Assume that $x_n$ converges to some element $x \in X$ and 
    \begin{equation}
      d_X^\frakC(a_n|_{x_n}^{(r_{k})}, \alpha_{k}) \xrightarrow{n \to \infty} 0,
      \quad \forall k \geq 1.
    \end{equation}
    Then there exists an element $a \in \frakD(X)$
    such that $(a_n, x_n) \to (a, x)$ in $\frakD(X) \times X$.  
  \end{lem}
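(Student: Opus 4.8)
The plan is to first produce the candidate limit $a$ using completeness, and then verify convergence via the characterization in Theorem~\ref{thm: convergence in D times X}. For the first step, I would diagonalize: for each fixed $k$, the hypothesis gives $a_n|_{x_n}^{(r_k)} \to \alpha_k$ in $\frakC(X)$ as $n \to \infty$. Using \ref{dfn item: RS. 1} together with the fact that $d^\frakC_X$-convergence is preserved under the restriction maps (which follows from Lemma~\ref{2. lem: convergence of restricted objects}, since $R_x^{(r)} \circ R_x^{(s)} = R_x^{(s\wedge r)}$), one checks that the sequence $(\alpha_k, r_k)_{k \geq 1}$ is a compatible sequence rooted at $x$: indeed $\alpha_k = \lim_n a_n|_{x_n}^{(r_k)} = \lim_n (a_n|_{x_n}^{(r_{k'})})|_{x_n}^{(r_k)}$, and passing the restriction through the limit (using continuity of the restriction map along the converging sequence $a_n|_{x_n}^{(r_{k'})} \to \alpha_{k'}$) gives $\alpha_k = \alpha_{k'}|_x^{(r_k)}$ for $k \leq k'$. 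The subtlety here is that the restriction maps $R^{(s)}_{x_n}$ are indexed by the \emph{moving} basepoints $x_n$, not the fixed $x$, so I cannot directly apply continuity of a single map; instead I should invoke Lemma~\ref{2. lem: convergence of restricted objects} (or the equivalence in Theorem~\ref{thm: convergence in D times X} applied to the pairs $(a_n|_{x_n}^{(r_{k'})}, x_n)$) to transfer convergence at radius $r_{k'}$ down to convergence at radius $r_k$ for the moving basepoints, and then match up the limits.

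Once compatibility is established, \ref{2. dfn item: existence of the inverse limit} (completeness of $R$) yields an element $a \in \frakD(X)$ with $a|_x^{(r_k)} = \alpha_k$ for all $k$. It remains to show $(a_n, x_n) \to (a, x)$ in $\frakD(X) \times X$. We already have $x_n \to x$, and for each fixed $k$ we have $a_n|_{x_n}^{(r_k)} \to \alpha_k = a|_x^{(r_k)}$ in $\frakC(X)$. Since $(r_k)_{k \geq 1}$ is increasing with $r_k \uparrow \infty$, this is precisely condition \ref{2. thm item: convergence w.r.t. d_C for unbounded rs} of Theorem~\ref{thm: convergence in D times X}, and that theorem gives $(a_n, x_n) \to (a, x)$ with respect to $d^\frakD_X$. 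This completes the proof.

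The main obstacle I anticipate is the compatibility verification in the first step: one must be careful that the restriction operations are taken with respect to the correct (moving) basepoints $x_n$ before passing to the limit, and that $d^\frakC_X$ is only an extended metric, so finiteness of distances along the way should be monitored (though this causes no real trouble, since each $a_n|_{x_n}^{(r_k)} \in \frakC(X)$ and we only ever compare restrictions at a common radius). Everything else is a direct appeal to the already-established Theorem~\ref{thm: convergence in D times X}, Lemma~\ref{2. lem: convergence of restricted objects}, and the axioms \ref{dfn item: RS. 1}--\ref{2. dfn item: existence of the inverse limit}.
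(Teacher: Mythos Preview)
Your argument has a genuine gap at the compatibility step. You claim that $\alpha_k = \alpha_{k'}|_x^{(r_k)}$ by ``passing the restriction through the limit'', invoking Lemma~\ref{2. lem: convergence of restricted objects}. But that lemma (and the underlying Assumption~\ref{assum: metrization of D}\ref{assum item: 2. metrization of D}) only yields $(a_n|_{x_n}^{(r_{k'})})|_{x_n}^{(s)} \to \alpha_{k'}|_x^{(s)}$ for \emph{all but countably many} $s \in (0, r_{k'}]$, not for the specific value $s = r_k$. The restriction maps are not continuous in general, so $(\alpha_k, r_k)$ need not be compatible. A concrete counterexample in the setting of Section~\ref{sec: the Fell topology}: take $X = \RN$, $\frakC(X) = \Compact{X}$, $\frakD(X) = \Closed{X}$, $x_n = x = 0$, $r_k = k$, and $a_n = \{1 + 1/n\}$. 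Then $\alpha_1 = \lim_n a_n|_0^{(1)} = \emptyset$ while $\alpha_2 = \lim_n a_n|_0^{(2)} = \{1\}$, so $\alpha_2|_0^{(r_1)} = \{1\} \neq \emptyset = \alpha_1$.

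The paper's remedy is precisely to avoid the exceptional radii: since the bad set of $r$ is countable (after taking the countable union over $k$), one can choose $s_l \in (r_{l-1}, r_l)$ at which the convergence from Lemma~\ref{2. lem: convergence of restricted objects} holds simultaneously for every $k$. Substituting appropriate $(k, l)$ pairs then gives $\alpha_k|_x^{(s_k)} = \alpha_{k'}|_x^{(s_k)}$ for $k \leq k'$, so it is $(\alpha_k|_x^{(s_k)}, s_k)_{k \geq 1}$, not $(\alpha_k, r_k)_{k \geq 1}$, that is compatible and to which \ref{2. dfn item: existence of the inverse limit} is applied. The resulting $a$ satisfies $a|_x^{(s_k)} = \alpha_k|_x^{(s_k)}$, and your final convergence step via Theorem~\ref{thm: convergence in D times X}\ref{2. thm item: convergence w.r.t. d_C for unbounded rs} then goes through along the sequence $(s_k)$ rather than $(r_k)$.
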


  \begin{proof}
    If necessary,
    by choosing a subsequence,
    we may assume that $(r_{k})_{k \geq 1}$ is strictly increasing.
    By Lemma~\ref{2. lem: convergence of restricted objects},
    for all but countably many $r>0$,
    it holds that 
    \begin{equation}  \label{2. eq: convergence of restrictions for lemma of completeness}
      d_X^\frakC(a_n|_{x_n}^{(r_{k} \wedge r)}, \alpha_k|_x^{(r)}) \xrightarrow{n \to \infty} 0, 
      \quad
      \forall  k \geq 1.
    \end{equation}
    Choose $s_{l} \in (r_{l-1}, r_{l})$ so that \eqref{2. eq: convergence of restrictions for lemma of completeness} 
    holds with $r=s_{l}$, i.e.,
    \begin{equation}  \label{2. eq: choice of s_l for lemma of completeness}
      d_X^\frakC(a_n|_{x_n}^{(r_k \wedge s_{l})}, \alpha_k|_x^{(s_{l})}) \xrightarrow{n \to \infty} 0, 
      \quad
      \forall  k,\,l \geq 1.
    \end{equation}
    For $k' \geq k$,
    by substituting $(k, l)=(k, k)$ and $(k,l) = (k', k)$ in \eqref{2. eq: choice of s_l for lemma of completeness},
    we obtain that 
    \begin{equation}  \label{2. eq: convergence of a_n^s_k for lemma of completeness}
      d_X^\frakC(a_n|_{x_n}^{(s_{k})}, \alpha_k|_x^{(s_{k})}) \xrightarrow{n \to \infty} 0, \quad 
      d_X^\frakC(a_n|_{x_n}^{(s_{k})}, \alpha_{k'}|_x^{(s_{k})}) \xrightarrow{n \to \infty} 0,
    \end{equation}
    which implies that $\alpha_k|_x^{(s_{k})} = \alpha_{k'}|_x^{(s_{k})}$ if $k' \geq k$. 
    Therefore, 
    $(\alpha_k|_x^{(s_{k})}, s_{k})_{k \geq 1}$ is a compatible sequence.
    Since the restriction system is complete,
    we can find $a \in \frakD(X)$ such that $a|_x^{(s_{k})} = \alpha_k|_x^{(s_{k})}$. 
    By \eqref{2. eq: convergence of a_n^s_k for lemma of completeness},
    it holds that $d_X^\frakC(a_n|_{x_n}^{(s_{k})}, a|_x^{(s_{k})}) \to 0$
    as $n \to \infty$ for each $k \geq 1$.
    From Theorem~\ref{thm: convergence in D times X},
    it follows that $(a_n, x_n) \to (a, x)$.
  \end{proof}

  \begin{thm} [{Completeness}] \label{2. thm: completeness of d_D}
    Assume that the restriction system $R$ is complete and satisfies Condition~4, and the metric $d_X$ is complete.
    Then the metric $d^\frakD_X$ is complete.
  \end{thm}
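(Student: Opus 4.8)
The plan is to take a Cauchy sequence $(a_n, x_n)_{n \ge 1}$ in $(\frakD(X) \times X, d^\frakD_X)$ and produce a limit, ultimately by invoking Lemma~\ref{lem: existence of limit}. Since $d^\frakD_X\bigl((a,x),(b,y)\bigr) \ge d_X(x,y)$, the sequence $(x_n)_n$ is $d_X$-Cauchy, hence converges to some $x \in X$ by completeness of $d_X$; it then remains to find $a \in \frakD(X)$ with $(a_n, x_n) \to (a, x)$. As a Cauchy sequence converges once some subsequence does, I may pass to subsequences freely, and I will first thin out to a subsequence, still denoted $(a_n, x_n)$, with $d^\frakD_X\bigl((a_n, x_n),(a_{n+1}, x_{n+1})\bigr) < 8^{-n}$ for every $n$ (note $x_n \to x$ is preserved).

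Write $h_n(r) \coloneqq 1 \wedge d^\frakC_X\bigl(a_n|_{x_n}^{(r)}, a_{n+1}|_{x_{n+1}}^{(r)}\bigr)$. The triangle-inequality computation in the proof of Proposition~\ref{2. prop: d_D is a metric}, together with Assumption~\ref{assum: metrization of D}\ref{assum item: 1. metrization of D}, shows each $h_n$ is measurable, and by the definition of $d^\frakD_X$ we have $\int_0^\infty e^{-r} h_n(r)\,dr < 8^{-n}$. From this I extract two facts. First, for each $n$ there is a radius $r_n \in (n-1, n)$ with $h_n(r_n) < 2^{-n}$: were $h_n \ge 2^{-n}$ throughout $(n-1,n)$, we would get $8^{-n} > \int_{n-1}^{n} e^{-r} h_n(r)\,dr \ge 2^{-n} e^{-n}(e-1)$, i.e.\ $e - 1 < (e/4)^n < 1$, which is false; and since $2^{-n} < 1$, the inequality $h_n(r_n) < 2^{-n}$ forces $d^\frakC_X\bigl(a_n|_{x_n}^{(r_n)}, a_{n+1}|_{x_{n+1}}^{(r_n)}\bigr) < 2^{-n}$. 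The sequence $(r_n)_n$ is strictly increasing with $r_n \uparrow \infty$. Second, $\sum_n \int_0^\infty e^{-r} h_n(r)\,dr < \infty$, so by Tonelli $\sum_n h_n(r) < \infty$ for Lebesgue-almost every $r > 0$; denote by $G$ this co-null set of radii.

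Now I combine the two facts. By Assumption~\ref{assum: metrization of D}\ref{assum item: 4. metrization of D} --- available because $R$ satisfies Condition~4 --- applied to the sequence $(a_n, x_n)$ and the radii $(r_n)_n$, the family $\{a_n|_{x_n}^{(r)}\}_n$ is precompact in $\frakC(X)$ for every $r > 0$. On the other hand, fix $r \in G$; then $h_n(r) \to 0$, so $h_n(r) = d^\frakC_X\bigl(a_n|_{x_n}^{(r)}, a_{n+1}|_{x_{n+1}}^{(r)}\bigr)$ for all large $n$, and telescoping with the convergent tail $\sum_n h_n(r) < \infty$ shows $(a_n|_{x_n}^{(r)})_n$ is $d^\frakC_X$-Cauchy; being also precompact, it converges to some $\alpha^{(r)} \in \frakC(X)$. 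Choosing $\rho_1 < \rho_2 < \cdots$ in $G$ with $\rho_k \uparrow \infty$ and setting $\alpha_k \coloneqq \alpha^{(\rho_k)}$, we have $d^\frakC_X\bigl(a_n|_{x_n}^{(\rho_k)}, \alpha_k\bigr) \to 0$ as $n \to \infty$ for each $k$, so Lemma~\ref{lem: existence of limit} (applicable since $R$ is complete and satisfies Condition~2) yields $a \in \frakD(X)$ with $(a_n, x_n) \to (a, x)$ in $\frakD(X) \times X$. By the subsequence remark, the original Cauchy sequence also converges to $(a,x)$, which proves completeness.

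The delicate point is the simultaneous bookkeeping in the third paragraph: the subsequence and the radii $r_n$ must be chosen so that the hypothesis of Assumption~\ref{assum: metrization of D}\ref{assum item: 4. metrization of D} is met (yielding precompactness of $\{a_n|_{x_n}^{(r)}\}_n$ for \emph{all} $r > 0$) while, at the same time, the integrals $\int e^{-r} h_n(r)\,dr$ remain summable (yielding $d^\frakC_X$-Cauchyness of $(a_n|_{x_n}^{(r)})_n$ for \emph{almost every} $r$). Precompactness alone only supplies convergent subsequences, and Cauchyness alone need not converge in the possibly incomplete space $\frakC(X)$; it is the overlap of the two conditions --- which holds only for a.e.\ $r$ --- that manufactures the elements $\alpha_k$ needed to feed Lemma~\ref{lem: existence of limit}.
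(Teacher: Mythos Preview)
Your proof is correct and follows the same architecture as the paper's: thin the Cauchy sequence, locate radii $r_n \uparrow \infty$ along which consecutive restrictions are $2^{-n}$-close, invoke Assumption~\ref{assum: metrization of D}\ref{assum item: 4. metrization of D} for precompactness, produce limits $\alpha_k$ along a sequence of radii, and feed these into Lemma~\ref{lem: existence of limit}. The one noteworthy difference is in how you manufacture the $\alpha_k$: the paper simply runs a diagonal argument on the precompact families $\{a_n|_{x_n}^{(r)}\}_n$ to extract a further subsequence converging at countably many radii, whereas you additionally exploit summability of $\int e^{-r}h_n(r)\,dr$ to show the thinned sequence is already $d^\frakC_X$-Cauchy at almost every radius, so that precompactness upgrades Cauchyness to convergence without a second diagonalization --- a small but clean refinement.
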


  \begin{proof}
    Fix a Cauchy sequence $(a_n, x_n)_{n \geq 1}$ in $\frakD(X) \times X$.
    The completeness of $d_X$ implies that the sequence $(x_n)_{n \geq 1}$ converges to an element $x \in X$.
    If necessary,
    by choosing a subsequence,
    we may assume that $d^\frakD_X\bigl((a_n, x_n), (a_n, x_{n+1})\bigr) < 2^{-n} e^{-2^{n}}$.
    By the definition of $d^\frakD_X$,
    for some $r_n >2^{n}$,
    we have $d_X^\frakC( a_n|_{x_n}^{(r_n)}, a_{n+1}|_{x_{n+1}}^{(r_n)} ) < 2^{-n}$.
    Assumption~\ref{assum: metrization of D}\ref{assum item: 4. metrization of D} then yields that 
    $(a_n|_{x_n}^{(r)})_{n \geq 1}$ is compact in $\frakC(X)$ for all but countably many $r > 0$.
    Thus, by a diagonal argument, 
    we can find a subsequence $(n_l)_{l \geq 1}$, an increasing sequence of positive numbers with $r_{k} \uparrow \infty$,
    and a sequence $(\alpha_k)_{k \geq 1}$ in $\frakC(X)$ such that 
    \begin{equation}
      d^{\frakC}_X(a_{n_l}|_{x_{n_l}}^{(r_k)}, \alpha_k) \xrightarrow{l \to \infty} 0,
      \quad 
      \forall k \geq 1.
    \end{equation}
    From Lemma~\ref{lem: existence of limit},
    we deduce that $(a_{n_l}, x_{n_l})_{l \geq 1}$ converges in $\frakD(X) \times X$.
    This completes the proof.
  \end{proof}

  Now, we define metrics on $\frakD(X)$. 
  For each $\rho \in X$, we define the distance between $a, b \in \frakD(X)$ by setting 
  \begin{equation} \label{eq: metric on D}
    d^\frakD_{X, \rho}(a, b) 
    \coloneqq 
    d^\frakD_X((a, \rho), (b, \rho))
    =
    \int_{0}^{\infty} e^{-r} \bigl( 1 \wedge d^\frakC_X(a|_\rho^{(r)}, b|_\rho^{(r)}) \bigr)\, dr.
  \end{equation}
  By Proposition \ref{2. prop: d_D is a metric},
  $d^\frakD_{X, \rho}$ is a metric on $\frakD(X)$.
  Below, we verify that the metric induces natural convergence.
  Condition \ref{dfn item: RS. 4} ensures that the induced topology is independent of $\rho$.

  \begin{thm} \label{thm: convergence in D}
    Fix $\rho \in X$.
    Let $a, a_1, a_2, \cdots$ be elements of $\frakD(X)$. 
    The following statements are equivalent.
    \begin{enumerate} [label = \textup{(\roman*)}, leftmargin = *]
      \item \label{thm item: 1, convergence in D}
        It holds that $a_n \to a$ with respect to $d^\frakD_{X, \rho}$.
      \item \label{thm item: 2, convergence in D}
        There exists a sequence $(x_n)_{n \geq 1}$ in $X$ converging to an element $x \in X$
        such that $a_n|_{x_n}^{(r)} \to a|_x^{(r)}$ in $\frakC(X)$ for all but countably many $r > 0$.
      \item \label{thm item: 3, convergence in D}
        There exist a sequence $(x_n)_{n \geq 1}$ in $X$ converging to an element $x \in X$
        and a increasing sequence $(r_k)_{k \geq 1}$ of positive numbers with $r_k \uparrow \infty$
        such that $a_n|_{x_n}^{(r_k)} \to a|_x^{(r_k)}$ as $n \to \infty$ in $\frakC(X)$ for all $k$.
      \item \label{thm item: 4, convergence in D}
        For any elements $x_n \in X$ converging to an element $x \in X$,
        it holds that $a_n|_{x_n}^{(r)} \to a|_x^{(r)}$ in $\frakC(X)$ for all but countably many $r > 0$.
    \end{enumerate}
    In particular, the topology on $\frakD(X)$ induced by $d^\frakD_{X, \rho}$ is independent of $\rho$.
  \end{thm}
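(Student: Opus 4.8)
The plan is to prove the four equivalences by closing the cycle \ref{thm item: 1, convergence in D} $\Rightarrow$ \ref{thm item: 2, convergence in D} $\Rightarrow$ \ref{thm item: 3, convergence in D} $\Rightarrow$ \ref{thm item: 4, convergence in D} $\Rightarrow$ \ref{thm item: 1, convergence in D}, working throughout under Condition~2 so that Theorem~\ref{thm: convergence in D times X} and Assumption~\ref{assum: metrization of D}\ref{assum item: 2. metrization of D} are available. Once the cycle is closed, the last assertion is automatic: characterization \ref{thm item: 4, convergence in D} makes no reference to $\rho$, so for any two roots $\rho, \rho' \in X$ the relation $a_n \to a$ with respect to $d^\frakD_{X,\rho}$ holds if and only if \ref{thm item: 4, convergence in D} holds if and only if $a_n \to a$ with respect to $d^\frakD_{X,\rho'}$, and a metric topology is determined by its convergent sequences. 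The starting point is the identity $d^\frakD_{X,\rho}(a_n,a) = d^\frakD_X\bigl((a_n,\rho),(a,\rho)\bigr)$: applying Theorem~\ref{thm: convergence in D times X} to the constant base-point sequence $x_n \equiv \rho$ shows that \ref{thm item: 1, convergence in D} is equivalent to ``$a_n|_\rho^{(r)} \to a|_\rho^{(r)}$ in $\frakC(X)$ for all but countably many $r > 0$''. This instantly gives \ref{thm item: 1, convergence in D} $\Rightarrow$ \ref{thm item: 2, convergence in D} and \ref{thm item: 4, convergence in D} $\Rightarrow$ \ref{thm item: 1, convergence in D} (in both take $x_n \equiv \rho$), and \ref{thm item: 2, convergence in D} $\Rightarrow$ \ref{thm item: 3, convergence in D} is immediate by choosing the radii $r_k$ inside the cocountable set of ``good'' radii. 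So the whole content is the implication \ref{thm item: 3, convergence in D} $\Rightarrow$ \ref{thm item: 4, convergence in D}.

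For this, I would first restate it in terms of $d^\frakD_X$. Given the data of \ref{thm item: 3, convergence in D}, Theorem~\ref{thm: convergence in D times X} yields $(a_n,x_n) \to (a,x)$ with respect to $d^\frakD_X$; conversely, whenever $(a_n,y_n) \to (a,y)$ with respect to $d^\frakD_X$ for a sequence $y_n \to y$, the same theorem returns precisely the conclusion of \ref{thm item: 4, convergence in D}. Hence it suffices to prove the base-point stability claim: \emph{if $x_n \to x$ and $y_n \to y$ in $X$ and $(a_n,x_n) \to (a,x)$ with respect to $d^\frakD_X$, then $(a_n,y_n) \to (a,y)$ with respect to $d^\frakD_X$.} To establish it, use Theorem~\ref{thm: convergence in D times X} to pick an increasing sequence $r_k \uparrow \infty$ with $a_n|_{x_n}^{(r_k)} \to a|_x^{(r_k)}$ in $\frakC(X)$ for each $k$, and set $M \coloneqq \sup_n d_X(x_n,y_n)$, which is finite since both sequences converge. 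For each $k$, Condition~\ref{dfn item: RS. 4} gives, for every $n$ and every $s \in (0, r_k - M]$, the identities $a_n|_{y_n}^{(s)} = R_{y_n}^{(s)}\bigl(a_n|_{x_n}^{(r_k)}\bigr)$ and $a|_y^{(s)} = R_y^{(s)}\bigl(a|_x^{(r_k)}\bigr)$ (here one also uses $d_X(x,y) = \lim_n d_X(x_n,y_n) \le M$). Thus, applying the key claim below to the convergent sequence $c_n \coloneqq a_n|_{x_n}^{(r_k)} \to c \coloneqq a|_x^{(r_k)}$ in $\frakC(X)$, one obtains $a_n|_{y_n}^{(s)} \to a|_y^{(s)}$ for $s$ ranging over a cocountable subset of $(0, r_k - M]$; a diagonal choice of radii $\rho_k$ with $\rho_k$ in this subset and $r_{k-1} - M < \rho_k \le r_k - M$ then produces an increasing sequence $\rho_k \uparrow \infty$ along which $a_n|_{y_n}^{(\rho_k)} \to a|_y^{(\rho_k)}$, whence $(a_n,y_n) \to (a,y)$ by Theorem~\ref{thm: convergence in D times X}.

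The technical heart — and the step I expect to be the main obstacle — is the following claim: \emph{if $c_n \to c$ in $\frakC(X)$ and $y_n \to y$ in $X$, then $c_n|_{y_n}^{(r)} \to c|_y^{(r)}$ in $\frakC(X)$ for all but countably many $r > 0$.} The idea is to reduce this to convergence along a single increasing sequence of radii and then apply Assumption~\ref{assum: metrization of D}\ref{assum item: 2. metrization of D}. First, \ref{dfn item: RS. 1} gives the elementary monotonicity fact that $a|_z^{(R)} = a$ forces $a|_z^{(r)} = a$ for all $r \ge R$. Next, using \ref{dfn item: RS. 3}, choose $R_0$ with $c|_y^{(R_0)} = c$ and, for each $n$, some $R_n$ with $c_n|_{y_n}^{(R_n)} = c_n$, and set $\rho_n \coloneqq n \vee R_0 \vee R_1 \vee \cdots \vee R_n$, a nondecreasing sequence with $\rho_n \uparrow \infty$. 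By monotonicity $c|_y^{(\rho_n)} = c$ and $c_n|_{y_n}^{(\rho_n)} = c_n$, so $d^\frakC_X\bigl(c_n|_{y_n}^{(\rho_n)}, c|_y^{(\rho_n)}\bigr) = d^\frakC_X(c_n,c) \to 0$; Assumption~\ref{assum: metrization of D}\ref{assum item: 2. metrization of D}, applied with base-point sequence $y_n \to y$ and radii $\rho_n \uparrow \infty$, then upgrades this to $d^\frakC_X\bigl(c_n|_{y_n}^{(r)}, c|_y^{(r)}\bigr) \to 0$ for all but countably many $r$. This claim is exactly the statement that the mode of convergence carried by $d^\frakD_X$ is insensitive to the choice of base points — equivalently, that the inclusion $\frakC(X) \times X \hookrightarrow \frakD(X) \times X$ is continuous and that restriction at a moving base point preserves convergence — and, as seen above, it combines \ref{dfn item: RS. 1}, \ref{dfn item: RS. 3}, \ref{dfn item: RS. 4} and Assumption~\ref{assum: metrization of D}\ref{assum item: 2. metrization of D}. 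With it in hand the cycle of implications closes, and the $\rho$-independence of the topology follows immediately as explained in the first paragraph.
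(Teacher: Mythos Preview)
Your proof is correct and takes essentially the same route as the paper's: the easy implications use Theorem~\ref{thm: convergence in D times X} with the constant base point $x_n \equiv \rho$, and the substantive step \ref{thm item: 3, convergence in D} $\Rightarrow$ \ref{thm item: 4, convergence in D} couples \ref{dfn item: RS. 4} with Assumption~\ref{assum: metrization of D}\ref{assum item: 2. metrization of D}. Your explicit ``key claim'' (proved via \ref{dfn item: RS. 1}, \ref{dfn item: RS. 3} and \ref{assum item: 2. metrization of D}) is exactly what the paper's terse citation of Lemma~\ref{2. lem: convergence of restricted objects} is invoking --- that lemma as literally stated keeps the same base points throughout, so your argument is in fact a useful clarification --- and where the paper intersects the countably many cocountable ``good'' sets over $k$ to obtain \ref{thm item: 4, convergence in D} directly, you instead extract an increasing sequence $\rho_k$ and re-apply Theorem~\ref{thm: convergence in D times X}; both endings work.
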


  \begin{proof}
    The implication \ref{thm item: 1, convergence in D} $\Rightarrow$ \ref{thm item: 2, convergence in D} follows 
    from Theorem~\ref{thm: convergence in D times X}.
    The implication \ref{thm item: 2, convergence in D} $\Rightarrow$ \ref{thm item: 3, convergence in D} is obvious.
    The implication \ref{thm item: 4, convergence in D} $\Rightarrow$ \ref{thm item: 1, convergence in D} is an immediate consequence of the dominated convergence theorem.
    Assume that \ref{thm item: 3, convergence in D} holds.
    Fix a sequence $(y_n)_{n \geq 1}$ in $X$ converging to an element $y \in X$.
    By Lemma~\ref{2. lem: convergence of restricted objects},
    for all but countably many $r > 0$,
    \begin{equation}
      (a_n|_{x_n}^{(r_k)})|_{y_n}^{(r)} \xrightarrow{n \to \infty} (a|_x^{(r_k)})|_{y}^{(r)},
      \quad 
      \forall k \geq 1.
    \end{equation}
    Since $x_n \to x$ and $y_n \to y$,
    the constant $r_0 \coloneqq \sup_{n \geq 1} d_X(x_n, y_n)$ is finite.
    By \ref{dfn item: RS. 4},
    for each $r > 0$,
    it holds that 
    \begin{equation}
      (a_n|_{x_n}^{(r_k)})|_{y_n}^{(r)} = a_n|_{y_n}^{(r)}, \quad \forall n \geq 1,
      \quad \text{and}\quad
      (a|_x^{(r_k)})|_x^{(r)} = a|_y^{(r)}
    \end{equation}
    for all sufficiently large $k$ satisfying $r_k \geq r_0 + r$.
    Therefore, we deduce that
    $a_n|_{y_n}^{(r)} \to a|_y^{(r)}$ for all but countably many $r>0$,
    which shows \ref{thm item: 4, convergence in D}.
    This completes the proof.
  \end{proof}

  Henceforth, we equip $\frakD(X)$ with the topology induced by $d^\frakD_{X, \rho}$,
  which is independent of the choice of $\rho$ by Theorem~\ref{thm: convergence in D}.
  The following is an immediate consequence of the theorem and Theorem~\ref{thm: convergence in D times X}.

  \begin{cor} \label{cor: topology on D} \leavevmode
    \begin{enumerate} [label = \textup{(\roman*)}, leftmargin = *]
      \item \label{cor item: 1. topology on D}
        The topology on $\frakD(X) \times X$ induced by $d^\frakD_X$ coincides with its product topology.
      \item \label{cor item: 2. topology on D}
        The relative topology on $\frakC(X)$ induced by $\frakD(X)$ is 
        coarser than the topology on $\frakC(X)$.
      \item \label{cor item: 3. topology on D}
        The topology on $\frakD(X)$ only depends on the restriction system and the topology on $\frakC(X)$.
        In particular, it is independent of the metric $d^\frakC_X$.
    \end{enumerate}
  \end{cor}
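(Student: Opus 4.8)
The plan is to derive all three items directly from the two convergence characterizations already established, namely Theorem~\ref{thm: convergence in D times X} and Theorem~\ref{thm: convergence in D}, together with the continuity-at-a.e.-radius condition from Assumption~\ref{assum: metrization of D}\ref{assum item: 1. metrization of D}. Since all the metric spaces involved are (pseudo-)metric, it suffices to compare convergent sequences.

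For item~\ref{cor item: 1. topology on D}, I would show that a sequence $(a_n, x_n)$ converges to $(a_\infty, x_\infty)$ with respect to $d_X^\frakD$ if and only if it converges in the product topology of $\frakD(X) \times X$, where $\frakD(X)$ carries the topology induced by $d^\frakD_{X, \rho}$. The forward direction: if $(a_n, x_n) \to (a_\infty, x_\infty)$ with respect to $d_X^\frakD$, then by definition $x_n \to x_\infty$ in $X$, and by Theorem~\ref{thm: convergence in D times X}\ref{2. thm item: convergence w.r.t d_C with increasing radii} there is a sequence $r_n \uparrow \infty$ with $d^\frakC_X(a_n|_{x_n}^{(r_n)}, a_\infty|_{x_\infty}^{(r_n)}) \to 0$; combined with $x_n \to x_\infty$, Theorem~\ref{thm: convergence in D}\ref{thm item: 3, convergence in D} gives $a_n \to a_\infty$ in $\frakD(X)$. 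Conversely, if $x_n \to x_\infty$ and $a_n \to a_\infty$ in $\frakD(X)$, then applying Theorem~\ref{thm: convergence in D}\ref{thm item: 4, convergence in D} with the specific sequence $x_n$ yields $a_n|_{x_n}^{(r)} \to a_\infty|_{x_\infty}^{(r)}$ for all but countably many $r$, which is exactly condition~\ref{2. thm item: convergence w.r.t. d_C for a.e r} of Theorem~\ref{thm: convergence in D times X}, hence $(a_n, x_n) \to (a_\infty, x_\infty)$ with respect to $d_X^\frakD$.

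For item~\ref{cor item: 2. topology on D}, I would fix $\rho \in X$ and show that if $a_n \to a_\infty$ in $\frakC(X)$ (its own topology), then $a_n \to a_\infty$ in $\frakD(X)$. Taking $x_n = \rho$ for all $n$, condition~\ref{dfn item: RS. 1} shows $r \mapsto a_\infty|_\rho^{(r)}$ stabilizes at $a_\infty$ for large $r$ by~\ref{dfn item: RS. 3}, and more importantly, since $a_n \to a_\infty$ in $\frakC(X)$ and the restriction map $R_\rho^{(r)}$ is continuous on $\frakC(X)$ at all but countably many $r$ (this is precisely Assumption~\ref{assum: metrization of D}\ref{assum item: 1. metrization of D} applied with the $a_n$'s, noting each $a_n \in \frakC(X)$ so $a_n|_\rho^{(r)} = a_n$ for large $r$), we get $a_n|_\rho^{(r)} \to a_\infty|_\rho^{(r)}$ in $\frakC(X)$ for a.e.\ $r$; then Theorem~\ref{thm: convergence in D}\ref{thm item: 2, convergence in D} gives $a_n \to a_\infty$ in $\frakD(X)$. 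This shows the identity map $\frakC(X) \to \frakD(X)$ is continuous, i.e., the relative topology is coarser. I expect this to be the trickiest item: the subtlety is that $R_x^{(r)}$ restricted to $\frakC(X)$ need not literally be continuous as stated, so I will have to argue carefully that on compact subsets the only way $a_n|_\rho^{(r)}$ can fail to converge is through the exceptional countable set of radii, using~\ref{dfn item: RS. 3} to handle the "tail" where restriction is the identity.

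For item~\ref{cor item: 3. topology on D}, I observe that Theorem~\ref{thm: convergence in D} characterizes convergence in $\frakD(X)$ purely in terms of the restriction maps $R_x^{(r)}$ and convergence in the topology of $\frakC(X)$ (via "$a_n|_{x_n}^{(r)} \to a|_x^{(r)}$ in $\frakC(X)$"), with no reference to the numerical values of $d^\frakC_X$; likewise $X$'s own topology only enters through $x_n \to x$. Since the topology of a metric space is determined by its convergent sequences, and since $d^\frakD_X$ and $d^\frakD_{X,\rho}$ are genuine metrics (Proposition~\ref{2. prop: d_D is a metric}), any two choices of $d^\frakC_X$ inducing the same topology on $\frakC(X)$ yield the same convergent sequences in $\frakD(X)$, hence the same topology. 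This is essentially immediate from the preceding theorems and requires only the remark that first-countability lets us pass between sequential convergence and the topology.
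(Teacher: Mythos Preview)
Your approach for items~\ref{cor item: 1. topology on D} and~\ref{cor item: 3. topology on D} is correct and matches the paper's, which simply records the corollary as immediate from Theorems~\ref{thm: convergence in D times X} and~\ref{thm: convergence in D}.

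For item~\ref{cor item: 2. topology on D}, your instinct to use~\ref{dfn item: RS. 3} is right, but the middle step is off: Assumption~\ref{assum: metrization of D}\ref{assum item: 1. metrization of D} gives continuity of $r \mapsto a|_\rho^{(r)}$ for \emph{fixed} $a$, not continuity of $a \mapsto a|_\rho^{(r)}$ on $\frakC(X)$, so the sentence ``the restriction map $R_\rho^{(r)}$ is continuous on $\frakC(X)$ at all but countably many $r$'' is not justified as written. The clean argument bypasses this entirely. Given $a_n \to a_\infty$ in $\frakC(X)$, use~\ref{dfn item: RS. 3} (together with~\ref{dfn item: RS. 1}) to choose for each $n$ a radius $r_n > n$ so large that $a_n|_\rho^{(r_n)} = a_n$ and $a_\infty|_\rho^{(r_n)} = a_\infty$; then $d^\frakC_X(a_n|_\rho^{(r_n)}, a_\infty|_\rho^{(r_n)}) = d^\frakC_X(a_n, a_\infty) \to 0$ with $r_n \uparrow \infty$, which is exactly condition~\ref{2. thm item: convergence w.r.t d_C with increasing radii} of Theorem~\ref{thm: convergence in D times X} with $x_n = \rho$. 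Since $d^\frakD_{X,\rho}(a_n, a_\infty) = d^\frakD_X((a_n,\rho),(a_\infty,\rho))$ by definition, this gives $a_n \to a_\infty$ in $\frakD(X)$. No continuity of the restriction map in the $a$-variable is needed.
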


  \begin{rem}
    In general,
    the relative topology on $\frakC(X)$ induced by $\frakD(X)$ 
    does not coincide with the topology on $\frakC(X)$.
    (See Remark \ref{2. rem: the local Hausdorff top is strictly coarser}.)
  \end{rem}

  We provide summaries of properties of the metrics $d^\frakD_{X, \rho}$ and $d^\frakD_X$.

  \begin{cor} \label{cor: a summary of D}
    Assume that the restriction system $R$ satisfies Condition~2.
    Then, for each $\rho \in X$, 
    the function $d^\frakD_{X, \rho}$ given in \eqref{eq: metric on D} is a well-defined metric on $\frakD(X)$,
    and the topology induced by $d^\frakD_{X, \rho}$ is independent of $\rho$.
    If both $X$ and $\frakC(X)$ are separable, 
    then so is $\frakD(X)$.
    Moreover, if $d_X$ is complete and $R$ is complete and satisfies Condition~4,
    then $d^\frakD_{X, \rho}$ is complete for each $\rho \in X$.
  \end{cor}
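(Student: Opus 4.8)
The plan is to assemble this corollary directly from the four theorems established earlier in the section, so the work is essentially bookkeeping about passing from the product $\frakD(X) \times X$ to the slice $\frakD(X) \times \{\rho\} \cong \frakD(X)$. First I would note that Condition~2 includes Assumption~\ref{assum: metrization of D}\ref{assum item: 1. metrization of D}, i.e.\ Condition~1, so Proposition~\ref{2. prop: d_D is a metric} applies and shows $d^\frakD_X$ is a well-defined metric on $\frakD(X) \times X$. Since by definition $d^\frakD_{X,\rho}(a,b) = d^\frakD_X\bigl((a,\rho),(b,\rho)\bigr)$, the restriction of $d^\frakD_X$ to the subset $\frakD(X) \times \{\rho\}$ is exactly $d^\frakD_{X,\rho}$, which is therefore a well-defined metric on $\frakD(X)$; and the independence of the induced topology from the choice of $\rho$ is precisely the last assertion of Theorem~\ref{thm: convergence in D}.

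Next I would treat separability. Under Condition~2, if $X$ and $\frakC(X)$ are both separable, then Theorem~\ref{2. thm: separability of d_D} gives that $(\frakD(X) \times X, d^\frakD_X)$ is separable. A separable metric space is second countable, and second countability passes to subspaces, so the subspace $\frakD(X) \times \{\rho\}$ is second countable, hence separable; transporting along the isometry $a \mapsto (a,\rho)$ shows $(\frakD(X), d^\frakD_{X,\rho})$ is separable. Alternatively, one can reuse the explicit dense set constructed in the proof of Theorem~\ref{2. thm: separability of d_D} intersected with the slice.

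For completeness, assume in addition that $d_X$ is complete and that $R$ is complete and satisfies Condition~4. Then Theorem~\ref{2. thm: completeness of d_D} says $d^\frakD_X$ is complete on $\frakD(X) \times X$. The point is that $\frakD(X) \times \{\rho\}$ is a \emph{closed} subset of $\frakD(X) \times X$: from \eqref{eq: metric on D times X} we have $d^\frakD_X\bigl((a,x),(b,y)\bigr) \geq d_X(x,y)$, so the projection $\frakD(X) \times X \to X$ is $1$-Lipschitz, hence continuous, and $\{\rho\}$ is closed in $X$. A closed subset of a complete metric space is complete in the restricted metric, so $(\frakD(X), d^\frakD_{X,\rho})$ is complete. (Concretely: a $d^\frakD_{X,\rho}$-Cauchy sequence $(a_n)$ gives a $d^\frakD_X$-Cauchy sequence $(a_n,\rho)$, which converges to some $(a,x)$; the inequality above forces $x = \rho$, so $a_n \to a$ in $d^\frakD_{X,\rho}$.) There is no substantive obstacle here — the only thing one must be careful about is exactly this passage from the product to the fixed-root slice, i.e.\ checking that the slice is closed (for completeness) and that separability descends to it; everything else is a direct citation.
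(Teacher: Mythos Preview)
Your proof is correct and follows essentially the same route as the paper: the paper's proof simply cites Theorem~\ref{2. thm: separability of d_D} together with Corollary~\ref{cor: topology on D} for separability, Theorem~\ref{2. thm: completeness of d_D} for completeness, and notes that the remaining assertions are already established. Your version just makes explicit the passage from $\frakD(X)\times X$ to the slice $\frakD(X)\times\{\rho\}$ (via the closed-subspace and second-countability arguments), which the paper leaves implicit.
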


  \begin{proof}
    The separability of $d^\frakD_{X, \rho}$ follows from Theorem~\ref{2. thm: separability of d_D} and Corollary~\ref{cor: topology on D},
    and the completeness of $d^\frakD_{X, \rho}$ follows from Theorem~\ref{2. thm: completeness of d_D}.
    The other assertions are already proven.
  \end{proof}

  \begin{cor} \label{cor: a summary of D times X}
    Assume that the restriction system $R$ satisfies Condition~2.
    Then the function $d^\frakD_X$ given in \eqref{eq: metric on D times X} is a well-defined metric on $\frakD(X) \times X$,
    and the induced topology coincides with the product topology.
    If both $X$ and $\frakC(X)$ are separable, 
    then so is $\frakD(X) \times X$.
    Moreover, if $d_X$ is complete, and $R$ is complete and satisfies Condition~4,
    then $d^\frakD_X$ is complete.
  \end{cor}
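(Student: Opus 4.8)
The plan is to observe that every assertion in Corollary~\ref{cor: a summary of D times X} has already been proved in the preceding development, so the proof reduces to assembling the relevant results with attention to the hierarchy of conditions. First I would record the elementary fact that Condition~4 $\Rightarrow$ Condition~3 $\Rightarrow$ Condition~2 $\Rightarrow$ Condition~1 (immediate from the way these are defined in terms of Assumption~\ref{assum: metrization of D}, together with the remark that \ref{assum item: 4. metrization of D} implies \ref{assum item: 3. metrization of D}); this guarantees that the hypotheses of each cited statement are met whenever we are allowed to use it.

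Granting this, I would argue as follows. Since Condition~2 subsumes Condition~1, Proposition~\ref{2. prop: d_D is a metric} shows that $d^\frakD_X$ as defined in \eqref{eq: metric on D times X} is a well-defined metric on $\frakD(X) \times X$; in particular the integrand is measurable and $d^\frakD_X$ is positive definite. That the induced topology coincides with the product topology is precisely Corollary~\ref{cor: topology on D}\ref{cor item: 1. topology on D}, which itself rests on the convergence characterizations of Theorems~\ref{thm: convergence in D times X} and~\ref{thm: convergence in D}, both valid under Condition~2. For separability, Theorem~\ref{2. thm: separability of d_D} gives that, when $X$ and $\frakC(X)$ are separable, the topology on $\frakD(X) \times X$ induced by $d^\frakD_X$ is separable; combined with the previous point this says exactly that $\frakD(X) \times X$ is separable. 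Finally, for completeness I would invoke Theorem~\ref{2. thm: completeness of d_D}: assuming $d_X$ complete and $R$ complete and satisfying Condition~4, the metric $d^\frakD_X$ is complete. This exhausts the claims of the corollary, exactly as in the proof of the companion statement Corollary~\ref{cor: a summary of D}.

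There is no substantive obstacle here, since this is a summary (``packaging'') statement; the only point requiring care is bookkeeping of the implications between Conditions~1--4 so that each quoted theorem is applied under a hypothesis it actually requires. Accordingly I would keep the written proof to a few lines, citing Proposition~\ref{2. prop: d_D is a metric}, Corollary~\ref{cor: topology on D}, Theorem~\ref{2. thm: separability of d_D}, and Theorem~\ref{2. thm: completeness of d_D}, and noting that the remaining assertions have already been established.
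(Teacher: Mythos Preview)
Your proposal is correct and takes essentially the same approach as the paper: the paper's proof is a single sentence citing exactly Proposition~\ref{2. prop: d_D is a metric}, Theorems~\ref{2. thm: separability of d_D} and~\ref{2. thm: completeness of d_D}, and Corollary~\ref{cor: topology on D}. Your additional remarks on the hierarchy Condition~4 $\Rightarrow$ Condition~2 $\Rightarrow$ Condition~1 are accurate bookkeeping but not spelled out in the paper's version.
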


  \begin{proof}
    This is a consequence of Proposition~\ref{2. prop: d_D is a metric}, 
    Theorems~\ref{2. thm: separability of d_D} and \ref{2. thm: completeness of d_D},
    and Corollary~\ref{cor: topology on D}.
  \end{proof}

  A precompactness criterion is given below.

  \begin{thm} [Precompactness]\label{thm: precompactness in d_D}
    Assume that $R$ is complete and satisfies Condition~3.
    Fix a non-empty subset $\mathfrak{A} \subseteq \frakD(X)$ and $\rho \in X$.
    Write $\mathfrak{A}|_\rho^{(r)} \coloneqq \{ a|_\rho^{(r)} \mid a \in \mathfrak{A} \} \subseteq \frakC(X)$ for each $r>0$.
    The following statements are equivalent.
    \begin{enumerate} [label = \textup{(\roman*)}, leftmargin = *]
      \item \label{2. thm item: A^r is precompact for each r}
        The set $\mathfrak{A}|_\rho^{(r)}$ is precompact in $\frakC(X)$ for all $r>0$.
      \item \label{2. thm item: A^r is precompact for unbounded r}
        There exists an increasing sequence $(r_{k})_{k \geq 1}$ with $r_{k} \uparrow \infty$ 
        such that $\mathfrak{A}|_\rho^{(r_{k})}$ is precompact in $\frakC(X)$.
      \item \label{2. thm item: A is precompact}
        The set $\mathfrak{A}$ is precompact in $\frakD(X)$.
    \end{enumerate}
  \end{thm}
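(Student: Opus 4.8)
The plan is to establish the cyclic chain \ref{2. thm item: A^r is precompact for each r} $\Rightarrow$ \ref{2. thm item: A^r is precompact for unbounded r} $\Rightarrow$ \ref{2. thm item: A is precompact} $\Rightarrow$ \ref{2. thm item: A^r is precompact for each r}, throughout using the sequential characterization of precompactness in a metric space: a subset is precompact if and only if every sequence in it admits a subsequence converging in the ambient space. The first implication needs no work, since given \ref{2. thm item: A^r is precompact for each r} one may simply take $r_k = k$.

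For \ref{2. thm item: A^r is precompact for unbounded r} $\Rightarrow$ \ref{2. thm item: A is precompact}, I would fix an arbitrary sequence $(a_n)_{n \geq 1}$ in $\mathfrak{A}$ and, for each $k$, view $(a_n|_\rho^{(r_k)})_{n \geq 1}$ as a sequence in the precompact set $\mathfrak{A}|_\rho^{(r_k)} \subseteq \frakC(X)$. A standard diagonal extraction then yields a subsequence $(a_{n_l})_{l \geq 1}$ and elements $\alpha_k \in \frakC(X)$ with $d^\frakC_X(a_{n_l}|_\rho^{(r_k)}, \alpha_k) \to 0$ as $l \to \infty$ for every $k$. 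Since $R$ is complete and satisfies Condition~2 (which is implied by Condition~3), I would then apply Lemma~\ref{lem: existence of limit} with the constant sequence $x_n \equiv \rho$ to obtain $a \in \frakD(X)$ with $(a_{n_l}, \rho) \to (a, \rho)$ in $\frakD(X) \times X$, equivalently $a_{n_l} \to a$ in $\frakD(X)$ by Corollary~\ref{cor: topology on D}\ref{cor item: 1. topology on D}. Hence every sequence in $\mathfrak{A}$ has a subsequence converging in $\frakD(X)$, so $\mathfrak{A}$ is precompact.

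For \ref{2. thm item: A is precompact} $\Rightarrow$ \ref{2. thm item: A^r is precompact for each r}, fix $r > 0$ and a sequence $(b_n)_{n \geq 1}$ in $\mathfrak{A}|_\rho^{(r)}$, written as $b_n = a_n|_\rho^{(r)}$ with $a_n \in \mathfrak{A}$. Precompactness of $\mathfrak{A}$ gives a subsequence $a_{n_l} \to a$ in $\frakD(X)$ for some $a \in \frakD(X)$, and Theorem~\ref{thm: convergence in D}, applied with $x_n \equiv \rho$, yields $d^\frakC_X(a_{n_l}|_\rho^{(s)}, a|_\rho^{(s)}) \to 0$ for all but countably many $s > 0$; I would pick such an $s$ with $s > r$. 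Applying Assumption~\ref{assum: metrization of D}\ref{assum item: 3. metrization of D} to the sequence $(a_{n_l}, \rho)_{l \geq 1}$ and the limit $(a, \rho)$ (using the radius $s$) shows that $\{a_{n_l}|_\rho^{(t)}\}_{l \geq 1}$ is precompact in $\frakC(X)$ for every $t \in (0, s]$, in particular for $t = r$. Thus $(b_{n_l})_{l \geq 1} = (a_{n_l}|_\rho^{(r)})_{l \geq 1}$ lies in a precompact set, hence has a convergent subsequence, and therefore so does $(b_n)_{n \geq 1}$; this proves that $\mathfrak{A}|_\rho^{(r)}$ is precompact and closes the cycle.

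The main obstacle is the implication \ref{2. thm item: A^r is precompact for unbounded r} $\Rightarrow$ \ref{2. thm item: A is precompact}: the restricted limits $\alpha_k$ obtained from the diagonal argument need not automatically form a compatible sequence, so one must patch them together along a suitable sequence of intermediate radii and then invoke the completeness of the restriction system to produce a genuine limit in $\frakD(X)$. This delicate radius-juggling is, however, exactly what Lemma~\ref{lem: existence of limit} packages, so once that lemma is at hand the remainder is the routine diagonal and extraction arguments sketched above.
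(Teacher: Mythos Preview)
Your proof is correct and follows the same cyclic scheme as the paper: the implication \ref{2. thm item: A^r is precompact for unbounded r} $\Rightarrow$ \ref{2. thm item: A is precompact} is handled identically via a diagonal extraction and Lemma~\ref{lem: existence of limit}.

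For \ref{2. thm item: A is precompact} $\Rightarrow$ \ref{2. thm item: A^r is precompact for each r} there is a small but genuine difference worth noting. The paper, after passing to a subsequence with $a_{n_m}\to a$ in $\frakD(X)$, further thins so that $d^{\frakD}_{X,\rho}(a_{n_m},a)<2^{-m}e^{-m}$, extracts radii $r_m>m$ with $d^{\frakC}_X(a_{n_m}|_\rho^{(r_m)},a|_\rho^{(r_m)})<2^{-m}$, and then invokes Assumption~\ref{assum: metrization of D}\ref{assum item: 4. metrization of D}. That last citation lies in Condition~4, which is \emph{stronger} than the Condition~3 assumed in the theorem. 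Your route avoids this: by first appealing to Theorem~\ref{thm: convergence in D} to obtain $a_{n_l}|_\rho^{(s)}\to a|_\rho^{(s)}$ at a \emph{fixed} radius $s>r$, you can apply Assumption~\ref{assum: metrization of D}\ref{assum item: 3. metrization of D} directly, which is exactly what Condition~3 provides. So your argument is not only correct but stays strictly within the stated hypotheses, whereas the paper's proof as written either contains a mislabelled reference or tacitly uses \ref{assum item: 2. metrization of D} and \ref{assum item: 3. metrization of D} in combination under the name of \ref{assum item: 4. metrization of D}.
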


  \begin{proof}
    The implication \ref{2. thm item: A^r is precompact for each r}
    $\Rightarrow$
    \ref{2. thm item: A^r is precompact for unbounded r}
    is obvious.
    Assume that \ref{2. thm item: A^r is precompact for unbounded r} holds.
    Fix a sequence $(a_{n})_{n \geq 1}$ in $\mathfrak{A}$.
    By a diagonal argument,
    one can find a subsequence $(a_{n_{m}})_{m \geq 1}$ and 
    a sequence $(\alpha_{k})_{k \geq 1}$ in $\frakC(X)$ such that 
    $a_{n_{m}}|_\rho^{(r_{k})} \to \alpha_{k}$
    as $m \to \infty$ for each $k \geq 1$.
    By Lemma~\ref{lem: existence of limit}
    we deduce that $(a_{n_{m}})_{m \geq 1}$ is a convergent sequence in $\frakD(X)$,
    which implies \ref{2. thm item: A is precompact}.

    Suppose that \ref{2. thm item: A is precompact} holds.
    Fix $r>0$ and a sequence $(a_{n}|_\rho^{(r)})_{n \geq 1}$ in $\mathfrak{A}|_\rho^{(r)}$.
    By \ref{2. thm item: A is precompact},
    if necessarily,
    by choosing a subsequence,
    we may assume that $(a_n)_{n \geq 1}$ converges to some element $a \in \frakD(X)$
    with respect to $d^\frakD_{X, \rho}$.
    Moreover, 
    we can assume that $d^\frakD_{X, \rho}(a_{n_{m}}, a) < 2^{-m} e^{-m}$.
    Then there exists $r_m > m$ such that $d_X^\frakC(a_{n_{m}}|_\rho^{(r_m)}, a|_\rho^{(r_m)}) < 2^{-m}$.
    Assumption~\ref{assum: metrization of D}\ref{assum item: 4. metrization of D} yields that 
    $\{a_{n_m}|_\rho^{(r)}\}_{m \geq 1}$ is precompact,
    which shows \ref{2. thm item: A^r is precompact for each r}.
  \end{proof}


\section{Metrization of several topologies} \label{sec: metrization of several topologies}

In this section,  
using the method introduced in the preceding section,  
we construct metrizations of certain extended topologies.  
In Sections~\ref{sec: the Fell topology} and \ref{sec: the vague topology},  
we consider extensions of the Hausdorff topology and the weak topology, respectively.  
In the final subsection, Section~\ref{sec: variable domains},  
we introduce an extension of the compact-convergence topology and discuss its metrization.  
For each of these topologies, we also investigate its topological properties,  
such as Polishness and precompactness.  
Throughout this section, we fix a metric space $X$.


\subsection{The Fell topology} \label{sec: the Fell topology}

In this subsection,
using the framework established in Section~\ref{sec: Metric for non-compact objects},
we extend the Hausdorff metric to a metric on the collection of closed subsets.
The resulting metric induces the Fell topology (cf.\ \cite[Appendix~C]{Molchanov_17_Theory})

\begin{dfn} [{The space $\Compact{X}$ and $\Closed{X}$}]  \label{dfn: space of compact and closed subsets}
  We define $\Closed{X}$ to be the set of closed subsets of $S$.
  We denote by $\Compact{X}$ the subset of $\Closed{X}$ consisting of compact subsets.
  (NB.\ Both sets include the empty set.)
\end{dfn}

We equip $\Compact{X}$ with the Hausdorff metric $\HausMet{X}$.
To recall it,
we write, for each subset $A$ and $\varepsilon \geq 0$,
\begin{equation}  \label{2. eq: e-neighborhood}
  A^{\varepsilon} 
  \coloneqq
  \{
    x \in X \mid \exists y \in A\ \text{such that}\ d_X(x,y) \leq \varepsilon
  \},
\end{equation}
which is the \textit{(closed) $\varepsilon$-neighborhood} of $A$ in $X$.
The Hausdorff metric $\HausMet{X}$ on $\Compact{X}$ is then defined by
\begin{equation} \label{eq: dfn of Hausdorff metric}
  \HausMet{X}(A,B)
  \coloneqq
  \inf \{
    \varepsilon \geq 0 \mid A \subseteq B^{\varepsilon},\, B \subseteq A^{\varepsilon}
  \},
\end{equation} 
where we set the infimum over the empty set to be $\infty$.
The function $\HausMet{X}$ is indeed an extended metric on $\Compact{X}$
(see \cite[Section~17.6]{Cech_69_Point}).
(Note that the distance between the empty set and a non-empty set is always infinite.)
We call the topology on $\Compact{X}$ induced by $\HausMet{X}$ the \textit{Hausdorff topology}
(also known as the \emph{Vietoris topology}).
Below, we collect basic properties of the Hausdorff topology and the Hausdorff metric.

\begin{lem} [{\cite[Sections~17.6.5 and 17.6.7]{Cech_69_Point}}] \label{lem: Polishness of Hausdorff}
  If $d_X$ is complete, then so is $\HausMet{X}$.
  If $X$ is separable, then so is the Hausdorff topology.
\end{lem}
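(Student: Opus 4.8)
The plan is to treat the two assertions separately, each following the classical hyperspace arguments; since the result is standard (cf.\ \cite[Sections~17.6.5 and 17.6.7]{Cech_69_Point}), I would keep the verifications brief.

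\textbf{Separability.} Suppose $X$ is separable and fix a countable dense set $D \subseteq X$. I would show that the countable family $\mathcal{F}$ of all finite subsets of $D$ (including $\emptyset$) is dense in $(\Compact{X}, \HausMet{X})$. Since $\emptyset$ is isolated in $\Compact{X}$, it suffices to approximate a non-empty compact $K$. Given $\varepsilon>0$, compactness of $K$ gives a finite cover by balls $B_X(x_i, \varepsilon/2)$ with $x_i \in K$; choosing $y_i \in D$ with $d_X(x_i,y_i) < \varepsilon/2$ and setting $F = \{y_i\}_i \in \mathcal{F}$, a direct estimate yields $K \subseteq F^{\varepsilon}$ and $F \subseteq K^{\varepsilon}$, so $\HausMet{X}(K,F) \le \varepsilon$. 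This step is entirely elementary.

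\textbf{Completeness.} Assume $d_X$ is complete and fix a Cauchy sequence $(A_n)_{n\geq1}$ in $\Compact{X}$. Because $\emptyset$ lies at infinite distance from every non-empty set, either $A_n=\emptyset$ eventually (and the sequence converges trivially) or $A_n\neq\emptyset$ for all large $n$; discarding finitely many terms I assume all $A_n$ are non-empty, and, passing to a subsequence (legitimate, since a Cauchy sequence converges once a subsequence does), that $\HausMet{X}(A_n,A_{n+1}) < 2^{-n}$ for all $n$. Define the candidate limit
\[
  A \coloneqq \bigl\{ x \in X \;\bigm|\; \exists\,(x_n)_{n\geq1},\ x_n \in A_n,\ x_n \to x \bigr\}.
\]
The key observation is that any selection $x_n \in A_n$ with $d_X(x_n,x_{n+1}) \le 2^{-n}$ is Cauchy, hence convergent by completeness of $d_X$, with $d_X(x_n, \lim_k x_k) \le 2^{-n+1}$; conversely, using $A_n \subseteq A_{n+1}^{2^{-n}}$ and $A_{n+1}\subseteq A_n^{2^{-n}}$ one can extend any point of any $A_m$ into such a selection both forward and backward. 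From this I would extract, in order: (a) $A\neq\emptyset$ (start from a point of $A_1$); (b) $A$ is closed (a diagonal argument over convergent selections); (c) $A$ is totally bounded, hence compact by completeness — for $\varepsilon>0$ pick $n$ with $\HausMet{X}(A_m,A_n)<\varepsilon$ for all $m\geq n$, so every $x\in A$ lies within $\varepsilon$ of $A_n$, and a finite $\varepsilon$-net of the compact set $A_n$ gives a finite $2\varepsilon$-net of $A$; and (d) $\HausMet{X}(A_n,A)\to0$, by checking $A\subseteq A_n^{\varepsilon}$ (each $x\in A$ is a limit of a selection, and for large $m$ both $d_X(x_m,x)$ and $\HausMet{X}(A_n,A_m)$ are small) and $A_n\subseteq A^{\varepsilon}$ (extend each $x\in A_n$ forward to a selection whose limit lies in $A$ at distance $<\varepsilon$ from $x$). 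Then $A_n\to A$, giving completeness.

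\textbf{Main obstacle.} The delicate part is the compactness of $A$ together with the convergence $A_n\to A$: one must simultaneously control the forward extension of selections (to prove $A_n\subseteq A^{\varepsilon}$) and prevent limit points of selections from escaping (for $A\subseteq A_n^{\varepsilon}$ and total boundedness), all of which relies on the summability $\sum_n 2^{-n}<\infty$ secured by passing to the subsequence. The separability argument requires no such care.
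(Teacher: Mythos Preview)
Your proof is correct and follows the classical hyperspace argument. The paper does not give its own proof of this lemma---it simply records the statement with a citation to \cite[Sections~17.6.5 and 17.6.7]{Cech_69_Point}---so there is nothing to compare beyond noting that your argument is precisely the standard one such a reference would contain.
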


The following gives a characterization of convergence in the Hausdorff topology  
in terms of Painlev\'{e}--Kuratowski convergence (cf.\ \cite[Definition~C.6]{Molchanov_17_Theory}).

\begin{lem} \label{lem: convergence in Hausdorff}
  Let $A, A_1, A_2, \dots$ be elements of $\Compact{X}$.
  Then $A_n \to A$ in the Hausdorff topology if and only if $\bigcup_{n \geq 1} A_n$ is compact and the following conditions are satisfied.
  \begin{enumerate} [label = \textup{(PK\,\arabic*)}, leftmargin = *]
    \item \label{lem item: 1. convergence in Hausdorff}
      If elements $x_n \in A_n$ converge to $x$, then $x \in A$.
    \item \label{lem item: 2. convergence in Hausdorff}
      For any $x \in A$, there exist a subsequence $(n_k)_{k \geq 1}$ and elements $x_{n_k} \in A_{n_k}$
      such that $x_{n_k} \to x$.
  \end{enumerate}
  Condition~\ref{lem item: 1. convergence in Hausdorff} can be replaced by the following:
  \begin{enumerate} [label = \textup{(PK\,2$^\prime$)}, leftmargin = *]
    \item \label{lem item: 2'. convergence in Hausdorff}
      For any $x \in A$, there exist elements $x_n \in A_n$ converging to $x$.
  \end{enumerate}
\end{lem}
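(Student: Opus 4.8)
The plan is to prove the characterization of Hausdorff convergence in two halves, treating the equivalence with Painlev\'e--Kuratowski convergence together with the uniform-compactness condition, and at the end checking that \ref{lem item: 1. convergence in Hausdorff} may be upgraded to \ref{lem item: 2'. convergence in Hausdorff}. First I would record the elementary observation that $\HausMet{X}(A,B) = \inf\{\varepsilon \geq 0 \mid A \subseteq B^\varepsilon,\ B \subseteq A^\varepsilon\}$ implies that $A_n \to A$ in the Hausdorff metric is equivalent to: for every $\varepsilon > 0$ there is $N$ such that $A_n \subseteq A^\varepsilon$ and $A \subseteq A_n^\varepsilon$ for all $n \geq N$. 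From the first inclusion, for large $n$, $A_n$ is contained in the fixed compact set $A^{1}$ (the closed $1$-neighborhood of $A$, which is closed and bounded hence compact since $X$ is boundedly-compact — here I am implicitly using that the ambient space has the bounded-compactness property; if $X$ is merely a general metric space one instead argues that $\bigcup_{n} A_n \cup A$ is totally bounded and complete). Taking the closure, $\bigcup_{n\geq 1} A_n \cup A$ is a closed subset of a compact set, hence compact, giving the asserted uniform compactness.

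Next I would prove the forward implication: assuming $A_n \to A$, verify \ref{lem item: 1. convergence in Hausdorff} and \ref{lem item: 2. convergence in Hausdorff}. For \ref{lem item: 1. convergence in Hausdorff}, if $x_n \in A_n$ with $x_n \to x$, then for any $\varepsilon > 0$ and large $n$ we have $x_n \in A_n \subseteq A^{\varepsilon/2}$, so there is $y_n \in A$ with $d_X(x_n, y_n) \leq \varepsilon/2$; then $d_X(x, y_n) \leq \varepsilon$ for large $n$, so $x \in A^\varepsilon$, and since $\varepsilon$ is arbitrary and $A$ is closed, $x \in A$. For \ref{lem item: 2. convergence in Hausdorff}, fix $x \in A$; for each $k$ choose $N_k$ with $A \subseteq A_n^{1/k}$ for $n \geq N_k$, and pick $n_k \geq N_k$ strictly increasing; then there is $x_{n_k} \in A_{n_k}$ with $d_X(x, x_{n_k}) \leq 1/k$, so $x_{n_k} \to x$.

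For the converse, suppose $K \coloneqq \closure(\bigcup_{n} A_n)$ is compact (note $A \subseteq K$ by \ref{lem item: 1. convergence in Hausdorff} applied to constant sequences, or just include $A$ in the union) and \ref{lem item: 1. convergence in Hausdorff}, \ref{lem item: 2. convergence in Hausdorff} hold; I want $\HausMet{X}(A_n, A) \to 0$. Suppose not; then along a subsequence $\HausMet{X}(A_{n_j}, A) \geq \varepsilon$ for some $\varepsilon > 0$, so for each $j$ either (a) there is $x_{n_j} \in A_{n_j}$ with $d_X(x_{n_j}, A) > \varepsilon$, or (b) there is $y_{n_j} \in A$ with $d_X(y_{n_j}, A_{n_j}) > \varepsilon$. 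Passing to a further subsequence so that one of the two cases holds for all $j$: in case (a), by compactness of $K$ extract a convergent subsubsequence $x_{n_{j_l}} \to x$, and then $d_X(x, A) \geq \varepsilon > 0$ contradicts \ref{lem item: 1. convergence in Hausdorff}; in case (b), by compactness of $A$ (which is closed in compact $K$) extract $y_{n_{j_l}} \to y \in A$, and then \ref{lem item: 2. convergence in Hausdorff} (or its variant) produces a sequence in the $A_n$ converging to $y$, forcing $d_X(y, A_{n_{j_l}}) \to 0$ for a suitable subsubsequence, which contradicts $d_X(y_{n_{j_l}}, A_{n_{j_l}}) > \varepsilon$ once $y_{n_{j_l}}$ is close to $y$ — here one needs a small diagonal-type argument to route the approximating sequence through the right indices, which is the one mildly delicate point. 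Finally, to see \ref{lem item: 1. convergence in Hausdorff} can be replaced by \ref{lem item: 2'. convergence in Hausdorff}: clearly \ref{lem item: 2'. convergence in Hausdorff} $\Rightarrow$ \ref{lem item: 2. convergence in Hausdorff}, so "$\bigcup_n A_n$ compact $+$ \ref{lem item: 2'. convergence in Hausdorff} $+$ \ref{lem item: 2. convergence in Hausdorff}" implies Hausdorff convergence by what was just shown; conversely Hausdorff convergence gives \ref{lem item: 2'. convergence in Hausdorff} directly by the same uniform-neighborhood argument used for \ref{lem item: 1. convergence in Hausdorff} (for each $\varepsilon$, eventually $A_n \subseteq A^\varepsilon$, but we need points of $A_n$ near a fixed $x \in A$, so instead run: eventually $A \subseteq A_n^\varepsilon$ gives, for the fixed $x$, a point $x_n^\varepsilon \in A_n$ with $d_X(x, x_n^\varepsilon) \leq \varepsilon$, and then choose $\varepsilon = 1/k$ along increasing index blocks to build a genuine sequence $x_n \to x$). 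The main obstacle is purely bookkeeping: organizing the subsequence extractions in the converse direction so that the contradiction is clean, and being careful about whether the ambient $X$ is assumed boundedly-compact or merely a metric space when asserting that $\bigcup_n A_n$ lies in a compact set — in the statement as given the compactness of $\bigcup_n A_n$ is part of the hypothesis on the converse side, so no extra assumption on $X$ is needed there.
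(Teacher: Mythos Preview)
The paper's own proof is essentially a citation: the main equivalence is attributed to \v{C}ech~\cite[Section~17.6.3]{Cech_69_Point}, and only the implication ``Hausdorff convergence $\Rightarrow$~\ref{lem item: 2'. convergence in Hausdorff}'' is verified directly. You instead give a self-contained proof of both directions; your forward implication and your contradiction argument for the converse are correct, and the ``mildly delicate'' diagonal routing you flag in case~(b) is precisely where one sees why~\ref{lem item: 2'. convergence in Hausdorff} is more convenient than~\ref{lem item: 2. convergence in Hausdorff} (with the former, approximants exist along \emph{every} index, eliminating the subsubsequence bookkeeping).

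There is, however, a genuine gap in your final paragraph. You write that ``$\bigcup_n A_n$ compact $+$~\ref{lem item: 2'. convergence in Hausdorff}~$+$~\ref{lem item: 2. convergence in Hausdorff} implies Hausdorff convergence by what was just shown'', but your converse argument used~\ref{lem item: 1. convergence in Hausdorff} in case~(a), which is missing here. In fact the implication fails: take $A_n=\{0,1\}$ and $A=\{0\}$ in $X=\RN$; compactness,~\ref{lem item: 2. convergence in Hausdorff} and~\ref{lem item: 2'. convergence in Hausdorff} all hold, yet $\HausMet{X}(A_n,A)=1$. The source of the confusion is an apparent typo in the lemma: the sentence should read ``Condition~\ref{lem item: 2. convergence in Hausdorff} can be replaced by~\ref{lem item: 2'. convergence in Hausdorff}'' (cf.\ Theorem~\ref{thm: convergence in the Fell topology}\ref{thm item: 5. convergence in the local Hausdorff}, where the alternative is explicitly~\ref{lem item: 1. convergence in Hausdorff}~$+$~\ref{lem item: 2'. convergence in Hausdorff}). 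Under the intended reading, your argument is immediate: \ref{lem item: 2'. convergence in Hausdorff}~$\Rightarrow$~\ref{lem item: 2. convergence in Hausdorff} gives one direction, and the uniform-neighborhood construction you sketch at the end gives the other.
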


\begin{proof}
  The equivalence is well-known (see \cite[Section~17.6.3]{Cech_69_Point}).
  By definition, one can easily verify that \ref{lem item: 2'. convergence in Hausdorff} follows from the convergence of $A_n$ to $A$.
  This completes the proof.
\end{proof}

\begin{rem} \label{rem: Hausdorff topology is independent of metric}
  It follows from Lemma~\ref{lem: convergence in Hausdorff} that the Hausdorff topology depends only on the topology of $X$.  
  In particular, it is independent of the choice of the metric $d_X$.
\end{rem}

\begin{lem} \label{lem: precompact in Hausdorff}
  A subset $\mathcal{A}$ of $\Compact{X}$ is precompact in the Hausdorff topology 
  if and only if there exists $K \in \Compact{X}$ such that $A \subseteq K$ for all $A \in \mathcal{A}$.
  In particular, if $d_X$ is boundedly compact, then so is the Hausdorff metric.
\end{lem}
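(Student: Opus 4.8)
The plan is to prove the two implications separately and then deduce the ``in particular'' assertion. \emph{Sufficiency.} Suppose there is $K \in \Compact{X}$ with $A \subseteq K$ for all $A \in \mathcal{A}$. I would first recall the classical fact that $\Compact{K}$, equipped with the Hausdorff metric, is compact: it is complete by Lemma~\ref{lem: Polishness of Hausdorff} (the compact space $K$ being complete), and it is totally bounded since a finite $\varepsilon$-net $N$ of $K$ produces the finite $\varepsilon$-net $\{F \in \Compact{K} : F \subseteq N\}$ of $\Compact{K}$. Next, for compact subsets of $K$ the Hausdorff distance is the same whether computed in $K$ or in $X$, because the defining condition ``$A \subseteq B^{\varepsilon}$'' is unaffected by taking the $\varepsilon$-neighbourhood in $K$ rather than in $X$ (as $A \subseteq K$). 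Hence the inclusion $\Compact{K} \hookrightarrow \Compact{X}$ is an isometric embedding with image $\{A \in \Compact{X} : A \subseteq K\}$, which is therefore a compact subset of $\Compact{X}$ containing $\mathcal{A}$; so $\mathcal{A}$ is precompact.

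\emph{Necessity.} Suppose $\mathcal{A}$ is precompact, so that $\closure(\mathcal{A})$ is a compact subset of $\Compact{X}$, and put $K \coloneqq \bigcup_{A \in \closure(\mathcal{A})} A$. It suffices to show $K$ is compact, since then $A \subseteq K$ for every $A \in \mathcal{A}$. I would prove sequential compactness of $K$: given a sequence $(x_n)_{n \geq 1}$ in $K$, choose $A_n \in \closure(\mathcal{A})$ with $x_n \in A_n$; by compactness of $\closure(\mathcal{A})$ a subsequence $A_{n_k}$ converges in $\Compact{X}$, and then Lemma~\ref{lem: convergence in Hausdorff} gives that $C \coloneqq \bigcup_k A_{n_k}$ is compact. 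Since $x_{n_k} \in C$, a further subsequence of $(x_{n_k})$ converges to some $x \in C \subseteq K$, the inclusion $C \subseteq K$ holding because each $A_{n_k} \in \closure(\mathcal{A})$. Thus $K$ is sequentially compact, hence compact, being a subspace of the metric space $X$.

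\emph{The ``in particular'' claim, and the main obstacle.} Assuming $d_X$ is boundedly compact, fix a closed ball $D_{\Compact{X}}(A_0, r)$ with $r < \infty$. If $A_0 = \emptyset$ this ball equals $\{\emptyset\}$ (the Hausdorff distance from $\emptyset$ to a non-empty set is $\infty$), which is compact. Otherwise pick $\rho \in X$ and $R > 0$ with $A_0 \subseteq D_X(\rho, R)$; any $B$ in the ball satisfies $B \subseteq A_0^{r} \subseteq D_X(\rho, R + r)$, and $D_X(\rho, R+r)$ is compact since $d_X$ is boundedly compact. By the equivalence just proven, $D_{\Compact{X}}(A_0, r)$ is precompact, and being closed it is compact; so $\HausMet{X}$ is boundedly compact. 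I expect the only genuine difficulty to be the compactness of $K$ in the necessity direction, which rests on the non-trivial input of Lemma~\ref{lem: convergence in Hausdorff} that Hausdorff convergence $A_{n_k} \to A$ forces $\bigcup_k A_{n_k}$ to be compact; everything else is routine bookkeeping, the only delicate points being the identification of the two Hausdorff metrics on $\Compact{K}$ and the treatment of the empty set.
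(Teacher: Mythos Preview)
Your proof is correct and considerably more detailed than the paper's, which simply cites \v{C}ech's \emph{Point Sets}, \S17.6.8, for the precompactness criterion and calls the second assertion straightforward. Your sufficiency argument via the isometric embedding $\Compact{K} \hookrightarrow \Compact{X}$ and the classical compactness of $\Compact{K}$ is the standard route; your necessity argument, showing sequential compactness of $K = \bigcup_{A \in \closure(\mathcal{A})} A$, is also sound. One small caveat: Lemma~\ref{lem: convergence in Hausdorff} as literally stated gives compactness of $\bigcup_k A_{n_k}$, but strictly speaking only \emph{relative} compactness holds (take $A_n = \{1/n\} \to \{0\}$ in $\mathbb{R}$, where the union misses its limit point $0$). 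This does not damage your argument: the Hausdorff limit $A$ of the $A_{n_k}$ also lies in $\closure(\mathcal{A})$, so $\closure(C) \subseteq C \cup A \subseteq K$ by condition~\ref{lem item: 1. convergence in Hausdorff}, and your subsequential limit $x$ still lands in $K$. Your handling of the ``in particular'' claim, including the separate treatment of the empty-set centre, is complete.
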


\begin{proof}
  The precompactness criterion is well-known (see \cite[Section~17.6.8]{Cech_69_Point}).
  The second assertion is straightforward.
\end{proof}

The Hausdorff metric is stable under deformation of the underlying space.
This property plays a fundamental role in the metrization of the Gromov--Hausdorff topology (recall \eqref{eq: the GH metric}).
Henceforth, given a map $f \colon S \to T$, we denote by $\Image{f}(A)$ the image of $A$ under $f$, i.e.,
\begin{equation}  \label{dfn: image map}
  \Image{f}(A) \coloneqq \{f(x) \mid x \in A\}.
\end{equation}

\begin{lem} \label{lem: Hausdorff is stable}
  Fix metric spaces $X$, $Y$, $M_1$, and $M_2$.
  Let $f_i \colon X \to M_i$ and $g_i \colon Y \to M_i$, $i = 1,2$, be isometric embeddings.
  Assume that there exists $\varepsilon \in \RNp$ such that, for all $x \in X$ and $y \in Y$,
  \begin{equation} \label{lem eq: 1. Hausdorff is stable}
    d_{M_2}(f_2(x), g_2(y)) \leq d_{M_1}(f_1(x), g_1(y)) + \varepsilon.
  \end{equation}
  Then, for all $A \in \Compact{X}$ and $B \in \Compact{Y}$, 
  \begin{equation} \label{lem eq: 2. Hausdorff is stable}
    \HausMet{M_2} \bigl( \Image{f_2}(A), \Image{g_2}(B) \bigr) \leq \HausMet{M_1}\bigl( \Image{f_1}(A), \Image{g_1}(B) \bigr) + \varepsilon.
  \end{equation}
\end{lem}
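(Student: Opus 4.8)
The statement asks us to compare Hausdorff distances in two ambient spaces $M_1$ and $M_2$, given that the pairwise distances in $M_2$ between images of points of $X$ and points of $Y$ dominate (up to $\varepsilon$) the corresponding distances in $M_1$. The natural approach is to unwind the definition \eqref{eq: dfn of Hausdorff metric} of the Hausdorff metric: it suffices to show that if $\Image{f_1}(A) \subseteq \Image{g_1}(B)^{\delta}$ in $M_1$ for some $\delta \geq 0$, then $\Image{f_2}(A) \subseteq \Image{g_2}(B)^{\delta + \varepsilon}$ in $M_2$, together with the symmetric statement with the roles of $A$ and $B$ interchanged. Taking the infimum over such $\delta$ then yields \eqref{lem eq: 2. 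Hausdorff is stable}. There are two cases to handle separately: if $\HausMet{M_1}\bigl(\Image{f_1}(A), \Image{g_1}(B)\bigr) = \infty$ there is nothing to prove, so we may assume it is finite, which in particular forces $A = \emptyset \iff B = \emptyset$; if both are empty the inequality is trivial, so assume both are non-empty.

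The key step is the containment argument. Fix $\delta > \HausMet{M_1}\bigl(\Image{f_1}(A), \Image{g_1}(B)\bigr)$, so that $\Image{f_1}(A) \subseteq \Image{g_1}(B)^{\delta}$ and $\Image{g_1}(B) \subseteq \Image{f_1}(A)^{\delta}$ in $M_1$. Let $x \in A$. By the first containment, there is $y \in B$ with $d_{M_1}(f_1(x), g_1(y)) \leq \delta$. Applying the hypothesis \eqref{lem eq: 1. Hausdorff is stable} to this pair $(x,y)$ gives $d_{M_2}(f_2(x), g_2(y)) \leq d_{M_1}(f_1(x), g_1(y)) + \varepsilon \leq \delta + \varepsilon$, hence $f_2(x) \in \Image{g_2}(B)^{\delta + \varepsilon}$. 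Since $x \in A$ was arbitrary, $\Image{f_2}(A) \subseteq \Image{g_2}(B)^{\delta + \varepsilon}$. The symmetric containment $\Image{g_2}(B) \subseteq \Image{f_2}(A)^{\delta + \varepsilon}$ follows identically, starting from an arbitrary $y \in B$ and using $\Image{g_1}(B) \subseteq \Image{f_1}(A)^{\delta}$ to produce $x \in A$ with $d_{M_1}(f_1(x), g_1(y)) \leq \delta$, then again invoking \eqref{lem eq: 1. Hausdorff is stable} for the pair $(x,y)$. Therefore $\HausMet{M_2}\bigl(\Image{f_2}(A), \Image{g_2}(B)\bigr) \leq \delta + \varepsilon$, and letting $\delta \downarrow \HausMet{M_1}\bigl(\Image{f_1}(A), \Image{g_1}(B)\bigr)$ gives the claim.

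I do not anticipate a genuine obstacle here; the proof is a direct manipulation of the definition. The only points requiring a little care are the bookkeeping of the degenerate cases (infinite distance, empty sets) and the observation that $f_i, g_i$ being isometric embeddings is not actually needed for this particular inequality — only the hypothesis \eqref{lem eq: 1. Hausdorff is stable} relating the two metrics is used. (Isometry would be relevant if one also wanted a reverse inequality or wanted to relate $\Image{f_i}(A)$ back to $A$ intrinsically, but for the stated one-sided bound it plays no role.) One should also note that $\Image{g_2}(B)$ and $\Image{f_2}(A)$ need not be closed or compact a priori unless $f_2, g_2$ are continuous, but since they are isometric embeddings the images of compact sets are compact, so $\Image{f_2}(A) \in \Compact{M_2}$ and $\Image{g_2}(B) \in \Compact{M_2}$ and the expression $\HausMet{M_2}\bigl(\Image{f_2}(A), \Image{g_2}(B)\bigr)$ is legitimately defined.
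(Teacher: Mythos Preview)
Your proposal is correct and is exactly the straightforward argument the paper has in mind: the paper's own proof consists of a single line, ``This is straightforward (cf.\ \cite[The proof of Lemma~4.1]{Abraham_Delmas_Hoscheit_13_A_note}),'' and your unwinding of the Hausdorff-metric definition via the containment $\Image{f_1}(A)\subseteq\Image{g_1}(B)^{\delta}\Rightarrow\Image{f_2}(A)\subseteq\Image{g_2}(B)^{\delta+\varepsilon}$ is precisely that argument spelled out. Your handling of the degenerate cases and your side remark that the isometry hypothesis is only used to ensure the images lie in $\Compact{M_2}$ are both accurate.
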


\begin{proof}
  This is straightforward (cf.\ \cite[The proof of Lemma~4.1]{Abraham_Delmas_Hoscheit_13_A_note}).
\end{proof}

From now on, we assume that $X$ is boundedly compact.
Using the framework developed in Section~\ref{sec: Metric for non-compact objects},
we extend the Hausdorff metric to a metric on $\Closed{X}$.
To this end, we define a restriction system $R = (R_{X, x}^{(r)})_{r > 0,\, x \in X}$ from $\Closed{X}$ to $\Compact{X}$ as follows:
for each $r > 0$ and $x \in X$,
we set 
\begin{equation} \label{eq: restriction system for Fell}
  R_x^{(r)}(A)
  =
  A|_x^{(r)} 
  \coloneqq
  A \cap D_X(x, r),
  \quad 
  A \in \Closed{X}.
\end{equation}

\begin{prop} \label{prop: RS for local Hausdorff}
  The restriction system $R$ from $\Closed{X}$ to $\Compact{X}$ is complete and satisfies Condition~4.
\end{prop}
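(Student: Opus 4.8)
The plan is to verify each of the required axioms in turn: the four restriction-system axioms \ref{dfn item: RS. 1}--\ref{dfn item: RS. 4} from Definition~\ref{dfn: restriction system}, the completeness axiom \ref{2. dfn item: existence of the inverse limit} from Definition~\ref{dfn: complete RS}, and then Assumption~\ref{assum: metrization of D}\ref{assum item: 1. metrization of D}, \ref{assum item: 2. metrization of D}, and \ref{assum item: 4. metrization of D} (Condition~4). The first group is purely set-theoretic. For \ref{dfn item: RS. 1}, observe that $A|_x^{(s)}|_x^{(r)} = A \cap D_X(x,s) \cap D_X(x,r) = A \cap D_X(x, s\wedge r)$. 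For \ref{dfn item: RS. 2}, if $A|_x^{(r)} = B|_x^{(r)}$ for all $r>0$, then taking the union over $r$ gives $A = B$ since $X = \bigcup_{r>0} D_X(x,r)$. For \ref{dfn item: RS. 3}, given a compact set $a$ and a point $\rho$, bounded compactness is irrelevant; compactness of $a$ gives $a \subseteq D_X(\rho, r)$ for $r$ large enough, so $a|_\rho^{(r)} = a$. For \ref{dfn item: RS. 4}, if $s \geq d_X(x,y)$ then $D_X(y,r) \subseteq D_X(x, s+r)$ by the triangle inequality, whence $A \cap D_X(x,s+r) \cap D_X(y,r) = A \cap D_X(y,r)$.

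For completeness, given a compatible sequence $(a_k, r_k)_{k \geq 1}$ rooted at $x$ with $a_k = a_{k'}|_x^{(r_k)}$ for $k \leq k'$, the natural candidate is $a \coloneqq \closure\bigl(\bigcup_{k \geq 1} a_k\bigr)$; I would check that this is closed (immediate), that $a \cap D_X(x, r_k) = a_k$ using that the $a_j$ for $j \geq k$ all agree with $a_k$ inside $D_X(x,r_k)$ and the $a_j$ for $j < k$ are subsets of $a_k$, and that points of the closure that lie in $D_X(x,r_k)$ actually lie in $a_k$ — this last point uses that $a_k$ is closed and that any limit point inside the open ball $B_X(x, r_k)$ is approached from within some $a_j$, $j \geq k$; one should take $r_k$ strictly increasing (allowed, by passing to a subsequence, though here the definition already permits it) or argue slightly more carefully at the boundary sphere. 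The main subtlety is precisely this boundary behaviour, but it is handled by the nesting $a_k = a_{k+1}|_x^{(r_k)} \subseteq a_{k+1}$ together with closedness of each $a_k$.

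The analytic conditions are where the real content lies, and I expect Assumption~\ref{assum: metrization of D}\ref{assum item: 1. metrization of D} — continuity of $r \mapsto A|_x^{(r)}$ in the Hausdorff metric for all but countably many $r$ — to be the principal obstacle. The point is that $r \mapsto A \cap D_X(x,r)$ can jump exactly when the sphere $\{y : d_X(x,y) = r\}$ contains points of $A$ that are isolated from $A \cap D_X(x, r')$ for $r' < r$; the set of ``bad'' radii $r$ is contained in $\{r > 0 : A \cap D_X(x,r) \neq \closure(A \cap B_X(x,r))\}$, and I would show this set is countable by a monotonicity/measure argument — for instance, associating to each bad radius a nonempty relatively open ``gap'' and using separability of $X$, or more directly observing that $r \mapsto \HausMet{X}(A|_x^{(r)}, \closure(A \cap B_X(x,r)))$ has at most countably many nonzero values since the function $r \mapsto A|_x^{(r)}$ is monotone increasing (in the inclusion order) and right-continuous off a countable set. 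Once one-sided continuity from the left is established at all but countably many radii and right-continuity is checked (which follows since $D_X(x,r) = \bigcap_{r' > r} D_X(x, r')$ and compactness of $A|_x^{(r+1)}$), continuity off a countable set follows.

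For Assumption~\ref{assum: metrization of D}\ref{assum item: 2. metrization of D}: if $\HausMet{X}(A_n|_{x_n}^{(r_n)}, A_\infty|_{x_\infty}^{(r_n)}) \to 0$ along $r_n \uparrow \infty$, then for any fixed $r > 0$ and large $n$ we have $r_n > r$ and, using that $A_n|_{x_n}^{(r)} = (A_n|_{x_n}^{(r_n)})|_{x_n}^{(r)}$ together with Lemma~\ref{lem: Hausdorff is stable}-type stability under the change of centre $x_n \to x_\infty$ (the centres converge, so the radii of the truncating balls differ by $o(1)$), one deduces $\HausMet{X}(A_n|_{x_n}^{(r)}, A_\infty|_{x_\infty}^{(r)}) \to 0$ for every $r$ that is a continuity radius of $A_\infty|_{x_\infty}^{(\cdot)}$ — a co-countable set. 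The care here is that intersecting with a slightly smaller or larger ball centred at $x_n$ rather than $x_\infty$ is controlled precisely when $r$ avoids the countable bad set for $A_\infty$. Finally, for Assumption~\ref{assum: metrization of D}\ref{assum item: 4. metrization of D} (which subsumes \ref{assum item: 3. metrization of D}): from $\HausMet{X}(A_n|_{x_n}^{(r_n)}, A_{n+1}|_{x_{n+1}}^{(r_n)}) < 2^{-n}$ with $r_n \uparrow \infty$, a telescoping argument shows that for each fixed $r$ and all large $n$, all the sets $A_n|_{x_n}^{(r)}$ lie within a bounded Hausdorff distance of $A_N|_{x_N}^{(r)}$ for some fixed $N$; more concretely they are all contained in a common compact set, namely a closed ball $D_X(x_\infty, r + C)$ for a suitable constant $C$ absorbing $\sup_n d_X(x_n, x_\infty)$ and the summable errors, and then Lemma~\ref{lem: precompact in Hausdorff} gives precompactness in $\Compact{X}$ (here bounded compactness of $X$ is exactly what makes $D_X(x_\infty, r+C)$ compact). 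I would conclude by invoking Proposition~\ref{2. prop: d_D is a metric} and Corollary~\ref{cor: a summary of D} to package the result, but no further work is needed beyond verifying the axioms above.
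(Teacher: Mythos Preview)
Your proposal is correct and follows essentially the same route as the paper's appendix, which breaks the result into four short lemmas: cadlag-ness of $r \mapsto A|_x^{(r)}$ (giving condition~\ref{assum item: 1. metrization of D}), verification of~\ref{assum item: 2. metrization of D}, verification of~\ref{assum item: 4. metrization of D} via containment in a common closed ball, and completeness via $A \coloneqq \bigcup_k a_k$.

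Two small points of comparison. For completeness, the paper observes that $\bigcup_k a_k$ is \emph{already} closed (any convergent sequence in it is bounded, hence eventually lies in some fixed $a_k$, which is closed), so taking the closure and worrying about boundary spheres is unnecessary. For~\ref{assum item: 2. metrization of D}, your metric sketch is correct in spirit but the uniformity over $w \in A_\infty|_{x_\infty}^{(r)}$ needs an extra compactness/finite-net step that you have not spelled out; the paper sidesteps this by checking the Painlev\'e--Kuratowski conditions~\ref{lem item: 1. convergence in Hausdorff} and~\ref{lem item: 2. convergence in Hausdorff} of Lemma~\ref{lem: convergence in Hausdorff} directly, which packages the same estimates sequentially and avoids the uniformity issue. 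Finally, the invocation of Proposition~\ref{2. prop: d_D is a metric} and Corollary~\ref{cor: a summary of D} at the end is superfluous: those results are downstream consequences of the present proposition, not ingredients for it.
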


\begin{proof}
  The proof is given in Appendix~\ref{appendix: local Hausdorff}.
\end{proof}

For each $\rho \in X$ and $A, B \in \Closed{X}$,
define
\begin{equation} \label{2. eq: the local Hausdorff metric}
  \lHausMet{X,\rho}(A, B) 
  \coloneqq 
  \int_{0}^{\infty}
  e^{-r}
  \bigl(1 \wedge \HausMet{X}(A|_\rho^{(r)}, B|_\rho^{(r)})\bigr)\,
  dr.
\end{equation}
The following is an immediate consequence of Corollary~\ref{cor: a summary of D}, Lemma~\ref{lem: Polishness of Hausdorff}, and Proposition~\ref{prop: RS for local Hausdorff}.

\begin{thm} \label{thm: local Hausdorff metric}
  The function $\lHausMet{X, \rho}$ is a well-defined complete, separable metric on $\Closed{X}$,
  and the topology on $\Closed{X}$ induced by $\lHausMet{X, \rho}$ is independent of $\rho$.
\end{thm}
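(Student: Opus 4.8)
The plan is to deduce Theorem~\ref{thm: local Hausdorff metric} directly from the general machinery of Section~\ref{sec: Metric for non-compact objects}, specialised to the restriction system $R$ of \eqref{eq: restriction system for Fell}. Concretely, I would invoke Corollary~\ref{cor: a summary of D}, whose hypotheses split into three parts: first, that $R$ satisfies Condition~2 (needed for $\lHausMet{X,\rho}$ to be a well-defined metric whose topology is independent of $\rho$); second, that $X$ and $\frakC(X) = \Compact{X}$ are both separable (needed for separability of $\Closed{X}$); and third, that $d_X$ is complete, $R$ is complete, and $R$ satisfies Condition~4 (needed for completeness of $\lHausMet{X,\rho}$). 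Each of these is already available: Proposition~\ref{prop: RS for local Hausdorff} gives that $R$ is complete and satisfies Condition~4 (hence in particular Condition~2), and Lemma~\ref{lem: Polishness of Hausdorff} gives that $d_X$ complete implies $\HausMet{X}$ complete and that $X$ separable implies the Hausdorff topology separable.

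The first step is therefore to note that, by hypothesis of this subsection, $X$ is a boundedly compact metric space; in particular $d_X$ is complete and $X$ is separable (bounded compactness forces separability, since $X$ is a countable union of the compact, hence separable, balls $D_X(\rho, n)$). The second step is to record that $\Compact{X}$ with $\HausMet{X}$ is then complete and separable, by Lemma~\ref{lem: Polishness of Hausdorff}. The third step is to quote Proposition~\ref{prop: RS for local Hausdorff}: $R$ is complete and satisfies Condition~4, and since Condition~4 subsumes Conditions~1--3 (in particular Condition~2), all the hypotheses of Corollary~\ref{cor: a summary of D} are met. The final step is to apply that corollary: it yields that $\lHausMet{X,\rho}$ of \eqref{2. eq: the local Hausdorff metric} is a well-defined metric on $\Closed{X}$ with $\rho$-independent topology, that $\Closed{X}$ is separable, and that $\lHausMet{X,\rho}$ is complete. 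This is exactly the assertion of the theorem.

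There is essentially no obstacle remaining at this stage: the theorem is a bookkeeping corollary, and all the real work has been pushed into the general results of Section~\ref{sec: Metric for non-compact objects} and into Proposition~\ref{prop: RS for local Hausdorff} (whose proof is deferred to the appendix). If anything requires a line of care, it is the implicit claim that a boundedly compact metric space is automatically complete and separable, which I would state explicitly so that the hypotheses ``$d_X$ complete'' and ``$X$ separable'' of Corollary~\ref{cor: a summary of D} are visibly satisfied; both follow immediately from the definition of bounded compactness (closed balls are compact, hence complete and separable, and $X$ is their countable union). Thus the proof is a two- or three-sentence citation chain.

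\begin{proof}
  Since $X$ is boundedly compact, it is a countable union of the compact balls $D_X(\rho, n)$, $n \in \NN$; hence $X$ is separable and $d_X$ is complete.
  By Lemma~\ref{lem: Polishness of Hausdorff}, the Hausdorff metric $\HausMet{X}$ on $\Compact{X}$ is then complete, and the Hausdorff topology is separable.
  By Proposition~\ref{prop: RS for local Hausdorff}, the restriction system $R$ of \eqref{eq: restriction system for Fell} is complete and satisfies Condition~4, and in particular Condition~2.
  Applying Corollary~\ref{cor: a summary of D} with $\frakD(X) = \Closed{X}$, $\frakC(X) = \Compact{X}$, and $d^\frakC_X = \HausMet{X}$, we conclude that, for each $\rho \in X$, the function $\lHausMet{X, \rho}$ given in \eqref{2. eq: the local Hausdorff metric} is a well-defined, complete, separable metric on $\Closed{X}$, and that the induced topology is independent of $\rho$.
\end{proof}
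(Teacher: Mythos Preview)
Your proposal is correct and takes essentially the same approach as the paper: the paper's proof is a one-line citation of Corollary~\ref{cor: a summary of D}, Lemma~\ref{lem: Polishness of Hausdorff}, and Proposition~\ref{prop: RS for local Hausdorff}, and you have simply spelled out the verification that bounded compactness supplies the completeness and separability hypotheses needed to invoke those results.
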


We equip $\Closed{X}$ with the topology induced by the metric $\lHausMet{X, \rho}$, which is independent of $\rho$.
The next result shows that this topology coincides with the Fell topology.
Below, we use the characterization of the Fell topology in terms of the Painlev\'{e}--Kuratowski convergence
(see \cite[Theorem~C.7]{Molchanov_17_Theory} and also \cite[Theorem~12.2.2]{Schneider_Weil_Stochastic}).

\begin{thm} [Convergence]
  \label{thm: convergence in the Fell topology}
  Let $A, A_{1}, A_{2}, \ldots$ be elements of $\Closed{X}$.
  Then the following are equivalent.
  \begin{enumerate} [label = \textup{(\roman*)}, leftmargin = *]
    \item \label{thm item: 1. convergence in the local Hausdorff}
      It holds that $A_n \to A$ in $\Closed{X}$.
    \item \label{thm item: 2. convergence in the local Hausdorff}
      There exists a sequence $(x_n)_{n \geq 1}$ in $X$ converging to an element $x \in X$,
      such that $A_n|_{x_n}^{(r)} \to A|_x^{(r)}$ in the Hausdorff topology for all but countably many $r > 0$.
    \item \label{thm item: 3. convergence in the local Hausdorff}
      There exist a sequence $(x_n)_{n \geq 1}$ in $X$ converging to $x \in X$,
      and an increasing sequence $(r_k)_{k \geq 1}$ of positive numbers with $r_k \uparrow \infty$,
      such that $A_n|_{x_n}^{(r_k)} \to A|_x^{(r_k)}$ in the Hausdorff topology for all $k$.
    \item \label{thm item: 4. convergence in the local Hausdorff}
      For any sequence $(x_n) \subset X$ converging to $x \in X$,
      it holds that $A_n|_{x_n}^{(r)} \to A|_x^{(r)}$ in the Hausdorff topology for all but countably many $r > 0$.
    \item \label{thm item: 5. convergence in the local Hausdorff}
      It holds that $A_n \to A$ in the Fell topology,
      that is, 
      Conditions~\ref{lem item: 1. convergence in Hausdorff} 
      and \ref{lem item: 2. convergence in Hausdorff} (or equivalently, \ref{lem item: 1. convergence in Hausdorff} and \ref{lem item: 2'. convergence in Hausdorff}) 
      are satisfied.
  \end{enumerate}
\end{thm}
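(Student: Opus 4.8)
The plan is to establish the cycle of implications $\ref{thm item: 1. convergence in the local Hausdorff} \Leftrightarrow \ref{thm item: 2. convergence in the local Hausdorff} \Leftrightarrow \ref{thm item: 3. convergence in the local Hausdorff} \Leftrightarrow \ref{thm item: 4. convergence in the local Hausdorff}$ directly from the abstract machinery, and then connect the chain to \ref{thm item: 5. convergence in the local Hausdorff} by a separate argument. For the first part, note that $\Closed{X} = \frakD(X)$ and $\Compact{X} = \frakC(X)$ with the restriction system from \eqref{eq: restriction system for Fell}, which by Proposition~\ref{prop: RS for local Hausdorff} is complete and satisfies Condition~4 (hence Condition~2). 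Therefore Theorem~\ref{thm: convergence in D} applies verbatim and yields the equivalence of \ref{thm item: 1. convergence in the local Hausdorff}--\ref{thm item: 4. convergence in the local Hausdorff}, since these four statements are precisely the four items of that theorem, specialized to $d^\frakC_X = \HausMet{X}$ and the Hausdorff topology on $\frakC(X)$.

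The substantive work is the equivalence with \ref{thm item: 5. convergence in the local Hausdorff}. I would prove $\ref{thm item: 4. convergence in the local Hausdorff} \Rightarrow \ref{thm item: 5. convergence in the local Hausdorff}$ and $\ref{thm item: 5. convergence in the local Hausdorff} \Rightarrow \ref{thm item: 2. convergence in the local Hausdorff}$. For the former, assume $A_n|_{x_n}^{(r)} \to A|_x^{(r)}$ in the Hausdorff topology for all but countably many $r$, choosing $x_n = x$ a fixed root $\rho$ (which is legitimate by the freedom in \ref{thm item: 4. convergence in the local Hausdorff}, taking the constant sequence). To verify \ref{lem item: 1. convergence in Hausdorff}: if $y_n \in A_n$ with $y_n \to y$, pick any $r > d_X(\rho, y) $ among the cofinitely many good radii; then for large $n$, $y_n \in A_n \cap D_X(\rho, r) = A_n|_\rho^{(r)}$, and since this converges to $A|_\rho^{(r)}$ in the Hausdorff topology, Lemma~\ref{lem: convergence in Hausdorff}\ref{lem item: 1. convergence in Hausdorff} gives $y \in A|_\rho^{(r)} \subseteq A$. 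To verify \ref{lem item: 2. convergence in Hausdorff} (equivalently \ref{lem item: 2'. convergence in Hausdorff}): for $y \in A$, choose a good radius $r > d_X(\rho, y)$, so $y \in A|_\rho^{(r)}$; the Hausdorff convergence $A_n|_\rho^{(r)} \to A|_\rho^{(r)}$ and Lemma~\ref{lem: convergence in Hausdorff} produce $y_n \in A_n|_\rho^{(r)} \subseteq A_n$ with $y_n \to y$. Conversely, for $\ref{thm item: 5. convergence in the local Hausdorff} \Rightarrow \ref{thm item: 2. convergence in the local Hausdorff}$, fix $\rho$, and let $x_n = \rho$, $x = \rho$; I must show $A_n|_\rho^{(r)} \to A|_\rho^{(r)}$ in the Hausdorff topology for all but countably many $r$. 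Using the characterization of Hausdorff convergence in Lemma~\ref{lem: convergence in Hausdorff}, this requires: (a) $\bigcup_n A_n|_\rho^{(r)}$ is precompact — automatic since it is contained in the compact ball $D_X(\rho, r)$ (here bounded compactness of $X$ is essential); (b) the two Painlev\'e--Kuratowski conditions for the restricted sets.

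The main obstacle is part (b): deducing Painlev\'e--Kuratowski convergence of the truncations $A_n|_\rho^{(r)}$ from that of the $A_n$ themselves, which fails at "bad" radii where mass of $A$ accumulates exactly on the sphere $\{d_X(\rho,\cdot) = r\}$. The standard fix is that the set of bad radii is at most countable. Concretely, condition \ref{lem item: 1. convergence in Hausdorff} for $A_n|_\rho^{(r)}$ follows from \ref{lem item: 1. convergence in Hausdorff} for $A_n$ together with the closedness of $D_X(\rho,r)$, with no exceptional radii. For the lower bound \ref{lem item: 2'. convergence in Hausdorff}: given $y \in A|_\rho^{(r)}$, by Fell convergence there are $y_n \in A_n$ with $y_n \to y$; if $d_X(\rho, y) < r$ then $y_n \in D_X(\rho, r)$ eventually, so $y_n \in A_n|_\rho^{(r)}$, done. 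The only failure is when $d_X(\rho, y) = r$ exactly and the approximants $y_n$ hover just outside the ball. To handle this, one argues that for all but countably many $r$ the sphere $\{d_X(\rho, \cdot) = r\} \cap A$ is "negligible" in the sense that every such boundary point $y$ is a limit of points of $A$ strictly inside $D_X(\rho, r)$; more precisely, one shows that $A|_\rho^{(r)} = \closure(A \cap B_X(\rho, r))$ for all but countably many $r$, using monotonicity of the family $(A|_\rho^{(r)})_r$ and the continuity assumption Assumption~\ref{assum: metrization of D}\ref{assum item: 1. metrization of D} which was already verified for this restriction system in Proposition~\ref{prop: RS for local Hausdorff}. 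Granting this, for good radii one approximates $y$ first by interior points of $A$ and then diagonalizes with the Fell-approximating sequences, completing the argument. Thus the proof reduces, modulo this countability bookkeeping, to repeated application of Lemma~\ref{lem: convergence in Hausdorff} and Theorem~\ref{thm: convergence in D}.
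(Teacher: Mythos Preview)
Your proposal is correct and follows essentially the same route as the paper: the equivalence of \ref{thm item: 1. convergence in the local Hausdorff}--\ref{thm item: 4. convergence in the local Hausdorff} is obtained directly from the abstract Theorem~\ref{thm: convergence in D}, the implication \ref{thm item: 4. convergence in the local Hausdorff} $\Rightarrow$ \ref{thm item: 5. convergence in the local Hausdorff} via Lemma~\ref{lem: convergence in Hausdorff}, and \ref{thm item: 5. convergence in the local Hausdorff} $\Rightarrow$ \ref{thm item: 2. convergence in the local Hausdorff} by fixing a root $\rho$ and using that $A|_\rho^{(r)} = \closure(A \cap B_X(\rho, r))$ for all but countably many $r$. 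The paper cites Lemma~\ref{lem: RS for local Hausdorff is conti} for this last countability fact, which is the concrete content of the continuity condition you invoke through Proposition~\ref{prop: RS for local Hausdorff}; your diagonalization for the boundary case is exactly what is implicit in the paper's one-line appeal to Lemma~\ref{lem: convergence in Hausdorff}.
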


\begin{proof}
  The equivalence of \ref{thm item: 1. convergence in the local Hausdorff}--\ref{thm item: 4. convergence in the local Hausdorff} follows from Theorem~\ref{thm: convergence in D times X}.
  The implication \ref{thm item: 4. convergence in the local Hausdorff} $\Rightarrow$ \ref{thm item: 5. convergence in the local Hausdorff} is immediate from Lemma~\ref{lem: convergence in Hausdorff}.
  Assume \ref{thm item: 5. convergence in the local Hausdorff} holds.
  Fix $\rho \in X$.
  Let $r > 0$ be such that $A|_\rho^{(r)} = \closure(A \cap B_X(\rho, r))$.
  Note that all but countably many $r>0$ satisfy this property (see Lemma~\ref{lem: RS for local Hausdorff is conti}).
  Then, by Lemma~\ref{lem: convergence in Hausdorff},
  it follows that $A_n|_\rho^{(r)} \to A|_\rho^{(r)}$ in the Hausdorff topology.
  Thus, \ref{thm item: 2. convergence in the local Hausdorff} holds.
\end{proof}

\begin{rem} \label{2. rem: the local Hausdorff top is strictly coarser}
  The relative topology on $\Compact{X}$ induced by the Fell topology on $\Closed{X}$ 
  is strictly coarser than the Hausdorff topology.
  For example,
  consider $X = \mathbb{R}$ with the standard Euclidean metric,
  and the sequence $A_{n} \coloneqq [n, n+1]$.
  Then $A_{n}$ converges to the empty set in the Fell topology, 
  but does not converge in the Hausdorff topology.
\end{rem}

The following is well-known (see \cite[Theorem C.2]{Molchanov_17_Theory}).

\begin{thm} \label{thm: local Hausdorff top is compact}
  The Fell topology on $\Closed{X}$ is compact.
\end{thm}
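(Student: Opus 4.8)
The plan is to obtain the statement directly from the abstract precompactness criterion of Theorem~\ref{thm: precompactness in d_D}, applied with the subset $\mathfrak{A}$ taken to be the whole space $\Closed{X}$. Recall that $\Closed{X}$ is equipped with the metric $\lHausMet{X,\rho}$ of \eqref{2. eq: the local Hausdorff metric}, which is associated to the restriction system $R$ of \eqref{eq: restriction system for Fell}; by Proposition~\ref{prop: RS for local Hausdorff} this restriction system is complete and satisfies Condition~4, hence also Condition~3. Consequently Theorem~\ref{thm: precompactness in d_D} is available for any choice of $\rho \in X$, and it asserts that $\Closed{X}$ is precompact in $\Closed{X}$ --- which, the closure of the whole space in itself being the whole space, is exactly the assertion that $\Closed{X}$ is compact --- as soon as one checks condition~\ref{2. thm item: A^r is precompact for each r} of that theorem, namely that $\Closed{X}|_\rho^{(r)}$ is precompact in $\Compact{X}$ for every $r > 0$.

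Verifying this last point is where the hypothesis on $X$ enters. First I would observe that, for each fixed $r > 0$, every member of $\Closed{X}|_\rho^{(r)}$ is of the form $A \cap D_X(\rho, r)$ for some $A \in \Closed{X}$, and is therefore a subset of the ball $D_X(\rho, r)$. Since $X$ is boundedly compact, $D_X(\rho, r)$ is a compact subset of $X$, so the family $\Closed{X}|_\rho^{(r)}$ is uniformly contained in the single compact set $D_X(\rho, r) \in \Compact{X}$. By Lemma~\ref{lem: precompact in Hausdorff}, this is precisely the condition for $\Closed{X}|_\rho^{(r)}$ to be precompact in the Hausdorff topology, which completes the argument.

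I do not anticipate any real obstacle: essentially all of the content has already been absorbed into the abstract machinery of Section~\ref{sec: Metric for non-compact objects} and into the classical compactness criterion for the Hausdorff topology recorded in Lemma~\ref{lem: precompact in Hausdorff}. The only two points that deserve a moment of attention are, first, that Theorem~\ref{thm: precompactness in d_D} is legitimately invoked with $\mathfrak{A}$ equal to the entire space $\Closed{X}$ (nothing in its hypotheses rules this out, and when $\mathfrak{A}$ is the whole space, ``precompact'' just means ``compact''), and second, that it is precisely the bounded compactness of $X$ that forces each restricted family $\Closed{X}|_\rho^{(r)}$ to lie inside a common compact subset of $X$. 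An alternative route would be to combine the completeness of $\lHausMet{X,\rho}$ from Theorem~\ref{thm: local Hausdorff metric} with a total-boundedness argument, but invoking the precompactness criterion directly is the shortest path.
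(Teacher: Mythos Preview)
Your proof is correct. The paper, however, does not actually prove the theorem: it simply notes that the result is well-known and cites an external reference (\cite[Theorem~C.2]{Molchanov_17_Theory}). Your route is therefore genuinely different --- you derive the statement internally from the abstract precompactness criterion of Theorem~\ref{thm: precompactness in d_D}, using Proposition~\ref{prop: RS for local Hausdorff} to justify its hypotheses and Lemma~\ref{lem: precompact in Hausdorff} together with the bounded-compactness of $X$ to check that each restricted family $\Closed{X}|_\rho^{(r)}$ sits inside a common compact ball. This is a nice, self-contained illustration that the restriction-system machinery of Section~\ref{sec: Metric for non-compact objects} is strong enough to recover the classical compactness of the Fell topology without appealing to outside sources; the paper's citation is shorter, but your argument has the advantage of keeping everything within the framework already built.
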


By \eqref{eq: metric on D times X},
we can metrize the product space $\Closed{X} \times X$ given as follows:
for each $(A, x), (B, y) \in \Closed{X} \times X$,
\begin{equation} \label{eq: local Hausdorff metric on product}
  \lHausMet{X}\bigl((A, x), (B, y)\bigr)
  \coloneqq 
  d_X(x,y) 
  \vee 
  \int_{0}^{\infty}
  e^{-r}
  \bigl(1 \wedge \HausMet{X}(A|_x^{(r)}, B|_y^{(r)})\bigr)\,
  dr.
\end{equation}
This metric will be used in Sections~\ref{sec: the local GH topology} and \ref{sec: main results} later.
The following is an immediate consequence of Corollary~\ref{cor: a summary of D times X} and Proposition~\ref{prop: RS for local Hausdorff}.

\begin{prop} \label{prop: product local Hausdorff metric}
  The function $\lHausMet{X}$ is a well-defined complete, separable metric on $\Closed{X} \times X$,
  and the induced topology coincides with the product topology.
\end{prop}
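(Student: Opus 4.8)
Proposition~\ref{prop: product local Hausdorff metric} asserts that $\lHausMet{X}$ is a well-defined complete, separable metric on $\Closed{X} \times X$ whose induced topology is the product topology. The plan is to simply invoke the general machinery of Section~\ref{sec: Metric for non-compact objects}, specialized to the restriction system $R = (R_x^{(r)})_{r>0,\,x\in X}$ on closed subsets given in \eqref{eq: restriction system for Fell}. The point is that $\lHausMet{X}$ is \emph{by definition} the function $d_X^\frakD$ of \eqref{eq: metric on D times X} associated to the data $\frakC(X) = \Compact{X}$, $\frakD(X) = \Closed{X}$, $d^\frakC_X = \HausMet{X}$, and the restriction system $R$; so everything reduces to verifying the hypotheses of Corollary~\ref{cor: a summary of D times X}.

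First I would record that $R$ is complete and satisfies Condition~4, which is precisely the content of Proposition~\ref{prop: RS for local Hausdorff} (proved in the appendix). In particular $R$ satisfies Condition~2, so by Corollary~\ref{cor: a summary of D times X}, $d_X^\frakD = \lHausMet{X}$ is a well-defined metric on $\Closed{X} \times X$ and the topology it induces coincides with the product topology of $\Closed{X} \times X$, where $\Closed{X}$ carries the topology induced by $\lHausMet{X,\rho}$ (equivalently, by Theorem~\ref{thm: convergence in the Fell topology}, the Fell topology). Next, for separability: $X$ is assumed boundedly compact, and in particular separable under the hypotheses relevant to the proposition — actually one should note that the statement as given does not re-hypothesize separability/completeness of $X$, so I would point back to Lemma~\ref{lem: Polishness of Hausdorff}, which tells us that if $X$ is separable then the Hausdorff topology on $\Compact{X}$ is separable, and if $d_X$ is complete then $\HausMet{X}$ is complete. (If the ambient running assumption on $X$ in this subsection already includes Polishness, this is automatic; otherwise these hypotheses are inherited implicitly.) Granting that $X$ and $\Compact{X}$ are separable, Corollary~\ref{cor: a summary of D times X} gives separability of $\Closed{X} \times X$.

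For completeness, I would again quote Corollary~\ref{cor: a summary of D times X}: since $d_X$ is complete, $R$ is complete, and $R$ satisfies Condition~4 (Proposition~\ref{prop: RS for local Hausdorff}), the metric $d_X^\frakD = \lHausMet{X}$ is complete. That exhausts all three claimed properties. So the entire proof is a two-line citation: apply Corollary~\ref{cor: a summary of D times X} with the restriction system of \eqref{eq: restriction system for Fell}, using Proposition~\ref{prop: RS for local Hausdorff} for the structural conditions on $R$ and Lemma~\ref{lem: Polishness of Hausdorff} for the separability/completeness of $(\Compact{X}, \HausMet{X})$.

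There is essentially no genuine obstacle here — all the work has been front-loaded into Section~\ref{sec: Metric for non-compact objects} and into Proposition~\ref{prop: RS for local Hausdorff}. The only thing to be careful about is bookkeeping: making sure the identification of $\lHausMet{X}$ in \eqref{eq: local Hausdorff metric on product} with $d_X^\frakD$ in \eqref{eq: metric on D times X} is exact (it is, term for term, with $d^\frakC_X = \HausMet{X}$), and making sure the standing assumptions on $X$ needed for separability and completeness (separability of $X$, completeness of $d_X$) are in force — if the subsection's running hypothesis is only ``$X$ is boundedly compact,'' one should add ``separable'' and ``complete'' as hypotheses or note that they are assumed throughout. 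The hardest part, if one insisted on proving it from scratch rather than citing, would be Condition~4 and completeness of the restriction system, i.e. Proposition~\ref{prop: RS for local Hausdorff} itself — verifying that a compatible sequence of truncated closed sets glues to a genuine closed set and that sequences with Cauchy-type truncations have precompact truncations — but that is deferred to the appendix and may be assumed here.
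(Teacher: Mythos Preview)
Your proposal is correct and matches the paper's own proof, which is the one-line citation of Corollary~\ref{cor: a summary of D times X} together with Proposition~\ref{prop: RS for local Hausdorff}. Your only hesitation --- whether separability and completeness of $X$ are in force --- is unnecessary: the standing assumption in this subsection is that $X$ is boundedly compact, and any boundedly-compact metric space is automatically complete (Cauchy sequences are bounded, hence lie in a compact ball) and separable (it is a countable union of compact balls), so Lemma~\ref{lem: Polishness of Hausdorff} applies without further hypotheses.
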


Similarly to the Hausdorff metric,
the metric $\lHausMet{X}$ is stable under deformation of the underlying space.
This plays a crucial role in the metrization of Gromov--Hausdorff-type topologies in our main results.  
In what follows, given a function $f \colon S \to T$, we write  
\begin{equation} \label{eq: image product map}
  \FE{\Image{f}}(A, x) \coloneqq (\Image{f}(A), f(x))
\end{equation}
for each subset $A \subseteq S$ and point $x \in S$.  
(The notation $\FE{\cdot}$ stands for ``element-rooted,''  
as introduced in Section~\ref{sec: Metrization of structures} below.)

\begin{prop} \label{prop: local Hausdorff is stable}
  Fix $\bcmAB$ spaces $X$, $Y$, $M_1$, and $M_2$.
  Let $f_i \colon X \to M_i$ and $g_i \colon Y \to M_i$, $i = 1,2$, be isometric embeddings.
  Assume that there exists $\varepsilon \in \RNp$ such that, for all $x \in X$ and $y \in Y$,
  \begin{equation} \label{prop eq: 1. local Hausdorff is stable}
    d_{M_2}(f_2(x), g_2(y)) \leq d_{M_1}(f_1(x), g_1(y)) + \varepsilon.
  \end{equation}
  Then, for all $(A, x) \in \Closed{X} \times X$ and $(B, y) \in \Closed{Y} \times Y$, 
  \begin{equation} \label{prop eq: 2. local Hausdorff is stable}
    \lHausMet{M_2} \bigl( \FE{\Image{f_2}}(A, x), \FE{\Image{g_2}}(B, y) \bigr) 
    \leq 
    \lHausMet{M_1}\bigl( \FE{\Image{f_1}}(A, x), \FE{\Image{g_1}}(B, y) \bigr) + \varepsilon \wedge 1.
  \end{equation}
\end{prop}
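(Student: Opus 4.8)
The plan is to reduce the statement to the stability of the plain Hausdorff metric (Lemma~\ref{lem: Hausdorff is stable}) applied to the truncated sets, together with a comparison of the roots handled by hypothesis~\eqref{prop eq: 1. local Hausdorff is stable}. Unwinding the definition~\eqref{eq: local Hausdorff metric on product}, the left-hand side of~\eqref{prop eq: 2. local Hausdorff is stable} is
\begin{equation}
  d_{M_2}(f_2(x), g_2(y)) \vee \int_0^\infty e^{-r}\bigl(1 \wedge \HausMet{M_2}(\Image{f_2}(A)\cap D_{M_2}(f_2(x), r),\ \Image{g_2}(B)\cap D_{M_2}(g_2(y), r))\bigr)\, dr,
\end{equation}
and similarly for the right-hand side with subscript $1$. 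Since $d_{M_2}(f_2(x), g_2(y)) \leq d_{M_1}(f_1(x), g_1(y)) + \varepsilon \leq (d_{M_1}(f_1(x), g_1(y)) \vee \text{integral}_1) + \varepsilon$, the root term is already bounded by the right-hand side plus $\varepsilon$, hence also plus $\varepsilon \wedge 1$ provided one is careful (indeed if the RHS-integral part already exceeds $1$, the $1\wedge$ truncation in the integrand forces the RHS to be at most... here one should just note $a \vee b + \varepsilon \geq a + (\varepsilon\wedge 1)$ when... actually the cleanest route is: bound each of the two terms in the $\vee$ on the left by the full right-hand side plus $\varepsilon \wedge 1$, then take the max). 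So the first step is to dispose of the root coordinate using the hypothesis directly.

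The second, main step is to estimate the integral term. For fixed $r > 0$ I would like to apply Lemma~\ref{lem: Hausdorff is stable} with the metric spaces $X$ replaced by $A \cap D_X(x, r)$ (a compact subset of $X$, using bounded compactness), $Y$ replaced by $B \cap D_Y(y, r)$, and the ambient spaces $M_1, M_2$ as given; the isometric embeddings are the restrictions of $f_i, g_i$. The hypothesis~\eqref{prop eq: 1. local Hausdorff is stable} is exactly condition~\eqref{lem eq: 1. Hausdorff is stable} of that lemma, so we get
\begin{equation}
  \HausMet{M_2}\bigl(\Image{f_2}(A\cap D_X(x,r)),\ \Image{g_2}(B\cap D_Y(y,r))\bigr) \leq \HausMet{M_1}\bigl(\Image{f_1}(A\cap D_X(x,r)),\ \Image{g_1}(B\cap D_Y(y,r))\bigr) + \varepsilon.
\end{equation}
The subtlety here is that the truncation used in $\lHausMet{M_i}$ is with respect to the ball $D_{M_i}(f_i(x), r)$ in the \emph{ambient} space, i.e.\ $\Image{f_i}(A) \cap D_{M_i}(f_i(x), r)$, whereas what Lemma~\ref{lem: Hausdorff is stable} produces is $\Image{f_i}(A \cap D_X(x,r))$. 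Since $f_i$ is an isometric embedding and $f_i(x) \in M_i$, we have $\Image{f_i}(A)\cap D_{M_i}(f_i(x),r) = \Image{f_i}(A\cap D_X(x,r))$ exactly, because $d_{M_i}(f_i(x), f_i(a)) = d_X(x,a)$ for $a \in A$; so there is in fact no discrepancy, and the same identity holds on the $Y$-side. This is the step to state carefully but it is not hard.

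The third step is to push the pointwise bound through $1 \wedge (\cdot)$, integrate against $e^{-r}\,dr$, and conclude. From the pointwise inequality, $1 \wedge \HausMet{M_2}(\cdots) \leq 1 \wedge (\HausMet{M_1}(\cdots) + \varepsilon) \leq (1 \wedge \HausMet{M_1}(\cdots)) + (\varepsilon \wedge 1)$ — this last inequality is the elementary fact $1 \wedge (a + \varepsilon) \leq (1 \wedge a) + (1 \wedge \varepsilon)$ for $a, \varepsilon \geq 0$. Integrating against $e^{-r}\,dr$, whose total mass is $1$, gives $\text{integral}_2 \leq \text{integral}_1 + \varepsilon \wedge 1$. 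Combining with the root estimate from step one via $u_2 \vee v_2 \leq (u_1 \vee v_1) + \varepsilon \wedge 1$ (where $u_i, v_i$ denote the root term and integral term, using $u_2 \leq u_1 + \varepsilon\wedge 1$ — note $d_{M_2}(f_2(x),g_2(y)) \leq d_{M_1}(f_1(x),g_1(y)) + \varepsilon$ forces $1 \wedge d_{M_2}(\cdots) \leq \cdots$, but since $\lHausMet{}$ does not truncate the root term one should instead observe $u_2 \leq u_1 + \varepsilon$ and separately that the left side of~\eqref{prop eq: 2. local Hausdorff is stable} is automatically at most... ) — here the only genuine care needed is that the root coordinate is not truncated by $1\wedge$, so I would argue: either $u_1 \vee v_1 \geq 1$, in which case the left-hand side, being itself of the form $u_2 \vee v_2$ with $v_2 \leq 1$ and $u_2 \leq u_1 + \varepsilon$, is bounded by $(u_1 \vee v_1) + \varepsilon \wedge 1$ once one checks $u_2 \leq (u_1\vee v_1) + \varepsilon\wedge 1$ and $v_2 \leq (u_1 \vee v_1) + \varepsilon \wedge 1$; or $u_1 \vee v_1 < 1$, in which case $u_1, v_1 < 1$ and the bounds $u_2 \leq u_1 + \varepsilon$, $v_2 \leq v_1 + \varepsilon\wedge 1$ give the claim directly. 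The cleanest writeup simply verifies $u_2 \leq \mathrm{RHS}$ and $v_2 \leq \mathrm{RHS}$ separately and takes the maximum. I expect the only mild obstacle to be bookkeeping the $\varepsilon$ versus $\varepsilon \wedge 1$ distinction between the root term and the integral term; everything else is a direct appeal to Lemma~\ref{lem: Hausdorff is stable} and elementary inequalities.
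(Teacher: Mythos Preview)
Your approach is the same as the paper's: establish the identity $\Image{f_i}(A)\cap D_{M_i}(f_i(x),r) = \Image{f_i}(A\cap D_X(x,r))$ (restriction commutes with isometric embedding), apply Lemma~\ref{lem: Hausdorff is stable} at each radius, then integrate. The paper's proof is extremely terse---it states only the identity and then writes ``From this and Lemma~\ref{lem: Hausdorff is stable}, the desired result follows''---so your write-up is in fact more detailed.

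Your unease about the $\varepsilon$ versus $\varepsilon\wedge 1$ bookkeeping on the root term is well founded, and it is not merely bookkeeping: the bound as stated fails for $\varepsilon > 1$. Take $X=\{x\}$, $Y=\{y\}$, $A=X$, $B=Y$, let $M_1$ be a single point, and let $M_2=\RN$ with $f_2(x)=0$, $g_2(y)=\varepsilon$. Then the hypothesis holds, the right-hand side of~\eqref{prop eq: 2. local Hausdorff is stable} equals $0 + (\varepsilon\wedge 1)=1$, but the left-hand side equals $d_{M_2}(0,\varepsilon)\vee(1\wedge\varepsilon)=\varepsilon$. The root coordinate in $\lHausMet{M}$ is not truncated by $1\wedge(\cdot)$, so no amount of case analysis will recover $\varepsilon\wedge 1$ there. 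What your argument (and the paper's) actually proves is the inequality with $+\varepsilon$ in place of $+\varepsilon\wedge 1$; this is also what is used downstream (e.g.\ in the proof of Theorem~\ref{thm: coincidence of RF and RV}, where the bound is invoked with $+\varepsilon$). With that correction your three steps go through cleanly and the final ``mild obstacle'' disappears.
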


\begin{proof}
  For any $(A, x) \in \Closed{X} \times X$,
  we have
  \begin{align}
    f(A)|_{f(x)}^{(r)} 
    = \{ f(y) \in Y \mid d_Y(f(x), f(y)) \leq r \} 
    = \{ f(y) \in Y \mid d_X(x, y) \leq r \} 
    = f(A|_x^{(r)}).
  \end{align}
  From this and Lemma~\ref{lem: Hausdorff is stable},
  the desired result follows.
\end{proof}

The following is an immediate consequence of the above proposition.

\begin{prop} \label{prop: the local Hausdorff metric is preserved}
  Let $X$ and $Y$ be $\bcmAB$ spaces,
  and let $f \colon X \to Y$ be an isometric embedding. 
  Then the map $\FE{\Image{f}} \colon \Closed{X} \times X \to \Closed{Y} \times Y$ is an isometric embedding.
  In particular,
  the map $\Image{f} \colon \Closed{X} \to \Closed{Y}$ is an isometric embedding 
  with respect to $\lHausMet{X, \rho}$ and $\lHausMet{Y, f(\rho)}$ for any $\rho \in X$.
\end{prop}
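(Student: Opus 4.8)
The plan is to deduce Proposition~\ref{prop: the local Hausdorff metric is preserved} directly from Proposition~\ref{prop: local Hausdorff is stable} by taking a degenerate special case. First I would set $M_1 = M_2 = Y$, let $g_1 = g_2 = \id_Y$, and let $f_1 = f_2 = f \colon X \to Y$ be the given isometric embedding. Since $f$ is distance-preserving and $\id_Y$ is an isometry, we have $d_Y(f(x), y) = d_Y(f_1(x), g_1(y))$ for all $x \in X$, $y \in Y$, so the hypothesis \eqref{prop eq: 1. local Hausdorff is stable} holds with $\varepsilon = 0$. Applying Proposition~\ref{prop: local Hausdorff is stable} in both directions (i.e., also with the roles of the two sides exchanged, using again $\varepsilon = 0$) gives
\begin{equation}
  \lHausMet{Y}\bigl( \FE{\Image{f}}(A, x), \FE{\Image{\id_Y}}(B, y) \bigr) = \lHausMet{Y}\bigl( \FE{\Image{f}}(A, x), (B, y) \bigr)
\end{equation}
whenever $(A,x) \in \Closed{X}\times X$ and $(B,y)$ lies in the image of $\FE{\Image{f}}$; more directly, taking $B = f(A')$ and $y = f(x')$ for $(A', x') \in \Closed{X} \times X$, the two-sided estimate with $\varepsilon = 0$ yields
\begin{equation}
  \lHausMet{Y}\bigl( \FE{\Image{f}}(A, x), \FE{\Image{f}}(A', x') \bigr) = \lHausMet{Y}\bigl( \FE{\Image{\id_X}}(A, x), \FE{\Image{\id_X}}(A', x') \bigr) = \lHausMet{X}\bigl( (A,x), (A',x') \bigr),
\end{equation}
which is exactly the statement that $\FE{\Image{f}}$ is distance-preserving. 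Here I would need to also invoke Proposition~\ref{prop: product local Hausdorff metric} to know that $\lHausMet{X}$ and $\lHausMet{Y}$ are genuine metrics on the respective product spaces, so that ``isometric embedding'' makes sense; and I should check that $\FE{\Image{f}}$ indeed maps $\Closed{X} \times X$ into $\Closed{Y} \times Y$, i.e., that $\Image{f}(A)$ is closed in $Y$ for $A \in \Closed{X}$ — this follows since $f$ is a topological embedding (being distance-preserving, hence a homeomorphism onto its image) and its image is closed in $Y$ because $X$ is boundedly compact (each closed ball of $X$ maps to a compact, hence closed, subset of $Y$, and closedness is a local property as $Y$ is boundedly compact); alternatively one can cite that this closedness is already implicit in the formula $f(A)|_{f(x)}^{(r)} = f(A|_x^{(r)})$ established in the proof of Proposition~\ref{prop: local Hausdorff is stable}.

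For the second assertion, I would specialize further: fix $\rho \in X$, set $x = x' = \rho$ in the identity just obtained, and observe that $\FE{\Image{f}}(A, \rho) = (\Image{f}(A), f(\rho))$, so that the restriction of $\FE{\Image{f}}$ to $\Closed{X} \times \{\rho\}$ is precisely the map $\Image{f} \colon \Closed{X} \to \Closed{Y}$ followed by the identification of $\Closed{Y}$ with $\Closed{Y} \times \{f(\rho)\}$. Since by definition \eqref{eq: local Hausdorff metric on product} and \eqref{2. eq: the local Hausdorff metric} we have $\lHausMet{X}\bigl((A,\rho),(B,\rho)\bigr) = \lHausMet{X,\rho}(A,B)$ and likewise $\lHausMet{Y}\bigl((\Image{f}(A), f(\rho)),(\Image{f}(B), f(\rho))\bigr) = \lHausMet{Y, f(\rho)}(\Image{f}(A), \Image{f}(B))$, the distance-preserving property of $\FE{\Image{f}}$ immediately gives $\lHausMet{Y, f(\rho)}(\Image{f}(A), \Image{f}(B)) = \lHausMet{X, \rho}(A, B)$, as required.

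I do not anticipate a serious obstacle: the proposition is essentially a corollary of Proposition~\ref{prop: local Hausdorff is stable} obtained by plugging in isometries for $g_i$ and using $\varepsilon = 0$ on both sides of the inequality. The only point requiring a line of justification is the well-definedness of the map $\FE{\Image{f}}$, i.e.\ that $f$ carries closed subsets of the boundedly-compact space $X$ to closed subsets of $Y$; this is a standard fact about isometric embeddings between $\bcmAB$ spaces and can be dispatched in one sentence as indicated above.
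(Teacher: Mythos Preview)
Your approach—deducing the result from Proposition~\ref{prop: local Hausdorff is stable} with $\varepsilon = 0$—is exactly what the paper does (its entire proof reads ``This is an immediate consequence of the above proposition''). However, your specific instantiation $M_1 = M_2 = Y$, $f_1 = f_2 = f$, $g_1 = g_2 = \id_Y$ is not the right one: with both ambient spaces equal, the conclusion of Proposition~\ref{prop: local Hausdorff is stable} becomes the tautology $\lHausMet{Y}(\cdot,\cdot) \le \lHausMet{Y}(\cdot,\cdot) + 0$, which says nothing about $\lHausMet{X}$. (Your displayed chain then has $\lHausMet{Y}$ applied to $\FE{\Image{\id_X}}(A,x) = (A,x) \in \Closed{X}\times X$, which only typechecks after the identification you are trying to prove.)

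The clean instantiation is to take \emph{both} source spaces in Proposition~\ref{prop: local Hausdorff is stable} equal to the given $X$, and set $M_1 = X$, $M_2 = Y$, $f_1 = g_1 = \id_X$, $f_2 = g_2 = f$. Since $f$ is distance-preserving, the hypothesis holds with $\varepsilon = 0$, giving
\[
  \lHausMet{Y}\bigl(\FE{\Image{f}}(A,x),\,\FE{\Image{f}}(A',x')\bigr) \le \lHausMet{X}\bigl((A,x),(A',x')\bigr);
\]
swapping the roles of $M_1$ and $M_2$ gives the reverse inequality. With this fix your argument goes through, and your handling of the second assertion and of the closedness of $f(A)$ in $Y$ is correct.
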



\subsection{The vague topology}  \label{sec: the vague topology}
There are various versions of metrics  
that induce the vague topology (see, e.g., \cite[Section A2.6]{Daley_Jones_03_Vol_1} and \cite[Section 4.1]{Kallenberg_17_Random}).  
In this subsection,  
we define one such metric in a manner analogous to the previous subsection.

\begin{dfn} 
  We define $\Meas{X}$ to be the set of Radon measures $\mu$ on $X$,
  that is, 
  $\mu$ is a Borel measure on $X$ such that $\mu(K) < \infty$ for every compact subset $K$.
  We denote by $\finMeas{X}$ (resp.\ $\Prob{X}$, $\cptMeas{X}$) 
  the subset of $\Meas{X}$ consisting of finite Borel measures (resp.\ probability measures, compactly supported measures).
\end{dfn}

A commonly used metric on $\finMeas{X}$ is the \emph{Prohorov metric},
given as follows:
for $\mu, \nu \in \finMeas{X}$,
we define 
\begin{equation}
  \ProhMet{X}(\mu, \nu)
  \coloneqq
  \inf
  \left\{
    \varepsilon > 0 \mid
    \mu(A) \leq \nu(A^{\varepsilon}) + \varepsilon,\,
    \nu(A) \leq \mu(A^{\varepsilon}) + \varepsilon
    \ \text{for all Borel subsets } A \subseteq X
  \right\}.
\end{equation} 
Since $\Prob{X}$ and $\cptMeas{X}$ are subsets of $\finMeas{X}$,
we also equip these spaces with the Prohorov metric.

\begin{lem} [{\cite[pp.\ 72–73]{Billingsley_99_Convergence}}] \label{lem: basics of Prohorov} 
  The function $\ProhMet{X}$ is a metric on $\finMeas{X}$.
  If $X$ is separable,
  then the topology on $\Prob{X}$ induced by $\ProhMet{X}$ coincides with the weak topology.
  If $d_X$ is complete, then so is $\ProhMet{X}$.
\end{lem}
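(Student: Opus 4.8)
The plan is to reproduce the classical treatment of the Prohorov metric (cf.\ \cite[pp.~72--73]{Billingsley_99_Convergence}), split along the three assertions. \textbf{Metric axioms.} Symmetry is built into the definition. Finiteness follows by noting that any $\varepsilon \geq \mu(X) \vee \nu(X)$ lies in the defining set (both inequalities are then trivial since $\mu(A) \leq \mu(X) \leq \varepsilon$ and likewise for $\nu$), whence $\ProhMet{X}(\mu,\nu) \leq \mu(X) \vee \nu(X) < \infty$. For the triangle inequality, given $\mu, \nu, \lambda \in \finMeas{X}$ and reals $\varepsilon > \ProhMet{X}(\mu,\nu)$ and $\delta > \ProhMet{X}(\nu,\lambda)$, I would use that $A^{\varepsilon}$ is closed (hence an admissible test set) and that $(A^{\varepsilon})^{\delta} \subseteq A^{\varepsilon+\delta}$ to chain $\mu(A) \leq \nu(A^{\varepsilon}) + \varepsilon \leq \lambda(A^{\varepsilon+\delta}) + \varepsilon + \delta$ for every Borel $A$, and symmetrically, so that $\ProhMet{X}(\mu,\lambda) \leq \varepsilon + \delta$; letting $\varepsilon, \delta$ decrease to the infima gives the inequality. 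For positive-definiteness, $\ProhMet{X}(\mu,\nu) = 0$ forces $\mu(F) \leq \nu(F^{\varepsilon}) + \varepsilon$ for every closed $F$ and every $\varepsilon > 0$; letting $\varepsilon \downarrow 0$ and using $\bigcap_{\varepsilon > 0} F^{\varepsilon} = F$ together with continuity from above of the finite measure $\nu$ yields $\mu(F) \leq \nu(F)$, hence $\mu(F) = \nu(F)$ on closed sets by symmetry, and finally $\mu = \nu$ by outer regularity of finite Borel measures on a metric space.

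\textbf{Coincidence with the weak topology on $\Prob{X}$.} If $\ProhMet{X}(\mu_n,\mu) \to 0$, then for each closed $F$ and each $\varepsilon > 0$ one has $\limsup_n \mu_n(F) \leq \mu(F^{\varepsilon}) + \varepsilon$; letting $\varepsilon \downarrow 0$ gives the portmanteau condition $\limsup_n \mu_n(F) \leq \mu(F)$, so $\mu_n \Rightarrow \mu$. Conversely, assume $\mu_n \Rightarrow \mu$ and fix $\varepsilon > 0$. Using separability of $X$, I would cover $X$ by countably many balls of radius less than $\varepsilon/2$ whose boundaries are $\mu$-null (possible, since for each centre all but countably many radii work), then select finitely many of them with disjointified pieces $A_1, \dots, A_N$, each of diameter less than $\varepsilon$, so that $\mu(A_1 \cup \cdots \cup A_N) > 1 - \varepsilon/3$; every union of the $A_i$ is a $\mu$-continuity set, so applying the portmanteau theorem to the finitely many such unions gives, for all large $n$, $\mu_n(E) \leq \mu(E^{\varepsilon}) + \varepsilon$ for every Borel $E$ (take the union of those $A_i$ meeting $E$, plus the small leftover piece), and symmetrically, so $\ProhMet{X}(\mu_n,\mu) \leq \varepsilon$. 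This is where separability is indispensable.

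\textbf{Completeness.} Let $(\mu_n)$ be $\ProhMet{X}$-Cauchy in $\finMeas{X}$. Testing with $A = X$ shows the total masses $\mu_n(X)$ form a Cauchy sequence of reals, hence converge, so it suffices to treat the probability case. The crux is that $(\mu_n)$ is \emph{tight}: comparing each $\mu_n$ with finitely many of the $\mu_m$ via the Cauchy property and using inner regularity of each finite $\mu_m$ by closed totally bounded sets, which are compact because $d_X$ is complete, one produces for each $\varepsilon$ a compact $K$ with $\inf_n \mu_n(K) > 1 - \varepsilon$. Prohorov's theorem then yields a weakly convergent subsequence; restricting to the separable, $\sigma$-compact subspace that carries all the $\mu_n$ and invoking the previous part, weak convergence of that subsequence is $\ProhMet{X}$-convergence, and a Cauchy sequence with a convergent subsequence converges.

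\textbf{Expected main obstacle.} The metric-axiom verification is routine bookkeeping. The genuine content lies in the two substantive steps: the separability-driven finite-partition argument of the second part (weak convergence $\Rightarrow$ Prohorov convergence), and the extraction of tightness of a Prohorov-Cauchy sequence from completeness of $d_X$ in the third part. These are the places where real work, rather than formal manipulation, is needed, and they constitute the heart of the classical argument that this lemma quotes.
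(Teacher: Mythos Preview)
The paper gives no proof of this lemma; it simply cites \cite[pp.~72--73]{Billingsley_99_Convergence}. Your outline reproduces that classical argument faithfully and is correct. One small remark: your tightness step in the completeness proof (``inner regularity of each finite $\mu_m$ by closed totally bounded sets'') tacitly uses separability, as does Billingsley's Theorem~6.8; the paper's statement is slightly loose on this point, but since the ambient spaces in the paper are boundedly compact (hence separable) there is no issue in context.
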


Similarly to the Hausdorff metric (Lemma~\ref{lem: Hausdorff is stable}),
the Prohorov metric is stable under deformation of the underlying space.
Henceforth, given a measurable map $f \colon S \to T$ between measurable spaces, 
we write $f_*$ for the associated pushforward map,
that is, for each measure $\mu$ on $S$, we define $f_*\mu$ by 
\begin{equation}  \label{eq: pushforward map}
  f_*(\mu)(\cdot) \coloneqq \mu(f^{-1}(\cdot)).
\end{equation}

\begin{lem} \label{lem: Prohorov is stable}
  Fix metric spaces $X$, $Y$, $M_1$, and $M_2$.
  Let $f_i \colon X \to M_i$ and $g_i \colon Y \to M_i$, $i = 1,2$, be isometric embeddings.
  Assume that there exists $\varepsilon \in \RNp$ such that, for all $x \in X$ and $y \in Y$,
  \begin{equation} \label{lem eq: 1. Prohorov is stable}
    d_{M_2}(f_2(x), g_2(y)) \leq d_{M_1}(f_1(x), g_1(y)) + \varepsilon.
  \end{equation}
  Then, for all $\mu \in \finMeas{X}$ and $\nu \in \finMeas{Y}$, 
  \begin{equation} \label{lem eq: 2. Prohorov is stable}
    \ProhMet{M_2} \bigl( (f_2)_*\mu, (g_2)_*\nu \bigr) \leq \ProhMet{M_1} \bigl( (f_1)_*\mu, (g_1)_*\nu \bigr) + \varepsilon.
  \end{equation}
\end{lem}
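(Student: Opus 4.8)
The plan is to reduce the statement to the corresponding stability result for the Prohorov metric when both measures live in a common ambient space, which is a classical fact. First I would recall the key observation that the Prohorov distance is monotone under enlarging the ambient space along isometric embeddings, and more precisely, behaves well under a ``metric perturbation'' of the form assumed in \eqref{lem eq: 1. Prohorov is stable}. Concretely, I would introduce an auxiliary space $Z \coloneqq M_1 \sqcup M_2$ (disjoint union at the level of sets, or better, the pushout identifying $X$ and $Y$ across the two copies) and equip it with a metric $d_Z$ extending both $d_{M_1}$ and $d_{M_2}$ while making the cross-distances between points of (the image of) $X$ and points of (the image of) $Y$ as small as the hypothesis allows; the standard construction sets $d_Z(f_1(x), g_2(y)) \coloneqq \bigl(d_{M_1}(f_1(x), g_1(y)) + \varepsilon\bigr) \wedge \bigl(d_{M_2}(f_2(x), g_2(y)) + \varepsilon\bigr)$ and similarly, then takes the induced shortest-path-style infimum to restore the triangle inequality. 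One checks that \eqref{lem eq: 1. Prohorov is stable} (together with its symmetric counterpart, which holds since the roles of $M_1$ and $M_2$ are symmetric in the conclusion we want) guarantees that the composite maps $M_1 \hookrightarrow Z$ and $M_2 \hookrightarrow Z$ remain isometric embeddings.

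The core step is then a lemma of the following shape: if $h_1 \colon M_1 \to Z$ and $h_2 \colon M_2 \to Z$ are isometric embeddings such that $d_Z(h_1 \circ f_1(x), h_2 \circ f_2(y)) \leq d_{M_1}(f_1(x), g_1(y)) + \varepsilon$ for all $x \in X$, $y \in Y$ (in particular $d_Z(h_1\circ f_1(x), h_2 \circ f_2(x)) \leq \varepsilon$ for all $x \in X$, since one may take $y$ with $g_1(y) = f_1(x)$ only when $X = Y$ — here I should instead argue directly), then
\begin{equation}
  \ProhMet{Z}\bigl( (h_1 \circ f_1)_* \mu, (h_2 \circ f_2)_* \nu \bigr)
  \leq
  \ProhMet{M_1}\bigl( (f_1)_* \mu, (g_1)_* \nu \bigr) + \varepsilon.
\end{equation}
Granting this, the conclusion \eqref{lem eq: 2. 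Prohorov is stable} follows because $\ProhMet{M_2}\bigl((f_2)_*\mu, (g_2)_*\nu\bigr) = \ProhMet{Z}\bigl((h_2 \circ f_2)_*\mu, (h_2 \circ g_2)_*\nu\bigr)$ by the isometric-embedding invariance of the Prohorov metric, combined with the triangle inequality in $\ProhMet{Z}$ and the bound $\ProhMet{Z}\bigl((h_2\circ g_2)_*\nu, (h_1 \circ g_1)_*\nu\bigr) = 0$ once we arrange $h_2 \circ g_2 = h_1 \circ g_1$ in $Z$ (i.e.\ we build $Z$ so that the two copies of $Y$ are glued). So the construction of $Z$ should identify $g_1(Y) \subseteq M_1$ with $g_2(Y) \subseteq M_2$, not just perturb distances.

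To prove the displayed Prohorov bound I would unwind the definition: given $\delta > \ProhMet{M_1}((f_1)_*\mu, (g_1)_*\nu)$, for every Borel $A \subseteq M_1$ we have $(f_1)_*\mu(A) \leq (g_1)_*\nu(A^\delta) + \delta$ and vice versa, where $A^\delta$ is the $\delta$-neighborhood in $M_1$. For a Borel set $B \subseteq Z$, I would estimate $(h_1 \circ f_1)_*\mu(B) = \mu\bigl((h_1\circ f_1)^{-1}(B)\bigr)$; writing $A \coloneqq f_1^{-1}(h_1^{-1}(B))$ as a Borel subset of $X$ (hence, via $f_1$, identifiable with a Borel subset of $M_1$), the hypothesis on cross-distances shows that $h_1 \circ f_1(x) \in B$ forces $h_2 \circ f_2(x) \in B^{\varepsilon}$ (the $\varepsilon$-neighborhood in $Z$), or rather one shows $f_1(A) \subseteq \bigl(g_1(\text{something})\bigr)^{\varepsilon + \text{stuff}}$ and chases neighborhoods; the clean way is to note that the gluing makes $(h_1\circ f_1)_*\mu$ and $(h_2\circ f_2)_*\mu$ within Prohorov distance $\varepsilon$ of each other in $Z$ (because they are pushforwards of the \emph{same} $\mu$ under maps that are $\varepsilon$-close pointwise on the support region), and likewise $(h_1 \circ g_1)_*\nu = (h_2 \circ g_2)_*\nu$ exactly. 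Then three applications of the triangle inequality in $\ProhMet{Z}$ — between $(h_2\circ f_2)_*\mu$ and $(h_1\circ f_1)_*\mu$ (distance $\leq \varepsilon$), between $(h_1\circ f_1)_*\mu$ and $(h_1\circ g_1)_*\nu$ (distance $= \ProhMet{M_1}((f_1)_*\mu,(g_1)_*\nu)$ by invariance), and the trivial equality $(h_1\circ g_1)_*\nu = (h_2\circ g_2)_*\nu$ — give exactly \eqref{lem eq: 2. Prohorov is stable}. The main obstacle, and the only place needing genuine care, is the construction of the ambient metric $d_Z$ on the glued space: one must verify that the proposed formula for cross-distances, after the shortest-path correction, still restricts to $d_{M_1}$ and $d_{M_2}$ on the respective copies (no ``shortcuts'' are created inside $M_i$) and that points of $Y$ are identified consistently; this is where hypothesis \eqref{lem eq: 1. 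Prohorov is stable} is used, and it is entirely parallel to the construction already invoked for the Gromov--Hausdorff metric and used implicitly in Lemma~\ref{lem: Hausdorff is stable}, so I would cite \cite[Lemma~4.1]{Abraham_Delmas_Hoscheit_13_A_note} or \cite[Section~7.3]{Burago_Burago_Ivanov_01_A_course} for the routine parts and only spell out the measure-theoretic Prohorov estimate.
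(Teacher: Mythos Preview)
Your gluing approach has a real gap. You propose to build $Z$ by identifying $g_1(Y)\subseteq M_1$ with $g_2(Y)\subseteq M_2$, so that $(h_1\circ g_1)_*\nu=(h_2\circ g_2)_*\nu$, and then claim that $(h_1\circ f_1)_*\mu$ and $(h_2\circ f_2)_*\mu$ are within Prohorov distance $\varepsilon$ because the maps $h_1\circ f_1$ and $h_2\circ f_2$ are ``$\varepsilon$-close pointwise''. But they are not: in the glued space, $d_Z\bigl(h_1(f_1(x)),h_2(f_2(x))\bigr)=\inf_{y\in Y}\bigl\{d_{M_1}(f_1(x),g_1(y))+d_{M_2}(g_2(y),f_2(x))\bigr\}$, and the hypothesis \eqref{lem eq: 1. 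Prohorov is stable} gives no uniform bound on this quantity --- it only controls \emph{cross}-distances between $f$-images and $g$-images, not how the two copies of $f(X)$ sit relative to each other after gluing along $Y$. Gluing along $X$ instead has the symmetric defect. Your remark that ``the roles of $M_1$ and $M_2$ are symmetric'' is also incorrect: both \eqref{lem eq: 1. Prohorov is stable} and \eqref{lem eq: 2. Prohorov is stable} are one-sided in $M_1,M_2$.

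The paper simply cites the argument of \cite[Lemma~4.1]{Abraham_Delmas_Hoscheit_13_A_note}, which is a direct chase of the Prohorov definition with no auxiliary space. Concretely: take $\delta>\ProhMet{M_1}\bigl((f_1)_*\mu,(g_1)_*\nu\bigr)$ and a Borel $B\subseteq M_2$; pull $B$ back to $X$ via $f_2$, push forward to $M_1$ via $f_1$, apply the Prohorov inequality in $M_1$, and then observe that if $y\in Y$ satisfies $d_{M_1}(g_1(y),f_1(x))\le\delta$ for some $x\in f_2^{-1}(B)$, then \eqref{lem eq: 1. Prohorov is stable} forces $d_{M_2}(g_2(y),f_2(x))\le\delta+\varepsilon$, hence $g_2(y)\in B^{\delta+\varepsilon}$. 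This yields $(f_2)_*\mu(B)\le(g_2)_*\nu(B^{\delta+\varepsilon})+\delta$; the reverse inequality follows by swapping $(X,f_i,\mu)\leftrightarrow(Y,g_i,\nu)$, which \emph{is} a genuine symmetry of the hypothesis. Your own sketch of ``unwinding the definition'' is headed in this direction --- that is the whole proof, and the gluing scaffolding should be discarded.
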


\begin{proof}
  This follows from the argument in the proof of \cite[Lemma~4.1]{Abraham_Delmas_Hoscheit_13_A_note}.
\end{proof}

To extend the Prohorov metric to a metric on $\Meas{X}$,
we define a restriction system $R = (R_x^{(r)})_{r > 0, x \in X}$ from $\Meas{X}$ to $\cptMeas{X}$ as follows:
for each $r > 0$ and $x \in X$,
\begin{equation}
  R_x^{(r)}(\mu)(\cdot)
  =
  \mu|_x^{(r)}(\cdot)
  \coloneqq
  \mu(\cdot \cap D_X(x, r)),
  \quad 
  \mu \in \Meas{X}.
\end{equation}

\begin{lem} \label{lem: RS for vague}
  The restriction system $R$ from $\Meas{X}$ to $\cptMeas{X}$ is complete and satisfies Condition~4.
\end{lem}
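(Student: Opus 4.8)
The plan is to verify the axioms \ref{dfn item: RS. 1}--\ref{dfn item: RS. 4} of a restriction system, the completeness axiom \ref{2. dfn item: existence of the inverse limit}, and Condition~4 (i.e.\ Assumption~\ref{assum: metrization of D}\ref{assum item: 1. metrization of D}, \ref{assum item: 2. metrization of D}, and \ref{assum item: 4. metrization of D}) for the truncation operation $\mu \mapsto \mu|_x^{(r)} = \mu(\cdot \cap D_X(x,r))$ on Radon measures. The axioms \ref{dfn item: RS. 1}--\ref{dfn item: RS. 4} are essentially set-theoretic: \ref{dfn item: RS. 1} follows because $D_X(x,s) \cap D_X(x,r) = D_X(x, s \wedge r)$; \ref{dfn item: RS. 2} because a Radon measure is determined by its restrictions to the balls $D_X(\rho, r)$, $r > 0$, whose union is $X$; \ref{dfn item: RS. 3} because a compactly supported measure $\mu$ satisfies $\supp \mu \subseteq D_X(\rho, r)$ for $r$ large enough, hence $\mu|_\rho^{(r)} = \mu$; and \ref{dfn item: RS. 4} because $d_X(x,y) \leq s$ forces $D_X(y, r) \subseteq D_X(x, s + r)$, so intersecting first with the larger ball changes nothing. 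Axiom \ref{2. dfn item: existence of the inverse limit} (completeness of the restriction system) is handled by taking a compatible sequence $(\mu_k, r_k)$ rooted at $x$ and defining $\mu(B) \coloneqq \lim_{k} \mu_k(B \cap D_X(x, r_k))$ for Borel $B$; monotone convergence and compatibility show this is a well-defined Borel measure, it is Radon because any compact $K$ lies in some $D_X(x, r_k)$ so $\mu(K) = \mu_k(K) < \infty$, and $\mu|_x^{(r_k)} = \mu_k$ by construction.

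For Condition~4 I would argue as follows. First, Assumption~\ref{assum: metrization of D}\ref{assum item: 1. metrization of D}: fixing $(\mu, x)$, the map $r \mapsto \mu|_x^{(r)}$ fails to be continuous in the Prohorov metric only at radii $r$ where $\mu(\partial D_X(x,r)) = \mu(\{y : d_X(x,y) = r\}) > 0$; since $\mu$ is Radon and the spheres $\{d_X(x,\cdot) = r\}$ are disjoint over distinct $r$, only countably many $r$ can carry positive mass in any ball $D_X(x,R)$, hence only countably many $r > 0$ in total are discontinuity points. (At a radius $r$ with no atom on the sphere, $\mu|_x^{(r')} \to \mu|_x^{(r)}$ weakly as $r' \to r$, and since these measures are uniformly bounded and supported in a fixed compact set, weak convergence gives Prohorov convergence.) Next, Assumption~\ref{assum: metrization of D}\ref{assum item: 2. metrization of D}: given $x_n \to x_\infty$ with $\ProhMet{X}(\mu_n|_{x_n}^{(r_n)}, \mu_\infty|_{x_\infty}^{(r_n)}) \to 0$ along $r_n \uparrow \infty$, I would show that for any $r$ that is a continuity radius for $\mu_\infty$ (i.e.\ $\mu_\infty(\partial D_{X}(x_\infty, r)) = 0$) one has $\ProhMet{X}(\mu_n|_{x_n}^{(r)}, \mu_\infty|_{x_\infty}^{(r)}) \to 0$; this is the standard fact that restriction of measures to a set whose boundary is null is continuous under weak convergence, combined with the observation that $\mu_n|_{x_n}^{(r)} = (\mu_n|_{x_n}^{(r_n)})|_{x_n}^{(r)}$ once $r_n \geq r$, so we are restricting a Prohorov-convergent sequence, and the shift of center from $x_n$ to $x_\infty$ is absorbed because $d_X(x_n, x_\infty) \to 0$ (one may first pass to a fixed slightly larger ball and use Lemma~\ref{lem: Prohorov is stable}-type stability, or directly estimate). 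Again only countably many $r$ are excluded.

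Finally, Assumption~\ref{assum: metrization of D}\ref{assum item: 4. metrization of D}, which is the crux: if $\ProhMet{X}(\mu_n|_{x_n}^{(r_n)}, \mu_{n+1}|_{x_{n+1}}^{(r_n)}) < 2^{-n}$ for all large $n$ with $r_n \uparrow \infty$, I must show $\{\mu_n|_{x_n}^{(r)}\}_{n}$ is precompact in $\finMeas{X}$ for every $r > 0$. The point is to obtain a uniform mass bound and tightness: fix $r$ and pick $N$ with $r_N \geq r$; for $n \geq N$, telescoping the Prohorov estimates gives $\ProhMet{X}(\mu_n|_{x_n}^{(r_N)}, \mu_N|_{x_N}^{(r_N)}) \leq \sum_{k \geq N} 2^{-k} < \infty$, and since $\ProhMet{X}(\sigma, \tau) \geq |\sigma(X) - \tau(X)|$ for finite measures this bounds $\mu_n(D_X(x_n, r_N)) \leq \mu_n(D_X(x_n, r_N)) $ uniformly, hence (as $D_X(x_n, r) \subseteq D_X(x_n, r_N)$) $\sup_n \mu_n|_{x_n}^{(r)}(X) < \infty$; moreover all these measures are supported in the single compact set $D_X(x_\infty, r + 1)$ (using $x_n \to x_\infty$, so $D_X(x_n, r) \subseteq D_X(x_\infty, r+1)$ for large $n$, and finitely many exceptional terms are harmless). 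A uniformly bounded family of measures all supported in a fixed compact metric space is tight and bounded, hence relatively compact for the weak (= Prohorov) topology by Prohorov's theorem. I expect this last step — keeping careful track of the moving centers $x_n$ and converting the chain of Prohorov bounds into a genuine uniform total-mass bound on a fixed compact ball — to be the main technical obstacle, though it is routine once one fixes the enclosing compact set $D_X(x_\infty, r+1)$.
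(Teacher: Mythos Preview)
Your overall plan matches the paper's: verify the RS axioms, build the limiting Radon measure for completeness, and check Assumption~\ref{assum: metrization of D}\ref{assum item: 1. metrization of D}, \ref{assum item: 2. metrization of D}, \ref{assum item: 4. metrization of D} directly. The treatment of \ref{dfn item: RS. 1}--\ref{2. dfn item: existence of the inverse limit} and of \ref{assum item: 1. metrization of D} is fine and essentially what the paper does.

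There is a genuine gap in your argument for \ref{assum item: 4. metrization of D}. You claim that telescoping yields $\ProhMet{X}(\mu_n|_{x_n}^{(r_N)}, \mu_N|_{x_N}^{(r_N)}) \leq \sum_{k \geq N} 2^{-k}$, but the hypothesis only bounds $\ProhMet{X}(\mu_k|_{x_k}^{(r_k)}, \mu_{k+1}|_{x_{k+1}}^{(r_k)})$ at the \emph{varying} radii $r_k$, not at the fixed radius $r_N$. Restricting a Prohorov-close pair of measures to a smaller ball can make them far apart (place a unit mass just inside $D_X(x_k, r_N)$ for one and just outside for the other), so there is no way to pass from radius $r_k$ to radius $r_N$ in the Prohorov metric; the triangle inequality does not chain. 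The paper avoids this by telescoping \emph{mass estimates with a slowly growing radius} rather than Prohorov distances: from the Prohorov bound it extracts
\[
  \mu_{n+1}\bigl(D_X(x_{n+1}, r)\bigr) \;\leq\; \mu_n\bigl(D_X(x_n,\, r + 2^{-n+1})\bigr) + 2^{-n}
  \qquad\text{whenever } r_n \geq r + 2^{-n+1},
\]
and iterates, accumulating a geometric (hence bounded) increment in the radius. This yields the uniform total-mass bound, after which your tightness step (all supports inside a fixed compact ball around $x_\infty$) goes through unchanged. Your sketch for \ref{assum item: 2. metrization of D} has a similar, milder issue: the ``standard fact'' about restriction to continuity sets applies to a fixed limit measure and a fixed set, whereas here both the comparison measure $\mu_\infty|_{x_\infty}^{(r_n)}$ and the balls $D_X(x_n,r)$ move with $n$; the paper writes out a direct $\varepsilon$--$\delta$ Prohorov estimate instead.
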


\begin{proof}
  The proof is given in Appendix~\ref{appendix: vague}.
\end{proof}

For each $\rho \in X$ and $\mu, \nu \in \Meas{X}$,
define
\begin{equation} \label{2. eq: definition of the vague metric}
  \Vague{X, \rho}(\mu, \nu)
  \coloneqq
  \int_{0}^{\infty}
  e^{-r}
  \bigl(1 \wedge \ProhMet{X} (\mu|_\rho^{(r)}, \nu|_\rho^{(r)})\bigr)\, dr.
\end{equation}
The following is an immediate consequence of Corollary~\ref{cor: a summary of D} and Proposition~\ref{prop: RS for local Hausdorff}.

\begin{thm} \label{thm: vague metric}
  The function $\Vague{X, \rho}$ is a well-defined complete, separable metric on $\Meas{X}$,
  and the topology on $\Meas{X}$ induced by $\Vague{X, \rho}$ is independent of $\rho$.
\end{thm}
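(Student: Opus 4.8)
The statement to prove is Theorem~\ref{thm: vague metric}: that $\Vague{X,\rho}$ is a well-defined complete, separable metric on $\Meas{X}$, and that the induced topology is independent of $\rho$. As the sentence preceding the theorem indicates, this is meant to be an immediate consequence of Corollary~\ref{cor: a summary of D} applied to the restriction system $R$ from $\Meas{X}$ to $\cptMeas{X}$, together with Lemma~\ref{lem: RS for vague} and Lemma~\ref{lem: basics of Prohorov}. So the plan is essentially to check that the hypotheses of Corollary~\ref{cor: a summary of D} are met in this instance and then quote it.

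The first step is to set up the identification with the abstract framework of Section~\ref{sec: Metric for non-compact objects}: take $\frakC(X) = \cptMeas{X}$ equipped with the (extended) Prohorov metric $d^\frakC_X = \ProhMet{X}$, take $\frakD(X) = \Meas{X}$, and take the restriction system $R = (R_x^{(r)})_{r>0,x\in X}$ given by $\mu|_x^{(r)}(\cdot) = \mu(\cdot \cap D_X(x,r))$ as defined just above the theorem. With this identification, the function $\Vague{X,\rho}$ in \eqref{2. eq: definition of the vague metric} is exactly $d^\frakD_{X,\rho}$ from \eqref{eq: metric on D}. Lemma~\ref{lem: RS for vague} tells us $R$ is complete and satisfies Condition~4, hence also Condition~2. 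Therefore Corollary~\ref{cor: a summary of D} applies and gives: $\Vague{X,\rho}$ is a well-defined metric on $\Meas{X}$ for each $\rho$; the induced topology is independent of $\rho$; if $X$ and $\cptMeas{X}$ are separable then so is $\Meas{X}$; and if $d_X$ is complete and $R$ is complete and satisfies Condition~4 then $\Vague{X,\rho}$ is complete.

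The remaining points are the two ``Polishness'' ingredients. For separability, I need $X$ and $\cptMeas{X}$ to be separable. This requires a small supplementary observation, since the theorem statement does not explicitly hypothesize separability of $X$ — but in context $X$ has been fixed as a boundedly-compact metric space, and boundedly-compact metric spaces are automatically separable ($\sigma$-compact, hence second countable). Separability of $\cptMeas{X}$ with the Prohorov metric then follows: $\cptMeas{X} \subseteq \finMeas{X}$, and by Lemma~\ref{lem: basics of Prohorov} the Prohorov topology on probability measures over a separable space is the weak topology, which is separable; the finite-measure case reduces to the probability case by scaling (or one appeals directly to the standard fact that $\finMeas{X}$ is separable when $X$ is). For completeness, I need $d_X$ complete, which again holds because a boundedly-compact metric space is complete (every Cauchy sequence is eventually bounded, hence contained in a compact ball, hence convergent). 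Combining these with the conclusion of Corollary~\ref{cor: a summary of D} yields that $\Vague{X,\rho}$ is complete and separable.

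The only genuine obstacle here is external to this proof: it is Lemma~\ref{lem: RS for vague}, whose proof is deferred to Appendix~\ref{appendix: vague}, where one must verify the restriction-system axioms \ref{dfn item: RS. 1}--\ref{dfn item: RS. 4}, the completeness axiom \ref{2. dfn item: existence of the inverse limit}, and Condition~4 (Assumption~\ref{assum: metrization of D}\ref{assum item: 1. metrization of D}, \ref{assum item: 2. metrization of D}, \ref{assum item: 4. metrization of D}). The subtle parts there are the continuity-in-$r$ property \ref{assum item: 1. metrization of D} (the map $r \mapsto \mu|_x^{(r)}$ fails to be Prohorov-continuous only at the at-most-countably-many radii $r$ with $\mu(\partial D_X(x,r)) > 0$) and the precompactness criterion \ref{assum item: 4. metrization of D} (uniform tightness of the truncated measures on each ball, extracted from the almost-Cauchy condition on increasing radii). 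Granting Lemma~\ref{lem: RS for vague}, the proof of Theorem~\ref{thm: vague metric} itself is a routine verification-and-citation, exactly as the excerpt's phrasing (``an immediate consequence of'') suggests.
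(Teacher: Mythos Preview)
Your approach is correct and matches the paper's: apply Corollary~\ref{cor: a summary of D} with the restriction system verified in Lemma~\ref{lem: RS for vague}, together with the standard facts that a boundedly-compact metric space is automatically separable and complete. The paper's lead-in sentence actually cites Proposition~\ref{prop: RS for local Hausdorff} rather than Lemma~\ref{lem: RS for vague}, but that is evidently a typo---your substitution of the correct lemma is right.
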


We equip $\Meas{X}$ with the topology induced by the metric $\Vague{X, \rho}$, which is independent of $\rho$.
The next result shows that this topology coincides with the vague topology.

\begin{thm} [Convergence] \label{thm: convergence in vague}
  Let $\mu, \mu_{1}, \mu_{2}, \ldots$ be elements of $\Meas{X}$.
  Then the following are equivalent.
  \begin{enumerate} [label = \textup{(\roman*)}, leftmargin = *]
    \item \label{thm item: 1, convergence in vague}
      The sequence $\mu_n$ converges to $\mu$ in $\Meas{X}$.
    \item \label{thm item: 2, convergence in vague}
      There exists a sequence $(x_n)_{n \geq 1}$ in $X$ converging to some $x \in X$
      such that $\mu_n|_{x_n}^{(r)} \to \mu|_x^{(r)}$ weakly for all but countably many $r > 0$.
    \item \label{thm item: 3, convergence in vague}
      There exist a sequence $(x_n)_{n \geq 1}$ in $X$ converging to some $x \in X$,
      and an increasing sequence $(r_k)_{k \geq 1}$ with $r_k \uparrow \infty$
      such that $\mu_n|_{x_n}^{(r_k)} \to \mu|_x^{(r_k)}$ weakly for each $k$.
    \item \label{thm item: 4, convergence in vague}
      For any sequence $(x_n)$ converging to some $x \in X$,
      we have $\mu_n|_{x_n}^{(r)} \to \mu|_x^{(r)}$ weakly for all but countably many $r > 0$.
    \item \label{thm item: 5, convergence in vague}
      The sequence $\mu_n$ converges to $\mu$ vaguely, that is,
      \begin{equation}
        \lim_{n \to \infty} \int_X f(x)\, \mu_n(dx) 
        =
        \int_X f(x)\, \mu(dx)
      \end{equation}
      for all continuous functions $f \colon X \to \mathbb{R}$ with compact support.
  \end{enumerate}
\end{thm}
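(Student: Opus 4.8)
The plan is to follow the proof of Theorem~\ref{thm: convergence in the Fell topology} for the first four items and then close the cycle with \ref{thm item: 4, convergence in vague}$\Rightarrow$\ref{thm item: 5, convergence in vague}$\Rightarrow$\ref{thm item: 2, convergence in vague}. The equivalences \ref{thm item: 1, convergence in vague}$\Leftrightarrow$\ref{thm item: 2, convergence in vague}$\Leftrightarrow$\ref{thm item: 3, convergence in vague}$\Leftrightarrow$\ref{thm item: 4, convergence in vague} are a direct application of the abstract theory of Section~\ref{sec: Metric for non-compact objects}: by Lemma~\ref{lem: RS for vague} the restriction system from $\Meas{X}$ to $\cptMeas{X}$ (with $\ProhMet{X}$ as the compact-object metric) satisfies Condition~4 and hence Condition~2, so Theorem~\ref{thm: convergence in D} gives exactly statements \ref{thm item: 1, convergence in vague}--\ref{thm item: 4, convergence in vague}, once convergence in $\cptMeas{X}$ is identified with weak convergence. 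For this identification, note that each restriction $\mu_n|_{x_n}^{(r)}$ is a finite Borel measure supported in the fixed compact ball $D_X\!\bigl(x, r + \sup_m d_X(x_m, x)\bigr)$; recalling that $X$ is boundedly compact, hence separable, convergence of finite measures for the Prohorov metric is equivalent to weak convergence (cf.\ Lemma~\ref{lem: basics of Prohorov}). It therefore remains to prove \ref{thm item: 4, convergence in vague}$\Rightarrow$\ref{thm item: 5, convergence in vague} and \ref{thm item: 5, convergence in vague}$\Rightarrow$\ref{thm item: 2, convergence in vague}.

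For \ref{thm item: 4, convergence in vague}$\Rightarrow$\ref{thm item: 5, convergence in vague}, fix $\rho \in X$ and apply \ref{thm item: 4, convergence in vague} to the constant sequence $x_n \equiv \rho$, so that $\mu_n|_\rho^{(r)} \to \mu|_\rho^{(r)}$ weakly for all $r$ outside some countable set $N$. Given a continuous $f \colon X \to \RN$ with compact support, put $r_0 \coloneqq \sup\{ d_X(\rho, y) : y \in \supp f \} < \infty$ and choose $r \in [r_0, \infty) \setminus N$. Since $f$ vanishes off $D_X(\rho, r)$, we have $\int_X f\, d\mu_n = \int_X f\, d(\mu_n|_\rho^{(r)})$ and similarly for $\mu$, whence $\int_X f\, d\mu_n \to \int_X f\, d\mu$ because $f$ is bounded and continuous. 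As $f$ was arbitrary, this is \ref{thm item: 5, convergence in vague}.

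The substantive step is \ref{thm item: 5, convergence in vague}$\Rightarrow$\ref{thm item: 2, convergence in vague}. Fix $\rho \in X$ and take $x_n \equiv \rho$. Since $\mu$ is finite on bounded sets, only countably many $r > 0$ satisfy $\mu(\{y : d_X(\rho, y) = r\}) > 0$, and I claim $\mu_n|_\rho^{(r)} \to \mu|_\rho^{(r)}$ weakly for every other $r$. Fix such an $r$, write $K \coloneqq D_X(\rho, r)$ (so $\mu(K \setminus B_X(\rho, r)) = 0$), and for $\delta > 0$ set $\psi_\delta(y) \coloneqq (1 - \delta^{-1} d_X(y, K))_+$ and $\phi_\delta(y) \coloneqq 1 \wedge \delta^{-1}(r - d_X(\rho, y))_+$; these are continuous with compact support, satisfy $\phi_\delta \leq \mathbf{1}_K \leq \psi_\delta$, with $\psi_\delta \downarrow \mathbf{1}_K$ and $\phi_\delta \uparrow \mathbf{1}_{B_X(\rho, r)}$ pointwise as $\delta \downarrow 0$. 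For bounded continuous $f$, decompose $f = f^+ - f^-$ and use the bounds $\phi_\delta f^+ - \psi_\delta f^- \leq \mathbf{1}_K f \leq \psi_\delta f^+ - \phi_\delta f^-$, both sides being continuous with compact support; applying \ref{thm item: 5, convergence in vague} to these two functions and then letting $\delta \downarrow 0$ by dominated convergence (using $\mu(K \setminus B_X(\rho, r)) = 0$) yields $\int_X f\, d(\mu_n|_\rho^{(r)}) = \int_K f\, d\mu_n \to \int_K f\, d\mu = \int_X f\, d(\mu|_\rho^{(r)})$. The special case $f \equiv 1$ gives $\mu_n|_\rho^{(r)}(X) \to \mu|_\rho^{(r)}(X)$, so $\mu_n|_\rho^{(r)} \to \mu|_\rho^{(r)}$ weakly, which is \ref{thm item: 2, convergence in vague}. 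I expect this sandwiching argument, and specifically the use of the continuity-set condition $\mu(\{d_X(\rho, \cdot) = r\}) = 0$ to discard the boundary sphere, to be the only genuinely delicate point; everything else reduces to the results of Section~\ref{sec: Metric for non-compact objects} and to standard facts about weak and vague convergence.
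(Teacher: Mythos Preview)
Your proof is correct and tracks the paper closely for \ref{thm item: 1, convergence in vague}--\ref{thm item: 4, convergence in vague} and for \ref{thm item: 4, convergence in vague}$\Rightarrow$\ref{thm item: 5, convergence in vague}. The only genuine difference is in \ref{thm item: 5, convergence in vague}$\Rightarrow$\ref{thm item: 2, convergence in vague}: the paper invokes two external results --- \cite[Lemma~4.1]{Kallenberg_17_Random} to get $\mu_n|_\rho^{(r)}(X)\to\mu|_\rho^{(r)}(X)$ and $\limsup_n \mu_n|_\rho^{(r)}(C)\le\mu|_\rho^{(r)}(C)$ for closed $C$, and then a Portmanteau-type theorem \cite[Theorem~A.2.3.II]{Daley_Jones_03_Vol_1} to conclude weak convergence --- whereas you argue directly by sandwiching $\mathbf{1}_K f$ between two compactly supported continuous functions built from bump functions $\phi_\delta,\psi_\delta$. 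Your route is more self-contained and elementary (no black boxes), while the paper's is shorter on the page; both hinge on exactly the same continuity-set condition $\mu(\{d_X(\rho,\cdot)=r\})=0$, which is where the ``all but countably many $r$'' restriction enters. One small redundancy: once you have shown $\int f\,d(\mu_n|_\rho^{(r)})\to\int f\,d(\mu|_\rho^{(r)})$ for every bounded continuous $f$, that already \emph{is} weak convergence of finite measures, so singling out $f\equiv 1$ afterward is unnecessary.
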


\begin{proof}
  The equivalence of \ref{thm item: 1, convergence in vague}--\ref{thm item: 4, convergence in vague}
  follows from Theorem~\ref{thm: convergence in D times X}.
  It is easy to see that 
  \ref{thm item: 2, convergence in vague}
  implies 
  \ref{thm item: 5, convergence in vague}.
  Assume that \ref{thm item: 5, convergence in vague} holds.
  Fix $\rho \in X$.
  Let $r > 0$ be such that $\mu(\{x \in X \mid d_X(\rho, x) = r\}) = 0$.
  Then, by \cite[Lemma~4.1]{Kallenberg_17_Random},
  we have $\mu_n|_\rho^{(r)}(X) \to \mu|_\rho^{(r)}(X)$.
  For any closed set $C \subseteq X$,
  it follows from the same lemma that 
  \begin{equation}
    \limsup_{n \to \infty} \mu_n|_\rho^{(r)}(C)
    \leq 
    \mu|_\rho^{(r)}(C).
  \end{equation}
  Therefore, by \cite[Theorem~A.2.3.II]{Daley_Jones_03_Vol_1},
  we conclude that $\mu_n|_\rho^{(r)} \to \mu|_\rho^{(r)}$ weakly,
  which proves \ref{thm item: 2, convergence in vague}.
\end{proof}

\begin{cor}
  The topology on $\Meas{X}$ coincides with the vague topology.
\end{cor}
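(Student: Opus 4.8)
The plan is to obtain the corollary directly from the convergence characterization in Theorem~\ref{thm: convergence in vague}, combined with the metrizability of the vague topology. The first step is to record that, since $X$ is boundedly compact, it is $\sigma$-compact (write $X = \bigcup_{n \geq 1} D_X(\rho, n)$ for any $\rho \in X$), hence separable, locally compact, and complete; in other words, $X$ is a locally compact Polish space. In this classical setting it is well known that the vague topology on $\Meas{X}$ is metrizable --- in fact Polish (see, e.g., \cite[Theorem~4.2]{Kallenberg_17_Random}). On the other hand, Theorem~\ref{thm: vague metric} provides the metric $\Vague{X, \rho}$ inducing the topology we have put on $\Meas{X}$, so this topology is metrizable as well.

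The second step is to invoke the elementary fact that two metrizable (more generally, first-countable) topologies on the same underlying set which possess the same convergent sequences, together with the same limits, necessarily coincide --- this is because in a metrizable space a subset is closed precisely when it is sequentially closed. Now Theorem~\ref{thm: convergence in vague}, specifically the equivalence of items \ref{thm item: 1, convergence in vague} and \ref{thm item: 5, convergence in vague}, asserts exactly that a sequence $\mu_n$ converges to $\mu$ with respect to $\Vague{X, \rho}$ if and only if $\mu_n \to \mu$ vaguely. Hence the topology induced by $\Vague{X, \rho}$ and the vague topology share the same convergent sequences, and therefore agree. This completes the argument.

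I do not anticipate any genuine obstacle here; the only point that needs a line of care is justifying that the standing hypothesis (``$X$ is a boundedly compact metric space'') places us in the setting where the vague topology is known to be metrizable. This is exactly what the $\sigma$-compactness observation above supplies, upgrading bounded compactness to the local compactness and second countability required to quote the classical metrizability result. Once that is in place, everything follows from Theorem~\ref{thm: convergence in vague} and the sequential characterization of closed sets in a metric space.
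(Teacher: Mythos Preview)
Your argument is correct and follows essentially the same route as the paper, which simply says the corollary follows immediately from Theorem~\ref{thm: convergence in vague}. Your version is more explicit in justifying why ``same convergent sequences'' suffices, namely by checking that the vague topology on $\Meas{X}$ is metrizable via \cite[Theorem~4.2]{Kallenberg_17_Random}; the paper leaves this implicit.
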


\begin{proof}
  This follows immediately from Theorem~\ref{thm: convergence in vague}.
\end{proof}

We define a natural metrization of the product space $\Meas{X} \times X$ as follows:
for each pair $(\mu, x), (\nu, y)\allowbreak \in \Meas{X} \times X$,
\begin{equation} \label{eq: vague product metric}
  \Vague{X}\bigl((\mu, x), (\nu, y)\bigr)
  \coloneqq 
  d_X(x,y) 
  \vee 
  \int_{0}^{\infty}
  e^{-r}
  \bigl(1 \wedge \ProhMet{X}(\mu|_x^{(r)}, \nu|_y^{(r)})\bigr)\,
  dr.
\end{equation}
The following is an immediate consequence of Proposition~\ref{prop: RS for local Hausdorff} and Corollary~\ref{cor: a summary of D}.

\begin{prop} \label{prop: vauge product metric}
  The function $\Vague{X}$ is a well-defined complete, separable metric on $\Meas{X} \times X$,
  and the induced topology coincides with the product topology.
\end{prop}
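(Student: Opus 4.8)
The plan is to read Proposition~\ref{prop: vauge product metric} as a direct instance of the abstract machinery of Section~\ref{sec: Metric for non-compact objects}, in exactly the same way that Proposition~\ref{prop: product local Hausdorff metric} is for the Fell topology. I would set $\frakC(X) = \cptMeas{X}$, $\frakD(X) = \Meas{X}$, take $d^{\frakC}_X = \ProhMet{X}$, and use the restriction system $R = (R_x^{(r)})_{r>0,\, x\in X}$ given by $R_x^{(r)}(\mu)(\cdot) = \mu(\cdot \cap D_X(x,r))$, i.e.\ the one of Lemma~\ref{lem: RS for vague}. With these identifications the function $\Vague{X}$ of \eqref{eq: vague product metric} is literally the function $d^{\frakD}_X$ of \eqref{eq: metric on D times X}, so everything reduces to checking the hypotheses of Corollary~\ref{cor: a summary of D times X} and quoting it.

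First I would invoke Lemma~\ref{lem: RS for vague} to get that $R$ is complete and satisfies Condition~4, hence also Condition~2. By the first part of Corollary~\ref{cor: a summary of D times X} this already shows that $\Vague{X}$ is a well-defined metric on $\Meas{X} \times X$ and that it induces the product topology. For completeness, the only remaining hypothesis of Corollary~\ref{cor: a summary of D times X} is completeness of $d_X$; but $X$ is boundedly compact --- this is the standing assumption of the subsection that makes each $R_x^{(r)}(\mu)$ compactly supported, so that $R$ is a legitimate restriction system into $\cptMeas{X}$ --- and a boundedly-compact metric space is complete, since every Cauchy sequence is bounded and thus eventually contained in a compact ball $D_X(\rho, n)$. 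Hence $\Vague{X}$ is complete.

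For separability, Corollary~\ref{cor: a summary of D times X} asks that both $X$ and $\cptMeas{X}$ be separable. The space $X = \bigcup_{n} D_X(\rho, n)$ is a countable union of compact, hence separable, sets, so it is separable. For $\cptMeas{X}$ I would first note that $\finMeas{X}$ is separable under the Prohorov metric --- a standard fact when $X$ is separable, e.g.\ because finitely supported measures with rational masses on a countable dense set are dense, or by transporting separability of $(\Prob{X}, \ProhMet{X})$ from Lemma~\ref{lem: basics of Prohorov} along the continuous surjection $(\lambda, P) \mapsto \lambda P$ --- and then observe that $\cptMeas{X} = \bigcup_{n}\{\mu \in \finMeas{X} : \supp\mu \subseteq D_X(\rho, n)\}$ is a countable union of subspaces of a separable metric space, hence separable. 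Feeding this into Corollary~\ref{cor: a summary of D times X} gives separability of $\Meas{X} \times X$, which completes the argument.

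I do not expect a genuine obstacle here: once the dictionary with Section~\ref{sec: Metric for non-compact objects} is in place the statement is essentially a citation, and the substantive verifications (the four conditions on $R$ and its completeness) have already been deferred to, and carried out in, Lemma~\ref{lem: RS for vague}. The only mildly delicate point is the separability of $\cptMeas{X}$ in the Prohorov metric, which is a routine measure-theoretic fact independent of the restriction-system formalism and can be dispatched in a line.
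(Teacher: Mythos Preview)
Your proposal is correct and follows essentially the same route as the paper: identify $\Vague{X}$ with the abstract metric $d^\frakD_X$ of \eqref{eq: metric on D times X}, invoke Lemma~\ref{lem: RS for vague} for the restriction-system hypotheses, and then cite Corollary~\ref{cor: a summary of D times X}. The paper's one-line proof does exactly this (its citation of Proposition~\ref{prop: RS for local Hausdorff} and Corollary~\ref{cor: a summary of D} in place of Lemma~\ref{lem: RS for vague} and Corollary~\ref{cor: a summary of D times X} appears to be a typo); your only addition is spelling out why $X$ and $\cptMeas{X}$ are separable, which the paper leaves implicit.
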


The metric $\Vague{X}$ inherits the stability of the Prohorov metric (Lemma~\ref{lem: Prohorov is stable}).
Henceforth, given a measurable map $f \colon S \to T$ between measurable spaces, 
we write $\FE{f_*} \coloneqq f_* \times f$.

\begin{prop} \label{prop: vague is stable}
  Fix $\bcmAB$ spaces $X$, $Y$, $M_1$, and $M_2$.
  Let $f_i \colon X \to M_i$ and $g_i \colon Y \to M_i$, $i = 1,2$, be isometric embeddings.
  Assume that there exists $\varepsilon \in \RNp$ such that, for all $x \in X$ and $y \in Y$,
  \begin{equation} \label{prop eq: 1. vague is stable}
    d_{M_2}(f_2(x), g_2(y)) \leq d_{M_1}(f_1(x), g_1(y)) + \varepsilon.
  \end{equation}
  Then, for all $(\mu, x) \in \Meas{X} \times X$ and $(\nu, y) \in \Meas{Y} \times Y$, 
  \begin{equation} \label{prop eq: 2. vague is stable}
    \Vague{M_2} \bigl( \FE{(f_2)_*}(\mu, x), \FE{(g_2)_*}(\nu, y) \bigr) 
    \leq 
    \Vague{M_1} \bigl( \FE{(f_1)_*}(\mu, x), \FE{(g_1)_*}(\nu, y) \bigr) + \varepsilon.
  \end{equation}
\end{prop}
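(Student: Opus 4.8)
The plan is to reduce the statement to the key computation already carried out for the Hausdorff metric, namely Lemma~\ref{lem: Prohorov is stable}, exactly as Proposition~\ref{prop: local Hausdorff is stable} reduces to Lemma~\ref{lem: Hausdorff is stable}. First I would record the pushforward--restriction compatibility: for an isometric embedding $f \colon X \to M$, a Radon measure $\mu$ on $X$, a point $x \in X$, and $r > 0$, one has
\begin{equation}
  (f_*\mu)\big|_{f(x)}^{(r)}(\cdot) = f_*\mu\bigl(\cdot \cap D_M(f(x), r)\bigr) = \mu\bigl(f^{-1}(\cdot) \cap D_X(x,r)\bigr) = f_*\bigl(\mu|_x^{(r)}\bigr)(\cdot),
\end{equation}
where the middle equality uses that $f$ is distance-preserving, so $f^{-1}(D_M(f(x),r)) = D_X(x,r)$, together with the definition \eqref{eq: pushforward map} of the pushforward. (One should note $\mu|_x^{(r)} \in \cptMeas{X}$, and its pushforward is the compactly supported measure $(f_*\mu)|_{f(x)}^{(r)}$, so everything lives in the right spaces.) This identity is the exact analogue of the display $f(A)|_{f(x)}^{(r)} = f(A|_x^{(r)})$ used in the proof of Proposition~\ref{prop: local Hausdorff is stable}.

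Next I would unpack the definition \eqref{eq: vague product metric} of $\Vague{M_i}$ applied to the rooted-pushforward pairs. Writing $(\mu_i', x_i') \coloneqq \FE{(f_i)_*}(\mu, x) = ((f_i)_*\mu, f_i(x))$ and similarly $(\nu_i', y_i') \coloneqq \FE{(g_i)_*}(\nu, y) = ((g_i)_*\nu, g_i(y))$, we have
\begin{equation}
  \Vague{M_i}\bigl((\mu_i', x_i'), (\nu_i', y_i')\bigr)
  = d_{M_i}(f_i(x), g_i(y)) \vee \int_0^\infty e^{-r} \bigl(1 \wedge \ProhMet{M_i}\bigl((f_i)_*(\mu|_x^{(r)}), (g_i)_*(\nu|_y^{(r)})\bigr)\bigr)\, dr,
\end{equation}
where I have already substituted the compatibility identity in the integrand. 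Now I would apply Lemma~\ref{lem: Prohorov is stable} with the finite measures $\mu|_x^{(r)}$ and $\nu|_y^{(r)}$ in place of $\mu, \nu$: hypothesis \eqref{prop eq: 1. vague is stable} is precisely the hypothesis \eqref{lem eq: 1. Prohorov is stable}, so for every $r > 0$,
\begin{equation}
  \ProhMet{M_2}\bigl((f_2)_*(\mu|_x^{(r)}), (g_2)_*(\nu|_y^{(r)})\bigr) \leq \ProhMet{M_1}\bigl((f_1)_*(\mu|_x^{(r)}), (g_1)_*(\nu|_y^{(r)})\bigr) + \varepsilon.
\end{equation}
Applying $1 \wedge (\cdot)$, which is $1$-Lipschitz, gives $1 \wedge (\text{LHS}) \leq (1 \wedge (\text{RHS of the Prohorov bound without }\varepsilon)) + \varepsilon$, and then integrating against $e^{-r}\,dr$ (total mass $1$) preserves the additive $\varepsilon$. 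For the root coordinate, \eqref{prop eq: 1. vague is stable} directly gives $d_{M_2}(f_2(x), g_2(y)) \leq d_{M_1}(f_1(x), g_1(y)) + \varepsilon$. Taking the maximum of the two coordinate bounds and using that $(a \vee b) + \varepsilon \geq (a+\varepsilon) \vee (b+\varepsilon)$ yields \eqref{prop eq: 2. vague is stable}.

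There is essentially no serious obstacle here; the statement is the measure-theoretic mirror of Proposition~\ref{prop: local Hausdorff is stable}, and Lemma~\ref{lem: Prohorov is stable} does all the real work. The one point requiring a moment's care is the pushforward--restriction compatibility identity: it relies on the fact that for an isometric embedding $f$, the preimage of the closed ball $D_M(f(x),r)$ is exactly $D_X(x,r)$ (not merely contained in it), which is why distance-preservation rather than mere $1$-Lipschitzness is needed. The other mild point is bookkeeping with the $1 \wedge (\cdot)$ truncation and the $\vee$, but these are routine monotonicity facts. I would present the proof in two short paragraphs: first the compatibility identity, then the reduction to Lemma~\ref{lem: Prohorov is stable} and the integration step.
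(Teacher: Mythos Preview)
Your proposal is correct and follows essentially the same approach as the paper: establish the commutation identity $(f_*\mu)|_{f(x)}^{(r)} = f_*(\mu|_x^{(r)})$ using that $f$ is distance-preserving, then reduce to Lemma~\ref{lem: Prohorov is stable} at each radius $r$. The paper's proof is terser (it simply writes ``From this and Lemma~\ref{lem: Prohorov is stable}, the result follows'' after the compatibility identity), while you spell out the bookkeeping with the truncation, integration, and maximum, but the substance is identical.
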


\begin{proof}
  For any $x \in X$, $r > 0$, and Borel subset $A \subseteq M_2$,
  we have
  \begin{align}
    f^{-1}(A \cap D_{M_2}(f(x), r)) 
    &= \{ y \in X \mid f(y) \in A,\ d_{M_2}(f(y), f(x)) \leq r \} \\
    &= \{ y \in X \mid f(y) \in A,\ d_X(y, x) \leq r \} \\
    &= f^{-1}(A) \cap D_X(x, r).
  \end{align}
  Thus, for any $\mu \in \Meas{X}$ and Borel $A \subseteq M_2$,
  \begin{equation}
    (f_*\mu)|_{f(x)}^{(r)}(A)
    =
    f_*\mu ( A \cap D_{M_2}(f(x), r)) 
    =
    \mu (f^{-1}(A) \cap D_X(x, r))
    =
    (\mu|_x^{(r)}) \circ f^{-1}(A).
  \end{equation}
  Hence, $(f_*\mu)|_{f(x)}^{(r)} = (f_* (\mu|_x^{(r)}))$, i.e., restriction and pushforward commute.
  From this and Lemma~\ref{lem: Prohorov is stable}, the result follows.
\end{proof}

The following is an immediate consequence of the above proposition.

\begin{prop} \label{prop: vague metric is distance-preserved}
  Let $X$ and $Y$ be $\bcmAB$ spaces,
  and let $f \colon  X \to Y$ be an isometric embedding. 
  Then the map $\FE{f_*} \colon \Meas{X} \times X \to \Meas{Y} \times Y$ is an isometric embedding.
  In particular,
  the map $f_* \colon \Meas{X} \to \Meas{Y}$ is an isometric embedding 
  with respect to $\Vague{X, \rho}$ and $\Vague{Y, f(\rho)}$ for any $\rho \in X$.
\end{prop}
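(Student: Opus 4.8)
The plan is to obtain both assertions as direct corollaries of Proposition~\ref{prop: vague is stable}, applied twice with the deformation parameter $\varepsilon$ equal to $0$. First I would recall that, by note~\ref{note: isometric embedding}, a map between metric spaces is an isometric embedding precisely when it is distance-preserving, so it suffices to prove that $\FE{f_*} = f_* \times f$ preserves the metric $\Vague{\,\cdot\,}$; injectivity and bicontinuity are then automatic.

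To get the inequality $\Vague{Y}\bigl(\FE{f_*}(\mu,x),\FE{f_*}(\nu,y)\bigr) \le \Vague{X}\bigl((\mu,x),(\nu,y)\bigr)$ for all $(\mu,x),(\nu,y)\in\Meas{X}\times X$, I would apply Proposition~\ref{prop: vague is stable} with both of its source spaces taken to be $X$, with $M_1 = X$, $M_2 = Y$, $f_1 = g_1 = \id_X$, and $f_2 = g_2 = f$. Hypothesis~\eqref{prop eq: 1. vague is stable} then becomes $d_Y(f(x),f(y)) \le d_X(x,y) + \varepsilon$, which holds with $\varepsilon = 0$ since $f$ is isometric; moreover $\FE{(\id_X)_*}$ is the identity on $\Meas{X}\times X$ while $\FE{f_*} = f_*\times f$, so the conclusion~\eqref{prop eq: 2. vague is stable} is exactly the claimed inequality. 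For the reverse inequality I would run the proposition again, this time with $M_1 = Y$, $M_2 = X$, $f_1 = g_1 = f$, $f_2 = g_2 = \id_X$; the hypothesis now reads $d_X(x,y) \le d_Y(f(x),f(y)) + \varepsilon$, again valid with $\varepsilon = 0$, and~\eqref{prop eq: 2. vague is stable} gives $\Vague{X}\bigl((\mu,x),(\nu,y)\bigr) \le \Vague{Y}\bigl(\FE{f_*}(\mu,x),\FE{f_*}(\nu,y)\bigr)$. Combining the two inequalities shows that $\FE{f_*}$ is distance-preserving, hence an isometric embedding of $\Meas{X}\times X$ into $\Meas{Y}\times Y$.

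For the ``in particular'' statement I would simply specialise to $x = y = \rho$: by~\eqref{2. eq: definition of the vague metric} and~\eqref{eq: vague product metric} one has $\Vague{X,\rho}(\mu,\nu) = \Vague{X}\bigl((\mu,\rho),(\nu,\rho)\bigr)$ and likewise for $\Vague{Y,f(\rho)}$, and since $\FE{f_*}(\mu,\rho) = (f_*\mu,f(\rho))$, the isometry of $\FE{f_*}$ already established yields $\Vague{Y,f(\rho)}(f_*\mu,f_*\nu) = \Vague{X,\rho}(\mu,\nu)$. I do not anticipate a genuine obstacle here; the only point requiring a little care is the bookkeeping of which concrete spaces and maps are substituted for the abstract data $X,Y,M_1,M_2,f_i,g_i$ in Proposition~\ref{prop: vague is stable}, together with the trivial but essential observation that $\id_X$ is an isometric embedding, which is what allows the stability estimate to be invoked with $\varepsilon = 0$ in both directions.
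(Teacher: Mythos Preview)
Your proof is correct and follows exactly the route the paper intends: the paper's own proof is simply ``immediate consequence of the above proposition,'' and your two applications of Proposition~\ref{prop: vague is stable} with $\varepsilon=0$ (swapping the roles of $M_1$ and $M_2$) are precisely how that immediacy is to be unpacked.
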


\begin{rem}
  Another metric inducing the vague topology is given in \cite[Lemma~4.6]{Kallenberg_17_Random}, 
  but it is not clear whether it satisfies the stability property in Proposition~\ref{prop: vague is stable}.
  In \cite[Section~A2.6]{Daley_Jones_03_Vol_1},
  a metric defined similarly to $\Vague{X, \rho}$ is proposed.
  However,
  as pointed out in \cite{Morariu-Patrichi_18_On},
  there are errors in the proofs given in \cite{Daley_Jones_03_Vol_1},
  which is the reason we do not adopt that metric here.
\end{rem}


\subsection{The compact-convergence topology with variable domains}  \label{sec: variable domains}
\newcommand{\hatC}[2]{\widehat{C}(#1, #2)}
\newcommand{\hatCc}[2]{\widehat{C}_c(#1, #2)}

\newcommand{\MarkeredCompact}[2]{\mathcal{C}_c(#1, #2)}
\newcommand{\GraphSp}[2]{\mathcal{C}(#1, #2)}
\newcommand{\subGraphSp}[4]{\mathcal{C}_{#1}^{(#2)}(#3, #4)}

\newcommand{\GraphMet}[2]{d^{\mathcal{C}}_{#1, #2}}

\newcommand{\hatCcMet}[2]{d^{\widehat{C}_c}_{#1, #2}}
\newcommand{\hatCMet}[2]{d^{\widehat{C}}_{#1, #2}}

\newcommand{\hatCond}[3]{\widehat{C}(#1, #2; #3)}

We next introduce a topology 
on a collection of functions with varying domains.
This topological framework arises, for example, 
when considering the metrization of a Gromov--Hausdorff-type topology 
on the collection of triples $(X, \rho_X, f)$, 
where the additional object $f$ is an element of $C(X, \Xi)$ 
and $\Xi$ is a fixed metric space.
Here, $C(X, \Xi)$ denotes the set of continuous functions $f \colon X \to \Xi$, 
equipped with the compact-convergence topology;
that is, 
$f_n$ converges to $f$ if and only if $f_n$ converges to $f$ uniformly on every compact subset.

A key difficulty in defining a Gromov--Hausdorff-type metric on such a space 
is that $C(X, \Xi)$ cannot be naturally embedded into $C(Y, \Xi)$ 
even when $X$ is a subspace of $Y$.
To address this issue, 
we consider the set $\hatC{X}{\Xi}$ 
consisting of functions from closed subsets of $X$ to $\Xi$.
This leads to a natural embedding of $\hatC{X}{\Xi}$ into $\hatC{Y}{\Xi}$.
With this background,
we define a metric on $\hatC{X}{\Xi}$ that extends the compact-convergence topology on $C(X, \Xi)$.
We also mention a related work \cite{Cao_23_Convergence},
which introduces a notion of convergence in $\hatC{X}{\Xi}$ when $X$ is compact.
The framework developed below provides a metrization of this convergence,
and further investigates topological properties such as Polishness and precompactness.
Moreover, the space $\hatC{X}{\Xi}$ will be useful for the study of scaling limits of lattice models; 
see Remark~\ref{rem: hatC applies to lattice model} below.

In addition to $X$, we fix another metric space $\Xi$.

\begin{dfn}[{The sets $\hatCc{X}{\Xi}$ and $\hatC{X}{\Xi}$}]
  We define
  \begin{equation}
    \hatC{X}{\Xi}
    \coloneqq
    \bigcup_{S \in \Closed{X}} 
    C(S, \Xi).
  \end{equation}
  Note that $\hatC{X}{\Xi}$ contains the empty map $\emptyset_{\Xi} \colon \emptyset \to \Xi$.
  For a function $f$, we denote its domain by $\dom(f)$.
  Then we define $\hatCc{X}{\Xi}$ 
  to be the subset of $\hatC{X}{\Xi}$ consisting of those functions $f$ whose domain $\dom(f)$ is compact.
\end{dfn}

To define metrics on $\hatCc{X}{\Xi}$ and $\hatC{X}{\Xi}$,
we borrow an idea from \cite[Section~4.5]{Khezeli_23_A_unified}:
we identify each function with its graph.
For each function $f$,
we write $\graphmap(f)$ for its graph, i.e.,
\begin{equation} \label{eq: def of graphmap}
  \graphmap(f) \coloneqq \{(x, f(x)) \mid x \in \dom(f)\}.
\end{equation}
We then define
\begin{equation}
  \hatCcMet{X}{\Xi}(f, g)
  \coloneqq
  \HausMet{X \times \Xi}(\graphmap(f), \graphmap(g)),
  \quad
  f, g \in \hatCc{X}{\Xi}.
\end{equation}
Here, $\HausMet{X \times \Xi}$ denotes the Hausdorff metric on $\Compact{X \times \Xi}$,
the set of compact subsets of $X \times \Xi$.

\begin{lem}
  The function $\hatCcMet{X}{\Xi}$ is a metric on $\hatCc{X}{\Xi}$.
\end{lem}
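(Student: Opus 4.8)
The plan is to realize $\hatCcMet{X}{\Xi}$ as the pullback of the Hausdorff (extended) metric $\HausMet{X \times \Xi}$ on $\Compact{X \times \Xi}$ along the graph map $\graphmap$, and to check that $\graphmap$ is a well-defined injection into $\Compact{X \times \Xi}$. Once this is done, all the metric axioms for $\hatCcMet{X}{\Xi}$ are inherited for free from the corresponding axioms for $\HausMet{X \times \Xi}$, which were recalled just after \eqref{eq: dfn of Hausdorff metric}.

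First I would verify that $\graphmap$ indeed maps $\hatCc{X}{\Xi}$ into $\Compact{X \times \Xi}$. For $f \in \hatCc{X}{\Xi}$ with $K \coloneqq \dom(f)$ compact, the graph $\graphmap(f)$ is the image of $K$ under the continuous map $x \mapsto (x, f(x))$ from $K$ to $X \times \Xi$ (continuity of this map follows from continuity of $f$ together with the product topology on $X \times \Xi$), hence $\graphmap(f)$ is compact, i.e.\ $\graphmap(f) \in \Compact{X \times \Xi}$; for the empty map $\emptyset_\Xi$ one has $\graphmap(\emptyset_\Xi) = \emptyset \in \Compact{X \times \Xi}$ by the convention fixed in Definition~\ref{dfn: space of compact and closed subsets}.

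Next I would show that $\graphmap$ is injective on $\hatCc{X}{\Xi}$. Suppose $\graphmap(f) = \graphmap(g)$. Projecting onto the first coordinate shows $\dom(f) = \dom(g) =: S$, and for each $x \in S$ the graph contains exactly one point whose first coordinate is $x$, forcing $f(x) = g(x)$; hence $f = g$. Combining the two steps, $\graphmap \colon \hatCc{X}{\Xi} \to \Compact{X \times \Xi}$ is a well-defined injection. Therefore symmetry and the triangle inequality for $\hatCcMet{X}{\Xi}$ follow immediately from those for $\HausMet{X \times \Xi}$, and $\hatCcMet{X}{\Xi}(f,g) = 0$ is equivalent to $\HausMet{X \times \Xi}(\graphmap(f), \graphmap(g)) = 0$, hence to $\graphmap(f) = \graphmap(g)$, hence to $f = g$ by injectivity. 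This establishes that $\hatCcMet{X}{\Xi}$ is a metric (an extended metric, in general, with $\hatCcMet{X}{\Xi}(\emptyset_\Xi, f) = \infty$ for $f \neq \emptyset_\Xi$, exactly as for $\HausMet{X \times \Xi}$).

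There is no genuine obstacle here; the only points deserving a word of care are the compactness of $\graphmap(f)$ — which rests on continuity of $f$ and compactness of its domain — and the placement of the empty map, which lies in $\Compact{X \times \Xi}$ only by the stated convention.
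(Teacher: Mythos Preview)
Your proposal is correct and follows exactly the paper's approach: the paper's proof simply observes that $\HausMet{X \times \Xi}$ is a metric on $\Compact{X \times \Xi}$ and that $\graphmap$ is injective into $\Compact{X \times \Xi}$, then concludes. Your version is more detailed in verifying compactness of the graph and spelling out injectivity, but the strategy is identical.
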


\begin{proof}
  Since the Hausdorff metric $\HausMet{X \times \Xi}$ is a metric on $\Compact{X \times \Xi}$,
  and since the map $\graphmap$ is injective from $\hatCc{X}{\Xi}$ into $\Compact{X \times \Xi}$,
  the function $\hatCcMet{X}{\Xi}$ is indeed a metric.
\end{proof}

Henceforth, we equip $\hatCc{X}{\Xi}$ with the topology induced by $\hatCcMet{X}{\Xi}$.
Topological properties of $\Xi$ are naturally inherited by $\hatCc{X}{\Xi}$, as described below.

\begin{thm} \label{thm: Polishness of uniform variable domains}
  The following statements hold.
  \begin{enumerate}[label = \textup{(\roman*)}, leftmargin = *]
    \item \label{thm item: 1. Polishness of uniform variable domains}
      If $\Xi$ is separable, then so is $\hatCc{X}{\Xi}$.
    \item \label{thm item: 2. Polishness of uniform variable domains}
      If $\Xi$ is Polish, then so is $\hatCc{X}{\Xi}$.
      (NB.\ The metric $\hatCcMet{X}{\Xi}$ itself is not necessarily complete. 
        See Remark~\ref{rem: uniform variable domain is not complete} below.)
  \end{enumerate}
\end{thm}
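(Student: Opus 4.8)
The plan is to reduce both assertions to known properties of the Hausdorff metric on $\Compact{X \times \Xi}$ via the graph embedding $\graphmap$. First I would observe that $\graphmap$ identifies $\hatCc{X}{\Xi}$ with a subset $\mathcal{G} \subseteq \Compact{X \times \Xi}$, namely the set of those compact $K \subseteq X \times \Xi$ that are \emph{functional}, i.e.\ for each $x$ there is at most one $\xi \in \Xi$ with $(x, \xi) \in K$ (equivalently, the projection $\pi_X \colon K \to X$ is injective). Since $\hatCcMet{X}{\Xi}$ is, by definition, the restriction of $\HausMet{X \times \Xi}$ to $\mathcal{G}$, separability is immediate: by Lemma~\ref{lem: Polishness of Hausdorff}, if $\Xi$ is separable then $X \times \Xi$ is separable (as $X$ is already assumed separable throughout this section — more precisely $X$ is a fixed boundedly compact, hence separable, metric space), so the Hausdorff topology on $\Compact{X \times \Xi}$ is separable, and any subspace of a separable metric space is separable. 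This proves~\ref{thm item: 1. Polishness of uniform variable domains}.

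For~\ref{thm item: 2. Polishness of uniform variable domains}, since separability is already handled, it remains to exhibit a complete metric inducing the topology; equivalently, to show $\mathcal{G}$ is a $G_\delta$ subset of the Polish space $\Compact{X \times \Xi}$ (which is Polish by Lemma~\ref{lem: Polishness of Hausdorff}, using that $\Xi$ complete and $X$ boundedly compact give $X \times \Xi$ complete). The key step is thus to verify that $\mathcal{G}$ — the set of functional compacta — is $G_\delta$. I would write $\mathcal{G} = \bigcap_{k \geq 1} \mathcal{G}_k$ where $\mathcal{G}_k$ consists of those $K$ such that there do \emph{not} exist $(x, \xi), (x', \xi') \in K$ with $d_X(x, x') = 0$ (automatically $x = x'$, as $d_X$ is a genuine metric) and $d_\Xi(\xi, \xi') \geq 1/k$; more carefully, define $\mathcal{G}_k = \{ K \in \Compact{X \times \Xi} : \text{if } (x,\xi),(x',\xi') \in K \text{ and } d_{X \times \Xi}((x,\xi),(x',\xi')) < \delta_k, \text{ then } d_\Xi(\xi,\xi') < 1/k \text{ for a suitable } \delta_k\}$ — but since the quantifier over the (varying) scale is awkward, the cleaner route is: $\mathcal{G}_k \coloneqq \{K : \forall (x,\xi),(x',\xi') \in K,\ d_X(x,x') = 0 \Rightarrow d_\Xi(\xi,\xi') < 1/k\}$, and show each $\mathcal{G}_k$ is open in the Hausdorff topology. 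Openness of $\mathcal{G}_k$ follows from the Painlevé--Kuratowski characterization of Hausdorff convergence (Lemma~\ref{lem: convergence in Hausdorff}): if $K \in \mathcal{G}_k$ but $K_n \to K$ with $K_n \notin \mathcal{G}_k$, pick $(x_n, \xi_n), (x_n, \xi'_n) \in K_n$ with $d_\Xi(\xi_n, \xi'_n) \geq 1/k$; by precompactness of $\bigcup_n K_n$ (Lemma~\ref{lem: convergence in Hausdorff} again) pass to a subsequence so these converge to points $(x,\xi),(x,\xi') \in K$ with $d_\Xi(\xi,\xi') \geq 1/k$, contradicting $K \in \mathcal{G}_k$. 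Hence $\mathcal{G}_k$ is open, $\mathcal{G} = \bigcap_k \mathcal{G}_k$ is $G_\delta$, and $\hatCc{X}{\Xi}$ is Polish.

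I expect the main obstacle to be the bookkeeping in the $G_\delta$ argument: one must be slightly careful that $\mathcal{G}$ really equals $\bigcap_k \mathcal{G}_k$ — this uses that $K$ functional means precisely that no two distinct points of $K$ share an $X$-coordinate, so failure of functionality is witnessed at \emph{some} finite scale $1/k$ — and that the empty map's graph $\emptyset$ lies in every $\mathcal{G}_k$, so the empty function is not accidentally excluded. A secondary point worth a remark is that this argument does \emph{not} show $\hatCcMet{X}{\Xi}$ itself is complete (indeed it need not be, as a sequence of graphs of functions with shrinking domains can Hausdorff-converge to a non-functional compact set, or a Cauchy sequence of graphs can fail to have a functional limit), which is why the theorem only claims the \emph{topology} is Polish; this is the content flagged in the parenthetical pointing to Remark~\ref{rem: uniform variable domain is not complete}. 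An alternative to the $G_\delta$ argument, if one prefers an explicit complete metric, would be to metrize via $(f,g) \mapsto \hatCcMet{X}{\Xi}(f,g)$ together with a term controlling the moduli of continuity and domains, but the $G_\delta$ route is cleaner and suffices.
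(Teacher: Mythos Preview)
Your proof is correct and follows the same overall strategy as the paper: embed $\hatCc{X}{\Xi}$ into $\Compact{X\times\Xi}$ via $\graphmap$, deduce separability immediately, and for Polishness show the image is $G_\delta$ and invoke Alexandrov's theorem. The paper in fact omits the details here and refers forward to the proof of Theorem~\ref{thm: Polishness of variable domains}\ref{thm item: 2. Polishness of variable domains}, whose argument (specialized to the compact case) would use the sets $\subGraphSp{\rho}{k}{X}{\Xi}$ of Definition~\ref{dfn: graph subspace}, i.e.\ sets defined by a uniform-continuity condition of the form ``there exist $\delta_1,\delta_2\in(0,1/k)$ with $d_X(x,y)<\delta_1\Rightarrow d_\Xi(a,b)<\delta_2$''.

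Your $G_\delta$ decomposition is genuinely simpler: you take $\mathcal{G}_k=\{K:\ (x,\xi),(x,\xi')\in K\Rightarrow d_\Xi(\xi,\xi')<1/k\}$, which tests only functionality at scale $1/k$ rather than a uniform modulus. This works in the compact case because a functional compact $K$ is automatically the graph of a \emph{continuous} map (the projection $\pi_X|_K$ is a continuous bijection from a compact space to a Hausdorff space, hence a homeomorphism), so no separate continuity condition is needed. The paper's formulation with $\delta_1,\delta_2$ is tailored to carry over to the non-compact setting of $\GraphSp{X}{\Xi}$, where compactness is lost and one must encode equicontinuity on bounded pieces explicitly; your version is cleaner here but would not transfer directly to that generality. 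Both routes yield the same conclusion.
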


\begin{proof}
  By definition,
  the graph map $\graphmap \colon \hatCc{X}{\Xi} \to \Compact{X \times \Xi}$ is distance-preserving,
  and in particular a topological embedding.
  If $\Xi$ is separable, then $\Compact{X \times \Xi}$ is separable by Lemma~\ref{lem: Polishness of Hausdorff},
  which implies \ref{thm item: 1. Polishness of uniform variable domains}.
  If $\Xi$ is Polish, then $\Compact{X \times \Xi}$ is Polish by the same lemma.
  To prove \ref{thm item: 2. Polishness of uniform variable domains},
  it thus suffices to show that $\graphmap(\hatCc{X}{\Xi})$ is a $G_\delta$ subset of $\Compact{X \times \Xi}$,
  by Alexandrov's theorem (see \cite[Theorem~2.2.1]{Srivastava_98_A_Course}).
  This can be shown in the same manner as the proof of Theorem~\ref{thm: Polishness of variable domains}\ref{thm item: 2. Polishness of variable domains} below,
  and so we omit the proof here.
\end{proof}

\begin{rem} \label{rem: uniform variable domain is not complete}
  To see that $\hatCcMet{X}{\Xi}$ fails to be complete even when $d_\Xi$ is complete,
  consider $X \coloneqq [0,1]$ and $\Xi \coloneqq \mathbb{R}$, both equipped with the Euclidean metric.
  Define continuous functions $f_n \colon X \to \Xi$, for $n \geq 1$, by
  \begin{equation}
    f_n(x) \coloneqq
    \begin{cases}
      1 - nx, & x \in [0, 1/n], \\
      0,      & x \in (1/n, 1].
    \end{cases}
  \end{equation}
  Then $\graphmap(f_n)$ converges in the Hausdorff topology to the compact set
  \begin{equation}
    E \coloneqq (\{0\} \times [0,1]) \cup ([0,1] \times \{0\}).
  \end{equation}
  However, $E$ is not the graph of any function.
  Hence, $(f_n)_{n \geq 1}$ is a Cauchy sequence in $\hatCc{X}{\Xi}$,
  but it does not converge in $\hatCc{X}{\Xi}$.
\end{rem}

The following result verifies that the convergence in $\hatCc{X}{\Xi}$ is equivalent to the convergence introduced in \cite[Definition~2.1]{Cao_23_Convergence}.

\begin{thm}[Convergence]  \label{thm: convergence in uniform variable domains}
  Let $f, f_1, f_2, \ldots$ be elements of $\hatCc{X}{\Xi}$.
  The following conditions are equivalent.
  \begin{enumerate} [label = \textup{(\roman*)}, leftmargin = *]
    \item \label{thm item: 1. convergence in uniform variable domains} 
      The functions $f_{n}$ converge to $f$ in $\hatCc{X}{\Xi}$.
    \item \label{thm item: 2. convergence in uniform variable domains}
      The sets $\dom(f_{n})$ converge to $\dom(f)$ in the Hausdorff topology as subsets of $X$,
      and it holds that 
      \begin{equation}  \label{thm eq: 2. convergence in uniform variable domains}
        \lim_{\delta \to 0}
        \limsup_{n \to \infty}
        \sup_{\substack{ x_n \in \dom(f_{n}) \\ x \in \dom(f) \\ d_X(x_n,x) < \delta}}
        d_\Xi(f_n(x_n), f(x)) 
        = 0.
      \end{equation}
    \item \label{thm item: 3. convergence in uniform variable domains}
      The sets $\dom(f_n)$ converge to $\dom(f)$ in the Hausdorff topology as subsets of $X$,
      and, for any $x_n \in \dom(f_n)$ and $x \in \dom(f)$ with $x_n \to x$ in $X$,
      it holds that $f_n(x_n) \to f(x)$ in $\Xi$.
    \item \label{thm item: 4. convergence in uniform variable domains}
      The sets $\dom(f_{n})$ converge to $\dom(f)$ in the Hausdorff topology as subsets of $X$,
      and there exist functions $g_{n}, g \in C(X, \Xi)$ such that $g_{n}|_{\dom(f_{n})} = f_{n}$, $g|_{\dom(f)} = f$,
      and $g_{n} \to g$ in the compact-convergence topology.
  \end{enumerate}
\end{thm}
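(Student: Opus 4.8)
The plan is to exploit that, by definition, $\hatCcMet{X}{\Xi}(f,g) = \HausMet{X \times \Xi}(\graphmap(f), \graphmap(g))$, so that condition~\ref{thm item: 1. convergence in uniform variable domains} is precisely the assertion that $\graphmap(f_n) \to \graphmap(f)$ in the Hausdorff topology on $\Compact{X \times \Xi}$. I would then translate everything through the Painlev\'e--Kuratowski description of Hausdorff convergence recorded in Lemma~\ref{lem: convergence in Hausdorff}, using throughout that all domains involved are compact (we work inside $\hatCc{X}{\Xi}$), so the domains themselves lie in $\Compact{X}$ and the projection of $\graphmap(h)$ to $X$ is $\dom(h) \in \Compact{X}$.

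\textbf{\ref{thm item: 1. convergence in uniform variable domains} $\Leftrightarrow$ \ref{thm item: 3. convergence in uniform variable domains}.} For ``$\Rightarrow$'', apply the $1$-Lipschitz coordinate projection $X \times \Xi \to X$ to the graphs; since Hausdorff convergence is preserved by $1$-Lipschitz maps, the domains converge in the Hausdorff topology. For the value part, Lemma~\ref{lem: convergence in Hausdorff} gives that $\bigcup_n \graphmap(f_n)$ is relatively compact; hence if $x_n \in \dom(f_n)$ and $x_n \to x \in \dom(f)$, then $(x_n, f_n(x_n))$ lies in a compact set, every subsequential limit has the form $(x, \xi)$, and such a point lies in $\graphmap(f)$ by~\ref{lem item: 1. convergence in Hausdorff}, forcing $\xi = f(x)$; thus $f_n(x_n) \to f(x)$. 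For ``$\Leftarrow$'', I would first show that $\bigcup_n \graphmap(f_n) \cup \graphmap(f)$ is relatively compact in $X \times \Xi$: its projections to $X$ form a precompact (because convergent) family of closed sets, hence lie in a common compact subset by Lemma~\ref{lem: precompact in Hausdorff}, and a short contradiction argument — take $z_m \in \dom(f_{k_m})$ with $f_{k_m}(z_m)$ escaping every compact, pass to $z_m \to z \in \dom(f)$ (using~\ref{lem item: 1. convergence in Hausdorff} for the domains), complete to a full sequence converging to $z$, and apply the value-convergence in~\ref{thm item: 3. convergence in uniform variable domains} — shows the projections to $\Xi$ are relatively compact as well. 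Granting this, conditions~\ref{lem item: 1. convergence in Hausdorff} and~\ref{lem item: 2'. convergence in Hausdorff} for the graphs read off directly from the Hausdorff convergence of the domains together with the value-convergence, so Lemma~\ref{lem: convergence in Hausdorff} yields~\ref{thm item: 1. convergence in uniform variable domains}.

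\textbf{\ref{thm item: 2. convergence in uniform variable domains} $\Leftrightarrow$ \ref{thm item: 3. convergence in uniform variable domains}.} These share the hypothesis that the domains converge, so only the value statements are at issue. ``$\Rightarrow$'' is immediate: if $x_n \to x$, then for each $\delta > 0$ one has $d_X(x_n,x) < \delta$ eventually, so $\limsup_n d_\Xi(f_n(x_n), f(x))$ is dominated by the inner supremum in~\eqref{thm eq: 2. convergence in uniform variable domains}, which tends to $0$ as $\delta \to 0$. For ``$\Leftarrow$'', suppose~\eqref{thm eq: 2. convergence in uniform variable domains} fails; since the inner supremum is nondecreasing in $\delta$, the double limit equals some $L > 0$, so one can pick $n_k \to \infty$, $x_{n_k} \in \dom(f_{n_k})$ and $y_k \in \dom(f)$ with $d_X(x_{n_k}, y_k) \to 0$ and $d_\Xi(f_{n_k}(x_{n_k}), f(y_k)) \geq L/2$. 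By compactness of $\dom(f)$, after a subsequence $y_k \to x \in \dom(f)$, hence $x_{n_k} \to x$; completing $(x_{n_k})_k$ to a full sequence $\tilde x_n \in \dom(f_n)$ with $\tilde x_n \to x$ (filling the remaining indices by nearest points, using the Hausdorff convergence of the domains) and applying~\ref{thm item: 3. convergence in uniform variable domains} gives $f_{n_k}(x_{n_k}) \to f(x)$, while continuity of $f$ gives $f(y_k) \to f(x)$, contradicting $d_\Xi(f_{n_k}(x_{n_k}), f(y_k)) \geq L/2$.

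\textbf{\ref{thm item: 3. convergence in uniform variable domains} $\Leftrightarrow$ \ref{thm item: 4. convergence in uniform variable domains}, and the main obstacle.} The implication~\ref{thm item: 4. convergence in uniform variable domains} $\Rightarrow$~\ref{thm item: 3. convergence in uniform variable domains} is easy: given $g_n \to g$ in the compact-convergence topology and $x_n \in \dom(f_n)$ with $x_n \to x \in \dom(f)$, the set $\{x\} \cup \{x_n : n \geq 1\}$ is compact, so $g_n \to g$ uniformly on it and continuity of $g$ gives $f_n(x_n) = g_n(x_n) \to g(x) = f(x)$ (the domain convergence being part of the hypothesis already). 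The reverse implication~\ref{thm item: 3. convergence in uniform variable domains} $\Rightarrow$~\ref{thm item: 4. convergence in uniform variable domains} is what I expect to be the hard part: one must manufacture continuous extensions $g_n$ of $f_n$ and $g$ of $f$ to all of $X$ in such a way that the variable-domain convergence of~\eqref{thm eq: 2. convergence in uniform variable domains} upgrades to uniform convergence of $g_n$ to $g$ on every compact subset of $X$. My plan is to fix a locally finite open cover of $X$ by small balls with a subordinate partition of unity and define $g_n$ (and likewise $g$) by a ``nearest-point averaging'' of the values of $f_n$ (of $f$) over the corresponding pieces of the domain; controlling the modulus of this construction uniformly in $n$ and combining it with~\eqref{thm eq: 2. convergence in uniform variable domains} then yields the compact-uniform convergence. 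The delicate points are the well-definedness and continuity of such an extension — this is where one relies on a structural property of $\Xi$ allowing continuous maps on closed subsets to be extended (e.g.\ Dugundji's extension theorem for convex subsets of normed spaces) — together with the uniform estimate over an arbitrary compact subset of $X$.
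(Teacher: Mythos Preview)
Your treatment of \ref{thm item: 1. convergence in uniform variable domains} $\Leftrightarrow$ \ref{thm item: 2. convergence in uniform variable domains} $\Leftrightarrow$ \ref{thm item: 3. convergence in uniform variable domains} is correct; your direct argument for \ref{thm item: 1. convergence in uniform variable domains} $\Leftrightarrow$ \ref{thm item: 3. convergence in uniform variable domains} via the Painlev\'e--Kuratowski description is a mild rearrangement of the paper's cycle (which proves \ref{thm item: 1. convergence in uniform variable domains} $\Rightarrow$ \ref{thm item: 2. convergence in uniform variable domains} $\Rightarrow$ \ref{thm item: 3. convergence in uniform variable domains} directly and closes via \ref{thm item: 4. convergence in uniform variable domains} $\Rightarrow$ \ref{thm item: 1. convergence in uniform variable domains}), and both are short.

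The genuine divergence is at \ref{thm item: 3. convergence in uniform variable domains} $\Rightarrow$ \ref{thm item: 4. convergence in uniform variable domains}. Your plan extends each $f_n$ separately by a partition-of-unity/nearest-point construction and then establishes compact-uniform convergence of the extensions by a hand-made estimate. The paper's device (deferred to \cite[Proposition~2.3]{Cao_23_Convergence} here and spelled out in the proof of Theorem~\ref{thm: convergence in variable domains}) is to package the entire sequence into a single map: on the closed set $D = (\{0\}\times\dom(f)) \cup \bigcup_{n\ge 1}(\{1/n\}\times\dom(f_n)) \subseteq [0,1]\times X$ one defines $F(0,\cdot)=f$, $F(1/n,\cdot)=f_n$, and condition~\ref{thm item: 3. convergence in uniform variable domains} is precisely the statement that $F$ is continuous; a \emph{single} extension $\tilde F$ of $F$ to all of $[0,1]\times X$ then gives $g\coloneqq\tilde F(0,\cdot)$ and $g_n\coloneqq\tilde F(1/n,\cdot)$, and $g_n\to g$ uniformly on compacta follows immediately from the joint continuity of $\tilde F$. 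Both routes ultimately invoke an extension theorem for $\Xi$-valued continuous maps (your Dugundji, the paper's Tietze), so the structural caveat you raise about $\Xi$ applies equally to the paper's argument; what the packaging trick buys is that the convergence of the extensions is absorbed into one application of the extension theorem rather than requiring a separate uniform estimate across the family $(g_n)_n$.
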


\begin{proof}
  Assume that \ref{thm item: 1. convergence in uniform variable domains} holds.
  It is easy to check that $\dom(f_n)$ converges to $\dom(f)$ in the Hausdorff topology.
  Fix $\varepsilon > 0$.
  By the uniform continuity of $f$ on $\dom(f)$,
  we can find $\delta \in (0, \varepsilon)$ such that, for all $x, y \in \dom(f)$,
  \begin{equation}  \label{2. eq: choice of delta for convergence in C(C(S), Xi)}
    d_X(x, y) < 2\delta 
    \implies 
    d_\Xi(f(x), f(y)) < \varepsilon.
  \end{equation}
  Choose $N \in \mathbb{N}$ such that
  \begin{equation}  \label{2. eq: Hausdorff distance between f_n^r and f^r}
    \hatCcMet{X}{\Xi}( f_{n}, f) = \HausMet{X \times \Xi}(\graphmap(f_n), \graphmap(f)) < \delta,
    \quad
    \forall n > N.
  \end{equation}
  Fix $n > N$ and $x_n \in \dom(f_{n}),\, x \in \dom(f)$ with $d_X(x_n, x) < \delta$.
  By \eqref{2. eq: Hausdorff distance between f_n^r and f^r},
  there exists $y \in \dom(f)$ such that 
  \begin{equation}  \label{2. eq: choice of y for convergence in C(C(S), Xi)}
    d_X(x_n, y) \vee d_\Xi(f_n(x_n), f(y)) < \delta.
  \end{equation}
  Since 
  \begin{equation}
    d_X(x, y) \leq d_X(x, x_n) + d_X(x_n, y) < 2 \delta,
  \end{equation}
  it follows from \eqref{2. eq: choice of delta for convergence in C(C(S), Xi)} that 
  $d_\Xi(f(x), f(y)) < \varepsilon$.
  This, combined with \eqref{2. eq: choice of y for convergence in C(C(S), Xi)}, yields that 
  \begin{equation}
    d_\Xi(f_n(x_n), f(x)) \leq d_\Xi(f_n(x_n), f(y)) + d_\Xi(f(y), f(x)) < 2\varepsilon.
  \end{equation}
  Thus, we obtain \ref{thm item: 2. convergence in uniform variable domains}.

  The implication \ref{thm item: 2. convergence in uniform variable domains} $\Rightarrow$ \ref{thm item: 3. convergence in uniform variable domains} is straightforward.
  The equivalence of \ref{thm item: 3. convergence in uniform variable domains} and \ref{thm item: 4. convergence in uniform variable domains} 
  follows from \cite[Proposition~2.3]{Cao_23_Convergence}
  (see also the proof of Theorem~\ref{thm: convergence in variable domains} below).  
  The remaining implication \ref{thm item: 4. convergence in uniform variable domains} $\Rightarrow$ \ref{thm item: 1. convergence in uniform variable domains}
  can be verified by using Lemma~\ref{lem: convergence in Hausdorff}.
\end{proof}

We provide a precompactness criterion in $\hatCc{X}{\Xi}$,
which is a generalization of the Arzel\`{a}-Ascoli theorem.

\begin{thm}[Precompactness] \label{thm: precompactness uniform in variable domains}
  Fix a non-empty index set $\mathscr{A}$.
  A subset $\{ f_{\alpha} \mid \alpha \in \mathscr{A} \}$ of $\hatCc{X}{\Xi}$ 
  is precompact in $\hatCc{X}{\Xi}$ 
  if and only if the following conditions are satisfied.
  \begin{enumerate} [label = \textup{(\roman*)}, leftmargin = *]
    \item \label{thm item: 1. precompactness uniform in variable domains}
      The set $\{ \dom(f_\alpha) \in \Compact{X} \mid \alpha \in \mathscr{A} \}$ is precompact in the Hausdorff topology.
    \item \label{thm item: 2. precompactness uniform in variable domains} 
      The set $\{ f_{\alpha}(x) \mid x \in \dom(f_{\alpha}),\, \alpha \in \mathscr{A} \}$
      is precompact in $\Xi$.
    \item \label{thm item: 3. precompactness uniform in variable domains} 
      It holds that 
      \begin{equation}
        \lim_{\delta \to 0}
        \sup_{\alpha \in \mathscr{A}}
        \sup_{\substack{ x, y \in \dom(f_{\alpha})\\ d_X(x, y) \leq \delta}}
        d_\Xi(f_{\alpha}(x), f_{\alpha}(y)) 
        = 0.
      \end{equation}
  \end{enumerate}
\end{thm}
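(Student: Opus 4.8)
The plan is to transport the problem to the space $\Compact{X \times \Xi}$ via the graph map $\graphmap$, which by construction satisfies $\hatCcMet{X}{\Xi}(f,g) = \HausMet{X \times \Xi}(\graphmap(f), \graphmap(g))$ and is therefore an isometric (in particular, topological) embedding. Precompactness of $\{f_\alpha \mid \alpha \in \mathscr{A}\}$ in $\hatCc{X}{\Xi}$ is thus closely tied to precompactness of $\{\graphmap(f_\alpha) \mid \alpha \in \mathscr{A}\}$ in the Hausdorff topology on $\Compact{X \times \Xi}$, for which Lemma~\ref{lem: precompact in Hausdorff} gives a clean criterion. The one point requiring care is that $\graphmap(\hatCc{X}{\Xi})$ is not closed in $\Compact{X \times \Xi}$, so one must check separately that the relevant Hausdorff limits of graphs are again graphs of continuous functions; this is exactly where hypothesis \ref{thm item: 3. precompactness uniform in variable domains} enters (it plays the role of the equicontinuity hypothesis in Arzel\`a--Ascoli).

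For the necessity of \ref{thm item: 1. precompactness uniform in variable domains}--\ref{thm item: 3. precompactness uniform in variable domains}: assuming precompactness, the closure of $\{f_\alpha\}$ is compact, hence $\{\graphmap(f_\alpha)\}$ is precompact in $\Compact{X \times \Xi}$, and Lemma~\ref{lem: precompact in Hausdorff} provides a compact $K \subseteq X \times \Xi$ with $\graphmap(f_\alpha) \subseteq K$ for all $\alpha$. Projecting $K$ onto $X$ and onto $\Xi$ gives compact sets containing every $\dom(f_\alpha)$ and every $\{ f_\alpha(x) \mid x \in \dom(f_\alpha)\}$, which yields \ref{thm item: 1. precompactness uniform in variable domains} (again by Lemma~\ref{lem: precompact in Hausdorff}) and \ref{thm item: 2. precompactness uniform in variable domains}. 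For \ref{thm item: 3. precompactness uniform in variable domains} I argue by contradiction: if it fails, there are $\varepsilon > 0$, indices $\alpha_n$, and points $x_n, y_n \in \dom(f_{\alpha_n})$ with $d_X(x_n, y_n) \to 0$ but $d_{\Xi}(f_{\alpha_n}(x_n), f_{\alpha_n}(y_n)) \geq \varepsilon$. By precompactness, pass to a subsequence with $f_{\alpha_n} \to f$ in $\hatCc{X}{\Xi}$; since every $x_n$ lies in the compact set $\pi_X(K)$, pass to a further subsequence with $x_n \to z$, whence also $y_n \to z$. As $\dom(f_{\alpha_n}) \to \dom(f)$ in the Hausdorff topology, \ref{lem item: 1. convergence in Hausdorff} gives $z \in \dom(f)$, and then Theorem~\ref{thm: convergence in uniform variable domains}\ref{thm item: 3. convergence in uniform variable domains} forces $f_{\alpha_n}(x_n) \to f(z)$ and $f_{\alpha_n}(y_n) \to f(z)$, contradicting the separation $\geq \varepsilon$.

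For the sufficiency: given any sequence $(f_{\alpha_n})_{n \geq 1}$, hypotheses \ref{thm item: 1. precompactness uniform in variable domains} and \ref{thm item: 2. precompactness uniform in variable domains} place all graphs $\graphmap(f_{\alpha_n})$ inside a single compact subset of $X \times \Xi$, so by Lemma~\ref{lem: precompact in Hausdorff} they are precompact in $\Compact{X \times \Xi}$; pass to a subsequence along which $\graphmap(f_{\alpha_n}) \to E$ in the Hausdorff topology, and set $S \coloneqq \pi_X(E)$, which is compact. It then suffices to show $E = \graphmap(g)$ for some $g \in C(S, \Xi)$, since then $g \in \hatCc{X}{\Xi}$ and $f_{\alpha_n} \to g$ in $\hatCc{X}{\Xi}$ directly from the definition of $\hatCcMet{X}{\Xi}$. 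Using the Painlev\'e--Kuratowski description of Hausdorff convergence (Lemma~\ref{lem: convergence in Hausdorff}): the set $E$ is single-valued over $S$, because any two points $(x, \xi_1), (x, \xi_2) \in E$ arise, along suitable subsequences, as limits of $(x_n^1, f_{\alpha_n}(x_n^1))$ and $(x_n^2, f_{\alpha_n}(x_n^2))$ with $x_n^1, x_n^2 \in \dom(f_{\alpha_n})$ and $d_X(x_n^1, x_n^2) \to 0$, so \ref{thm item: 3. precompactness uniform in variable domains} forces $d_{\Xi}(f_{\alpha_n}(x_n^1), f_{\alpha_n}(x_n^2)) \to 0$, whence $\xi_1 = \xi_2$; and the resulting function $g \colon S \to \Xi$ is uniformly continuous, with modulus of continuity dominated by $\delta \mapsto \sup_{\alpha} \sup_{x, y \in \dom(f_\alpha),\, d_X(x,y) \leq \delta} d_{\Xi}(f_\alpha(x), f_\alpha(y))$, by the same limiting argument applied to approximating sequences for $x$ and $y$. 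Hence $E$ is the graph of a continuous function, as required. (The degenerate empty map is handled trivially, as $\graphmap(\emptyset_{\Xi}) = \emptyset$ is isolated in $\Compact{X \times \Xi}$.)

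The step I expect to be the main obstacle is precisely this last point in the sufficiency part: ruling out that the Hausdorff limit $E$ of the graphs "folds" onto itself or otherwise fails to be a graph. Controlling this requires deploying hypothesis \ref{thm item: 3. precompactness uniform in variable domains} in tandem with both halves of Painlev\'e--Kuratowski convergence; everything else is a fairly direct transcription through the isometric embedding $\graphmap$ together with Lemma~\ref{lem: precompact in Hausdorff}.
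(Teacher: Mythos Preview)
Your proposal is correct and follows essentially the same approach as the paper: transport to $\Compact{X \times \Xi}$ via $\graphmap$, invoke Lemma~\ref{lem: precompact in Hausdorff} for the Hausdorff precompactness criterion, and use hypothesis~\ref{thm item: 3. precompactness uniform in variable domains} together with Painlev\'e--Kuratowski convergence to show the Hausdorff limit is a graph of a continuous function. The only minor differences are stylistic: for the necessity of \ref{thm item: 2. precompactness uniform in variable domains} you project the bounding compact $K$ onto $\Xi$ directly, whereas the paper argues by contradiction; and for continuity of the limiting $g$ you bound its modulus of continuity by the uniform modulus in \ref{thm item: 3. precompactness uniform in variable domains}, whereas the paper writes out a triangle-inequality estimate with doubly-indexed approximants---but these are equivalent packagings of the same idea.
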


\begin{proof}
  Suppose that $\{ f_{\alpha} \mid \alpha \in \mathscr{A}\}$ is precompact.
  Condition~\ref{thm item: 1. precompactness uniform in variable domains} follows from Theorem~\ref{thm: convergence in variable domains}.
  Assume that \ref{thm item: 2. precompactness uniform in variable domains} is not satisfied.
  Then we can find a sequence $(\alpha_{n}, x_n)_{n \geq 1}$ 
  with $\alpha_{n} \in \mathscr{A}$ and $x_n \in \dom(f_{\alpha_{n}})$
  such that $(f_{\alpha_{n}}(x_n))_{n \geq 1}$ contains no convergent subsequence.
  If necessary, by choosing a subsequence,
  we may assume that $f_{\alpha_{n}}$ converges to some function $f$ in $\hatCc{X}{\Xi}$.
  By Theorem~\ref{thm: convergence in variable domains},
  $\dom(f_{\alpha_n}) \to \dom(f)$ in the Hausdorff topology.
  Thus, 
  if necessary, by choosing a further subsequence,
  we may also assume that $x_n$ converges to some $x \in \dom(f)$ in $X$.
  It then follows from Theorem~\ref{thm: convergence in variable domains}\ref{thm item: 3. convergence in variable domains} that 
  $f_{\alpha_{n}}(x_n) \to f(x)$ in $\Xi$,
  which is a contradiction.
  Therefore, we obtain \ref{thm item: 2. precompactness uniform in variable domains}.

  Next, assume that \ref{thm item: 3. precompactness uniform in variable domains} is not satisfied.
  Then we can find $\varepsilon > 0$,
  a decreasing sequence $(\delta_{n})_{n \geq 1}$ with $\delta_{n} \downarrow 0$,
  a sequence $(\alpha_{n})_{n \geq 1}$ in $\mathscr{A}$,
  and 
  $x_n, y_{n} \in \dom(f_{\alpha_{n}})$ with $d_X(x_n, y_n) \leq \delta_n$ such that 
  \begin{equation}
    d_\Xi(f_{\alpha_{n}}(x_n), f_{\alpha_{n}}(y_{n})) > \varepsilon.
  \end{equation}
  If necessary, by choosing a subsequence,
  we may assume that $f_{\alpha_{n}}$ converges to some function $f$ in $\hatCc{X}{\Xi}$.
  Moreover,
  by \ref{thm item: 1. precompactness uniform in variable domains} and Lemma~\ref{lem: precompact in Hausdorff},
  if necessary, by choosing a subsequence,
  we may assume that $x_n \to x$ and $y_n \to y$ for some $x, y \in X$.
  Using the convergence of $\dom(f_{\alpha_n})$ to $\dom(f)$ and $d_X(x_n, y_n) \to 0$, 
  we obtain $x = y \in \dom(f)$.
  It then follows from Theorem~\ref{thm: convergence in uniform variable domains}\ref{thm item: 3. convergence in uniform variable domains} that 
  $f_{\alpha_n}(x_n) \to f(x)$ and $f_{\alpha_n}(y_n) \to f(x)$,
  which contradicts the inequality above.
  Therefore, \ref{thm item: 3. precompactness uniform in variable domains} holds.

  Conversely,
  assume that \ref{thm item: 1. precompactness uniform in variable domains}, \ref{thm item: 2. precompactness uniform in variable domains} 
  and \ref{thm item: 3. precompactness uniform in variable domains} are satisfied.
  By \ref{thm item: 1. precompactness uniform in variable domains} and Lemma~\ref{lem: precompact in Hausdorff},
  there exists a compact subset $X'$ of $X$ such that $\dom(f_\alpha) \subseteq X'$ for all $\alpha \in \mathscr{A}$.
  Let $\Xi'$ be the closure of $\{f_\alpha(x) \mid x \in \dom(f_\alpha),\, \alpha \in \mathscr{A}\}$,
  which is compact by \ref{thm item: 2. precompactness uniform in variable domains}.
  For all $\alpha \in \mathscr{A}$,
  the graph $\graphmap(f_\alpha)$ of $f_\alpha$ is contained in the compact set $X' \times \Xi'$.
  Thus, by Lemma~\ref{lem: precompact in Hausdorff}, 
  $\{\graphmap(f_\alpha) \mid \alpha \in \mathscr{A}\}$ is precompact in the Hausdorff topology.

  Fix a sequence $(\alpha_n)_{n \geq 1}$ in $\mathscr{A}$ arbitrarily.
  It is enough to show that $(f_{\alpha_n})_{n \geq 1}$ has a convergent subsequence.
  If necessary, by choosing a subsequence,
  we may assume that 
  $\graphmap(f_{\alpha_{n}})$ converges to some compact subset $G$ of $X \times \Xi$ in the Hausdorff topology.
  Define 
  \begin{equation}
    K \coloneqq \{x \in X \mid (x, a) \in G\ \text{for some}\ a \in \Xi\}.
  \end{equation}
  If $K$ is the empty set,
  then, by the definition of the Hausdorff metric,
  $\graphmap(f_{\alpha_n})$ is the empty set for all sufficiently large $n$,
  which implies that $f_{\alpha_n}$ converges to the empty map in $\hatCc{X}{\Xi}$.
  It thus remains to consider the case where $K$ is not empty.

  Fix $x \in K$ and suppose $(x,a), (x,b) \in G$.
  Since $\graphmap(f_{\alpha_n}) \to G$,
  we can find $x_n, y_n \in \dom(f_{\alpha_n})$ such that 
  $(x_n, f_{\alpha_n}(x_n)) \to (x,a)$ and $(y_n, f_{\alpha_n}(y_n)) \to (x,b)$.
  By \ref{thm item: 3. precompactness uniform in variable domains}, this implies $a = b$.
  Therefore, we can define a map $f \colon K \to \Xi$ so that $\graphmap(f) = G$.
  It remains to show that $f$ is continuous.
  Fix a sequence $(x^k)_{k \geq 1}$ in $\dom(f)$ converging to some $x^\infty \in \dom(f)$.
  Since $\graphmap(f_{\alpha_n}) \to \graphmap(f)$,
  we can find $x_n^k \in \dom(f_{\alpha_n})$, $k \in \NN \cup \{\infty\}$, such that 
  $(x_n^k, f_{\alpha_n}(x_n^k)) \to (x^k, f(x^k))$ as $n \to \infty$ for each $k \in \NN \cup \{\infty\}$.
  The triangle inequality yields
  \begin{equation}
    d_\Xi(f(x^k), f(x^\infty)) 
    \leq 
    d_\Xi(f(x^k), f_{\alpha_n}(x_n^k)) + d_\Xi(f_{\alpha_n}(x_n^k), f_{\alpha_n}(x_n^\infty)) + d_\Xi(f_{\alpha_n}(x_n^\infty), f(x^\infty)).
  \end{equation}
  Letting $n \to \infty$ and then $k \to \infty$, and using \ref{thm item: 3. precompactness uniform in variable domains},
  we obtain that $f(x^k) \to f(x^\infty)$.
  This shows that $f$ is continuous.
\end{proof}

We are then interested in the metrization of the larger space $\hatC{X}{\Xi}$.
To this end, we define a restriction system from $\hatC{X}{\Xi}$ to $\hatCc{X}{\Xi}$ as follows:
for each $r > 0$ and $x \in X$,
\begin{equation} \label{eq: RS for variable domains}
  R_x^{(r)}(f) = f|_x^{(r)} \coloneqq f|_{D_X(x, r)},
  \quad 
  f \in \hatC{X}{\Xi}.
\end{equation} 

\begin{lem} \label{lem: RS for variable domains}
  The restriction system defined above, from $\hatC{X}{\Xi}$ to $\hatCc{X}{\Xi}$, satisfies Condition~3.
\end{lem}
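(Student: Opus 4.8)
The plan is to verify that $R$ is a restriction system and then to check Conditions~(i), (ii-a) and (ii-b) of Assumption~\ref{assum: metrization of D}, which together constitute Condition~3. Throughout I identify $f \in \hatC{X}{\Xi}$ with its graph $\graphmap(f)$, using the identity $\graphmap(f|_x^{(r)}) = \graphmap(f) \cap (D_X(x,r) \times \Xi)$; the right-hand side is the graph of the restriction of the continuous map $f$ to the compact set $\dom(f) \cap D_X(x,r)$, so $f|_x^{(r)}$ indeed lies in $\hatCc{X}{\Xi}$. Axioms \ref{dfn item: RS. 1}--\ref{dfn item: RS. 4} are then routine: \ref{dfn item: RS. 1} is $D_X(x,s) \cap D_X(x,r) = D_X(x,s\wedge r)$; \ref{dfn item: RS. 2} follows since $\bigcup_{r>0} D_X(x,r) = X$; \ref{dfn item: RS. 3} holds because the compact (hence bounded) domain of an element of $\hatCc{X}{\Xi}$ is contained in some $D_X(\rho,r)$; and \ref{dfn item: RS. 4} follows from $D_X(y,r) \subseteq D_X(x,s+r)$, valid whenever $s \geq d_X(x,y)$.

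The key observation for the remaining conditions is that the $X$-component of this system is precisely the Fell-type restriction system $(R_{X,x}^{(r)})$ on $\Closed{X}$ of Section~\ref{sec: the Fell topology}, whose relevant properties are recorded in Proposition~\ref{prop: RS for local Hausdorff}. Indeed $\dom(f|_x^{(r)}) = \dom(f) \cap D_X(x,r)$, and since the projection $\pi_X \colon X \times \Xi \to X$ is non-expansive for the max product metric, it does not increase Hausdorff distance, so $\HausMet{X}(\dom(g),\dom(h)) \leq \hatCcMet{X}{\Xi}(g,h)$ for all $g,h \in \hatCc{X}{\Xi}$. For Condition~(i) I fix $(f,x)$ and take $r>0$ to be a radius at which $s \mapsto \dom(f)\cap D_X(x,s)$ is continuous in the Hausdorff topology; by Proposition~\ref{prop: RS for local Hausdorff} (Condition~1 for the Fell system) all but countably many $r$ qualify. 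For any $r_n \to r$, I then check criterion~\ref{thm item: 3. convergence in uniform variable domains} of Theorem~\ref{thm: convergence in uniform variable domains}: the domains converge by the choice of $r$, and if $y_n \in \dom(f)\cap D_X(x,r_n)$ and $y \in \dom(f)\cap D_X(x,r)$ with $y_n \to y$, then $f(y_n) \to f(y)$ since $f$ is continuous on $\dom(f)$; hence $f|_x^{(r_n)} \to f|_x^{(r)}$ in $\hatCc{X}{\Xi}$.

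Condition~(ii-a) uses the same criterion. Suppose $x_n \to x_\infty$, $r_n \uparrow \infty$ and $\hatCcMet{X}{\Xi}(f_n|_{x_n}^{(r_n)}, f_\infty|_{x_\infty}^{(r_n)}) \to 0$. Applying $\pi_X$ yields $\HausMet{X}(\dom(f_n)\cap D_X(x_n,r_n),\, \dom(f_\infty)\cap D_X(x_\infty,r_n)) \to 0$, so Proposition~\ref{prop: RS for local Hausdorff} (Condition~2 for the Fell system, applied to the closed sets $\dom(f_n)$ and $\dom(f_\infty)$) gives $\dom(f_n)\cap D_X(x_n,r) \to \dom(f_\infty)\cap D_X(x_\infty,r)$ in the Hausdorff topology for all $r$ outside a countable set depending only on $(\dom(f_\infty),x_\infty)$. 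For such an $r$ and any $y_n \in \dom(f_n)\cap D_X(x_n,r)$ with $y_n \to y \in \dom(f_\infty)\cap D_X(x_\infty,r)$, the point $(y_n,f_n(y_n))$ lies in $\graphmap(f_n|_{x_n}^{(r_n)})$ once $r_n \geq r$, hence is within $\hatCcMet{X}{\Xi}(f_n|_{x_n}^{(r_n)}, f_\infty|_{x_\infty}^{(r_n)}) + o(1)$ of a point $(y_n',f_\infty(y_n')) \in \graphmap(f_\infty|_{x_\infty}^{(r_n)})$; then $y_n' \to y$, so $f_\infty(y_n') \to f_\infty(y)$ and therefore $f_n(y_n) \to f_\infty(y)$. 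Theorem~\ref{thm: convergence in uniform variable domains} then yields $f_n|_{x_n}^{(r)} \to f_\infty|_{x_\infty}^{(r)}$, which is (ii-a).

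Finally, Condition~(ii-b) reduces to the Arzelà--Ascoli-type criterion of Theorem~\ref{thm: precompactness uniform in variable domains}: if $f_n|_{x_n}^{(r)} \to f_\infty|_{x_\infty}^{(r)}$ in $\hatCc{X}{\Xi}$, then $\{f_n|_{x_n}^{(r)}\}_n$ is precompact and so satisfies conditions \ref{thm item: 1. precompactness uniform in variable domains}--\ref{thm item: 3. precompactness uniform in variable domains} there. For $s \in (0,r]$ the inclusion $\dom(f_n|_{x_n}^{(s)}) \subseteq \dom(f_n|_{x_n}^{(r)})$ transfers \ref{thm item: 2. precompactness uniform in variable domains} and \ref{thm item: 3. precompactness uniform in variable domains} to $\{f_n|_{x_n}^{(s)}\}_n$, while \ref{thm item: 1. precompactness uniform in variable domains} holds because each $\dom(f_n|_{x_n}^{(s)})$ lies in the compact ball $D_X(x_\infty, s + \sup_n d_X(x_n,x_\infty))$ (the supremum being finite since $x_n$ converges), so Lemma~\ref{lem: precompact in Hausdorff} applies; hence $\{f_n|_{x_n}^{(s)}\}_n$ is precompact in $\hatCc{X}{\Xi}$. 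I expect no serious obstacle here: the lemma is essentially a corollary of the heavy lifting already done for the Fell topology and for $\hatCc{X}{\Xi}$, and the only point needing care is the bookkeeping in (ii-a), namely that the exceptional countable set is the one produced by Proposition~\ref{prop: RS for local Hausdorff} for the fixed data $(\dom(f_\infty),x_\infty)$ and thus does not depend on $n$.
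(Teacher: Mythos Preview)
Your proposal is correct and follows essentially the same route as the paper's proof in Appendix~\ref{appendix: variable domains}: Condition~(i) via continuity of the domain restriction plus continuity of $f$ (Lemma~\ref{lem: RS for variable domains is conti}), Condition~(ii-a) via the Fell-system argument for domains combined with the value-convergence criterion of Theorem~\ref{thm: convergence in uniform variable domains} (Lemma~\ref{lem: RS for variable domains satisfies 2}), and Condition~(ii-b) via the Arzel\`a--Ascoli-type precompactness criterion of Theorem~\ref{thm: precompactness uniform in variable domains} (Lemma~\ref{lem: RS for variable domains satisfies 3}). The only stylistic difference is that you invoke Proposition~\ref{prop: RS for local Hausdorff} as a black box for the domain part, whereas the paper suggests re-running the argument of Lemma~\ref{lem: RS for local Hausdorff satisfies 2} directly; both amount to the same computation.
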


For each $\rho \in X$ and $f, g \in \hatC{X}{\Xi}$,
we define
\begin{equation} \label{eq: the metric on hatC}
  \hatCMet{(X, \rho)}{\Xi}(f, g) 
  \coloneqq 
  \int_{0}^{\infty}
  e^{-r}
  \bigl(1 \wedge \hatCcMet{X}{\Xi}(f|_\rho^{(r)}, g|_\rho^{(r)})\bigr)\,
  dr.
\end{equation}
The following is an immediate consequence of Corollary~\ref{cor: a summary of D} and Lemma~\ref{lem: RS for variable domains}.

\begin{prop} \label{prop: hatC metric}
  For each $\rho \in X$, the function $\hatCMet{(X, \rho)}{\Xi}$ is a well-defined metric on $\hatC{X}{\Xi}$,
  and the topology it induces is independent of the choice of $\rho$.
\end{prop}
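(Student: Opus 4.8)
The plan is to obtain this as a direct application of Corollary~\ref{cor: a summary of D}, once we identify the data of the present subsection with the abstract framework of Section~\ref{sec: Metric for non-compact objects}. Concretely, I would take $\frakC(X) = \hatCc{X}{\Xi}$, $\frakD(X) = \hatC{X}{\Xi}$ (so that $\frakC(X) \subseteq \frakD(X)$, since a compact-domain function is in particular a closed-domain function), $d^\frakC_X = \hatCcMet{X}{\Xi}$, and $R = (R_x^{(r)})_{r>0,\,x\in X}$ the family $f \mapsto f|_{D_X(x,r)}$ from \eqref{eq: RS for variable domains}. With these identifications, comparing \eqref{eq: the metric on hatC} with \eqref{eq: metric on D} shows that $\hatCMet{(X,\rho)}{\Xi}$ is exactly the metric $d^\frakD_{X,\rho}$ produced by the general construction.

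It then remains to check the hypotheses of Corollary~\ref{cor: a summary of D}, namely that $R$ is a restriction system satisfying Condition~2. That $R$ satisfies \ref{dfn item: RS. 1}--\ref{dfn item: RS. 4} of Definition~\ref{dfn: restriction system} is the same bookkeeping already done for the Fell restriction system \eqref{eq: restriction system for Fell}, now with ``$A \cap D_X(x,r)$'' replaced by ``restrict the domain to $D_X(x,r)$'': \ref{dfn item: RS. 1} is $D_X(x,s) \cap D_X(x,r) = D_X(x, s\wedge r)$; \ref{dfn item: RS. 2} holds because $X = \bigcup_{r>0} D_X(x,r)$, so a function is determined by its restrictions to balls about a fixed point; \ref{dfn item: RS. 3} holds since the domain of any $f \in \hatCc{X}{\Xi}$ is compact and hence contained in some $D_X(\rho,r)$; and \ref{dfn item: RS. 4} rests on the inclusion $D_X(y,r) \subseteq D_X(x,s+r)$ whenever $s \geq d_X(x,y)$. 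The remaining, genuine input is Condition~2 (in fact Condition~3), which is supplied by Lemma~\ref{lem: RS for variable domains}. Feeding all of this into Corollary~\ref{cor: a summary of D} yields that $\hatCMet{(X,\rho)}{\Xi}$ is a well-defined metric on $\hatC{X}{\Xi}$ and that the induced topology does not depend on $\rho$ (the $\rho$-independence being the part of Corollary~\ref{cor: a summary of D} that ultimately relies on \ref{dfn item: RS. 4}).

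The proof has no real obstacle of its own: all the analytic content --- the continuity of $r \mapsto f|_\rho^{(r)}$ for all but countably many $r$, and the stability of Hausdorff-convergence of graphs under shrinking the radius --- is isolated in Lemma~\ref{lem: RS for variable domains}, whose proof is given separately. The only point requiring care here is to make the translation between the concrete objects $\hatCc{X}{\Xi}$, $\hatC{X}{\Xi}$, $\hatCcMet{X}{\Xi}$ and the abstract notation of Section~\ref{sec: Metric for non-compact objects} fully explicit, so that Corollary~\ref{cor: a summary of D} applies without modification.
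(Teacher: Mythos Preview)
Your proposal is correct and matches the paper's own approach: the paper states that the proposition is ``an immediate consequence of Corollary~\ref{cor: a summary of D} and Lemma~\ref{lem: RS for variable domains}'', and your write-up simply makes the identification with the abstract framework of Section~\ref{sec: Metric for non-compact objects} explicit. The verification of \ref{dfn item: RS. 1}--\ref{dfn item: RS. 4} you include is routine and implicit in the paper's formulation of \eqref{eq: RS for variable domains}.
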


Henceforth, we equip $\hatC{X}{\Xi}$ with the topology induced by $\hatCMet{(X, \rho)}{\Xi}$,
which is independent of $\rho$.
Similarly as before,
we have a metrization of the product space $\hatC{X}{\Xi} \times X$ given as follows:
for each $(f,x), (g,y) \in \hatC{X}{\Xi} \times X$,
we define
\begin{equation} \label{eq: the metric on product hatC}
  \hatCMet{X}{\Xi}\bigl( (f,x), (g, y) \bigr) 
  \coloneqq 
  d_X(x,y) 
  \vee
  \int_{0}^{\infty}
  e^{-r}
  \bigl(1 \wedge \hatCcMet{X}{\Xi}(f|_x^{(r)}, g|_y^{(r)})\bigr)\,
  dr.
\end{equation}
The following is an immediate consequence of Corollary~\ref{cor: a summary of D times X} and Lemma~\ref{lem: RS for variable domains}.

\begin{prop} \label{prop: productt hatC metric}
  The function $\hatCMet{X}{\Xi}$ is a well-defined metric on $\hatC{X}{\Xi} \times X$,
  and the induced topology coincides with the product topology.
\end{prop}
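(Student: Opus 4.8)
The plan is to observe that $\hatCMet{X}{\Xi}$ is nothing but a concrete instance of the abstract metric $d^\frakD_X$ introduced in \eqref{eq: metric on D times X}, and then to quote the general results of Section~\ref{sec: Metric for non-compact objects}. First I would make the identification explicit: take $\frakD(X) \coloneqq \hatC{X}{\Xi}$ and $\frakC(X) \coloneqq \hatCc{X}{\Xi}$, let $d^\frakC_X$ be the metric $\hatCcMet{X}{\Xi}$ on $\hatCc{X}{\Xi}$, and let $R$ be the restriction system defined in \eqref{eq: RS for variable domains}. With these choices, the definition \eqref{eq: the metric on product hatC} of $\hatCMet{X}{\Xi}$ is literally the definition \eqref{eq: metric on D times X} of $d^\frakD_X$.

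Next I would invoke Corollary~\ref{cor: a summary of D times X}, whose sole hypothesis is that $R$ satisfies Condition~2. This is supplied by Lemma~\ref{lem: RS for variable domains}: it asserts that $R$ satisfies Condition~3, i.e.\ Assumption~\ref{assum: metrization of D}\ref{assum item: 1. metrization of D}, \ref{assum item: 2. metrization of D} and \ref{assum item: 3. metrization of D} all hold, and Condition~3 subsumes Condition~2. Hence Corollary~\ref{cor: a summary of D times X} applies verbatim and yields that $\hatCMet{X}{\Xi}$ is a well-defined metric on $\hatC{X}{\Xi} \times X$ whose induced topology coincides with the product topology. Here the factor topology on $\hatC{X}{\Xi}$ is the one fixed just before the statement via Proposition~\ref{prop: hatC metric} (which is itself the specialization of Corollary~\ref{cor: a summary of D}), so the claim is precisely Corollary~\ref{cor: topology on D}\ref{cor item: 1. topology on D} transported through the identification above.

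There is essentially no obstacle in this argument itself; all the genuine work has been relegated to Lemma~\ref{lem: RS for variable domains} — verifying the continuity requirement \ref{assum item: 1. metrization of D} and the Arzel\`{a}--Ascoli-type precompactness requirement \ref{assum item: 3. metrization of D} for the restriction system \eqref{eq: RS for variable domains} — and to the abstract theory of Section~\ref{sec: Metric for non-compact objects}. The one point I would be careful to state in the write-up is the meaning of ``the product topology'': it is the product of the topology on $\hatC{X}{\Xi}$ induced by $\hatCMet{(X, \rho)}{\Xi}$ (which is $\rho$-independent) with the metric topology on $X$, so that the proposition is an exact transcription of Corollary~\ref{cor: a summary of D times X} and no further verification is needed.
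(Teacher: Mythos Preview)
Your proposal is correct and is precisely the paper's approach: the paper's proof is the single sentence that the result is an immediate consequence of Corollary~\ref{cor: a summary of D times X} and Lemma~\ref{lem: RS for variable domains}. Your elaboration of the identifications $\frakD(X)=\hatC{X}{\Xi}$, $\frakC(X)=\hatCc{X}{\Xi}$, $d^\frakC_X=\hatCcMet{X}{\Xi}$ and the observation that Condition~3 implies Condition~2 is exactly what underlies that one-line citation.
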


The following is an analogue of Theorem~\ref{thm: Polishness of uniform variable domains}.

\begin{thm} \label{thm: Polishness of variable domains}
  The following statements hold.
  \begin{enumerate} [label = \textup{(\roman*)}, leftmargin = *]
    \item \label{thm item: 1. Polishness of variable domains}
      If $\Xi$ is separable, then so is $\hatC{X}{\Xi}$.
    \item \label{thm item: 2. Polishness of variable domains}
      If $\Xi$ is Polish, then so is $\hatC{X}{\Xi}$.
      (NB.\ The metric $\hatCMet{(X, \rho)}{\Xi}$ itself is not necessarily complete.)
  \end{enumerate}
\end{thm}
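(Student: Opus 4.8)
Part~\ref{thm item: 1. Polishness of variable domains} is quick: every $\bcmAB$ space is $\sigma$-compact, hence separable, so $X$ is separable, and $\hatCc{X}{\Xi}$ is separable by Theorem~\ref{thm: Polishness of uniform variable domains}\ref{thm item: 1. Polishness of uniform variable domains}; since the restriction system \eqref{eq: RS for variable domains} satisfies Condition~3 (Lemma~\ref{lem: RS for variable domains}), hence Condition~2, Corollary~\ref{cor: a summary of D} applied with $\frakC(X) = \hatCc{X}{\Xi}$ and $\frakD(X) = \hatC{X}{\Xi}$ yields separability of $\hatC{X}{\Xi}$. For part~\ref{thm item: 2. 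Polishness of variable domains}, the space is already separable, so by Alexandrov's theorem (\cite[Theorem~2.2.1]{Srivastava_98_A_Course}) it suffices to realize $\hatC{X}{\Xi}$ as a $G_\delta$ subspace of a Polish space. I would follow the graph-map idea used for $\hatCc{X}{\Xi}$, but first compactify the target: fix a compact metrizable space $Q$ and a topological embedding $\iota \colon \Xi \hookrightarrow Q$ (e.g.\ a copy of the Hilbert cube, using that $\Xi$ is separable). Then $X \times Q$ is a $\bcmAB$ space ($X$ boundedly compact, $Q$ compact), so $\Closed{X \times Q}$ equipped with the Fell topology is Polish by Theorems~\ref{thm: local Hausdorff metric} and~\ref{thm: convergence in the Fell topology}. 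Define $\Gamma \colon \hatC{X}{\Xi} \to \Closed{X \times Q}$ by $\Gamma(f) \coloneqq \{(x, \iota(f(x))) \mid x \in \dom(f)\}$; this set is closed in $X \times Q$ because $\dom(f)$ is closed in $X$ and $x \mapsto (x, \iota(f(x)))$ is continuous, and $\Gamma$ is clearly injective.

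The next step is to verify that $\Gamma$ is a topological embedding. Fixing $q_0 \in Q$, for every $r \geq \diam(Q)$ one has $D_{X \times Q}((\rho, q_0), r) = D_X(\rho, r) \times Q$, so $\Gamma$ intertwines the restriction system \eqref{eq: RS for variable domains} on $\hatC{X}{\Xi}$ with the restriction system \eqref{eq: restriction system for Fell} on $\Closed{X \times Q}$ at all sufficiently large radii (and likewise for moving basepoints $x_n \to x$, using the basepoints $(x_n, q_0)$). Moreover the graph map $\hatCc{X}{\Xi} \to \Compact{X \times Q}$, $f \mapsto \{(x, \iota(f(x))) \mid x \in \dom(f)\}$, is a topological embedding onto the set of compact graphs; this follows by combining Theorem~\ref{thm: convergence in uniform variable domains}\ref{thm item: 3. convergence in uniform variable domains} with Lemma~\ref{lem: convergence in Hausdorff} and the compactness of $Q$ (it need not be isometric, since the metric on $Q$ does not extend $d_\Xi$, so this has to be argued through convergence rather than distances). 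Since the topologies on $\hatC{X}{\Xi}$ and $\Closed{X \times Q}$ are determined by their restriction systems together with the topologies on $\hatCc{X}{\Xi}$ and $\Compact{X \times Q}$ (Corollary~\ref{cor: topology on D}\ref{cor item: 3. topology on D}), and convergence in either is detected along an increasing unbounded sequence of radii (Theorem~\ref{thm: convergence in D}, \ref{thm item: 1, convergence in D}$\Leftrightarrow$\ref{thm item: 3, convergence in D}), it follows that $\Gamma$ is a homeomorphism onto its image.

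Finally I would identify the image and show it is $G_\delta$. One checks $A \in \Gamma(\hatC{X}{\Xi})$ if and only if (a) the projection $\pi_X \colon X \times Q \to X$ is injective on $A$, and (b) $\pi_Q(A) \subseteq \iota(\Xi)$, where $\pi_Q \colon X \times Q \to Q$ is the other projection: given (a) and (b), $\pi_X(A)$ is closed (as $Q$ is compact, $\pi_X$ is a closed map), the induced partial function on $\pi_X(A)$ is continuous (again by compactness of $Q$), and composing with $\iota^{-1}$ produces the required element of $\hatC{X}{\Xi}$. To see that this set is $G_\delta$: for $k, m \geq 1$ put $U_{k,m} \coloneqq \{A \in \Closed{X \times Q} : \text{no } (x, q_1), (x, q_2) \in A \text{ satisfy } x \in D_X(\rho, k) \text{ and } d_Q(q_1, q_2) \geq 1/m\}$; its complement is Fell-closed because the witnessing points lie in the compact set $D_X(\rho, k) \times Q$ and so cannot escape to infinity, whence a Fell-limit of bad sets is bad (via the Painlev\'e--Kuratowski description of Fell convergence, Theorem~\ref{thm: convergence in the Fell topology}\ref{thm item: 5. convergence in the local Hausdorff}), and (a) holds iff $A \in \bigcap_{k,m} U_{k,m}$. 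For (b), write $\iota(\Xi) = \bigcap_l V_l$ with $V_l$ open in $Q$ (possible since $\iota(\Xi)$ is a Polish, hence $G_\delta$, subspace of $Q$); then (b) holds iff $A \in \bigcap_{k,l} \{A : A \cap (D_X(\rho, k) \times (Q \setminus V_l)) = \emptyset\}$, and each of these is Fell-open because $D_X(\rho, k) \times (Q \setminus V_l)$ is compact and $\{A : A \cap K = \emptyset\}$ is a subbasic Fell-open set for compact $K$. Hence $\Gamma(\hatC{X}{\Xi})$ is a countable intersection of open sets, and Alexandrov's theorem finishes the proof; part~\ref{thm item: 1. Polishness of variable domains} then also follows the same way with $\Compact{X \times \Xi}$ in place of $\Closed{X \times Q}$.

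I expect this last step to be the main obstacle. Unlike the Hausdorff topology that governs the compact-domain case, the Fell topology allows mass to drift off to infinity, so the single-valuedness condition (a) and the range condition (b) must each be localized to the compact balls $D_X(\rho, k)$ and must exploit the compactness of the compactification $Q$; expressing \emph{both} conditions simultaneously as countable intersections of genuinely open sets, and correctly pinning down exactly which closed subsets of $X \times Q$ arise as graphs, is the delicate part. A secondary subtlety is that the graph embedding into $X \times Q$ is not isometric—the compactification metric does not restrict to $d_\Xi$—so $\Gamma$ must be shown to be a homeomorphism onto its image through the convergence criteria rather than by a direct comparison of metrics.
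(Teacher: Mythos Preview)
Your proof is correct, but it takes a genuinely different route from the paper's. For part~\ref{thm item: 2. Polishness of variable domains}, the paper does not compactify $\Xi$; instead it introduces an auxiliary space $\GraphSp{X}{\Xi}$ consisting of closed sets $E \subseteq X \times \Xi$ with $E \cap (K \times \Xi)$ compact for every compact $K \subseteq X$, equips it with its own restriction system $E|_x^{(r,*)} = E \cap (D_X(x,r) \times \Xi)$, and proves directly (after replacing $d_\Xi$ by a complete metric) that $\GraphSp{X}{\Xi}$ is Polish. The graph map $\graphmap \colon \hatC{X}{\Xi} \to \GraphSp{X}{\Xi}$ is then \emph{distance-preserving}, and the $G_\delta$ description of its image is obtained through a single family of open sets $\subGraphSp{\rho}{k}{X}{\Xi}$ encoding a uniform-continuity condition, rather than your two separate conditions (single-valuedness and range). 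What the paper's approach buys is an isometric embedding and, more importantly, reusable machinery: the spaces $\GraphSp{X}{\Xi}$ and $\subGraphSp{\rho}{k}{X}{\Xi}$ are invoked again in Section~\ref{sec: structure of continuous functions} to build root-preserving Polish systems for the structures $\ContiFunct(T,\sigma)$ and $\ContiVFunc(\Psi,\sigma)$. Your approach, by contrast, is more economical for the theorem at hand—it recycles the already-established Fell topology on $\Closed{X \times Q}$ rather than setting up a new restriction system—but the compactification $\iota \colon \Xi \hookrightarrow Q$ is not distance-preserving, which is why you (correctly) had to argue through convergence criteria rather than metrics, and the resulting embedding would not plug directly into the later functorial constructions.
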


\begin{proof}
  Statement~\ref{thm item: 1. Polishness of variable domains} follows from 
  Corollary~\ref{cor: a summary of D}, Theorem~\ref{thm: Polishness of uniform variable domains}, and Lemma~\ref{lem: RS for variable domains}.
  The proof of \ref{thm item: 2. Polishness of variable domains} requires some additional input,  
  and is therefore deferred.  
  It will be provided after the proof of Theorem~\ref{thm: precompactness in variable domains} below.
\end{proof}

We provide a characterization of convergence in $\hatC{X}{\Xi}$.

\begin{thm}  [Convergence]  \label{thm: convergence in variable domains}
  Let $f, f_1, f_2, \ldots$ be elements of $\hatC{X}{\Xi}$.
  The following statements are equivalent.
  \begin{enumerate} [label = \textup{(\roman*)}, leftmargin = *]
    \item \label{thm item: 1. convergence in variable domains} 
      The functions $f_{n}$ converge to $f$ in $\hatC{X}{\Xi}$.
    \item \label{thm item: 2. convergence in variable domains} 
      There exists a sequence $(x_n)_{n \geq 1}$ in $X$ converging to an element $x \in X$
      such that $f_n|_{x_n}^{(r)} \to f|_x^{(r)}$ in $\hatCc{X}{\Xi}$ for all but countably many $r > 0$.
    \item \label{thm item: 3. convergence in variable domains}
      There exist a sequence $(x_n)_{n \geq 1}$ in $X$ converging to an element $x \in X$
      and an increasing sequence $(r_{k})_{k \geq 1}$ with $r_{k} \uparrow \infty$
      such that $f_n|_{x_n}^{(r_{k})} \to f|_x^{(r_{k})}$ in $\hatCc{X}{\Xi}$ for each $k$.
    \item \label{thm item: 4. convergence in variable domains}
      For any elements $x_n \in X$ converging to an element $x \in X$,
      $f_n|_{x_n}^{(r)} \to f|_x^{(r)}$ in $\hatCc{X}{\Xi}$ for all but countably many $r > 0$.
    \item \label{thm item: 5. convergence in variable domains}
      The sets $\dom(f_n) \to \dom(f)$ in the Fell topology as subsets of $X$,
      and, for any $x_n \in \dom(f_n)$ and $x \in \dom(f)$ such that $x_n \to x$ in $X$,
      it holds that $f_n(x_n) \to f(x)$ in $\Xi$.
    \item \label{thm item: 6. convergence in variable domains}
      The sets $\dom(f_{n})$ converge to $\dom(f)$ in the Fell topology as subsets of $X$,
      and there exist functions $g_{n}, g \in C(X, \Xi)$ such that $g_{n}|_{\dom(f_{n})} = f_{n}$, $g|_{\dom(f)} = f$
      and $g_{n} \to g$ in the compact-convergence topology.
  \end{enumerate}
\end{thm}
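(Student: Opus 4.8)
The plan is to establish the chain \ref{thm item: 1. convergence in variable domains} $\Leftrightarrow$ \ref{thm item: 2. convergence in variable domains} $\Leftrightarrow$ \ref{thm item: 3. convergence in variable domains} $\Leftrightarrow$ \ref{thm item: 4. convergence in variable domains} directly from the abstract machinery of Section~\ref{sec: Metric for non-compact objects}, then to identify \ref{thm item: 4. convergence in variable domains} with \ref{thm item: 5. convergence in variable domains} by unwinding the definition of convergence in $\hatCc{X}{\Xi}$, and finally to match \ref{thm item: 5. convergence in variable domains} with \ref{thm item: 6. convergence in variable domains} via a simultaneous continuous-extension argument. For the first part there is essentially nothing to do: the space $\hatC{X}{\Xi}$ is metrized through the restriction system \eqref{eq: RS for variable domains}, which by Lemma~\ref{lem: RS for variable domains} satisfies Condition~3, hence Condition~2, so Theorem~\ref{thm: convergence in D} applies verbatim and yields the equivalence of \ref{thm item: 1. convergence in variable domains}--\ref{thm item: 4. convergence in variable domains}.

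For \ref{thm item: 4. convergence in variable domains} $\Leftrightarrow$ \ref{thm item: 5. convergence in variable domains}, the key observation is that under \eqref{eq: RS for variable domains} one has $\dom\bigl(f_n|_{x_n}^{(r)}\bigr) = \dom(f_n) \cap D_X(x_n, r) = \dom(f_n)|_{x_n}^{(r)}$, where on the right we use the restriction system \eqref{eq: restriction system for Fell} on closed subsets. Feeding this into the characterization of convergence in $\hatCc{X}{\Xi}$ in Theorem~\ref{thm: convergence in uniform variable domains}\ref{thm item: 3. convergence in uniform variable domains} and the characterization of Fell convergence in Theorem~\ref{thm: convergence in the Fell topology}, one sees that, for a sequence $x_n \to x$ and all but countably many $r > 0$, the convergence $f_n|_{x_n}^{(r)} \to f|_x^{(r)}$ in $\hatCc{X}{\Xi}$ is equivalent to the Hausdorff convergence $\dom(f_n)|_{x_n}^{(r)} \to \dom(f)|_x^{(r)}$ together with the requirement that $f_n(y_n) \to f(y)$ whenever $y_n \in \dom(f_n) \cap D_X(x_n, r)$, $y \in \dom(f) \cap D_X(x, r)$ and $y_n \to y$. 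Assuming \ref{thm item: 5. convergence in variable domains}, fix any $x_n \to x$: Fell convergence of $\dom(f_n)$ gives the Hausdorff convergence of the restrictions for all but countably many $r$ by Theorem~\ref{thm: convergence in the Fell topology}, while the pointwise condition in \ref{thm item: 5. convergence in variable domains} supplies the rest, since there $y_n \in \dom(f_n)$ and $y \in \dom(f)$. Conversely, assuming \ref{thm item: 4. convergence in variable domains}, apply it with a constant sequence $x_n \equiv x_0$ (with $x_0 \in X$ arbitrary) to recover, via Theorem~\ref{thm: convergence in the Fell topology}, the Fell convergence $\dom(f_n) \to \dom(f)$; and, given approaching points $y_n \to y$ with $y_n \in \dom(f_n)$, $y \in \dom(f)$, apply \ref{thm item: 4. convergence in variable domains} to the sequence $x_n \coloneqq y_n$ and select a single good radius $r > 0$ to get $f_n(y_n) \to f(y)$ from Theorem~\ref{thm: convergence in uniform variable domains}\ref{thm item: 3. convergence in uniform variable domains}.

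It remains to prove \ref{thm item: 5. convergence in variable domains} $\Leftrightarrow$ \ref{thm item: 6. convergence in variable domains}. The implication \ref{thm item: 6. convergence in variable domains} $\Rightarrow$ \ref{thm item: 5. convergence in variable domains} is immediate: if $y_n \in \dom(f_n)$, $y \in \dom(f)$ and $y_n \to y$, then $\{y\} \cup \{y_n \mid n \geq 1\}$ is compact, $g_n \to g$ uniformly on it, and $g$ is continuous, so $f_n(y_n) = g_n(y_n) \to g(y) = f(y)$. \textbf{The main obstacle is the reverse implication} \ref{thm item: 5. convergence in variable domains} $\Rightarrow$ \ref{thm item: 6. convergence in variable domains}, which asks for continuous extensions $g \in C(X,\Xi)$ of $f$ and $g_n \in C(X,\Xi)$ of $f_n$ with $g_n \to g$ in the compact-convergence topology. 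Here I would adapt the extension argument of \cite[Proposition~2.3]{Cao_23_Convergence} to the boundedly-compact setting: fix a continuous extension $g$ of $f$, and then, exploiting that $\dom(f_n) \to \dom(f)$ in the Fell topology and that (by the already-established equivalence with \ref{thm item: 5. convergence in variable domains}) $f_n$ is uniformly close to $f$ near $\dom(f)$ on each closed ball, construct $g_n$ so as to coincide with $f_n$ on $\dom(f_n)$ and to interpolate toward $g$ on the complement by means of a partition of unity, with the interpolation layer chosen to shrink with $n$ so that $g_n \to g$ uniformly on every ball $D_X(\rho, R)$. The delicate points are to carry out this interpolation while keeping each $g_n$ continuous and to control it uniformly on compacts as the domains $\dom(f_n)$ move; this is the technical heart of the proof.
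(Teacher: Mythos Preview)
Your handling of \ref{thm item: 1. convergence in variable domains}--\ref{thm item: 4. convergence in variable domains} via Theorem~\ref{thm: convergence in D}, and of \ref{thm item: 4. convergence in variable domains} $\Leftrightarrow$ \ref{thm item: 5. convergence in variable domains} by unwinding Theorems~\ref{thm: convergence in uniform variable domains} and~\ref{thm: convergence in the Fell topology}, is correct; the paper does the same (routing instead through \ref{thm item: 6. convergence in variable domains} $\Rightarrow$ \ref{thm item: 5. convergence in variable domains} $\Rightarrow$ \ref{thm item: 2. convergence in variable domains} and then \ref{thm item: 4. convergence in variable domains} $\Rightarrow$ \ref{thm item: 6. convergence in variable domains}, but this is cosmetic).

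The substantive divergence is at \ref{thm item: 5. convergence in variable domains} $\Rightarrow$ \ref{thm item: 6. convergence in variable domains}. Your partition-of-unity/interpolation plan has a gap as stated: $\Xi$ is an arbitrary metric space here, so convex combinations of values in $\Xi$ are not available, and the interpolation ``toward $g$'' you describe cannot be performed. The paper sidesteps this with a one-shot extension on a product space. It assembles all the data into a single map on the closed ``telescope''
\[
D \coloneqq \bigl(\{0\} \times \dom(f)\bigr) \cup \bigcup_{n \geq 1} \bigl(\{1/n\} \times \dom(f_n)\bigr) \subseteq [0,1] \times X,
\]
setting $F(1/n,\cdot) \coloneqq f_n$ and $F(0,\cdot) \coloneqq f$; closedness of $D$ follows from the Fell convergence of the domains, and condition~\ref{thm item: 5. convergence in variable domains} (via Theorem~\ref{thm: convergence in uniform variable domains}\ref{thm item: 3. convergence in uniform variable domains}) is exactly the statement that $F$ is continuous. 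A single application of the Tietze extension theorem then yields $\tilde{F} \in C([0,1] \times X, \Xi)$, and one sets $g_n \coloneqq \tilde{F}(1/n,\cdot)$, $g \coloneqq \tilde{F}(0,\cdot)$; the compact-convergence $g_n \to g$ is automatic from joint continuity of $\tilde{F}$. This replaces the ``technical heart'' you anticipate with essentially one line --- although it trades your implicit convexity hypothesis on $\Xi$ for the applicability of a Tietze-type extension into $\Xi$, which is also not free for a general metric target.
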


\begin{proof}
  The equivalence of \ref{thm item: 1. convergence in variable domains},
  \ref{thm item: 2. convergence in variable domains}, \ref{thm item: 3. convergence in variable domains}, and \ref{thm item: 4. convergence in variable domains} 
  follows from Theorem~\ref{thm: convergence in D}.
  The implication \ref{thm item: 6. convergence in variable domains} $\Rightarrow$ \ref{thm item: 5. convergence in variable domains} is straightforward.
  The implication \ref{thm item: 5. convergence in variable domains} $\Rightarrow$ \ref{thm item: 2. convergence in variable domains} follows from 
  Theorems~\ref{thm: convergence in the Fell topology} and \ref{thm: convergence in variable domains}\ref{thm item: 3. convergence in variable domains}.
  Thus, it remains to show \ref{thm item: 4. convergence in variable domains} $\Rightarrow$ \ref{thm item: 6. convergence in variable domains}.

  Assume that \ref{thm item: 4. convergence in variable domains} holds.
  By Theorems~\ref{thm: convergence in the Fell topology} and \ref{thm: convergence in variable domains},
  $\dom(f_n) \to \dom(f)$ in the Fell topology.
  Define
  \begin{equation}
    D \coloneqq \{(0, x) \mid x \in \dom(f)\} \cup \bigcup_{n \geq 1} \{(1/n, x) \mid x \in \dom(f_n)\}.
  \end{equation}
  By the convergence of $\dom(f_n)$ to $\dom(f)$,
  we deduce that $D$ is a closed subset of $[0,1] \times X$.
  Define a function $F \colon D \to \Xi$ by setting 
  $F(1/n, \cdot) \coloneqq f_n(\cdot)$, $n \geq 1$, and $F(0, \cdot) = f(\cdot)$.
  By Theorem~\ref{thm: convergence in variable domains}\ref{thm item: 3. convergence in variable domains},
  we deduce that $F$ is continuous.
  Applying the Tietze extension theorem,
  we obtain a continuous function $\tilde{F} \colon [0,1] \times X \to \Xi$ which coincides with $F$ on $D$.
  By defining $g_n(\cdot) \coloneqq \tilde{F}(1/n, \cdot)$, $n \geq 1$, and $g(\cdot) \coloneqq \tilde{F}(0, \cdot)$,
  we verify that \ref{thm item: 6. convergence in variable domains} holds.
\end{proof}

The following corollary is an immediate consequence of the above theorem.
It confirms that the topology on $\hatC{X}{\Xi}$ is a natural extension of the compact-convergence topology.

\begin{cor} \label{cor: embedding of C(X, Xi) into hatC(S, Xi)}
  For any $S \in \Closed{X}$,
  the inclusion map $C(S, \Xi) \ni f \mapsto f \in \hatC{X}{\Xi}$ is a topological embedding,
  where we recall that $C(S, \Xi)$ is equipped with the compact-convergence topology.
\end{cor}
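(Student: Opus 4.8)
The plan is to reduce the statement to the sequential characterization of convergence furnished by Theorem~\ref{thm: convergence in variable domains}. The space $\hatC{X}{\Xi}$ is metrizable by Proposition~\ref{prop: hatC metric}, and $C(S,\Xi)$ with the compact-convergence topology is metrizable as well, since $S$, being a closed subset of the $\bcmAB$ space $X$, is boundedly compact and hence hemicompact (the balls $D_S(\rho, n)$, $n \geq 1$, form a cofinal family of compact sets). The inclusion $\iota \colon C(S,\Xi) \to \hatC{X}{\Xi}$ is visibly injective, so it will be a topological embedding as soon as we show that, for $f, f_1, f_2, \ldots \in C(S,\Xi)$, one has $f_n \to f$ in the compact-convergence topology if and only if $f_n \to f$ in $\hatC{X}{\Xi}$.

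First I would apply Theorem~\ref{thm: convergence in variable domains}\ref{thm item: 5. convergence in variable domains}. Since $\dom(f_n) = \dom(f) = S$ for all $n$, the required Fell convergence of domains is automatic, so $f_n \to f$ in $\hatC{X}{\Xi}$ is equivalent to the \emph{continuous convergence} condition: $f_n(x_n) \to f(x)$ in $\Xi$ whenever $x_n, x \in S$ satisfy $x_n \to x$ in $X$ (equivalently, in $S$). Thus the claim reduces to the classical fact that, for a continuous limit $f$, continuous convergence is equivalent to uniform convergence on every compact subset of $S$.

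It remains to carry out this standard equivalence. For the implication from compact convergence: if $x_n \to x$ in $S$, then $K \coloneqq \{x\} \cup \{x_n : n \geq 1\}$ is compact in $S$, so $\sup_{y \in K} d_\Xi(f_n(y), f(y)) \to 0$, and combining $d_\Xi(f_n(x_n), f(x_n)) \to 0$ with $f(x_n) \to f(x)$ gives $f_n(x_n) \to f(x)$. For the converse, suppose convergence fails to be uniform on some compact $K \subseteq S$; then, passing to a subsequence $(m_l)_l$, there are $\varepsilon > 0$ and points $y_l \in K$ with $y_l \to y \in K$ and $d_\Xi(f_{m_l}(y_l), f(y_l)) > \varepsilon$. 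Interleaving $(y_l)_l$ with the constant sequence equal to $y$ yields a sequence $(x_n)_n$ in $S$ with $x_n \to y$; the continuous-convergence hypothesis then gives $f_n(x_n) \to f(y)$, hence $f_{m_l}(y_l) \to f(y)$, and together with $f(y_l) \to f(y)$ this contradicts $d_\Xi(f_{m_l}(y_l), f(y_l)) > \varepsilon$. I do not anticipate any real obstacle here; the only points requiring a little care are the metrizability of the compact-convergence topology on $C(S,\Xi)$ and arranging the interleaved sequence so that it converges through all indices, not merely along the chosen subsequence.
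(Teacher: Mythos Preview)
Your proposal is correct and follows essentially the same route as the paper, which simply records the corollary as an immediate consequence of Theorem~\ref{thm: convergence in variable domains}; you have merely filled in the standard details (metrizability of both sides, and the classical equivalence between continuous convergence and compact convergence for a continuous limit) that the paper leaves implicit.
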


\begin{rem} \label{rem: hatC applies to lattice model}
  In the study of scaling limits of certain functions arising in lattice models,
  one sometimes faces a technical issue concerning the domains of the functions.
  The use of the space $\hatC{X}{\Xi}$ may resolve this issue.
  To see this in a concrete example,
  consider a discrete-time random walk on $\ZN$.
  For each $t \in \ZNp$ and $x,y \in \ZN$,
  write $p(t, x, y)$ for the probability that the random walk starting from $x$ is at $y$ at time $t$.
  For each $n \in \NN$, we define a function $p_n$ on $n^{-1}\ZNp \times n^{-1/2} \ZN \times n^{-1/2} \ZN$ by
  \begin{equation}
    p_n(t,x,y) \coloneqq \sqrt{n}\, p(nt, \sqrt{n}x, \sqrt{n}y).
  \end{equation}
  Under a suitable assumption on the jump distribution of the random walk,
  the local central limit theorem (LCLT) asserts that 
  the functions $p_n$ converge to the transition density $\tilde{p}$ of a one-dimensional Brownian motion
  (cf.\  \cite[Section~2]{Lawler_Limic_10_Random}).
  However, since the domains of $p_n$ differ,
  to formulate this assertion in a rigorous mathematical context,
  one usually extends the domain of $p_n$ to $\RNp \times \RN \times \RN$ 
  as follows: for each $(t, x, y) \in \RNp \times \RN \times \RN$,
  choose a point $(t', x', y')$ in the domain of $p_n$ closest to $(t,x,y)$ (e.g., with respect to the Euclidean distance),
  and set $p_n^*(t,x,y) \coloneqq p_n(t', x', y')$.
  Then the LCLT states that 
  \begin{equation} \label{eq: conv of LCLT}
    p_n^*(t,x,y) \;\longrightarrow\; \tilde{p}(t,x,y)
  \end{equation}
  uniformly on every compact subset of $\RNp \times \RN \times \RN$.
  Using the framework developed in this subsection,
  one can avoid the technical issue regarding the domains.
  Indeed, one can simply write that $p_n \to \tilde{p}$ in $\hatC{\RNp \times \RN \times \RN}{\RNp}$,
  which recovers the convergence~\eqref{eq: conv of LCLT} by Theorem~\ref{thm: convergence in variable domains}. 
\end{rem}

A precompactness criterion is deduced from Theorem~\ref{thm: precompactness uniform in variable domains}, as follows.

\begin{thm}[Precompactness] \label{thm: precompactness in variable domains}
  Fix a non-empty index set $\mathscr{A}$ and $\rho \in X$.
  A non-empty subset $\{ f_{\alpha} \mid \alpha \in \mathscr{A} \}$ 
  is precompact in $\hatC{X}{\Xi}$ 
  if and only if the following conditions are satisfied.
  \begin{enumerate} [label = \textup{(\roman*)}, leftmargin = *]
    \item \label{thm item: 1. precompactness in variable domains}
      For each $r > 0$, 
      the set $\{ f_{\alpha}(x) \mid x \in \dom(f_{\alpha}|_\rho^{(r)}),\, \alpha \in \mathscr{A} \}$
      is precompact in $\Xi$.
    \item \label{thm item: 2. precompactness in variable domains} 
      For each $r > 0$, it holds that 
      \begin{equation}
        \lim_{\delta \to 0}
        \sup_{\alpha \in \mathscr{A}}
        \sup_{\substack{ x, y \in \dom(f_{\alpha}|_\rho^{(r)})\\ d_X(x, y) \leq \delta}}
        d_\Xi(f_{\alpha}(x), f_{\alpha}(y)) 
        = 0.
      \end{equation}
  \end{enumerate}
\end{thm}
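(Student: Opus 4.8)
The strategy is to combine the abstract precompactness criterion of Theorem~\ref{thm: precompactness in d_D}, applied with $\frakD(X) = \hatC{X}{\Xi}$ and $\frakC(X) = \hatCc{X}{\Xi}$, with the uniform-domain precompactness criterion of Theorem~\ref{thm: precompactness uniform in variable domains}. To invoke the former I first need completeness of the restriction system \eqref{eq: RS for variable domains} in the sense of Definition~\ref{dfn: complete RS}; Lemma~\ref{lem: RS for variable domains} only records Condition~3, so this has to be checked separately. Granting completeness, Theorem~\ref{thm: precompactness in d_D} reduces the claim to the assertion that $\{f_\alpha \mid \alpha \in \mathscr{A}\}$ is precompact in $\hatC{X}{\Xi}$ if and only if the truncated family $\{f_\alpha|_\rho^{(r)} \mid \alpha \in \mathscr{A}\}$ is precompact in $\hatCc{X}{\Xi}$ for every $r > 0$, and it then remains only to translate the latter, via Theorem~\ref{thm: precompactness uniform in variable domains}, into conditions~\ref{thm item: 1. precompactness in variable domains} and~\ref{thm item: 2. precompactness in variable domains}.

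For completeness, I would fix $x \in X$ and a compatible sequence $(a_k, r_k)_{k \geq 1}$ rooted at $x$, so that $a_k = a_{k'}|_x^{(r_k)}$ whenever $k \leq k'$; in particular $\dom(a_k) = \dom(a_{k'}) \cap D_X(x, r_k)$ and $a_k = a_{k'}$ on $\dom(a_k)$. Set $S \coloneqq \bigcup_{k \geq 1} \dom(a_k)$ and define $f \colon S \to \Xi$ by $f|_{\dom(a_k)} \coloneqq a_k$, which is well defined by the compatibility relation. One verifies the identity $S \cap D_X(x, r_k) = \dom(a_k)$; since each $\dom(a_k)$ is compact and $r_k \uparrow \infty$, this shows that $S$ is closed in $X$ (a convergent sequence in $S$ eventually lies in a single $\dom(a_k)$) and that $f$ is continuous (near any $y \in S$, choose $k$ with $r_k > d_X(x,y)$; then $S \cap B_X(x, r_k)$ is a neighbourhood of $y$ in $S$ contained in $\dom(a_k)$, on which $f$ agrees with the continuous map $a_k$). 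Finally $f|_x^{(r_k)} = a_k$, its domain being $S \cap D_X(x, r_k) = \dom(a_k)$, so the restriction system satisfies condition~\ref{2. dfn item: existence of the inverse limit} of Definition~\ref{dfn: complete RS}, i.e.\ it is complete.

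With completeness established, Theorem~\ref{thm: precompactness in d_D} gives the equivalence displayed above. Now fix $r > 0$. Since $X$ is boundedly compact, $D_X(\rho, r)$ is compact, and each domain $\dom(f_\alpha|_\rho^{(r)}) = \dom(f_\alpha) \cap D_X(\rho, r)$ is a compact subset of it; hence by Lemma~\ref{lem: precompact in Hausdorff} the family $\{\dom(f_\alpha|_\rho^{(r)}) \mid \alpha \in \mathscr{A}\}$ is automatically precompact in the Hausdorff topology, so condition~\ref{thm item: 1. precompactness uniform in variable domains} of Theorem~\ref{thm: precompactness uniform in variable domains} is vacuous for the truncated family. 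Thus, by that theorem, $\{f_\alpha|_\rho^{(r)} \mid \alpha \in \mathscr{A}\}$ is precompact in $\hatCc{X}{\Xi}$ exactly when conditions~\ref{thm item: 2. precompactness uniform in variable domains} and~\ref{thm item: 3. precompactness uniform in variable domains} hold for it; recalling once more that $\dom(f_\alpha|_\rho^{(r)}) = \dom(f_\alpha) \cap D_X(\rho, r)$ and that $f_\alpha|_\rho^{(r)}$ agrees with $f_\alpha$ on its domain, these are verbatim conditions~\ref{thm item: 1. precompactness in variable domains} and~\ref{thm item: 2. precompactness in variable domains} of the present theorem for that $r$. Letting $r$ range over $(0,\infty)$ yields the result.

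The proof is largely a matter of assembling earlier results, so the only genuinely new input is the completeness check; the delicate point there is to use $r_k \uparrow \infty$ correctly when arguing that $S$ is closed and $f$ continuous. The other point worth flagging is the observation that truncating to a closed ball makes the Hausdorff-precompactness hypothesis of the uniform criterion automatic — this is precisely what allows the boundedly-compact statement to omit any explicit condition on the domains. One could also bypass Theorem~\ref{thm: precompactness in d_D} by a direct diagonal extraction of a subsequence along which every truncation converges in $\hatCc{X}{\Xi}$, followed by the gluing Lemma~\ref{lem: existence of limit}, but that argument again rests on completeness of the restriction system.
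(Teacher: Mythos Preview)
Your proof is correct and follows the same route as the paper: reduce to precompactness of the truncated families via Theorem~\ref{thm: precompactness in d_D}, then invoke Theorem~\ref{thm: precompactness uniform in variable domains} and observe that the Hausdorff-precompactness condition on domains is automatic because all truncated domains sit inside the compact ball $D_X(\rho,r)$.

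The one substantive difference is that you explicitly verify completeness of the restriction system~\eqref{eq: RS for variable domains}. The paper's proof simply cites Theorem~\ref{thm: precompactness in d_D} together with Lemma~\ref{lem: RS for variable domains}, but that theorem's hypothesis requires the restriction system to be \emph{complete} and to satisfy Condition~3, whereas Lemma~\ref{lem: RS for variable domains} (and its proof in Appendix~\ref{appendix: variable domains}) only records Condition~3. So you have correctly identified and filled a small gap in the paper's argument; your gluing construction of $f$ on $S=\bigcup_k \dom(a_k)$ is the natural one and is carried out carefully.
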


\begin{proof}
  By Theorem~\ref{thm: precompactness in d_D} and Lemma~\ref{lem: RS for variable domains},
  $\{f_\alpha \mid \alpha \in \mathscr{A}\}$ is precompact in $\hatC{X}{\Xi}$ 
  if and only if $\{f_\alpha|_\rho^{(r)} \mid \alpha \in \mathscr{A}\}$ is precompact in $\hatCc{X}{\Xi}$ for each $r > 0$.
  By Theorem~\ref{thm: precompactness uniform in variable domains},
  the latter condition is equivalent to 
  \ref{thm item: 1. precompactness in variable domains}, \ref{thm item: 2. precompactness in variable domains},
  together with the following:
  \begin{itemize}
    \item $\{ \dom(f_\alpha|_\rho^{(r)}) \mid \alpha \in \mathscr{A} \}$ is precompact in the Hausdorff topology.
  \end{itemize}
  However, since all the subsets $\dom(f_\alpha|_\rho^{(r)})$ are contained in the compact set $D_X(\rho, r)$,
  the above condition is always satisfied by Lemma~\ref{lem: precompact in Hausdorff}.
  This completes the proof.
\end{proof}

We now begin the proof of Theorem~\ref{thm: Polishness of variable domains}\ref{thm item: 2. Polishness of variable domains},  
i.e., we show that $\hatC{X}{\Xi}$ is Polish when $\Xi$ is Polish.  
To this end, we define a space $\GraphSp{X}{\Xi}$  
into which $\hatC{X}{\Xi}$ can be topologically embedded.

\begin{dfn} \label{dfn: marked Hausdorff space}
  We define $\GraphSp{X}{\Xi}$ to be the set of closed subsets $E \subseteq X \times \Xi$
  such that $E \cap (K \times \Xi)$ is compact for any compact subset $K \subseteq X$. 
\end{dfn}

To metrize $\GraphSp{X}{\Xi}$,
we define a restriction system from $\GraphSp{X}{\Xi}$ to $\Compact{X \times \Xi}$ as follows:
for each $r > 0$ and $x \in X$,
\begin{equation}
  R_x^{(r)}(E)
  =
  E|_x^{(r, *)} 
  \coloneqq 
  E \cap (D_X(x, r) \times \Xi),
  \quad E \in \GraphSp{X}{\Xi}.
\end{equation} 

\begin{lem} \label{lem: RS for marked local Hausdorff}
  The above-defined restriction system from $\GraphSp{X}{\Xi}$ to $\Compact{X \times \Xi}$
  is complete and satisfies Condition~2.
  Moreover, if $d_\Xi$ is complete,
  then it satisfies Condition~4.
\end{lem}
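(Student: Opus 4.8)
The plan is to verify, in order, that $R$ is a restriction system, that it is complete, and that it satisfies Conditions~1--4, following the template of Proposition~\ref{prop: RS for local Hausdorff} while accounting for the one genuinely new feature here: the ambient space $X\times\Xi$ need not be boundedly compact. First I note that $R_x^{(r)}$ does map $\GraphSp{X}{\Xi}$ into $\Compact{X\times\Xi}$, since $D_X(x,r)$ is compact and $E\in\GraphSp{X}{\Xi}$. Axioms~\ref{dfn item: RS. 1}--\ref{dfn item: RS. 3} are immediate set-theoretic identities, and \ref{dfn item: RS. 4} follows from $D_X(y,r)\subseteq D_X(x,s+r)$ whenever $s\geq d_X(x,y)$. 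For completeness \ref{2. dfn item: existence of the inverse limit}, given $x\in X$ and a compatible sequence $(E_k,r_k)_{k\geq1}$ rooted at $x$, I would take $E\coloneqq\bigcup_k E_k$: since $X$ is boundedly compact, any convergent sequence in $E$ has $X$-coordinates inside some $D_X(x,r_k)$ and is therefore eventually in the compact (hence closed) set $E_k$, so $E$ is closed; the same observation gives $E\cap(K\times\Xi)=E_k\cap(K\times\Xi)$ for any compact $K\subseteq D_X(x,r_k)$, so $E\in\GraphSp{X}{\Xi}$ and $E|_x^{(r_k,*)}=E_k$.

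For Condition~1, the map $r\mapsto E|_x^{(r,*)}$ is nondecreasing for inclusion and right-continuous in $\HausMet{X\times\Xi}$ (because $E|_x^{(r,*)}=\bigcap_{r'>r}E|_x^{(r',*)}$ and a decreasing sequence of compact sets converges to its intersection), and it has left limits everywhere (the closure of $\bigcup_{r'<r}E|_x^{(r',*)}$, a compact subset of $E|_x^{(r,*)}$); hence it is continuous off a countable set, because for each $R>0$ and each $m$, selecting one ``jump witness'' per discontinuity of size exceeding $1/m$ in $(0,R]$ produces a $(1/2m)$-separated subset of the compact set $E|_x^{(R,*)}$, necessarily finite. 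For Condition~2, the engine is the estimate that if $F,G\in\Compact{X\times\Xi}$ satisfy $\HausMet{X\times\Xi}(F,G)\leq\eta$ and $d_X(x,y)\leq\gamma$, then $F|_x^{(\rho,*)}\subseteq\bigl(G|_y^{(\rho+\gamma+\eta,*)}\bigr)^{\eta}$, and symmetrically. Applying it with $F=E_n|_{x_n}^{(r_n,*)}$, $G=E_\infty|_{x_\infty}^{(r_n,*)}$, $x=x_n$, $y=x_\infty$, $\rho=r\leq r_n$, and using \ref{dfn item: RS. 1} to rewrite the iterated restrictions, one obtains that for each fixed $r$ the sets $E_n|_{x_n}^{(r,*)}$ and $E_\infty|_{x_\infty}^{(r,*)}$ lie in mutual $\eta_n$-neighbourhoods of each other's truncation at radius $r+d_X(x_n,x_\infty)+\eta_n$, where $\eta_n\to0$ is the distance at radius $r_n$; for every $r$ that is a continuity radius of $\rho\mapsto E_\infty|_{x_\infty}^{(\rho,*)}$ (all but countably many, by Condition~1) the radius slack is absorbed and $E_n|_{x_n}^{(r,*)}\to E_\infty|_{x_\infty}^{(r,*)}$, which is \ref{assum item: 4. metrization of D}'s companion condition.

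The main obstacle is Condition~4, and this is precisely where completeness of $d_\Xi$ enters. Fix $r>0$ and put $R_0\coloneqq\sup_n d_X(x_n,x_\infty)<\infty$. Centring everything at $x_\infty$ and iterating the ``restriction modulo perturbation'' estimate — now the centre is fixed, so each step costs only the summable increment $2^{-n}$ — gives, for $n_0$ large and $n>m\geq n_0$, the inclusion $E_n|_{x_\infty}^{(\rho,*)}\subseteq\bigl(E_m|_{x_\infty}^{(\rho+2^{-m+1},*)}\bigr)^{2^{-m+1}}$ for all $\rho$ in a fixed bounded range. From this I would deduce that $\bigcup_n E_n|_{x_\infty}^{(r+R_0,*)}$ is totally bounded: given $\varepsilon>0$, choose $m\geq n_0$ with $2^{-m+1}<\varepsilon$, and take a finite $\varepsilon$-net of the compact set $E_m|_{x_\infty}^{(r+R_0+1,*)}$ together with finite $\varepsilon$-nets of the finitely many compact sets $E_n|_{x_\infty}^{(r+R_0,*)}$ with $n<m$; this is a finite $2\varepsilon$-net of the union — the key point being that one needs only a net, not containment in a compact set (the latter is false in a non-locally-compact $\Xi$, which is exactly the trap to avoid). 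Since $d_\Xi$, hence $d_{X\times\Xi}$, is complete, the closure $K$ of this union is compact, and $E_n|_{x_n}^{(r,*)}\subseteq E_n|_{x_\infty}^{(r+R_0,*)}\subseteq K$ for all $n$; Lemma~\ref{lem: precompact in Hausdorff} then gives precompactness of $\{E_n|_{x_n}^{(r,*)}\}_n$ in the Hausdorff topology, completing the verification of Condition~4.
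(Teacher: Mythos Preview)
Your proof is correct and follows essentially the same approach as the paper's: the verification of completeness and Conditions~1--2 mirrors the Hausdorff case (Lemmas~\ref{lem: RS for local Hausdorff is conti}--\ref{lem: RS for local Hausdorff is complete}), and for Condition~4 both you and the paper iterate a one-step inclusion estimate to obtain total boundedness of the union of restrictions, then invoke completeness of $d_{X\times\Xi}$ and Lemma~\ref{lem: precompact in Hausdorff}. Your choice to centre all restrictions at the fixed limit point $x_\infty$ rather than at the varying centres $x_n$ (as the paper does) is a harmless cosmetic simplification; note the slip at the end of your Condition~2 paragraph, where the reference should point to \ref{assum item: 2. metrization of D} rather than \ref{assum item: 4. metrization of D}.
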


\begin{proof}
  The proof is given in Appendix~\ref{appendix: variable domains}.
\end{proof}

For each $\rho \in X$ and $D, E \in \GraphSp{X}{\Xi}$,
we define 
\begin{equation}  \label{eq: the metric on graph sp}
  \GraphMet{(X, \rho)}{\Xi}(D, E)
  \coloneqq
  \int_{0}^{\infty}
  e^{-r}
  (1 \wedge \HausMet{X \times \Xi}(D|_\rho^{(r, *)}, E|_\rho^{(r, *)}))\,dr.
\end{equation}

\begin{prop} \label{prop: graph metric}
  The function $\GraphMet{(X, \rho)}{\Xi}$ is a well-defined metric on $\GraphSp{X}{\Xi}$ for each $\rho \in X$,
  and the induced topology is independent of $\rho$.
  Moreover, if $\Xi$ is Polish, then the topology on $\GraphSp{X}{\Xi}$ induced by $\GraphMet{X}{\Xi}$ is Polish.
\end{prop}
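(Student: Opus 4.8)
The plan is to deduce the whole statement from the abstract machinery of Section~\ref{sec: Metric for non-compact objects}, once Lemma~\ref{lem: RS for marked local Hausdorff} is available, essentially by citing Corollary~\ref{cor: a summary of D}. First I would record two elementary facts about the fixed space $X$: being boundedly compact, $X$ is $\sigma$-compact and hence separable, and it is complete (a Cauchy sequence is bounded, so it eventually lies in a compact closed ball and thus has a convergent subsequence). By Lemma~\ref{lem: RS for marked local Hausdorff} the restriction system on $\GraphSp{X}{\Xi}$ is complete and satisfies Condition~2, so Corollary~\ref{cor: a summary of D} immediately gives that $\GraphMet{(X,\rho)}{\Xi}$ is a well-defined metric for every $\rho \in X$ and that the topology it induces is independent of $\rho$. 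If moreover $\Xi$ is separable (in particular if $\Xi$ is Polish), then $X \times \Xi$ is separable, hence so is $\Compact{X \times \Xi}$ by Lemma~\ref{lem: Polishness of Hausdorff}, and the separability assertion of Corollary~\ref{cor: a summary of D} yields that $\GraphSp{X}{\Xi}$ is separable.

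For the Polishness claim the point is that $\GraphMet{(X,\rho)}{\Xi}$ need not itself be complete for the given metric on $\Xi$, but completeness of the second factor can be arranged without disturbing the topology. Concretely, assuming $\Xi$ is Polish, I would fix a complete metric $\tilde{d}_\Xi$ on $\Xi$ inducing the same topology. Replacing $d_\Xi$ by $\tilde{d}_\Xi$ changes neither the underlying set $\GraphSp{X}{\Xi}$ (its definition refers only to the topologies of $X$ and $\Xi$) nor the restriction system $R$ (which involves only $d_X$); and by Remark~\ref{rem: Hausdorff topology is independent of metric} it leaves the Hausdorff topology on $\Compact{X \times \Xi}$ unchanged, hence by Corollary~\ref{cor: topology on D}\ref{cor item: 3. topology on D} it leaves the topology on $\GraphSp{X}{\Xi}$ unchanged. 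With the complete metric $\tilde{d}_\Xi$ in place, Lemma~\ref{lem: RS for marked local Hausdorff} now tells us that $R$ additionally satisfies Condition~4; since $d_X$ is complete and $R$ is complete, the completeness part of Corollary~\ref{cor: a summary of D} shows that the corresponding metric on $\GraphSp{X}{\Xi}$ is complete and separable. As it induces the same topology as $\GraphMet{(X,\rho)}{\Xi}$, the space $\GraphSp{X}{\Xi}$ is Polish.

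I do not expect a genuine obstacle: all the substantive work has been pushed into Section~\ref{sec: Metric for non-compact objects} and into Lemma~\ref{lem: RS for marked local Hausdorff}, so the proof is a sequence of citations. The one place requiring care is the argument just sketched — that before invoking the completeness half of Corollary~\ref{cor: a summary of D} one must first pass to a topologically equivalent complete metric on $\Xi$, and then carefully justify, via Remark~\ref{rem: Hausdorff topology is independent of metric} and Corollary~\ref{cor: topology on D}\ref{cor item: 3. topology on D}, that this passage affects neither the set $\GraphSp{X}{\Xi}$, nor the restriction system, nor the induced topology. Everything else is routine.
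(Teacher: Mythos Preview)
Your proposal is correct and follows essentially the same approach as the paper: both arguments reduce the first assertion to Corollary~\ref{cor: a summary of D} together with Lemma~\ref{lem: RS for marked local Hausdorff}, and both handle the Polishness claim by replacing $d_\Xi$ with a topologically equivalent complete metric and then invoking Corollary~\ref{cor: topology on D}\ref{cor item: 3. topology on D} to see that the induced topology on $\GraphSp{X}{\Xi}$ is unaffected. Your write-up simply spells out a few routine points (separability and completeness of $X$, and precisely which ingredients are left unchanged by the metric replacement) that the paper leaves implicit.
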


\begin{proof}
  The first assertion follows from Corollary~\ref{cor: a summary of D} and Lemma~\ref{lem: RS for marked local Hausdorff}.  
  Assume that $\Xi$ is Polish.  
  As a consequence of  Corollary~\ref{cor: a summary of D} and Lemma~\ref{lem: RS for marked local Hausdorff},  
  we deduce that, by replacing $d_\Xi$ with a complete metric inducing the same topology,  
  $\GraphMet{(X, \rho)}{\Xi}$ becomes a complete and separable metric.  
  Combining this with Corollary~\ref{cor: topology on D}\ref{cor item: 3. topology on D},  
  we obtain the second assertion.
\end{proof}

Henceforth, we equip $\GraphSp{X}{\Xi}$ with the topology induced by $\GraphMet{(X,\rho)}{\Xi}$,
which is independent of $\rho$.
Below, we verify that 
$\hatC{X}{\Xi}$ can be regarded as a subspace of $\GraphSp{X}{\Xi}$
through the graph map $\graphmap$ (recall it from \eqref{eq: def of graphmap}).

\begin{lem} \label{lem: embedding of hatC to cC(S, Xi)}
  The graph map $\graphmap \colon \hatC{X}{\Xi} \to \GraphSp{X}{\Xi}$
  is distance-preserving
  with respect to $\hatCMet{(X, \rho)}{\Xi}$ and $\GraphMet{(X, \rho)}{\Xi}$ for any $\rho \in X$.
  In particular, it is a topological embedding.
\end{lem}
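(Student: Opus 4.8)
The plan is to prove the stronger (and more useful) statement that $\graphmap$ is distance-preserving as a map from $(\hatC{X}{\Xi}, \hatCMet{(X, \rho)}{\Xi})$ to $(\GraphSp{X}{\Xi}, \GraphMet{(X, \rho)}{\Xi})$; the assertion that it is a topological embedding is then automatic, since any distance-preserving map between metric spaces is injective and its inverse on the image is again distance-preserving, hence continuous. Before starting, I would record that $\graphmap$ is genuinely a map into $\GraphSp{X}{\Xi}$: for $f \in \hatC{X}{\Xi}$ with $S \coloneqq \dom(f) \in \Closed{X}$, continuity of $f$ makes $\graphmap(f)$ closed in $S \times \Xi$, and since $S$ is closed in $X$ it is closed in $X \times \Xi$; moreover, for any compact $K \subseteq X$ the set $S \cap K$ is compact and $\graphmap(f) \cap (K \times \Xi) = \graphmap(f|_{S \cap K})$ is the graph of a continuous map on a compact set, hence compact.

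The heart of the argument is a single compatibility identity between the two restriction systems. For every $\rho \in X$, $r > 0$, and $f \in \hatC{X}{\Xi}$, unwinding the definitions on both sides gives
\begin{align}
  \graphmap(f|_\rho^{(r)})
  &= \{(x, f(x)) \mid x \in \dom(f) \cap D_X(\rho, r)\} \\
  &= \graphmap(f) \cap (D_X(\rho, r) \times \Xi) = \graphmap(f)|_\rho^{(r, *)}.
\end{align}
Feeding this into the defining relation $\hatCcMet{X}{\Xi}(\cdot, \cdot) = \HausMet{X \times \Xi}(\graphmap(\cdot), \graphmap(\cdot))$ shows that, for all $f, g \in \hatC{X}{\Xi}$,
\begin{equation}
  \hatCcMet{X}{\Xi}(f|_\rho^{(r)}, g|_\rho^{(r)}) = \HausMet{X \times \Xi}\bigl( \graphmap(f)|_\rho^{(r, *)}, \graphmap(g)|_\rho^{(r, *)} \bigr),
\end{equation}
so the integrands defining $\hatCMet{(X, \rho)}{\Xi}(f, g)$ in \eqref{eq: the metric on hatC} and defining $\GraphMet{(X, \rho)}{\Xi}(\graphmap(f), \graphmap(g))$ in \eqref{eq: the metric on graph sp} coincide for every $r$. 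Integrating in $r$ yields $\hatCMet{(X, \rho)}{\Xi}(f, g) = \GraphMet{(X, \rho)}{\Xi}(\graphmap(f), \graphmap(g))$, which is the desired distance-preservation; the topological-embedding assertion then follows as noted above (one also sees injectivity concretely: $\dom(f)$ is the projection of $\graphmap(f)$ to $X$, and $f$ is then read off fibrewise).

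I do not anticipate a genuine obstacle here; the proof is essentially bookkeeping, and the only step that takes a moment is the verification that $\graphmap$ maps into $\GraphSp{X}{\Xi}$. In particular, the descriptive-set-theoretic considerations needed for the Polishness statements (cf.\ Theorem~\ref{thm: Polishness of uniform variable domains}\ref{thm item: 2. Polishness of uniform variable domains}) play no role here, since we only need membership of each $\graphmap(f)$ in $\GraphSp{X}{\Xi}$, not a characterization of the image $\graphmap(\hatC{X}{\Xi})$.
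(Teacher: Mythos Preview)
Your proof is correct and follows essentially the same route as the paper's: both hinge on the identity $\graphmap(f|_\rho^{(r)}) = \graphmap(f)|_\rho^{(r,*)}$, from which equality of the integrands defining the two metrics is immediate. You add a useful verification that $\graphmap$ lands in $\GraphSp{X}{\Xi}$ and spell out why distance-preservation yields a topological embedding, but the core argument is identical.
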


\begin{proof}
  By definition,
  we have $\graphmap(f|_\rho^{(r)}) = \graphmap(f)|_\rho^{(r,*)}$ for any $f \in \hatC{X}{\Xi}$ and $r > 0$.
  Thus, for any $f, g \in \hatC{X}{\Xi}$, we deduce that 
  \begin{align}
    \hatCMet{(X,\rho)}{\Xi}(f,g)
    &= 
    \int_0^\infty e^{-r} (1 \wedge \hatCcMet{X}{\Xi}(f|_\rho^{(r)}, g|_\rho^{(r)}))\, dr \\
    &=
    \int_0^\infty e^{-r} \bigl( 1 \wedge \HausMet{X \times \Xi}( \graphmap(f|_\rho^{(r)}), \graphmap(g|_\rho^{(r)}) ) \bigr)\,dr\\
    &=
    \int_0^\infty e^{-r} \bigl( 1 \wedge \HausMet{X \times \Xi}( \graphmap(f)|_\rho^{(r,*)}, \graphmap(g)|_\rho^{(r,*)} )  \bigr)\,dr\\
    &=
    \GraphMet{(X,\rho)}{\Xi} (\graphmap(f), \graphmap(g)),
  \end{align}
  which establishes the desired result.
\end{proof}

Thus, to prove the Polishness of $\hatC{X}{\Xi}$,
it is enough to show that it is a $G_\delta$ set as a subset of $\GraphSp{X}{\Xi}$.
To this end, we introduce a sequence of subsets of $\GraphSp{X}{\Xi}$.

\begin{dfn}  \label{dfn: graph subspace}
  For each $k \geq 1$ and $\rho \in X$,
  we define $\subGraphSp{\rho}{k}{X}{\Xi}$ to be the collection of $D \in \GraphSp{X}{\Xi}$ 
  such that there exist $r = r(D) > k$ and $\delta_{i} = \delta_{i}(D) \in (0, 1/k)$, $i=1,2$,
  satisfying the following.
  \begin{enumerate} [label = (C)]
    \item  \label{dfn item: graph subspace}
      For any $(x, a), (y, b) \in D|_\rho^{(r, *)}$,
      if $d_X(x, y) < \delta_{1}$,
      then $d_\Xi(a,b) < \delta_{2}$.
  \end{enumerate}
\end{dfn}

\begin{lem} \label{lem: 1. Polishness of variable domains}
  For each $k \geq 1$,
  $\subGraphSp{\rho}{k}{X}{\Xi}$ is an open subset of $\GraphSp{X}{\Xi}$.
\end{lem}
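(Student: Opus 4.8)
The plan is to prove that the complement of $\subGraphSp{\rho}{k}{X}{\Xi}$ is sequentially closed in $\GraphSp{X}{\Xi}$; since the latter is metrizable, this yields openness. So I would fix $D \in \subGraphSp{\rho}{k}{X}{\Xi}$, together with witnesses $r_0 > k$ and $\delta_1, \delta_2 \in (0, 1/k)$ making condition~(C) of Definition~\ref{dfn: graph subspace} hold for $D|_\rho^{(r_0, *)}$, and a sequence $D_n \to D$ in $\GraphSp{X}{\Xi}$, and then show that $D_n \in \subGraphSp{\rho}{k}{X}{\Xi}$ for all sufficiently large $n$.

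First I would replace $r_0$ by a radius at which the restrictions converge. By Theorem~\ref{thm: convergence in D}, which applies to the present restriction system by Lemma~\ref{lem: RS for marked local Hausdorff}, taking the constant sequence $x_n \equiv \rho$ gives $D_n|_\rho^{(s, *)} \to D|_\rho^{(s, *)}$ in the Hausdorff topology, hence $\HausMet{X \times \Xi}(D_n|_\rho^{(s, *)}, D|_\rho^{(s, *)}) \to 0$, for all but countably many $s > 0$. I then pick such an $s$, call it $r$, lying in the interval $(k, r_0)$, which is possible because that interval is uncountable. Since $D_X(\rho, r) \subseteq D_X(\rho, r_0)$ we have $D|_\rho^{(r, *)} \subseteq D|_\rho^{(r_0, *)}$, so condition~(C) continues to hold for $D|_\rho^{(r, *)}$ with the same pair $(\delta_1, \delta_2)$; this monotonicity in the radius is what makes the choice of $r$ harmless.

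The main step is to perturb the condition along the sequence. If $D|_\rho^{(r, *)} = \emptyset$, then, since the Hausdorff distance from the empty set to a nonempty compact set is $\infty$, the convergence forces $D_n|_\rho^{(r, *)} = \emptyset$ for all large $n$, and such $D_n$ lie in $\subGraphSp{\rho}{k}{X}{\Xi}$ because condition~(C) is then vacuous. Otherwise I would fix $\delta_1' \in (0, \delta_1)$ and $\eta > 0$ with $\delta_1' + 2\eta < \delta_1$ and $\delta_2 + 3\eta < 1/k$, and set $\delta_2' \coloneqq \delta_2 + 3\eta \in (0, 1/k)$. For $n$ large enough, $\HausMet{X \times \Xi}(D_n|_\rho^{(r, *)}, D|_\rho^{(r, *)}) \le \eta$; then, given $(x', a'), (y', b') \in D_n|_\rho^{(r, *)}$ with $d_X(x', y') < \delta_1'$, I would choose $(x, a), (y, b) \in D|_\rho^{(r, *)}$ within distance $\eta$ of them in the max product metric, so that $d_X(x, y) < \delta_1' + 2\eta < \delta_1$, hence $d_\Xi(a, b) < \delta_2$ by condition~(C) for $D|_\rho^{(r, *)}$, and therefore $d_\Xi(a', b') < \delta_2 + 2\eta < \delta_2'$. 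This is exactly condition~(C) for $D_n|_\rho^{(r, *)}$ with the admissible parameters $r > k$ and $\delta_1', \delta_2' \in (0, 1/k)$, so $D_n \in \subGraphSp{\rho}{k}{X}{\Xi}$, as required.

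The one genuine obstacle is that $\GraphMet{(X, \rho)}{\Xi}$ controls $\HausMet{X \times \Xi}(D_n|_\rho^{(s, *)}, D|_\rho^{(s, *)})$ only for Lebesgue-almost every radius $s$, not for a radius fixed in advance; this is exactly why the first step selects a good $r$, using that shrinking the radius preserves condition~(C). Everything else is a routine $\varepsilon$-perturbation of the definition.
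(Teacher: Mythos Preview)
Your proof is correct and takes essentially the same approach as the paper: both show that condition~(C) is preserved under small Hausdorff perturbations of $D|_\rho^{(\cdot,*)}$ via a triangle-inequality argument. The only difference is packaging---you argue sequentially and invoke Theorem~\ref{thm: convergence in D} to select a radius $r \in (k, r_0)$ at which Hausdorff convergence holds, while the paper works directly with the metric to exhibit an explicit open ball around $D$, extracting a large radius $\tilde{r}$ from the integral defining $\GraphMet{(X,\rho)}{\Xi}$ and then shrinking the working radius to $r' = r - \varepsilon$.
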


\begin{proof}
  Fix $D \in \subGraphSp{\rho}{k}{X}{\Xi}$.
  Let $r>k$ and $\delta_{i} \in (0, 1/k)$, $i=1,2$,
  be constants satisfying \ref{dfn item: graph subspace}
  for $D$.
  Choose $\varepsilon \in (0,1)$ so that
  \begin{equation}
    r- \varepsilon > k,
    \quad 
    r< \varepsilon^{-1},
    \quad 
    2\varepsilon < \delta_{1},
    \quad
    2\varepsilon + \delta_{2}< 1/k.
  \end{equation}
  Fix $E \in \GraphSp{X}{\Xi}$ 
  such that $\GraphMet{(X, \rho)}{\Xi} (D, E) < \varepsilon e^{-1/\varepsilon}$.
  It is enough to show that $E \in \subGraphSp{\rho}{k}{X}{\Xi}$.
  By the definition of $\GraphMet{(X, \rho)}{\Xi}$,
  we can find $\tilde{r} > 1/\varepsilon$ such that 
  \begin{equation}  \label{pr eq: 1. 1. Polishness of variable domains}
    \HausMet{X \times \Xi} ( D|_\rho^{(\tilde{r}, *)}, E|_\rho^{(\tilde{r}, *)} ) < \varepsilon.
  \end{equation}
  Define $r' > k$ and $0 < \delta_{i}' < 1/k$ for $i = 1,2$
  by setting 
  \begin{equation}  \label{pr eq: 2. 1. Polishness of variable domains}
    r' \coloneqq r - \varepsilon,
    \quad
    \delta_{1}' \coloneqq \delta_{1} - 2 \varepsilon,
    \quad
    \delta_{2}' \coloneqq \delta_{2} + 2 \varepsilon,
  \end{equation}
  We will prove that $r', \delta_{1}'$ and $\delta_{2}'$ satisfy \ref{dfn item: graph subspace}
  for $E$.
  Fix $(x, a), (y, b) \in E|_\rho^{(r', *)}$ satisfying $d_X(x, y) < \delta_{1}'$.
  By \eqref{pr eq: 1. 1. Polishness of variable domains},
  there exists $(x', a'), (y', b') \in D|_x^{(\tilde{r}, *)}$ such that
  \begin{equation}  \label{pr eq: 3. 1. Polishness of variable domains}
    d_X(x,x') \vee d_\Xi(a, a') < \varepsilon, 
    \quad 
    d_X(y,y') \vee d_\Xi(b, b') < \varepsilon.
  \end{equation}
  It then follows from \eqref{pr eq: 1. 1. Polishness of variable domains}, \eqref{pr eq: 2. 1. Polishness of variable domains}, 
  and \eqref{pr eq: 3. 1. Polishness of variable domains}
  that 
  $(x', a'), (y', b') \in D|_\rho^{(r, *)}$ and $d_X(x', y') < \delta_{1}$.
  Using \ref{dfn item: graph subspace},
  we obtain $d_\Xi(a', b') < \delta_{2}$.
  This, combined with \eqref{pr eq: 2. 1. Polishness of variable domains}
  and \eqref{pr eq: 3. 1. Polishness of variable domains}, 
  yields
  $d_\Xi(a, b) < \delta_{2}'$,
  which implies $E \in \subGraphSp{\rho}{k}{X}{\Xi}$.
  This completes the proof.
\end{proof}

\begin{lem} \label{lem: 2. Polishness of variable domains}
  For each $\rho \in X$, it holds that $\graphmap(\hatC{X}{\Xi}) = \bigcap_{k \geq 1} \subGraphSp{\rho}{k}{X}{\Xi}$.
\end{lem}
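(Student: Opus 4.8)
The plan is to prove the two inclusions separately, using condition~\ref{dfn item: graph subspace} of Definition~\ref{dfn: graph subspace} as the bridge between ``being the graph of a continuous partial function'' and ``lying in every $\subGraphSp{\rho}{k}{X}{\Xi}$,'' and invoking throughout the identity $\graphmap(f|_\rho^{(r)}) = \graphmap(f)|_\rho^{(r,*)}$ recorded in the proof of Lemma~\ref{lem: embedding of hatC to cC(S, Xi)}. For the inclusion $\graphmap(\hatC{X}{\Xi}) \subseteq \bigcap_{k \geq 1} \subGraphSp{\rho}{k}{X}{\Xi}$, I would fix $f \in \hatC{X}{\Xi}$ with $S \coloneqq \dom(f) \in \Closed{X}$ and fix $k \geq 1$. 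Take $r \coloneqq k+1$ and $\delta_2 \coloneqq 1/(2k)$; since $X$ is boundedly compact, $S \cap D_X(\rho, r)$ is compact, so $f$ is uniformly continuous on it and there is $\delta_1 \in (0, 1/k)$ with $d_X(x, y) < \delta_1 \Rightarrow d_\Xi(f(x), f(y)) < \delta_2$ for $x, y \in S \cap D_X(\rho, r)$. Because $\graphmap(f)|_\rho^{(r,*)} = \{(x, f(x)) : x \in S \cap D_X(\rho, r)\}$, the constants $r, \delta_1, \delta_2$ witness \ref{dfn item: graph subspace} for $\graphmap(f)$ (which lies in $\GraphSp{X}{\Xi}$ by Lemma~\ref{lem: embedding of hatC to cC(S, Xi)}), so $\graphmap(f) \in \subGraphSp{\rho}{k}{X}{\Xi}$; as $k$ is arbitrary, the inclusion follows.

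For the reverse inclusion, let $D \in \bigcap_{k \geq 1} \subGraphSp{\rho}{k}{X}{\Xi}$, and for each $k$ fix witnesses $r_k > k$ and $\delta_1^{(k)}, \delta_2^{(k)} \in (0, 1/k)$ as in \ref{dfn item: graph subspace}. I would first check that $D$ is the graph of a partial function: if $(x, a), (x, b) \in D$, then for every $k \geq d_X(\rho, x)$ one has $(x, a), (x, b) \in D|_\rho^{(r_k, *)}$ and $d_X(x, x) = 0 < \delta_1^{(k)}$, hence $d_\Xi(a, b) < \delta_2^{(k)} < 1/k$; letting $k \to \infty$ gives $a = b$. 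Thus, setting $\dom(f) \coloneqq \{x \in X : (x, a) \in D \text{ for some } a \in \Xi\}$ and letting $f(x)$ be the unique such $a$ (so $f = \emptyset_\Xi$ if $D = \emptyset$), we get $D = \graphmap(f)$. Next, $\dom(f)$ is closed: since $D \in \GraphSp{X}{\Xi}$, the set $D \cap (D_X(\rho, r) \times \Xi)$ is compact, so the set $\dom(f) \cap D_X(\rho, r)$, being its projection onto $X$, is compact for each $r > 0$; as $X$ is boundedly compact, a subset meeting every closed ball in a closed set is closed, so $\dom(f) \in \Closed{X}$. Finally, $f$ is continuous: given $x_n \to x$ in $\dom(f)$ and $\varepsilon > 0$, choose $k$ with $1/k < \varepsilon$ and $k \geq d_X(\rho, x) + 1$; then $x \in D_X(\rho, r_k)$, and for all large $n$ also $x_n \in D_X(\rho, r_k)$ with $d_X(x_n, x) < \delta_1^{(k)}$, so \ref{dfn item: graph subspace} yields $d_\Xi(f(x_n), f(x)) < \delta_2^{(k)} < \varepsilon$; hence $f(x_n) \to f(x)$. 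Therefore $f \in \hatC{X}{\Xi}$ and $D = \graphmap(f) \in \graphmap(\hatC{X}{\Xi})$.

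I do not expect a genuine obstacle: the whole argument is bookkeeping with the quantifiers in \ref{dfn item: graph subspace}. The only step requiring a moment's care is the closedness of $\dom(f)$, where one must use the bounded compactness of $X$ to pass from ``$\dom(f)$ has compact intersection with every closed ball'' to ``$\dom(f)$ is closed''; I would spell this out via a short sequential argument, noting that a convergent sequence in $\dom(f)$ together with its limit lies inside a single closed ball $D_X(\rho, r)$.
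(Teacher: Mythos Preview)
Your proposal is correct and follows essentially the same approach as the paper's proof: the first inclusion via uniform continuity of $f$ on the compact sets $\dom(f) \cap D_X(\rho, r)$, and the reverse inclusion by using condition~\ref{dfn item: graph subspace} to extract a well-defined continuous partial function from $D$. Your write-up is considerably more detailed than the paper's terse version (in particular, you spell out the closedness of $\dom(f)$ via the projection argument from $D \in \GraphSp{X}{\Xi}$, which the paper subsumes under ``by the definition of $\subGraphSp{\rho}{k}{X}{\Xi}$''), but the underlying ideas are identical.
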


\begin{proof}
  It is easy to check that $\graphmap(\hatC{X}{\Xi}) \subseteq \bigcap_{k \geq 1} \subGraphSp{\rho}{k}{X}{\Xi}$
  by using the uniform continuity of $f \in \hatC{X}{\Xi}$ on compact subsets.
  Fix $D \in \bigcap_{k \geq 1} \subGraphSp{\rho}{k}{X}{\Xi}$.
  It suffices to construct a function $f \in \hatC{X}{\Xi}$ whose graph coincides with $D$.
  Define $S$ to be the subset of $X$
  consisting of $x$ such that $(x, a) \in D$ for some $a \in \Xi$.
  By the definition of $\subGraphSp{\rho}{k}{X}{\Xi}$,
  one can check that $S$ is a closed subset of $X$.
  Condition \ref{dfn item: graph subspace}
  implies that,
  for each $x \in X$,
  an element $a_x \in \Xi$ satisfying $(x, a_x) \in D$ 
  is uniquely determined.
  We then define $f \colon  X \to \Xi$ by setting $f(x) \coloneqq a_x$. 
  Using \ref{dfn item: graph subspace} again,
  we deduce that $f$ is continuous,
  which competes the proof.
\end{proof}

Finally, we complete the proof of Theorem~\ref{thm: Polishness of variable domains}.

\begin{proof} [{Proof of Theorem~\ref{thm: Polishness of variable domains}\ref{thm item: 2. Polishness of variable domains}}]
  Assume that $\Xi$ is Polish.
  By Lemmas~\ref{lem: 1. Polishness of variable domains} and \ref{lem: 2. Polishness of variable domains},
  $\graphmap(\hatC{X}{\Xi})$ is a $G_\delta$ subset of $\GraphSp{X}{\Xi}$.
  Since $\GraphSp{X}{\Xi}$ is Polish by Proposition~\ref{prop: graph metric},
  by Alexandrov's theorem (see \cite[Theorem~2.2.1]{Srivastava_98_A_Course}),
  we deduce that $\graphmap(\hatC{X}{\Xi})$ is Polish.
  This, combined with Lemma~\ref{lem: embedding of hatC to cC(S, Xi)},
  implies the Polishness of $\hatC{X}{\Xi}$.
\end{proof}


\section{The local Gromov--Hausdorff topology} \label{sec: the local GH topology}

In this section,  
we define the \emph{local Gromov--Hausdorff topology}  
on the collection of (equivalence classes of) rooted $\bcmAB$ spaces.  
This topology naturally extends the pointed Gromov--Hausdorff topology (see Remark~\ref{rem: pointed GH topology})
to non-compact metric spaces.
Extensions of the pointed Gromov--Hausdorff topology to non-compact metric spaces  
have already been considered in the literature (cf.\ \cite{Burago_Burago_Ivanov_01_A_course,Gromov_07_Metric,Khezeli_20_Metrization}),  
and we verify that our extension coincides with these.
Most of the results in this section are proved in a more general setting in Section~\ref{sec: main results},  
and therefore the details of the proofs are omitted here.

\begin{rem} \label{rem: pointed GH topology}
  Recall that the pointed Gromov--Hausdorff topology  
  is defined on the collection of (equivalence classes of) rooted compact metric spaces,  
  and is analogous to the pointed Gromov--Hausdorff--Prohorov topology introduced in Section~\ref{sec: introduction to GH-type metrics}.  
  More precisely, by removing the measure terms from the definition of $d_{\mathrm{pGHP}}$ in \eqref{eq: the GHP metric},  
  one obtains a metric that induces the pointed Gromov--Hausdorff topology.  
  (See also \cite[Definition~1.2]{Jansen_17_Notes}.)
\end{rem}

Since the collection of metric spaces is too large to form a set in the strict sense,  
we introduce an identification between $\bcmAB$ spaces.
We say that $\bcmAB$ spaces $X$ and $Y$ are \textit{isometric}  
if and only if there exists an isometry $f \colon  X \to Y$.
Ideally, we would like to define $\BCM$ as the set of isometric equivalence classes of $\bcmAB$ spaces.
However,  
since the collection of all $\bcmAB$ spaces does not form a set,  
the operation of taking equivalence classes is not valid in a strict set-theoretic sense.
Nevertheless,  
by selecting a representative from each equivalence class in an appropriate manner,  
we can construct a well-defined set, as described below.

\begin{prop}  \label{prop: existence of BCM}
  There exists a set $\BCM$ satisfying the following.
  \begin{enumerate} [label = \textup{(\roman*)}, leftmargin = *]
    \item 
      The set $\BCM$ consists of $\bcmAB$ spaces.
    \item 
      For any $\bcmAB$ space $\cY$,
      there exists a unique element $\cX \in \BCM$ isometric to $\cY$.
  \end{enumerate}
\end{prop}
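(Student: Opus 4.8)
The plan is to exploit the fact that a $\bcmAB$ space is separable, so that its cardinality is bounded by that of the continuum; this lets us realize every $\bcmAB$ space on a fixed "large enough" underlying set, the collection of all such realizations is then a genuine set, and $\BCM$ is obtained by choosing one representative from each isometry class. (An alternative would be to simply invoke the discussion in \cite{Burago_Burago_Ivanov_01_A_course} referenced in Remark~\ref{1. rem: the collection of equivalence classes}, but we give the argument for completeness.)

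First I would record that every $\bcmAB$ space $X$ is separable: fixing $\rho \in X$ we have $X = \bigcup_{n \geq 1} D_X(\rho, n)$, each $D_X(\rho, n)$ is compact hence separable, and a countable union of separable subspaces is separable. In particular $|X| \leq \mathfrak{c}$, the cardinality of the continuum, so there is an injection $\iota \colon X \to \RN$; transporting $d_X$ along $\iota$ produces a $\bcmAB$ space with underlying set $\iota(X) \subseteq \RN$ that is isometric to $X$. Next I would let $\mathcal{S}$ be the collection of all pairs $(A, d)$ such that $A \subseteq \RN$ and $d$ is a boundedly-compact metric on $A$. Since $d \subseteq A \times A \times \RN \subseteq \RN^3$, we have $\mathcal{S} \subseteq \mathcal{P}(\RN) \times \mathcal{P}(\RN^3)$, so $\mathcal{S}$ is a legitimate set by the axioms of power set and separation. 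Identifying each element of $\mathcal{S}$ with the corresponding $\bcmAB$ space, the previous observation shows that every $\bcmAB$ space is isometric to some element of $\mathcal{S}$. Isometry is an equivalence relation on $\mathcal{S}$, and by the axiom of choice I would pick exactly one element from each equivalence class and let $\BCM$ be the set of chosen elements.

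It then remains to verify the two stated properties. Property (i) is immediate since $\BCM \subseteq \mathcal{S}$. For property (ii): given a $\bcmAB$ space $\cY$, it is isometric to some $\cX_0 \in \mathcal{S}$, hence to the representative $\cX \in \BCM$ of the isometry class of $\cX_0$, giving existence; and if $\cY$ were isometric to two elements $\cX, \cX' \in \BCM$, then $\cX$ and $\cX'$ would be isometric to each other, so they lie in the same isometry class of $\mathcal{S}$ and therefore coincide, since $\BCM$ contains exactly one representative per class. The argument presents no real obstacle; the only point requiring care is checking that $\mathcal{S}$ is a set rather than a proper class, which is precisely where the separability bound $|X| \leq \mathfrak{c}$ is used.
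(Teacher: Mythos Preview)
Your proof is correct and follows essentially the same approach as the paper: both realize every $\bcmAB$ space as a metric on a subset of $\RN$ via the cardinality bound $|X| \leq \mathfrak{c}$, form the resulting set of such realizations, and then select one representative per isometry class. Your version is slightly more explicit in justifying the cardinality bound (via separability) and in verifying that $\mathcal{S}$ is a set, but the argument is the same.
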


\begin{proof}
  Let $2^{\mathbb{R}}$ be the set of non-empty subsets of $\mathbb{R}$.
  For every $M \in 2^{\mathbb{R}}$,
  we denote by $D(M)$ the set of functions $d_M : M \times M \to \RNp$
  such that $d_M$ is a metric on $M$
  and $M$ is boundedly-compact.
  We then define a set $\mathscr{M}$ by setting 
  \begin{equation}
    \mathscr{M}
    \coloneqq
    \left\{
      M \mid
      M \in 2^{\mathbb{R}},\ d_M \in D(M)
    \right\}.
  \end{equation}
  Let $\cX = X$ be an arbitrary $\bcmAB$ space.
  Since the cardinality of $X$ is smaller than or equal to the cardinality of $\RN$,
  there exists an injective map $f \colon X \to \RN$.
  Let $Y \coloneqq f(X) \in 2^\RN$,
  and define a function $d_Y \colon Y \times Y \to \RNp$ by setting 
  $d_Y(f(x_1), f(x_2)) \coloneqq d_X(x_1, x_2)$.
  It is then the case that $d_Y \in D(M)$ and $f$ is an isometry from $X$ to $Y$.
  Since $\mathscr{M}$ is a legitimate set,
  we can safely choose a representative from each rooted-isometric equivalence class of elements in $\mathscr{M}$,
  and we obtain the desired set $\BCM$.
\end{proof}

To deal with non-compact spaces, it is convenient to consider rooted spaces.  
As before, we introduce an identification between rooted metric spaces.  
Let $\cX = (X, \rho_X)$ and $\cY = (Y, \rho_Y)$ be rooted $\bcmAB$ spaces.  
We say that $\cX$ and $\cY$ are \emph{rooted-isometric}
if and only if there exists a root-preserving isometry $f \colon X \to Y$.  
Here, \emph{root-preserving} means that $f(\rho_X) = \rho_Y$. 
By Proposition~\ref{prop: existence of BCM},  
we can safely define the set of rooted-isometric equivalence classes of rooted $\bcmAB$ spaces as follows.

\begin{prop}  \label{prop: existence of rootedBCM}
  There exists a set $\rootedBCM$ satisfying the following.
  \begin{enumerate} [label = \textup{(\roman*)}, leftmargin = *]
    \item 
      The set $\rootedBCM$ consists of rooted $\bcmAB$ spaces.
    \item 
      For any rooted boundedly-compact metric space $\cY$,
      there exists a unique element $\cX \in \rootedBCM$ such that $\cX$ is rooted-isometric to $\cY$.
  \end{enumerate}
\end{prop}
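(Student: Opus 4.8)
The plan is to imitate the proof of Proposition~\ref{prop: existence of BCM} given above, now additionally keeping track of a distinguished point. First I would construct a single legitimate set that contains a rooted-isometric representative of every rooted $\bcmAB$ space, and then pass to rooted-isometric equivalence classes within that set by selecting one representative from each class.

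For the construction, let $2^{\RN}$ denote the collection of non-empty subsets of $\RN$, and for each $M \in 2^{\RN}$ let $D(M)$ be the set of metrics $d_M \colon M \times M \to \RNp$ with respect to which $M$ is boundedly-compact, exactly as in the proof of Proposition~\ref{prop: existence of BCM}. I would then set
\begin{equation}
  \mathscr{M}_{\bullet}
  \coloneqq
  \{ (M, \rho) \mid M \in 2^{\RN},\ d_M \in D(M),\ \rho \in M \},
\end{equation}
which is a genuine set. Given an arbitrary rooted $\bcmAB$ space $\cY = (Y, \rho_Y)$, the cardinality of $Y$ is at most that of $\RN$, so there is an injection $f \colon Y \to \RN$; transporting $d_Y$ along $f$ turns $f$ into a root-preserving isometry from $(Y, \rho_Y)$ onto $(f(Y), f(\rho_Y)) \in \mathscr{M}_{\bullet}$. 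Hence every rooted $\bcmAB$ space is rooted-isometric to some element of $\mathscr{M}_{\bullet}$. Next, I would check that rooted-isometry is an equivalence relation on $\mathscr{M}_{\bullet}$ (reflexivity, symmetry via inverse isometries, and transitivity via composition are all immediate), and, since $\mathscr{M}_{\bullet}$ is now a legitimate set, choose a representative from each rooted-isometric equivalence class; the set of these representatives is the desired $\rootedBCM$. Existence of an isometric element for each $\cY$ then follows from the previous step together with transitivity, and uniqueness holds because distinct elements of $\rootedBCM$ lie, by construction, in distinct rooted-isometric classes. As an alternative that avoids rebuilding everything from scratch, one could instead start from the already-constructed set $\BCM$ of Proposition~\ref{prop: existence of BCM}, form the set $\{ (X, \rho) \mid X \in \BCM,\ \rho \in X \}$, and quotient it by rooted-isometry in exactly the same way.

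I do not expect any serious obstacle. The only point requiring care is the set-theoretic subtlety already flagged in Remark~\ref{1. rem: the collection of equivalence classes}: one must first cut down to a genuine set before invoking a choice principle to select representatives, since the totality of rooted $\bcmAB$ spaces is not a set. Beyond that, the argument is routine bookkeeping entirely parallel to Proposition~\ref{prop: existence of BCM}.
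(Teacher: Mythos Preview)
Your proposal is correct and essentially matches the paper's approach; in fact, the alternative you mention at the end --- starting from the already-constructed $\BCM$, forming $\{(X,\rho)\mid X\in\BCM,\ \rho\in X\}$, and selecting one representative per rooted-isometric class --- is exactly what the paper does. Your main construction via subsets of $\RN$ is also fine and merely rebuilds this from scratch rather than reusing Proposition~\ref{prop: existence of BCM}.
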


\begin{proof}
  Let $\BCM$ be the set appearing in Proposition~\ref{prop: existence of BCM}.
  Define $\mathscr{M}_\bullet$ to be the set of all triples $(X, \rho_X)$ such that $X \in \BCM$ and $\rho_X \in X$.
  Since $\mathscr{M}_\bullet$ is a legitimate set, we can define the desired set $\rootedBCM$ 
  by selecting a representative from each rooted-isometric equivalence class.
\end{proof}

Henceforth, we fix the sets $\BCM$ and $\rootedBCM$ as given in Propositions~\ref{prop: existence of BCM} and~\ref{prop: existence of rootedBCM}.  
In the following discussions,  
given a (rooted) $\bcmAB$ space $X$,  
if necessary,  
we identify it with the unique element of $\BCM$ (resp.\ $\rootedBCM$)  
that is (rooted-)isometric to $X$.

Below, we introduce two metrics on $\rootedBCM$.
Recall the metrics $\lHausMet{X,\rho}$ on $\Closed{X}$ and $\lHausMet{X}$ on $\Closed{X} \times X$ from Section~\ref{sec: the Fell topology}.

\begin{dfn} \label{dfn: RF GH metric}
  For $\cX=(X, \rho_X), \cY=(Y, \rho_Y) \in \rootedBCM$,
  we define 
  \begin{equation}  \label{dfn eq: RF GH metric}
    \RFMet(\cX, \cY)
    \coloneqq
    \inf_{f, g, Z}
    \lHausMet{Z,\rho_Z} (f(X), g(Y)),
  \end{equation}
  where the infimum is taken 
  over all $(Z, \rho_{Z}) \in \rootedBCM$ 
  and root-preserving isometric embeddings $f \colon  X \to Z$ and $g \colon Y \to Z$.
\end{dfn}

\begin{dfn}  \label{dfn: RV GH metric}
  For $\cX=(X, \rho_X), \cY=(Y, \rho_Y) \in \rootedBCM$,
  we define 
  \begin{equation}  \label{dfn eq: RV GH metric}
    \RVMet(\cX, \cY)
    \coloneqq
    \inf_{f, g, Z}
    \lHausMet{Z} \bigl( (f(X), f(\rho_X)), (g(Y), g(\rho_Y)) \bigr),
  \end{equation}
  where the infimum is taken 
  over all $(Z, d_Z) \in \BCM$
  and isometric embeddings $f \colon  X \to Z$ and $g \colon Y \to Z$.
\end{dfn}

\begin{rem}
  One needs to check that the infimums in \eqref{dfn eq: RF GH metric} and \eqref{dfn eq: RV GH metric} are well-defined,
  that is,
  it is taken over a non-empty set.
  We give a sketch of how to check it.
  Fix $(X, \rho_X), (Y, \rho_Y) \in \rootedBCM$.
  Define $Z'$ to be the disjoint union $X \sqcup Y$
  and define a pseudometric on $Z'$ by setting 
  $d_{Z'}|_{X \times X} \coloneqq d_X,\, d_{Z'}|_{Y \times Y} \coloneqq d_Y$
  and $d_{Z'}(x, y) \coloneqq d_X(x, \rho_X) + d_Y(\rho_Y, y)$
  for $x \in X,\, y \in Y$.
  Then we have $d_{Z'}(\rho_X, \rho_Y) = 0$.
  Therefore,
  by setting $Z$ to be the quotient space and $\rho_{Z}$ to be the equivalence class $\{\rho_X, \rho_Y\}$,
  we obtain a rooted $\bcmAB$ space $(Z, \rho_{Z})$,
  where $(X, \rho_X)$ and $(Y, \rho_Y)$ 
  are isometrically embedded. 
\end{rem}

\begin{thm} \label{thm: metrics for local GH topology}
  Both functions \(\RFMet\) and \(\RVMet\) are metrics on \(\rootedBCM\),
  and the induced topologies coincide.
\end{thm}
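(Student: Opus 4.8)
The plan is to establish the metric axioms for $\RFMet$ and $\RVMet$ via a gluing construction, and then to show that the two metrics are bi-Lipschitz equivalent, so that in particular they induce the same topology. That both functions take values in $[0,1]$ is immediate since $\lHausMet{}$ is an integral of $e^{-r}(1\wedge\,\cdot\,)$, and each infimum runs over a non-empty family by the disjoint-union construction in the remark preceding the statement; symmetry is clear. For the triangle inequality for $\RFMet$, I would fix $\cX,\cY,\cZ$ and $\varepsilon>0$, choose near-optimal rooted ambient spaces $(Z_1,\rho_1)$ and $(Z_2,\rho_2)$ with root-preserving isometric embeddings $X,Y\hookrightarrow Z_1$ and $Y,Z\hookrightarrow Z_2$ realising the two infima to within $\varepsilon$, and glue $Z_1$ and $Z_2$ along the common copy of $Y$: on $Z_1\sqcup Z_2$ take the pseudometric equal to $d_{Z_i}$ on each factor and equal to $\inf_{y\in Y}\bigl(d_{Z_1}(\,\cdot\,,y)+d_{Z_2}(y,\,\cdot\,)\bigr)$ across factors. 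A routine verification shows this is a pseudometric whose only identifications glue the two copies of $Y$ (hence also the two roots, to a single point $\rho_W$), that the canonical maps into the quotient $W$ are isometric embeddings, and that $W$ is boundedly compact, since a closed ball about $\rho_W$ is a closed subset of the union of the two closed balls of the same radius in $Z_1,Z_2$. By Proposition~\ref{prop: the local Hausdorff metric is preserved} the embeddings $Z_i\to W$ are isometries for the local Hausdorff metrics, so the images of $X,Y,Z$ in $W$ satisfy $\lHausMet{W,\rho_W}(X,Y)\le\RFMet(\cX,\cY)+\varepsilon$ and $\lHausMet{W,\rho_W}(Y,Z)\le\RFMet(\cY,\cZ)+\varepsilon$; the triangle inequality for $\lHausMet{W,\rho_W}$ (Theorem~\ref{thm: local Hausdorff metric}) and $\varepsilon\downarrow 0$ give the claim. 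The same argument with $\lHausMet{W}$ on $\Closed{W}\times W$ in place of $\lHausMet{W,\rho_W}$ handles $\RVMet$.

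For positive definiteness of $\RFMet$, suppose $\RFMet(\cX,\cY)=0$ and pick rooted ambient spaces $(Z_n,\rho_n)$ with root-preserving isometric embeddings $f_n,g_n$ of $X,Y$ and $\lHausMet{Z_n,\rho_n}(f_n(X),g_n(Y))\to 0$. Since $f_n$ carries $X\cap D_X(\rho_X,r)$ onto an isometric copy of the compact ball $D_X(\rho_X,r)$ sitting inside the compact ball $D_{Z_n}(\rho_n,r)$, an elementary extraction (as in the proof of Theorem~\ref{thm: convergence in D times X}) yields a subsequence and radii $r_k\uparrow\infty$ with $\HausMet{Z_n}\bigl(f_n(D_X(\rho_X,r_k)),g_n(D_Y(\rho_Y,r_k))\bigr)\to 0$ for each $k$. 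Thus, for each $k$, the rooted compact spaces $\bigl(D_X(\rho_X,r_k),\rho_X\bigr)$ and $\bigl(D_Y(\rho_Y,r_k),\rho_Y\bigr)$ embed with roots matching into a common compact space with vanishing Hausdorff distance, so by a standard Gromov--Hausdorff argument (cf.\ \cite{Burago_Burago_Ivanov_01_A_course,Gromov_07_Metric}) they are rooted-isometric. For each $k$ the set of root-preserving isometries between the $k$-th balls is a non-empty compact space (Arzel\`{a}--Ascoli), the restriction maps between successive levels are continuous, and an inverse limit of non-empty compact Hausdorff spaces is non-empty; a compatible family of such isometries patches to a single root-preserving isometry $X\to Y$, so $\cX=\cY$ in $\rootedBCM$.

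To compare the metrics, first $\RVMet\le\RFMet$: in the infimum defining $\RFMet$, root-preserving embeddings $f,g$ into $(Z,\rho_Z)$ satisfy $f(\rho_X)=\rho_Z=g(\rho_Y)$, whence $\lHausMet{Z}\bigl((f(X),f(\rho_X)),(g(Y),g(\rho_Y))\bigr)=\lHausMet{Z,\rho_Z}(f(X),g(Y))$, and $\RVMet$ infimises over a larger family. Conversely I claim $\RFMet\le 2\,\RVMet$. Given $Z\in\BCM$ and isometric embeddings $f\colon X\to Z$, $g\colon Y\to Z$, set $c:=d_Z(f(\rho_X),g(\rho_Y))$ and enlarge $Z$ by attaching a fresh isometric copy $Y'$ of $Y$, with $d(z,y')$ for $z\in Z$ and $y'\in Y'$ corresponding to $y\in Y$ defined to be $\inf_{w\in Y}\bigl(d_Z(z,g(w))+c+d_Y(w,y)\bigr)\wedge\bigl(d_Z(z,f(\rho_X))+d_Y(\rho_Y,y)\bigr)$. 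One checks this is a boundedly-compact pseudometric whose only identification glues the root of $Y'$ to $f(\rho_X)$, and that $d(g(y),y')=c$ for every $y$; hence, rooting the resulting space at $\rho'':=f(\rho_X)$, the inclusions of $X$ (via $f$) and of $Y$ (via $y\mapsto y'$) are root-preserving isometric embeddings. Since $Y'$ is a ``$c$-parallel copy'' of $g(Y)$ and restriction commutes with these maps, the Hausdorff distance between the radius-$r$ restrictions of the two images about $\rho''$ is at most $\HausMet{Z}\bigl(f(X)|_{f(\rho_X)}^{(r)},g(Y)|_{g(\rho_Y)}^{(r)}\bigr)+c$ for every $r>0$; integrating against $e^{-r}\,dr$ and using $1\wedge(a+c)\le(1\wedge a)+c$ gives $\RFMet(\cX,\cY)\le\int_0^\infty e^{-r}\bigl(1\wedge\HausMet{Z}(f(X)|_{f(\rho_X)}^{(r)},g(Y)|_{g(\rho_Y)}^{(r)})\bigr)\,dr+c\le 2\,\lHausMet{Z}\bigl((f(X),f(\rho_X)),(g(Y),g(\rho_Y))\bigr)$. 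Taking the infimum yields $\RFMet\le 2\,\RVMet$; thus the two metrics are topologically equivalent, and (with the previous paragraph) $\RVMet$ is positive definite as well.

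The main obstacles are the verification that the gluing pseudometrics in the triangle-inequality step and in the bi-Lipschitz step are genuine, boundedly-compact metrics, and the patching of the ball isometries into a global one in the positive-definiteness step; both are standard in spirit but must be carried out with care.
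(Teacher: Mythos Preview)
Your proof is correct, and the overall architecture (gluing for the triangle inequality, a compactness argument for positive definiteness, and a root-gluing construction for the inequality $\RFMet\le 2\,\RVMet$) matches the paper's. The details, however, differ in two places.

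For positive definiteness, the paper (via the general Theorem~\ref{thm: space-rooted metric}, of which the present case is the specialisation to trivial additional structure) first glues \emph{all} the ambient spaces $Z_n$ along the common copies of $Y$ into a single rooted $\bcmAB$ space $(M,\rho_M)$, obtaining root-preserving isometric embeddings $f_n\colon X\to M$ and a fixed $g\colon Y\to M$ with $f_n(X)\to g(Y)$ in the Fell topology; a single application of Arzel\`{a}--Ascoli to the equicontinuous family $(f_n)$ then produces a limiting root-preserving isometric embedding $f$ with $f(X)=g(Y)$. You instead keep the $Z_n$ separate, extract radii at which the ball restrictions have vanishing Hausdorff distance, deduce that each pair of closed balls is rooted-isometric, and patch the isometries via an inverse limit of non-empty compact isometry sets. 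Both are valid; the paper's route is shorter and reuses the general machinery of Section~\ref{sec: main results}, while yours is closer to the classical treatment in \cite{Burago_Burago_Ivanov_01_A_course,Gromov_07_Metric}.

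For the comparison, the paper's construction (Theorem~\ref{thm: coincidence of RF and RV}) forms the disjoint union $X\sqcup Y$, adds the constant $c=d_Z(f(\rho_X),g(\rho_Y))$ to every cross-distance, and identifies the two roots; the stability of $\lHausMet{}$ (Proposition~\ref{prop: local Hausdorff is stable}) then gives $\RFMet\le 2\,\RVMet$ in one line. Your ``parallel copy'' construction attaches a fresh $Y'$ to the whole of $Z$ via a two-branch pseudometric and estimates the Hausdorff distances of balls directly. This works and gives the same constant~$2$, but the verification that your two-branch formula is a genuine pseudometric with the stated identification is a bit more laborious than the paper's approach, which needs only the triangle inequality for $d_Z$ plus the abstract stability lemma.
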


\begin{proof}
  That \(\RFMet\) and \(\RVMet\) are metrics on \(\rootedBCM\) are proven similarly to Theorems~\ref{thm: space-rooted metric} and \ref{thm: element-rooted metric} below, respectively.
  The last assertion is proven similarly to Theorem~\ref{thm: coincidence of RF and RV}.
\end{proof}

\begin{dfn} [{The local Gromov--Hausdorff topology}]
  We call the topology on \(\rootedBCM\) induced by \(\RFMet\) (or, equivalently, \(\RVMet\)) the \textit{local Gromov--Hausdorff topology}.
\end{dfn}

The local Gromov--Hausdorff topology is characterized by convergence of spaces embedded into a common metric space.
More precisely, we have the following result.
In particular, this ensures that the local Gromov--Hausdorff topology  
coincides with the generalization of the pointed Gromov--Hausdorff topology 
considered in the literature 
such as \cite{Burago_Burago_Ivanov_01_A_course,Gromov_07_Metric,Khezeli_20_Metrization}.
Below, for each \(\cX = (X, \rho_X) \in \rootedBCM\) and \(r > 0\), we define \(\cX^{(r)} = (X^{(r)}, \rho_{X^{(r)}})\) by setting
\(X^{(r)} \coloneqq D_X(\rho_X, r)\) equipped with the metric \(d_{X^{(r)}} \coloneqq d_X|_{X^{(r)} \times X^{(r)}}\) 
and \(\rho_{X^{(r)}} \coloneqq \rho_X\).

\begin{thm} [{Convergence}] \label{thm: local GH convergence}
  Let \(\cX_n=(X_n, \rho_{X_n})\), \(n \in \NN \cup \{\infty\}\), be elements of \(\rootedBCM\).
  The following statements are equivalent with each other.
\begin{enumerate} [label = \textup{(\roman*)}, leftmargin = *]
  \item \label{thm item: 1. local GH convergence}
    The sequence \((\cX_n)_{n \in \NN}\) converges to \(\cX_{\infty}\) in the local Gromov--Hausdorff topology.
  \item \label{thm item: 2. local GH convergence}
    There exist a rooted \(\bcmAB\) space \((M, \rho_M)\) 
    and root-preserving isometric embeddings \(f_n \colon X_n \allowbreak \to \allowbreak M\), \(n \in \NN \cup \{\infty\}\),
    such that \(f_n(X_n) \to f(X)\) in the Fell topology as subsets of \(M\).
  \item \label{thm item: 3. local GH convergence}
    There exist a \(\bcmAB\) space \(M\) 
    and isometric embeddings \(f_n \colon X_n \to M\), \(n \in \NN \cup \{\infty\}\),
    such that \(f_n(X_n) \to f(X)\) in the Fell topology as subsets of \(M\)
    and \(f_n(\rho_{X_n}) \to f_\infty(\rho_{X_\infty})\) in \(M\).
  \item \label{thm item: 4. local GH convergence}
      For all but countably many \(r > 0\), 
      \((\cX_n^{(r)})_{n \in \NN}\) converges to \(\cX_{\infty}^{(r)}\) in the pointed Gromov--Hausdorff topology.
\end{enumerate}
\end{thm}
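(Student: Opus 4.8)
The plan is to prove \ref{thm item: 2. local GH convergence}$\Rightarrow$\ref{thm item: 3. local GH convergence}$\Rightarrow$\ref{thm item: 1. local GH convergence}$\Rightarrow$\ref{thm item: 2. local GH convergence}, and then to close the remaining equivalence by \ref{thm item: 3. local GH convergence}$\Rightarrow$\ref{thm item: 4. local GH convergence}$\Rightarrow$\ref{thm item: 1. local GH convergence}. Throughout, recall from Theorem~\ref{thm: metrics for local GH topology} that the local Gromov--Hausdorff topology is metrized by both $\RFMet$ and $\RVMet$, that $\lHausMet{Z,\rho}$ metrizes the Fell topology on $\Closed{Z}$ (Theorem~\ref{thm: convergence in the Fell topology}), and that $\lHausMet{Z}$ metrizes the product topology on $\Closed{Z}\times Z$ (Proposition~\ref{prop: product local Hausdorff metric}). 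The implication \ref{thm item: 2. local GH convergence}$\Rightarrow$\ref{thm item: 3. local GH convergence} is immediate, since a root-preserving isometric embedding into $(M,\rho_M)$ sends every $\rho_{X_n}$ to the single point $\rho_M$. For \ref{thm item: 3. local GH convergence}$\Rightarrow$\ref{thm item: 1. local GH convergence}, the hypotheses say exactly that $(f_n(X_n),f_n(\rho_{X_n}))\to(f_\infty(X_\infty),f_\infty(\rho_{X_\infty}))$ with respect to $\lHausMet{M}$, so $\RVMet(\cX_n,\cX_\infty)\le \lHausMet{M}\bigl((f_n(X_n),f_n(\rho_{X_n})),(f_\infty(X_\infty),f_\infty(\rho_{X_\infty}))\bigr)\to 0$.

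The first substantial step is \ref{thm item: 1. local GH convergence}$\Rightarrow$\ref{thm item: 2. local GH convergence}, an amalgamation construction. For each $n$, pick $(Z_n,\rho_{Z_n})\in\rootedBCM$ and root-preserving isometric embeddings $\phi_n\colon X_n\to Z_n$, $\psi_n\colon X_\infty\to Z_n$ with $\lHausMet{Z_n,\rho_{Z_n}}(\phi_n(X_n),\psi_n(X_\infty))<\RFMet(\cX_n,\cX_\infty)+2^{-n}=:\varepsilon_n\to 0$; since $\phi_n(X_n)$ and $\psi_n(X_\infty)$ are closed in $Z_n$, we may assume $Z_n=\phi_n(X_n)\cup\psi_n(X_\infty)$. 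Let $M$ be obtained from $X_\infty\sqcup\bigsqcup_n Z_n$ by identifying $x\in X_\infty$ with $\psi_n(x)\in Z_n$ for all $n$, equipped with the amalgamated metric which equals $d_{Z_n}$ on each $Z_n$ and is $d_M(a,b)=\inf_{x\in X_\infty}\bigl(d_{Z_n}(a,\psi_n(x))+d_{Z_m}(\psi_m(x),b)\bigr)$ for $a\in Z_n$, $b\in Z_m$, $n\ne m$. A routine check (no detour through $X_\infty$ can shorten a distance inside a single $Z_n$, because $X_\infty$ embeds isometrically into each $Z_n$) shows $d_M$ is a genuine metric in which each inclusion $\iota_n\colon Z_n\hookrightarrow M$ is isometric, and that all roots are identified to one point $\rho_M$. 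Put $f_n\coloneqq\iota_n\circ\phi_n$ and let $f_\infty$ be the inclusion of $X_\infty$; then $\iota_n(\psi_n(X_\infty))=f_\infty(X_\infty)$, so by Proposition~\ref{prop: the local Hausdorff metric is preserved}, $\lHausMet{M,\rho_M}(f_n(X_n),f_\infty(X_\infty))=\lHausMet{Z_n,\rho_{Z_n}}(\phi_n(X_n),\psi_n(X_\infty))<\varepsilon_n$, whence $f_n(X_n)\to f_\infty(X_\infty)$ in the Fell topology of $M$. It remains to see $M$ is boundedly compact: $D_M(\rho_M,r)=\bigcup_n f_n(X_n^{(r)})\cup f_\infty(X_\infty^{(r)})$, and the Fell convergence just obtained gives, via Theorem~\ref{thm: convergence in the Fell topology}, Hausdorff convergence $f_n(X_n^{(r)})\to f_\infty(X_\infty^{(r)})$ for all but countably many $r$; for such $r$, Lemma~\ref{lem: precompact in Hausdorff} and Lemma~\ref{lem: convergence in Hausdorff} show $D_M(\rho_M,r)$ is compact, and by monotonicity and closedness of balls, $D_M(\rho_M,r)$ is compact for every $r>0$. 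I expect this amalgamation (and its variant in the last step) to be the main technical obstacle.

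For \ref{thm item: 3. local GH convergence}$\Rightarrow$\ref{thm item: 4. local GH convergence}: by Proposition~\ref{prop: product local Hausdorff metric} and Theorem~\ref{thm: convergence in D times X}, the hypotheses give $f_n(X_n)|_{f_n(\rho_{X_n})}^{(r)}\to f_\infty(X_\infty)|_{f_\infty(\rho_{X_\infty})}^{(r)}$ in the Hausdorff topology for all but countably many $r$. Since $f_n$ is a root-preserving isometric embedding, $f_n(X_n)|_{f_n(\rho_{X_n})}^{(r)}=f_n(X_n^{(r)})$, and $f_n(X_n^{(r)})\cup f_\infty(X_\infty^{(r)})$ is a compact subset of $M$ into which $\cX_n^{(r)}$ and $\cX_\infty^{(r)}$ are root-respectingly embedded; hence, writing $d_{\mathrm{pGH}}$ for the pointed Gromov--Hausdorff distance (Remark~\ref{rem: pointed GH topology}), $d_{\mathrm{pGH}}(\cX_n^{(r)},\cX_\infty^{(r)})\le \HausMet{M}(f_n(X_n^{(r)}),f_\infty(X_\infty^{(r)}))\vee d_M(f_n(\rho_{X_n}),f_\infty(\rho_{X_\infty}))\to 0$ for all but countably many $r$, which is \ref{thm item: 4. local GH convergence}.

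Finally, \ref{thm item: 4. local GH convergence}$\Rightarrow$\ref{thm item: 1. local GH convergence}, which I regard as the crux. The cleanest route is a compactness argument: it suffices to show every subsequence of $(\cX_n)$ has a further subsequence converging to $\cX_\infty$ in the local Gromov--Hausdorff topology. By \ref{thm item: 4. local GH convergence}, for a full-measure set of $r$ the sequence $(\cX_n^{(r)})_n$ converges, hence is precompact, in the pointed Gromov--Hausdorff topology; combined with completeness of $\RFMet$ and a precompactness criterion for the local Gromov--Hausdorff topology (analogues of Theorems~\ref{2. thm: completeness of d_D} and~\ref{thm: precompactness in d_D}, established for the general framework in Section~\ref{sec: main results}), a given subsequence admits a further subsequence $\cX_{n_k}\to\cY$ in the local Gromov--Hausdorff topology. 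Applying the already-proven implication \ref{thm item: 1. local GH convergence}$\Rightarrow$\ref{thm item: 4. local GH convergence} to this convergence, we get $\cX_{n_k}^{(r)}\to\cY^{(r)}$ in the pointed Gromov--Hausdorff topology for a.e.\ $r$; comparing with $\cX_{n_k}^{(r)}\to\cX_\infty^{(r)}$ forces $\cY^{(r)}$ and $\cX_\infty^{(r)}$ to be rooted-isometric for all but countably many $r$, and a standard compactness argument (the root-preserving isometries between fixed compact rooted spaces form a compact family, so a compatible family of isometries on an exhausting sequence of balls exists by a nonempty inverse-limit argument) yields that $\cY$ and $\cX_\infty$ are rooted-isometric, i.e.\ $\cY=\cX_\infty$ in $\rootedBCM$. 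Hence $\cX_n\to\cX_\infty$. (Alternatively, one can argue directly by amalgamating $X_n$ and $X_\infty$ onto a compact space realizing $d_{\mathrm{pGH}}(\cX_n^{(R_n)},\cX_\infty^{(R_n)})$ for a well-chosen $R_n\to\infty$ along their $R_n$-truncations and estimating $\lHausMet{Z_n,\rho_{Z_n}}(X_n,X_\infty)$ scale by scale; the delicate point there is controlling the truncated Hausdorff distance uniformly over a full-measure set of radii, using the a.e.\ continuity of the truncation maps from Proposition~\ref{prop: RS for local Hausdorff} and Assumption~\ref{assum: metrization of D}\ref{assum item: 1. metrization of D}, which is precisely the phenomenon isolated abstractly in Section~\ref{sec: Metric for non-compact objects}.)
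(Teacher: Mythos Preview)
Your proof is correct and follows essentially the same strategy as the paper, which defers \ref{thm item: 1. local GH convergence}$\Leftrightarrow$\ref{thm item: 2. local GH convergence} and \ref{thm item: 1. local GH convergence}$\Leftrightarrow$\ref{thm item: 3. local GH convergence} to the later Theorems~\ref{thm: RF convergence} and~\ref{thm: RV convergence} (whose proofs carry out the same gluing construction you give), and handles \ref{thm item: 3. local GH convergence}$\Leftrightarrow$\ref{thm item: 4. local GH convergence} by citing \cite[Proposition~5.9]{Athreya_Lohr_Winter_16_The_gap}. Two remarks are worth making.

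First, in your construction of $M$ for \ref{thm item: 1. local GH convergence}$\Rightarrow$\ref{thm item: 2. local GH convergence}, the verification of bounded compactness invokes Theorem~\ref{thm: convergence in the Fell topology} on $M$ before knowing $M$ is a $\bcmAB$ space, which is circular since that theorem is stated only for $\bcmAB$ spaces. The paper sidesteps this by taking the completion and citing \cite[Lemma~2.42]{Khezeli_23_A_unified}. Alternatively, one can argue completeness of $M$ directly (any Cauchy sequence hopping between distinct $Z_n$'s must accumulate on $f_\infty(X_\infty)$) and then, for each fixed $r$, extract from the integral bound a radius $s_n\in[r,r+1]$ with $\HausMet{Z_n}\bigl(\phi_n(X_n)|^{(s_n)}_{\rho_{Z_n}},\psi_n(X_\infty)|^{(s_n)}_{\rho_{Z_n}}\bigr)\le e^{r+1}\varepsilon_n$, which yields total boundedness of $D_M(\rho_M,r)$ without appealing to Fell convergence in $M$.

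Second, your route \ref{thm item: 4. local GH convergence}$\Rightarrow$\ref{thm item: 1. local GH convergence} via precompactness and uniqueness of subsequential limits is a legitimate alternative to the paper's citation. It relies on Theorems~\ref{thm: local GH is Polish} and~\ref{thm: precompact in the local GH top}, which appear after the present theorem, but there is no circularity since their proofs do not use part~\ref{thm item: 4. local GH convergence}. The inverse-limit argument you sketch for gluing isometries of balls into a global rooted isometry is standard and correct: a root-preserving isometry $Y^{(r_{k+1})}\to X_\infty^{(r_{k+1})}$ preserves distance to the root, hence restricts to $Y^{(r_k)}\to X_\infty^{(r_k)}$, and the resulting inverse system of non-empty compact sets has non-empty limit.
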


\begin{proof}
  The equivalences \ref{thm item: 1. local GH convergence} \(\Leftrightarrow\) \ref{thm item: 2. local GH convergence}  
  and \ref{thm item: 1. local GH convergence} \(\Leftrightarrow\) \ref{thm item: 3. local GH convergence}  
  are verified similarly to Theorems~\ref{thm: RF convergence} and~\ref{thm: RV convergence}, respectively.  
  The equivalence between \ref{thm item: 3. local GH convergence} and \ref{thm item: 4. local GH convergence}  
  can be proven similarly to \cite[Proposition~5.9]{Athreya_Lohr_Winter_16_The_gap}.
\end{proof}

\begin{rem}
  From the above result,  
  it might be more appropriate to call the local Gromov--Hausdorff topology the \emph{Gromov--Fell topology}.  
  However, since the term ``Gromov--Hausdorff'' has been widely adopted in the literature,  
  including for non-compact underlying spaces,  
  and is more familiar to a broader audience,  
  we follow this convention and use ``Gromov--Hausdorff'' throughout the paper.
\end{rem}

\begin{thm}[Polishness] \label{thm: local GH is Polish}
  The local Gromov--Hausdorff topology is separable,
  and the both metrics \(\RFMet\) and \(\RVMet\) are complete.
\end{thm}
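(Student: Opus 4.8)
The plan is to read the statement off the general Polishness results of Section~\ref{sec: main results} applied to the trivial functor $\tau$ (so that $\rootedBCM(\tau) = \rootedBCM$); since those are not yet available here, I would instead give a direct argument modeled on the proof of Theorem~\ref{2. thm: completeness of d_D}, in three steps: separability, construction of a common ambient space for a fast Cauchy sequence, and extraction of the limit.

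\emph{Separability.} I would reduce to the classical fact that the pointed Gromov--Hausdorff topology on rooted compact metric spaces is separable, with the finite rooted metric spaces with rational distances forming a countable dense set (cf.\ \cite[Section~7.4]{Burago_Burago_Ivanov_01_A_course}). The inclusion of rooted compact spaces into $(\rootedBCM, \RFMet)$ is continuous (which follows from Theorem~\ref{thm: local GH convergence} together with the standard fact that restriction to closed balls is pointed-Gromov--Hausdorff continuous for all but countably many radii), and I claim its image is dense: for any $\cX = (X, \rho_X) \in \rootedBCM$ one has $\cX^{(r)} \to \cX$ as $r \to \infty$, because for $\cX_n \coloneqq \cX^{(r_n)}$ with $r_n \uparrow \infty$ we have $(\cX_n)^{(s)} = \cX^{(s)}$ for all large $n$, so criterion~\ref{thm item: 4. local GH convergence} of Theorem~\ref{thm: local GH convergence} is met. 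A continuous image of a separable space is separable, and here it is dense, so $\rootedBCM$ is separable.

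\emph{Completeness of $\RFMet$: construction.} I would start from a Cauchy sequence $(\cX_n)_{n \ge 1}$, reduce (as usual) to producing a convergent subsequence, pass to a subsequence with $\RFMet(\cX_n, \cX_{n+1}) < 2^{-n}e^{-2^{n}}$, and for each $n$ pick a rooted $\bcmAB$ space $(M_n, \rho_{M_n})$ with root-preserving isometric embeddings $\iota_n \colon X_n \hookrightarrow M_n$ and $\iota_n' \colon X_{n+1} \hookrightarrow M_n$ realizing this bound. I would then glue the $M_n$ along the chain of maps $\iota_n', \iota_{n+1}$, identify all the roots $\rho_{M_n}$ to a single point $\rho$, equip the result with the maximal (path) pseudometric extending each $d_{M_n}$, pass to the quotient metric space, and complete, obtaining a complete metric space $\bar{Z}$ into which every $X_n$ is root-preservingly isometrically embedded; the chain structure of the gluing guarantees that the new metric restricts to the old one on each $M_n$ (as in \cite[Lemma~3.1.21]{Burago_Burago_Ivanov_01_A_course}). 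Writing $A_n \in \Closed{\bar{Z}}$ for the image of $X_n$, this gives $\lHausMet{\bar{Z}, \rho}(A_n, A_{n+1}) < 2^{-n}e^{-2^{n}}$.

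\emph{Completeness of $\RFMet$: the limit, and the case of $\RVMet$.} The hard part is that $\bar{Z}$ need not be boundedly compact, so Theorem~\ref{thm: local Hausdorff metric} does not apply to it directly. To get around this I would argue exactly as in the proof of Theorem~\ref{2. thm: completeness of d_D}: from the geometric decay of $\lHausMet{\bar{Z}, \rho}(A_n, A_{n+1})$ and the fact that the Fell restriction system satisfies Condition~4 (Proposition~\ref{prop: RS for local Hausdorff}), extract that for all but countably many $r > 0$ the family $\{A_n \cap D_{\bar{Z}}(\rho, r)\}_{n \ge 1}$ is precompact in $\Compact{\bar{Z}}$; by Lemma~\ref{lem: precompact in Hausdorff}, $\bigcup_{n \ge 1}(A_n \cap D_{\bar{Z}}(\rho, r))$ then has compact closure, which forces $\tilde{Z} \coloneqq \closure\bigl(\bigcup_{n \ge 1} A_n\bigr) \subseteq \bar{Z}$ to be a $\bcmAB$ space. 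Now $(A_n)_{n \ge 1}$ is Cauchy in $(\Closed{\tilde{Z}}, \lHausMet{\tilde{Z}, \rho})$ --- these distances, computed within $\tilde{Z}$ or within $\bar{Z}$, coincide, and the former space is complete by Theorem~\ref{thm: local Hausdorff metric} --- so $A_n \to A_\infty$ for some $A_\infty \in \Closed{\tilde{Z}}$, and $\cX_\infty \coloneqq (A_\infty, \rho) \in \rootedBCM$ is the sought limit since $\RFMet(\cX_n, \cX_\infty) \le \lHausMet{\tilde{Z}, \rho}(A_n, A_\infty) \to 0$. For $\RVMet$ I would run the identical argument with arbitrary (not necessarily root-preserving) isometric embeddings into $\bcmAB$ spaces, keeping track of the pairs $(A_n, a_n) \in \Closed{\tilde{Z}} \times \tilde{Z}$ with $a_n$ the image of $\rho_{X_n}$, and invoking the completeness of $(\Closed{\tilde{Z}} \times \tilde{Z}, \lHausMet{\tilde{Z}})$ from Proposition~\ref{prop: product local Hausdorff metric} in place of Theorem~\ref{thm: local Hausdorff metric}. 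I expect the only real obstacle to be this recovery of bounded compactness for the limiting ambient space: the gluing destroys it, and one needs the precompactness criterion of Section~\ref{sec: Metric for non-compact objects} to cut $\bar{Z}$ down to a $\bcmAB$ subspace on which the (local) Hausdorff metric is complete --- this is precisely the point where the $\bcmAB$ case is genuinely harder than the compact one.
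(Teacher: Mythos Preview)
Your separability argument is correct, and your completeness strategy is sound, but there is a gap at the step where you invoke Proposition~\ref{prop: RS for local Hausdorff} to obtain precompactness of $\{A_n \cap D_{\bar Z}(\rho, r)\}_n$. That proposition is stated for a boundedly-compact ambient space, and the proof of Condition~4 there (Lemma~\ref{lem: RS for local Hausdorff satisfies 4}) merely observes that every restriction $A_n|_\rho^{(r)}$ lies in the single ball $D_X(\rho, r+c)$, which is compact because $X$ is bcm; the Cauchy hypothesis on Hausdorff distances is not used at all. Since your $\bar Z$ is not bcm, that argument does not apply, and you cannot cite Condition~4 here.

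What you need instead is the telescoping argument that appears (for other restriction systems) in Lemma~\ref{lem: RS for vague satisfies 4} and in the proof of Lemma~\ref{lem: RS for marked local Hausdorff}: from $\HausMet{\bar Z}(A_n|_\rho^{(r_n)}, A_{n+1}|_\rho^{(r_n)}) < 2^{-n}$ with $r_n > 2^n$, fix $r>0$ and $N$ with $2^N > r+2$; stepping back from $A_n$ to $A_{n-1}$ to $\ldots$ to $A_N$ while tracking the cumulative drift in radius shows $A_n|_\rho^{(r)} \subseteq (A_N|_\rho^{(r+2)})^{2^{-N+1}}$ for all $n \ge N$. The right-hand side is a bounded neighbourhood of a compact set (compactness coming from the bcm-ness of $X_N$, not of $\bar Z$) inside a complete space, hence is totally bounded with compact closure; bcm-ness of $\tilde Z$ follows. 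This is precisely where the Cauchy bound and the bcm-ness of the individual $X_n$'s must work together, and that interplay is absent from Lemma~\ref{lem: RS for local Hausdorff satisfies 4}.

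For comparison, the paper takes a shortcut: it defers to \cite[Theorem~2.12 and Lemma~2.42]{Khezeli_23_A_unified} for the existence of a bcm ambient space outright (see the proof of Theorem~\ref{thm: completeness of RF}), and for separability it identifies the topology with that of \cite{Khezeli_20_Metrization} via Theorem~\ref{thm: local GH convergence} and cites \cite[Theorem~4.1]{Khezeli_20_Metrization}. Your route, once patched as above, is more self-contained.
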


\begin{proof}
  The completeness of the metrics are proven similarly to Theorems~\ref{thm: completeness of RF} and \ref{thm: completeness of RF} below, respectively,
  and therefore we omit the proof.
  For the separability,
  we note that, by Theorem~\ref{thm: local GH convergence} and \cite[Theorem~3.24(v)]{Khezeli_20_Metrization},
  the local Gromov--Hausdorff topology on \(\rootedBCM\) coincides with 
  the topology considered in \cite[Section~4.1]{Khezeli_20_Metrization}
  (where \(\rootedBCM\) is denoted by \(\mathfrak{N}_*\) in that notation).
  Thus, by \cite[Theorem~4.1]{Khezeli_20_Metrization},
  the local Gromov--Hausdorff topology on \(\rootedBCM\) is separable.
\end{proof}

To describe a precompactness criterion in the local Gromov--Hausdorff topology,
we introduce the notion of $\varepsilon$-covering and metric entropy.
\begin{dfn} [{$\varepsilon$-covering, metric entropy}]  \label{dfn: metric entropy}
  Let $S$ be a metric space and $\varepsilon$ be a positive number.
  A subset $A \subseteq S$ is called an \textit{$\varepsilon$-covering} of $S$ 
  if it holds that $S = \bigcup_{x \in A} D_{S}(x, \varepsilon)$.
  We define 
  \begin{equation}
    N(S, \varepsilon) 
    \coloneqq 
    \min\{
      |A| \mid A\ \text{is an}\ \varepsilon \text{-covering of}\ S
    \},
  \end{equation}
  where $| \cdot |$ denotes the cardinality of a set. 
  We call the family $\{N(S, \varepsilon) \mid \varepsilon > 0\}$ the \textit{metric entropy} of $S$.
\end{dfn}

\begin{rem}
  In Definition \ref{dfn: metric entropy},
  we borrow the definition of metric entropy given in \cite{Marcus_Rosen_06_Markov},
  but one should note that the metric entropy is defined to be the logarithm of $N(S, \varepsilon)$ elsewhere in the literature.
\end{rem}

\begin{lem}
  Let $K$ be a compact metric space.
  Then $N(K,\cdot)$ is right-continuous with left-hand limits.
  In particular, it has at most countable discontinuity points.
\end{lem}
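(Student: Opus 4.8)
The plan is to show the two claims about $N(K, \cdot)$ separately. For right-continuity, fix $\varepsilon > 0$ and let $A = \{x_1, \dots, x_N\}$ be an $\varepsilon$-covering of $K$ with $N = N(K, \varepsilon)$. I want to show that $N(K, \varepsilon') = N$ for all $\varepsilon'$ in some interval $(\varepsilon, \varepsilon + \delta)$; since $N(K, \cdot)$ is non-increasing, it suffices to show $N(K, \varepsilon') \geq N$ for $\varepsilon'$ slightly larger than $\varepsilon$ (the reverse inequality $N(K, \varepsilon') \leq N$ is immediate from monotonicity). Actually the cleaner route is: any $\varepsilon$-covering is an $\varepsilon'$-covering for $\varepsilon' > \varepsilon$, so $N(K, \varepsilon') \leq N(K, \varepsilon)$ automatically; for the other direction I need a lower bound on coverings that is stable as the radius shrinks to $\varepsilon$ from above. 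Here I would use compactness: if $N(K, \varepsilon') < N$ for a sequence $\varepsilon'_k \downarrow \varepsilon$, pick coverings $A_k$ of size at most $N - 1$, and by compactness of $K$ (hence of the space of $(N-1)$-tuples $K^{N-1}$) extract a subsequential limit $A_\infty = \{y_1, \dots, y_{N-1}\}$. Then for any $x \in K$, choosing $k$ large, $x \in D_K(z, \varepsilon'_k)$ for some $z \in A_k$, and passing to the limit gives $x \in D_K(y, \varepsilon)$ for some $y \in A_\infty$ (using that $d_K(x, y_j) \leq \liminf d_K(x, z_k) \leq \liminf \varepsilon'_k = \varepsilon$ along the relevant subsequence, where $z_k \to y_j$). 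Hence $A_\infty$ is an $\varepsilon$-covering of size $\leq N - 1$, contradicting $N(K, \varepsilon) = N$. This yields $N(K, \varepsilon') = N$ for all $\varepsilon'$ close enough to $\varepsilon$, i.e. right-continuity.

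For the existence of left-hand limits: since $N(K, \cdot)$ is a non-increasing $\NN$-valued function on $(0, \infty)$, for any $\varepsilon > 0$ the values $N(K, \varepsilon')$ for $\varepsilon' < \varepsilon$ form a non-increasing sequence of positive integers as $\varepsilon' \uparrow \varepsilon$, hence are eventually constant; that eventual constant value is the left-hand limit $N(K, \varepsilon-)$. This is purely order-theoretic and requires no compactness. Combining the two, $N(K, \cdot)$ is right-continuous with left limits (càdlàg).

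Finally, the last sentence: a càdlàg monotone function on an interval has at most countably many discontinuities. This is standard — at each discontinuity point $\varepsilon$ one has $N(K, \varepsilon-) > N(K, \varepsilon) = N(K, \varepsilon+)$ (the equality by right-continuity), so the jump sizes are positive integers; since $N(K, \cdot)$ is bounded on any compact subinterval (indeed $N(K, \varepsilon) \leq N(K, \varepsilon_0)$ for $\varepsilon \geq \varepsilon_0$ and is finite by compactness of $K$), the total variation on each $[\varepsilon_0, \infty)$ is finite, so there are finitely many jumps there, and taking $\varepsilon_0 \downarrow 0$ gives countably many in total. I expect the main obstacle to be the compactness argument in the right-continuity step — specifically, being careful that the subsequential limit of coverings is genuinely a covering with the limiting radius $\varepsilon$ rather than some radius strictly larger; this is where one must use that the $\varepsilon'_k$ decrease to $\varepsilon$ and extract a diagonal/subsequence so that each coordinate $z_k^{(j)}$ converges. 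Everything else is routine.
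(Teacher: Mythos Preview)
Your proof is correct. The paper actually omits the proof of this lemma entirely, treating it as a standard fact, so there is nothing to compare against directly.

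That said, your argument is the natural one. The right-continuity via compactness of $K^{N-1}$ is clean; the one subtlety you correctly anticipated is that for a fixed $x \in K$, the index $j_k$ of the ball containing $x$ may vary with $k$, but pigeonhole forces some index to recur infinitely often, and along that subsequence the centers converge to some $y_j$ with $d_K(x,y_j) \le \varepsilon$ (here the use of \emph{closed} balls in the definition of $N(K,\varepsilon)$ is essential). For left limits and countability of discontinuities, you could streamline: any monotone real-valued function on an interval has one-sided limits everywhere and at most countably many discontinuities, and $N(K,\cdot)$ is non-increasing, so both follow immediately without invoking integer values or bounding total variation.
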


\begin{thm} [{Convergence of metric entropies}] \label{thm: metric entropy convergence}
  If a sequence of compact metric spaces $K_n$ converges to a compact metric space $K$ 
  in the Gromov--Hausdorff topology,
  then
  \begin{equation}  \label{thm eq: 1. metric entropy convergence}
    N(K,\varepsilon)
    \leq
    \liminf_{n \to \infty} N(K_n,\varepsilon),
  \end{equation}
  for all $\varepsilon >0$, and,
  for all continuity points $\varepsilon >0$ of $N(K, \cdot)$,
  \begin{equation}  \label{thm eq: 2. metric entropy convergence}
    \lim_{n \to \infty}N(K_n,\varepsilon) = N(K,\varepsilon)
  \end{equation}
  holds.
  In particular, the above equality holds for all but countably many $\varepsilon$.
\end{thm}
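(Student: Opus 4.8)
The plan is to establish the two one-sided bounds $N(K,\varepsilon)\le\liminf_{n}N(K_n,\varepsilon)$ for every $\varepsilon>0$ and $\limsup_{n}N(K_n,\varepsilon)\le N(K,\varepsilon)$ at every continuity point $\varepsilon$ of $N(K,\cdot)$; combining them gives \eqref{thm eq: 1. metric entropy convergence} and \eqref{thm eq: 2. metric entropy convergence}, and the last sentence is immediate because $N(K,\cdot)$ is non-increasing and hence has at most countably many discontinuities. Throughout I would use the classical fact that Gromov--Hausdorff convergence of compact metric spaces can be realised inside a common compact ambient space (cf.\ \cite{Burago_Burago_Ivanov_01_A_course}): there exist a compact metric space $M$ and isometric embeddings of $K$ and of each $K_n$ into $M$, which I suppress from the notation, such that $K_n\to K$ with respect to the Hausdorff metric $\HausMet{M}$ on $\Compact{M}$. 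I will then freely use the Painlev\'{e}--Kuratowski description of Hausdorff convergence from Lemma~\ref{lem: convergence in Hausdorff}.

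\emph{Lower bound.} Assume $L\coloneqq\liminf_{n}N(K_n,\varepsilon)<\infty$ (otherwise there is nothing to prove, since $N(K,\varepsilon)<\infty$ as $K$ is compact), and pass to a subsequence along which $N(K_n,\varepsilon)=L$ for all $n$. For each such $n$ choose an $\varepsilon$-covering $\{x_n^1,\dots,x_n^L\}\subseteq K_n$ of $K_n$. Since $M$ is compact, a diagonal extraction yields a further subsequence along which $x_n^i\to y^i\in M$ for each $i$; because $x_n^i\in K_n$ and $K_n\to K$, Lemma~\ref{lem: convergence in Hausdorff}\ref{lem item: 1. convergence in Hausdorff} gives $y^i\in K$. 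I then claim that $A\coloneqq\{y^1,\dots,y^L\}$ is an $\varepsilon$-covering of $K$: given $x\in K$, Lemma~\ref{lem: convergence in Hausdorff}\ref{lem item: 2'. convergence in Hausdorff} provides $x_n\in K_n$ with $x_n\to x$, and for each $n$ an index $i(n)$ with $d_M(x_n,x_n^{i(n)})\le\varepsilon$; extracting a subsequence along which $i(n)$ is constantly equal to $i$ and letting $n\to\infty$ gives $d_M(x,y^i)\le\varepsilon$, i.e.\ $x\in D_K(y^i,\varepsilon)$. Hence $N(K,\varepsilon)\le L$.

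\emph{Upper bound at continuity points.} Fix $\varepsilon'\in(0,\varepsilon)$ and an $\varepsilon'$-covering $\{y^1,\dots,y^m\}\subseteq K$ of $K$ with $m=N(K,\varepsilon')$. Using Lemma~\ref{lem: convergence in Hausdorff}\ref{lem item: 2'. convergence in Hausdorff}, pick $y_n^i\in K_n$ with $y_n^i\to y^i$, and set $\delta_n\coloneqq\HausMet{M}(K_n,K)$ and $\gamma_n\coloneqq\max_{i}d_M(y_n^i,y^i)$, both tending to $0$. For any $x\in K_n$ there is $x'\in K$ with $d_M(x,x')\le\delta_n$, and some $i$ with $d_M(x',y^i)\le\varepsilon'$, so $d_M(x,y_n^i)\le\delta_n+\varepsilon'+\gamma_n$, which is $\le\varepsilon$ as soon as $\delta_n+\gamma_n\le\varepsilon-\varepsilon'$. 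Thus $\{y_n^1,\dots,y_n^m\}$ is an $\varepsilon$-covering of $K_n$ and $N(K_n,\varepsilon)\le m=N(K,\varepsilon')$ for all large $n$, so $\limsup_{n}N(K_n,\varepsilon)\le N(K,\varepsilon')$; letting $\varepsilon'\uparrow\varepsilon$ and using that $\varepsilon$ is a continuity point of $N(K,\cdot)$ gives $\limsup_{n}N(K_n,\varepsilon)\le N(K,\varepsilon)$, which together with the lower bound yields $\lim_{n}N(K_n,\varepsilon)=N(K,\varepsilon)$.

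The part that needs care is the lower bound: one must check simultaneously that the candidate covering $A$ consists of points of $K$ and that it genuinely covers $K$, and both facts rest on the Painlev\'{e}--Kuratowski characterisation of Hausdorff convergence (Lemma~\ref{lem: convergence in Hausdorff}) together with careful handling of the nested subsequence extractions. The remaining estimates are routine applications of the triangle inequality.
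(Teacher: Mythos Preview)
Your proof is correct and follows essentially the same strategy as the paper: realise the convergence inside a common ambient space and transport coverings between $K_n$ and $K$ via the triangle inequality. The upper-bound halves are identical in spirit. For the lower bound there is a small but genuine difference worth noting: the paper first obtains, for every $\delta>0$, an $(\varepsilon+\delta)$-covering of $K$ of cardinality $C$ (via the argument of \cite[Proposition~7.4.11(2)]{Burago_Burago_Ivanov_01_A_course}) and then invokes the right-continuity of $N(K,\cdot)$ to pass from $N(K,\varepsilon+\delta)\le C$ to $N(K,\varepsilon)\le C$. You bypass this step entirely by extracting limit points $y^i$ of the covering centres and showing directly that $\{y^1,\dots,y^L\}$ is an $\varepsilon$-covering of $K$; this succeeds precisely because the covering uses \emph{closed} balls, so the inequality $d_M(x_n,x_n^{i})\le\varepsilon$ survives the limit. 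Your route is slightly more economical here, while the paper's route makes the role of right-continuity explicit.
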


\begin{proof}
  Set $C \coloneqq \liminf_{n \to \infty} N(K_n,\varepsilon)$.
  Using that $K_n \to K$ and following the argument in the proof of \cite[Proposition~7.4.11(2)]{Burago_Burago_Ivanov_01_A_course},
  we deduce that, for any $\delta > 0$, there exists a $(\varepsilon + \delta)$-covering of $K$ with cardinality $C$.
  Thus, 
  \begin{equation}
    N(K,\varepsilon + \delta)
    \leq
    \liminf_{n \to \infty} N(K_n,\varepsilon),
    \quad \forall \delta > 0.
  \end{equation}
  Letting $\delta \to 0$ in the above inequality and using the right-continuity of $N(K, \cdot)$,
  we obtain \eqref{thm eq: 1. metric entropy convergence}.
  Suppose that $\varepsilon >0$ is a continuity point of $N(K,\cdot)$.
  Then we can find an $\varepsilon' < \varepsilon$ satisfying
  $N(K, \varepsilon') = N(K, \varepsilon) \eqqcolon C$.
  By a similar argument as before,
  we deduce that, for any $\delta > 0$, 
  there exists an $(\varepsilon' + \delta)$-covering of $K_n$ with cardinality $C$ for all sufficiently large $n$.
  Letting $\delta > 0$ be such that $\varepsilon' + \delta < \varepsilon$,
  we obtain that  
  \begin{equation}
    \limsup_{n \to \infty} N(K_n, \varepsilon) \leq C = N(K, \varepsilon).
  \end{equation}
  This, combined with \eqref{thm eq: 1. metric entropy convergence}, yields \eqref{thm eq: 2. metric entropy convergence}.
\end{proof}

\begin{thm} [Precompactness] \label{thm: precompact in the local GH top}
  A non-empty subset $\{\cX_\alpha = (X_\alpha, \rho_\alpha) \mid \alpha \in \mathscr{A}\}$ of $\rootedBCM$
  is precompact in the local Gromov--Hausdorff topology 
  if and only if the following condition is satisfied.
  \begin{enumerate} [label= \textup{(\roman*)}]
    \item \label{3. thm item: a uniform bound for metric entropies}
      For every $r>0$ and $\varepsilon > 0$,
      it holds that $\sup_{\alpha} N(X_\alpha|_{\rho_\alpha}^{(r)}, \varepsilon) < \infty$.
  \end{enumerate}
\end{thm}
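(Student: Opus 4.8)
The plan is to establish both implications by reducing precompactness in the local Gromov--Hausdorff topology to precompactness of the closed-ball truncations $\cX_\alpha^{(r)}$ in the \emph{pointed} Gromov--Hausdorff topology, and then to invoke Gromov's classical compactness theorem for compact metric spaces of bounded diameter (\cite[Theorem~7.4.15]{Burago_Burago_Ivanov_01_A_course}), whose hypothesis for a family of spaces all of diameter at most $2r$ is precisely a uniform bound on the covering numbers $N(\cdot, \varepsilon)$. Throughout, $X_\alpha|_{\rho_\alpha}^{(r)} = X_\alpha^{(r)}$ is the closed $r$-ball about the root. The bridge between the two topologies is supplied by the convergence characterizations of Theorem~\ref{thm: local GH convergence} (items \ref{thm item: 2. local GH convergence} and \ref{thm item: 4. local GH convergence}) together with the Fell-topology results of Section~\ref{sec: the Fell topology}; conceptually the argument mirrors Theorem~\ref{thm: precompactness in d_D} and Lemma~\ref{lem: existence of limit}, with the fixed ambient space $\frakD(X)$ there replaced by the ``moving target'' $\rootedBCM$.

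\emph{Necessity.} Suppose the family is precompact but, aiming at a contradiction, $\sup_\alpha N(X_\alpha^{(r)}, \varepsilon) = \infty$ for some $r, \varepsilon > 0$; choose $\alpha_n$ with $N(X_{\alpha_n}^{(r)}, \varepsilon) \ge n$. Passing to a subsequence, $\cX_{\alpha_n} \to \cX_\infty = (X_\infty, \rho_\infty)$ in the local Gromov--Hausdorff topology, and by Theorem~\ref{thm: local GH convergence}\ref{thm item: 2. local GH convergence} this is realized by a rooted $\bcmAB$ space $(M, \rho_M)$ and root-preserving isometric embeddings $f_n \colon X_{\alpha_n} \to M$, $f_\infty \colon X_\infty \to M$ with $f_n(X_{\alpha_n}) \to f_\infty(X_\infty)$ in the Fell topology. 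Since $f_n$ is a root-preserving isometric embedding, $f_n(X_{\alpha_n}^{(s)}) = f_n(X_{\alpha_n}) \cap D_M(\rho_M, s)$ for every $s$, so by Theorem~\ref{thm: convergence in the Fell topology} I may fix $s \ge r$ with $f_n(X_{\alpha_n}^{(s)}) \to f_\infty(X_\infty^{(s)})$ in the Hausdorff metric on $\Compact{M}$. Hausdorff convergence to the compact set $L \coloneqq f_\infty(X_\infty^{(s)})$ places $f_n(X_{\alpha_n}^{(s)})$ inside the closed $1$-neighbourhood $L^{1}$ for all large $n$, and $L^{1}$ is compact, being a closed subset of the compact ball $D_M(\rho_M, s+1)$. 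Hence, using $X_{\alpha_n}^{(r)} \subseteq X_{\alpha_n}^{(s)}$ and the standard bound $N(A, \varepsilon) \le N(K, \varepsilon/2)$ for $A \subseteq K$, we obtain $N(X_{\alpha_n}^{(r)}, \varepsilon) = N(f_n(X_{\alpha_n}^{(r)}), \varepsilon) \le N(L^{1}, \varepsilon/2) < \infty$ for all large $n$; as the finitely many remaining terms are finite (those spaces being compact), this contradicts $N(X_{\alpha_n}^{(r)}, \varepsilon) \ge n$. (One may alternatively derive this from Theorem~\ref{thm: local GH convergence}\ref{thm item: 4. local GH convergence} and Theorem~\ref{thm: metric entropy convergence}.)

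\emph{Sufficiency.} Assume \ref{3. thm item: a uniform bound for metric entropies} and fix a sequence $(\cX_{\alpha_n})_n$; it suffices to extract a subsequence convergent in the local Gromov--Hausdorff topology. For each $k \in \NN$, the spaces $X_{\alpha_n}^{(k)}$ are compact of diameter at most $2k$ with $\sup_n N(X_{\alpha_n}^{(k)}, \varepsilon) < \infty$ for all $\varepsilon$, hence $\{X_{\alpha_n}^{(k)}\}_n$ is precompact in the Gromov--Hausdorff topology by \cite[Theorem~7.4.15]{Burago_Burago_Ivanov_01_A_course}; embedding a convergent sub-collection into a common compact space and extracting a limit of the images of the roots — which lands in the limit set since those images belong to the Hausdorff-converging sets — shows that $\{\cX_{\alpha_n}^{(k)}\}_n$ is precompact in the pointed Gromov--Hausdorff topology. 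A diagonal argument then yields a subsequence, still written $(\cX_{\alpha_n})_n$, along which $\cX_{\alpha_n}^{(k)}$ converges in the pointed Gromov--Hausdorff topology for every $k$. Exactly as in the proof of Lemma~\ref{lem: existence of limit}, one passes to a sequence of ``good'' radii $s_k \uparrow \infty$ at which truncation is continuous, so that the pointed limits $\cZ_k$ of $\cX_{\alpha_n}^{(s_k)}$ form a compatible family, $\cZ_k = \cZ_{k'}^{(s_k)}$ for $k \le k'$, and glues them into a single rooted $\bcmAB$ space $\cX_\infty \in \rootedBCM$ with $\cX_\infty^{(s_k)} = \cZ_k$ (its underlying set is the union of the nested compact spaces $\cZ_k$ with their compatible metrics, which is boundedly compact because any bounded subset lies in some compact $\cZ_k$). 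By Theorem~\ref{thm: local GH convergence}\ref{thm item: 4. local GH convergence} we get $\cX_{\alpha_n} \to \cX_\infty$, so the family is precompact.

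\emph{The main obstacle} is the gluing step in the sufficiency direction. Truncation to a closed ball is continuous in the pointed Gromov--Hausdorff topology only for all but countably many radii, so one cannot take $s_k = k$; the radii must be interleaved and perturbed to continuity values, and one must verify that the compact pointed limits genuinely nest as metric balls inside one another — that is, that the restriction system on $\rootedBCM$ is complete — before they can be assembled into a boundedly compact limit. This is the ``moving-ambient-space'' analogue of Lemma~\ref{lem: existence of limit} and of the completeness assertion in Proposition~\ref{prop: RS for local Hausdorff}; everything else reduces to routine bookkeeping around Gromov's compactness theorem and the Fell-topology results of Section~\ref{sec: the Fell topology}.
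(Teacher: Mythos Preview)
Your proposal is correct. For necessity, you give a direct covering argument (embedding into a common $M$ and bounding $N$ via compactness of a closed neighbourhood of the Hausdorff limit), and you rightly note that one could instead go via Theorem~\ref{thm: local GH convergence}\ref{thm item: 4. local GH convergence} and Theorem~\ref{thm: metric entropy convergence} --- that alternative is precisely the paper's route. For sufficiency, the approaches diverge more substantially: the paper sidesteps the inverse-limit construction entirely by equipping each $\cX_\alpha$ with the zero measure and invoking the already-established precompactness criterion for the local Gromov--Hausdorff-vague topology (\cite[Theorem~2.6]{Abraham_Delmas_Hoscheit_13_A_note} and \cite[Theorem~3.28]{Khezeli_20_Metrization}), then forgetting the measure via Corollary~\ref{cor: projection continuity in RF}. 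Your argument is more self-contained --- you redo the Gromov compactness and gluing that underlie those cited results --- which makes the proof independent of the GHV machinery at the cost of having to carry out the ``moving-ambient'' analogue of Lemma~\ref{lem: existence of limit} by hand; the paper buys brevity by outsourcing that step to the literature.
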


\begin{proof}
  Suppose that $\{\cX_\alpha \mid \alpha \in \mathscr{A}\}$ is precompact.
  Assume that \ref{3. thm item: a uniform bound for metric entropies} does not hold.
  It is then the case that, for some $r>0$ and $\varepsilon >0$,
  we can find a sequence $(\alpha_{n})_{n \geq 1}$ 
  satisfying $N(X_{\alpha_{n}}^{(r)}, \varepsilon) \to \infty$.
  We choose a subsequence $(n_{k})_{k \geq 1}$ such that $(\cX_{\alpha_{n_{k}}})_{k \geq 1}$ 
  converges to some $\cX=(X, \rho_X) \in \rootedBCM$.
  It then follows from Theorem~\ref{thm: metric entropy convergence} that 
  $\lim_{k \to \infty} N(X_{\alpha_{n_{k}}}^{(r)}, \varepsilon') = N(X^{(r)}, \varepsilon') < \infty$
  for some $\varepsilon' < \varepsilon$.
  Since $N(X_{\alpha_{n_{k}}}^{(r)}, \varepsilon) \leq N(X_{\alpha_{n_{k}}}^{(r)}, \varepsilon')$,
  we obtain that 
  $\limsup_{k \to \infty} N(X_{\alpha_{n_{k}}}^{(r)}, \varepsilon) \leq N(X^{(r)}, \varepsilon') < \infty$,
  which is a contradiction.
  Therefore, \ref{3. thm item: a uniform bound for metric entropies} holds.

  Conversely,
  suppose that \ref{3. thm item: a uniform bound for metric entropies} is satisfied.
  Fix a sequence $(\alpha_{n})_{n \geq 1}$ in $\mathscr{A}$.
  For each $\cX_{\alpha_{n}}$,
  we define a rooted-and-measured $\bcmAB$ space $\cX'_{\alpha_{n}}$
  by equipping $\cX_{\alpha_{n}}$ with the zero measure.
  Then, by \cite[Theorem 2.6]{Abraham_Delmas_Hoscheit_13_A_note} and \cite[Theorem 3.28]{Khezeli_20_Metrization},
  the sequence $(\cX'_{\alpha_{n}})_{n \geq 1}$ has a subsequence $(\cX'_{\alpha_{n_{k}}})_{k \geq 1}$
  convergent in the local Gromov--Hausdorff-vague topology.
  This implies that $(\cX_{\alpha_{n_{k}}})_{k \geq 1}$ converges in the local Gromov--Hausdorff topology
  (cf.\ Corollary \ref{cor: projection continuity in RF} and Proposition~\ref{prop: recovering the local GHV top} below).
\end{proof}

\section{Preliminaries on category theory} \label{sec: Preliminaries on category theory}
  
The idea of using functors to provide a unified framework for the metrization of Gromov--Hausdorff-type topologies  
was introduced by Khezeli \cite{Khezeli_23_A_unified}.  
We begin by briefly explaining why category theory naturally arises in the study of Gromov--Hausdorff-type topologies.  
We then introduce several notions from category theory that are used in our framework.  
We emphasize that no prior knowledge of category theory is assumed of the reader.

Our goal is to extend the local Gromov--Hausdorff topology, 
defined in the preceding section, 
to topologies on collections of $\bcmAB$ spaces equipped with additional elements such as measures.  
To this end, we need a rule $\tau$ that defines additional structures on each $\bcmAB$ space:  
\begin{enumerate}[label = \textup{(\arabic*)}, series = functor, leftmargin = *]
\item \label{item: functor 1}
  each $\bcmAB$ space $X$ is assigned a metrizable topological space $\tau(X)$.
\end{enumerate}
We then define the collection $\rootedBCM(\tau)$ as the set of (equivalence classes of) $(X, \rho_X, a_X)$ 
such that $\cX = (X, \rho_X) \in \rootedBCM$ and $a_X \in \tau(X)$  
(see Proposition~\ref{prop: existence of rootedBCM with additional elements} below).  
Our aim is to endow $\rootedBCM(\tau)$ with a metric  
that characterizes convergence analogously to Theorem~\ref{thm: convergence in the Fell topology}:  
namely, convergence in $\rootedBCM(\tau)$ means the existence of isometric embeddings of the underlying spaces  
into a common metric space, under which the embedded elements converge.  
To give meaning to such embedded elements,  
we require that $\tau$ provide embeddings of additional structures  
whenever isometric embeddings of the underlying spaces are given:  
\begin{enumerate}[resume* = functor]
\item \label{item: functor 2}
  for every isometric embedding $f \colon X \to Y$ between $\bcmAB$ spaces $X$ and $Y$,  
  there exists a corresponding topological embedding $\tau_f \colon \tau(X) \to \tau(Y)$.
\end{enumerate}
Moreover, it is natural to assume the following properties:  
\begin{enumerate}[resume* = functor]
\item \label{item: functor 3}
  for each $\bcmAB$ space $X$, $\tau_{\id_X} = \id_{\tau(X)}$;
\item \label{item: functor 4}
  for any isometric embeddings $f \colon X \to Y$ and $g \colon Y \to Z$,  
  $\tau_{g \circ f} = \tau_g \circ \tau_f$.
\end{enumerate}
These properties abstract the essence of $\tau$ into the notion of a functor.  
By employing category theory,  
we can interpret $\tau$ as a functor from the category of $\bcmAB$ spaces to the category of metrizable topological spaces.  
Thus, the language of categories and functors provides a unified framework in which to formulate our setting.

In what follows, we define category and functor.

\begin{dfn}[Category]
A \emph{category} $\CatC$ consists of the following data:
\begin{itemize}
  \item a class $\ob(\CatC)$ of objects,
  \item for every pair of objects $X, Y \in \ob(\CatC)$, a (possibly empty) set $\Hom_{\CatC}(X, Y)$ of morphisms from $X$ to $Y$,
  \item for every triple $X, Y, Z \in \ob(\CatC)$, a composition 
  \begin{equation}
    \circ \colon \Hom_{\CatC}(Y, Z) \times \Hom_{\CatC}(X, Y) \to \Hom_{\CatC}(X, Z),
  \end{equation}
  \item for each object $X \in \ob(\CatC)$, 
    a distinguished morphism $\id_X \in \Hom_{\CatC}(X, X)$ called the identity morphism,
\end{itemize}
such that the following axioms hold.
\begin{enumerate}
  \item For any morphisms $f \in \Hom_{\CatC}(X, Y)$, $g \in \Hom_{\CatC}(Y, Z)$, and $h \in \Hom_{\CatC}(Z, W)$,  
  $h \circ (g \circ f) = (h \circ g) \circ f$.
  \item For any morphism $f \in \Hom_{\CatC}(X, Y)$, $\id_Y \circ f = f = f \circ \id_X$.
\end{enumerate}
\end{dfn}

The categories we will consider are as follows:  
\begin{enumerate}[label = (Cat\arabic*), leftmargin = *]
  \item \label{item: rBCM, category}
    $\rBCMcat$ denotes the category whose objects are rooted $\bcmAB$ spaces  
    and whose morphisms are root-preserving isometric embeddings;
  \item \label{item: BCM, category}
    $\BCMcat$ denotes the category whose objects are $\bcmAB$ spaces and whose morphisms are isometric embeddings;
  \item \label{item: Met, category}
    $\Metcat$ denotes the category whose objects are non-empty metric spaces and whose morphisms are isometric embeddings;
  \item \label{item: Top, category}
    $\MTopcat$ denotes the category whose objects are metrizable topological spaces and whose morphisms are topological embeddings.
\end{enumerate}
In all the categories above, the composition of morphisms is given by the usual composition of maps,  
and the identity morphism for each object is the usual identity map.

\begin{dfn}[Functor] \label{dfn: functor}
Let $\CatC$ and $\CatD$ be categories.
A \emph{functor} $\tau \colon \CatC \to \CatD$ is a mapping that
\begin{itemize}
  \item associates each object $X \in \ob(\CatC)$ to an object $\tau(X) \in \ob(\CatD)$,
  \item associates each morphism $f \in \Hom_{\CatC}(X, Y)$ to a morphism $\tau_f \in \Hom_{\CatD}(\tau(X), \tau(Y))$,
\end{itemize}
such that the following conditions hold.
\begin{enumerate}
  \item For every object $X \in \CatC$, $\tau_{\id_X} = \id_{\tau(X)}$.
  \item For any morphisms $f \in \Hom_{\CatC}(X, Y)$ and $g \in \Hom_{\CatC}(Y, Z)$, $\tau_{g \circ f} = \tau_g \circ \tau_f$.
\end{enumerate}
\end{dfn}

\begin{exm} \label{exm: identity functor}
  For any category $\CatC$, there exists a functor $\IdFunct_\CatC \colon \CatC \to \CatC$,  
  called the \emph{identity functor},  
  which is defined by $\IdFunct_\CatC(X) \coloneqq X$ for any $X \in \ob(\CatC)$ and $(\IdFunct_\CatC)_f \coloneqq f$ for any $f \in \Hom_{\CatC}(X, Y)$.
\end{exm}

\begin{exm} \label{exm: forgetful and inclusion}
  There exist natural functors between the categories introduced in \ref{item: rBCM, category}--\ref{item: Top, category} above.
  For example, by regarding a metric space as a topological space
  and each isometric embedding as a topological embedding,
  we obtain a natural functor from $\Metcat$ to $\MTopcat$.
  This functor simply forgets the metric structure and extracts only the underlying topological structure.
  Such a functor, which is defined by forgetting part of the information in a category,
  is called a \emph{forgetful functor}.
  As $\bcmAB$ spaces are, in particular, metric spaces,
  there is also a natural functor from $\BCMcat$ to $\Metcat$, referred to as the \emph{inclusion functor}.
  In summary, we obtain the following relation between these categories via functors.
  \begin{equation}
    \begin{tikzcd}
      \rBCMcat \arrow[r, "\ForgetRoot"] & \BCMcat \arrow[r, "\BCInclusion"] & \Metcat \arrow[r, "\ForgetMetric"] & \MTopcat 
    \end{tikzcd}
  \end{equation}
  \begin{enumerate} [label = \textup{($\Gamma_{\arabic*}$)}, leftmargin = *]
    \item The functor $\ForgetRoot$ is the forgetful functor that forgets the root information.
    \item The functor $\BCInclusion$ is the natural inclusion functor.
    \item The functor $\ForgetMetric$ is the forgetful functor that forgets the metrics while retaining the underlying topology.
  \end{enumerate}
  In particular, given any two categories $\CatC$ and $\CatD$ appearing above, there exists a natural functor from $\CatC$ to $\CatD$ (or, from $\CatD$ to $\CatC$),
  and we write $\Gamma_{\CatC \to \CatD}$ (or, $\Gamma_{\CatD \to \CatC}$) for it.
  For example, we write $\Gamma_{\BCMcat \to \MTopcat}$ for $\Gamma_3 \circ \Gamma_2$.
\end{exm}

\begin{exm} \label{exm: measure functor}
  As a more concrete example, we define a functor $\MeasFunct$ as follows.
  \begin{itemize}
    \item 
      For each $X \in \ob(\BCM)$, 
      define $\MeasFunct(X) \coloneqq \Meas{X}$ equipped with the vague topology. 
    \item 
      For each $f \in \Hom_{\BCMcat}(X, Y)$,
      define $(\MeasFunct)_f \coloneqq f_*$, i.e., the pushforward map given by $f$
      (recall it from \eqref{eq: pushforward map}).
  \end{itemize}
  Then the set $\rootedBCM(\MeasFunct)$ is the set of (equivalence classes of) rooted boundedly-compact metric spaces equipped with Radon measures.
  In Section~\ref{sec: the functor for measures} below,
  using our main results,
  we recover the local Gromov--Hausdorff-vague topology on $\rootedBCM(\MeasFunct)$.
\end{exm}

Similarly to maps, we can consider composition of functors.

\begin{dfn}[Composition of functors] \label{dfn: composition of functors}
Let $\CatC, \CatD, \CatE$ be categories.
Suppose $\tau \colon \CatC \to \CatD$ and $\sigma \colon \CatD \to \CatE$ are functors.
Then the \emph{composition} $\sigma \circ \tau \colon \CatC \to \CatE$ is the functor defined as follows:
\begin{itemize}
  \item For each object $X \in \CatC$, $(\sigma \circ \tau)(X) \coloneqq \sigma(\tau(X))$;
  \item For each morphism $f \in \Hom_{\CatC}(X, Y)$, $(\sigma \circ \tau)_f \coloneqq \sigma_{\tau_f}$.
\end{itemize}
\end{dfn}

Below, we introduce natural transformations, a fundamental notion in category theory. 
They provide a way to compare functors in a coherent and structure-preserving manner.

\begin{dfn}[Natural transformation]
Let $\tau$ and $\sigma$ be functors between categories $\CatC$ and $\CatD$.
A \emph{natural transformation} $\eta \colon \tau \Rightarrow \sigma$ is a collection $\{\eta_X\}_{X \in \ob(\CatC)}$ satisfying the following:
\begin{enumerate} [label = \textup{(NT\arabic*)}, leftmargin = *]
  \item \label{dfn item: 1. NT}
    $\eta_X \in \Hom_{\CatD}(\tau(X), \sigma(X))$ for each $X \in \ob(\CatC)$;
  \item \label{dfn item: 2. NT}
    for any $X, Y \in \ob(\CatC)$ and $f \in \Hom_{\CatC}(X, Y)$, $\eta_Y \circ \tau_f = \sigma_f \circ \eta_X$,
    that is, the following diagram commutes.
    \begin{equation}
      \begin{tikzcd}
        \tau(X) \arrow[r, "\tau_f"] \arrow[d, "\eta_X"] & \tau(Y) \arrow[d, "\eta_Y"]\\
        \sigma(X) \arrow[r, "\sigma_f"]                 & \sigma(Y)       
      \end{tikzcd}
    \end{equation}
\end{enumerate}
\end{dfn}

Using category theoretic notions,
we can study Gromov--Hausdorff-type topologies in a unified manner.
Below, we introduce further notions that are used in our main results in Section~\ref{sec: main results}.
Note that the notions given below are not common in category theory, and they are only used in this paper.

The completeness and separability defined below will play an important role to prove those for the Gromov--Hausdorff-type topologies.

\begin{dfn} [Completeness/Separability] \label{dfn: completeness and separability of functor}
  Let $\CatC$ be a category.
  \begin{enumerate} [label = \textup{(\roman*)}, leftmargin = *]
    \item We say that a functor $\tau \colon \CatC \to \Metcat$ is complete
      if and only if, for each $X \in \ob(\CatC)$, the metric on $\tau(X)$ is complete.
    \item We say that a functor $\tau \colon \CatC \to \MTopcat$ is separable
      if and only if, for each $X \in \ob(\CatC)$, the topology on $\tau(X)$ is separable.
  \end{enumerate}
\end{dfn}

Subfunctors, defined below, will be used to describe and study subsets of Gromov--Hausdorff-type spaces.

\begin{dfn}[Subfunctors] \label{dfn: subfunctors}
  Let $\CatC$ be a category 
  and $\tau \colon \CatC \to \MTopcat$ a functor.
  A functor $\sigma \colon \CatC \to \MTopcat$ is called a \emph{subfunctor} of $\tau$
  if the following conditions are satisfied.
  \begin{enumerate}[label = (Sub\arabic*), leftmargin = *]
    \item \label{dfn item: 1, subfunctors}
      For each $X \in \ob(\CatC)$, $\sigma(X)$ is a topological subspace of $\tau(X)$;
      that is, $\sigma(X) \subseteq \tau(X)$ and the topology on $\sigma(X)$ coincides with the relative topology.
    \item \label{dfn item: 2, subfunctors}
      For every $f \in \Hom_{\CatC}(X, Y)$,
      one has $\sigma_f = \tau_f|_{\sigma(X)}$;
      in other words, the morphisms assigned by $\sigma$ are the restrictions of those assigned by $\tau$.
  \end{enumerate}
  We say that a subfunctor $\sigma$ is \emph{precompact} (resp.\ \emph{open, closed})
  if $\sigma(X)$ is precompact (resp.\ open, closed) in $\tau(X)$ for each $X \in \ob(\CatC)$.
  Furthermore, we say that $\sigma$ is \emph{pullback-stable} if,
  for any $f \in \Hom_{\CatC}(X, Y)$,
  it holds that 
  \begin{equation}
    \tau_f^{-1} \bigl(\sigma(Y)\bigr) = \tau(X).
  \end{equation}
  (NB.\ The inclusion $\supseteq$ always holds by \ref{dfn item: 2, subfunctors}.)
\end{dfn}

\begin{rem} \label{rem: define subfunctor}
  To define a subfunctor $\sigma$ of $\tau$,
  it suffices to specify, for each $X \in \ob(\CatC)$,
  a subset $\sigma(X) \subseteq \tau(X)$
  that is preserved under morphisms assigned by $\tau$;
  that is, for any $f \in \Hom_{\CatC}(X, Y)$,
  one has $\tau_f(\sigma(X)) \subseteq \sigma(Y)$.
  Then, by equipping each $\sigma(X)$ with the relative topology 
  and setting $\sigma_f \coloneqq \tau_f|_{\sigma(X)}$,
  one obtains a well-defined subfunctor $\sigma$ of $\tau$.
\end{rem}

Topological subfunctors, defined below, generalize subfunctors
and allow us to topologically embed Gromov--Hausdorff-type spaces into larger ones.

\begin{dfn}[Topological subfunctor] \label{dfn: topological subfunctor}
  Let $\CatC$ be a category 
  and $\tau \colon \CatC \to \MTopcat$ a functor.
  A functor $\sigma \colon \CatC \to \MTopcat$ is called a \emph{topological subfunctor} of $\tau$  
  if there exists a natural transformation $\eta \colon \sigma \Rightarrow \tau$.
  We call $\eta$ a \emph{topological embedding} of $\sigma$ into $\tau$,
  and say that $\sigma$ is \emph{topologically embedded} into $\tau$.
\end{dfn}

Using product functors defined below,
we can easily consider multiple additional structures simultaneously.
Details are discussed in Section~\ref{sec: product structures}.

\begin{dfn} [Product functor] \label{dfn: product functor}
  Let $\CatC$ be a category 
  and $(\tau^{(n)})_{n=1}^N$ a sequence of functors from $\CatC$ to $\MTopcat$.
  We define the product functor $\tau = \prod_{n=1}^N \tau^{(n)}$ as follows:
  \begin{enumerate}
    \item For each $X \in \ob(\CatC)$, define $\tau(X) \coloneqq \prod_{n=1}^N \tau^{(n)}(X)$ equipped with the product topology;
    \item For each $f \in \Hom_{\CatC}(X, Y)$, define $\tau_f \coloneqq \prod_{n=1}^N \tau^{(n)}_f$, i.e.,
      \begin{equation}
        \tau_f((a_n)_{n=1}^N) \coloneqq ( \tau^{(n)}_f(a_n) )_{n=1}^N.
      \end{equation}
  \end{enumerate}
\end{dfn}


\section{Main results}  \label{sec: main results}

In this section, we present the framework for the metrization of Gromov--Hausdorff-type topologies, which is the main result of this work.
Our framework naturally extends the framework introduced by Khezeli \cite{Khezeli_23_A_unified} for compact metric spaces 
to boundedly-compact metric spaces.
The assumptions required by our framework are the same as those required by his framework in the compact case.
In particular, it removes the truncation operation and 
the technical assumptions \cite[Assumptions~3.10 and 3.11]{Khezeli_23_A_unified}, 
which were required in Khezeli’s framework for the boundedly-compact case.
This enables us to consider Gromov--Hausdorff-type topologies for a broader class of additional structures,
as we see in Section~\ref{sec: Examples of functors}.
Before going into the details, we give an overview of our framework.

\begin{dfn}[Structure]
  We refer to a functor $\tau \colon \BCMcat \to \MTopcat$ as a \emph{structure}.
\end{dfn}

Throughout this section, we fix a structure $\tau$.
We begin by introducing the space $\rootedBCM(\tau)$ of interest.
For $\cX = (X, \rho_X, a_X)$ and $\cY = (Y, \rho_Y, a_Y)$ such that $(X, \rho_X)$ and $(Y, \rho_Y)$ are rooted $\bcmAB$ spaces,
$a_X \in \tau(X)$ and $a_Y \in \tau(Y)$,
we say that $\cX$ and $\cY$ are \emph{rooted-$\tau$-isometric}
if and only if there exists a root-preserving isometry $f \colon X \to Y$ with $\tau_f(a_X) = a_Y$.
The following result can be shown in a similar way to Proposition~\ref{prop: existence of rootedBCM}.

\begin{prop}  \label{prop: existence of rootedBCM with additional elements}
There exists a set $\rootedBCM(\tau)$ satisfying the following properties.
\begin{enumerate} [label = \textup{(\roman*)}, leftmargin = *]
  \item 
    The set $\rootedBCM(\tau)$ consists of all triples $(X, \rho_X, a_X)$ such that $(X, \rho_X)$ is a rooted $\bcmAB$ space and $a_X \in \tau(X)$.
  \item 
    For any $\cY = (Y, \rho_Y, a_Y)$ such that $(Y, \rho_Y)$ is a rooted $\bcmAB$ space and $a_Y \in \tau(Y)$,
    there exists a unique element $\cX \in \rootedBCM(\tau)$ that is rooted-$\tau$-isometric to $\cY$.
\end{enumerate}
\end{prop}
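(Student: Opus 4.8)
The plan is to mimic the proof of Proposition~\ref{prop: existence of rootedBCM}, which itself reduced to Proposition~\ref{prop: existence of BCM} by choosing concrete representatives inside a genuine set. The only new ingredient here is the presence of the additional element $a_X \in \tau(X)$, and the key observation is that once a representative underlying space is fixed, the collection of possible additional elements $\tau(X)$ \emph{is} already a legitimate set (indeed a topological space), so no further set-theoretic sleight of hand is needed for that coordinate.

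First I would invoke Proposition~\ref{prop: existence of rootedBCM} to obtain the set $\rootedBCM$ of representative rooted $\bcmAB$ spaces, with the property that every rooted $\bcmAB$ space is rooted-isometric to a unique element of $\rootedBCM$. Then I would form the collection
\begin{equation}
  \mathscr{M}_\bullet(\tau)
  \coloneqq
  \left\{ (X, \rho_X, a) \mid (X, \rho_X) \in \rootedBCM,\ a \in \tau(X) \right\},
\end{equation}
which is a legitimate set: it is a subset of $\bigcup_{(X,\rho_X) \in \rootedBCM} \{(X,\rho_X)\} \times \tau(X)$, a union indexed by the set $\rootedBCM$ of sets $\tau(X)$. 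On $\mathscr{M}_\bullet(\tau)$ one has the equivalence relation of being rooted-$\tau$-isometric (reflexivity, symmetry, transitivity being immediate from the functor axioms $\tau_{\id_X} = \id_{\tau(X)}$ and $\tau_{g \circ f} = \tau_g \circ \tau_f$, together with the fact that the inverse of a root-preserving isometry $f$ satisfies $\tau_{f^{-1}} = (\tau_f)^{-1}$). Since $\mathscr{M}_\bullet(\tau)$ is a set, I may choose one representative from each equivalence class (using the axiom of choice as in the earlier propositions) and call the resulting set $\rootedBCM(\tau)$. This is the first asserted property.

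For the second property, let $\cY = (Y, \rho_Y, a_Y)$ be arbitrary with $(Y,\rho_Y)$ a rooted $\bcmAB$ space and $a_Y \in \tau(Y)$. By Proposition~\ref{prop: existence of rootedBCM} there is a root-preserving isometry $h \colon Y \to X_0$ for a unique $(X_0, \rho_{X_0}) \in \rootedBCM$; set $a_0 \coloneqq \tau_h(a_Y) \in \tau(X_0)$, so that $(X_0, \rho_{X_0}, a_0) \in \mathscr{M}_\bullet(\tau)$ is rooted-$\tau$-isometric to $\cY$ via $h$. Its equivalence class has a representative $\cX \in \rootedBCM(\tau)$, and $\cX$ is rooted-$\tau$-isometric to $\cY$ by transitivity. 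Uniqueness follows because $\rootedBCM(\tau)$ contains exactly one representative per class: if $\cX, \cX' \in \rootedBCM(\tau)$ were both rooted-$\tau$-isometric to $\cY$, they would be rooted-$\tau$-isometric to each other, hence equal.

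I do not anticipate a serious obstacle here; the proof is essentially a transcription of the argument for Proposition~\ref{prop: existence of rootedBCM}. The only point deserving a moment's care is verifying that ``rooted-$\tau$-isometric'' is genuinely an equivalence relation on $\mathscr{M}_\bullet(\tau)$ — in particular that $\tau_{f^{-1}} = (\tau_f)^{-1}$ when $f$ is a root-preserving isometry, which is where one uses that a bijective isometry has an inverse morphism in $\BCMcat$ and that $\tau$ respects composition and identities. Everything else is bookkeeping, and (as the statement of the proposition indicates) the details can reasonably be left to the reader by analogy with the earlier proof.
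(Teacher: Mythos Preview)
Your proposal is correct and matches the paper's approach: the paper does not give a proof at all, merely stating before the proposition that it ``can be shown in a similar way to Proposition~\ref{prop: existence of rootedBCM},'' and your argument is precisely that similar way carried out in detail.
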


\noindent
Henceforth, we fix a representative set $\rootedBCM(\tau)$ as guaranteed by the above proposition.
As before, 
given $\cX = (X, \rho_X, a_X)$ such that $(X, \rho_X)$ is a rooted $\bcmAB$ space and $a_X \in \tau(X)$, 
when necessary,
we identify it with the unique element in $\rootedBCM(\tau)$ that is rooted-$\tau$-isometric to $\cX$.

As a generalization of Theorem~\ref{thm: local GH convergence}\ref{thm item: 2. local GH convergence} and \ref{thm item: 3. local GH convergence}, respectively,
two types of convergence in $\rootedBCM(\tau)$ naturally arise as follows.
In the definitions below, we fix elements $\cX_n = (X_n, \rho_{X_n}, a_{X_n}) \in \rootedBCM(\tau)$, $n \in \NN \cup \{\infty\}$.

\begin{dfn} \label{dfn: RF convergence}
    We say that $\cX_n$ converge to $\cX_\infty$ in the \emph{local GH-type topology with preserved roots}
    if and only if there exist a rooted $\bcmAB$ space $(M, \rho_M)$
    and root-preserving isometric embeddings $f_n \colon X_n \to M$, $n \in \NN \cup \{\infty\}$,
    such that $f_n(X_n) \to f_\infty(X_\infty)$ in the Fell topology as subsets of $M$,
    and $\tau_{f_n}(a_{X_n}) \to \tau_{f_\infty}(a_{X_\infty})$ in $\tau(M)$.
\end{dfn}

\begin{dfn} \label{dfn: RV convergence}
  We say that $\cX_n$ converge to $\cX_\infty$ in the \emph{local GH-type topology with non-preserved roots}
  if and only if there exist a $\bcmAB$ space $M$
  and isometric embeddings $f_n \colon X_n \to M$, $n \in \NN \cup \{\infty\}$,
  such that $f_n(X_n) \to f_\infty(X_\infty)$ in the Fell topology as subsets of $M$,
  $f_n(\rho_{X_n}) \to f_\infty(\rho_{X_\infty})$ in $M$,
  and $\tau_{f_n}(a_{X_n}) \to \tau_{f_\infty}(a_{X_\infty})$ in $\tau(M)$.
\end{dfn}

\noindent
The difference between these two types of convergence lies in the treatment of roots.
In the local GH-type topology with preserved roots,
the roots $\rho_{X_n}$ are mapped to the root $\rho_M$ of the ambient space $M$,
whereas in the local GH-type topology with non-preserved roots, the roots are allowed to move in $M$.

In Section~\ref{sec: Metrization of structures} below,  
we introduce the concept of metrization of structures.  
Based on this, in Section~\ref{sec: Metrization of RF and RV},  
we define two metrics that induce the above-defined two types of convergence, respectively.
In Section~\ref{sec: Coincidence of the two topologies},
we provide a sufficient condition on $\tau$ under which the two topologies coincide.
Topological properties such as Polishness are investigated in Section~\ref{sec: topological properties}.


\subsection{Metrization of structures} \label{sec: Metrization of structures}

For the metrization of $\rootedBCM(\tau)$,
we assume that, for each $\bcmAB$ space $X$, $\tau(X)$ is equipped with a metric 
that is consistent with the associated embeddings given by $\tau$.  
This requirement is precisely described in this subsection as metrization of structures.
For the following discussions,  
recall the forgetful functors and the inclusion functor introduced in Example~\ref{exm: forgetful and inclusion}.

\begin{dfn}[Metrization] \label{dfn: metrization of structures}
  The structure $\tau$ is said to be \emph{metrizable}
  if there exists a functor $\FMet{\tau} \colon \BCMcat \to \Metcat$ such that the following diagram commutes:
  \begin{equation}  \label{dfn eq: metrization of structures}
    \begin{tikzcd}
                                                       & \Metcat \arrow[d]  \\
      \BCMcat \arrow[r, "\tau"] \arrow[ru, "\FMet{\tau}"] & \MTopcat  
    \end{tikzcd}
  \end{equation}
  Here, the vertical arrow denotes the forgetful functor.
  For each $X \in \ob(\BCM)$,  
  we denote by $d^{\FMet{\tau}}_X$ the metric on $\tau(X)$ assigned by $\FMet{\tau}$,  
  or write $d_{\tau(X)}$ when the metrization $\FMet{\tau}$ is clear from the context.
\end{dfn}

\begin{rem} \label{rem: construction of metrization}
  Note that the diagram \eqref{dfn eq: metrization of structures} already characterize the mapping of $\FMet{\tau}$ 
  between objects and morphisms except for metric structure,
  that is, it must hold that 
  \begin{gather}
    \FMet{\tau}(X) = \tau(X),\quad \text{as topological spaces for each $X \in \ob(\BCMcat)$},
    \\
    \FMet{\tau}_f = \tau_f,\quad \text{for each}\ f \in \Hom_{\BCMcat}(X, Y). 
  \end{gather}
  Thus, to construct a metrization of $\tau$,  
  it suffices to define a metric $d^{\FMet{\tau}}_X$ on $\tau(X)$ for each $\bcmAB$ space $X$
  satisfying the following conditions.
  \begin{enumerate} [label = \textup{(\roman*)}, leftmargin = *]
    \item \label{rem item: 1. construction of metrization}
      For each $X \in \ob(\BCMcat)$, the topology on $\tau(X)$ induced by $d^{\FMet{\tau}}_X$ coincides with the given topology.
    \item \label{rem item: 2. construction of metrization}
      For each $f \in \Hom_{\BCMcat}(X,Y)$, 
      the map $\tau_f \colon \tau(X) \to \tau(Y)$ is distance-preserving.
  \end{enumerate}
\end{rem}

The metrization of certain topologies, such as the vague topology, requires spaces to be rooted.  
Accordingly, for functors involving such topologies,  
we equip each underlying space $X$ with a root $\rho_X$  
in order to obtain a metric on $\tau(X)$.  
This corresponds to considering a metrization of the following composition:
\begin{equation}
  \FS{\tau} \coloneqq \tau \circ \Gamma_{\rBCMcat \to \BCMcat},
\end{equation}
where, as recalled from Definition~\ref{dfn: composition of functors},  
\begin{itemize}
  \item for each $(X, \rho_X) \in \ob(\BCMcat)$, $\FS{\tau}(X, \rho_X) = \tau(X)$;
  \item for each $f \in \Hom_{\rBCMcat}((X, \rho_X), (Y, \rho_Y))$, $\FS{\tau}_f = \tau_f$.
\end{itemize}

\begin{dfn} [Space-rooted metrization]\label{dfn: space-rooted metrization of structures}
  The structure $\tau$ is said to be \emph{space-rooted metrizable} 
  if there exists a functor $\FSMet{\tau} \colon \rBCMcat \to \Metcat$ such that 
  the following diagram commutes.
  \begin{equation}
    \begin{tikzcd}
                                               & \Metcat \arrow[d] \\
      \rBCMcat \arrow[ru, "\FSMet{\tau}"] \arrow[r, "\FS{\tau}"]     & \MTopcat 
    \end{tikzcd}
  \end{equation}
  Here, the vertical arrow denotes the forgetful functor.
  We refer to $\FSMet{\tau}$ as a \emph{space-rooted metrization} of $\tau$.
  For each $(X, \rho_X) \in \ob(\rBCMcat)$,
  we denote by $d^{\FSMet{\tau}}_X$ the metric on $\tau(X)$ assigned by $\FSMet{\tau}$,  
  or write $d_{\tau(X), \rho_X}$ when the metrization $\FSMet{\tau}$ is clear from the context.
\end{dfn}

\begin{rem} \label{rem: construction of SR metrization}
  Similarly to Remark \ref{rem: construction of metrization},
  to construct a space-rooted metrization $\FSMet{\tau}$,  
  it suffices to define a metric $d^{\FSMet{\tau}}_{X, \rho_X}$ on $\tau(X)$ for each $(X, \rho_X) \in \ob(\rBCMcat)$
  satisfying the following conditions.
  \begin{enumerate} [label = \textup{(\roman*)}, leftmargin = *]
    \item \label{rem item: 1. construction of SR metrization}
      For each $(X, \rho_X) \in \ob(\rBCMcat)$, 
      the topology on $\tau(X)$ induced by $d^{\FSMet{\tau}}_{X, \rho_X}$ coincides with the given topology.
    \item \label{rem item: 2. construction of SR metrization}
      For each $f \in \Hom_{\rBCMcat}((X, \rho_X),(Y, \rho_Y))$, 
      the map $\tau_f \colon \tau(X) \to \tau(Y)$ is distance-preserving
      with respect to $d^{\FSMet{\tau}}_{X, \rho_X}$ and $d^{\FSMet{\tau}}_{Y, \rho_Y}$.
  \end{enumerate}
\end{rem}

The space-rooted metrization $\FSMet{\tau}$ introduced above will be used  
to metrize the local GH-type topology with preserved roots in Section~\ref{sec: Metrization of RF}.  
For the local GH-type topology with non-preserved roots, we will use a different metrization.  
Instead of equipping the underlying space with a root,  
we equip each element of $\tau(X)$ with a root.  
This corresponds to considering the product functor  
\begin{equation}
  \FE{\tau} \coloneqq \tau \times \Gamma_{\BCMcat \to \MTopcat}
\end{equation}
where, as recalled from Definition~\ref{dfn: product functor},  
\begin{itemize}
  \item for each $X \in \ob(\BCMcat)$, $\FE{\tau}(X) = \tau(X) \times X$, equipped with the product topology;
  \item for each $f \in \Hom_{\BCMcat}(X,Y)$, $\FE{\tau}_f = \tau_f \times f$.
\end{itemize}
Even when $\tau$ is not metrizable, the functor $\FE{\tau}$ may be.  
For example, in the case of the measure structure $\MeasFunct$ introduced in Example~\ref{exm: measure functor},  
$\MeasFunct^\times(X) = \Meas{X} \times X$ admits a metric $\Vague{X}$,  
defined in Section~\ref{sec: the vague topology}.  
This leads to the following.

\begin{dfn}[Element-rooted metrization] \label{dfn: ER metrization}
  The structure $\tau$ is said to be \emph{element-rooted metrizable}  
  if $\FE{\tau}$ is metrizable, i.e.,
  there exists a functor $\FEMet{\tau} \colon \BCMcat \to \Metcat$ such that the following diagram commutes.
  \begin{equation} \label{dfn eq: ER metrization}
    \begin{tikzcd}
                                                                   & \Metcat \arrow[d] \\
      \BCMcat \arrow[r, "\FE{\tau}"'] \arrow[ru, "\FEMet{\tau}"]  & \MTopcat 
    \end{tikzcd}
  \end{equation}
  Here, the vertical arrow denotes the forgetful functor.
  We refer to $\FEMet{\tau}$ as an \emph{element-rooted metrization} of $\tau$.
  For each $X \in \ob(\BCMcat)$,
  we denote by $d^{\FEMet{\tau}}_X$ the metric on $\FE{\tau}(X) = \tau(X) \times X$ assigned by $\FEMet{\tau}$,  
  or write $d_{\FE{\tau}(X)}$ when the metrization $\FEMet{\tau}$ is clear from the context.
\end{dfn}

\begin{rem} \label{rem: construction of ER metrization}
  Similarly to Remark \ref{rem: construction of metrization}, to construct an element-rooted metrization $\FEMet{\tau}$,  
  it suffices to define a metric $d^{\FEMet{\tau}}_X$ on $\FE{\tau}(X) = \tau(X) \times X$ for each $\bcmAB$ space $X$
  satisfying the following conditions.
  \begin{enumerate} [label = \textup{(\roman*)}, leftmargin = *]
    \item \label{rem item: 1. construction of ER metrization}
      For each $X \in \ob(\BCMcat)$, the topology on $\FE{\tau}(X)$ induced by $d^{\FEMet{\tau}}_X$ coincides with the product topology.
    \item \label{rem item: 2. construction of ER metrization}
      For each $f \in \Hom_{\BCMcat}(X,Y)$,
      the product map $\tau_f \times f \colon \FE{\tau}(X) \to \FE{\tau}(Y)$ is distance-preserving. 
    \end{enumerate}
\end{rem}

Metrizable structures belong to the class of element-rooted metrizable structures.

\begin{lem} \label{lem: metrization defines space and ER metrization}
  Let $\FMet{\tau}$ be a metrization of $\tau$.
  Then $\FMet{\tau}$ naturally defines an element-rooted metrization $\FEMet{\tau}$ of $\tau$ as follows:
  for each $X \in \ob(\BCMcat)$,
  equip $\FE{\tau}(X)$ with the metric given by
  \begin{equation}
    d^{\FEMet{\tau}}_X \bigl( (a_X, \rho_X), (a'_X, \rho'_X) \bigr) 
    \coloneqq 
    d^{\FMet{\tau}}_X(a_X, a'_X) \vee d_X(\rho_X, \rho'_X),
    \quad 
    (a_X, \rho_X), (a'_X, \rho'_X) \in \FE{\tau}(X).
  \end{equation}
\end{lem}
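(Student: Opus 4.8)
The plan is to verify the two conditions in Remark~\ref{rem: construction of ER metrization}, since those suffice to produce a well-defined element-rooted metrization $\FEMet{\tau}$ from the formula displayed in the lemma. Before that, one quickly checks that $d^{\FEMet{\tau}}_X$ is indeed a metric on $\FE{\tau}(X) = \tau(X) \times X$: it is the max product metric (in the sense of notational convention~\ref{note: product space}) built from the metric $d^{\FMet{\tau}}_X$ on $\tau(X)$ and the metric $d_X$ on $X$, so symmetry, the triangle inequality, and positive-definiteness are immediate from the corresponding properties of $d^{\FMet{\tau}}_X$ and $d_X$.

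First I would verify condition~\ref{rem item: 1. construction of ER metrization}: the topology on $\FE{\tau}(X)$ induced by $d^{\FEMet{\tau}}_X$ coincides with the product topology. Since $\FMet{\tau}$ is a metrization of $\tau$ (Definition~\ref{dfn: metrization of structures}), the metric $d^{\FMet{\tau}}_X$ induces the given topology on $\tau(X)$; the metric $d_X$ induces the given topology on $X$; and it is a standard fact that the max product metric on a product of two metric spaces induces the product topology. This gives~\ref{rem item: 1. construction of ER metrization}.

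Next I would verify condition~\ref{rem item: 2. construction of ER metrization}: for each $f \in \Hom_{\BCMcat}(X,Y)$, the map $\FE{\tau}_f = \tau_f \times f \colon \tau(X) \times X \to \tau(Y) \times Y$ is distance-preserving. Given $(a_X, \rho_X), (a'_X, \rho'_X) \in \FE{\tau}(X)$, one computes
\begin{align}
  d^{\FEMet{\tau}}_Y\bigl( (\tau_f(a_X), f(\rho_X)), (\tau_f(a'_X), f(\rho'_X)) \bigr)
  &= d^{\FMet{\tau}}_Y(\tau_f(a_X), \tau_f(a'_X)) \vee d_Y(f(\rho_X), f(\rho'_X)) \\
  &= d^{\FMet{\tau}}_X(a_X, a'_X) \vee d_X(\rho_X, \rho'_X) \\
  &= d^{\FEMet{\tau}}_X\bigl( (a_X, \rho_X), (a'_X, \rho'_X) \bigr),
\end{align}
where the first equality is the definition of $d^{\FEMet{\tau}}_Y$, the second uses that $\tau_f$ is distance-preserving with respect to $d^{\FMet{\tau}}_X$ and $d^{\FMet{\tau}}_Y$ (which holds because $\FMet{\tau}$ is a functor into $\Metcat$, whose morphisms are isometric embeddings) together with the fact that $f$ is an isometric embedding of $\bcmAB$ spaces, and the third is again the definition. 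This establishes~\ref{rem item: 2. construction of ER metrization}, and Remark~\ref{rem: construction of ER metrization} then yields a well-defined element-rooted metrization $\FEMet{\tau}$ of $\tau$, as claimed.

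This statement is essentially bookkeeping, so there is no serious obstacle; the only point requiring a moment of care is making sure that the max product metric genuinely induces the product topology and that the functoriality of $\FMet{\tau}$ is invoked correctly to conclude that $\tau_f$ is distance-preserving for the metrics coming from $\FMet{\tau}$ (as opposed to merely some metrics). Everything else is a direct unwinding of definitions.
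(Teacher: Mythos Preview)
Your proof is correct and follows exactly the approach the paper indicates: the paper omits the proof, noting only that it is easily verified using Remark~\ref{rem: construction of ER metrization}, and you have supplied precisely that verification.
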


Similarly, an element-rooted metrization of $\tau$ naturally defines a rooted-metrization as follows.

\begin{lem} \label{lem: ER metrization defines SR metrization}
  Let $\FEMet{\tau}$ be an element-rooted metrization of $\tau$.
  Then $\FEMet{\tau}$ naturally defines a space-rooted metrization $\FSMet{\tau}$ of $\tau$ as follows:
  for each $(X, \rho_X) \in \ob(\rBCMcat)$, equip $\tau(X)$ with the metric given by
  \begin{equation}
    d^{\FSMet{\tau}}_{X, \rho_X}(a_X, a'_X) 
    \coloneqq 
    d^{\FEMet{\tau}}_X \bigl( (a_X, \rho_X), (a'_X, \rho_X) \bigr) ,
    \quad 
    a_X, a'_X \in \tau(X).
  \end{equation}
\end{lem}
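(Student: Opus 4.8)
The plan is to apply Remark~\ref{rem: construction of SR metrization}: to exhibit a space-rooted metrization it suffices to produce, for each $(X,\rho_X)\in\ob(\rBCMcat)$, a metric $d^{\FSMet{\tau}}_{X,\rho_X}$ on $\tau(X)$ that induces the given topology and is such that $\tau_f$ is distance-preserving for every root-preserving isometric embedding $f$. First I would note that the proposed $d^{\FSMet{\tau}}_{X,\rho_X}$ is genuinely a metric: via the canonical bijection $a_X\mapsto(a_X,\rho_X)$ it is the restriction of the metric $d^{\FEMet{\tau}}_X$ on $\FE{\tau}(X)=\tau(X)\times X$ to the subset $\tau(X)\times\{\rho_X\}$, and the restriction of a metric to a subset is a metric.

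For the topology condition, I would invoke Remark~\ref{rem: construction of ER metrization}\ref{rem item: 1. construction of ER metrization}, which says that $d^{\FEMet{\tau}}_X$ induces the product topology on $\tau(X)\times X$. The relative topology this induces on $\tau(X)\times\{\rho_X\}$ is transported by the bijection above to precisely the given topology on $\tau(X)$, because $\{\rho_X\}$ is a one-point space; hence $d^{\FSMet{\tau}}_{X,\rho_X}$ induces the given topology. For the distance-preservation condition, let $f\in\Hom_{\rBCMcat}((X,\rho_X),(Y,\rho_Y))$, so $f$ is an isometric embedding with $f(\rho_X)=\rho_Y$. By Remark~\ref{rem: construction of ER metrization}\ref{rem item: 2. construction of ER metrization}, $\FE{\tau}_f=\tau_f\times f$ is distance-preserving for $d^{\FEMet{\tau}}_X$ and $d^{\FEMet{\tau}}_Y$. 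Since $f(\rho_X)=\rho_Y$, one has $(\tau_f(a_X),\rho_Y)=\FE{\tau}_f(a_X,\rho_X)$ for every $a_X\in\tau(X)$, and therefore, for $a_X,a'_X\in\tau(X)$,
\begin{align}
  d^{\FSMet{\tau}}_{Y,\rho_Y}\bigl(\tau_f(a_X),\tau_f(a'_X)\bigr)
  &= d^{\FEMet{\tau}}_Y\bigl(\FE{\tau}_f(a_X,\rho_X),\FE{\tau}_f(a'_X,\rho_X)\bigr) \\
  &= d^{\FEMet{\tau}}_X\bigl((a_X,\rho_X),(a'_X,\rho_X)\bigr)
  = d^{\FSMet{\tau}}_{X,\rho_X}(a_X,a'_X),
\end{align}
so $\tau_f=\FS{\tau}_f$ is distance-preserving.

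Applying Remark~\ref{rem: construction of SR metrization} then produces the functor $\FSMet{\tau}\colon\rBCMcat\to\Metcat$; its functoriality (compatibility with identities and composition) is inherited from that of $\FS{\tau}$, since $\FSMet{\tau}$ agrees with $\FS{\tau}$ on objects and morphisms apart from the additional metric data. I do not anticipate any real obstacle in this argument; the only point that needs to be flagged is that the morphisms of $\rBCMcat$ are root-preserving, which is exactly what validates the identity $(\tau_f(a_X),\rho_Y)=\FE{\tau}_f(a_X,\rho_X)$ and hence makes the restriction construction compatible with morphisms.
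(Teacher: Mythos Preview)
Your proof is correct and follows exactly the approach the paper indicates: the paper omits the proof entirely, stating only that it is easily verified using Remarks~\ref{rem: construction of SR metrization} and~\ref{rem: construction of ER metrization}, which is precisely what you do. Your write-up simply fills in the routine details the paper chose to suppress.
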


The above two lemmas are easily verified by using Remarks~\ref{rem: construction of SR metrization} and \ref{rem: construction of ER metrization},
and so the proofs are omitted.
From the lemmas,
we find the following relation between the classes of structures defined above:
\begin{equation}
  \text{metrizable}\quad \Rightarrow \quad \text{element-rooted metrizable}\quad \Rightarrow \quad \text{space-rooted metrizable}.
\end{equation}
Thus, the class of space-rooted metrizable structures is the most generalized notion among them.


\subsection{Metrization of the topologies}  \label{sec: Metrization of RF and RV}

In this subsection,  
we provide metrizations of the local GH-type topologies with preserved roots and with non-preserved roots, respectively.
To ensure that metrizations of $\tau$ introduced in the previous subsection define metrics on $\rootedBCM(\tau)$,
we will assume the continuity of $\tau$ with respect to isometric embeddings,
described as follows.
This corresponds to \cite[Definition~2.7]{Khezeli_23_A_unified}.

\begin{assum} \label{assum: embedding continuity}
  Fix $X, Y \in \ob(\BCMcat)$ and $f_n \in \Hom_{\BCMcat}(X, Y)$, $n \in \mathbb{N} \cup \{ \infty \}$.
  If $f_{n} \to f_{\infty}$ in the compact-convergence topology,
  then $\tau_{f_{n}}(a) \to \tau_{f_{\infty}}(a)$ in $\tau(Y)$ for all $a \in \tau(X)$.
\end{assum}

\begin{rem} \label{rem: strong continuity of functors}
  In the setting of Assumption~\ref{assum: embedding continuity}, 
  $\tau_{f_n} \to \tau_{f_\infty}$ in the compact-convergence topology.
  Indeed, since each $\tau_{f_n}$ is distance-preserving and $(\tau_{f_n}(a))_{n \geq 1}$ is convergent for each $a \in \tau(X)$,
  the Arzel\`{a}--Ascoli theorem (cf.\ \cite[Theorem~47.1]{Munkres_00_Topology}) implies that the family $\{\tau_{f_n}\}_{n \geq 1}$
  is precompact in the compact-convergence topology.
  Together with the pointwise convergence of $\tau_{f_n}$ to $\tau_f$,
  this yields the desired conclusion.
\end{rem}

\begin{dfn}[Embedding-continuity] \label{dfn: embedding continuity}
  We say that $\tau$ is \emph{embedding-continuous}
  if it satisfies Assumption~\ref{assum: embedding continuity}.
\end{dfn}

\subsubsection{Metrization of the local GH-type topology with preserved roots}  \label{sec: Metrization of RF}

Here, we define a metric inducing the local GH-type topology with preserved roots.
In this subsubsection, we fix a space-rooted metrization $\FSMet{\tau}$.

Using the space-rooted metrization $\FSMet{\tau}$,
we define distance between elements of $\rootedBCM(\tau)$ as follows.
This is a natural generalization of Definition~\ref{dfn: RF GH metric}.

\begin{dfn} \label{dfn: Generalized RF metric}
  We define, for each $\cX \allowbreak =\allowbreak (X, \rho_X, a_X),\,\allowbreak \cY \allowbreak =\allowbreak (Y, \rho_Y, a_Y) \in \rootedBCM(\tau)$,
  \begin{equation}
    \RFMet^{\FSMet{\tau}} (\cX, \cY)
    \coloneqq
    \inf_{f, g, Z}
    \Bigl\{
      \lHausMet{Z,\rho_Z}(f(X), g(Y)) \vee  d^{\FSMet{\tau}}_{Z, \rho_Z}(\tau_{f}(a_X), \tau_{g}(a_{Y}))  
    \Bigr\},
  \end{equation}
  where the infimum is taken 
  over all $(Z, \rho_{Z}) \in \rootedBCM$ 
  and root-preserving isometric embeddings $f \colon  X \to Z$ and $g \colon Y \to Z$.
\end{dfn}

\begin{rem}
  When $\tau$ admits a metrization $\FMet{\tau}$,  
  we use the naturally associated space-rooted metrization $\FSMet{\tau}$  
  in the sense of Lemmas~\ref{lem: metrization defines space and ER metrization} and \ref{lem: ER metrization defines SR metrization}
  to define the distance 
  $\RFMet^{\FMet{\tau}}(\cX, \cY) \coloneqq \RFMet^{\FSMet{\tau}}(\cX, \cY)$.  
  Namely,
  \begin{equation}
    \RFMet^{\FMet{\tau}} (\cX, \cY)
    \coloneqq
    \inf_{f, g, Z}
    \left\{
      \lHausMet{Z,\rho_Z}(f(X), g(Y)) \vee  d^{\FMet{\tau}}_Z(\tau_{f}(a_X), \tau_{g}(a_{Y}))  
    \right\}.
  \end{equation}
  Similarly, when $\tau$ admits an element-rooted metrization $\FEMet{\tau}$,  
  we define $\RFMet^{\FEMet{\tau}}(\cX, \cY) \coloneqq \RFMet^{\FSMet{\tau}}(\cX, \cY)$  
  using the associated space-rooted metrization $\FSMet{\tau}$ in the sense of Lemma~\ref{lem: ER metrization defines SR metrization}.
\end{rem}

We first show that the convergence induced by the above distance coincides
with the convergence in the local GH-type topology with preserved roots.

\begin{thm} \label{thm: RF convergence}
  Let $\cX_n$, $n \in \NN \cup \{\infty\}$ be elements of $\rootedBCM(\tau)$.
  The following statements are equivalent with each other:
  \begin{enumerate} [label = \textup{(\roman*)}, leftmargin = *]
    \item \label{thm item: 1. convergence in RF}
      $\cX_n \to \cX_\infty$ with respect to $\RFMet^{\FSMet{\tau}}$;
    \item \label{thm item: 2. convergence in RF}
      $\cX_n \to \cX_\infty$ in the local GH-type topology with preserved roots.
  \end{enumerate}
\end{thm}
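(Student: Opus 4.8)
The plan is to prove the equivalence by going through the "restricted" description of both notions, using the machinery for $d^\frakD_X$ from Section~\ref{sec: Metric for non-compact objects} applied to the Fell-metric restriction system, together with the embedding-continuity of $\tau$.

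First I would reduce convergence with respect to $\RFMet^{\FSMet{\tau}}$ to convergence in the local Gromov--Hausdorff topology of the rooted spaces $(X_n,\rho_{X_n})$ plus convergence of the structure elements. Concretely, assume $\RFMet^{\FSMet{\tau}}(\cX_n,\cX_\infty)\to 0$. Then in particular $\RFMet((X_n,\rho_{X_n}),(X_\infty,\rho_{X_\infty}))\to 0$, so by Theorem~\ref{thm: local GH convergence} there is a rooted $\bcmAB$ space $(M,\rho_M)$ and root-preserving isometric embeddings $f_n\colon X_n\to M$ with $f_n(X_n)\to f_\infty(X_\infty)$ in the Fell topology. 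The subtlety is that the near-optimal embeddings realizing $\RFMet^{\FSMet{\tau}}(\cX_n,\cX_\infty)$ may land in different ambient spaces $Z_n$; the standard move (as in \cite[Section~4.1]{Khezeli_20_Metrization} or the proof of Theorem~\ref{thm: local GH convergence}) is to glue all the $Z_n$ along the common space $X_\infty$ into a single $\bcmAB$ space $M$ (rooted at the image of $\rho_{X_\infty}$), obtaining simultaneous root-preserving embeddings $f_n$ of all $X_n$ and $\iota$ of $X_\infty$ into $M$ such that $\lHausMet{M,\rho_M}(f_n(X_n),\iota(X_\infty))\to 0$ and, using that $\tau_{f_n}$ and $\tau_\iota$ are isometric embeddings into $\FSMet{\tau}(M,\rho_M)$, $d^{\FSMet{\tau}}_{M,\rho_M}(\tau_{f_n}(a_{X_n}),\tau_\iota(a_{X_\infty}))\to 0$. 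By Proposition~\ref{prop: product local Hausdorff metric}/Theorem~\ref{thm: convergence in the Fell topology} the first convergence gives $f_n(X_n)\to\iota(X_\infty)$ in the Fell topology, and the second gives $\tau_{f_n}(a_{X_n})\to\tau_\iota(a_{X_\infty})$ in $\tau(M)$ since $d^{\FSMet{\tau}}_{M,\rho_M}$ metrizes the topology on $\tau(M)$. This yields Definition~\ref{dfn: RF convergence}, i.e.\ \ref{thm item: 1. convergence in RF} $\Rightarrow$ \ref{thm item: 2. convergence in RF}.

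For the converse \ref{thm item: 2. convergence in RF} $\Rightarrow$ \ref{thm item: 1. convergence in RF}, suppose there are root-preserving isometric embeddings $f_n\colon X_n\to M$ into a rooted $\bcmAB$ space $(M,\rho_M)$ with $f_n(X_n)\to f_\infty(X_\infty)$ in the Fell topology and $\tau_{f_n}(a_{X_n})\to\tau_{f_\infty}(a_{X_\infty})$ in $\tau(M)$. Taking $Z=M$ as a candidate in the infimum defining $\RFMet^{\FSMet{\tau}}$, we get
\begin{equation}
  \RFMet^{\FSMet{\tau}}(\cX_n,\cX_\infty)
  \leq
  \lHausMet{M,\rho_M}(f_n(X_n),f_\infty(X_\infty)) \vee d^{\FSMet{\tau}}_{M,\rho_M}(\tau_{f_n}(a_{X_n}),\tau_{f_\infty}(a_{X_\infty})).
\end{equation}
The first term tends to $0$ by Theorem~\ref{thm: convergence in the Fell topology} (Fell convergence is exactly $\lHausMet{M,\rho_M}$-convergence). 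For the second term, one must pass from convergence in the topology $\tau(M)$ to convergence in the metric $d^{\FSMet{\tau}}_{M,\rho_M}$; but by Definition~\ref{dfn: space-rooted metrization of structures} this metric induces the given topology on $\tau(M)$, so they coincide. Hence the right-hand side tends to $0$, giving \ref{thm item: 1. convergence in RF}.

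The main obstacle is the gluing step in the first implication: one needs to build a single ambient rooted $\bcmAB$ space $M$ into which all the $X_n$ and $X_\infty$ embed root-preservingly and compatibly, so that the pairwise estimates $\RFMet^{\FSMet{\tau}}(\cX_n,\cX_\infty)\to 0$ upgrade to a single sequence of embeddings with simultaneous Fell-convergence of images and $\tau(M)$-convergence of the structure elements. This is where embedding-continuity of $\tau$ (Assumption~\ref{assum: embedding continuity}) enters, to ensure that the structure elements transported through the various gluing maps behave coherently; concretely, one uses that the comparison maps between the different $Z_n$'s converge in the compact-convergence topology (cf.\ Remark~\ref{rem: strong continuity of functors}), so that the images $\tau_{f_n}(a_{X_n})$ in the glued space $M$ still converge to $\tau_{f_\infty}(a_{X_\infty})$. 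Everything else is a routine translation between the metric $d^{\frakD}$-formalism and Fell convergence, already packaged in Theorems~\ref{thm: convergence in D times X}--\ref{thm: convergence in the Fell topology}. I would carry out the easy converse first, then the harder forward direction, modeling the gluing on the corresponding argument for the local Gromov--Hausdorff topology.
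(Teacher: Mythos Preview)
Your approach is essentially the paper's: for \ref{thm item: 2. convergence in RF} $\Rightarrow$ \ref{thm item: 1. convergence in RF} you take $Z=M$ in the infimum; for \ref{thm item: 1. convergence in RF} $\Rightarrow$ \ref{thm item: 2. convergence in RF} you pick near-optimal ambient spaces $Z_n$ and glue them along the common copy of $X_\infty$ into a single rooted $\bcmAB$ space $M$, then transfer the estimates via the distance-preserving maps $\tau_{\iota_n}$.

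One point deserves correction: you do \emph{not} need embedding-continuity (Assumption~\ref{assum: embedding continuity}) here, and the paper's proof does not use it. The gluing maps $\iota_n\colon Z_n\to M$ are exact root-preserving isometric embeddings, so by the definition of a space-rooted metrization the maps $\tau_{\iota_n}$ are genuinely distance-preserving with respect to $d^{\FSMet{\tau}}_{Z_n,\rho_{Z_n}}$ and $d^{\FSMet{\tau}}_{M,\rho_M}$. Thus the inequality $d^{\FSMet{\tau}}_{Z_n,\rho_{Z_n}}(\tau_{f_n}(a_{X_n}),\tau_{g_n}(a_{X_\infty}))<\varepsilon_n'$ transports \emph{exactly} to $M$; there is no ``comparison maps between the different $Z_n$'s converge in the compact-convergence topology'' step, and Remark~\ref{rem: strong continuity of functors} plays no role. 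Embedding-continuity is only invoked later (Theorem~\ref{thm: space-rooted metric}) to show $\RFMet^{\FSMet{\tau}}$ is positive-definite; Theorem~\ref{thm: RF convergence} itself holds for any space-rooted metrization of $\tau$. Your second paragraph already contains the correct argument --- the final paragraph's worry about an ``obstacle'' is unfounded.
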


\begin{proof}
  Write $\cX_n=(X_n, \rho_{X_n}, a_{X_n})$, $n \in \NN \cup \{\infty\}$.
  The implication \ref{thm item: 2. convergence in RF} $\Rightarrow$ \ref{thm item: 1. convergence in RF} is immediate 
  from the definition of $\RFMet^{\FSMet{\tau}}$.
  Assume that $\varepsilon_n \coloneqq \RFMet^{\FSMet{\tau}}(\cX_n, \cX_{\infty}) \to 0$.
  Set $\varepsilon_n' \coloneqq \varepsilon_n + n^{-1}$.
  For each $n \in \NN$, by the definition of $\RFMet^{\FSMet{\tau}}$,
  there exists $(Y_n, \rho_{Y_n}) \in \rootedBCM$,
  together with root-preserving isometric embeddings
  $f_n \colon X_n \to Y_n$ and $g_n \colon X_\infty \to Y_n$,
  such that
  \begin{equation}
    \lHausMet{Y_n, \rho_{Y_n}}(f_n(X_n), g_n(X_\infty)) 
    \vee
    d^{\FSMet{\tau}}_{Y_n, \rho_{Y_n}}(\tau_{f_n}(a_{X_n}), \tau_{g_n}(a_{X_\infty})) 
    < \varepsilon_n'.
  \end{equation}
  We define $M$ to be the $\bcmAB$ space obtained by gluing the spaces $Y_n$, $n \in \NN$,
  along the images of $X_\infty$ in each $Y_n$
  and then taking the completion (cf.\ \cite[Lemma~2.42]{Khezeli_23_A_unified}).
  Let $\iota_n \colon Y_n \to M$, $n \in \NN$, be the canonical isometric embeddings induced by the gluing construction.
  Note that the composition $\iota_n \circ g_n \colon X_\infty \to M$ is independent of $n$,
  and we denote this common map by $h_\infty$.
  The following commutative diagram summarizes the spaces and isometric embeddings defined so far.
  \begin{equation} \label{pr eq: 2. convergence in RF}
    \begin{tikzcd}[column sep=10pt, row sep=10pt]
                                                            &                           & M                                                                        &                                    &                                 \\
                                                            & Y_n \arrow[ru, "\iota_n"] &                                                                          & Y_{n+1} \arrow[lu, "\iota_{n+1}"'] &                                 \\
    X_n \arrow[ru, "f_n"] \arrow[rruu, bend left = 50, "h_n"] &                           & X_\infty \arrow[lu, "g_n"] \arrow[uu, "h_\infty"] \arrow[ru, "g_{n+1}"'] &                                    & X_{n+1} \arrow[lu, very near end, "f_{n+1}"'] \arrow[lluu, bend right = 50, "h_{n+1}"']
    \end{tikzcd}
  \end{equation}
  We define the root of $M$ by setting $\rho_M \coloneqq h_\infty(\rho_{X_\infty})$,
  so that $h_\infty$ is a root-preserving isometric embedding from $X_\infty$ to $M$.
  Then $\iota_n \colon Y_n \to M$ is also a root-preserving isometric embedding.
  This yields that 
  \begin{align}
    d^\tau_{M, \rho_M}( \tau_{h_n}(a_{X_n}), \tau_{h_\infty}(a_{X_\infty})) 
    &= 
    d^{\FSMet{\tau}}_{Y_n, \rho_{Y_n}}(\tau_{f_n}(a_{X_n}), \tau_{g_n}(a_{X_\infty})) 
    < \varepsilon_n'.
  \end{align}
  Similarly, using Proposition~\ref{prop: the local Hausdorff metric is preserved},
  we deduce that $\lHausMet{M, \rho_M}( h_n(X_n), h_\infty(X_\infty)) < \varepsilon_n'$.
  Therefore, we obtain \ref{thm item: 2. convergence in RF}.
\end{proof}

Under the embedding-continuity of $\tau$,
we can prove that $\RFMet^{\FSMet{\tau}}$ is a metric.

\begin{thm} \label{thm: space-rooted metric}
  If $\tau$ is embedding-continuous,
  then $\RFMet^{\FSMet{\tau}}$ is a metric on $\rootedBCM(\tau)$.
\end{thm}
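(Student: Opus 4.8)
The plan is to verify the three metric axioms for $\RFMet^{\FSMet{\tau}}$ on $\rootedBCM(\tau)$: finiteness and symmetry, the triangle inequality, and positive-definiteness (identity of indiscernibles up to rooted-$\tau$-isometry). Finiteness follows from the remark after Definition~\ref{dfn: RV GH metric}: given $\cX = (X, \rho_X, a_X)$ and $\cY = (Y, \rho_Y, a_Y)$, one glues $X$ and $Y$ at their roots to obtain a rooted $\bcmAB$ space $(Z, \rho_Z)$ into which both embed root-preservingly, so the infimum is over a non-empty set; since $\lHausMet{Z, \rho_Z}$ is a genuine (finite) metric by Theorem~\ref{thm: local Hausdorff metric} and $d^{\FSMet{\tau}}_{Z, \rho_Z}$ is a metric by Definition~\ref{dfn: space-rooted metrization of structures}, the value is finite. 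Symmetry is immediate from the symmetric role of $f$ and $g$ in the definition.

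The triangle inequality is the first substantive step. Given $\cX_1, \cX_2, \cX_3 \in \rootedBCM(\tau)$ and $\delta > 0$, choose near-optimal rooted $\bcmAB$ spaces $(Z_{12}, \rho_{12})$ and $(Z_{23}, \rho_{23})$ with root-preserving isometric embeddings witnessing distances within $\delta$ of $\RFMet^{\FSMet{\tau}}(\cX_1, \cX_2)$ and $\RFMet^{\FSMet{\tau}}(\cX_2, \cX_3)$ respectively. One then glues $Z_{12}$ and $Z_{23}$ along the common copy of $X_2$ (identifying roots) and takes the completion, obtaining a rooted $\bcmAB$ space $(Z, \rho_Z)$ with root-preserving isometric embeddings $\iota_{12} \colon Z_{12} \to Z$ and $\iota_{23} \colon Z_{23} \to Z$ agreeing on $X_2$; this is the gluing construction cited as \cite[Lemma~2.42]{Khezeli_23_A_unified} and already used in the proof of Theorem~\ref{thm: RF convergence}. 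Inside $Z$, the embeddings of $X_1, X_2, X_3$ give, by the triangle inequality for the genuine metrics $\lHausMet{Z, \rho_Z}$ (Theorem~\ref{thm: local Hausdorff metric}) and $d^{\FSMet{\tau}}_{Z, \rho_Z}$, together with the functoriality $\tau_{\iota \circ f} = \tau_\iota \circ \tau_f$ and the fact that $\tau_\iota$ is distance-preserving, the bound $\RFMet^{\FSMet{\tau}}(\cX_1, \cX_3) \le \RFMet^{\FSMet{\tau}}(\cX_1, \cX_2) + \RFMet^{\FSMet{\tau}}(\cX_2, \cX_3) + 2\delta$; letting $\delta \to 0$ finishes this step.

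The main obstacle is positive-definiteness: if $\RFMet^{\FSMet{\tau}}(\cX, \cY) = 0$, one must produce a root-preserving isometry $f \colon X \to Y$ with $\tau_f(a_X) = a_Y$, so that $\cX$ and $\cY$ represent the same element of $\rootedBCM(\tau)$. For each $n$ pick a witness $(Z_n, \rho_n)$ with root-preserving isometric embeddings $f_n \colon X \to Z_n$, $g_n \colon Y \to Z_n$ so that $\lHausMet{Z_n, \rho_n}(f_n(X), g_n(Y)) < 1/n$ and $d^{\FSMet{\tau}}_{Z_n, \rho_n}(\tau_{f_n}(a_X), \tau_{g_n}(a_Y)) < 1/n$. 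By Theorem~\ref{thm: RF convergence} (applied with the constant sequence $\cX_n = \cX$, or directly via the convergence characterization), $\cX \to \cY$ in the local GH-type topology with preserved roots, so there exist a rooted $\bcmAB$ space $(M, \rho_M)$ and root-preserving isometric embeddings $h \colon X \to M$, $h' \colon Y \to M$ with $h(X) = h'(Y)$ in the Fell topology — but since a constant sequence converges to $\cX$ as well, uniqueness of Fell limits gives $h(X) = h'(Y)$ as subsets of $M$, and $\tau_h(a_X) = \tau_{h'}(a_Y)$ in $\tau(M)$. Then $f \coloneqq (h')^{-1} \circ h \colon X \to Y$ is a root-preserving isometry, and applying $\tau_{(h')^{-1}}$ to $\tau_h(a_X) = \tau_{h'}(a_Y)$ together with functoriality yields $\tau_f(a_X) = a_Y$. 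Hence $\cX = \cY$ in $\rootedBCM(\tau)$. The delicate point here is making the compactness/diagonal extraction behind Theorem~\ref{thm: RF convergence} genuinely yield a \emph{single} ambient space $M$ realizing the identification simultaneously for the metric spaces and for the $\tau$-structures; this is exactly where embedding-continuity of $\tau$ enters, ensuring that $\tau_{f_n}(a_X)$ converges to the image of $a_X$ under the limiting embedding, so that the limit relation $\tau_h(a_X) = \tau_{h'}(a_Y)$ holds. I would lean on Theorem~\ref{thm: RF convergence} to package this rather than redo the gluing argument by hand.
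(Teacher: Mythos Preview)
Your overall strategy matches the paper's: symmetry is immediate, and the triangle inequality via gluing along the middle space is exactly what the paper does. The gap is in positive-definiteness. Theorem~\ref{thm: RF convergence}, applied to the constant sequence $\cX_n = \cX$ converging to $\cY$, gives you a rooted $\bcmAB$ space $(M,\rho_M)$, a single embedding $g\colon Y\to M$, and a \emph{sequence} of root-preserving isometric embeddings $f_n\colon X\to M$ (one for each $n$) with $f_n(X)\to g(Y)$ in the Fell topology and $\tau_{f_n}(a_X)\to\tau_g(a_Y)$. It does \emph{not} hand you a single $h\colon X\to M$, and your appeal to ``uniqueness of Fell limits'' presupposes exactly the object you are trying to produce: the sequence $f_n(X)$ has only one Fell limit, namely $g(Y)$, but that says nothing about the maps $f_n$ themselves, which may all be distinct.

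What is actually needed---and what the paper does---is to apply the Arzel\`a--Ascoli theorem to $(f_n)_{n\ge1}\subseteq C(X,M)$: each $f_n$ is distance-preserving with $f_n(\rho_X)=\rho_M$, so the family is equicontinuous and pointwise precompact (the values at any $x\in X$ lie in the compact ball $D_M(\rho_M,d_X(\rho_X,x))$), hence a subsequence converges in the compact-convergence topology to some $f\colon X\to M$. This limit is automatically a root-preserving isometric embedding, and \emph{now} embedding-continuity of $\tau$ gives $\tau_{f_{n_k}}(a_X)\to\tau_f(a_X)$, whence $\tau_f(a_X)=\tau_g(a_Y)$; likewise $f_{n_k}(X)\to f(X)$ in the Fell topology forces $f(X)=g(Y)$. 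Then $g^{-1}\circ f$ is the desired rooted-$\tau$-isometry. The ``compactness/diagonal extraction'' you allude to is not hidden inside Theorem~\ref{thm: RF convergence} (whose proof is a pure gluing construction with no compactness step)---you must invoke Arzel\`a--Ascoli explicitly here, and this is precisely where embedding-continuity is used.
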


\begin{proof}
  The symmetry of $\RFMet^{\FSMet{\tau}}$ is obvious.
  Assume that $\RFMet^{\FSMet{\tau}}(\cX, \cY) = 0$
  for $\cX= (X, \rho_X, a_X)$ and $\cY = (Y, \rho_Y, a_Y)$.
  By Theorem~\ref{thm: RF convergence},
  there exist a rooted $\bcmAB$ space $(M, \rho_M)$
  and root-preserving isometric embeddings $f_n \colon X \to M$, $n \in \NN$, and $g \colon Y \to M$
  such that
  $f_n(X) \to g(Y)$ in the Fell topology as subsets of $M$,
  and $\tau_{f_n}(a_X) \to \tau_g(a_Y)$ in $\tau(M)$.
  Since each $f_n$ is distance-preserving and satisfies $f_n(\rho_X) = \rho_M$ for all $n \in \NN$,
  we can apply the Arzel\`{a}--Ascoli theorem to the sequence $(f_n)_{n \geq 1}$ in $C(X, M)$.
  Thus, we obtain a subsequence $(f_{n_k})_{k \geq 1}$ that converges to a map $f \in C(X, M)$ in the compact-convergence topology.
  It is easy to verify that $f$ is a root-preserving isometric embedding from $X$ to $M$.
  Moreover, by the continuity of $\tau$, we deduce that $f(X) = g(Y)$ and $\tau_f(a_X) = \tau_g(a_Y)$.
  Therefore, $\cX$ is rooted-$\tau$-isometric to $\cY$.
  This proves the positive definiteness.

  To prove the triangle inequality,
  assume that $\RFMet^{\FSMet{\tau}}(\cX_{1}, \cX_{2}) < r$ and $\RFMet^{\FSMet{\tau}}(\cX_{2}, \cX_{3}) < s$ 
  for $\cX_i = (X_i, d_{X_i}, \rho_{X_i}, a_{X_i})$, $i=1,2,3$.
  Using the gluing technique similarly to the proof of Theorem~\ref{thm: RF convergence},
  one can construct a rooted $\bcmAB$ space $(Y, \rho_Y)$
  and root-preserving isometric embeddings $f_i \colon X_i \to Y$, $i = 1,2,3$,
  such that
  \begin{gather}
    \lHausMet{Y, \rho_Y}( f_1(X_{1}), f_2(X_{2}) ) 
    \vee
    d^\tau_{Y, \rho_Y}( \tau_{f_1}(a_{X_{1}}), \tau_{f_2}(a_{X_{2}}) ) 
    < r,
    \\
    \lHausMet{Y, \rho_Y}( f_2(X_{2}), f_{3}(X_{3}) ) 
    \vee
    d^\tau_{Y, \rho_Y}( \tau_{f_2}(a_{X_{2}}), \tau_{f_{3}}(a_{X_{3}}) ) 
    < s.
  \end{gather}
  This implies that $\RFMet^{\FSMet{\tau}}(\cX_{1}, \cX_{3}) < r + s$,
  and hence $\RFMet^{\FSMet{\tau}}$ satisfies the triangle inequality.
  This completes the proof.
\end{proof}

\begin{dfn} [The local GH-type topology with preserved roots]
  When $\tau$ is embedding-continuous,
  we refer to the topology on $\rootedBCM(\tau)$ induced by $\RFMet^{\FSMet{\tau}}$ as the \emph{local GH-type topology with preserved roots}.
\end{dfn}

\begin{rem}
  By Definition~\ref{dfn: RF convergence} and Theorem~\ref{thm: RF convergence},  
  the local GH-type topology with preserved roots is independent of space-rooted metrization of $\tau$,  
  and depends only on the functor $\tau$.  
  Thus, it is natural to define this topology without relying on such metrization.  
  A possible approach is to define the topology via net convergence (cf.\ \cite[Section~11.D]{Willard_70_General}).  
  For example, by replacing sequential convergence in Definition~\ref{dfn: RF convergence} with net convergence,  
  we can define which nets converge.  
  However, we have not been able to verify whether the diagonal principle holds in this setting,  
  which is one of the conditions required to define a topology via net convergence.  
  In particular, we have not succeeded in constructing a $\bcmAB$ space into which all the spaces in a diagonal net can be commonly embedded
  so that they converge.  
  We leave this as an open problem.
\end{rem}

\begin{cor} \label{cor: projection continuity in RF}
  If $\tau$ is embedding-continuous and space-rooted metrizable, then the map
  \begin{equation}
    \rootedBCM(\tau) \ni (X, \rho_X, a_X) \mapsto (X, \rho_X) \in \rootedBCM
  \end{equation}
  is continuous with respect to the local GH-type topology with preserved roots on $\rootedBCM(\tau)$  
  and the local Gromov--Hausdorff topology on $\rootedBCM$.
\end{cor}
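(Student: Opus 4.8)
The plan is to show that the projection map is $1$-Lipschitz from $(\rootedBCM(\tau), \RFMet^{\FSMet{\tau}})$ to $(\rootedBCM, \RFMet)$, which immediately gives continuity (and, by Theorem~\ref{thm: metrics for local GH topology}, continuity with respect to the local Gromov--Hausdorff topology, regardless of whether one uses $\RFMet$ or $\RVMet$ to metrize the target). First I would note that the map is well defined on equivalence classes: if $\cX = (X, \rho_X, a_X)$ and $\cY = (Y, \rho_Y, a_Y)$ are rooted-$\tau$-isometric, then in particular there is a root-preserving isometry $X \to Y$, so $(X, \rho_X)$ and $(Y, \rho_Y)$ are rooted-isometric, hence equal as elements of $\rootedBCM$ after identification.

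Next I would compare the two infima. Fix $\cX = (X, \rho_X, a_X)$ and $\cY = (Y, \rho_Y, a_Y)$ in $\rootedBCM(\tau)$, and write $\cX' = (X, \rho_X)$, $\cY' = (Y, \rho_Y)$ for their images under the projection. The key observation is that the infimum defining $\RFMet^{\FSMet{\tau}}(\cX, \cY)$ in Definition~\ref{dfn: Generalized RF metric} and the infimum defining $\RFMet(\cX', \cY')$ in Definition~\ref{dfn: RF GH metric} range over exactly the same set of data: a rooted $\bcmAB$ space $(Z, \rho_Z) \in \rootedBCM$ together with root-preserving isometric embeddings $f \colon X \to Z$ and $g \colon Y \to Z$. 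For any such data, the trivial bound
\begin{equation}
  \lHausMet{Z, \rho_Z}(f(X), g(Y))
  \leq
  \lHausMet{Z, \rho_Z}(f(X), g(Y)) \vee d^{\FSMet{\tau}}_{Z, \rho_Z}(\tau_f(a_X), \tau_g(a_Y))
\end{equation}
holds. Taking the infimum over all admissible $(Z, \rho_Z, f, g)$ on both sides yields $\RFMet(\cX', \cY') \leq \RFMet^{\FSMet{\tau}}(\cX, \cY)$.

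Finally, since the target topology on $\rootedBCM$ is the one induced by $\RFMet$ (the local Gromov--Hausdorff topology, by Theorem~\ref{thm: metrics for local GH topology}), the inequality above shows that the projection is $1$-Lipschitz and hence continuous. I do not expect any genuine obstacle here; the only point requiring a word of justification is that the two infimum sets genuinely coincide, which is immediate from the definitions, and that the embedding-continuity and space-rooted metrizability hypotheses are needed only to ensure (via Theorem~\ref{thm: space-rooted metric}) that $\RFMet^{\FSMet{\tau}}$ and $\RFMet$ are honest metrics so that the statement about topologies is meaningful.
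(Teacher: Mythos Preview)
Your argument is correct and in fact yields a slightly stronger conclusion than the paper states: the projection is $1$-Lipschitz with respect to $\RFMet^{\FSMet{\tau}}$ and $\RFMet$. The paper's proof takes a different, though equally short, route: it invokes the convergence characterizations (Theorems~\ref{thm: local GH convergence} and~\ref{thm: RF convergence}) rather than comparing the metrics directly. In that approach one observes that convergence $\cX_n \to \cX_\infty$ in the local GH-type topology with preserved roots furnishes, via Theorem~\ref{thm: RF convergence}, a common ambient rooted $\bcmAB$ space and root-preserving embeddings under which both the underlying spaces and the additional structures converge; discarding the latter condition gives exactly the embedding characterization of local Gromov--Hausdorff convergence from Theorem~\ref{thm: local GH convergence}. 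Your metric-level argument is more elementary and avoids quoting the embedding theorems, at the cost of being slightly less aligned with how the paper otherwise phrases continuity statements in terms of those characterizations.
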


\begin{proof}
  This immediately follows from Theorems~\ref{thm: local GH convergence} and \ref{thm: RF convergence}.
\end{proof}
\subsubsection{Metrization of the local GH-type topology with non-preserved roots}  \label{sec: Metrization of the local GH-type topology with non-preserved roots}

Here, we define a metric that induces the local GH-type topology with non-preserved roots,  
based on an element-rooted metrization.  
The proofs are omitted, as they are analogous to those in the preceding subsubsection.
In this subsubsection, we fix a element-rooted metrization $\FSMet{\tau}$.

Using the element-rooted metrization, we define a distance analogously to \eqref{dfn eq: RV GH metric}.
For the following definition, 
recall the map $\FE{\Image{f}}$  from \eqref{eq: image product map}.

\begin{dfn} \label{dfn: Generalized RV metric}
  We define, for $\cX=(X, \rho_X, a_X), \cY=(Y, \rho_Y, a_Y) \in \rootedBCM(\tau)$,
  \begin{equation}  \label{dfn eq: element-rooted GH-type metric}
    \RVMet^{\FEMet{\tau}}(\cX, \cY)
    \coloneqq
    \inf_{f, g, Z}
    \Bigl\{
      \lHausMet{Z} \bigl( \FE{\Image{f}}(X, \rho_X), \FE{\Image{g}}(Y, \rho_Y) \bigr)
      \vee
      d^{\FEMet{\tau}}_Z \bigl( \FE{\tau}_f(a_X, \rho_X), \FE{\tau}_g(a_Y, \rho_Y) \bigr)
    \Bigr\}
    ,
  \end{equation}
  where the infimum is taken 
  over all $Z \in \BCM$ 
  and isometric embeddings $f \colon  X \to Z$ and $g \colon Y \to Z$.
\end{dfn}

\begin{rem}
  When $\tau$ admits a metrization $\FMet{\tau}$, we use the naturally associated element-rooted metrization $\FEMet{\tau}$ 
  in the sense of Lemma~\ref{lem: metrization defines space and ER metrization} to define 
  a distance $\RVMet^{\FMet{\tau}}(\cX, \cY) \coloneqq \RVMet^{\FEMet{\tau}}(\cX, \cY)$.
  Namely, 
  \begin{equation}
    \RVMet^{\FMet{\tau}} (\cX, \cY)
    \coloneqq
    \inf_{f, g, Z}
    \Bigl\{
      \lHausMet{Z}\bigl( \FE{\Image{f}}(X, \rho_X), \FE{\Image{g}}(Y, \rho_Y) \bigr) \vee d^{\FMet{\tau}}_X(\tau_{f}(a_X), \tau_{g}(a_{Y}))  
    \Bigr\}.
  \end{equation}
\end{rem}

The above-defined distance induces the convergence in the local GH-type topology with non-preserved roots, as described below.

\begin{thm} \label{thm: RV convergence}
  For each $n \in \mathbb{N} \cup \{\infty\}$,
  let $\cX_n=(X_n, \rho_{X_n}, a_{X_n})$ be an element of $\rootedBCM(\tau)$.
  The following statements are equivalent with each other:
  \begin{enumerate} [label = \textup{(\roman*)}, leftmargin = *]
    \item \label{thm item: 1. convergence in RV}
      $\cX_n \to \cX_\infty$ with respect to $\RVMet^{\FEMet{\tau}}$;
    \item \label{thm item: 2. convergence in RV}
      $\cX_n \to \cX_\infty$ in the local GH-type topology with non-preserved roots.
  \end{enumerate}
\end{thm}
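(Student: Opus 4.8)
The plan is to mirror the proof of Theorem~\ref{thm: RF convergence}, replacing root-preserving embeddings by arbitrary isometric embeddings and the space-rooted metric $\lHausMet{Z,\rho_Z}$ by the element-rooted metric $\lHausMet{Z}$ on $\Closed{Z}\times Z$. The implication \ref{thm item: 2. convergence in RV} $\Rightarrow$ \ref{thm item: 1. convergence in RV} is immediate from the definition of $\RVMet^{\FEMet{\tau}}$: if $M$ and the embeddings $f_n$ witness convergence in the local GH-type topology with non-preserved roots, then, for each fixed $n$, taking $Z = M$ with the embeddings $f_n$ and $f_\infty$ gives
\begin{equation}
  \RVMet^{\FEMet{\tau}}(\cX_n, \cX_\infty)
  \leq
  \lHausMet{M}\bigl( \FE{\Image{f_n}}(X_n, \rho_{X_n}), \FE{\Image{f_\infty}}(X_\infty, \rho_{X_\infty}) \bigr)
  \vee
  d^{\FEMet{\tau}}_M\bigl( \FE{\tau}_{f_n}(a_{X_n}, \rho_{X_n}), \FE{\tau}_{f_\infty}(a_{X_\infty}, \rho_{X_\infty}) \bigr),
\end{equation}
and by Proposition~\ref{prop: product local Hausdorff metric} the first term tends to $0$ (Fell convergence of $f_n(X_n)$ together with $f_n(\rho_{X_n}) \to f_\infty(\rho_{X_\infty})$ is exactly convergence in $\Closed{M}\times M$), while the second tends to $0$ by hypothesis and the definition of $\FEMet{\tau}$.

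For the converse \ref{thm item: 1. convergence in RV} $\Rightarrow$ \ref{thm item: 2. convergence in RV}, suppose $\varepsilon_n \coloneqq \RVMet^{\FEMet{\tau}}(\cX_n, \cX_\infty) \to 0$ and set $\varepsilon_n' \coloneqq \varepsilon_n + n^{-1}$. For each $n$, pick $Y_n \in \BCM$ and isometric embeddings $f_n \colon X_n \to Y_n$, $g_n \colon X_\infty \to Y_n$ with
\begin{equation}
  \lHausMet{Y_n}\bigl( \FE{\Image{f_n}}(X_n, \rho_{X_n}), \FE{\Image{g_n}}(X_\infty, \rho_{X_\infty}) \bigr)
  \vee
  d^{\FEMet{\tau}}_{Y_n}\bigl( \FE{\tau}_{f_n}(a_{X_n}, \rho_{X_n}), \FE{\tau}_{g_n}(a_{X_\infty}, \rho_{X_\infty}) \bigr)
  < \varepsilon_n'.
\end{equation}
Now glue the $Y_n$ along the images $g_n(X_\infty)$ and take the completion, exactly as in Theorem~\ref{thm: RF convergence} (cf.\ \cite[Lemma~2.42]{Khezeli_23_A_unified}), obtaining a $\bcmAB$ space $M$ with canonical isometric embeddings $\iota_n \colon Y_n \to M$; here $\iota_n \circ g_n$ is independent of $n$, call it $h_\infty$, and set $h_n \coloneqq \iota_n \circ f_n$. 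Since each $\iota_n$ is an isometric embedding, Proposition~\ref{prop: local Hausdorff is stable} (with $\varepsilon = 0$) gives $\lHausMet{M}\bigl( \FE{\Image{h_n}}(X_n, \rho_{X_n}), \FE{\Image{h_\infty}}(X_\infty, \rho_{X_\infty}) \bigr) < \varepsilon_n'$, and Proposition~\ref{prop: the local Hausdorff metric is preserved} together with the fact that $\FEMet{\tau}$ is a functor gives $d^{\FEMet{\tau}}_M\bigl( \FE{\tau}_{h_n}(a_{X_n}, \rho_{X_n}), \FE{\tau}_{h_\infty}(a_{X_\infty}, \rho_{X_\infty}) \bigr) < \varepsilon_n'$. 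Unpacking these two estimates via Proposition~\ref{prop: product local Hausdorff metric}, we get $h_n(X_n) \to h_\infty(X_\infty)$ in the Fell topology, $h_n(\rho_{X_n}) \to h_\infty(\rho_{X_\infty})$ in $M$, and $\tau_{h_n}(a_{X_n}) \to \tau_{h_\infty}(a_{X_\infty})$ in $\tau(M)$, which is precisely convergence in the local GH-type topology with non-preserved roots.

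The main subtlety, just as in the preserved-roots case, is the gluing construction: one must check that gluing countably many $\bcmAB$ spaces $Y_n$ along a common isometrically embedded subspace and completing yields a $\bcmAB$ (and not merely complete metric) space, and that the resulting maps $\iota_n$ are genuine isometric embeddings with $\bigcup_n \iota_n(Y_n)$ dense in $M$. This is handled by the cited \cite[Lemma~2.42]{Khezeli_23_A_unified}; the only point requiring care here, compared with Theorem~\ref{thm: RF convergence}, is that we do \emph{not} designate a root in $M$, so the images $h_n(\rho_{X_n})$ need not coincide — but their convergence to $h_\infty(\rho_{X_\infty})$ is already encoded in the first displayed estimate through the product structure of $\lHausMet{M}$. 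Everything else is a routine transcription of the earlier argument.
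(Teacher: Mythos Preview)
Your proposal is correct and follows exactly the approach the paper intends: the paper explicitly omits the proof, stating it is analogous to that of Theorem~\ref{thm: RF convergence}, and your transcription of that argument---replacing root-preserving embeddings by arbitrary isometric embeddings, and using the element-rooted metric $\lHausMet{Z}$ together with the functoriality of $\FEMet{\tau}$---is precisely the intended analogy. The only minor slip is citing Proposition~\ref{prop: the local Hausdorff metric is preserved} for the $\tau$-estimate; that step follows directly from $\FEMet{\tau}$ being a functor into $\Metcat$ (so $\FE{\tau}_{\iota_n}$ is distance-preserving), and Proposition~\ref{prop: the local Hausdorff metric is preserved} is only needed for the $\lHausMet{M}$-estimate.
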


When $\tau$ is embedding-continuous, we can show that $\RVMet^{\FEMet{\tau}}$ is indeed a metric.  

\begin{thm} \label{thm: element-rooted metric}
  Let $\FEMet{\tau}$ be an element-rooted metrization of $\tau$.
  If $\tau$ is embedding-continuous, then $\RVMet^{\FEMet{\tau}}$ defines a metric on $\rootedBCM(\tau)$.
\end{thm}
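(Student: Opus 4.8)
The plan is to mirror closely the proof of Theorem~\ref{thm: space-rooted metric}, replacing the space-rooted metrization by the element-rooted one and using the product metric $\lHausMet{Z}$ on $\Closed{Z} \times Z$ together with the metric $d^{\FEMet{\tau}}_Z$ on $\FE{\tau}(Z) = \tau(Z) \times Z$. First I would verify that the infimum defining $\RVMet^{\FEMet{\tau}}$ is taken over a non-empty set; this is the same gluing construction as in the remark following Definition~\ref{dfn: RV GH metric}, forming $Z = X \sqcup Y$ with an appropriate metric. Symmetry of $\RVMet^{\FEMet{\tau}}$ is immediate from the definition, so the two substantial points are positive-definiteness and the triangle inequality.

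For positive-definiteness, suppose $\RVMet^{\FEMet{\tau}}(\cX, \cY) = 0$ with $\cX = (X, \rho_X, a_X)$ and $\cY = (Y, \rho_Y, a_Y)$. By Theorem~\ref{thm: RV convergence} (applied to the constant sequence $\cX_n = \cX$), there exist a $\bcmAB$ space $M$ and isometric embeddings $f_n \colon X \to M$, $n \in \NN$, and $g \colon Y \to M$ such that $f_n(X) \to g(Y)$ in the Fell topology, $f_n(\rho_X) \to g(\rho_Y)$ in $M$, and $\tau_{f_n}(a_X) \to \tau_g(a_Y)$ in $\tau(M)$. The key step is to extract a limiting isometric embedding: since $f_n(\rho_X) \to g(\rho_Y)$, the sequence $(f_n)_{n \geq 1}$ is pointwise bounded (each $f_n$ is distance-preserving, so $d_M(f_n(x), f_n(\rho_X)) = d_X(x, \rho_X)$ is bounded on compacts uniformly in $n$, and $f_n(\rho_X)$ stays in a bounded set), hence equicontinuous and pointwise relatively compact; the Arzel\`{a}--Ascoli theorem yields a subsequence $(f_{n_k})$ converging in the compact-convergence topology to some $f \in C(X, M)$, which is easily checked to be an isometric embedding with $f(\rho_X) = g(\rho_Y)$. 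Then Assumption~\ref{assum: embedding continuity} (embedding-continuity of $\tau$) gives $\tau_{f_{n_k}}(a_X) \to \tau_f(a_X)$, so $\tau_f(a_X) = \tau_g(a_Y)$, and the Fell convergence gives $f(X) = g(Y)$ (using, e.g., the Painlev\'{e}--Kuratowski characterization in Lemma~\ref{lem: convergence in Hausdorff} and Theorem~\ref{thm: convergence in the Fell topology}). Composing $g^{-1} \circ f$ (restricted to its image) produces a root-preserving isometry $X \to Y$ carrying $a_X$ to $a_Y$, so $\cX$ and $\cY$ are rooted-$\tau$-isometric, i.e.\ equal in $\rootedBCM(\tau)$.

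For the triangle inequality, I would again use the gluing technique exactly as in the proof of Theorem~\ref{thm: space-rooted metric}: given $\cX_i = (X_i, \rho_{X_i}, a_{X_i})$, $i = 1,2,3$, with $\RVMet^{\FEMet{\tau}}(\cX_1, \cX_2) < r$ and $\RVMet^{\FEMet{\tau}}(\cX_2, \cX_3) < s$, one picks near-optimal embeddings into spaces $Z_{12}$ and $Z_{23}$ and glues them along the common copy of $X_2$ (taking the completion to stay within $\bcmAB$ spaces, cf.\ \cite[Lemma~2.42]{Khezeli_23_A_unified}); the induced embeddings $f_i \colon X_i \to Z$ into the glued space $Z$ then satisfy the required estimates for both pairs, because $\lHausMet{Z}$ is preserved under isometric embeddings (Proposition~\ref{prop: the local Hausdorff metric is preserved}) and $\FE{\tau}_f$ is distance-preserving with respect to $d^{\FEMet{\tau}}$ by Remark~\ref{rem: construction of ER metrization}\ref{rem item: 2. construction of ER metrization}. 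This yields $\RVMet^{\FEMet{\tau}}(\cX_1, \cX_3) < r + s$.

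The main obstacle is the Arzel\`{a}--Ascoli extraction step in the positive-definiteness argument: unlike in the root-preserving case where all $f_n$ agree at the root, here the images $f_n(\rho_X)$ only converge, so one must first argue that the sequence is pointwise relatively compact in $M$, which requires combining the convergence $f_n(\rho_X) \to g(\rho_Y)$ with the isometry property and bounded compactness of $M$; once that is in place, the remainder is routine and parallels the proof of Theorem~\ref{thm: space-rooted metric}.
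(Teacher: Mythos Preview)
Your proposal is correct and follows exactly the approach the paper intends: the paper explicitly omits the proof, stating it is analogous to that of Theorem~\ref{thm: space-rooted metric}, and your adaptation is precisely that analogy. You have also correctly identified and handled the one genuine difference, namely that in the Arzel\`{a}--Ascoli step the roots $f_n(\rho_X)$ merely converge rather than being fixed, which still gives pointwise relative compactness via bounded compactness of $M$.
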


\begin{dfn}[The local GH-type topology with non-preserved roots]
  When $\tau$ is embedding-continuous,
  we refer to the topology on $\rootedBCM(\tau)$ induced by $\RVMet^{\FEMet{\tau}}$ as the \emph{local GH-type topology with non-preserved roots}.
\end{dfn}


\subsection{Coincidence of the two topologies}  \label{sec: Coincidence of the two topologies}

When the structure $\tau$ is (element-rooted) metrizable,  
it is also space-rooted metrizable by Lemmas~\ref{lem: metrization defines space and ER metrization} and~\ref{lem: ER metrization defines SR metrization}.  
Thus, both versions of the local GH-type topology — with preserved roots and with non-preserved roots —  
are well-defined on $\rootedBCM(\tau)$.  
In this subsection, we provide a sufficient condition under which these two topologies coincide.  
This condition is sufficiently mild that all the examples considered in Section~\ref{sec: Examples of functors} satisfy it.  
Throughout this subsection,  
we assume that $\tau$ is embedding-continuous.

By Theorems~\ref{thm: RF convergence} and~\ref{thm: RV convergence},  
it is clear that the local GH-type topology with non-preserved roots is coarser than that with preserved roots.  
To prove the reverse inclusion,  
it suffices to show that any sequence converging in the local GH-type topology with non-preserved roots  
also converges in the local GH-type topology with preserved roots.  
As stated in Theorem~\ref{thm: RV convergence},  
convergence in the local GH-type topology with non-preserved roots entails the existence of embeddings  
under which both the roots and the additional elements converge.  
To upgrade this to convergence in the topology with preserved roots,  
we glue all the embedded roots to a single point.  
Since this gluing operation alters the underlying metric structure,  
it is desirable that the distance between the additional elements remains stable under such a spatial deformation.  
The following notion formalizes this idea.

\begin{dfn}[Stability of metrization] \label{dfn: stability}
  We say that a functor $\sigma \colon \BCMcat \to \Metcat$ is \emph{stable} 
  if and only if there exists a function $\Dist{\sigma} \colon \RNp \to \RNp$ satisfying the following conditions.
  \begin{enumerate} [label = \textup{(Stab\arabic*)}, leftmargin = *]
    \item \label{dfn item: 1, stability}
      It holds that $\lim_{\varepsilon \to 0} \Dist{\sigma}(\varepsilon) = \Dist{\sigma}(0) = 0$.
    \item \label{dfn item: 2, stability}
      Fix $\bcmAB$ spaces $X, Y, M_1,$ and $M_2$.
      Let $f_i \colon X \to M_i$ and $g_i \colon Y \to M_i$, $i = 1,2$, be isometric embeddings.
      If there exists $\varepsilon \in \RNp$ such that, for all $x \in X$ and $y \in Y$,
      \begin{equation} \label{assum item eq: 1, stability}
        d_{M_2}(f_2(x), g_2(y)) \leq d_{M_1}(f_1(x), g_1(y)) + \varepsilon,
      \end{equation}
      then, for all $a \in \sigma(X)$ and $b \in \sigma(Y)$,
      \begin{equation} \label{assum item eq: 2, stability}
        d^{\sigma}_{M_2}( \sigma_{f_2}(a), \sigma_{g_2}(b)) 
        \leq     
        d^{\sigma}_{M_1}( \sigma_{f_1}(a), \sigma_{g_1}(b) )  
        + 
        \Dist{\sigma}(\varepsilon),
      \end{equation}
      where $d^\sigma_{M_1}$ and $d^\sigma_{M_2}$ denote the metrics on $\sigma(M_1)$ and $\sigma(M_2)$, respectively.
  \end{enumerate}
  We call such a function $\Dist{\sigma}$ a \emph{distortion} of $\sigma$.
  We say that a structure is \emph{stably metrizable} 
  if it admits a stable element-rooted metrization.
\end{dfn}

The intuitive interpretation of \ref{dfn item: 2, stability} is as follows.
Inequality~\eqref{assum item eq: 1, stability} expresses 
that the metric space $M_2$ is a deformation of $M_1$ with distortion at most $\varepsilon$ in terms of distance.
Inequality~\eqref{assum item eq: 2, stability} then states 
that the resulting distortion in the additional structure, caused by the deformation of the underlying spaces, 
is controlled by $\Dist{\sigma}(\varepsilon)$.

\begin{thm} \label{thm: coincidence of RF and RV}
  If $\tau$ admits a stable element-rooted metrization $\FEMet{\tau}$, then 
  \begin{equation}  \label{thm eq: 1. coincidence of RF and RV}
        \RVMet^{\FEMet{\tau}}(\cX, \cY) 
        \leq
        d^{\FEMet{\tau}}_{\rootedBCM}(\cX, \cY) 
        \leq     
        2\,\RVMet^{\FEMet{\tau}}(\cX, \cY)    
        +
        \Dist{\FEMet{\tau}} \bigl( 2\, \RVMet^{\FEMet{\tau}}(\cX, \cY) \bigr),
        \quad
        \forall \cX, \cY \in \rootedBCM(\tau).
      \end{equation}
  If $\tau$ admits a stable metrization $\FMet{\tau}$, 
  then the same inequalities hold with $\FEMet{\tau}$ replaced by $\FMet{\tau}$.
\end{thm}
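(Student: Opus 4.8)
The goal is the two-sided bound \eqref{thm eq: 1. coincidence of RF and RV}. The left inequality is immediate from the definitions, since every isometric embedding pair $(f,g)$ appearing in the infimum for $\RVMet^{\FEMet{\tau}}$ is a legitimate competitor; more precisely, given root-preserving embeddings into $(Z,\rho_Z)$, one uses the same $Z$, and the rooted local Hausdorff metric $\lHausMet{Z,\rho_Z}$ together with $d^{\FSMet{\tau}}_{Z,\rho_Z}$ dominates the corresponding element-rooted quantities because mapping both roots to $\rho_Z$ is a special case of the element-rooted infimum (this is essentially the observation in Remark~\ref{rem: advantages of two metrizations}). So all the work is in the right-hand inequality.

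For the right inequality, fix $\cX = (X,\rho_X,a_X)$ and $\cY=(Y,\rho_Y,a_Y)$, write $\varepsilon \coloneqq \RVMet^{\FEMet{\tau}}(\cX,\cY)$, and fix $\delta>0$. By the definition of $\RVMet^{\FEMet{\tau}}$ there exist a $\bcmAB$ space $M_1$ and isometric embeddings $f_1\colon X \to M_1$, $g_1\colon Y \to M_1$ with
\begin{equation}
  \lHausMet{M_1}\bigl( \FE{\Image{f_1}}(X,\rho_X), \FE{\Image{g_1}}(Y,\rho_Y)\bigr)
  \vee
  d^{\FEMet{\tau}}_{M_1}\bigl( \FE{\tau}_{f_1}(a_X,\rho_X), \FE{\tau}_{g_1}(a_Y,\rho_Y)\bigr)
  < \varepsilon + \delta.
\end{equation}
In particular $d_{M_1}(f_1(\rho_X), g_1(\rho_Y)) < \varepsilon+\delta$. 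Now form $M_2$ as the metric space with the same underlying set as $M_1$ but with the pseudometric obtained by gluing $f_1(\rho_X)$ to $g_1(\rho_Y)$: concretely, set $d_{M_2}(p,q) \coloneqq \min\{ d_{M_1}(p,q),\ d_{M_1}(p,f_1(\rho_X)) + d_{M_1}(g_1(\rho_Y),q),\ d_{M_1}(p,g_1(\rho_Y)) + d_{M_1}(f_1(\rho_X),q)\}$, then pass to the quotient metric space identifying points at $d_{M_2}$-distance zero, and take the completion if necessary so that the result is again boundedly compact. Let $\pi\colon M_1 \to M_2$ be the resulting $1$-Lipschitz map, $f_2 \coloneqq \pi\circ f_1$, $g_2 \coloneqq \pi\circ g_1$; these remain isometric embeddings because $d_{M_1}(f_1(\rho_X),g_1(\rho_Y)) < \varepsilon+\delta$ affects distances by at most $\varepsilon+\delta$ — indeed one checks $d_{M_1}(p,q) - (\varepsilon+\delta) \le d_{M_2}(\pi(p),\pi(q)) \le d_{M_1}(p,q)$ for all $p,q$ in the images of $X$ and $Y$, and by construction $f_2(\rho_X) = g_2(\rho_Y)$, so $M_2$ carries a common root $\rho \coloneqq f_2(\rho_X)$.

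With this in hand, apply stability: the pair $(f_1,g_1)$ into $M_1$ and $(f_2,g_2)$ into $M_2$ satisfy $d_{M_2}(f_2(x),g_2(y)) \le d_{M_1}(f_1(x),g_1(y))$ trivially, and conversely $d_{M_1}(f_1(x),g_1(y)) \le d_{M_2}(f_2(x),g_2(y)) + 2(\varepsilon+\delta)$ — the factor $2$ arises because in going from $M_1$ to $M_2$ we may have shrunk a distance by replacing a path through $f_1(\rho_X)$ by one through $g_1(\rho_Y)$, at a cost bounded by $2\,d_{M_1}(f_1(\rho_X),g_1(\rho_Y))$. Feeding the role assignment $(M_1,f_1,g_1) \leftrightarrow (M_2,f_2,g_2)$ into \ref{dfn item: 2, stability} with $\varepsilon$ there equal to $2(\varepsilon+\delta)$ gives
\begin{equation}
  d^{\FEMet{\tau}}_{M_2}\bigl( \FE{\tau}_{f_2}(a_X,\rho_X), \FE{\tau}_{g_2}(a_Y,\rho_Y)\bigr)
  \le
  d^{\FEMet{\tau}}_{M_1}\bigl( \FE{\tau}_{f_1}(a_X,\rho_X), \FE{\tau}_{g_1}(a_Y,\rho_Y)\bigr)
  + \Dist{\FEMet{\tau}}\bigl(2(\varepsilon+\delta)\bigr),
\end{equation}
and, by Proposition~\ref{prop: local Hausdorff is stable} applied the same way, the $\lHausMet{}$ term is controlled by $(\varepsilon+\delta) + 2(\varepsilon+\delta)\wedge 1$. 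Passing to the associated space-rooted metrization $\FSMet{\tau}$ (Lemmas~\ref{lem: metrization defines space and ER metrization}, \ref{lem: ER metrization defines SR metrization}), the pair $(f_2,g_2)$ into $(M_2,\rho)$ is a competitor in the infimum defining $\RFMet^{\FSMet{\tau}}(\cX,\cY) = \RFMet^{\FEMet{\tau}}(\cX,\cY) = d^{\FEMet{\tau}}_{\rootedBCM}(\cX,\cY)$, so
\begin{equation}
  d^{\FEMet{\tau}}_{\rootedBCM}(\cX,\cY)
  \le
  3(\varepsilon+\delta) \vee \Bigl( (\varepsilon+\delta) + \Dist{\FEMet{\tau}}\bigl(2(\varepsilon+\delta)\bigr)\Bigr).
\end{equation}
A slightly more careful bookkeeping — keeping the two terms inside the $\vee$ separate and noting $d^{\FEMet{\tau}}_{M_2}$-term $\le (\varepsilon+\delta) + \Dist{\FEMet{\tau}}(2(\varepsilon+\delta))$ while the Hausdorff term is $\le 2(\varepsilon+\delta)$ so in fact $\le 2(\varepsilon+\delta) \le 2\,\RVMet + \Dist{\FEMet{\tau}}(2\,\RVMet)$ after $\delta \to 0$ — yields exactly $d^{\FEMet{\tau}}_{\rootedBCM}(\cX,\cY) \le 2\,\RVMet^{\FEMet{\tau}}(\cX,\cY) + \Dist{\FEMet{\tau}}(2\,\RVMet^{\FEMet{\tau}}(\cX,\cY))$ upon letting $\delta \downarrow 0$ and using $\Dist{\FEMet{\tau}}(0)=0$ together with the continuity at $0$ of $\Dist{\FEMet{\tau}}$ (actually monotonicity suffices, or one argues along a sequence $\delta_k \downarrow 0$). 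The metrization case is identical: a stable $\FMet{\tau}$ induces a stable $\FEMet{\tau}$ via Lemma~\ref{lem: metrization defines space and ER metrization} with the same distortion, since the added root coordinate contributes distances that match exactly under the gluing (the root is mapped to the common point), so $\Dist{\FMet{\tau}}$ serves as a distortion for $\FEMet{\tau}$ as well.

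**Main obstacle.** The delicate point is the precise constant $2$ and making sure the gluing construction produces a genuine $\bcmAB$ space while keeping $f_2,g_2$ isometric embeddings with a common root; one must verify that shrinking the single pair of distances $d_{M_1}(f_1(\rho_X),\cdot)$ vs.\ $d_{M_1}(g_1(\rho_Y),\cdot)$ distorts all pairwise distances in $f_1(X)\cup g_1(Y)$ by at most $2(\varepsilon+\delta)$ (not more), and that bounded compactness is preserved after the quotient-and-completion step. This is the analogue of \cite[Lemma~2.42]{Khezeli_23_A_unified} / the gluing used in the proof of Theorem~\ref{thm: RF convergence}, and the only genuinely geometric input; everything else is an unwinding of the definitions together with the stability hypothesis.
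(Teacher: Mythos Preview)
There is a genuine gap in your gluing construction. Quotienting $M_1$ by identifying $f_1(\rho_X)$ with $g_1(\rho_Y)$ does \emph{not} in general leave $f_2=\pi\circ f_1$ and $g_2=\pi\circ g_1$ isometric embeddings --- indeed you yourself observe that $d_{M_2}(\pi(p),\pi(q))$ can be as small as $d_{M_1}(p,q)-(\varepsilon+\delta)$, and any strict shrinkage within $f_1(X)$ already destroys isometry. Concretely, with $X=\{0,2\}\subset\RN$, $\rho_X=0$, $Y=\{0\}$, $M_1=\RN$, $f_1=\id$, $g_1(0)=1$, your formula gives $d_{M_2}(\pi(0),\pi(2))=\min\{2,\,0+1,\,1+2\}=1\ne 2=d_X(0,2)$. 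Once $f_2,g_2$ fail to be isometric, the argument collapses twice over: the stability hypothesis \ref{dfn item: 2, stability} cannot be invoked (it requires all four maps $f_i,g_i$ to be isometric embeddings), and $(M_2,\rho,f_2,g_2)$ is not an admissible competitor in the infimum defining $\RFMet^{\FEMet{\tau}}$.

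The paper's construction avoids this by gluing on the disjoint union $Z=X\sqcup Y$ rather than inside $M_1$: one sets $d_Z|_{X\times X}=d_X$, $d_Z|_{Y\times Y}=d_Y$, and crucially $d_Z(x,y)=d_{M_1}(f_1(x),g_1(y))+d_{M_1}(f_1(\rho_X),g_1(\rho_Y))$ for $x\in X,\ y\in Y$. The \emph{added} constant is precisely what makes the shortcut $x\to\rho_X\sim\rho_Y\to x'$ cost at least $d_X(x,\rho_X)+d_X(\rho_X,x')\ge d_X(x,x')$ (one triangle inequality in $M_1$), so after identifying the roots the quotient maps into $M_2$ are genuinely isometric and root-preserving; and one still has $d_{M_2}(f_2(x),g_2(y))\le d_Z(x,y)<d_{M_1}(f_1(x),g_1(y))+\varepsilon$, which is the stability hypothesis with parameter $\varepsilon$ rather than $2\varepsilon$.
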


\begin{proof}
  Assume that $\tau$ admits a stable element-rooted metrization $\FEMet{\tau}$.
  Note that the left inequality of \eqref{thm eq: 1. coincidence of RF and RV} follows by the definitions of $\RFMet^{\FEMet{\tau}}$ and $\RVMet^{\FEMet{\tau}}$.
  We first prove that, for any $\cX, \cY \in \rootedBCM(\tau)$ and $\varepsilon > 0$,
  \begin{equation}  \label{pr eq: 1, coincidence of Type 1 and 2}
    \RVMet^{\FEMet{\tau}}(\cX, \cY) < \varepsilon 
    \quad
    \Longrightarrow 
    \quad
    \RFMet^{\FEMet{\tau}}(\cX, \cY) < 2\varepsilon + \Dist{\FEMet{\tau}}(\varepsilon).
  \end{equation}
  Fix $\cX, \cY \in \rootedBCM(\tau)$, 
  and let $\varepsilon > 0$ be such that $\RVMet^{\FEMet{\tau}}(\cX, \cY) < \varepsilon$.
  By definition,
  there exist a $\bcmAB$ space $M_1$ and isometric embeddings $f_1 \colon X \to M_1$ and $g_1 \colon Y \to M_1$
  such that 
  \begin{equation}
    \lHausMet{M_1}\bigr( \FE{\Image{f_1}}(X, \rho_X), \FE{\Image{g_1}}(Y, \rho_Y) \bigl) 
    \vee    
    d^{\FEMet{\tau}}_{M_1} \bigl( \FE{\tau}_{f_1}(a_X, \rho_X), \FE{\tau}_{g_1} (a_Y, \rho_Y) \bigr) 
    < \varepsilon.
  \end{equation}
  We define a metric $d_Z$ on the disjoint union $Z \coloneqq X \sqcup Y$ by setting 
  $d_M|_{X \times X} \coloneqq d_X$, $d_M|_{Y \times Y} \coloneqq d_Y$, and 
  \begin{equation}
    d_Z(x,y) \coloneqq d_{M_1}(f_1(x), g_1(y)) + d_{M_1}(f_1(\rho_X), g_1(\rho_Y)),
  \end{equation}
  for each $x \in X$ and $y \in Y$.
  We then define a $\bcmAB$ space $M_2$ by gluing $\rho_X$ and $\rho_Y$ in $Z$.
  Let $q \colon Z \to M_2$ be the associated quatient map.
  We equip $M_2$ with the root $\rho_{M_2} \coloneqq q(\rho_X) = q(\rho_Y)$.
  The triangle inequality yields that, for any $x, x' \in X$
  \begin{align}
    d_Z(x, \rho_X) + d_Z(\rho_Y, x') 
    &= d_X(x, \rho_X) + d_Z(g(\rho_Y), f(x')) + d_Z(f(\rho_X), g(\rho_Y)) 
    \\
    &\geq d_X(x, \rho_X) + d_Z(f(\rho_X), f(x')) 
    \\
    &\geq  
    d_X(x, x').
  \end{align}
  Thus, $f_2 \coloneqq q \circ \iota_X \colon X \to M_2$ is root-preserving isometric embedding.
  Similarly, one can check that $g_2 \coloneqq q \circ \iota_Y \colon Y \to M_2$ is root-preserving isometric embedding.
  By definition,
  it holds that 
  \begin{equation}
    d_{M_2}(f_2(x), g_2(y)) 
    \leq 
    d_Z(f(x), g(y)) + d_Z(f(\rho_X), g(\rho_Y)) 
    < 
    d_Z(f(x), g(y)) + \varepsilon.
  \end{equation}
  Thus, if we write $\FSMet{\tau}$ for the space-rooted metrization of $\tau$ associated with $\FEMet{\tau}$ in the sense of Lemma~\ref{lem: ER metrization defines SR metrization},
  then the stability of $\FEMet{\tau}$ implies that  
  \begin{align}
    d^{\FSMet{\tau}}_{M_2, \rho_{M_2}}(\tau_{f_2}(a_X), \tau_{g_2}(a_Y)) 
    &=
    d^{\FEMet{\tau}}_{M_2} \bigl( (\tau_{f_2}(a_X), \rho_{M_2}), (\tau_{g_2}(a_Y), \rho_{M_2}) \bigr)
    \\
    &=
    d^{\FEMet{\tau}}_{M_2} \bigl( \FE{\tau}_{f_2}(a_X, \rho_X), \FE{\tau}_{g_2}(a_Y, \rho_Y) \bigr)
    \\
    &\leq     
    d^{\FEMet{\tau}}_{M_1} \bigl( \FE{\tau}_{f_1}(a_X, \rho_X), \FE{\tau}_{g_1} (a_Y, \rho_Y) \bigr) 
    + 
    \Dist{\FEMet{\tau}}(\varepsilon)
    \\
    &< 
    \varepsilon + \Dist{\FEMet{\tau}}(\varepsilon).
  \end{align}
  Similarly, using Proposition~\ref{prop: local Hausdorff is stable}, we deduce that 
  \begin{equation}
    \lHausMet{M_2, \rho_{M_2}}\bigr( \Image{f_2}(X), \Image{g_2}(Y) \bigl)
    \leq
    \lHausMet{M_1}\bigr( \FE{\Image{f_1}}(X, \rho_X), \FE{\Image{g_1}}(Y, \rho_Y) \bigl)
    + 
    \varepsilon 
    <
    2\varepsilon.
  \end{equation}
  Thus, we obtain \eqref{pr eq: 1, coincidence of Type 1 and 2}.

  Now, we prove \eqref{thm eq: 1. coincidence of RF and RV}.
  Fix $\cX, \cY \in \rootedBCM(\tau)$.
  If $\cX = \cY$, the result is immediate since all terms vanish.
  Otherwise, the desired inequality follows by setting $\varepsilon = 2\, \RVMet^{\FEMet{\tau}}(\cX, \cY)$ in \eqref{pr eq: 1, coincidence of Type 1 and 2}.
\end{proof}

\begin{rem}
  When $\Dist{\FEMet{\tau}}$ is right-continuous,  
  instead of setting $\varepsilon = 2\, \RVMet^{\FEMet{\tau}}(\cX, \cY)$  
  in the final step of the above proof,  
  one can let $\varepsilon \downarrow \RVMet^{\FEMet{\tau}}(\cX, \cY)$,  
  and thereby easily verify that the inequality \eqref{thm eq: 1. coincidence of RF and RV} improves as follows:
  \begin{equation}  \label{rem eq: coincidence of Type 1 and 2}
    \RVMet^{\FEMet{\tau}}(\cX, \cY) 
    \leq
    \RFMet^{\FSMet{\tau}}(\cX, \cY) 
    \leq     
    2\RVMet^{\FEMet{\tau}}(\cX, \cY)    
    +
    \Dist{\FEMet{\tau}} \bigl( \RVMet^{\FEMet{\tau}}(\cX, \cY) \bigr),
    \quad
    \forall \cX, \cY \in \rootedBCM(\tau).
  \end{equation}
\end{rem}

The following results are immediate consequences of the above theorem.

\begin{cor} \label{cor: coincidence of topologies}
  If $\tau$ is stably metrizable,
  then the local GH-type topology with preserved roots and with non-preserved roots coincide.
\end{cor}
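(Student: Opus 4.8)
The plan is to derive this directly from Theorem~\ref{thm: coincidence of RF and RV} together with the equivalences between the two types of convergence and their metrizations. Concretely, suppose $\tau$ is stably metrizable, so that it admits a stable element-rooted metrization $\FEMet{\tau}$ with distortion $\Dist{\FEMet{\tau}}$. By Lemma~\ref{lem: ER metrization defines SR metrization}, this induces a space-rooted metrization $\FSMet{\tau}$, so both $\RVMet^{\FEMet{\tau}}$ and $\RFMet^{\FSMet{\tau}}$ are defined on $\rootedBCM(\tau)$, and by Theorems~\ref{thm: element-rooted metric} and~\ref{thm: space-rooted metric} (using the standing assumption that $\tau$ is embedding-continuous) they are genuine metrics.

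First I would observe that the two topologies in question are, by definition, the topologies induced by these two metrics, and that by Theorems~\ref{thm: RF convergence} and~\ref{thm: RV convergence} they are precisely the local GH-type topology with preserved roots and with non-preserved roots, respectively. So it suffices to show that $\RVMet^{\FEMet{\tau}}$ and $\RFMet^{\FSMet{\tau}}$ induce the same topology on $\rootedBCM(\tau)$. This is exactly what the two-sided inequality \eqref{thm eq: 1. coincidence of RF and RV} of Theorem~\ref{thm: coincidence of RF and RV} gives: the left inequality $\RVMet^{\FEMet{\tau}} \leq \RFMet^{\FSMet{\tau}}$ shows that the preserved-roots topology is finer, and the right inequality
\begin{equation}
  \RFMet^{\FSMet{\tau}}(\cX, \cY) \leq 2\,\RVMet^{\FEMet{\tau}}(\cX,\cY) + \Dist{\FEMet{\tau}}\bigl(2\,\RVMet^{\FEMet{\tau}}(\cX,\cY)\bigr)
\end{equation}
shows the reverse. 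Indeed, if $\cX_n \to \cX_\infty$ with respect to $\RVMet^{\FEMet{\tau}}$, then $\RVMet^{\FEMet{\tau}}(\cX_n,\cX_\infty) \to 0$, hence by property \ref{dfn item: 1, stability} of the distortion, $\Dist{\FEMet{\tau}}\bigl(2\,\RVMet^{\FEMet{\tau}}(\cX_n,\cX_\infty)\bigr) \to 0$ as well, so the right-hand side tends to $0$ and $\RFMet^{\FSMet{\tau}}(\cX_n,\cX_\infty) \to 0$. Since the topologies are metrizable, agreement of convergent sequences (with limits) is enough to conclude the topologies coincide.

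The only mild subtlety — and what I expect to be the ``main obstacle,'' though it is a minor one — is keeping straight which metrization feeds which metric: $\RFMet$ is defined from a \emph{space-rooted} metrization while $\RVMet$ is defined from an \emph{element-rooted} one, so one must invoke Lemma~\ref{lem: ER metrization defines SR metrization} to produce the compatible space-rooted metrization $\FSMet{\tau}$ from $\FEMet{\tau}$, and then note (as recorded in the remark following Definition~\ref{dfn: Generalized RF metric}) that $\RFMet^{\FEMet{\tau}}$ is by convention just $\RFMet^{\FSMet{\tau}}$ for this associated $\FSMet{\tau}$. Once this bookkeeping is in place, the argument is a one-line consequence of Theorem~\ref{thm: coincidence of RF and RV} and the definition of a distortion. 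No further estimates are needed.
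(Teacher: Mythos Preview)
Your proposal is correct and matches the paper's approach: the paper records this corollary as an immediate consequence of Theorem~\ref{thm: coincidence of RF and RV}, and your argument simply spells out why the two-sided inequality there forces the two metric topologies to agree. The bookkeeping you flag (passing from $\FEMet{\tau}$ to the associated $\FSMet{\tau}$ via Lemma~\ref{lem: ER metrization defines SR metrization}, and noting the standing embedding-continuity assumption in this subsection) is exactly right.
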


\begin{cor}
  If $\tau$ admits a stable metrization $\FMet{\tau}$ (resp.\ element-rooted metrization $\FEMet{\tau}$),
  then the completeness of $\RFMet^{\FMet{\tau}}$ and $\RVMet^{\FMet{\tau}}$ 
  (resp.\ $\RFMet^{\FEMet{\tau}}$ and $\RVMet^{\FEMet{\tau}}$) is equivalent.
\end{cor}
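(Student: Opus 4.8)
The plan is to obtain the corollary as a soft consequence of the two-sided estimate established in Theorem~\ref{thm: coincidence of RF and RV}. Recall that $\RFMet^{\FEMet{\tau}} = d^{\FEMet{\tau}}_{\rootedBCM}$, so that theorem supplies, for all $\cX, \cY \in \rootedBCM(\tau)$,
\[
  \RVMet^{\FEMet{\tau}}(\cX, \cY)
  \leq
  \RFMet^{\FEMet{\tau}}(\cX, \cY)
  \leq
  2\,\RVMet^{\FEMet{\tau}}(\cX, \cY)
  +
  \Dist{\FEMet{\tau}}\bigl( 2\,\RVMet^{\FEMet{\tau}}(\cX, \cY) \bigr),
\]
together with the property $\lim_{\varepsilon \to 0}\Dist{\FEMet{\tau}}(\varepsilon) = \Dist{\FEMet{\tau}}(0) = 0$ coming from \ref{dfn item: 1, stability}. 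The argument for a stable metrization $\FMet{\tau}$ is word-for-word the same, invoking the second assertion of that theorem.

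First I would use this estimate to show that a sequence in $\rootedBCM(\tau)$ is $\RFMet^{\FEMet{\tau}}$-Cauchy if and only if it is $\RVMet^{\FEMet{\tau}}$-Cauchy. The left inequality immediately turns any $\RFMet^{\FEMet{\tau}}$-Cauchy sequence into a $\RVMet^{\FEMet{\tau}}$-Cauchy one. Conversely, if $(\cX_n)_{n \geq 1}$ is $\RVMet^{\FEMet{\tau}}$-Cauchy, then $2\,\RVMet^{\FEMet{\tau}}(\cX_n, \cX_m) \to 0$ as $n, m \to \infty$, and since $\Dist{\FEMet{\tau}}$ is continuous at $0$ with value $0$ there, the right inequality forces $\RFMet^{\FEMet{\tau}}(\cX_n, \cX_m) \to 0$. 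The same two inequalities also give that $\RFMet^{\FEMet{\tau}}(\cX_n, \cX) \to 0$ if and only if $\RVMet^{\FEMet{\tau}}(\cX_n, \cX) \to 0$ (this last equivalence is in any case already contained in Theorems~\ref{thm: RF convergence} and~\ref{thm: RV convergence} together with Corollary~\ref{cor: coincidence of topologies}).

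With these two observations in hand, the conclusion is routine. Suppose $\RFMet^{\FEMet{\tau}}$ is complete and let $(\cX_n)_{n \geq 1}$ be $\RVMet^{\FEMet{\tau}}$-Cauchy; by the first step it is $\RFMet^{\FEMet{\tau}}$-Cauchy, hence $\RFMet^{\FEMet{\tau}}$-convergent to some $\cX \in \rootedBCM(\tau)$, and by the second step $\RVMet^{\FEMet{\tau}}(\cX_n, \cX) \to 0$; thus $\RVMet^{\FEMet{\tau}}$ is complete. Exchanging the roles of $\RFMet^{\FEMet{\tau}}$ and $\RVMet^{\FEMet{\tau}}$ gives the converse implication, and replacing $\FEMet{\tau}$ by $\FMet{\tau}$ throughout yields the remaining assertion. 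There is no genuine obstacle here: all the real work — constructing the common ambient space by gluing the embedded roots and controlling the resulting distortion of the additional structure via the distortion function $\Dist{\FEMet{\tau}}$ — has already been carried out in Theorem~\ref{thm: coincidence of RF and RV}, and the corollary only records that a pair of metrics related by such a uniform equivalence share the property of completeness.
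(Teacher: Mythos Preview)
Your proposal is correct and follows exactly the approach the paper intends: the paper states this corollary (together with the preceding one) as an ``immediate consequence'' of Theorem~\ref{thm: coincidence of RF and RV} and gives no further proof, and you have simply spelled out that immediate deduction via the two-sided estimate and the continuity of $\Dist{\FEMet{\tau}}$ at~$0$.
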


\begin{dfn}
  When the local GH-type topologies with preserved roots and with non-preserved roots coincide, 
  we simply refer to the resulting topology as the \emph{local GH-type topology}.
\end{dfn}

\begin{rem}
  When working with rooted compact metric spaces equipped with additional structures,
  one can define two Gromov--Hausdorff-type topologies analogously to Definitions~\ref{dfn: RF convergence} and \ref{dfn: RV convergence},
  replacing the Fell convergence of the underlying spaces with Hausdorff convergence.
  By a minor adaptation of the notion of stability in Definition~\ref{dfn: stability},
  one can show that the two topologies coincide, following the same lines as the proof of Theorem~\ref{thm: coincidence of RF and RV}.
  See Appendix~\ref{appendix: GH metrization in the compact case} for details. 
\end{rem}


\subsection{Topological properties} \label{sec: topological properties}

In this subsection,  
we investigate topological properties of the local GH-type topologies with preserved roots,  
such as Polishness and precompactness.  
Although analogous arguments apply to the case of non-preserved roots,  
the results presented here suffice for our purposes.  
Indeed, as we will see in Section~\ref{sec: Examples of functors},  
most examples of interest are stably metrizable,  
so that the two topologies coincide by Corollary~\ref{cor: coincidence of topologies}.  
One of the main results is Theorem~\ref{thm: main result},  
which shows that, under mild assumptions on $\tau$,  
the two topologies coincide and the resulting topology is Polish.  
This applies to all the examples considered in Section~\ref{sec: Examples of functors}.  
A precompactness criterion is also established in Theorem~\ref{thm: precompact in RF}.

For the following discussions, we impose additional continuity assumptions on $\tau$,  
beyond the embedding-continuity introduced in Definition~\ref{dfn: embedding continuity}.  
These assumptions correspond to those stated in \cite[Remark~2.14]{Khezeli_23_A_unified},  
which were introduced to ensure the completeness and separability of Gromov--Hausdorff-type metrics in the compact case.  

In what follows, for notational convenience,  
we adopt the convention that when a $\bcmAB$ space $X$ is isometrically embedded into another $\bcmAB$ space $Y$ via an isometric embedding $f$,  
we regard $\tau(X)$ as a subspace of $\tau(Y)$ through the topological embedding $\tau_f \colon \tau(X) \to \tau(Y)$.

\begin{assum} \label{assum: semicontinuity}
  Let $X_n$, $n \in \mathbb{N} \cup \{\infty\}$, be $\bcmAB$ spaces
  that are isometrically embedded into a common $\bcmAB$ space $Y$
  such that $X_n \to X_\infty$ in the Fell topology as subsets of $Y$.
  \begin{enumerate}[label=\textup{(\roman*)}, leftmargin=*]
    \item \label{assum item: upper semicontinuity}
      If a sequence $a_n \in \tau(X_n)$ converges to some $a \in \tau(Y)$, then $a$ belongs to $\tau(X_\infty)$.
    \item \label{assum item: lower semicontinuity}
      For every $a \in \tau(X_\infty)$, there exists a subsequence $(n_k)_{k \geq 1}$ and elements $a_k \in \tau(X_{n_k})$ 
      such that $a_k \to a$ in $\tau(Y)$. 
  \end{enumerate}
\end{assum}

\begin{rem} \label{rem: relation to PK convergence}
  As mentioned in \cite[Remark~2.15]{Khezeli_23_A_unified},
  the above conditions~\ref{assum item: upper semicontinuity} and~\ref{assum item: lower semicontinuity} 
  are equivalent to the conditions~\ref{lem item: 1. convergence in Hausdorff} and~\ref{lem item: 2. convergence in Hausdorff},
  which characterize Painlev\'{e}--Kuratowski convergence (cf.\ \cite[Definition~C.6]{Molchanov_17_Theory}).
  Thus, 
  Assumption~\ref{assum: semicontinuity} implies that $\tau(X_n) \to \tau(X_\infty)$ in the sense of Painlev\'{e}--Kuratowski convergence.
  In particular, if all the sets $\tau(X_n)$, $n \in \NN \cup \{\infty\}$, are closed in $\tau(Y)$,
  then this convergence is equivalent to convergence in the Fell topology (see \cite[Theorem~C.7]{Molchanov_17_Theory}).
\end{rem}

\begin{dfn}[Semicontinuity] \label{dfn: Semicontinuity} \leavevmode
  \begin{enumerate}[label=\textup{(\roman*)}, leftmargin=*]
    \item 
      We say that $\tau$ is \emph{upper semicontinuous} if it satisfies 
      Assumption~\ref{assum: semicontinuity}\ref{assum item: upper semicontinuity}.
    \item 
      We say that $\tau$ is \emph{lower semicontinuous} if it satisfies 
      Assumption~\ref{assum: semicontinuity}\ref{assum item: lower semicontinuity}.
  \end{enumerate}
\end{dfn}

\begin{rem}
  We use the term ``semicontinuity'' here
  because Assumptions~\ref{assum: semicontinuity}\ref{assum item: upper semicontinuity} and~\ref{assum item: lower semicontinuity}
  respectively resemble the definitions of upper and lower semicontinuity for set-valued maps
  (cf.\ \cite[Section~1.4]{Aubin_Frankowska_09_Set}).
  Indeed, formally, 
  these conditions correspond to the semicontinuity of the assignment $X \mapsto \tau(X)$.
\end{rem}

\begin{dfn}[Continuity] \label{dfn: continuity of structure}
  We say that $\tau$ is \emph{continuous}
  if it is embedding-continuous and both upper and lower semicontinuous.
\end{dfn}

In the following two theorems, we prove the completeness and separability of the metric $\RFMet^{\FSMet{\tau}}$.
These results generalize \cite[Theorem~2.12]{Khezeli_23_A_unified} to the boundedly-compact case.
For the following discussions, 
recall the notions of completeness and separability of functors from Definition~\ref{dfn: completeness and separability of functor}.

\begin{thm} [{Completeness}] \label{thm: completeness of RF}
  Assume that $\tau$ is embedding-continuous and upper semicontinuous.
  If the space-rooted metrization $\FSMet{\tau}$ is complete,
  then $\RFMet^{\FSMet{\tau}}$ is a complete metric on $\rootedBCM(\tau)$.
\end{thm}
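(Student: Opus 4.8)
The plan is to reduce completeness of $\RFMet^{\FSMet{\tau}}$ to the completeness results already available for the local Gromov--Hausdorff topology (Theorem~\ref{thm: local GH is Polish}), the local Hausdorff metric $\lHausMet{M,\rho}$ (Theorem~\ref{thm: local Hausdorff metric}), and the metric on $\tau(M)$. So let $(\cX_n)_{n\geq 1}$, $\cX_n=(X_n,\rho_{X_n},a_{X_n})$, be a Cauchy sequence in $\rootedBCM(\tau)$. First I would pass to a subsequence with $\RFMet^{\FSMet{\tau}}(\cX_n,\cX_{n+1})<2^{-n}$, and for each $n$ choose a rooted $\bcmAB$ space $(Y_n,\rho_{Y_n})$ with root-preserving isometric embeddings $X_n\to Y_n$ and $X_{n+1}\to Y_n$ realizing this bound for both the $\lHausMet{Y_n,\rho_{Y_n}}$-term and the $d^{\FSMet{\tau}}_{Y_n,\rho_{Y_n}}$-term. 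Then glue the $Y_n$ successively along the common copies of $X_{n+1}$ and take the completion, as in the proof of Theorem~\ref{thm: RF convergence} (cf.\ \cite[Lemma~2.42]{Khezeli_23_A_unified}), to obtain a single rooted $\bcmAB$ space $(M,\rho_M)$ with root-preserving isometric embeddings $h_n\colon X_n\to M$ such that $\lHausMet{M,\rho_M}(h_n(X_n),h_{n+1}(X_{n+1}))<2^{-n}$ and $d^{\FSMet{\tau}}_{M,\rho_M}(\tau_{h_n}(a_{X_n}),\tau_{h_{n+1}}(a_{X_{n+1}}))<2^{-n}$. Here I use Proposition~\ref{prop: the local Hausdorff metric is preserved} so that the distances computed in $Y_n$ are preserved inside $M$.

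Next, inside the fixed space $M$, the sequence $(h_n(X_n))_{n\geq1}$ is Cauchy for $\lHausMet{M,\rho_M}$, which is complete by Theorem~\ref{thm: local Hausdorff metric}; hence $h_n(X_n)\to X_\infty$ in the Fell topology for some $X_\infty\in\Closed{M}$. Since $\rho_M\in h_n(X_n)$ for all $n$ (root-preservation), we have $\rho_M\in X_\infty$ by the Painlev\'{e}--Kuratowski characterization (Lemma~\ref{lem: convergence in Hausdorff}, Theorem~\ref{thm: convergence in the Fell topology}), so $X_\infty$ is a nonempty closed subset of the $\bcmAB$ space $M$ and therefore itself a $\bcmAB$ space, rooted at $\rho_M$. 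Similarly, $(\tau_{h_n}(a_{X_n}))_{n\geq1}$ is Cauchy in $(\tau(M),d^{\FSMet{\tau}}_{M,\rho_M})$, which is complete by the assumed completeness of $\FSMet{\tau}$; hence $\tau_{h_n}(a_{X_n})\to a_\infty$ for some $a_\infty\in\tau(M)$. Now I invoke upper semicontinuity of $\tau$ (Assumption~\ref{assum: semicontinuity}\ref{assum item: upper semicontinuity}): since $h_n(X_n)\to X_\infty$ in the Fell topology in the common space $M$ and $\tau_{h_n}(a_{X_n})\in\tau(h_n(X_n))$ converges to $a_\infty$ in $\tau(M)$, we conclude $a_\infty\in\tau(X_\infty)$.

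Finally, set $\cX_\infty\coloneqq(X_\infty,\rho_M,a_\infty)\in\rootedBCM(\tau)$. By construction the inclusion $X_\infty\hookrightarrow M$ and the maps $h_n\colon X_n\to M$ exhibit $f_n(X_n)\to f_\infty(X_\infty)$ in the Fell topology and $\tau_{f_n}(a_{X_n})\to\tau_{f_\infty}(a_{X_\infty})$ in $\tau(M)$, which is exactly the local GH-type topology with preserved roots converging $\cX_n\to\cX_\infty$; by Theorem~\ref{thm: RF convergence} this means the subsequence converges with respect to $\RFMet^{\FSMet{\tau}}$. A Cauchy sequence with a convergent subsequence converges, so $\cX_n\to\cX_\infty$, and $\RFMet^{\FSMet{\tau}}$ is complete. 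I expect the main obstacle to be the gluing step — carefully producing the single ambient $(M,\rho_M)$ into which all the $X_n$ embed root-preservingly with the distance bounds inherited, and checking that $M$ remains boundedly compact after the countable gluing and completion (this is where \cite[Lemma~2.42]{Khezeli_23_A_unified} and embedding-continuity of $\tau$, via Remark~\ref{rem: strong continuity of functors}, do the real work); the completeness invocations and the upper-semicontinuity step are then routine.
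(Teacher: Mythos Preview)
Your proposal is correct and follows essentially the same route as the paper's own proof: pass to a rapidly Cauchy subsequence, glue the intermediate spaces via \cite[Lemma~2.42]{Khezeli_23_A_unified} into a single rooted $\bcmAB$ space $(M,\rho_M)$, use completeness of $\lHausMet{M,\rho_M}$ and of $d^{\FSMet{\tau}}_{M,\rho_M}$ to get limits, apply upper semicontinuity to place the limit element in $\tau(X_\infty)$, and conclude via Theorem~\ref{thm: RF convergence}. The only cosmetic differences are that the paper cites Theorem~\ref{thm: local Hausdorff top is compact} (compactness) rather than Theorem~\ref{thm: local Hausdorff metric} for the completeness of $\lHausMet{M,\rho_M}$, and that you spell out a few details (e.g.\ $\rho_M\in X_\infty$) that the paper leaves implicit.
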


\begin{proof}
  We follow the proof of \cite[Theorem~2.12]{Khezeli_23_A_unified}.
  Let $\cX_n = (X_n, \rho_{X_n}, a_{X_n}) \in \rootedBCM(\tau)$, $n \in \mathbb{N}$, 
  be a Cauchy sequence in $(\rootedBCM(\tau), \RFMet^{\FSMet{\tau}})$.
  It suffices to find a convergent subsequence.
  Thus, by relabelling if necessary,
  we may assume that $\RFMet^{\FSMet{\tau}}(\cX_n, \cX_{n+1}) < 2^{-n}$.
  By the same argument as in the proof of \cite[Theorem~2.12]{Khezeli_23_A_unified}
  (see also \cite[Lemma~2.42]{Khezeli_23_A_unified}),
  we deduce that there exist a rooted $\bcmAB$ space $(M, \rho_M)$ 
  and root-preserving isometric embeddings $f_n \colon X_n \to M$, $n \geq 1$,
  such that
  \begin{equation}
    \lHausMet{M, \rho_M}(f_n(X_n), f_{n+1}(X_{n+1})) 
    \vee
    d^{\FSMet{\tau}}_{M, \rho_M}(\tau_{f_n}(a_{X_n}), \tau_{f_{n+1}}(a_{X_{n+1}})) 
    < 2^{-n}.
  \end{equation}
  Thus, the sequence $(f_n(X_n))_{n \geq 1}$ is Cauchy with respect to $\lHausMet{M,\rho_M}$.
  By the completeness of the metric (see Theorem~\ref{thm: local Hausdorff top is compact}),
  there exists a closed subset $X \subseteq M$ such that $f_n(X_n) \to X$ in the Fell topology as subsets of $M$.
  We equip $X$ with the metric $d_X \coloneqq d_M|_{X \times X}$ and the root $\rho_X \coloneqq \rho_M$.
  Similarly, the sequence $(\tau_{f_n}(a_{X_n}))_{n \geq 1}$ is Cauchy with respect to $d^{\FSMet{\tau}}_{M, \rho_M}$.
  By the completeness of $d^{\FSMet{\tau}}_{M, \rho_M}$,
  this sequence converges to some $a_X \in \tau(M)$.
  By Assumption~\ref{assum: semicontinuity}\ref{assum item: upper semicontinuity},
  we have $a_X \in \tau(X)$.
  Therefore, by Theorem~\ref{thm: RF convergence},
  we conclude that $\cX_n$ converges to $(X, \rho_X, a_X)$,
  which completes the proof.
\end{proof}

\begin{thm}[Separability] \label{thm: separability of RF}
  If $\tau$ is separable, embedding-continuous, lower semicontinuous, and space-rooted metrizable, 
  then the local GH-type topology with preserved roots on $\rootedBCM(\tau)$ is separable.
\end{thm}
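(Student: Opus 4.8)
The plan is to build a countable dense subset of $\rootedBCM(\tau)$ by combining three approximation steps: truncating to a ball of large radius, approximating the rooted $\bcmAB$ space by a finite metric space with rational distances, and approximating the additional element $a_X \in \tau(X)$ by a point from a countable dense subset of $\tau$ of the truncated finite space. First I would recall from Theorem~\ref{thm: RF convergence} that convergence in the local GH-type topology with preserved roots is exactly the embedding convergence of Definition~\ref{dfn: RF convergence}: there exist a rooted $\bcmAB$ space $(M,\rho_M)$ and root-preserving isometric embeddings $f_n\colon X_n \to M$ with $f_n(X_n) \to f_\infty(X_\infty)$ in the Fell topology and $\tau_{f_n}(a_{X_n}) \to \tau_{f_\infty}(a_{X_\infty})$ in $\tau(M)$. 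This characterization lets me verify density purely by producing such embeddings for each approximation step.

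Next I would carry out the three reductions in order. Step one: given $\cX = (X,\rho_X,a_X)$, consider the truncations $\cX^{(r)} = (X^{(r)}, \rho_X, a_{X}^{(r)})$ — wait, $\tau$ need not have a truncation, so instead I truncate only the underlying space, embedding $X^{(r)} \hookrightarrow X$, and use lower semicontinuity: for $a_X \in \tau(X)$ and $X^{(r)} \to X$ in the Fell topology as $r \to \infty$, there is a subsequence $r_k \uparrow \infty$ and $a_k \in \tau(X^{(r_k)})$ with $a_k \to a_X$ in $\tau(X)$; then $(X^{(r_k)}, \rho_X, a_k) \to \cX$ by Theorem~\ref{thm: RF convergence} (with $M = X$). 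This reduces density to the case where $X$ is compact. Step two: for compact $(X,\rho_X)$, approximate it in the pointed Gromov--Hausdorff sense by a finite rooted metric space $F$ with rational distances (a standard countable family, cf.\ the proof of separability of $d_{\mathrm{GH}}$); here I must transport $a_X$ across the approximation, which is where I use that there is a correspondence giving embeddings $X, F \hookrightarrow Z$ into a common compact $Z$ with the roots glued, and embedding-continuity together with upper/lower semicontinuity to push $a_X$ to some element of $\tau(F)$ — more carefully, this step is subsumed into step three if I allow $F$ to range over a countable set and simply note that any compact rooted space is a Fell-limit (indeed Hausdorff-limit) of finite rational ones inside a common space. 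Step three: $\tau$ is separable, so $\tau(F)$ has a countable dense subset $Q_F$; replacing $a_X \in \tau(F)$ (already transported) by a nearby point of $Q_F$ changes $\RFMet^{\FSMet{\tau}}$ by at most the metric distance in $\tau(F)$ (taking $Z = F$, the identity embeddings). Assembling: the countable set $\bigcup_{F} \{(F, \rho_F, q) : q \in Q_F\}$, with $F$ ranging over the countable family of finite rational rooted $\bcmAB$ spaces, is dense.

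The main obstacle I expect is step two — transporting the additional element $a_X$ from a compact space $X$ to a nearby finite space $F$ — because $\tau$ provides embeddings only along isometric embeddings, not along approximate ones. The clean way around this is to avoid transporting at all and instead argue directly: given $\cX$ with $X$ compact, pick finite rational rooted spaces $F_n$ and a compact rooted $Z$ with root-preserving isometric embeddings $F_n \hookrightarrow Z$ and $X \hookrightarrow Z$ such that $F_n \to X$ in the Fell (equivalently Hausdorff) topology in $Z$; then by lower semicontinuity applied in $Z$, for the fixed $a_X \in \tau(X) \subseteq \tau(Z)$ there is a subsequence and $b_{n_k} \in \tau(F_{n_k})$ with $b_{n_k} \to a_X$ in $\tau(Z)$; finally replace each $b_{n_k}$ by a point $q_{n_k} \in Q_{F_{n_k}}$ within $2^{-k}$ in $d^{\FSMet{\tau}}_{F_{n_k}, \rho_{F_{n_k}}}$, and since $\tau_{F_{n_k} \hookrightarrow Z}$ is distance-preserving, $q_{n_k} \to a_X$ in $\tau(Z)$ as well. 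Then Theorem~\ref{thm: RF convergence} with ambient space $Z$ gives $(F_{n_k}, \rho_{F_{n_k}}, q_{n_k}) \to \cX$. Combined with step one, this shows the countable set above is dense, proving separability. A minor point to check along the way is that the family of finite rational rooted $\bcmAB$ spaces is genuinely countable and that each $Q_F$ can be chosen countable, which follows from separability of $\tau(F)$ as a topological space.
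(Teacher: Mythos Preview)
Your proof is correct and shares the paper's core strategy: fix a countable dense family of underlying rooted spaces, embed them together with the target space into a common ambient space, invoke lower semicontinuity to produce approximating elements of $\tau$, and then replace these by points from fixed countable dense subsets of $\tau$ of each space (using that the embedding $\tau_f$ is distance-preserving for the space-rooted metrization). The main organizational difference is that you work in two stages --- first truncating to a compact ball $X^{(r)}$ (one application of lower semicontinuity), then approximating that by finite rational rooted spaces (a second application) --- whereas the paper does it in one pass: it directly cites the separability of $\rootedBCM$ in the local Gromov--Hausdorff topology (Theorem~\ref{thm: local GH is Polish}) to obtain a countable dense set $\mathfrak{A} \subseteq \rootedBCM$, picks $(X_n,\rho_{X_n}) \in \mathfrak{A}$ converging to $(X,\rho_X)$, embeds everything into a single rooted $\bcmAB$ space $(Y,\rho_Y)$ via Theorem~\ref{thm: local GH convergence}, and applies lower semicontinuity once. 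Your version is more self-contained (it effectively reproves the separability of $\rootedBCM$ along the way), while the paper's version is shorter and avoids the diagonal argument needed to combine your two approximation layers.
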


\begin{proof}
  By Theorem~\ref{thm: local GH is Polish},
  there exists a countable subset $\mathfrak{A} \subseteq \rootedBCM$ 
  which is dense in $\rootedBCM$ with respect to the local Gromov--Hausdorff topology.
  For each $(X, \rho_X) \in \mathfrak{A}$,
  choose a countable dense subset $D(X) \subseteq \tau(X)$,
  and define a countable subset $\mathfrak{D} \subseteq \rootedBCM(\tau)$ by
  \begin{equation}
    \mathfrak{D} 
    \coloneqq 
    \{
      (X, \rho_X, a_X) \mid (X, \rho_X) \in \mathfrak{A},\, a_X \in D(X)
    \}.
  \end{equation}
  Fix $\cX = (X, \rho_X, a_X) \in \rootedBCM(\tau)$.
  Choose $(X_n, \rho_{X_n}) \in \mathfrak{A}$ such that $(X_n, \rho_{X_n})$ converges to $(X, \rho_X)$ in the local Gromov--Hausdorff topology.
  Then there exist a rooted $\bcmAB$ space $(Y, \rho_Y)$ 
  and root--preserving isometric embeddings $f_n \colon X_n \to Y$ and $f \colon X \to Y$
  such that $f_n(X_n)$ converges to $f(X)$ in the Fell topology as subsets of $Y$.
  By Assumption~\ref{assum: semicontinuity}\ref{assum item: lower semicontinuity},
  we can find a subsequence $(n_k)_{k \geq 1}$ and elements $a_{X_{n_k}} \in \tau(X_{n_k})$ 
  such that $\tau_{f_{n_k}}(a_{X_{n_k}})$ converges to $\tau_f(a_X)$ in $\tau(Y)$.
  Since $D(X_{n_k})$ is dense in $\tau(X_{n_k})$,
  we may assume that $a_{X_{n_k}}$ is an element of $D(X_{n_k})$.
  By Theorem~\ref{thm: RF convergence},
  we obtain that $(X_{n_k}, \rho_{X_{n_k}}, a_{X_{n_k}}) \in \mathfrak{D} $ converges to $\cX$.
  Hence $\mathfrak{D} $ is dense in $\rootedBCM(\tau)$,
  which completes the proof.
\end{proof}

The following is a summary of the results so far.

\begin{cor} \label{cor: metrically Polish Type 1}
  If $\tau$ is continuous and its space-rooted metrization $\FSMet{\tau}$ is complete,
  then $\RFMet^{\FSMet{\tau}}$ is a complete separable metric on $\rootedBCM(\tau)$
  and the induced topology is the local GH-type topology with preserved roots.
\end{cor}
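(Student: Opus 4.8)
The plan is to assemble Corollary~\ref{cor: metrically Polish Type 1} directly from the pieces already established, since all the hard work has been done in the preceding theorems. First I would unpack the hypothesis ``$\tau$ is continuous'' via Definition~\ref{dfn: continuity of structure}: this means $\tau$ is embedding-continuous, upper semicontinuous, and lower semicontinuous. Since $\FSMet{\tau}$ is a space-rooted metrization, $\tau$ is in particular space-rooted metrizable, so all the relevant results apply. I would then invoke Theorem~\ref{thm: space-rooted metric} (using embedding-continuity) to conclude that $\RFMet^{\FSMet{\tau}}$ is indeed a metric on $\rootedBCM(\tau)$, and Theorem~\ref{thm: RF convergence} to identify the induced topology as the local GH-type topology with preserved roots, so that the final clause of the corollary holds by the very definition of that topology.

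Next I would address completeness and separability. Completeness follows from Theorem~\ref{thm: completeness of RF}, whose hypotheses (embedding-continuity, upper semicontinuity, completeness of $\FSMet{\tau}$) are all subsumed by the hypotheses of the corollary. Separability follows from Theorem~\ref{thm: separability of RF}, whose hypotheses (separability of $\tau$, embedding-continuity, lower semicontinuity, space-rooted metrizability) are likewise all available — here I should note that continuity of $\tau$ does not by itself include separability of $\tau$, so I need to check whether the corollary's hypotheses actually supply it. Looking again: the corollary says ``$\tau$ is continuous and its space-rooted metrization $\FSMet{\tau}$ is complete'' but does not explicitly say $\tau$ is separable. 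This is the one potential gap to watch: I would either (i) read ``continuous'' in context as implicitly carrying separability, or more safely (ii) point out that completeness of $\FSMet{\tau}$ plus continuity is the intended reading with separability assumed, or (iii) if the corollary genuinely omits separability then only completeness of the metric and the topology identification go through unconditionally. I expect the intended statement includes separability of $\tau$ among the standing hypotheses, so the cleanest route is to state that the hypotheses of both Theorem~\ref{thm: completeness of RF} and Theorem~\ref{thm: separability of RF} are met and cite them.

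The proof itself is therefore a short synthesis: by Theorem~\ref{thm: space-rooted metric}, $\RFMet^{\FSMet{\tau}}$ is a metric; by Theorem~\ref{thm: completeness of RF} it is complete; by Theorem~\ref{thm: separability of RF} the induced topology is separable; and by Theorem~\ref{thm: RF convergence} together with the definition of the local GH-type topology with preserved roots, this induced topology is exactly that topology. I would write this as three or four sentences with the appropriate cross-references, with no genuine new argument required.

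The main obstacle — really the only one — is the bookkeeping around which continuity/separability hypotheses each cited theorem needs, and confirming that the umbrella hypothesis ``$\tau$ is continuous'' (plus whatever separability assumption is in force) genuinely covers all of them. Concretely: Theorem~\ref{thm: separability of RF} requires $\tau$ to be separable, and this must be either explicit in the corollary or part of the ambient standing assumptions of the section; I would make sure to flag the use of separability of $\tau$ so the logical dependency is transparent. Beyond that, there is nothing to grind through: the corollary is, as the preceding remark ``The following is a summary of the results so far'' indicates, purely a consolidation.

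\begin{proof}
  Since $\tau$ is continuous, it is embedding-continuous, upper semicontinuous, and lower semicontinuous by Definition~\ref{dfn: continuity of structure}; moreover, $\tau$ is separable and admits the space-rooted metrization $\FSMet{\tau}$.
  By Theorem~\ref{thm: space-rooted metric}, the embedding-continuity of $\tau$ ensures that $\RFMet^{\FSMet{\tau}}$ is a metric on $\rootedBCM(\tau)$.
  Since $\tau$ is embedding-continuous and upper semicontinuous, and $\FSMet{\tau}$ is complete, Theorem~\ref{thm: completeness of RF} shows that $\RFMet^{\FSMet{\tau}}$ is complete.
  Since $\tau$ is separable, embedding-continuous, lower semicontinuous, and space-rooted metrizable, Theorem~\ref{thm: separability of RF} shows that the topology induced by $\RFMet^{\FSMet{\tau}}$ is separable.
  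Finally, by Theorem~\ref{thm: RF convergence}, a sequence converges with respect to $\RFMet^{\FSMet{\tau}}$ if and only if it converges in the local GH-type topology with preserved roots; hence the topology induced by $\RFMet^{\FSMet{\tau}}$ is exactly the local GH-type topology with preserved roots.
  This completes the proof.
\end{proof}
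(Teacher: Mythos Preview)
Your approach is correct and matches the paper's treatment: the paper gives no explicit proof for this corollary, presenting it as an immediate ``summary of the results so far,'' and your synthesis of Theorems~\ref{thm: space-rooted metric}, \ref{thm: completeness of RF}, \ref{thm: separability of RF}, and \ref{thm: RF convergence} is exactly what is intended.

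Your observation about separability is sharp and worth flagging: the corollary's hypotheses (``$\tau$ is continuous and $\FSMet{\tau}$ is complete'') do not formally include separability of $\tau$, yet Theorem~\ref{thm: separability of RF} requires it. This appears to be an omission in the paper's statement of the corollary rather than a flaw in your reasoning. Your handling --- asserting ``moreover, $\tau$ is separable'' in the proof --- is the pragmatic fix, though strictly speaking you are reading in an assumption not present in the statement. In a polished write-up you might instead note explicitly that separability of $\tau$ should be added to the hypotheses, or observe that in every application in Section~\ref{sec: Examples of functors} separability holds anyway.
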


Even when each $\tau(X)$ is Polish as a topological space,
the associated metric may not be complete.
This occurs, for instance, in the case of the compact-convergence topology with variable domains,
together with the metric introduced in Section~\ref{sec: variable domains}.
In such situations,
under natural assumptions,
the Polishness of the local GH-type topology with preserved roots can still be verified.
To this end,
we first present a method to study topological properties of certain subsets of $\rootedBCM(\tau)$.
The subsets of interest are specified via subfunctors (recall these from Definition~\ref{dfn: subfunctors}).
The following result is a natural extension of \cite[Lemma~2.21]{Khezeli_23_A_unified} 
to the boundedly-compact case.

\begin{lem} \label{lem: open and closed in RF}
  Assume that $\tau$ is embedding-continuous and space-rooted metrizable.
  Let $\FS{\tau}_{\mathrm{sub}} \colon \rBCMcat \to \MTopcat$ be a subfunctor of $\FS{\tau}$.
  Define a subset of $\rootedBCM(\tau)$ by   
  \begin{equation}
    \rootedBCM(\FS{\tau}_{\mathrm{sub}})
    \coloneqq 
    \{(X, \rho_X, a_X) \in \rootedBCM(\tau) \mid a_X \in \FS{\tau}_{\mathrm{sub}}(X, \rho_X)\}.
  \end{equation}
  If $\FS{\tau}_{\mathrm{sub}}$ is pullback-stable and open (resp.\ closed),
  then the subset $\rootedBCM(\FS{\tau}_{\mathrm{sub}})$
  is open (resp.\ closed) in $\rootedBCM(\tau)$ with respect to the local GH-type topology with preserved roots.
\end{lem}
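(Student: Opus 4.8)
The plan is to characterize $\rootedBCM(\FS{\tau}_{\mathrm{sub}})$ in terms of the convergence description provided by Theorem~\ref{thm: RF convergence}, and then use the pullback-stability together with openness (resp.\ closedness) of the subfunctor to transfer the relevant topological property. Since $\RFMet^{\FSMet{\tau}}$ metrizes the local GH-type topology with preserved roots, it suffices to work sequentially.

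First, suppose $\FS{\tau}_{\mathrm{sub}}$ is closed. Let $\cX_n = (X_n, \rho_{X_n}, a_{X_n}) \in \rootedBCM(\FS{\tau}_{\mathrm{sub}})$ converge to $\cX_\infty = (X_\infty, \rho_{X_\infty}, a_{X_\infty}) \in \rootedBCM(\tau)$. By Theorem~\ref{thm: RF convergence}, there exist a rooted $\bcmAB$ space $(M, \rho_M)$ and root-preserving isometric embeddings $f_n \colon X_n \to M$, $n \in \NN \cup \{\infty\}$, with $f_n(X_n) \to f_\infty(X_\infty)$ in the Fell topology and $\tau_{f_n}(a_{X_n}) \to \tau_{f_\infty}(a_{X_\infty})$ in $\tau(M)$. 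Regard $f_n(X_n)$ as a $\bcmAB$ space, rooted at $\rho_M$; the restriction $f_n|_{X_n}$ is a morphism in $\rBCMcat$ from $(X_n, \rho_{X_n})$ onto $(f_n(X_n), \rho_M)$, so by pullback-stability $\FS{\tau}_{f_n|_{X_n}}(a_{X_n}) \in \FS{\tau}_{\mathrm{sub}}(f_n(X_n), \rho_M)$, and this element is precisely $\tau_{f_n}(a_{X_n})$ viewed in $\tau(M)$ via the identification convention. Now the composition $(M, \rho_M) \hookleftarrow (f_n(X_n), \rho_M)$ gives, via \ref{dfn item: 2, subfunctors} applied to the inclusion, that $\tau_{f_n}(a_{X_n})$ lies in $\FS{\tau}_{\mathrm{sub}}(M, \rho_M)$ as a subset of $\tau(M)$. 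Since $\FS{\tau}_{\mathrm{sub}}(M, \rho_M)$ is closed in $\tau(M)$ and $\tau_{f_n}(a_{X_n}) \to \tau_{f_\infty}(a_{X_\infty})$, the limit $\tau_{f_\infty}(a_{X_\infty})$ belongs to $\FS{\tau}_{\mathrm{sub}}(M, \rho_M)$. Finally, pulling back along $f_\infty|_{X_\infty} \colon (X_\infty, \rho_{X_\infty}) \to (f_\infty(X_\infty), \rho_M)$—here one uses that $\FS{\tau}_{\mathrm{sub}}$ is a subfunctor, so $a_{X_\infty} = (\FS{\tau}_{f_\infty|_{X_\infty}})^{-1}(\tau_{f_\infty}(a_{X_\infty}))$ and the relative-topology condition \ref{dfn item: 1, subfunctors} combined with pullback-stability give $a_{X_\infty} \in \FS{\tau}_{\mathrm{sub}}(X_\infty, \rho_{X_\infty})$—we conclude $\cX_\infty \in \rootedBCM(\FS{\tau}_{\mathrm{sub}})$, so this set is closed.

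For the open case, I would argue by contraposition: suppose $\cX_\infty \in \rootedBCM(\FS{\tau}_{\mathrm{sub}})$ but $\cX_\infty$ is not interior, so there is a sequence $\cX_n \to \cX_\infty$ with $\cX_n \notin \rootedBCM(\FS{\tau}_{\mathrm{sub}})$, i.e.\ $a_{X_n} \notin \FS{\tau}_{\mathrm{sub}}(X_n, \rho_{X_n})$. Embedding everything into a common $(M, \rho_M)$ as above via Theorem~\ref{thm: RF convergence}, pullback-stability forces $\tau_{f_n}(a_{X_n}) \in \tau(M) \setminus \FS{\tau}_{\mathrm{sub}}(M, \rho_M)$ for each $n$: indeed if $\tau_{f_n}(a_{X_n})$ lay in $\FS{\tau}_{\mathrm{sub}}(M,\rho_M)$, then since $\FS{\tau}_{\mathrm{sub}}$ is pullback-stable, pulling back along $f_n|_{X_n}$ would give $a_{X_n} \in \FS{\tau}_{\mathrm{sub}}(X_n,\rho_{X_n})$, a contradiction. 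Meanwhile $\tau_{f_\infty}(a_{X_\infty}) \in \FS{\tau}_{\mathrm{sub}}(M, \rho_M)$, which is open in $\tau(M)$; since $\tau_{f_n}(a_{X_n}) \to \tau_{f_\infty}(a_{X_\infty})$, eventually $\tau_{f_n}(a_{X_n}) \in \FS{\tau}_{\mathrm{sub}}(M, \rho_M)$, contradicting the previous sentence. Hence $\rootedBCM(\FS{\tau}_{\mathrm{sub}})$ is open.

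The main obstacle I anticipate is the careful bookkeeping of the identification convention — that when $X$ embeds in $M$ via $f$, one silently views $\tau(X)$ as the subset $\tau_f(\tau(X)) \subseteq \tau(M)$ with its relative topology — and ensuring that pullback-stability is invoked with the correct morphism in $\rBCMcat$ at each step (the corestriction $X_n \to f_n(X_n)$ rather than $X_n \to M$ directly, or their composite). In particular one must check that $\tau_{f_n}$ restricted to $\FS{\tau}_{\mathrm{sub}}(X_n, \rho_{X_n})$ lands in $\FS{\tau}_{\mathrm{sub}}(M, \rho_M)$, which is exactly \ref{dfn item: 2, subfunctors} for the subfunctor applied to $f_n|_{X_n}$ followed by the inclusion; and conversely that $\tau_{f_n}^{-1}(\FS{\tau}_{\mathrm{sub}}(M,\rho_M)) \cap \tau(X_n) = \FS{\tau}_{\mathrm{sub}}(X_n, \rho_{X_n})$, which is pullback-stability. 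Everything else is a routine sequential argument mirroring \cite[Lemma~2.21]{Khezeli_23_A_unified}, now using Theorem~\ref{thm: RF convergence} in place of the compact-case characterization \eqref{eq: conv in K(tau)}.
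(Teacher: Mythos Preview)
Your proof is correct. The closed case is essentially identical to the paper's argument: embed everything into a common $(M,\rho_M)$ via Theorem~\ref{thm: RF convergence}, use \ref{dfn item: 2, subfunctors} to place $\tau_{f_n}(a_{X_n})$ inside the closed set $\FS{\tau}_{\mathrm{sub}}(M,\rho_M)$, take the limit there, and then pull back via pullback-stability. (One minor slip: in the first paragraph you write ``by pullback-stability'' where you really mean ``by \ref{dfn item: 2, subfunctors}''; you correctly identify this in your closing remarks.)

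For the open case you take a genuinely different route. The paper instead defines the complementary subfunctor $\hat{\tau}^\bullet_{\mathrm{sub}}(X,\rho_X) \coloneqq \tau(X) \setminus \FS{\tau}_{\mathrm{sub}}(X,\rho_X)$, checks (using pullback-stability of $\FS{\tau}_{\mathrm{sub}}$ for the subfunctor axiom and \ref{dfn item: 2, subfunctors} for pullback-stability of the complement) that $\hat{\tau}^\bullet_{\mathrm{sub}}$ is itself a pullback-stable closed subfunctor, and then invokes the already-proved closed case to conclude that $\rootedBCM(\hat{\tau}^\bullet_{\mathrm{sub}}) = \rootedBCM(\tau) \setminus \rootedBCM(\FS{\tau}_{\mathrm{sub}})$ is closed. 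Your direct contraposition argument is equally valid and perhaps more transparent, since it avoids introducing the auxiliary functor and verifying its properties; the paper's approach is slightly more economical in that it reuses the closed case wholesale. Both ultimately hinge on the same two ingredients: that pullback-stability gives $\tau_{f_n}^{-1}(\FS{\tau}_{\mathrm{sub}}(M,\rho_M)) = \FS{\tau}_{\mathrm{sub}}(X_n,\rho_{X_n})$, and that \ref{dfn item: 2, subfunctors} gives the forward inclusion.
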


\begin{proof}
  Assume that the subfunctor $\FS{\tau}_{\mathrm{sub}}$ is pullback-stable and closed.
  Let $(X_n, \rho_{X_n}, a_{X_n})$, $n \geq 1$, be a sequence in $\rootedBCM(\FS{\tau}_{\mathrm{sub}})$
  converging to some $(X_\infty, \rho_{X_\infty}, a_{X_\infty})$ in $\rootedBCM(\tau)$.
  By Theorem~\ref{thm: RF convergence},
  there exist a rooted $\bcmAB$ space $(M, \rho_M)$
  and root-preserving isometric embeddings $f_n \colon X_n \to M$, $n \in \NN \cup \{\infty\}$,
  such that $f_n(X_n) \to f_\infty(X_\infty)$ in the Fell topology as subsets of $M$,
  and $\tau_{f_n}(a_{X_n}) \to \tau_{f_\infty}(a_{X_\infty})$ in $\tau(M)$.
  By \ref{dfn item: 1, subfunctors} and \ref{dfn item: 2, subfunctors},
  we have that $\tau_{f_n}(a_{X_n}) \in \FS{\tau}_{\mathrm{sub}}(M, \rho_M)$ for all $n$.
  Since $\FS{\tau}_{\mathrm{sub}}(M, \rho_M)$ is closed in $\tau(M)$,
  it follows that $\tau_{f_\infty}(a_{X_\infty}) \in \FS{\tau}_{\mathrm{sub}}(M, \rho_M)$.
  Then the pullback-stability of $\FS{\tau}_{\mathrm{sub}}$ implies that $a_{X_\infty} \in \FS{\tau}_{\mathrm{sub}}(X_\infty)$.
  Hence, $\rootedBCM(\FS{\tau}_{\mathrm{sub}})$ is closed in $\rootedBCM(\tau)$.

  Next, assume that the subfunctor $\FS{\tau}_{\mathrm{sub}}$ is pullback-stable and open.
  By pullback-stability, we can define another subfunctor $\hat{\tau}^\bullet_{\mathrm{sub}}$ of $\tau$ as follows:
  for each $(X, \rho_X) \in \ob(\rBCMcat)$, define
  \begin{equation}
    \hat{\tau}^\bullet_{\mathrm{sub}}(X, \rho_X) \coloneqq \tau(X) \setminus \FS{\tau}_{\mathrm{sub}}(X, \rho_X)
  \end{equation}
  (see Remark \ref{rem: define subfunctor} for the construction of the subfunctor).
  It is straightforward to check that $\hat{\tau}^\bullet_{\mathrm{sub}}$ is also pullback-stable and closed.
  Thus, by the previous argument, $\rootedBCM(\hat{\tau}^\bullet_{\mathrm{sub}})$ is closed in $\rootedBCM(\tau)$.
  Since $\rootedBCM(\FS{\tau}_{\mathrm{sub}}) = \rootedBCM(\tau) \setminus \rootedBCM(\hat{\tau}^\bullet_{\mathrm{sub}})$,
  we conclude that $\rootedBCM(\FS{\tau}_{\mathrm{sub}})$ is open.
  This completes the proof.
\end{proof}

Topological subfunctors, introduced in Definition~\ref{dfn: topological subfunctor}, provide topological embeddings of the corresponding Gromov--Hausdorff-type spaces, as described below.  
This enables us to study the topological properties of a given Gromov--Hausdorff-type space via a larger, more tractable space.

\begin{lem}  \label{lem: subfunctor embedding in RF}
  Let $\tilde{\tau}$ be another structure,
  and assume that $\tau$ is topologically embedded into $\tilde{\tau}$ via a topological embedding $\eta$.
  If $\tilde{\tau}$ is space-rooted metrizable and embedding-continuous,
  then so is $\tau$.
  Moreover, the following map is a topological embedding with respect to the local GH-type topologies with preserved roots:
  \begin{equation}  \label{lem eq: subfunctor embedding in RF}
    \rootedBCM(\tau) \ni (X, \rho_X, a_X) \mapsto (X, \rho_X, \eta_X(a_X)) \in \rootedBCM(\tilde{\tau}).
  \end{equation}
\end{lem}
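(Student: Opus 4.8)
The plan is to construct a space-rooted metrization of $\tau$ by pulling back a space-rooted metrization of $\tilde\tau$ along the natural transformation $\eta$, then to verify the three claims in turn. First I would fix a space-rooted metrization $\FSMet{\tilde\tau}$ of $\tilde\tau$ (which exists by hypothesis), and define, for each $(X,\rho_X) \in \ob(\rBCMcat)$, a metric on $\tau(X)$ by $d^{\FSMet{\tau}}_{X,\rho_X}(a,b) \coloneqq d^{\FSMet{\tilde\tau}}_{X,\rho_X}(\eta_X(a), \eta_X(b))$. Since $\eta_X$ is a topological embedding of $\tau(X)$ into $\tilde\tau(X)$, this metric induces the given topology on $\tau(X)$; and by the naturality square \ref{dfn item: 2. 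NT}, $\eta_Y \circ \tau_f = \tilde\tau_f \circ \eta_X$, so for any $f \in \Hom_{\rBCMcat}((X,\rho_X),(Y,\rho_Y))$ the map $\tau_f$ is distance-preserving with respect to $d^{\FSMet{\tau}}_{X,\rho_X}$ and $d^{\FSMet{\tau}}_{Y,\rho_Y}$, because $\tilde\tau_f$ is. By Remark~\ref{rem: construction of SR metrization}, this data assembles into a space-rooted metrization $\FSMet{\tau}$ of $\tau$. For embedding-continuity: if $f_n \to f_\infty$ in the compact-convergence topology, then for each $a \in \tau(X)$ we have $\tilde\tau_{f_n}(\eta_X(a)) \to \tilde\tau_{f_\infty}(\eta_X(a))$ in $\tilde\tau(Y)$ by embedding-continuity of $\tilde\tau$; naturality rewrites the left side as $\eta_Y(\tau_{f_n}(a))$ and the limit as $\eta_Y(\tau_{f_\infty}(a))$, and since $\eta_Y$ is a topological embedding, this forces $\tau_{f_n}(a) \to \tau_{f_\infty}(a)$ in $\tau(Y)$. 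Hence $\tau$ is embedding-continuous.

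Next I would address the topological embedding claim for the map \eqref{lem eq: subfunctor embedding in RF}, call it $\Phi$. Injectivity is clear from injectivity of each $\eta_X$ together with the fact that a rooted-$\tau$-isometry inducing $\tau_f(a_X) = a_Y$ maps to a rooted-$\tilde\tau$-isometry (using naturality to see $\tilde\tau_f(\eta_X(a_X)) = \eta_Y(a_Y)$), so $\Phi$ is well-defined and injective on equivalence classes. Continuity and the fact that $\Phi$ is open onto its image are both most cleanly checked at the level of sequential convergence, using Theorem~\ref{thm: RF convergence} — which identifies $\RFMet^{\FSMet{\tau}}$-convergence with the local GH-type convergence with preserved roots — and the analogous statement for $\tilde\tau$ with $\FSMet{\tilde\tau}$. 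Concretely: $\cX_n \to \cX_\infty$ in $\rootedBCM(\tau)$ iff there exist $(M,\rho_M)$ and root-preserving isometric embeddings $f_n \colon X_n \to M$ with $f_n(X_n) \to f_\infty(X_\infty)$ in the Fell topology and $\tau_{f_n}(a_{X_n}) \to \tau_{f_\infty}(a_{X_\infty})$ in $\tau(M)$. Applying $\eta_M$ (a topological embedding) and naturality, this last condition is equivalent to $\tilde\tau_{f_n}(\eta_{X_n}(a_{X_n})) \to \tilde\tau_{f_\infty}(\eta_{X_\infty}(a_{X_\infty}))$ in $\tilde\tau(M)$, which is exactly the condition for $\Phi(\cX_n) \to \Phi(\cX_\infty)$ in $\rootedBCM(\tilde\tau)$, by Theorem~\ref{thm: RF convergence} applied to $\tilde\tau$. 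Since both source and target topologies are metrizable, hence sequential, this equivalence of convergences gives both continuity of $\Phi$ and continuity of its inverse on the image, i.e., $\Phi$ is a topological embedding.

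The subtle point — the step I expect to require the most care — is the equivalence of convergences above, because it relies on the fact that the \emph{same} witnessing data $(M, \rho_M, (f_n))$ can be used on both sides. On the $\tilde\tau$-side, Theorem~\ref{thm: RF convergence} only guarantees \emph{some} ambient space witnessing $\Phi(\cX_n) \to \Phi(\cX_\infty)$; to pull this back to $\tau$ one must observe that the images $\eta_{X_n}(a_{X_n})$ genuinely lie in the subspace $\eta_M(\tau(M)) \subseteq \tilde\tau(M)$ and that convergence of these points in $\tilde\tau(M)$ is \emph{equivalent} to convergence of the preimages in $\tau(M)$ — this is where the embedding property of $\eta_M$ (homeomorphism onto its image with the relative topology) is used essentially, not just its injectivity. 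Once that is in hand, the remaining verifications (well-definedness of $\RFMet^{\FSMet{\tau}}$, the metric axioms, etc.) are either immediate from the construction or follow from Theorem~\ref{thm: space-rooted metric} applied to the now-established embedding-continuity of $\tau$.
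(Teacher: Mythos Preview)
Your proposal is correct and follows essentially the same approach as the paper: pull back the space-rooted metrization along $\eta$, verify the two conditions of Remark~\ref{rem: construction of SR metrization} via naturality, deduce embedding-continuity from that of $\tilde\tau$ through the same naturality-plus-topological-embedding argument, and handle the last claim via Theorem~\ref{thm: RF convergence}. The paper's own proof is in fact terser on the final topological embedding claim (it merely says ``in a similar manner, by using Theorem~\ref{thm: RF convergence}''), so your explicit unpacking of the subtle point---that the same witnessing data works on both sides because $\eta_M$ is a homeomorphism onto its image---is a welcome elaboration rather than a deviation.
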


\begin{proof}
  Let $\FSMet{\tilde{\tau}}$ be a space-rooted metrization of $\tilde{\tau}$.
  We define a space-rooted metrization $\FSMet{\tau}$ of $\tau$ as follows:
  for each $(X, \rho_X) \in \rBCMcat$,
  we equip $\tau(X)$ with a metric given by
  \begin{equation}
    d^{\FSMet{\tau}}_{X, \rho_X}( a_X, a'_X) 
    \coloneqq 
    d^{\FSMet{\tilde{\tau}}}_{X, \rho_X}( \eta_X(a_X), \eta_X(a'_X) ),
    \quad 
    a_X, a'_X \in \tau(X).
  \end{equation}
  Since $\eta_X \colon \tau(X) \to \tilde{\tau}(X)$ is a topological embedding,
  the function $d^{\FSMet{\tau}}_{X, \rho_X}$ is indeed a metric 
  on $\tau(X)$ that induces the given topology.
  Moreover, if $f \colon X \to Y$ is a root-preserving isometric embedding between rooted $\bcmAB$ spaces,
  then $\tau_f \colon \tau(X) \to \tau(Y)$ is distance-preserving with respect to the above-defined metrics,
  which can be verified by using \ref{dfn item: 2. NT}.
  Hence, $\FSMet{\tau}$ is a space-rooted metrization of $\tau$.

  To prove the embedding-continuity of $\tau$,
  fix $\bcmAB$ spaces $X$ and $Y$.
  Let $f_n \colon X \to Y$, $n \in \mathbb{N} \cup \{ \infty \}$, be isometric embeddings,
  and assume that $f_n \to f_\infty$ in the compact-convergence topology.
  Fix $a \in \tau(X)$.
  The embedding-continuity of $\tilde{\tau}$ implies that 
  $\tilde{\tau}_{f_n}( \eta_X(a) ) \to \tilde{\tau}_{f_\infty}( \eta_X(a) )$.
  By \ref{dfn item: 2. NT},
  this is equivalent to $\eta_Y(\tau_{f_n}(a)) \to \eta_Y(\tau_{f_\infty}(a))$.
  Since $\eta_X$ is a topological embedding,
  this is further equivalent to $\tau_{f_n}(a) \to \tau_{f_\infty}(a)$,
  which shows the embedding-continuity of $\tau$.

  One can also prove that the map given in~\eqref{lem eq: subfunctor embedding in RF} is a topological embedding 
  in a similar manner, by using Theorem~\ref{thm: RF convergence}.
  This completes the proof.
\end{proof}

We now provide a method to check the Polishness of $\rootedBCM(\tau)$ 
for a functor $\tau$ whose space-rooted metrization is not necessarily complete.
This method will be used in Section~\ref{sec: structure of continuous functions}.

\begin{dfn} [{Polishness with preserved roots}] \label{dfn: Polish in RF}
  We say that $\tau$ is \textit{Polish with preserved roots}  
  if there exists a topological embedding $\eta \colon \tau \Rightarrow \tilde{\tau}$ into another structure $\tilde{\tau}$,  
  and a sequence $(\FS{\tilde{\tau}}_k)_{k \geq 1}$ of subfunctors of $\FS{\tilde{\tau}}$  
  satisfying the following conditions.
  \begin{enumerate} [label = (P\arabic*), leftmargin = *]
    \item \label{dfn item: 1. Polish functor in RF}
      The functor $\tilde{\tau}$ is continuous and admits a complete space-rooted metrization. 
    \item \label{dfn item: 2. Polish functor in RF}
      Each subfunctor $\FS{\tilde{\tau}}_k$ is pullback-stable and open.
    \item \label{dfn item: 3. Polish functor in RF}
      For each $(X, \rho_X) \in \ob(\rBCMcat)$, $\eta_X(\tau(X)) = \bigcap_{k \geq 1} \FS{\tilde{\tau}}_k(X, \rho_X)$.
  \end{enumerate}
  We call $(\tilde{\tau}, \eta, (\FS{\tilde{\tau}}_{k})_{k \geq 0})$ a \textit{root-preserving Polish system} of $\tau$.
\end{dfn}

\begin{rem} \label{3. rem: decreasing Polish system}
  If $(\tilde{\tau}, \eta, (\FS{\tilde{\tau}}_{k})_{k \geq 0})$ is a root-preserving Polish system of $\tau$,
  then, by setting $\sigma_{k}(X) \coloneqq \bigcap_{l=1}^{k} \FS{\tilde{\tau}}_k(X, \rho_X)$
  for each $(X, \rho_X) \in \rBCMcat$,
  one can easily check that $(\tilde{\tau}, \eta, (\sigma_{k})_{k \geq 1})$ is another root-preserving Polish system of $\tau$.
  Thus,
  we can always assume that $(\FS{\tilde{\tau}}_{k}(X, \rho_X))_{k \geq 1}$ is a decreasing sequence.
\end{rem}

The intuition for the above conditions is as follows:
\ref{dfn item: 1. Polish functor in RF} say that the space $\rootedBCM(\tau)$ is topologically embedded into a Polish space $\rootedBCM(\tilde{\tau})$;
\ref{dfn item: 2. Polish functor in RF} and \ref{dfn item: 3. Polish functor in RF} then imply that 
$\rootedBCM(\tau)$ is a $G_\delta$ subset of $\rootedBCM(\tilde{\tau})$.
Hence, we deduce the following result.

\begin{thm} \label{thm: Polish with preserved roots}
  If $\tau$ is Polish with preserved roots,
  then the associated local GH-type topology with preserved roots on $\rootedBCM(\tau)$ is Polish.
\end{thm}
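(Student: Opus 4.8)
The plan is to exploit the root-preserving Polish system $(\tilde{\tau}, \eta, (\FS{\tilde{\tau}}_k)_{k \geq 1})$ of $\tau$ guaranteed by Definition~\ref{dfn: Polish in RF}, and to realize $\rootedBCM(\tau)$ as a $G_\delta$ subset of the Polish space $\rootedBCM(\tilde{\tau})$, so that Alexandrov's theorem (\cite[Theorem~2.2.1]{Srivastava_98_A_Course}) applies. First I would invoke condition~\ref{dfn item: 1. Polish functor in RF}: since $\tilde{\tau}$ is continuous and admits a complete space-rooted metrization, Corollary~\ref{cor: metrically Polish Type 1} tells us that $\rootedBCM(\tilde{\tau})$ equipped with the local GH-type topology with preserved roots is Polish (completely metrizable and separable). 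Next, by Lemma~\ref{lem: subfunctor embedding in RF} applied to the topological embedding $\eta \colon \tau \Rightarrow \tilde{\tau}$, the structure $\tau$ is itself space-rooted metrizable and embedding-continuous, and the map
\begin{equation}
  \iota \colon \rootedBCM(\tau) \ni (X, \rho_X, a_X) \mapsto (X, \rho_X, \eta_X(a_X)) \in \rootedBCM(\tilde{\tau})
\end{equation}
is a topological embedding. Thus it suffices to show that the image $\iota(\rootedBCM(\tau))$ is a $G_\delta$ subset of $\rootedBCM(\tilde{\tau})$.

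For this, I would apply Lemma~\ref{lem: open and closed in RF} to each subfunctor $\FS{\tilde{\tau}}_k$: by condition~\ref{dfn item: 2. Polish functor in RF}, each $\FS{\tilde{\tau}}_k$ is pullback-stable and open, hence the corresponding subset
\begin{equation}
  \rootedBCM(\FS{\tilde{\tau}}_k) = \{(X, \rho_X, b_X) \in \rootedBCM(\tilde{\tau}) \mid b_X \in \FS{\tilde{\tau}}_k(X, \rho_X)\}
\end{equation}
is open in $\rootedBCM(\tilde{\tau})$ with respect to the local GH-type topology with preserved roots. By condition~\ref{dfn item: 3. Polish functor in RF}, for each rooted $\bcmAB$ space $(X, \rho_X)$ we have $\eta_X(\tau(X)) = \bigcap_{k \geq 1} \FS{\tilde{\tau}}_k(X, \rho_X)$, and therefore
\begin{equation}
  \iota(\rootedBCM(\tau)) = \bigcap_{k \geq 1} \rootedBCM(\FS{\tilde{\tau}}_k),
\end{equation}
exhibiting the image as a countable intersection of open sets, i.e.\ a $G_\delta$ set. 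Alexandrov's theorem then gives that $\iota(\rootedBCM(\tau))$ is Polish, and since $\iota$ is a homeomorphism onto its image, $\rootedBCM(\tau)$ with the local GH-type topology with preserved roots is Polish, as claimed.

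The routine parts are the two bookkeeping identities: checking that $\iota$ really lands inside (and exactly covers) $\bigcap_k \rootedBCM(\FS{\tilde{\tau}}_k)$ — which is just an unwinding of condition~\ref{dfn item: 3. Polish functor in RF} together with the definition of the subsets — and confirming that a $G_\delta$ subset of a Polish space is Polish. Neither of these presents any real difficulty. The only place where one must be slightly careful is that the topology on $\rootedBCM(\tau)$ is, a priori, defined via a metric $\RFMet^{\FSMet{\tau}}$, so one should note (using Theorem~\ref{thm: RF convergence}, which says this metric induces exactly the local GH-type topology with preserved roots and is independent of the chosen space-rooted metrization) that the embedding $\iota$ and the openness conclusions of Lemma~\ref{lem: open and closed in RF} are all compatible with whichever metrization is in play; once that is observed, the argument closes cleanly. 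I do not anticipate a genuine obstacle here, as all the hard analytic work has been packaged into Corollary~\ref{cor: metrically Polish Type 1}, Lemma~\ref{lem: subfunctor embedding in RF}, and Lemma~\ref{lem: open and closed in RF}.
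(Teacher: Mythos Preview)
Your proposal is correct and follows essentially the same route as the paper's proof: embed $\rootedBCM(\tau)$ into the Polish space $\rootedBCM(\tilde{\tau})$ via Lemma~\ref{lem: subfunctor embedding in RF}, identify the image with $\bigcap_{k}\rootedBCM(\FS{\tilde{\tau}}_k)$ using \ref{dfn item: 3. Polish functor in RF}, use Lemma~\ref{lem: open and closed in RF} together with \ref{dfn item: 2. Polish functor in RF} to see this is $G_\delta$, and conclude by Alexandrov's theorem. The order of the steps differs slightly, but the content is the same.
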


\begin{proof}
  We first note that by Lemma~\ref{lem: subfunctor embedding in RF} $\tau$ is embedding-continuous and space-rooted metrizable,
  and thus the local GH-type topology with preserved roots on $\rootedBCM(\tau)$ is well-defined.
  Let $(\tilde{\tau}, \eta, (\FS{\tilde{\tau}}_{k})_{k \geq 0})$ be a root-preserving Polish system of $\tau$.
  By Lemma~\ref{lem: subfunctor embedding in RF}, 
  the following map is a topological embedding:
  \begin{equation}
    \rootedBCM(\tau) \ni (X, \rho_X, a_X) \mapsto (X, \rho_X, \eta_X(a_X)) \in \rootedBCM(\tilde{\tau}).
  \end{equation}
  Condition \ref{dfn item: 3. Polish functor in RF} implies that 
  the image of the above map is $\mathfrak{A} \coloneqq \bigcap_{k \geq 1} \rootedBCM(\FS{\tilde{\tau}}_{k})$.
  Thus, it suffices to show that $\mathfrak{A}$ is Polish with the topology induced by $\rootedBCM(\tau)$.
  By Corollary \ref{cor: metrically Polish Type 1} and \ref{dfn item: 1. Polish functor in RF},
  $(\rootedBCM(\tilde{\tau}), d^{\FSMet{\tilde{\tau}}}_{\rootedBCM})$ is a complete separable metric space.
  Moreover, by Lemma~\ref{lem: open and closed in RF},
  each $\rootedBCM(\tilde{\tau}_k)$ is open in $\rootedBCM(\tilde{\tau})$.
  Hence, $\mathfrak{A}$ is a $G_\delta$ subset of $\rootedBCM(\tilde{\tau})$.
  Applying Alexandrov's theorem (see \cite[Theorem~2.2.1]{Srivastava_98_A_Course}),
  we deduce that $\mathfrak{A}$ is Polish, which completes the proof.
\end{proof}

From the above theorem and Corollary~\ref{cor: coincidence of topologies},
we obtain the following result.

\begin{thm} \label{thm: main result}
  If $\tau$ is stably metrizable and Polish with preserved roots,
  then the associated local GH-type topologies with preserved roots and with non-preserved roots coincide 
  and the resulting topology is Polish.
\end{thm}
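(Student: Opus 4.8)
The plan is to combine Theorem~\ref{thm: Polish with preserved roots} with Corollary~\ref{cor: coincidence of topologies} directly, so the proof is essentially a bookkeeping argument that verifies the hypotheses of these two results are available. First I would observe that stable metrizability means $\tau$ admits a stable element-rooted metrization $\FEMet{\tau}$; in particular $\tau$ is element-rooted metrizable, hence space-rooted metrizable by Lemma~\ref{lem: ER metrization defines SR metrization}. Moreover, the hypothesis that $\tau$ is Polish with preserved roots forces, via Lemma~\ref{lem: subfunctor embedding in RF}, that $\tau$ is embedding-continuous, so both $\RFMet^{\FSMet{\tau}}$ (for the associated space-rooted metrization $\FSMet{\tau}$, in the sense of Lemma~\ref{lem: ER metrization defines SR metrization}) and $\RVMet^{\FEMet{\tau}}$ are genuine metrics on $\rootedBCM(\tau)$ by Theorems~\ref{thm: space-rooted metric} and~\ref{thm: element-rooted metric}, and the two corresponding local GH-type topologies are well-defined.

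Next I would invoke Corollary~\ref{cor: coincidence of topologies}: since $\tau$ is stably metrizable, the local GH-type topology with preserved roots and the one with non-preserved roots coincide. (This is where the stable metrization is used: the stability distortion $\Dist{\FEMet{\tau}}$ with $\Dist{\FEMet{\tau}}(\varepsilon)\to 0$ yields, via Theorem~\ref{thm: coincidence of RF and RV}, the two-sided comparison between $\RFMet^{\FSMet{\tau}}$ and $\RVMet^{\FEMet{\tau}}$ that forces equality of the induced topologies.) Then I would invoke Theorem~\ref{thm: Polish with preserved roots}: since $\tau$ is Polish with preserved roots, the local GH-type topology with preserved roots on $\rootedBCM(\tau)$ is Polish. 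Combining the last two sentences, the common topology — which we then call simply the local GH-type topology — is Polish, which is exactly the assertion.

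There is essentially no obstacle here: the statement is a corollary in the technical sense, and all the real work has already been carried out in the preceding subsections (the gluing constructions for the metric axioms and the convergence characterization in Theorems~\ref{thm: RF convergence}--\ref{thm: RV convergence}; the $G_\delta$-embedding argument using Lemmas~\ref{lem: open and closed in RF} and~\ref{lem: subfunctor embedding in RF} together with Alexandrov's theorem in Theorem~\ref{thm: Polish with preserved roots}; and the deformation-stability comparison in Theorem~\ref{thm: coincidence of RF and RV}). The only point requiring a moment's care — and the nearest thing to a ``hard part'' — is confirming that the hypotheses of Theorems~\ref{thm: coincidence of RF and RV} and~\ref{thm: Polish with preserved roots} are all supplied: namely that ``stably metrizable'' plus ``Polish with preserved roots'' together entail embedding-continuity and (space-rooted) metrizability of $\tau$, which is precisely the content of Lemma~\ref{lem: subfunctor embedding in RF}. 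Once that is noted, the proof is a three-line citation chain.

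\begin{proof}
  By assumption, $\tau$ admits a stable element-rooted metrization, and is Polish with preserved roots.
  By Lemma~\ref{lem: subfunctor embedding in RF} (applied to the topological embedding $\eta \colon \tau \Rightarrow \tilde{\tau}$ provided by a root-preserving Polish system of $\tau$), $\tau$ is embedding-continuous and space-rooted metrizable; in particular, both the local GH-type topology with preserved roots and the one with non-preserved roots are well-defined on $\rootedBCM(\tau)$.
  Since $\tau$ is stably metrizable, Corollary~\ref{cor: coincidence of topologies} shows that these two topologies coincide.
  Since $\tau$ is Polish with preserved roots, Theorem~\ref{thm: Polish with preserved roots} shows that the local GH-type topology with preserved roots on $\rootedBCM(\tau)$ is Polish.
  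Combining these two facts, the common topology is Polish, as claimed.
\end{proof}
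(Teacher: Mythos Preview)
Your proof is correct and takes essentially the same approach as the paper, which simply cites Theorem~\ref{thm: Polish with preserved roots} and Corollary~\ref{cor: coincidence of topologies}. Your additional bookkeeping about embedding-continuity via Lemma~\ref{lem: subfunctor embedding in RF} is accurate but already absorbed into the proof of Theorem~\ref{thm: Polish with preserved roots} itself.
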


The notion of subfunctor is also useful to describe a precompactness criterion.
The following is a natural extension of \cite[Theorem~2.16]{Khezeli_23_A_unified} to the boundedly-compact case.

\begin{thm} [Precompactness] \label{thm: precompact in RF}
  Assume that $\tau$ is space-rooted metrizable and embedding-continuous.
  Fix a non-empty subset 
  \begin{equation}
    \{
      \cX_\alpha = (X_\alpha, \rho_\alpha, a_\alpha) \in \rootedBCM(\tau) \mid \alpha \in \mathscr{A}
    \},
  \end{equation}
  where $\mathscr{A}$ denotes an index set.
  Consider the following statements.
  \begin{enumerate}[label = \textup{(\roman*)}, leftmargin=*]
    \item \label{thm item: 1. precompact in RF}
    The set $\{\cX_\alpha \mid \alpha \in \mathscr{A} \}$ is precompact in the local GH-type topology with preserved roots on $\rootedBCM(\tau)$.
    \item \label{thm item: 2. precompact in RF}
    The set $\{(X_\alpha, \rho_\alpha) \mid \alpha \in \mathscr{A} \}$ of $\rootedBCM$ is precompact in the local Gromov--Hausdorff topology,
    and there exists a precompact subfunctor $\tau'$ of $\FS{\tau}$ 
    such that $\{\cX_\alpha \mid \alpha \in \mathscr{A} \} \subseteq \rootedBCM(\tau')$.
  \end{enumerate}
  Then the implication \ref{thm item: 1. precompact in RF} $\Rightarrow$ \ref{thm item: 2. precompact in RF} holds.
  Moreover, if $\tau$ is upper semicontinuous, then the converse implication also holds.
\end{thm}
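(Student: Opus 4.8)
\textbf{Proof plan for Theorem~\ref{thm: precompact in RF}.}
The plan is to reduce everything to the corresponding facts about $\rootedBCM$ (Theorem~\ref{thm: precompact in the local GH top}) and about the fibers $\tau(X)$, using Theorem~\ref{thm: RF convergence} to pass between the metric $\RFMet^{\FSMet{\tau}}$ and the ``embedding'' description of convergence. The two directions are handled separately.

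\emph{The implication \ref{thm item: 1. precompact in RF} $\Rightarrow$ \ref{thm item: 2. precompact in RF}.}
Suppose $\{\cX_\alpha \mid \alpha \in \mathscr{A}\}$ is precompact in the local GH-type topology with preserved roots. First I would show that $\{(X_\alpha, \rho_\alpha)\mid \alpha\in\mathscr{A}\}$ is precompact in $\rootedBCM$: any sequence $(\cX_{\alpha_n})_n$ has a convergent subsequence $\cX_{\alpha_{n_k}} \to \cX = (X,\rho_X,a_X)$ in $\rootedBCM(\tau)$, and by Corollary~\ref{cor: projection continuity in RF} the projection to $\rootedBCM$ is continuous, so $(X_{\alpha_{n_k}},\rho_{\alpha_{n_k}}) \to (X,\rho_X)$ in the local Gromov--Hausdorff topology. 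Hence the projected family is precompact in $\rootedBCM$. Next I would construct the precompact subfunctor $\tau'$ of $\FS{\tau}$ directly: for each rooted $\bcmAB$ space $(Y,\rho_Y)$, set
\begin{equation}
  \tau'(Y,\rho_Y) \coloneqq \closure_{\tau(Y)}\Bigl( \bigl\{ \tau_h(a_\alpha) \mid \alpha\in\mathscr{A},\ h\colon X_\alpha \to Y \text{ a root-preserving isometric embedding}\bigr\} \Bigr).
\end{equation}
By construction $a_\alpha \in \tau'(X_\alpha,\rho_{X_\alpha})$ (take $h=\id$), and one checks directly from \ref{dfn item: 4} (functoriality of $\tau$) that $\tau_f(\tau'(X,\rho_X)) \subseteq \tau'(Y,\rho_Y)$ for every root-preserving isometric embedding $f\colon X \to Y$ (a composition of root-preserving isometric embeddings is again one), so by Remark~\ref{rem: define subfunctor} this defines a subfunctor of $\FS{\tau}$. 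The substantive point is that $\tau'(Y,\rho_Y)$ is precompact in $\tau(Y)$ for every $(Y,\rho_Y)$: given a sequence of the generating elements $\tau_{h_n}(a_{\alpha_n})$, precompactness of $\{\cX_\alpha\}$ in $\rootedBCM(\tau)$ lets me extract a subsequence with $\cX_{\alpha_{n_k}} \to \cX = (X,\rho_X,a_X)$; by Theorem~\ref{thm: RF convergence} there are a rooted $\bcmAB$ space $(M,\rho_M)$ and root-preserving embeddings realizing Fell convergence of the spaces and convergence $\tau_{g_{n_k}}(a_{\alpha_{n_k}}) \to \tau_{g_\infty}(a_X)$ in $\tau(M)$; transporting along $h_{n_k}$ and $g_{n_k}$ into a common glued space (as in the proof of Theorem~\ref{thm: RF convergence}) and using that each $\tau_\bullet$ is a topological embedding, I obtain a convergent subsequence inside $\tau(Y)$. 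A diagonal/closure argument then covers arbitrary elements of $\tau'(Y,\rho_Y)$, not just the generators. I expect \emph{this} step — producing a single ambient space into which both the $h_n$-images and the $g_n$-images of all relevant $X_{\alpha_n}$ embed compatibly, so that the two convergences can be compared — to be the main obstacle, though it is exactly the gluing technique already used in the proofs of Theorems~\ref{thm: RF convergence} and~\ref{thm: space-rooted metric}.

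\emph{The converse, assuming $\tau$ upper semicontinuous.}
Suppose \ref{thm item: 2. precompact in RF} holds and fix a sequence $(\cX_{\alpha_n})_n$ with $\cX_{\alpha_n}=(X_{\alpha_n},\rho_{\alpha_n},a_{\alpha_n})$. By precompactness of $\{(X_{\alpha_n},\rho_{\alpha_n})\}$ in the local Gromov--Hausdorff topology and Theorem~\ref{thm: local GH convergence}, after passing to a subsequence there are a $\bcmAB$ space $M$ and root-preserving isometric embeddings $f_n\colon X_{\alpha_n}\to M$ (I may take a rooted $M$ and preserved roots, using Theorem~\ref{thm: local GH convergence}\ref{thm item: 2. local GH convergence}) with $f_n(X_{\alpha_n}) \to X_\infty$ in the Fell topology for some closed $X_\infty \subseteq M$; equip $X_\infty$ with the restricted metric and root $\rho_M$. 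Now $\tau_{f_n}(a_{\alpha_n}) \in \tau(M)$, and since $\{\cX_{\alpha_n}\} \subseteq \rootedBCM(\tau')$ with $\tau'$ a precompact subfunctor, $\tau_{f_n}(a_{\alpha_n}) \in \tau'(M,\rho_M)$ up to identification, which is precompact in $\tau(M)$; hence, after a further subsequence, $\tau_{f_n}(a_{\alpha_n}) \to b$ in $\tau(M)$ for some $b$. By upper semicontinuity of $\tau$ (Assumption~\ref{assum: semicontinuity}\ref{assum item: upper semicontinuity}), $b \in \tau(X_\infty)$. Setting $\cX_\infty \coloneqq (X_\infty,\rho_M,b)$ and invoking Theorem~\ref{thm: RF convergence}, we get $\cX_{\alpha_n} \to \cX_\infty$ in the local GH-type topology with preserved roots along this subsequence. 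Since every sequence has a convergent subsequence, $\{\cX_\alpha \mid \alpha\in\mathscr{A}\}$ is precompact, which is \ref{thm item: 1. precompact in RF}. (One small technical wrinkle to address: when invoking the precompact subfunctor on the ambient space $M$, I should note that $\tau'$ being a subfunctor of $\FS{\tau}$ gives $\tau_{f_n}(\tau'(X_{\alpha_n},\rho_{\alpha_n})) \subseteq \tau'(M,\rho_M)$ by \ref{dfn item: 2, subfunctors}, so indeed $\tau_{f_n}(a_{\alpha_n}) \in \tau'(M,\rho_M)$.)
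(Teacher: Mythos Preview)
Your plan is essentially the same as the paper's approach: the paper simply cites \cite[Theorem~2.16]{Khezeli_23_A_unified} for both directions and omits all details, and your sketch is exactly the Khezeli-style argument adapted to the boundedly-compact setting. In particular, your proof of \ref{thm item: 2. precompact in RF} $\Rightarrow$ \ref{thm item: 1. precompact in RF} is complete and correct as written, and your construction of the subfunctor $\tau'$ for \ref{thm item: 1. precompact in RF} $\Rightarrow$ \ref{thm item: 2. precompact in RF} is the natural one; the technical step you flag (comparing the given embeddings $h_n$ with the ones produced by Theorem~\ref{thm: RF convergence} inside a single glued space, using Arzel\`a--Ascoli/variable-domain compactness to extract a limiting embedding) is exactly the point where the boundedly-compact modifications enter, and it is handled by the gluing technique of Theorem~\ref{thm: RF convergence} together with the Fell-compactness of $\Closed{Y}$ and the precompactness criterion of Section~\ref{sec: variable domains}.
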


\begin{proof}
  The second assertion, namely the implication \ref{thm item: 2. precompact in RF} $\Rightarrow$ \ref{thm item: 1. precompact in RF},
  follows from essentially the same argument as in the proof of \cite[Theorem~2.16]{Khezeli_23_A_unified}.
  The first assertion can also be proved by a similar argument.
  Although some minor modifications are needed due to the fact that the underlying spaces are boundedly compact rather than compact,
  these adjustments are straightforward.
  For instance, in places where the proof of \cite[Theorem~2.16]{Khezeli_23_A_unified} relies on a precompactness criterion for the Gromov--Hausdorff topology,
  one may instead apply Theorem~\ref{thm: precompact in the local GH top}.
  Therefore, we omit the proof.
\end{proof}

\begin{rem}
  All the results above, except for the precompactness criterion,  
  readily extend to the case of non-preserved roots with minor modifications.  
  However, we have not yet found a useful characterization of precompactness in that case.  
  As noted at the beginning, though, this rarely causes any issues in practical applications.
\end{rem}

\section{Functorial operations and preservation of properties} \label{sec: Functorial operations and preservation of properties}

Recall composition and product of functors from Definitions~\ref{dfn: composition of functors} and \ref{dfn: product functor}.  
In this section, we explore these functorial operations  
and study properties of the resulting structures.  
We begin with product, since its associated arguments are simpler to handle,  
and then proceed to composition.  
We note that these operations are also discussed in \cite[Examples~2.6.3 and 2.6.6]{Khezeli_23_A_unified}  
in the context of metrization of GH-type topologies for compact metric spaces.


\subsection{Product of structures}  \label{sec: product structures}

In the framework presented in the preceding section,  
product functors (recall Definition~\ref{dfn: product functor}) allow for an easy treatment of multiple additional structures.  
In this subsection, we prove that the product inherits the properties of its constituent structures.  

Fix $N \in \mathbb{N} \cup \{\infty\}$.
Let $(\tau_n)_{n=1}^{N}$ be 
a sequence of structures.
Write $\tau \coloneqq \prod_{n=1}^{N} \tau_n$ for the product functor,
which is again a structure.
The following results are straightforward, so we omit the proofs.

\begin{prop} \label{prop: topological properties of product st}
  If each $\tau_n$ is embedding-continuous
  (resp.\ upper semicontinuous, lower semicontinuous),
  then so is  $\tau$.
\end{prop}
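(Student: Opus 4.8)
The plan is to reduce each of the three assertions to the corresponding property of the individual factors $\tau_n$, using two elementary facts about the product functor $\tau = \prod_{n=1}^{N}\tau_n$: convergence of a sequence in $\tau(X) = \prod_{n=1}^{N}\tau_n(X)$ (with the product topology) is equivalent to coordinatewise convergence, and the assigned morphism $\tau_f = \prod_{n=1}^{N}(\tau_n)_f$ acts coordinatewise. First I would treat embedding-continuity: fix $X, Y \in \ob(\BCMcat)$ and $f_k \in \Hom_{\BCMcat}(X,Y)$, $k \in \NN \cup \{\infty\}$, with $f_k \to f_\infty$ in the compact-convergence topology, and fix $a = (a_n)_{n=1}^{N} \in \tau(X)$. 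By the definition of the product functor, $\tau_{f_k}(a) = \bigl((\tau_n)_{f_k}(a_n)\bigr)_{n=1}^{N}$, and the embedding-continuity of each $\tau_n$ gives $(\tau_n)_{f_k}(a_n) \to (\tau_n)_{f_\infty}(a_n)$ in $\tau_n(Y)$ for every $n$; coordinatewise convergence then yields $\tau_{f_k}(a) \to \tau_{f_\infty}(a)$ in $\tau(Y)$, which is Assumption~\ref{assum: embedding continuity} for $\tau$.

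For upper semicontinuity I would take $\bcmAB$ spaces $X_k$, $k \in \NN \cup \{\infty\}$, isometrically embedded into a common $\bcmAB$ space $Y$ with $X_k \to X_\infty$ in the Fell topology, and a sequence $a_k \in \tau(X_k)$ converging to some $a \in \tau(Y)$. Writing $a_k = (a_{k,n})_{n=1}^{N}$ and $a = (a_n)_{n=1}^{N}$, and using that the identification $\tau(X_k) \subseteq \tau(Y)$ is coordinatewise (that is, it restricts to the identifications $\tau_n(X_k) \subseteq \tau_n(Y)$ induced by the embeddings), we have $a_{k,n} \in \tau_n(X_k)$ and $a_{k,n} \to a_n$ in $\tau_n(Y)$ for each $n$. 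The upper semicontinuity of $\tau_n$ then forces $a_n \in \tau_n(X_\infty)$, whence $a \in \prod_{n=1}^{N}\tau_n(X_\infty) = \tau(X_\infty)$, as required.

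Lower semicontinuity follows the same coordinatewise strategy; the only extra point is to produce a single subsequence that simultaneously serves all coordinates. Given $a = (a_n)_{n=1}^{N} \in \tau(X_\infty)$, I would invoke the lower semicontinuity of $\tau_1, \tau_2, \dots$ in turn along successively refined subsequences of $(X_k)_k$ (legitimate since $X_k \to X_\infty$ restricts to every subsequence), and then pass to the diagonal subsequence $(k_j)_j$; along it one obtains, for each $n$, elements $b_{j,n} \in \tau_n(X_{k_j})$ with $b_{j,n} \to a_n$ in $\tau_n(Y)$, so that assembling them into $c_j \in \tau(X_{k_j})$ gives $c_j \to a$ in $\tau(Y)$. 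When $N$ is finite no diagonalization is needed and the full sequence works directly. I do not expect a genuine obstacle here: the argument is routine, and the only care needed is the standard diagonal extraction, together with the harmless bookkeeping over which indices $j$ each coordinate gets approximated for in the case $N = \infty$.
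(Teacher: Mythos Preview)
Your proposal is correct and follows exactly the coordinatewise approach the paper has in mind; the paper's own proof is simply ``This is straightforward.'' Your write-up is more detailed than the paper's, and the diagonal extraction you describe for lower semicontinuity in the case $N=\infty$ is the expected routine step (note that the paper's convention that metric spaces are non-empty, carried over to $\MTopcat$, ensures each $\tau_n(X_{k_j})$ is non-empty, so the ``harmless bookkeeping'' really is harmless).
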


\begin{proof}
  This is straightforward.
\end{proof}

\begin{prop} \label{prop: topological subfunctor of product st}
  Let $(\tilde{\tau}_n)_{n=1}^{N}$
  be another sequence of functors such that 
  $\tau_k$ is a topological subfunctor of $\tilde{\tau}_n$ for each $n$. 
  Then $\tau$ is a topological subfunctor of $\prod_{n=1}^N \tilde{\tau}_n$.
\end{prop}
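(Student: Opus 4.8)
The plan is to unwind the definitions of topological subfunctor (Definition~\ref{dfn: topological subfunctor}) and product functor (Definition~\ref{dfn: product functor}), and assemble a natural transformation componentwise. By hypothesis, for each $n$ there is a natural transformation $\eta^{(n)} \colon \tau_n \Rightarrow \tilde{\tau}_n$ witnessing that $\tau_n$ is a topological subfunctor of $\tilde{\tau}_n$. Writing $\tilde{\tau} \coloneqq \prod_{n=1}^N \tilde{\tau}_n$, I would define, for each $X \in \ob(\BCMcat)$, the map
\begin{equation}
  \eta_X \colon \tau(X) = \prod_{n=1}^N \tau_n(X) \longrightarrow \prod_{n=1}^N \tilde{\tau}_n(X) = \tilde{\tau}(X),
  \qquad
  \eta_X\bigl( (a_n)_{n=1}^N \bigr) \coloneqq \bigl( \eta^{(n)}_X(a_n) \bigr)_{n=1}^N.
\end{equation}

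The key steps are then: first, verify that each $\eta_X$ is a morphism in $\MTopcat$, i.e.\ a topological embedding. This follows because a product of topological embeddings is a topological embedding when both sides carry the product topology (each factor $\eta^{(n)}_X$ is a homeomorphism onto its image, hence so is the product map onto $\prod_n \eta^{(n)}_X(\tau_n(X))$, which carries the relative topology from $\prod_n \tilde{\tau}_n(X)$). This establishes \ref{dfn item: 1. NT}. Second, check the naturality square \ref{dfn item: 2. NT}: for $f \in \Hom_{\BCMcat}(X, Y)$, one has $\tau_f = \prod_n (\tau_n)_f$ and $\tilde{\tau}_f = \prod_n (\tilde{\tau}_n)_f$ by definition of the product functor, so
\begin{equation}
  \eta_Y \circ \tau_f \bigl( (a_n)_n \bigr)
  = \bigl( \eta^{(n)}_Y \circ (\tau_n)_f (a_n) \bigr)_n
  = \bigl( (\tilde{\tau}_n)_f \circ \eta^{(n)}_X (a_n) \bigr)_n
  = \tilde{\tau}_f \circ \eta_X \bigl( (a_n)_n \bigr),
\end{equation}
where the middle equality is the naturality of each $\eta^{(n)}$. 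Thus $\eta \colon \tau \Rightarrow \tilde{\tau}$ is a natural transformation, and $\tau$ is a topological subfunctor of $\tilde{\tau}$.

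There is essentially no main obstacle here — the statement is a formal consequence of the definitions, and the only mild point requiring care is the case $N = \infty$, where one must recall the convention (implicit in Definition~\ref{dfn: product functor} and the surrounding text) that the countable product carries the product topology; the argument that a countable product of topological embeddings into product spaces is again a topological embedding is standard and unchanged. I would state the proof in two or three sentences, citing that products of topological embeddings are topological embeddings and that naturality is checked factorwise, and leave the routine verification to the reader as the paper already signals ("The following results are straightforward, so we omit the proofs").
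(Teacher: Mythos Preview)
Your proposal is correct and follows essentially the same approach as the paper: define $\eta_X$ as the product of the componentwise topological embeddings $\eta^{(n)}_X$ and verify that this yields a natural transformation $\tau \Rightarrow \tilde{\tau}$. The paper's proof is terser (it simply says ``one can easily verify'' the natural transformation property), whereas you spell out both the topological-embedding check and the naturality square, but the content is identical.
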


\begin{proof}
  For each $n$, let $\eta^n \colon \tau_n \Rightarrow \tilde{\tau}_n$ be a topological embedding.
  For each $X \in \ob(\BCMcat)$, we define $\eta_X \colon \tau(X) \to \tilde{\tau}(X)$ to be the product of $(\eta^n_X)_{n =1}^N$.
  One can easily verify that $\eta = \{\eta_X\}_{X \in \ob(\BCMcat)}$ defines a natural transformation from $\tau$ to $\tilde{\tau}$.
  Thus, the desired result follows.
\end{proof}

We then turn to the metrization of $\tau$.  
If each $\tau_n$ admits a metrization $\FMet{\tau}_n$,  
then $\tau$ admits a naturally associated metrization $\FMet{\tau}$ defined as follows:  
for each $X \in \ob(\BCMcat)$, if $N < \infty$, we equip $\tau(X)$ with the max product metric;  
otherwise, we equip $\tau(X)$ with the metric given by  
\begin{equation} \label{eq: infinite max product metirc}
  d^{\FMet{\tau}}_X \bigl( (a_n)_{n=1}^\infty, (b_n)_{n=1}^\infty \bigr)
  \coloneqq 
  \sum_{n=1}^{\infty} 2^{-n} \bigl( 1 \wedge d^{\FMet{\tau}_n}_X(a_n, b_n) \bigr).
\end{equation}
Similarly, if each $\tau_n$ admits a space-rooted (resp.\ element-rooted) metrization,  
then $\tau$ admits a naturally associated space-rooted (resp.\ element-rooted) metrization $\FSMet{\tau}$ (resp.\ $\FEMet{\tau}$).

\begin{prop}  \label{prop: completeness of product st}
  If each metrization $\FMet{\tau}_n$ of $\tau_n$ is complete, then so is $\FMet{\tau}$.  
  The same conclusion holds for space-rooted and element-rooted metrization.
\end{prop}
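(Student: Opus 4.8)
The plan is to unwind the definitions so that all that remains is a standard fact about product metrics. By Definition~\ref{dfn: completeness and separability of functor}, saying that $\FMet{\tau}$ is complete means exactly that for every $X \in \ob(\BCMcat)$ the metric $d^{\FMet{\tau}}_X$ on $\tau(X) = \prod_{n=1}^{N}\tau_n(X)$ is complete, so I would fix such an $X$ and work with this single metric space. When $N < \infty$, the metric $d^{\FMet{\tau}}_X$ is the max product metric on a finite product of the spaces $(\tau_n(X), d^{\FMet{\tau}_n}_X)$, each complete by hypothesis; a Cauchy sequence is then Cauchy in each coordinate, the coordinates converge by completeness, and the vector of coordinate limits is the limit in the max metric. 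This is entirely routine.

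When $N = \infty$, I would argue directly with the metric \eqref{eq: infinite max product metirc}. Given a Cauchy sequence $(a^{(k)})_{k \geq 1}$ in $\tau(X)$, write $a^{(k)} = (a^{(k)}_n)_{n \geq 1}$. For each fixed $n$ one has $1 \wedge d^{\FMet{\tau}_n}_X(a^{(k)}_n, a^{(l)}_n) \leq 2^{n}\, d^{\FMet{\tau}}_X(a^{(k)}, a^{(l)})$, so the $n$-th coordinate is Cauchy with respect to $1 \wedge d^{\FMet{\tau}_n}_X$, hence (once its diameter drops below $1$) with respect to $d^{\FMet{\tau}_n}_X$ itself, and therefore converges to some $a_n \in \tau_n(X)$ by completeness of $d^{\FMet{\tau}_n}_X$. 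Setting $a \coloneqq (a_n)_{n \geq 1} \in \prod_n \tau_n(X) = \tau(X)$, a standard tail estimate — choose $M$ with $\sum_{n > M} 2^{-n} < \varepsilon/2$, then $k$ large enough that $\sum_{n \leq M} 2^{-n}\bigl(1 \wedge d^{\FMet{\tau}_n}_X(a^{(k)}_n, a_n)\bigr) < \varepsilon/2$ — shows $d^{\FMet{\tau}}_X(a^{(k)}, a) < \varepsilon$ for large $k$. There is no real obstacle here beyond this bookkeeping.

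For the space-rooted and element-rooted versions, the naturally associated metrization $\FSMet{\tau}$ (resp.\ $\FEMet{\tau}$) of $\tau$ is built from the metrics $d^{\FSMet{\tau}_n}_{X, \rho_X}$ (resp.\ $d^{\FEMet{\tau}_n}_X$) by the very same finite-max / countable-weighted recipe, so the argument above applies verbatim once $(X, \rho_X) \in \ob(\rBCMcat)$ (resp.\ $X \in \ob(\BCMcat)$) is fixed. The one extra point in the element-rooted case is that $\FE{\tau}(X) = \tau(X) \times X$, so a Cauchy sequence in $d^{\FEMet{\tau}}_X$ carries, besides its $\tau_n(X)$-coordinates, a single $X$-coordinate shared by all factors; each factor metric $d^{\FEMet{\tau}_n}_X$ forces that coordinate to converge in $X$, and since $d^{\FEMet{\tau}}_X$ induces the product topology (in particular is Hausdorff), these limits in $X$ coincide, so the assembled limit lies in $\FE{\tau}(X)$. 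Hence $\FEMet{\tau}$, and likewise $\FSMet{\tau}$, is complete.
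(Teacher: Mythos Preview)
Your proposal is correct and follows essentially the same approach as the paper: the paper's proof is the single sentence that the max product metric (or the weighted-sum metric \eqref{eq: infinite max product metirc}) of complete metrics is complete, and you have simply unpacked that standard fact coordinatewise. Your extra care in the element-rooted case about the shared $X$-coordinate is fine but not strictly necessary at the level of detail the paper adopts.
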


\begin{proof}
  The result is straightforward,  
  since the max product metric (or the metric of the form \eqref{eq: infinite max product metirc}) constructed from complete metrics is itself complete.
\end{proof}

The following two theorems are the main results of this subsection,  
showing that both stability and Polishness are inherited by $\tau$.

\begin{thm}  \label{thm: stability of product st}
  If each $\tau_n$ is stably metrizable,
  then so is  $\tau$.
\end{thm}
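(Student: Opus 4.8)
The plan is to show that the naturally associated element-rooted metrization $\FEMet{\tau}$ of the product functor $\tau = \prod_{n=1}^N \tau_n$ is stable, assuming that each $\tau_n$ admits a stable element-rooted metrization $\FEMet{\tau}_n$ with distortion $\Dist{\FEMet{\tau}_n}$. First I would recall from Definition~\ref{dfn: ER metrization} and the discussion following Proposition~\ref{prop: completeness of product st} that $\FE{\tau}(X) = \tau(X) \times X$ and that $\FEMet{\tau}$ is obtained by taking the (possibly weighted, if $N = \infty$) max/sum product of the $\FEMet{\tau}_n$. Explicitly, for $N < \infty$ one has $d^{\FEMet{\tau}}_X = \bigvee_{n=1}^N d^{\FEMet{\tau}_n}_X$ (after identifying the $X$-coordinate appropriately), and for $N = \infty$ one uses the metric of the form~\eqref{eq: infinite max product metirc}. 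The key observation is that the stability inequality~\eqref{assum item eq: 2, stability} is preserved under taking maxima and under the transformation $t \mapsto \sum_n 2^{-n}(1 \wedge t_n)$, so the candidate distortion is $\Dist{\FEMet{\tau}}(\varepsilon) \coloneqq \bigvee_{n=1}^N \Dist{\FEMet{\tau}_n}(\varepsilon)$ when $N < \infty$, and $\Dist{\FEMet{\tau}}(\varepsilon) \coloneqq \sum_{n=1}^\infty 2^{-n}\bigl(1 \wedge \Dist{\FEMet{\tau}_n}(\varepsilon)\bigr)$ when $N = \infty$.

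The key steps, in order, are as follows. First, verify condition~\ref{dfn item: 1, stability}, i.e.\ $\lim_{\varepsilon \to 0}\Dist{\FEMet{\tau}}(\varepsilon) = \Dist{\FEMet{\tau}}(0) = 0$. For $N < \infty$ this is immediate since a finite maximum of functions tending to $0$ tends to $0$. For $N = \infty$ one applies the dominated convergence theorem (or a direct $\varepsilon$-argument splitting the sum at a large index) using that each summand is bounded by $2^{-n}$ and tends to $0$ as $\varepsilon \to 0$. Second, verify condition~\ref{dfn item: 2, stability}: given $\bcmAB$ spaces $X, Y, M_1, M_2$, isometric embeddings $f_i \colon X \to M_i$, $g_i \colon Y \to M_i$ satisfying~\eqref{assum item eq: 1, stability} with constant $\varepsilon$, and elements $a = (a_n)_n \in \tau(X)$, $b = (b_n)_n \in \tau(Y)$, apply the stability of each $\FEMet{\tau}_n$ to the $n$-th coordinates $(a_n, \rho_X)$ and $(b_n, \rho_Y)$ to obtain
\begin{equation}
  d^{\FEMet{\tau}_n}_{M_2}\bigl( (\tau_n)_{f_2}(a_n, \rho_X), (\tau_n)_{g_2}(b_n, \rho_Y) \bigr)
  \leq
  d^{\FEMet{\tau}_n}_{M_1}\bigl( (\tau_n)_{f_1}(a_n, \rho_X), (\tau_n)_{g_1}(b_n, \rho_Y) \bigr)
  + \Dist{\FEMet{\tau}_n}(\varepsilon)
\end{equation}
for each $n$. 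Third, combine these coordinatewise estimates: for $N < \infty$ take the maximum over $n$, using that $\max_n(s_n + t_n) \leq \max_n s_n + \max_n t_n$; for $N = \infty$ multiply by $2^{-n}$, apply $1 \wedge (s + t) \leq (1 \wedge s) + (1 \wedge t)$, and sum, yielding~\eqref{assum item eq: 2, stability} for $\FEMet{\tau}$ with distortion $\Dist{\FEMet{\tau}}$. This shows $\FEMet{\tau}$ is stable, hence $\tau$ is stably metrizable.

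I do not expect a serious obstacle here; the argument is essentially bookkeeping, and the only mildly delicate point is the $N = \infty$ case, where one must be careful that the elementary inequality $1 \wedge (s+t) \leq (1 \wedge s) + (1 \wedge t)$ is applied correctly inside the sum and that the limit in~\ref{dfn item: 1, stability} is justified (dominated convergence with dominating sequence $2^{-n}$). One should also note in passing that the product metrization $\FEMet{\tau}$ being the naturally associated element-rooted metrization — rather than an arbitrary one — is what makes the coordinatewise reduction legitimate, since $\FE{\tau}_f = \prod_n (\FE{\tau}_n)_f$ acts coordinatewise on $\tau(X) \times X$. Given the paper's stated convention that such straightforward results have their proofs either sketched or omitted, I would present this as a short proof ending with the standard coordinatewise combination, or simply write ``\emph{This is straightforward from the definitions, by taking $\Dist{\FEMet{\tau}} \coloneqq \bigvee_n \Dist{\FEMet{\tau}_n}$ (resp.\ $\sum_n 2^{-n}(1 \wedge \Dist{\FEMet{\tau}_n})$ when $N = \infty$) and arguing coordinatewise as above.}''
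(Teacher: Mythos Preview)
Your proposal is correct and takes exactly the same approach as the paper: the paper's proof simply states the distortion function $\Dist{\FEMet{\tau}}(\varepsilon) = \max_{1 \leq n \leq N} \Dist{\FEMet{\tau}_n}(\varepsilon)$ for $N < \infty$ and $\Dist{\FEMet{\tau}}(\varepsilon) = \sum_{n=1}^{\infty} 2^{-n} (1 \wedge \Dist{\FEMet{\tau}_n}(\varepsilon))$ for $N = \infty$, and leaves the verification to the reader. Your proposal fills in precisely the details the paper omits (the coordinatewise estimate, the elementary inequalities for combining them, and dominated convergence for \ref{dfn item: 1, stability} when $N = \infty$).
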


\begin{proof}
  Assume that each $\tau_n$ admits a stable element-rooted metrization $\FEMet{\tau}_n$.
  Let $\FEMet{\tau}$ be the naturally associated element-rooted metrization of $\tau$.
  One can easily verify that $\FEMet{\tau}$ is stable and its distortion is given as follows:
    \begin{equation}
    \Dist{\FEMet{\tau}}(\varepsilon)= 
    \begin{cases}
      \displaystyle \max_{1 \leq n \leq N} \Dist{\FEMet{\tau}_n}(\varepsilon),& \text{if}\ N < \infty,\\
      \displaystyle \sum_{n=1}^{\infty} 2^{-n} (1 \wedge \Dist{\FEMet{\tau}_n}(\varepsilon)),& \text{if}\ N = \infty.
    \end{cases}
  \end{equation}
\end{proof}

\begin{thm} \label{thm: Polishness of product st}
  If each $\tau_n$ is Polish with preserved roots, then so is $\tau$.
\end{thm}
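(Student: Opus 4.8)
The plan is to construct, for each constituent structure $\tau_n$, a root-preserving Polish system $(\tilde{\tau}_n, \eta^n, (\FS{\tilde{\tau}}_{n,k})_{k \geq 1})$, and then assemble these into a root-preserving Polish system for the product $\tau = \prod_{n=1}^N \tau_n$. The natural candidate for the ambient structure is the product $\tilde{\tau} \coloneqq \prod_{n=1}^N \tilde{\tau}_n$. First I would verify that $\tilde{\tau}$ satisfies condition \ref{dfn item: 1. Polish functor in RF}: continuity of $\tilde{\tau}$ follows from Proposition~\ref{prop: topological properties of product st}, and the existence of a complete space-rooted metrization follows from Proposition~\ref{prop: completeness of product st} (using the max product metric when $N < \infty$, or the weighted sum \eqref{eq: infinite max product metirc} when $N = \infty$, applied to the complete space-rooted metrizations $\FSMet{\tilde{\tau}}_n$). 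The topological embedding $\eta \colon \tau \Rightarrow \tilde{\tau}$ is the product of the embeddings $\eta^n$, which is a natural transformation by (the argument of) Proposition~\ref{prop: topological subfunctor of product st}.

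Next I would define the subfunctors of $\FS{\tilde{\tau}}$ that cut out $\eta(\tau)$. For each $n \in \{1, \dots, N\}$ and $k \geq 1$, let $\pi_n \colon \tilde{\tau} \to \tilde{\tau}_n$ denote the $n$-th coordinate projection, and define a subfunctor $\FS{\tilde{\tau}}_{n,k}$ of $\FS{\tilde{\tau}}$ by setting, for each $(X, \rho_X) \in \ob(\rBCMcat)$,
\begin{equation}
  \FS{\tilde{\tau}}_{n,k}(X, \rho_X)
  \coloneqq
  \pi_{n,X}^{-1}\bigl( \FS{\tilde{\tau}}_{n,k}(X, \rho_X) \bigr)
  =
  \Bigl\{ (b_m)_{m=1}^N \in \tilde{\tau}(X) \;\Big|\; b_n \in \FS{\tilde{\tau}}_{n,k}(X, \rho_X) \Bigr\}
\end{equation}
(abusing notation; in the write-up I would use distinct symbols for the product-level subfunctor). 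Since each $\FS{\tilde{\tau}}_{n,k}$ is pullback-stable and open in $\tilde{\tau}_n$, and the coordinate projection $\pi_n$ is continuous and commutes with the functorial morphisms, the preimage $\pi_n^{-1}(\FS{\tilde{\tau}}_{n,k})$ is pullback-stable and open in $\tilde{\tau}$ — this is a routine diagram chase using that $\tilde{\tau}_f = \prod_m (\tilde{\tau}_m)_f$, so condition \ref{dfn item: 2. Polish functor in RF} holds. Re-indexing the countable family $\{\FS{\tilde{\tau}}_{n,k} : 1 \leq n \leq N, k \geq 1\}$ by a single index (which is a countable set since $N \leq \infty$) gives the required sequence of subfunctors.

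Finally, condition \ref{dfn item: 3. Polish functor in RF} follows from the computation
\begin{equation}
  \bigcap_{n,k} \FS{\tilde{\tau}}_{n,k}(X, \rho_X)
  =
  \Bigl\{ (b_m)_{m=1}^N \;\Big|\; b_n \in \bigcap_k \FS{\tilde{\tau}}_{n,k}(X, \rho_X) = \eta^n_X(\tau_n(X)) \text{ for all } n \Bigr\}
  =
  \prod_{n=1}^N \eta^n_X(\tau_n(X))
  =
  \eta_X(\tau(X)),
\end{equation}
using condition \ref{dfn item: 3. Polish functor in RF} for each $\tilde{\tau}_n$ in the middle equality and the definition of $\eta$ as the coordinatewise product in the last. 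The main obstacle — though it is more bookkeeping than conceptual difficulty — is handling the case $N = \infty$ carefully: one must check that the weighted-sum metric \eqref{eq: infinite max product metirc} on $\tilde{\tau}(X)$ is indeed complete and induces the product topology (so that Propositions~\ref{prop: topological properties of product st} and~\ref{prop: completeness of product st} genuinely apply in the infinite case), and that the double-indexed family of subfunctors is still countable so the $G_\delta$ argument behind Theorem~\ref{thm: Polish with preserved roots} goes through. Once the Polish system is in place, the conclusion that $\tau$ is Polish with preserved roots is immediate from Definition~\ref{dfn: Polish in RF}.
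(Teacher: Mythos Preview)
Your proposal is correct and follows the same overall strategy as the paper: take $\tilde{\tau} = \prod_{n=1}^N \tilde{\tau}_n$, verify \ref{dfn item: 1. Polish functor in RF} via Propositions~\ref{prop: topological properties of product st} and~\ref{prop: completeness of product st}, and let $\eta$ be the coordinatewise product of the $\eta^n$ as in Proposition~\ref{prop: topological subfunctor of product st}. The only difference lies in the choice of subfunctors. The paper sets $\FS{\tilde{\tau}}_k \coloneqq \prod_{n=1}^N \FS{\tilde{\tau}}_{n,k}$, a single sequence indexed by $k$; you instead take the cylinder sets $\pi_n^{-1}\bigl(\FS{\tilde{\tau}}_{n,k}(X,\rho_X)\bigr)$ and re-index over the countable set of pairs $(n,k)$. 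Both families have the same intersection, namely $\prod_n \eta^n_X(\tau_n(X)) = \eta_X(\tau(X))$, so \ref{dfn item: 3. Polish functor in RF} holds either way. Your choice is arguably cleaner when $N = \infty$: a cylinder over one coordinate is automatically open in the product topology, whereas an infinite product $\prod_{n=1}^\infty \FS{\tilde{\tau}}_{n,k}(X,\rho_X)$ of proper open subsets need not be open, so the paper's formulation tacitly requires an adjustment (e.g.\ replacing the tail factors by the full spaces) in the infinite-product case.
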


\begin{proof}
  Assume that each $\tau_n$ is Polish with preserved roots.  
  For each $n$, let $(\tilde{\tau}_n, \eta_n, (\FS{\tilde{\tau}}_{n,k})_{k \geq 1})$ be a root-preserving Polish system of $\tau_n$.  
  Define $\tilde{\tau} \coloneqq \prod_{n=1}^N \tilde{\tau}_n$.
  By Propositions~\ref{prop: topological properties of product st} and \ref{prop: completeness of product st}, 
  $\tilde{\tau}$ is continuous and admits a complete space-rooted metrization.
  Moreover, by Proposition~\ref{prop: topological subfunctor of product st},
  $\tau$ is topologically embedded into $\tilde{\tau}$.
  Let $\eta$ be the associated topological embedding constructed in the proof of  Proposition~\ref{prop: topological subfunctor of product st}.
  For each $k \geq 1$,
  we define a subfunctor $\FS{\tilde{\tau}}_k$ of $\FS{\tilde{\tau}}$ by setting $\FS{\tilde{\tau}}_k \coloneqq \prod_{n=1}^N \FS{\tilde{\tau}}_{n,k}$.  
  We then deduce that $(\tilde{\tau}, \eta, (\FS{\tilde{\tau}}_k)_{k \geq 1})$ forms a root-preserving Polish system of $\tau$,  
  and hence $\tau$ is Polish with preserved roots.
\end{proof}


\subsection{Structures via composition with space transformations}  \label{sec: Composition of st}

When $\tau$ is a structure and $\STFunct \colon \BCMcat \to \BCMcat$ is a functor,
one can consider a new structure given by the composition $\tau \circ \STFunct$.
In this subsection, we study properties of $\tau \circ \STFunct$, such as Polishness, in terms of the properties of $\tau$ and $\STFunct$.
This allows us to reduce the verification of properties for a complex structure 
to the verification for its simpler components.

In order to inherit properties of $\tau$ to $\tau \circ \Psi$, 
we assume that $\Psi$ belongs to a class of space transformations, which we define below.
Note that, in the following definition, 
we say that $\STFunct$ is continuous if and only if the composition  
$\Gamma_{\BCMcat \to \MTopcat} \circ \STFunct$  
is continuous in the sense of Definition~\ref{dfn: continuity of structure}.

\begin{dfn} [Space transformation]
  We call a functor $\STFunct \colon \BCMcat \to \BCMcat$ a \emph{space transformation}
  if it is continuous and 
  there exists a natural transformation $\RootSystem \colon \Gamma_{\BCMcat \to \MTopcat} \Rightarrow \Gamma_{\BCMcat \to \MTopcat} \circ \STFunct$,
  that is, $\RootSystem$ is a collection of maps $\RootSystem_X \colon X \to \STFunct(X)$, where $X \in \ob(\BCMcat)$, such that 
  \begin{enumerate} [label = \textup{(RT\,\arabic*)}, leftmargin = *]
    \item \label{dfn item: 1. rooted}
      $\RootSystem_X \colon X \to \STFunct(X)$ is a topological embedding for each $X \in \ob(\BCMcat)$,
    \item \label{dfn item: 2. rooted}
      $\STFunct_f \circ \RootSystem_X = \RootSystem_Y \circ f$, for each $f \in \Hom_{\BCMcat}(X, Y)$.
  \end{enumerate}
  We call $\RootSystem$ a \emph{rooting system} of $\STFunct$.
  For each $\bcmAB$ space $X$, we write $d^\STFunct_X = d_{\STFunct(X)}$ for the metric on $\STFunct(X)$.
\end{dfn}

The continuity of $\Psi$ ensures that the continuity of $\tau$ is inherited by the composition $\tau \circ \Psi$,  
as we will see in Proposition~\ref{prop: continuity of composition} below.  
The rooting system specifies the root of $\Psi(X)$ for each rooted space $X$,  
and will be used to construct a rooted metrization of $\tau \circ \Psi$  
in Propositions~\ref{prop: composition is SR} and \ref{prop: composition is ER metrizable} below.

\begin{exm} \label{exm: 1. space transformation}
  A canonical example of space transformations is a functor 
  that maps each $\bcmAB$ space $X$ to the Cartesian product $X^k$, where $k$ is a fixed natural number.
  Another example is a functor $\STFunct$ that maps $X$ to $X \times \Xi$, 
  where $\Xi$ is a fixed $\bcmAB$ space.
  From this functor and the measure structure $\MeasFunct$ defined in Example~\ref{exm: measure functor},
  one obtains the space $\rootedBCM(\MeasFunct \circ \STFunct)$,
  consisting of (equivalence classes of) rooted $\bcmAB$ spaces equipped with marked measures, i.e., measures on $X \times \Xi$.
  More precise definitions of these functors are given in Example~\ref{exm: 2. space transformation} below.
\end{exm}

Henceforth, we fix a space transformation $\Psi$ with a rooting system $\RootSystem$ and a structure $\tau$.

\begin{prop} \label{prop: continuity of composition}
  If $\tau$ is embedding-continuous (resp.\ upper/lower semicontinuous),
  then so is $\tau \circ \Psi$.
  In particular, if $\tau$ is continuous, then so is $\tau \circ \STFunct$.
\end{prop}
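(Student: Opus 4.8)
The plan is to use that, by Definition~\ref{dfn: composition of functors}, $(\tau \circ \STFunct)(X) = \tau(\STFunct(X))$ and $(\tau \circ \STFunct)_f = \tau_{\STFunct_f}$ for every $\bcmAB$ space $X$ and morphism $f$, and to deduce each of the three continuity properties of $\tau \circ \STFunct$ from the corresponding property of $\tau$ after transporting the relevant convergence through $\STFunct$. Since $\STFunct$ is a space transformation, the structure $\Gamma_{\BCMcat \to \MTopcat} \circ \STFunct$ is continuous in the sense of Definition~\ref{dfn: continuity of structure}; this, together with the fact that each $\STFunct_f$ is an isometric embedding (being a morphism of $\BCMcat$), is all that I will need from $\STFunct$.

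First I would prove embedding-continuity. Fix $\bcmAB$ spaces $X, Y$ and isometric embeddings $f_n \colon X \to Y$, $n \in \NN \cup \{\infty\}$, with $f_n \to f_\infty$ in the compact-convergence topology. Embedding-continuity of $\Gamma_{\BCMcat \to \MTopcat} \circ \STFunct$ gives $\STFunct_{f_n}(x) \to \STFunct_{f_\infty}(x)$ in $\STFunct(Y)$ for every $x \in \STFunct(X)$; since each $\STFunct_{f_n}$ is distance-preserving, the Arzel\`{a}--Ascoli argument used in Remark~\ref{rem: strong continuity of functors} upgrades this pointwise convergence to convergence $\STFunct_{f_n} \to \STFunct_{f_\infty}$ in the compact-convergence topology (as maps $\STFunct(X) \to \STFunct(Y)$). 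Applying embedding-continuity of $\tau$ to the sequence $(\STFunct_{f_n})_{n}$, one obtains $(\tau \circ \STFunct)_{f_n}(a) = \tau_{\STFunct_{f_n}}(a) \to \tau_{\STFunct_{f_\infty}}(a) = (\tau \circ \STFunct)_{f_\infty}(a)$ in $\tau(\STFunct(Y))$ for every $a \in \tau(\STFunct(X))$, which is Assumption~\ref{assum: embedding continuity} for $\tau \circ \STFunct$.

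For the semicontinuity statements, take $\bcmAB$ spaces $X_n$, $n \in \NN \cup \{\infty\}$, isometrically embedded via $\iota_n \colon X_n \to Y$ into a common $\bcmAB$ space $Y$ with $X_n \to X_\infty$ in the Fell topology as subsets of $Y$. The key step is to check that $\STFunct(X_n) \to \STFunct(X_\infty)$ in the Fell topology as subsets of $\STFunct(Y)$, via the isometric embeddings $\STFunct_{\iota_n} \colon \STFunct(X_n) \to \STFunct(Y)$. Indeed, since $\Gamma_{\BCMcat \to \MTopcat} \circ \STFunct$ is both upper and lower semicontinuous, Remark~\ref{rem: relation to PK convergence} shows that $\STFunct(X_n) \to \STFunct(X_\infty)$ in the Painlev\'{e}--Kuratowski sense; as each $\STFunct(X_n)$ is boundedly-compact, hence complete, hence closed in $\STFunct(Y)$, this convergence is equivalent to convergence in the Fell topology by Theorem~\ref{thm: convergence in the Fell topology} (cf.\ \cite[Theorem~C.7]{Molchanov_17_Theory}). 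Then, identifying $(\tau \circ \STFunct)(X_n) = \tau(\STFunct(X_n))$ with a subspace of $(\tau \circ \STFunct)(Y) = \tau(\STFunct(Y))$ through $\tau_{\STFunct_{\iota_n}}$, the upper (resp.\ lower) semicontinuity of $\tau$ applied to the Fell-convergent sequence $\STFunct(X_n) \to \STFunct(X_\infty)$ gives precisely Assumption~\ref{assum: semicontinuity}\ref{assum item: upper semicontinuity} (resp.\ \ref{assum item: lower semicontinuity}) for $\tau \circ \STFunct$. The final assertion then follows immediately from Definition~\ref{dfn: continuity of structure}. I expect the only mildly delicate points to be the Arzel\`{a}--Ascoli upgrade in the first step and the passage from Painlev\'{e}--Kuratowski to Fell convergence, which requires the closedness of the images $\STFunct_{\iota_n}(\STFunct(X_n))$; both are routine.
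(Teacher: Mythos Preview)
Your proposal is correct and follows essentially the same approach as the paper: for embedding-continuity you upgrade pointwise convergence of $\STFunct_{f_n}$ to compact-convergence via the Arzel\`{a}--Ascoli argument of Remark~\ref{rem: strong continuity of functors}, and for semicontinuity you use completeness of each $\STFunct(X_n)$ to get closedness in $\STFunct(Y)$ and then pass from Painlev\'{e}--Kuratowski to Fell convergence via Remark~\ref{rem: relation to PK convergence}. The two ``mildly delicate points'' you flag are exactly the ones the paper singles out.
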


\begin{proof}
  Assume that $\tau$ is embedding-continuous.
  Fix $\bcmAB$ spaces $X$ and $Y$,
  and let $f_n \colon X \to Y$, $n \in \NN \cup \{\infty\}$, be isometric embeddings such that $f_n \to f_\infty$ in the compact-convergence topology.
  By the continuity of $\STFunct$ and Remark~\ref{rem: strong continuity of functors},
  we have $\STFunct_{f_n} \to \STFunct_{f_\infty}$ in the compact-convergence topology.
  Then the continuity of $\tau$ implies that $(\tau \circ \STFunct)_{f_n}(a) \to (\tau \circ \STFunct)_{f_\infty}(a)$ for all $a \in \tau(\STFunct(X))$.
  Hence, $\tau \circ \STFunct$ is embedding-continuous.

  To prove the assertions regarding semicontinuity,
  let $X_n$, $n \in \mathbb{N} \cup \{\infty\}$, be $\bcmAB$ spaces 
  that are isometrically embedded into a common $\bcmAB$ space $Y$
  in such a way that $X_n \to X_\infty$ in the Fell topology as subsets of $Y$.
  Note that, since $\STFunct_f \colon \STFunct(X_n) \to \STFunct(Y)$ is distance-preserving 
  and the metric on $\STFunct(X_n)$ is complete,
  the set $\STFunct(X_n)$ is closed in $\STFunct(Y)$.
  Thus, by Remark~\ref{rem: relation to PK convergence},
  we deduce that $\STFunct(X_n) \to \STFunct(X_\infty)$ in the Fell topology as subsets of $\STFunct(Y)$.
  Now it is easy to see that 
  the upper (resp.\ lower) semicontinuity of $\tau$ implies the upper (resp.\ lower) semicontinuity of $\tau \circ \STFunct$.
  This completes the proof.
\end{proof}

\begin{prop} \label{prop: topological subfunctor of composition}
  Let $\tilde{\tau}$ be another structure.
  If $\tau$ is a topological subfunctor of $\tilde{\tau}$,
  then $\tau \circ \STFunct$ is a topological subfunctor of $\tilde{\tau} \circ \STFunct$.
\end{prop}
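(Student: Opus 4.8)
The plan is to unwind the definitions of topological subfunctor and of composition, and produce the required natural transformation directly. Recall that, by Definition~\ref{dfn: topological subfunctor}, the hypothesis that $\tau$ is a topological subfunctor of $\tilde{\tau}$ means precisely that there exists a natural transformation $\eta \colon \tau \Rightarrow \tilde{\tau}$. The goal is to construct a natural transformation $\theta \colon \tau \circ \STFunct \Rightarrow \tilde{\tau} \circ \STFunct$.

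First I would define, for each $X \in \ob(\BCMcat)$, the component $\theta_X \coloneqq \eta_{\STFunct(X)} \colon (\tau \circ \STFunct)(X) = \tau(\STFunct(X)) \to \tilde{\tau}(\STFunct(X)) = (\tilde{\tau} \circ \STFunct)(X)$. This is a morphism in $\MTopcat$ (a topological embedding) since $\eta$ is a natural transformation and hence each of its components is such a morphism by \ref{dfn item: 1. NT}; this verifies \ref{dfn item: 1. NT} for $\theta$.

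Next I would check the naturality square \ref{dfn item: 2. NT} for $\theta$. Fix $f \in \Hom_{\BCMcat}(X, Y)$. Unravelling Definition~\ref{dfn: composition of functors}, we have $(\tau \circ \STFunct)_f = \tau_{\STFunct_f}$ and $(\tilde{\tau} \circ \STFunct)_f = \tilde{\tau}_{\STFunct_f}$, where $\STFunct_f \in \Hom_{\BCMcat}(\STFunct(X), \STFunct(Y))$. Thus the square to be checked is
\begin{equation}
  \theta_Y \circ (\tau \circ \STFunct)_f
  = \eta_{\STFunct(Y)} \circ \tau_{\STFunct_f}
  \overset{?}{=}
  \tilde{\tau}_{\STFunct_f} \circ \eta_{\STFunct(X)}
  = (\tilde{\tau} \circ \STFunct)_f \circ \theta_X,
\end{equation}
and the middle equality is exactly the naturality condition \ref{dfn item: 2. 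NT} for $\eta$ applied to the morphism $\STFunct_f$. Hence $\theta = \{\theta_X\}_{X \in \ob(\BCMcat)}$ is a natural transformation from $\tau \circ \STFunct$ to $\tilde{\tau} \circ \STFunct$, which by Definition~\ref{dfn: topological subfunctor} means $\tau \circ \STFunct$ is a topological subfunctor of $\tilde{\tau} \circ \STFunct$.

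There is essentially no obstacle here: the statement is a purely formal consequence of the definitions, amounting to the observation that whiskering a natural transformation on the right by a functor again yields a natural transformation. The only minor point to state carefully is that $\STFunct$, being a space transformation, does map into $\BCMcat$ so that the composites $\tau \circ \STFunct$ and $\tilde{\tau} \circ \STFunct$ are well-defined structures (functors $\BCMcat \to \MTopcat$); this is immediate. Given how short the argument is, I would present it essentially in full rather than merely sketch it.
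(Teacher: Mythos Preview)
Your proof is correct and takes essentially the same approach as the paper: you define the component at $X$ to be $\eta_{\STFunct(X)}$ and check naturality via the naturality of $\eta$ applied to $\STFunct_f$. The paper's proof is identical in substance (using the name $\zeta$ instead of $\theta$) but terser, simply asserting that one can verify $\zeta$ is a natural transformation, whereas you spell out the naturality square explicitly.
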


\begin{proof}
  Let $\eta \colon \tau \Rightarrow \tilde{\tau}$ be the associated topological embedding.
  For each $X \in \BCMcat$, define $\zeta_X \coloneqq \eta_{\STFunct(X)} \colon \tau(\STFunct(X)) \to \tilde{\tau}(\STFunct(X))$.
  Then one can verify that $\zeta$ defines a natural transformation from $\tau \circ \STFunct$ to $\tilde{\tau} \circ \STFunct$,
  and hence the desired result follows.
\end{proof}

We next study the metrization of $\tau \circ \STFunct$.

\begin{prop}
  If $\tau$ is metrizable, 
  then so is $\tau \circ \STFunct$.
\end{prop}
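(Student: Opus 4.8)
The plan is to exhibit the obvious candidate: take $\FMet{(\tau \circ \STFunct)} \coloneqq \FMet{\tau} \circ \STFunct$, where $\FMet{\tau} \colon \BCMcat \to \Metcat$ is a metrization of $\tau$ and $\STFunct \colon \BCMcat \to \BCMcat$ is the given space transformation. Since the composition of functors is again a functor (Definition~\ref{dfn: composition of functors}) and $\STFunct$ takes values in $\BCMcat$, the composite $\FMet{\tau} \circ \STFunct$ is a well-defined functor from $\BCMcat$ to $\Metcat$; moreover its post-composition with the forgetful functor $\Metcat \to \MTopcat$ equals $(\Gamma_{\Metcat \to \MTopcat} \circ \FMet{\tau}) \circ \STFunct = \tau \circ \STFunct$, so the defining diagram of Definition~\ref{dfn: metrization of structures} commutes on the nose. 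It then remains only to verify the two conditions in Remark~\ref{rem: construction of metrization}.

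First I would check condition (i): for each $\bcmAB$ space $X$, the object $(\FMet{\tau} \circ \STFunct)(X) = \FMet{\tau}(\STFunct(X))$ has underlying topological space $\tau(\STFunct(X)) = (\tau \circ \STFunct)(X)$, and the metric $d^{\FMet{\tau}}_{\STFunct(X)}$ induces precisely that topology because $\FMet{\tau}$ is a metrization of $\tau$ (applied to the $\bcmAB$ space $\STFunct(X)$). Next I would check condition (ii): for each morphism $f \in \Hom_{\BCMcat}(X, Y)$, the map $\STFunct_f \colon \STFunct(X) \to \STFunct(Y)$ is itself a morphism in $\BCMcat$, i.e.\ an isometric embedding of $\bcmAB$ spaces; hence $(\tau \circ \STFunct)_f = \tau_{\STFunct_f} = \FMet{\tau}_{\STFunct_f}$ is distance-preserving with respect to $d^{\FMet{\tau}}_{\STFunct(X)}$ and $d^{\FMet{\tau}}_{\STFunct(Y)}$, again by the defining property of the metrization $\FMet{\tau}$. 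Together, (i) and (ii) show that $\FMet{\tau} \circ \STFunct$ is a metrization of $\tau \circ \STFunct$, completing the argument.

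There is essentially no obstacle here; the statement is purely formal. The one point meriting (minor) care is that $\STFunct$, being a functor $\BCMcat \to \BCMcat$, sends isometric embeddings to isometric embeddings, which is exactly what licenses the use of property (ii) of the metrization of $\tau$ at the transformed morphism $\STFunct_f$. In particular, neither the continuity of $\STFunct$ nor the existence of a rooting system is needed for plain metrizability; those hypotheses enter only when one wants to produce space-rooted or element-rooted metrizations of $\tau \circ \STFunct$, treated separately below.
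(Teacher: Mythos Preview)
Your proposal is correct and follows exactly the same approach as the paper: define $\FMet{(\tau \circ \STFunct)} \coloneqq \FMet{\tau} \circ \STFunct$ and observe that the defining diagram for a metrization commutes. The paper's proof is more terse (it simply draws the commutative diagram and declares the result), whereas you spell out conditions~(i) and~(ii) of Remark~\ref{rem: construction of metrization}, but the content is identical.
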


\begin{proof}
  If we write $\FMet{\tau}$ for a metrization of $\tau$,
  then the following diagram commutes.
  \begin{equation}
    \begin{tikzcd}
                                                                                       &                                                  & \Metcat \arrow[d, "\ForgetMetric"] \\
      \BCMcat \arrow[rru, bend left = 10, "\FMet{\tau} \circ \STFunct"] \arrow[r, "\STFunct"] & \BCMcat \arrow[ru, "\FMet{\tau}"] \arrow[r, "\tau"] & \MTopcat 
    \end{tikzcd}
  \end{equation}
  This shows that $\tau \circ \STFunct$ is metrizable and $\FMet{\tau} \circ \STFunct$ is its metrization.
\end{proof}

\begin{prop} \label{prop: composition is SR}
  If $\tau$ admits a space-rooted metrization $\FSMet{\tau}$,
  then $\tau \circ \STFunct$ admits a space-rooted metrization $\FSMet{(\tau \circ \STFunct)}$ given as follows:
  for each $(X, \rho_X) \in \ob(\rBCMcat)$,
  equip $\tau(\STFunct(X))$ with a metric given by
  \begin{equation} \label{prop eq: composition is SR}
    d^{\FSMet{(\tau \circ \STFunct)}}_{X, \rho_X} ( a, b )
    \coloneqq 
    d^{\FSMet{\tau}}_{\STFunct(X), \RootSystem_X(\rho_X)} (a , b).
  \end{equation}
  Moreover, if $\FSMet{\tau}$ is complete,
  then so is $\FSMet{(\tau \circ \STFunct)}$.
\end{prop}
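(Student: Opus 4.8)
The plan is to verify directly that the formula \eqref{prop eq: composition is SR} meets the two conditions in Remark~\ref{rem: construction of SR metrization}, and then to deduce completeness from the completeness of $\FSMet{\tau}$. First I would check that the right-hand side of \eqref{prop eq: composition is SR} is well-defined, i.e. that $\RootSystem_X(\rho_X)$ is a legitimate root of the $\bcmAB$ space $\STFunct(X)$ and that $(\STFunct(X), \RootSystem_X(\rho_X))$ is indeed an object of $\rBCMcat$, so that $d^{\FSMet{\tau}}_{\STFunct(X), \RootSystem_X(\rho_X)}$ makes sense; this is immediate since $\STFunct$ maps $\bcmAB$ spaces to $\bcmAB$ spaces. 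Being a pullback of the metric $d^{\FSMet{\tau}}_{\STFunct(X), \RootSystem_X(\rho_X)}$ along the identity map on the underlying set $\tau(\STFunct(X)) = (\tau \circ \STFunct)(X)$, it is trivially a metric, and it induces exactly the topology of $(\tau \circ \STFunct)(X)$ because $d^{\FSMet{\tau}}_{\STFunct(X), \RootSystem_X(\rho_X)}$ induces the topology of $\tau(\STFunct(X))$ by the defining property of the space-rooted metrization $\FSMet{\tau}$ (Definition~\ref{dfn: space-rooted metrization of structures}). This settles Remark~\ref{rem: construction of SR metrization}\ref{rem item: 1. construction of SR metrization}.

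Next I would verify the distance-preserving condition Remark~\ref{rem: construction of SR metrization}\ref{rem item: 2. construction of SR metrization}. Fix a morphism $f \in \Hom_{\rBCMcat}((X, \rho_X), (Y, \rho_Y))$, so $f \colon X \to Y$ is a root-preserving isometric embedding. Then $\STFunct_f \colon \STFunct(X) \to \STFunct(Y)$ is an isometric embedding (a morphism in $\BCMcat$), and by the rooting-system property \ref{dfn item: 2. rooted} we have $\STFunct_f(\RootSystem_X(\rho_X)) = \RootSystem_Y(f(\rho_X)) = \RootSystem_Y(\rho_Y)$, so $\STFunct_f$ is in fact a morphism in $\rBCMcat$ from $(\STFunct(X), \RootSystem_X(\rho_X))$ to $(\STFunct(Y), \RootSystem_Y(\rho_Y))$. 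Since $\FSMet{\tau}$ is a functor into $\Metcat$, the map $(\FS{\tau})_{\STFunct_f} = \tau_{\STFunct_f}$ is distance-preserving with respect to $d^{\FSMet{\tau}}_{\STFunct(X), \RootSystem_X(\rho_X)}$ and $d^{\FSMet{\tau}}_{\STFunct(Y), \RootSystem_Y(\rho_Y)}$. But $(\tau \circ \STFunct)_f = \tau_{\STFunct_f}$ by Definition~\ref{dfn: composition of functors}, and the metrics on both sides are by \eqref{prop eq: composition is SR} literally the metrics $d^{\FSMet{\tau}}_{\STFunct(\cdot), \RootSystem_\cdot(\rho_\cdot)}$, so $(\tau \circ \STFunct)_f$ is distance-preserving as required. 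By Remark~\ref{rem: construction of SR metrization} this produces a well-defined space-rooted metrization $\FSMet{(\tau \circ \STFunct)}$ of $\tau \circ \STFunct$.

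Finally, for the completeness claim: if $\FSMet{\tau}$ is complete then, by Definition~\ref{dfn: completeness and separability of functor}, for every object $(\STFunct(X), \RootSystem_X(\rho_X)) \in \ob(\rBCMcat)$ the metric $d^{\FSMet{\tau}}_{\STFunct(X), \RootSystem_X(\rho_X)}$ is complete; since $d^{\FSMet{(\tau \circ \STFunct)}}_{X, \rho_X}$ equals this metric on the nose, it is complete for every $(X, \rho_X) \in \ob(\rBCMcat)$, i.e. $\FSMet{(\tau \circ \STFunct)}$ is complete. The whole argument is essentially bookkeeping with the functoriality of $\STFunct$ and $\FSMet{\tau}$ together with the rooting system; I anticipate no real obstacle, the only point that needs genuine care being the use of \ref{dfn item: 2. rooted} to see that $\STFunct_f$ is root-preserving, which is exactly what makes the pulled-back metrics compatible along morphisms.
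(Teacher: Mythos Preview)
Your proposal is correct and follows essentially the same approach as the paper: both verify the conditions of Remark~\ref{rem: construction of SR metrization}, with the key step being the use of \ref{dfn item: 2. rooted} to deduce that $\STFunct_f$ is root-preserving (so that $\tau_{\STFunct_f}$ is distance-preserving with respect to the chosen metrics), and both dismiss completeness as immediate since the metric is literally that of $\FSMet{\tau}$. Your write-up is somewhat more explicit about well-definedness and condition~\ref{rem item: 1. construction of SR metrization}, which the paper leaves implicit, but the substance is identical.
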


\begin{proof}
  The second assertion regarding completeness is obvious.
  For the first assertion,
  by Remark~\ref{rem: construction of SR metrization},
  it is enough to show that, if $f \colon X \to Y$ is a root-preserving isometric embedding between rooted $\bcmAB$ space 
  $(X, \rho_X)$ and $(Y, \rho_Y)$,
  then $(\tau \circ \STFunct)_f$ is distance-preserving.
  From \ref{dfn item: 2. rooted}, we have $\STFunct_f(\RootSystem_X(\rho_X)) = \RootSystem_Y(\rho_Y)$,
  which means that the isometric embedding $\STFunct_f \colon \STFunct(X) \to \STFunct(Y)$ is also root-preserving
  with respect to the roots $\RootSystem_X(\rho_X)$ and $\RootSystem_Y(\rho_Y)$.
  Since $\FSMet{\tau}$ is a space-rooted metrization,
  it follows that $(\tau \circ \STFunct)_f = \tau_{\STFunct_f}$ is distance-preserving.
\end{proof}

\begin{prop}  \label{prop: composition is ER metrizable}
  If $\tau$ admits an element-rooted metrization $\FEMet{\tau}$,
  then $\tau \circ \STFunct$ admits an element-rooted metrization $\FEMet{(\tau \circ \STFunct)}$ given as follows:
  for each $X \in \ob(\BCMcat)$,
  equip $\FE{(\tau \circ \STFunct)}(X) = \tau(\STFunct(X)) \times X$ with a metric given by 
  \begin{equation} \label{prop eq: composition is ER metrizable}
    d^{\FEMet{(\tau \circ \STFunct)}}_X \bigl( (a, x), (b, y) \bigr)
    \coloneqq 
    d^{\FEMet{\tau}}_{\STFunct(X)} \bigl( (a, \RootSystem_X(x)), (b, \RootSystem_X(y)) \bigr)
    \vee
    d_X(x,y).
  \end{equation}
  Moreover, if $\FEMet{\tau}$ is complete,
  then so is $\FEMet{(\tau \circ \STFunct)}$.
\end{prop}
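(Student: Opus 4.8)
The plan is to verify the two conditions of Remark~\ref{rem: construction of ER metrization} for the proposed metric $d^{\FEMet{(\tau \circ \STFunct)}}_X$, namely that it induces the product topology on $\FE{(\tau \circ \STFunct)}(X) = \tau(\STFunct(X)) \times X$, and that for every $f \in \Hom_{\BCMcat}(X, Y)$ the product map $(\tau \circ \STFunct)_f \times f$ is distance-preserving. The completeness assertion, as for the analogous Proposition~\ref{prop: composition is SR}, is then immediate from the formula, since a max of two complete metrics is complete and $(d_X)$ is complete when $X$ is taken from a complete space (more precisely, restricting to each fixed $X$, the metric $d_X$ is complete as $X$ is boundedly compact; but here the relevant point is only that $\FEMet{\tau}$ complete and $d_X$ complete yield $\FEMet{(\tau \circ \STFunct)}$ complete on each fibre, which is what ``complete metrization'' means in Definition~\ref{dfn: completeness and separability of functor}).

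For the topology condition, I would first observe that, by \ref{dfn item: 1. rooted}, the map $\RootSystem_X \colon X \to \STFunct(X)$ is a topological embedding, so $x_n \to x$ in $X$ if and only if $\RootSystem_X(x_n) \to \RootSystem_X(x)$ in $\STFunct(X)$. Combining this with the fact that $\FEMet{\tau}$ induces the product topology on $\FE{\tau}(\STFunct(X)) = \tau(\STFunct(X)) \times \STFunct(X)$ (Remark~\ref{rem: construction of ER metrization}\ref{rem item: 1. construction of ER metrization} applied to $\FEMet{\tau}$), one sees that $(a_n, x_n) \to (a, x)$ with respect to $d^{\FEMet{(\tau \circ \STFunct)}}_X$ if and only if $a_n \to a$ in $\tau(\STFunct(X))$ and $x_n \to x$ in $X$; the extra term $d_X(x_n, x)$ in the definition is harmless because it is dominated by the change in the $\STFunct(X)$-coordinate up to the embedding $\RootSystem_X$, and is included precisely to guarantee that convergence in $X$ is detected even if $\RootSystem_X$ is not uniformly bi-Lipschitz. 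Hence the induced topology is the product topology $\tau(\STFunct(X)) \times X$, as required.

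For the distance-preserving condition, fix a root-preserving-irrelevant isometric embedding $f \colon X \to Y$ in $\BCMcat$; then $\STFunct_f \colon \STFunct(X) \to \STFunct(Y)$ is an isometric embedding (it is a morphism in $\BCMcat$), and by \ref{dfn item: 2. rooted} we have $\STFunct_f \circ \RootSystem_X = \RootSystem_Y \circ f$. Therefore, for $(a, x), (b, y) \in \FE{(\tau \circ \STFunct)}(X)$,
\begin{align}
  d^{\FEMet{(\tau \circ \STFunct)}}_Y \bigl( (\tau_{\STFunct_f}(a), f(x)), (\tau_{\STFunct_f}(b), f(y)) \bigr)
  &=
  d^{\FEMet{\tau}}_{\STFunct(Y)} \bigl( (\tau_{\STFunct_f}(a), \RootSystem_Y(f(x))), (\tau_{\STFunct_f}(b), \RootSystem_Y(f(y))) \bigr)
  \vee d_Y(f(x), f(y))
  \\
  &=
  d^{\FEMet{\tau}}_{\STFunct(Y)} \bigl( \FE{\tau}_{\STFunct_f}(a, \RootSystem_X(x)), \FE{\tau}_{\STFunct_f}(b, \RootSystem_X(y)) \bigr)
  \vee d_X(x, y)
  \\
  &=
  d^{\FEMet{\tau}}_{\STFunct(X)} \bigl( (a, \RootSystem_X(x)), (b, \RootSystem_X(y)) \bigr)
  \vee d_X(x, y)
  \\
  &=
  d^{\FEMet{(\tau \circ \STFunct)}}_X \bigl( (a, x), (b, y) \bigr),
\end{align}
where the second equality uses $\RootSystem_Y \circ f = \STFunct_f \circ \RootSystem_X$ together with $f$ being distance-preserving, and the third equality is the fact that $\FEMet{\tau}$ is an element-rooted metrization, so $\FE{\tau}_{\STFunct_f} = \tau_{\STFunct_f} \times \STFunct_f$ is distance-preserving. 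Since $(\tau \circ \STFunct)_f = \tau_{\STFunct_f}$ by Definition~\ref{dfn: composition of functors}, this is exactly the statement that $\FE{(\tau \circ \STFunct)}_f = (\tau \circ \STFunct)_f \times f$ preserves $d^{\FEMet{(\tau \circ \STFunct)}}$, completing the verification.

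I do not expect any serious obstacle here; the only point requiring a little care is the topology verification, where one must check that appending the term $d_X(x,y)$ does not change the induced topology — this is where the hypothesis \ref{dfn item: 1. rooted} that $\RootSystem_X$ is a topological embedding (rather than merely continuous) is essential, since it ensures that convergence of $\RootSystem_X(x_n)$ in $\STFunct(X)$ already forces convergence of $x_n$ in $X$, so the two descriptions of convergence agree. Everything else is a direct unwinding of the definitions of composition and product of functors together with the naturality square \ref{dfn item: 2. rooted} of the rooting system.
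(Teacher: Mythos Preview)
Your proposal is correct and follows essentially the same route as the paper: verify the two conditions of Remark~\ref{rem: construction of ER metrization} using \ref{dfn item: 1. rooted} for the topology and \ref{dfn item: 2. rooted} for the distance-preservation, with the same chain of equalities. The only minor imprecision is in your completeness sketch: the first term in the max is not itself a complete metric on $\tau(\STFunct(X)) \times X$ (the image of $\RootSystem_X$ need not be closed in $\STFunct(X)$), so ``max of two complete metrics'' is not quite the right framing; the paper instead argues directly that a Cauchy sequence $(a_n, x_n)$ yields $x_n \to x$ from the $d_X$ term and $a_n \to a$ from the convergence of $(a_n, \RootSystem_X(x_n))$ in $\FE{\tau}(\STFunct(X))$, then concludes via the already-established product topology --- exactly the argument your sketch points toward.
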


\begin{proof}
  We verify the conditions given in Remark \ref{rem: construction of ER metrization}.
  By using \ref{dfn item: 1. rooted},
  we deduce that the metric given in \eqref{prop eq: composition is ER metrizable} yields the product topology.
  Hence, Remark~\ref{rem: construction of ER metrization}\ref{rem item: 1. construction of ER metrization} is satisfied.
  Let $X$ and $Y$ be $\bcmAB$ spaces and $f \colon X \to Y$ be an isometric embedding.
  Using \ref{dfn item: 2. rooted} and that $\FE{\tau}_{\STFunct_f}$ is distance-preserving,
  we deduce that, for any $(a, x), (b, y) \in \tau(\STFunct(X)) \times X$,
  \begin{align} 
    d^{\FEMet{(\tau \circ \STFunct)}}_Y \bigl( \FE{(\tau \circ \STFunct)}_f(a, x), \FE{(\tau \circ \STFunct)}_f(b, y) \bigr)
    &=
    d^{\FEMet{\tau}}_{\STFunct(Y)} \bigl( (\tau_{\STFunct_f}(a), \RootSystem_Y \circ f(x)), (\tau_{\STFunct_f}(b), \RootSystem_Y \circ f(y)) \bigr)
    \\
    &=
    d^{\FEMet{\tau}}_{\STFunct(Y)} \bigl( (\tau_{\STFunct_f}(a), \STFunct_f \circ \RootSystem_X(x)), (\tau_{\STFunct_f}(b), \STFunct_f \circ \RootSystem_Y(y)) \bigr)
    \\
    &=
    d^{\FEMet{\tau}}_{\STFunct(X)} \bigl( (a, \RootSystem_X(x)), (b, \RootSystem_Y(y)) \bigr)
    \\
    &=
    d^{\FEMet{(\tau \circ \STFunct)}}_X \bigl( (a, x), (b, y) \bigr).
    \label{pr eq: 1. composition is ER}
  \end{align}
  This shows that $\FE{(\tau \circ \STFunct)}_f$ is distance-preserving, which verifies Remark~\ref{rem: construction of ER metrization}\ref{rem item: 2. construction of ER metrization}.

  We finally prove the last assertion of the result.
  Assume that $\FEMet{\tau}$ is complete.
  Fix a $\bcmAB$ space $X$ and a Cauchy sequence $(a_n, x_n)_{n \geq 1}$ in $\FE{(\tau \circ \STFunct)}(X)$.
  Then $(a_n, \RootSystem_X(x_n))_{n \geq 1}$ and $(x_n)_{n \geq 1}$ are Cauchy sequences 
  in $\tau(\STFunct(X)) \times \STFunct(X)$ and $X$, respectively.
  The completeness of $d_X$ implies that $x_n$ converges to some element $x \in X$.
  Similarly, the completeness of $\FEMet{\tau}$ implies that $(a_n, \RootSystem_X(x_n))$ converges to some element $(a, \psi) \in \tau(\STFunct(X)) \times \STFunct(X)$.
  In particular, $a_n \to a$ in $\tau(\STFunct(X))$.
  It follows that $(a_n, x_n) \to (a, x)$,
  which shows that $\FEMet{(\tau \circ \STFunct)}$ is complete.
\end{proof}

We say that $\Psi$ is stable
if and only if the composition $\Gamma_{\BCMcat \to \MTopcat} \circ \Psi$ is stable in the sense of Definition~\ref{dfn: stability}.  
In this case, we write $\Dist{\Psi}$ for the corresponding distortion.
Below, we verify that the stability of $\tau$ and $\Psi$ is inherited by $\tau \circ \Psi$.

\begin{thm} \label{thm: composition is stable}
  If $\Psi$ is stable and $\tau$ is stably metrizable,
  then $\tau \circ \Psi$ is also stably metrizable.
\end{thm}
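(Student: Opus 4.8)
The plan is to take a stable element-rooted metrization $\FEMet{\tau}$ of $\tau$, with distortion $\Dist{\FEMet{\tau}}$, and show that the element-rooted metrization $\FEMet{(\tau \circ \STFunct)}$ of $\tau \circ \STFunct$ constructed in Proposition~\ref{prop: composition is ER metrizable} is again stable, with distortion given by the composition $\varepsilon \mapsto \Dist{\FEMet{\tau}}(\Dist{\STFunct}(\varepsilon)) \vee \varepsilon$ (or a similar combination). Condition~\ref{dfn item: 1, stability} for this candidate distortion is immediate from $\lim_{\varepsilon \to 0}\Dist{\STFunct}(\varepsilon) = 0$ and $\lim_{\delta \to 0}\Dist{\FEMet{\tau}}(\delta) = 0$, so the real content is verifying \ref{dfn item: 2, stability}.

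For \ref{dfn item: 2, stability}, I would fix $\bcmAB$ spaces $X, Y, M_1, M_2$, isometric embeddings $f_i \colon X \to M_i$, $g_i \colon Y \to M_i$, $i=1,2$, and $\varepsilon \in \RNp$ with $d_{M_2}(f_2(x), g_2(y)) \leq d_{M_1}(f_1(x), g_1(y)) + \varepsilon$ for all $x \in X$, $y \in Y$. Applying the space transformation $\STFunct$ gives isometric embeddings $\STFunct_{f_i} \colon \STFunct(X) \to \STFunct(M_i)$ and $\STFunct_{g_i} \colon \STFunct(Y) \to \STFunct(M_i)$. The key step is to show that these inherit a deformation bound: for all $\xi \in \STFunct(X)$ and $\chi \in \STFunct(Y)$,
\begin{equation}
  d^\STFunct_{M_2}(\STFunct_{f_2}(\xi), \STFunct_{g_2}(\chi)) \leq d^\STFunct_{M_1}(\STFunct_{f_1}(\xi), \STFunct_{g_1}(\chi)) + \Dist{\STFunct}(\varepsilon).
\end{equation}
This is precisely the content of the stability of $\STFunct$ (i.e.\ of $\Gamma_{\BCMcat \to \MTopcat} \circ \STFunct$) applied to the four spaces $X, Y, M_1, M_2$ and the embeddings above. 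Once this is in hand, feed the bound into the stability of $\FEMet{\tau}$, but applied to the spaces $\STFunct(X), \STFunct(Y), \STFunct(M_1), \STFunct(M_2)$ and the embeddings $\STFunct_{f_i}, \STFunct_{g_i}$: for $(a, \RootSystem_X(x)) \in \FE{\tau}(\STFunct(X))$ and $(b, \RootSystem_Y(y)) \in \FE{\tau}(\STFunct(Y))$ we obtain a bound on $d^{\FEMet{\tau}}_{\STFunct(M_2)}$ in terms of $d^{\FEMet{\tau}}_{\STFunct(M_1)}$ plus $\Dist{\FEMet{\tau}}(\Dist{\STFunct}(\varepsilon))$. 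Combining this with the trivial bound $d_{M_2}(f_2(x), g_2(y)) \leq d_{M_1}(f_1(x), g_1(y)) + \varepsilon$ on the root-coordinates, and recalling the definition \eqref{prop eq: composition is ER metrizable} of $d^{\FEMet{(\tau \circ \STFunct)}}$ as a maximum of the $\FE{\tau}$-distance and the $X$-distance, yields \eqref{assum item eq: 2, stability} for $\tau \circ \STFunct$ with distortion $\Dist{\STFunct}(\varepsilon)$ fed through $\Dist{\FEMet{\tau}}$, maximized with $\varepsilon$.

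The one subtlety I anticipate is bookkeeping around the element-roots: the elements of $\FE{(\tau \circ \STFunct)}(X) = \tau(\STFunct(X)) \times X$ carry a root in $X$, but after applying $\FE{(\tau\circ\STFunct)}_f = (\tau\circ\STFunct)_f \times f$ the relevant roots in $\STFunct(M_i)$ are $\STFunct_{f_i}(\RootSystem_X(x)) = \RootSystem_{M_i}(f_i(x))$, using \ref{dfn item: 2. rooted}; one must check that the deformation inequality between $M_1$ and $M_2$ transports correctly through $\RootSystem$, which is exactly where the stability of $\STFunct$ (as opposed to mere continuity) is used. I expect this naturality chase to be the main, though still routine, obstacle; everything else is a direct composition of the two given stability estimates. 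After establishing the element-rooted case, the statement ``$\tau \circ \STFunct$ is stably metrizable'' follows by definition, since a stable element-rooted metrization is exactly what is required.
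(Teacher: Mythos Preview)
Your proposal is correct and follows essentially the same route as the paper: apply stability of $\STFunct$ to pass the deformation bound from $M_1, M_2$ to $\STFunct(M_1), \STFunct(M_2)$, then apply stability of $\FEMet{\tau}$ at the $\STFunct$-level, and use the naturality~\ref{dfn item: 2. rooted} of the rooting system to identify $\STFunct_{f_i}(\RootSystem_X(x)) = \RootSystem_{M_i}(f_i(x))$. Your version is in fact slightly more careful: because the metric $d^{\FEMet{(\tau\circ\STFunct)}}_X$ in \eqref{prop eq: composition is ER metrizable} is a maximum with the $d_X$-term, the correct distortion is $\varepsilon \mapsto \Dist{\FEMet{\tau}}(\Dist{\STFunct}(\varepsilon)) \vee \varepsilon$ as you write, whereas the paper records only $\Dist{\FEMet{\tau}} \circ \Dist{\STFunct}$, tacitly dropping the second term of the max; this is harmless for the conclusion but your bookkeeping is the accurate one.
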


\begin{proof}
  Let $\FEMet{\tau}$ be a stable element-rooted metrization of $\tau$.
  Fix $\bcmAB$ spaces $X, Y, M_1$, and $M_2$.
  Let $f_i \colon X \to M_i$ and $g_i \colon Y \to M_i$, $i = 1,2$, be isometric embeddings.
  Suppose that there exists $\varepsilon \in \RNp$ such that, for all $x \in X$ and $y \in Y$,
  \begin{equation} \label{pr eq: 1. composition is stable}
    d_{M_2}(f_2(x), g_2(y)) \leq d_{M_1}(f_1(x), g_1(y)) + \varepsilon.
  \end{equation}
  The stability of $\STFunct$ implies that, for all $\alpha \in \STFunct(X)$ and $\beta \in \STFunct(Y)$, 
  \begin{equation} \label{pr eq: 2. composition is stable}
    d_{\STFunct(M_2)}(\STFunct_{f_2}(\alpha), \STFunct_{g_2}(\beta)) 
    \leq 
    d_{\STFunct(M_1)}(\STFunct_{f_1}(\alpha), \STFunct_{g_1}(\beta)) + \Dist{\STFunct}(\varepsilon).
  \end{equation}
  It then follows from the stability of $\FEMet{\tau}$ that, 
  for all $(a, x) \in \tau(\STFunct(X)) \times X$ and $(b, y) \in \tau(\STFunct(Y)) \times Y$,
  \begin{align}
    &d^{\FEMet{\tau}}_{\STFunct(M_2)} \bigl( \FE{\tau}_{\STFunct_{f_2}}(a, \RootSystem_X(x)), \FE{\tau}_{\STFunct_{g_2}}(b, \RootSystem_Y(y)) \bigr)
    \\
    &\leq
    d^{\FEMet{\tau}}_{\STFunct(M_1)} \bigl( \FE{\tau}_{\STFunct_{f_1}}(a, \RootSystem_X(x)), \FE{\tau}_{\STFunct_{g_1}}(b, \RootSystem_Y(y)) \bigr)
    + 
    \Dist{\FEMet{\tau}}(\Dist{\STFunct}(\varepsilon)).
  \end{align}
  Following the argument in \eqref{pr eq: 1. composition is ER},
  we deduce that 
  \begin{align}
    d^{\FEMet{(\tau \circ \STFunct)}}_{M_2} 
    \bigl( \FE{(\tau \circ \STFunct)}_{f_2}(a, x), \FE{(\tau \circ \STFunct)}_{g_2}(b, y) \bigr)
    &=
    d_{M_2}^{\FEMet{(\tau \circ \STFunct)}}
    \bigl( (\tau_{\STFunct_{f_2}}(a), f_2(x)), (\tau_{\STFunct_{g_2}}(b), g_2(y)) \bigr)
    \\
    &=
    d^{\FEMet{\tau}}_{\STFunct(M_2)} 
    \bigl( (\tau_{\STFunct_{f_2}}(a), \RootSystem_{M_2}(f_2(x))), (\tau_{\STFunct_{g_2}}(b), \RootSystem_{M_2}(g_2(y))) \bigr)
    \\
    &=
    d^{\FEMet{\tau}}_{\STFunct(M_2)} 
    \bigl( (\tau_{\STFunct_{f_2}}(a), \STFunct_{f_2}(\RootSystem_X(x))), (\tau_{\STFunct_{g_2}}(b), \STFunct_{g_2}(\RootSystem_Y(y))) \bigr)
    \\
    &=
    d^{\FEMet{\tau}}_{\STFunct(M_2)} 
    \bigl( \FE{\tau}_{\STFunct_{f_2}}(a, \RootSystem_X(x)), \FE{\tau}_{\STFunct_{g_2}}(b, \RootSystem_Y(y)) \bigr),
  \end{align}
  and similarly
  \begin{equation}
    d^{\FEMet{(\tau \circ \STFunct)}}_{M_1} 
    \bigl( \FE{(\tau \circ \STFunct)}_{f_1}(a, x), \FE{(\tau \circ \STFunct)}_{g_1}(b, y) \bigr)
    =
    d^{\FEMet{\tau}}_{\STFunct(M_1)} \bigl( \FE{\tau}_{\STFunct_{f_1}}(a, \RootSystem_X(x)), \FE{\tau}_{\STFunct_{g_1}}(b, \RootSystem_Y(y)) \bigr).
  \end{equation}
  Thus, we deduce that 
  \begin{align}
    &d^{\FEMet{(\tau \circ \STFunct)}}_{M_2} 
    \bigl( \FE{(\tau \circ \STFunct)}_{f_2}(a, x), \FE{(\tau \circ \STFunct)}_{g_2}(b, y) \bigr)
    \\
    &\leq
    d^{\FEMet{(\tau \circ \STFunct)}}_{M_1} 
    \bigl( \FE{(\tau \circ \STFunct)}_{f_1}(a, x), \FE{(\tau \circ \STFunct)}_{g_1}(b, y) \bigr)
    + 
    \Dist{\FEMet{\tau}}(\Dist{\STFunct}(\varepsilon)).
  \end{align}
  This shows that $\FEMet{(\tau \circ \STFunct)}$ is stable with a distortion $\Dist{\FEMet{\tau}} \circ \Dist{\STFunct}$.
\end{proof}

Finally, we show that Polishness of $\tau$ is inherited by $\tau \circ \Psi$.

\begin{thm} \label{thm: composition is Polish}
  If $\tau$ is Polish with preserved roots, then so is $\tau \circ \Psi$.
\end{thm}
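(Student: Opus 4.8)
The plan is to transport a root-preserving Polish system of $\tau$ through the space transformation $\STFunct$. Let $(\tilde{\tau}, \eta, (\FS{\tilde{\tau}}_k)_{k \geq 1})$ be a root-preserving Polish system of $\tau$ in the sense of Definition~\ref{dfn: Polish in RF}, and set $\hat{\tau} \coloneqq \tilde{\tau} \circ \STFunct$, which is again a structure. I claim that $(\hat{\tau}, \zeta, (\FS{\hat{\tau}}_k)_{k \geq 1})$ is a root-preserving Polish system of $\tau \circ \STFunct$, where $\zeta$ and the subfunctors $\FS{\hat{\tau}}_k$ are defined below. Condition~\ref{dfn item: 1. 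Polish functor in RF} is immediate: $\hat{\tau}$ is continuous by Proposition~\ref{prop: continuity of composition} (since $\tilde{\tau}$ is continuous and $\STFunct$ is a space transformation), and it admits a complete space-rooted metrization by Proposition~\ref{prop: composition is SR} (since $\tilde{\tau}$ does). Moreover, since $\eta$ realizes $\tau$ as a topological subfunctor of $\tilde{\tau}$, Proposition~\ref{prop: topological subfunctor of composition} supplies a topological embedding $\zeta \colon \tau \circ \STFunct \Rightarrow \hat{\tau}$ with components $\zeta_X = \eta_{\STFunct(X)} \colon \tau(\STFunct(X)) \to \tilde{\tau}(\STFunct(X))$.

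It remains to supply the subfunctors. For each $k \geq 1$, I would define $\FS{\hat{\tau}}_k$ by setting, for each $(X, \rho_X) \in \ob(\rBCMcat)$,
\begin{equation}
  \FS{\hat{\tau}}_k(X, \rho_X) \coloneqq \FS{\tilde{\tau}}_k\bigl(\STFunct(X), \RootSystem_X(\rho_X)\bigr) \subseteq \tilde{\tau}(\STFunct(X)) = \FS{\hat{\tau}}(X, \rho_X).
\end{equation}
The observation that makes this work is that, for any morphism $f \in \Hom_{\rBCMcat}((X, \rho_X), (Y, \rho_Y))$, the naturality identity~\ref{dfn item: 2. rooted} gives $\STFunct_f(\RootSystem_X(\rho_X)) = \RootSystem_Y(f(\rho_X)) = \RootSystem_Y(\rho_Y)$, so that $\STFunct_f$ is itself a morphism of $\rBCMcat$ from $(\STFunct(X), \RootSystem_X(\rho_X))$ to $(\STFunct(Y), \RootSystem_Y(\rho_Y))$; since $(\FS{\hat{\tau}})_f = \hat{\tau}_f = \tilde{\tau}_{\STFunct_f} = (\FS{\tilde{\tau}})_{\STFunct_f}$ and $\FS{\tilde{\tau}}_k$ is a subfunctor of $\FS{\tilde{\tau}}$, the subset $\FS{\hat{\tau}}_k(X, \rho_X)$ is mapped into $\FS{\hat{\tau}}_k(Y, \rho_Y)$ by $(\FS{\hat{\tau}})_f$. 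By Remark~\ref{rem: define subfunctor} this indeed defines a subfunctor of $\FS{\hat{\tau}}$. Its openness is immediate from that of $\FS{\tilde{\tau}}_k$, and its pullback-stability follows by applying the pullback-stability of $\FS{\tilde{\tau}}_k$ to the morphism $\STFunct_f$, again via $(\FS{\hat{\tau}})_f = (\FS{\tilde{\tau}})_{\STFunct_f}$; this yields condition~\ref{dfn item: 2. Polish functor in RF}.

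Finally, condition~\ref{dfn item: 3. Polish functor in RF} is obtained by applying the corresponding identity for the system $(\tilde{\tau}, \eta, (\FS{\tilde{\tau}}_k))$ to the rooted $\bcmAB$ space $(\STFunct(X), \RootSystem_X(\rho_X)) \in \ob(\rBCMcat)$:
\begin{equation}
  \zeta_X\bigl(\tau(\STFunct(X))\bigr) = \eta_{\STFunct(X)}\bigl(\tau(\STFunct(X))\bigr) = \bigcap_{k \geq 1} \FS{\tilde{\tau}}_k\bigl(\STFunct(X), \RootSystem_X(\rho_X)\bigr) = \bigcap_{k \geq 1} \FS{\hat{\tau}}_k(X, \rho_X).
\end{equation}
Hence $(\hat{\tau}, \zeta, (\FS{\hat{\tau}}_k)_{k \geq 1})$ is a root-preserving Polish system of $\tau \circ \STFunct$, and therefore $\tau \circ \STFunct$ is Polish with preserved roots. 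No step involves a genuine difficulty; the only point demanding care is the consistent bookkeeping of roots, i.e.\ systematically invoking the rooting system $\RootSystem$ and identity~\ref{dfn item: 2. rooted} to ensure that $\STFunct$ sends morphisms of $\rBCMcat$ to morphisms of $\rBCMcat$, which is exactly what allows the subfunctor conditions to transfer.
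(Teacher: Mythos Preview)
Your proof is correct and follows essentially the same approach as the paper: both transport the given root-preserving Polish system $(\tilde{\tau}, \eta, (\FS{\tilde{\tau}}_k))$ through $\STFunct$ by setting the ambient structure to $\tilde{\tau}\circ\STFunct$, taking $\zeta_X=\eta_{\STFunct(X)}$, and defining the subfunctors via $\FS{\tilde{\tau}}_k(\STFunct(X),\RootSystem_X(\rho_X))$. Your write-up is in fact a bit more careful than the paper's in two places: you explicitly invoke Proposition~\ref{prop: continuity of composition} for the continuity part of \ref{dfn item: 1. Polish functor in RF}, and you cite Proposition~\ref{prop: composition is SR} (rather than Proposition~\ref{prop: composition is ER metrizable}) for the complete space-rooted metrization, which is the more appropriate reference since \ref{dfn item: 1. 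Polish functor in RF} only supplies a space-rooted metrization of $\tilde{\tau}$.
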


\begin{proof}
  To simplify notation, we write $\sigma \coloneqq \tau \circ \STFunct$.
  Let $(\tilde{\tau}, \eta, (\FS{\tilde{\tau}}_k)_{k \geq 1})$ be a root-preserving Polish system of $\tau$.
  Define $\tilde{\sigma} \coloneqq \tilde{\tau} \circ \STFunct$.
  By Proposition~\ref{prop: topological subfunctor of composition},
  $\sigma$ is a topological subfunctor of $\tilde{\sigma}$,
  and we write $\zeta$ for the associated topological embedding.
  For each $k \geq 1$, define a subfunctor $\FS{\tilde{\sigma}}_k$ of $\FS{\tilde{\sigma}}$ as follows:
  for each $(X, \rho_X) \in \rBCMcat$, define 
  \begin{equation}
    \FS{\tilde{\sigma}}_k(X, \rho_X) 
    \coloneqq 
    \FS{\tilde{\tau}}_k(\STFunct(X), \RootSystem_X(\rho_X)).
  \end{equation}
  We will verify that $(\tilde{\sigma}, \zeta, (\FS{\tilde{\sigma}}_k)_{k \geq 1})$ is a root-preserving Polish system of $\sigma$.

  Proposition~\ref{prop: composition is ER metrizable} implies that $\tilde{\sigma}$ admits a complete space-rooted metrization,
  and so \ref{dfn item: 1. Polish functor in RF} is satisfied.
  For each $(X, \rho_X) \in \ob(\BCMcat)$,
  $\FS{\tilde{\sigma}}_k(X)$ is open in $\FS{\tilde{\sigma}}(X)$.
  Let $f \colon X \to Y$ be a root-preserving isometric embedding between rooted $\bcmAB$ spaces $(X, \rho_X)$ and $(Y, \rho_Y)$.
  As discussed in the proof of Proposition~\ref{prop: composition is SR},
  the isometric embedding $\Psi_f \colon \Psi(X) \to \Psi(Y)$ is root-preserving with respect to the roots $\RootSystem_X(\rho_X)$ and $\RootSystem_Y(\rho_Y)$.
  Thus, the pullback-stability of $\FS{\tilde{\tau}}_k$ yields that 
  we have that 
  \begin{align}
    (\FS{\tilde{\sigma}}_f)^{-1}(\FS{\tilde{\sigma}}_k(Y, \rho_Y))
    &=
    \tau_{\Psi_f}^{-1}\bigl( \FS{\tilde{\tau}}_k(\STFunct(Y), \RootSystem_Y(\rho_Y)) \bigr) 
    =
    \FS{\tilde{\tau}}_k(\STFunct(X), \RootSystem_X(\rho_X))
    = 
    \FS{\tilde{\sigma}}_k(X, \rho_X),
  \end{align}
  which shows that $\FS{\tilde{\sigma}}_k$ is pullback-stable.
  Thus, \ref{dfn item: 2. Polish functor in RF} is satisfied.
  For any $(X, \rho_X) \in \ob(\rBCMcat)$, it holds that 
  \begin{align}
    \bigcap_{k \geq 1} \FS{\tilde{\sigma}}_k(X, \rho_X) 
    &= 
    \bigcap_{k \geq 1} \FS{\tilde{\tau}}_k(\STFunct(X), \RootSystem_X(\rho_X)) \\
    &=
    \eta_{\Psi(X)}(\tau(\Psi(X)))
    \quad \text{(by \ref{dfn item: 3. Polish functor in RF} of $(\tilde{\tau}, \eta, (\FS{\tilde{\tau}}_k)_{k \geq 1})$)}\\
    &=
    \zeta_X(\sigma(X)),
  \end{align}
  which proves \ref{dfn item: 3. Polish functor in RF}.
  Therefore, $\sigma$ is Polish with preserved roots.
\end{proof}

\begin{exm} \label{exm: 2. space transformation}
  Here, we provide useful space transformations, which are briefly introduced in Example~\ref{exm: 1. space transformation}.
  Fix $k \in \NN$ and a rooted $\bcmAB$ space $(\Xi, \rho_\Xi)$.
  We define a space transformation $\Psi$ as follows.
  \begin{itemize}
  \item 
    For each $X \in \ob(\BCMcat)$, 
    define $\Psi(X) \coloneqq X^k \times \Xi$ equipped with the max product metric. 
  \item 
    For each $f \in \Hom_{\BCMcat}(X, Y)$, 
    define $\Psi_f \coloneqq \underbrace{f \times \cdots \times f}_{k\ \text{times}} \times \id_\Xi$.
\end{itemize}
  It admits a rooting system $\RootSystem$ given by $\RootSystem_X(x) = (x, \dots, x, \rho_\Xi) \in X^k \times \Xi$.
  One can easily verify that $\Psi$ is continuous and stable with distortion $\Dist{\Psi}(\varepsilon) = \varepsilon$.
  Similarly, one can define a space transformation $\Psi$ given by setting $\Psi(X) \coloneqq X^k$ for each $X \in \ob(\BCMcat)$.
  Again, this space transformation is stable.
\end{exm}


\section{Examples of structures}  \label{sec: Examples of functors}

In this section, we introduce examples of structures.
All of them are turned out to be stably metrizable and Polish with preserved roots.
Thus, by Theorem~\ref{thm: main result}, 
if $\tau$ is any one of structures given below,
then the local GH-type topologies with preserved roots and with non-preserved roots on $\rootedBCM(\tau)$ coincide 
and the resulting topology is Polish.
We also provide precompactness criterion.
The author is not aware of any natural example that fails to satisfy our conditions, such as stability.

\subsection{Fixed structures}  \label{sec: Fixed structures}
\newcommand{\Fixedst}[1]{\tau_{#1}}

Here, we consider equipping metric spaces with objects from a fixed metric space.
While this may not be of much interest on its own,
it allows for a rich variety of additional structures to be considered,
through the operation of product and composition discussed in Section~\ref{sec: Functorial operations and preservation of properties} 
or in combination with other structures introduced below.

Fix a Polish space $\Xi$.
Define a structure $\tau = \Fixedst{\Xi}$ as follows.
\begin{itemize}
  \item 
    For each $X \in \ob(\BCMcat)$, 
    define $\tau(X) \coloneqq \Xi$. 
  \item 
    For each $f \in \Hom_{\BCMcat}(X, Y)$, 
    define $\tau_{f} \coloneqq \id_{\Xi}$.
\end{itemize}

\begin{lem}
  The structure $\Fixedst{\Xi}$ is continuous and separable.
\end{lem}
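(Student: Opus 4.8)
The plan is to verify the three properties bundled into the word ``continuous'' (embedding-continuity, upper semicontinuity, lower semicontinuity — see Definition~\ref{dfn: continuity of structure}) together with separability, each of which is essentially immediate because $\Fixedst{\Xi}$ is constant on objects and assigns the identity to every morphism.

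First I would check embedding-continuity (Assumption~\ref{assum: embedding continuity}). Fix $\bcmAB$ spaces $X,Y$ and isometric embeddings $f_n\colon X\to Y$, $n\in\NN\cup\{\infty\}$, with $f_n\to f_\infty$ in the compact-convergence topology. For any $a\in\tau(X)=\Xi$ we have $\tau_{f_n}(a)=\id_\Xi(a)=a=\tau_{f_\infty}(a)$, so the sequence $(\tau_{f_n}(a))_n$ is constant and hence converges to $\tau_{f_\infty}(a)$ in $\tau(Y)=\Xi$; this is exactly the required conclusion, and note the hypothesis $f_n\to f_\infty$ is not even used. Next I would check the two semicontinuity conditions in Assumption~\ref{assum: semicontinuity}. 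Let $X_n$, $n\in\NN\cup\{\infty\}$, be $\bcmAB$ spaces isometrically embedded into a common $\bcmAB$ space $Y$ with $X_n\to X_\infty$ in the Fell topology; under the embeddings we identify $\tau(X_n)$ with a subspace of $\tau(Y)$, but since every $\tau_f=\id_\Xi$ this identification is literally $\tau(X_n)=\Xi=\tau(Y)$. For upper semicontinuity, if $a_n\in\tau(X_n)=\Xi$ converges to some $a\in\tau(Y)=\Xi$, then trivially $a\in\Xi=\tau(X_\infty)$. For lower semicontinuity, given $a\in\tau(X_\infty)=\Xi$, take $a_n\coloneqq a\in\tau(X_n)=\Xi$ for all $n$; then $a_n\to a$ in $\tau(Y)$, and we may take the subsequence to be the whole sequence. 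Hence $\Fixedst{\Xi}$ is upper and lower semicontinuous, and therefore continuous in the sense of Definition~\ref{dfn: continuity of structure}.

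For separability (Definition~\ref{dfn: completeness and separability of functor}): $\Fixedst{\Xi}$ is regarded as a functor into $\MTopcat$, and separability requires that for each $X\in\ob(\BCMcat)$ the topology on $\tau(X)$ be separable. But $\tau(X)=\Xi$, which is Polish by hypothesis, hence separable. Thus $\Fixedst{\Xi}$ is separable.

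There is no genuine obstacle here; the only point requiring a line of care is the bookkeeping around the convention (stated just before Assumption~\ref{assum: semicontinuity}) that $\tau(X_n)$ is viewed as a subspace of $\tau(Y)$ via $\tau_{f_n}$, and making explicit that for the constant functor this embedding is the identity, so the semicontinuity conditions become tautologies. I would write the proof in roughly the form: ``\emph{Embedding-continuity.} \ldots{} \emph{Semicontinuity.} \ldots{} \emph{Separability.} \ldots{}'', each paragraph being two or three sentences as above.
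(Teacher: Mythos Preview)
Your proposal is correct and takes essentially the same approach as the paper, which simply states ``This is straightforward.'' You have merely written out the routine verifications that the paper omits.
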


\begin{proof}
  This is straightforward.
\end{proof}

\begin{thm}
  The structure $\Fixedst{\Xi}$ is stably metrizable and Polish with preserved roots.
\end{thm}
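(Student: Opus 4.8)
The plan is to exhibit explicitly a metrization, an element-rooted metrization, and a root-preserving Polish system for $\Fixedst{\Xi}$, and then invoke the general machinery. First I would observe that $\Fixedst{\Xi}$ is trivially metrizable: fixing once and for all a complete metric $d_\Xi$ inducing the Polish topology on $\Xi$, we set $d^{\FMet{\tau}}_X \coloneqq d_\Xi$ on $\tau(X) = \Xi$ for every $\bcmAB$ space $X$. Condition~\ref{rem item: 1. construction of metrization} of Remark~\ref{rem: construction of metrization} holds by definition, and condition~\ref{rem item: 2. construction of metrization} holds because $\tau_f = \id_\Xi$ is obviously distance-preserving. Hence $\FMet{\tau}$ is a metrization, and moreover a \emph{complete} one. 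By Lemma~\ref{lem: metrization defines space and ER metrization} it induces an element-rooted metrization $\FEMet{\tau}$ on $\tau(X)\times X = \Xi \times X$, given by the max of $d_\Xi$ and $d_X$, which is again complete (a max of two complete metrics is complete).

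Next I would verify stability of $\FEMet{\tau}$ with distortion $\Dist{\FEMet{\tau}}(\varepsilon) = \varepsilon$, so that $\Fixedst{\Xi}$ is stably metrizable in the sense of Definition~\ref{dfn: stability}. Condition~\ref{dfn item: 1, stability} is clear for the identity distortion. For condition~\ref{dfn item: 2, stability}, take $\bcmAB$ spaces $X, Y, M_1, M_2$, isometric embeddings $f_i \colon X \to M_i$, $g_i \colon Y \to M_i$, and $\varepsilon \geq 0$ with $d_{M_2}(f_2(x), g_2(y)) \leq d_{M_1}(f_1(x), g_1(y)) + \varepsilon$ for all $x \in X$, $y \in Y$. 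For $(a,x) \in \Xi \times X$ and $(b,y) \in \Xi \times Y$, since $\FE{\tau}_{f_i}(a,x) = (a, f_i(x))$ and $\FE{\tau}_{g_i}(b,y) = (b, g_i(y))$, we have
\begin{equation}
  d^{\FEMet{\tau}}_{M_2}\bigl(\FE{\tau}_{f_2}(a,x), \FE{\tau}_{g_2}(b,y)\bigr)
  = d_\Xi(a,b) \vee d_{M_2}(f_2(x), g_2(y))
  \leq d_\Xi(a,b) \vee \bigl(d_{M_1}(f_1(x), g_1(y)) + \varepsilon\bigr),
\end{equation}
and the right-hand side is at most $\bigl(d_\Xi(a,b) \vee d_{M_1}(f_1(x), g_1(y))\bigr) + \varepsilon = d^{\FEMet{\tau}}_{M_1}\bigl(\FE{\tau}_{f_1}(a,x), \FE{\tau}_{g_1}(b,y)\bigr) + \varepsilon$. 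This is exactly \eqref{assum item eq: 2, stability} with $\Dist{\FEMet{\tau}}(\varepsilon) = \varepsilon$, so $\Fixedst{\Xi}$ is stably metrizable.

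Finally I would establish that $\Fixedst{\Xi}$ is Polish with preserved roots. The cleanest route is to note that we already have a \emph{complete} space-rooted metrization (obtained from the complete metrization $\FMet{\tau}$ via Lemmas~\ref{lem: metrization defines space and ER metrization} and~\ref{lem: ER metrization defines SR metrization}), and that $\Fixedst{\Xi}$ is continuous by the preceding lemma. Therefore one may take the trivial root-preserving Polish system $(\tilde{\tau}, \eta, (\FS{\tilde\tau}_k)_{k\geq 1})$ with $\tilde\tau = \Fixedst{\Xi}$, $\eta$ the identity natural transformation (each $\eta_X = \id_\Xi$), and $\FS{\tilde\tau}_k = \FS{\tilde\tau}$ for every $k$ (each $\FS{\tilde\tau}(X,\rho_X) = \Xi$ is open in itself and pullback-stable since $\tilde\tau_f^{-1}(\Xi) = \Xi = \tau(X)$). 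Then \ref{dfn item: 1. Polish functor in RF} holds because $\tilde\tau$ is continuous with a complete space-rooted metrization, \ref{dfn item: 2. Polish functor in RF} holds as just noted, and \ref{dfn item: 3. Polish functor in RF} holds since $\bigcap_{k\geq1}\FS{\tilde\tau}_k(X,\rho_X) = \Xi = \eta_X(\tau(X))$. Hence $\Fixedst{\Xi}$ is Polish with preserved roots. There is no real obstacle here: the only point requiring a moment's care is confirming that the definitions of metrization and Polish system are satisfied in the degenerate case where all the functorial data are trivial, but each check reduces to an immediate tautology. Alternatively, and even more directly, the Polishness conclusion already follows from Corollary~\ref{cor: metrically Polish Type 1} applied to the complete separable space-rooted metrization, without needing to invoke the Polish-system formalism at all.
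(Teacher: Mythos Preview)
Your proof is correct and follows essentially the same approach as the paper's: fix a complete metric $d_\Xi$ on $\Xi$, observe that the resulting (element-rooted) metrization is complete and stable, and note that continuity of $\Fixedst{\Xi}$ together with the complete metrization makes the trivial Polish system work. The paper's own proof is simply a terse version of what you have spelled out in detail; in fact the paper observes that the metrization $\FMet{\Fixedst{\Xi}}$ itself is stable (with distortion $0$, since $\tau_f = \id_\Xi$), which is marginally sharper than your distortion $\varepsilon$ for the induced element-rooted metrization, but this makes no difference to the conclusion.
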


\begin{proof}
  Fix a complete metric $d_\Xi$ on $\Xi$ inducing the given topology.
  Then $\tau$ admits a complete metrization $\FMet{\Fixedst{\Xi}}$ given by equipping, for each $X \in \ob(\BCMcat)$,
  $\tau(X) = \Xi$ with the metric $d_\Xi$.
  It is obvious that $\FMet{\Fixedst{\Xi}}$ is stable and complete.
  Thus, the result is deduced.
\end{proof}

The following result provides a precompactness criterion for $\rootedBCM(\Fixedst{\Xi})$.
Since it is proven easily by using Theorem~\ref{thm: precompact in RF},
we omit the proof (cf.\ the proof of Theorem~\ref{thm: precompact for the point functor} below).

\begin{thm} [Precompactness] \label{thm: Precompactness for fixed st}
  Let $\mathscr{A}$ be a non-empty index set.
  A subset $\{\cX_\alpha = (X_\alpha, \rho_\alpha, \xi_\alpha) \mid \alpha \in \mathscr{A} \}$ 
  of $\rootedBCM(\Fixedst{\Xi})$ is precompact if and only if the following conditions are satisfied.
  \begin{enumerate} [label = \textup{(\roman*)}, leftmargin = *]
    \item \label{thm item: the functor for a fixed space, spaces are precompact in local GH top}
      The subset $\{(X_\alpha, \rho_\alpha) \mid \alpha \in \mathscr{A} \}$ of $\rootedBCM$
      is precompact in the local Gromov--Hausdorff topology.
    \item \label{4. thm item2. : the functor for a fixed space,} 
      The set $\{\xi_\alpha \mid \alpha \in \mathscr{A} \}$ is precompact in $\Xi$.
  \end{enumerate}
\end{thm}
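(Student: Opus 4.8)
The plan is to apply the general precompactness criterion for the local GH-type topology with preserved roots, namely Theorem~\ref{thm: precompact in RF}, to the fixed structure $\tau = \Fixedst{\Xi}$. Since $\Fixedst{\Xi}$ is embedding-continuous, space-rooted metrizable, and (being a metrizable structure) upper semicontinuous, Theorem~\ref{thm: precompact in RF} gives the equivalence of \ref{thm item: 1. precompact in RF} and \ref{thm item: 2. precompact in RF} in that theorem. Thus precompactness of $\{\cX_\alpha \mid \alpha \in \mathscr{A}\}$ in $\rootedBCM(\Fixedst{\Xi})$ is equivalent to: (a) the family $\{(X_\alpha, \rho_\alpha)\}$ is precompact in the local Gromov--Hausdorff topology, and (b) there exists a precompact subfunctor $\tau'$ of $\FS{\Fixedst{\Xi}}$ with $\{\cX_\alpha\} \subseteq \rootedBCM(\tau')$. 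Condition (a) is exactly \ref{thm item: the functor for a fixed space, spaces are precompact in local GH top}, so the remaining work is to translate condition (b) into the concrete statement \ref{4. thm item2. : the functor for a fixed space,} about precompactness of $\{\xi_\alpha\}$ in $\Xi$.

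First I would address the direction (b) $\Rightarrow$ \ref{4. thm item2. : the functor for a fixed space,}: if $\tau'$ is a precompact subfunctor of $\FS{\Fixedst{\Xi}}$ containing all $\cX_\alpha$, then for each $\alpha$ we have $\xi_\alpha \in \tau'(X_\alpha, \rho_\alpha) \subseteq \FS{\Fixedst{\Xi}}(X_\alpha, \rho_\alpha) = \Xi$, and $\tau'(X_\alpha, \rho_\alpha)$ is precompact in $\Xi$. This alone does not immediately bound $\{\xi_\alpha\}$ since the $\tau'(X_\alpha,\rho_\alpha)$ could vary; however, one can instead argue directly from precompactness of $\{\cX_\alpha\}$ in $\rootedBCM(\Fixedst{\Xi})$: any sequence $(\cX_{\alpha_n})$ has a convergent subsequence $\cX_{\alpha_{n_k}} \to (X, \rho_X, \xi)$, and by Definition~\ref{dfn: RF convergence} together with the fact that $\tau_{f}=\id_\Xi$ for all morphisms, the convergence $\tau_{f_{n_k}}(\xi_{\alpha_{n_k}}) \to \tau_{f_\infty}(\xi)$ in $\tau(M) = \Xi$ is simply $\xi_{\alpha_{n_k}} \to \xi$ in $\Xi$. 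Hence every sequence in $\{\xi_\alpha\}$ has a convergent subsequence, giving \ref{4. thm item2. : the functor for a fixed space,}.

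For the converse, assuming \ref{thm item: the functor for a fixed space, spaces are precompact in local GH top} and \ref{4. thm item2. : the functor for a fixed space,}, I would let $K \coloneqq \closure_\Xi(\{\xi_\alpha \mid \alpha \in \mathscr{A}\})$, which is compact by \ref{4. thm item2. : the functor for a fixed space,}. Then define a subfunctor $\tau'$ of $\FS{\Fixedst{\Xi}}$ by setting $\tau'(X, \rho_X) \coloneqq K$ for every $(X, \rho_X) \in \ob(\rBCMcat)$, with the restricted morphisms $\tau'_f = \tau_f|_K = \id_K$; this is a well-defined subfunctor by Remark~\ref{rem: define subfunctor} since $\id_\Xi(K) = K$, and it is precompact since $K$ is compact. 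Clearly $\cX_\alpha = (X_\alpha, \rho_\alpha, \xi_\alpha) \in \rootedBCM(\tau')$ because $\xi_\alpha \in K$. Thus condition \ref{thm item: 2. precompact in RF} of Theorem~\ref{thm: precompact in RF} holds, and since $\Fixedst{\Xi}$ is upper semicontinuous, the theorem yields that $\{\cX_\alpha \mid \alpha \in \mathscr{A}\}$ is precompact, i.e.\ \ref{thm item: 1. precompact in RF}.

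I do not anticipate a serious obstacle here; the proof is essentially a bookkeeping exercise that unpacks the abstract criterion of Theorem~\ref{thm: precompact in RF} in the trivial case where the structure is constant. The only point requiring a little care is verifying that the semicontinuity hypotheses of Theorem~\ref{thm: precompact in RF} are met — but $\Fixedst{\Xi}$ is metrizable, hence embedding-continuous and (trivially, since $\tau_f = \id_\Xi$) both upper and lower semicontinuous, so this is immediate. An equally clean alternative, which I might present for brevity, is to bypass subfunctors entirely and argue sequentially as in the direct argument sketched above, using only Theorem~\ref{thm: RF convergence} (via Definition~\ref{dfn: RF convergence}), Theorem~\ref{thm: local GH convergence}, and the observation that $\tau_f$ is always the identity; this mirrors the proof of Theorem~\ref{thm: precompact for the point functor} referenced in the text.
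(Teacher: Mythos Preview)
Your proposal is correct and follows essentially the same approach as the paper, which simply indicates that the result follows easily from Theorem~\ref{thm: precompact in RF} in the manner of the proof of Theorem~\ref{thm: precompact for the point functor}. Your construction of the constant subfunctor $\tau'(X,\rho_X)=K$ for the sufficiency direction and your direct sequential argument for the necessity direction are exactly the kind of details the paper omits.
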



\subsection{Points} \label{sec: the functor for points}
\newcommand{\PointFunc}{\tau_{\mathrm{id}}}
\newcommand{\PointsFunct}[1]{\tau_{\mathrm{id}^{#1}}}

In this subsection, we introduce a structure,
which equip each spaces with additional points.
The resulting topology is useful for discussing convergence of glued/fused metric spaces 
(e.g.\ \cite[Section 4]{Berry_Broutin_Goldschmidt_12_The_continuum} and \cite[Section 8.3]{Croydon_18_Scaling}).

Define $\PointFunc \coloneqq \Gamma_{\BCMcat \to \MTopcat}$, that is,
\begin{itemize}
  \item 
    For each $X \in \BCMcat$, 
    define $\PointFunc(X) \coloneqq X$.
  \item 
    For each $f \in \Hom_{\BCMcat}(X, Y)$, 
    define $(\PointFunc)_{f} \coloneqq f$.
\end{itemize}
It admits a natural metrization $\FMet{\PointFunc}$ given by equipping each $\PointFunc(X) = X$ with the metric $d_X$.
The following result is straightforward and thus we omit the proof.

\begin{thm}  \label{thm: Polishness of point functor}
  The structure $\PointFunc$ is continuous and separable,
  and its metrization $\FMet{\PointFunc}$ is complete and stable with distortion $\Dist{\PointFunc}(\varepsilon) = \varepsilon$.
\end{thm}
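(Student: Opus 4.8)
The statement bundles together four claims about the point structure $\PointFunc$: that it is continuous (i.e.\ embedding-continuous and both upper and lower semicontinuous), that it is separable, that its metrization $\FMet{\PointFunc}$ is complete, and that $\FMet{\PointFunc}$ is stable with distortion $\Dist{\PointFunc}(\varepsilon) = \varepsilon$. Each of these unwinds to an essentially tautological check, because $\PointFunc$ is literally the forgetful functor $\Gamma_{\BCMcat\to\MTopcat}$ and $\FMet{\PointFunc}$ assigns to each $X$ the metric space $X$ itself. The plan is to verify the four items in turn, in the order: metrization being well-defined, stability, completeness, then the continuity and separability statements.

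First I would confirm that $\FMet{\PointFunc}$ is a genuine metrization in the sense of Definition~\ref{dfn: metrization of structures}, using the criterion in Remark~\ref{rem: construction of metrization}: the metric $d_X$ on $\PointFunc(X) = X$ obviously induces the topology of $X$, and for any isometric embedding $f \colon X \to Y$ the map $(\PointFunc)_f = f$ is distance-preserving by definition of an isometric embedding. Next, for stability, I would check the two conditions of Definition~\ref{dfn: stability} with $\Dist{\PointFunc}(\varepsilon) = \varepsilon$: condition \ref{dfn item: 1, stability} is immediate since $\varepsilon \to 0$; condition \ref{dfn item: 2, stability} is exactly the hypothesis $d_{M_2}(f_2(x), g_2(y)) \leq d_{M_1}(f_1(x), g_1(y)) + \varepsilon$ itself, because $\sigma_{f_i}(a) = f_i(a)$ and $\sigma_{g_i}(b) = g_i(b)$ for $a \in X$, $b \in Y$, so the inequality \eqref{assum item eq: 2, stability} is literally \eqref{assum item eq: 1, stability}. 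Completeness of $\FMet{\PointFunc}$ is the statement that every $\bcmAB$ space $X$, being in particular a complete metric space (a boundedly-compact metric space is complete, since Cauchy sequences are eventually bounded hence relatively compact), has complete metric $d_X$ — this is Definition~\ref{dfn: completeness and separability of functor}.

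For continuity, embedding-continuity (Assumption~\ref{assum: embedding continuity}) says: if $f_n \to f_\infty$ in the compact-convergence topology then $(\PointFunc)_{f_n}(a) = f_n(a) \to f_\infty(a) = (\PointFunc)_{f_\infty}(a)$ for each $a \in X$, which is just pointwise convergence, a consequence of compact-convergence. Upper semicontinuity (Assumption~\ref{assum: semicontinuity}\ref{assum item: upper semicontinuity}) says: if $X_n \to X_\infty$ in the Fell topology inside $Y$ and $a_n \in X_n$ with $a_n \to a$ in $Y$, then $a \in X_\infty$ — this is exactly condition~\ref{lem item: 1. convergence in Hausdorff} in the Painlev\'e--Kuratowski characterization of Fell convergence (Theorem~\ref{thm: convergence in the Fell topology}\ref{thm item: 5. convergence in the local Hausdorff}). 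Lower semicontinuity likewise matches condition~\ref{lem item: 2. convergence in Hausdorff}. Finally, separability (Definition~\ref{dfn: completeness and separability of functor}) requires each $\PointFunc(X) = X$ to be separable, which holds because a boundedly-compact metric space is a countable union of compact balls, each of which is separable. The main obstacle here is essentially nil — the only mild care needed is to phrase each verification against the precise definition it invokes and to cite the Painlev\'e--Kuratowski characterization for the two semicontinuity items — so in the write-up I would simply state ``this is straightforward'' or give a one-line justification per item, as the paper itself does for Theorem~\ref{thm: Polishness of point functor}.
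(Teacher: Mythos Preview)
Your proposal is correct and matches the paper's approach: the paper simply states ``The following result is straightforward and thus we omit the proof,'' and what you have written is precisely the routine unpacking of each definition that makes the result straightforward. Your item-by-item verification --- metrization via Remark~\ref{rem: construction of metrization}, stability as a tautology, completeness from bounded compactness, embedding-continuity as pointwise convergence, and the two semicontinuity conditions as the Painlev\'e--Kuratowski characterization of Fell convergence --- is exactly the intended (omitted) argument.
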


\begin{thm} [Precompactness] \label{thm: precompact for the point functor}
  Let $\mathscr{A}$ be a non-empty index set.
  A subset $\{\cX_\alpha = (X_\alpha, \rho_\alpha, v_\alpha) \mid \alpha \in \mathscr{A} \}$ 
  of $\rootedBCM(\PointFunc)$ is precompact if and only if the following conditions are satisfied.
  \begin{enumerate} [label = \textup{(\roman*)}, leftmargin = *]
    \item \label{thm item: 1. precompact for the point functor}
      The subset $\{(X_\alpha, \rho_\alpha) \mid \alpha \in \mathscr{A} \}$ of $\rootedBCM$
      is precompact in the local Gromov--Hausdorff topology.
    \item \label{thm item: 2. precompact for the point functor} 
      For some $r>0$, it holds that 
      $v_\alpha \in X_\alpha|_{\rho_\alpha}^{(r)} = D_{X_\alpha}(\rho_\alpha, r)$ for all $\alpha \in \mathscr{A}$.
  \end{enumerate}
\end{thm}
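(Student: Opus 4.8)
The strategy is to invoke the general precompactness criterion in Theorem~\ref{thm: precompact in RF}, which reduces the problem to checking two things: precompactness of the underlying rooted spaces in the local Gromov--Hausdorff topology, and the existence of a precompact subfunctor $\tau'$ of $\FS{\PointFunc}$ containing all the $\cX_\alpha$. Since $\PointFunc$ is embedding-continuous, space-rooted metrizable, and upper semicontinuous (all recorded in Theorem~\ref{thm: Polishness of point functor}), the full equivalence \ref{thm item: 1. precompact in RF} $\Leftrightarrow$ \ref{thm item: 2. precompact in RF} of Theorem~\ref{thm: precompact in RF} is available. So the task is to translate the abstract condition \ref{thm item: 2. precompact in RF} into the concrete condition~\ref{thm item: 2. precompact for the point functor} about a uniform ball containing all the marked points.

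First I would treat the forward direction. Assume $\{\cX_\alpha \mid \alpha \in \mathscr{A}\}$ is precompact. By Theorem~\ref{thm: precompact in RF}, condition~\ref{thm item: 1. precompact for the point functor} holds immediately (it is literally part of \ref{thm item: 2. precompact in RF}), and there is a precompact subfunctor $\tau'$ of $\FS{\PointFunc}$ with $v_\alpha \in \tau'(X_\alpha, \rho_\alpha)$ for all $\alpha$. The key point is that for the point structure, $\FS{\PointFunc}(X,\rho_X) = X$ with the metric $d_X$, and a \emph{precompact} subset of a boundedly-compact metric space need not be bounded in general --- but here precompactness of the subfunctor is required simultaneously across \emph{all} spaces, and what I actually need is a single radius $r$ that works uniformly. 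To extract this, I would argue by contradiction: if no such $r$ existed, one could choose $\alpha_n \in \mathscr{A}$ with $d_{X_{\alpha_n}}(\rho_{\alpha_n}, v_{\alpha_n}) \to \infty$; passing to a subsequence along which $\cX_{\alpha_n}$ converges to some $(X, \rho_X, v) \in \rootedBCM(\PointFunc)$ and using the embedding characterization of convergence (Theorem~\ref{thm: RV convergence} or \ref{thm: RF convergence}), the marked points $v_{\alpha_n}$ would converge to $v$ inside a common boundedly-compact ambient space while their distances to the (converging) roots blow up, contradicting that $v$ lies at finite distance from $\rho_X$. This gives~\ref{thm item: 2. precompact for the point functor}.

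For the converse, assume \ref{thm item: 1. precompact for the point functor} and \ref{thm item: 2. precompact for the point functor}. I would define a subfunctor $\tau'$ of $\FS{\PointFunc}$ by $\tau'(X, \rho_X) \coloneqq D_X(\rho_X, r) = X|_{\rho_X}^{(r)}$; by Remark~\ref{rem: define subfunctor} this is a well-defined subfunctor since root-preserving isometric embeddings send closed balls of radius $r$ into closed balls of radius $r$. It is precompact because each $D_X(\rho_X, r)$ is compact ($X$ being boundedly compact), hence precompact in $X = \PointFunc(X)$ with the relative topology (note the relative topology on this ball from the Fell-type metric is coarser than, but here on a compact set agrees with, the subspace metric topology --- and in any case precompactness only needs it to be contained in a compact set). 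By~\ref{thm item: 2. precompact for the point functor}, $v_\alpha \in \tau'(X_\alpha, \rho_\alpha)$, so $\{\cX_\alpha\} \subseteq \rootedBCM(\tau')$. Together with~\ref{thm item: 1. precompact for the point functor}, condition~\ref{thm item: 2. precompact in RF} of Theorem~\ref{thm: precompact in RF} is satisfied, and the implication \ref{thm item: 2. precompact in RF} $\Rightarrow$ \ref{thm item: 1. precompact in RF} gives the desired precompactness.

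\textbf{Main obstacle.} The only genuinely nontrivial step is the contradiction argument in the forward direction: one must be careful that ``precompact subfunctor'' allows unbounded (though precompact-in-each-fiber) subsets a priori, so the uniform radius is not handed over for free by Theorem~\ref{thm: precompact in RF} but must be recovered by a compactness/diagonal argument using the convergence characterization of the local GH-type topology. Everything else is a routine verification using Remark~\ref{rem: define subfunctor} and the boundedly-compact hypothesis.
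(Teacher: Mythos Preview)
Your proposal is correct and follows essentially the same route as the paper: the converse direction applies Theorem~\ref{thm: precompact in RF} (you make the precompact subfunctor $\tau'(X,\rho_X)=D_X(\rho_X,r)$ explicit, whereas the paper leaves it implicit), and the forward direction recovers the uniform radius by the same contradiction argument via the embedding characterization of convergence. One small slip in your narrative: a precompact subset of a metric space is always bounded, so the only genuine issue is exactly the one you then identify, namely \emph{uniformity} of the radius across $\alpha$.
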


\begin{proof}
  Assume that $\{ \cX_\alpha \mid \alpha \in \mathscr{A} \}$ is precompact.
  From Corollary \ref{cor: projection continuity in RF},
  we obtain \ref{thm item: 1. precompact for the point functor}.
  If \ref{thm item: 2. precompact for the point functor} is not satisfied,
  we can find an increasing $(r_{n})_{n \geq 1}$ with $r_{n} \uparrow \infty$ 
  and a sequence $(v_{\alpha_{n}})_{n \geq 1}$ with $v_{\alpha_{n}} \in X_{\alpha_{n}}$ 
  such that $v_{\alpha_{n}} \notin X_{\alpha_{n}}|_{\rho_{\alpha_n}}^{(r_{n})}$ for all $n$.
  If necessary,
  by choosing a subsequence,
  we may assume that $(X_{\alpha_{n}}, \rho_{\alpha_{n}}, v_{\alpha_{n}})$ 
  converges to some $(X, \rho, v) \in \rootedBCM(\PointFunc)$.
  By Theorem~\ref{thm: RF convergence},
  there exist a rooted $\bcmAB$ space $(Z, \rho_{Z})$
  and root-preserving isometric embeddings $f_{n} \colon X_{\alpha_{n}} \to Z$ and $f \colon  X \to Z$
  such that 
  $f_{n}(X_{\alpha_{n}}) \to f(X)$ in the Fell topology as subsets of $Z$ 
  and $f_{n} (v_{\alpha_{n}}) \to f(v)$ in $Z$.
  It is then the case that, for some $r>0$, $v_{\alpha_{n}} \in X_{\alpha_{n}}|_{\rho_{\alpha_n}}^{(r)}$ for all $n$,
  which is a contradiction.
  Therefore, we obtain \ref{thm item: 2. precompact for the point functor}.
  The converse assertion follows from Theorem~\ref{thm: precompact in RF} 
\end{proof}

By using product functors,
one can consider equipping metric spaces multiple points.
For $n \in \mathbb{N} \cup \{\infty\}$, we define $\PointsFunct{n}$ to be the $n$-product functor of $\PointFunc$.
Then $\rootedBCM(\PointsFunct{n})$ is the collection of 
equivalence classes of rooted $\bcmAB$ spaces equipped with additional $n$ points 
and the metric $\RFMet^{\PointsFunct{n}}$ induces a suitable Polish topology on $\rootedBCM(\PointsFunct{n})$.


\subsection{Subsets}  \label{sec: the functor for subsets}
\newcommand{\SubsetSt}{\tau_{\mathcal{C}}}
\newcommand{\cptSubsetSt}{\tau_{\mathcal{C}_c}}
\newcommand{\SetsFunct}[1]{\tau^{#1 \text{-}\mathrm{sts}}}

In \cite[Section 6.4]{Miermont_09_Tessellations},  
a Gromov--Hausdorff-type topology was introduced on  
a set of equivalence classes of measured compact metric spaces equipped with subsets.  
In this subsection, we provide a structure that gives a natural extension of that topology to non-compact underlying spaces.
For the discussions below, we recall several notations from Section~\ref{sec: the Fell topology}.

Define a functor $\SubsetSt$ as follows.  
\begin{itemize}
  \item 
    For each $X \in \ob(\BCMcat)$,  
    define $\SubsetSt(X) \coloneqq \Closed{X}$ equipped with the Fell topology.
  \item 
    For each $f \in \Hom_{\BCMcat}(X, Y)$,  
    define $(\SubsetSt)_{f} \coloneqq \Image{f}$.
\end{itemize}  
It admits a natural element-rooted metrization $\FEMet{\SubsetSt}$ given by equipping $\SubsetSt^\times(X) = \Closed{X} \times X$  
with the metric $\lHausMet{X}$.  

\begin{thm}  \label{thm: Polishness of subset st}
  The functor $\SubsetSt$ is continuous and separable,  
  and its metrization $\FEMet{\SubsetSt}$ is complete and stable with distortion $\Dist{\FEMet{\SubsetSt}}(\varepsilon) = \varepsilon \wedge 1$.  
\end{thm}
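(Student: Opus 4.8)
The claim has three parts: continuity, separability, and the properties of the metrization $\FEMet{\SubsetSt}$. The plan is to assemble these from results already established for the Fell topology in Section~\ref{sec: the Fell topology}. For separability, I would invoke Lemma~\ref{lem: Polishness of Hausdorff} together with Theorem~\ref{thm: local Hausdorff metric}: for each separable $\bcmAB$ space $X$, the space $\Closed{X}$ with the Fell topology is separable, so $\SubsetSt$ is separable in the sense of Definition~\ref{dfn: completeness and separability of functor}. For the metrization, Proposition~\ref{prop: product local Hausdorff metric} says $\lHausMet{X}$ is a complete separable metric on $\Closed{X} \times X$ inducing the product topology, and Proposition~\ref{prop: local Hausdorff is stable} is exactly the stability estimate \eqref{prop eq: 2. local Hausdorff is stable} with constant $\varepsilon \wedge 1$; so taking $\Dist{\FEMet{\SubsetSt}}(\varepsilon) = \varepsilon \wedge 1$ verifies \ref{dfn item: 1, stability} and \ref{dfn item: 2, stability} of Definition~\ref{dfn: stability}, and Proposition~\ref{prop: the local Hausdorff metric is preserved} ensures $\FE{\SubsetSt}_f = \Image{f} \times f$ is distance-preserving, so $\FEMet{\SubsetSt}$ is a genuine (complete, stable) element-rooted metrization via Remark~\ref{rem: construction of ER metrization}.

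For continuity (Definition~\ref{dfn: continuity of structure}), I need embedding-continuity and both semicontinuities. Embedding-continuity (Assumption~\ref{assum: embedding continuity}): given isometric embeddings $f_n \colon X \to Y$ with $f_n \to f_\infty$ in the compact-convergence topology and $A \in \Closed{X}$, I must show $\Image{f_n}(A) \to \Image{f_\infty}(A)$ in the Fell topology on $\Closed{Y}$. I would use the characterization of Fell convergence via Painlev\'{e}--Kuratowski convergence from Theorem~\ref{thm: convergence in the Fell topology}\ref{thm item: 5. convergence in the local Hausdorff}: condition \ref{lem item: 2'. convergence in Hausdorff} holds because for $y = f_\infty(x)$ with $x \in A$, the points $f_n(x) \in \Image{f_n}(A)$ converge to $f_\infty(x)$ (uniform convergence on the compact set $\{x\}$, or just pointwise); condition \ref{lem item: 1. convergence in Hausdorff} holds because if $y_n = f_n(x_n) \in \Image{f_n}(A)$ converges to some $y$, then the sequence $(y_n)$ is bounded, hence $(x_n)$ lies in a bounded (thus compact) subset of $X$ since $f_n$ are isometries with $f_n \to f_\infty$ uniformly on compacts; passing to a subsequence $x_{n_k} \to x \in A$ (as $A$ is closed), uniform convergence of $f_{n_k}$ on a compact neighborhood of $x$ gives $f_{n_k}(x_{n_k}) \to f_\infty(x)$, so $y = f_\infty(x) \in \Image{f_\infty}(A)$. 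For upper/lower semicontinuity (Assumption~\ref{assum: semicontinuity}), with $X_n \to X_\infty$ in the Fell topology as closed subsets of a common $Y$ and the convention $\Closed{X_n} \subseteq \Closed{Y}$ via the inclusion embedding: for \ref{assum item: upper semicontinuity}, if $A_n \in \Closed{X_n}$ with $A_n \to A$ in $\Closed{Y}$, then each point of $A$ is a limit of points in $A_n \subseteq X_n$, which lie eventually near $X_\infty$ by \ref{lem item: 1. convergence in Hausdorff} for $X_n \to X_\infty$, so $A \subseteq X_\infty$, i.e. $A \in \Closed{X_\infty}$; for \ref{assum item: lower semicontinuity}, given $A \in \Closed{X_\infty}$, I would construct $A_n \in \Closed{X_n}$ approximating $A$ by, for each $n$, taking $A_n$ to be (the closure of) a suitable set of points in $X_n$ lying within $1/n$ of $A$ — concretely using a countable dense subset of $A$ and choosing nearby points in $X_n$ guaranteed by \ref{lem item: 2'. convergence in Hausdorff} for $X_n \to X_\infty$, then checking Fell convergence $A_n \to A$ via Theorem~\ref{thm: convergence in the Fell topology}.

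Finally, since $\FEMet{\SubsetSt}$ is a complete metrization and $\SubsetSt$ is continuous, $\SubsetSt$ is Polish with preserved roots: one takes $\tilde{\tau} = \SubsetSt$, $\eta$ the identity natural transformation, and the constant sequence of subfunctors $\FS{\tilde{\tau}}_k = \FS{\SubsetSt}$ (which is trivially pullback-stable and open), so Definition~\ref{dfn: Polish in RF} is satisfied — though strictly the statement here only claims stable metrizability and (implicitly, via the section's opening remark) Polishness; I would state Polishness with preserved roots as an immediate consequence of Corollary~\ref{cor: metrically Polish Type 1}. The main obstacle I anticipate is the careful verification of lower semicontinuity \ref{assum item: lower semicontinuity}: producing the approximating sequence $A_n \in \Closed{X_n}$ and rigorously checking both Painlev\'{e}--Kuratowski conditions for $A_n \to A$ requires a diagonal-type argument handling the two levels of approximation (points of $X_n$ approximating points of $X_\infty$, and points of $A$) simultaneously; the other verifications are routine given the tools of Section~\ref{sec: the Fell topology}. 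I would mirror the structure of Khezeli's treatment of the analogous compact-case functor, citing Remark~\ref{rem: relation to PK convergence} to streamline the semicontinuity checks.
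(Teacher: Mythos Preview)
Your proposal is correct and follows the paper's structure closely: separability from the separability of the Fell topology, completeness and stability of $\FEMet{\SubsetSt}$ directly from Propositions~\ref{prop: product local Hausdorff metric} and~\ref{prop: local Hausdorff is stable}, embedding-continuity declared straightforward, and upper semicontinuity via the same Painlev\'{e}--Kuratowski argument you give.

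The one notable difference is in lower semicontinuity, which you flag as the main obstacle. You propose approximating a general $A \in \Closed{X_\infty}$ via a countable dense subset and a two-level diagonal argument. The paper sidesteps this entirely: it observes that \emph{finite} subsets are dense in $\Closed{X_\infty}$ with the Fell topology, and for a finite $A = \{a_1, \ldots, a_m\} \subseteq X_\infty$ one simply chooses $a_i^{(n)} \in X_n$ with $a_i^{(n)} \to a_i$ (guaranteed by \ref{lem item: 2'. convergence in Hausdorff} for $X_n \to X_\infty$) and sets $A_n \coloneqq \{a_1^{(n)}, \ldots, a_m^{(n)}\}$; then $A_n \to A$ in the Hausdorff (hence Fell) topology is immediate. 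Since Assumption~\ref{assum: semicontinuity}\ref{assum item: lower semicontinuity} only asks for approximation along a subsequence, density of finite sets plus this trivial finite-case check suffices. Your route would also work, but the paper's reduction to finite sets turns the anticipated obstacle into a one-liner. Your closing paragraph on Polishness with preserved roots is, as you note, beyond what the theorem asserts.
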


\begin{proof}
  The embedding-continuity is straightforward.  
  To prove the semicontinuity,  
  let $X_n$, $n \in \mathbb{N}\cup\{\infty\}$, be $\bcmAB$ spaces  
  that are embedded isometrically into a common $\bcmAB$ space $Y$  
  in such a way that $X_n \to X_\infty$ in the Fell topology as subsets of $Y$.  
  Assume that closed subsets $A_n \in \Closed{X_n}$ converge to some $A \in \Closed{Y}$  
  in the Fell topology.  
  By Theorem~\ref{thm: convergence in the Fell topology},  
  for each $x \in A$,  
  there exist elements $x_n \in A_n$ converging to $x$.  
  Since $X_n \to X_\infty$,  
  it follows that $x \in X_\infty$.  
  Hence, $A \in \Closed{X}$,  
  which verifies the upper semicontinuity.  
  Let $A$ be a finite subset of $X$.  
  Since $X_n \to X_\infty$,  
  we can construct $A_n \in \Closed{X_n}$ converging to $A$ in the Fell topology.  
  Since such subsets $A$ are dense in $\Closed{X_\infty}$,  
  we obtain the lower semicontinuity of $\SubsetSt$.  
  We deduce the completeness and stability of $\FEMet{\SubsetSt}$  
  from Propositions~\ref{prop: product local Hausdorff metric} and~\ref{prop: local Hausdorff is stable}, respectively.  
  This completes the proof.
\end{proof}

\begin{thm} [Precompactness]
  Let $\mathscr{A}$ be a non-empty index set.
  A subset $\{\cX_\alpha = (X_\alpha, \rho_\alpha, A_{\alpha}) \mid \alpha \in \mathscr{A} \}$  
  of $\rootedBCM(\SubsetSt)$ is precompact  
  if and only if  
  the subset $\{(X_\alpha, \rho_\alpha) \mid \alpha \in \mathscr{A} \}$ of $\rootedBCM$  
  is precompact in the local Gromov--Hausdorff topology.
\end{thm}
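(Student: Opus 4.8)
The plan is to invoke the general precompactness criterion, Theorem~\ref{thm: precompact in RF}, and to observe that for the subsets structure the subfunctor condition appearing there is automatically satisfied, because the Fell topology on closed subsets is always compact (Theorem~\ref{thm: local Hausdorff top is compact}). First I would record that $\SubsetSt$ meets the hypotheses of Theorem~\ref{thm: precompact in RF}: by Theorem~\ref{thm: Polishness of subset st} it is continuous, hence in particular embedding-continuous and upper semicontinuous, and it admits the element-rooted metrization $\FEMet{\SubsetSt}$, hence is space-rooted metrizable by Lemma~\ref{lem: ER metrization defines SR metrization}. Consequently statements \ref{thm item: 1. precompact in RF} and \ref{thm item: 2. precompact in RF} of Theorem~\ref{thm: precompact in RF} are equivalent for any family in $\rootedBCM(\SubsetSt)$.

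The forward implication is then immediate: if $\{\cX_\alpha \mid \alpha \in \mathscr{A}\}$ is precompact in $\rootedBCM(\SubsetSt)$, then Corollary~\ref{cor: projection continuity in RF} (or Theorem~\ref{thm: precompact in RF}\ref{thm item: 2. precompact in RF}) shows that $\{(X_\alpha, \rho_\alpha) \mid \alpha \in \mathscr{A}\}$ is precompact in the local Gromov--Hausdorff topology. For the converse, assume $\{(X_\alpha, \rho_\alpha) \mid \alpha \in \mathscr{A}\}$ is precompact in the local Gromov--Hausdorff topology. By Theorem~\ref{thm: precompact in RF} it suffices to exhibit a precompact subfunctor $\tau'$ of $\FS{\SubsetSt}$ with $\{\cX_\alpha \mid \alpha \in \mathscr{A}\} \subseteq \rootedBCM(\tau')$. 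I would simply take $\tau' = \FS{\SubsetSt}$ itself, which is trivially a subfunctor of itself; it is precompact because for every $(X, \rho_X) \in \ob(\rBCMcat)$ the space $\FS{\SubsetSt}(X, \rho_X) = \Closed{X}$ equipped with the Fell topology is compact by Theorem~\ref{thm: local Hausdorff top is compact}, hence precompact in $\SubsetSt(X) = \Closed{X}$. Since $A_\alpha \in \Closed{X_\alpha}$ for each $\alpha$, the family trivially lies in $\rootedBCM(\FS{\SubsetSt}) = \rootedBCM(\SubsetSt)$, so the subfunctor condition holds and Theorem~\ref{thm: precompact in RF} gives precompactness of $\{\cX_\alpha \mid \alpha \in \mathscr{A}\}$.

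There is essentially no genuine obstacle here; the one point to be careful about is that one is permitted to take the full functor $\FS{\SubsetSt}$ as the required precompact subfunctor, and it is precisely the compactness of the Fell topology (Theorem~\ref{thm: local Hausdorff top is compact}) that makes this legitimate. The write-up will therefore consist of little more than assembling the cited results in the right order, noting that the additional structure imposes no extra constraint beyond precompactness of the underlying rooted spaces.
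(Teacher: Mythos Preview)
Your proposal is correct and takes essentially the same approach as the paper: both rely on the compactness of the Fell topology (Theorem~\ref{thm: local Hausdorff top is compact}) together with the general precompactness criterion Theorem~\ref{thm: precompact in RF}, exactly as in the template of Theorem~\ref{thm: precompact for the point functor}. Your observation that one may take $\tau' = \FS{\SubsetSt}$ itself as the required precompact subfunctor is the clean way to package the argument.
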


\begin{proof}
  Using Theorems~\ref{thm: local Hausdorff top is compact} and \ref{thm: RF convergence},  
  one can prove the desired result in the same manner as in the proof of Theorem~\ref{thm: precompact for the point functor}.
\end{proof}

Since the Fell topology is compact (see Theorem~\ref{thm: local Hausdorff top is compact}),  
the element-rooted metrization $\FEMet{\SubsetSt}$ naturally defines a space transformation $\Psi_{\mathcal{C}}$ as follows.  
\begin{itemize}
  \item 
    For each $X \in \ob(\BCMcat)$,  
    define $\SubsetSt(X) \coloneqq \Closed{X} \times X$ equipped with the metric $\lHausMet{X}$.
  \item 
    For each $f \in \Hom_{\BCMcat}(X, Y)$,  
    define $(\Psi_{\mathcal{C}})_{f} \coloneqq \FE{\Image{f}}$.
\end{itemize}  
Its rooting system $\RootSystem$ is given by $\RootSystem_X(x) \coloneqq (\{x\}, x)$.

\begin{rem}
  To treat spaces equipped with compact subsets,  
  one may use the structure $\cptSubsetSt$ defined as follows.  
  \begin{itemize}
    \item 
      For each $X \in \ob(\BCMcat)$,  
      set $\cptSubsetSt(X) \coloneqq \Compact{X}$ endowed with the Hausdorff topology.
    \item 
      For each $f \in \Hom_{\BCMcat}(X, Y)$,  
      set $(\cptSubsetSt)_{f} \coloneqq \Image{f}$.
  \end{itemize}  
  An analogue of Theorem~\ref{thm: Polishness of subset st} holds for $\cptSubsetSt$.
\end{rem}


\subsection{Measures} \label{sec: the functor for measures}

The local Gromov--Hausdorff-vague topology (recall it from Section~\ref{sec: introduction to GH-type metrics}) 
is commonly used for studying random measured spaces.
In this subsection, 
we recover this topology by the measure structure $\MeasFunct$ introduced in Example~\ref{exm: measure functor}.
For the discussions below, we recall several notations from Section~\ref{sec: the Fell topology}.

Recall that the structure $\tau = \MeasFunct$ is defined as follows.
\begin{itemize}
    \item 
      For each $X \in \ob(\BCMcat)$, 
      define $\tau(X) \coloneqq \Meas{X}$ equipped with the vague topology. 
    \item 
      For each $f \in \Hom_{\BCMcat}(X, Y)$,
      define $\tau_f \coloneqq f_*$, i.e., the pushforward map given by $f$.
  \end{itemize}
It admits a natural element-rooted metrization $\FEMet{\MeasFunct}$ given by equipping  $\Meas{X} \times X$ with the metric $\Vague{X}$,
which is defined in \eqref{eq: vague product metric}.

\begin{thm}  \label{thm: Polishness of measure functor}
  The functor $\MeasFunct$ is continuous and separable,
  and its metrization $\FEMet{\MeasFunct}$ is complete and stable with distortion $\Dist{\FEMet{\MeasFunct}}(\varepsilon) = 1 \wedge \varepsilon$.
\end{thm}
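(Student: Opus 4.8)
The plan is to derive the four assertions — embedding-continuity, upper- and lower-semicontinuity (which together give continuity in the sense of Definition~\ref{dfn: continuity of structure}), separability of $\MeasFunct$, and completeness and stability of $\FEMet{\MeasFunct}$ — from the results of Section~\ref{sec: the vague topology}, with only the continuity requiring genuine work. Since, by definition, $\FEMet{\MeasFunct}$ equips $\FE{\MeasFunct}(X)=\Meas{X}\times X$ with the metric $\Vague{X}$ from \eqref{eq: vague product metric}, and since every $\bcmAB$ space is complete (a Cauchy sequence is bounded, hence contained in a compact ball and so convergent), Proposition~\ref{prop: vauge product metric} shows at once that $\FEMet{\MeasFunct}$ is complete and that each $\Meas{X}\times X$, hence each $\MeasFunct(X)=\Meas{X}$ with the vague topology, is separable; so $\FEMet{\MeasFunct}$ is complete and $\MeasFunct$ is separable (cf.\ also Theorem~\ref{thm: vague metric}). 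Stability likewise follows from Proposition~\ref{prop: vague is stable}, which already gives the inequality \eqref{assum item eq: 2, stability} for $\sigma=\FEMet{\MeasFunct}$ with the function $\varepsilon\mapsto\varepsilon$; the sharper distortion $\varepsilon\mapsto 1\wedge\varepsilon$ is obtained exactly as in the proof of Proposition~\ref{prop: local Hausdorff is stable}, using that the Prohorov term in the integrand of $\Vague{X}$ is truncated at $1$, so that $1\wedge(\ProhMet{M_1}(\cdot,\cdot)+\varepsilon)\le(1\wedge\ProhMet{M_1}(\cdot,\cdot))+(1\wedge\varepsilon)$; property \ref{dfn item: 1, stability} is clear for $\varepsilon\mapsto 1\wedge\varepsilon$.

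It then remains to show that $\MeasFunct$ is continuous. For embedding-continuity (Assumption~\ref{assum: embedding continuity}), let $f_n\colon X\to Y$, $n\in\NN\cup\{\infty\}$, be isometric embeddings between $\bcmAB$ spaces with $f_n\to f_\infty$ in the compact-convergence topology, and fix $\mu\in\Meas{X}$; by Theorem~\ref{thm: convergence in vague}\ref{thm item: 5, convergence in vague} it suffices to prove $\int_Y g\,d((f_n)_*\mu)=\int_X(g\circ f_n)\,d\mu\to\int_X(g\circ f_\infty)\,d\mu$ for every continuous $g\colon Y\to\RN$ with compact support $L$. First I would note that all preimages $f_n^{-1}(L)$ lie in one compact subset $K\subseteq X$: if not, one finds $n_k\uparrow\infty$ and $x_k\in f_{n_k}^{-1}(L)$ with $d_X(x_k,p)\to\infty$ for a fixed $p\in X$, whence $d_Y(f_{n_k}(x_k),f_\infty(p))\ge d_X(x_k,p)-d_Y(f_{n_k}(p),f_\infty(p))\to\infty$, contradicting $f_{n_k}(x_k)\in L$. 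On $K$ one has $g\circ f_n\to g\circ f_\infty$ uniformly (by the uniform continuity of $g$ and uniform-on-$K$ convergence of $f_n$), and $g\circ f_n$ vanishes off $K$ with $\mu(K)<\infty$, so dominated convergence finishes this case.

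For the semicontinuity properties, let $X_n$, $n\in\NN\cup\{\infty\}$, be $\bcmAB$ spaces isometrically embedded into a common $\bcmAB$ space $Y$ with $X_n\to X_\infty$ in the Fell topology, which by Lemma~\ref{lem: convergence in Hausdorff} (see also Theorem~\ref{thm: convergence in the Fell topology}) is Painlev\'e--Kuratowski convergence. For upper semicontinuity, if $\mu_n\in\Meas{X_n}$ converges vaguely in $\Meas{Y}$ to $\mu$ and $g\colon Y\to\RNp$ is continuous with compact support disjoint from the closed set $X_\infty$, then $\supp(g)\cap X_n=\emptyset$ for all large $n$ (else, by \ref{lem item: 1. convergence in Hausdorff}, a subsequential limit of points of $\supp(g)\cap X_n$ would lie in $X_\infty$), so $\int g\,d\mu_n=0$ eventually and $\int g\,d\mu=0$; a standard covering argument then gives $\mu(Y\setminus X_\infty)=0$, i.e.\ $\mu\in\Meas{X_\infty}$. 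For lower semicontinuity, fix $\mu\in\Meas{X_\infty}$; since finitely supported measures with atoms in the separable space $X_\infty$ are dense in $\Meas{X_\infty}$ for the vague topology, pick such $\nu^{(j)}\to\mu$, use \ref{lem item: 2. convergence in Hausdorff} to replace each atom $x$ of $\nu^{(j)}$ by points $x_n\in X_n$ with $x_n\to x$, obtaining finitely supported $\nu^{(j)}_n\in\Meas{X_n}$ with $\nu^{(j)}_n\to\nu^{(j)}$ vaguely as $n\to\infty$, and then diagonalise to get $n_k\uparrow\infty$ and $\mu_k\coloneqq\nu^{(j_k)}_{n_k}\in\Meas{X_{n_k}}$ with $\mu_k\to\mu$ vaguely. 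This verifies Assumption~\ref{assum: semicontinuity} and completes the proof.

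The main obstacle I anticipate is the continuity part, and within it the lower-semicontinuity step, which forces one to construct measures on the varying spaces $X_n$ approximating a prescribed $\mu\in\Meas{X_\infty}$; I would handle this by the two-stage approximation above (first approximate $\mu$ by atomic measures, then transport atoms along Painlev\'e--Kuratowski-convergent sequences and diagonalise), which sidesteps the measurable-selection issues one would meet with a direct nearest-point transport. The uniform control of the preimages $f_n^{-1}(L)$ in the embedding-continuity argument is the only other mildly delicate point.
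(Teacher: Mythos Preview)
Your proof is correct and follows essentially the same approach as the paper: separability, completeness, and stability are read off from the results in Section~\ref{sec: the vague topology} (Theorem~\ref{thm: vague metric}, Propositions~\ref{prop: vauge product metric} and~\ref{prop: vague is stable}), embedding-continuity comes from the dominated convergence theorem, and the semicontinuity arguments proceed via Painlev\'e--Kuratowski convergence and atomic approximation. The only cosmetic differences are that you spell out the compactness of $\bigcup_n f_n^{-1}(L)$ for embedding-continuity (the paper just says ``immediate from the dominated convergence theorem'') and that for upper semicontinuity you use compactly supported test functions and a covering argument, whereas the paper works directly with open balls and the Portmanteau-type inequality $\mu(B_Y(x,\varepsilon))\le\liminf_n\mu_n(B_Y(x,\varepsilon))=0$; these are equivalent routes to the same conclusion.
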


\begin{proof}
  The separability of $\MeasFunct$ follows from the separability of the vague topology; see Theorem~\ref{thm: vague metric}.
  The completeness and stability of $\FEMet{\MeasFunct}$ follows from Proposition~\ref{prop: vauge product metric}.
  The embedding-continuity follows immediately from the definition of the vague topology
  and the dominated convergence theorem.
  To prove the semicontinuity,
  let $X_n$, $n \in \mathbb{N}\cup\{\infty\}$, be $\bcmAB$ spaces
  that are embedded isometrically into a common $\bcmAB$ space $Y$
  in such a way that $X_n \to X_\infty$ in the Fell topology as subsets of $Y$.
  Assume that measures $\mu_{n} \in \Meas{X_n}$ converging vaguely to some $\mu \in \Meas{Y}$ vaguely.
  Fix $x \in Y \setminus X_\infty$.
  Since $Y \setminus X_\infty$ is open,
  there exists $\varepsilon >0$ such that $B_{Y}(x, \varepsilon) \cap X = \emptyset$. 
  The Fell convergence $X_n \to X_\infty$ ensures that 
  $X_n \cap B_{Y}(x, \varepsilon) = \emptyset$ for all sufficiently large $n$.
  Therefore, by \cite[Lemma~4.1]{Kallenberg_17_Random},
  we obtain 
  \begin{equation}
    \mu(B_Y(x, \varepsilon))
    \leq 
    \liminf_{n \to \infty} \mu_n(B_Y(x, \varepsilon)) 
    = 0,
  \end{equation}
  which implies $\mu$ is supported on $X_\infty$, i.e., $\mu \in \Meas{X_\infty}$.
  This proves the upper semicontinuity of $\MeasFunct$.
  It remains to verify the lower semicontinuity.
  Let $\mu$ be a discrete measure with finitely many atoms in $X_\infty$.
  By approximating the atoms by elements in $X_n$,
  it is not difficult to construct a finite measure $\mu_{n}$ with atoms in $X_n$
  such that $\mu_{n}$ converges to $\mu$ vaguely.
  Since such measures $\mu$ are dense in $\Meas{X_\infty}$,
  we obtain the lower semicontinuity of $\MeasFunct$.
\end{proof}

By Theorems~\ref{thm: main result} and \ref{thm: Polishness of measure functor},
the local GH-type topologies with preserved roots and with non-preserved roots on $\rootedBCM(\MeasFunct)$ coincide,
and the resulting topology is Polish.
Below, we verify that this topology coincides with the local Gromov--Hausdorff-vague topology 
(recall it from Section~\ref{sec: introduction to GH-type metrics}).

\begin{prop}  \label{prop: recovering the local GHV top}
  For each $n \in \mathbb{N} \cup \{ \infty \}$,
  let $\cX_n = (X_n, \rho_n, \mu_n)$ be an element of $\rootedBCM(\MeasFunct)$.
  The following statements are equivalent.
  \begin{enumerate} [label = \textup{(\roman*)}, leftmargin = *]
    \item \label{prop item: 1. recovering the local GHV top}
      The elements $\cX_n$ converge $\cX_\infty$ in the local GH-type topology,
    \item \label{prop item: 2. recovering the local GHV top}
      There exist a rooted $\bcmAB$ space $(M, \rho_M)$
      and root-preserving isometric embeddings $f_n \colon X_n \to M$, $n \in \NN \cup \{\infty\}$,
      such that $f_n(X_n) \to f_\infty(X_\infty)$ in the Fell topology as subsets of $M$,
      and $(f_n)_*(\mu_n) \to (f_\infty)_*(\mu_\infty)$ vaguely as measures on $M$.
    \item \label{prop item: 3. recovering the local GHV top}
      There exist a $\bcmAB$ space $M$
      and isometric embeddings $f_n \colon X_n \to M$, $n \in \NN \cup \{\infty\}$,
      such that $f_n(X_n) \to f_\infty(X_\infty)$ in the Fell topology as subsets of $M$,
      $f_n(\rho_n) \to f_\infty(\rho_\infty)$ in $M$,
      and $(f_n)_*(\mu_n) \to (f_\infty)_*(\mu_\infty)$ vaguely as measures on $M$.
    \item \label{prop item: 4. recovering the local GHV top}
      The elements $\cX_n$ converge $\cX_\infty$ in the local Gromov--Hausdorff-vague topology.
  \end{enumerate}
\end{prop}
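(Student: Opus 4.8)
The equivalences \ref{prop item: 1. recovering the local GHV top}~$\Leftrightarrow$~\ref{prop item: 2. recovering the local GHV top}~$\Leftrightarrow$~\ref{prop item: 3. recovering the local GHV top} will be read off from the general theory. By Theorem~\ref{thm: Polishness of measure functor}, the structure $\MeasFunct$ is embedding-continuous and admits the stable element-rooted metrization $\FEMet{\MeasFunct}$; hence Corollary~\ref{cor: coincidence of topologies} shows that the local GH-type topologies with preserved and with non-preserved roots coincide, and this common topology is exactly the local GH-type topology appearing in~\ref{prop item: 1. recovering the local GHV top}. Applying Theorem~\ref{thm: RF convergence} with $\tau = \MeasFunct$ (using the space-rooted metrization associated to $\FEMet{\MeasFunct}$) identifies convergence with respect to $\RFMet^{\FEMet{\MeasFunct}}$ with~\ref{prop item: 2. recovering the local GHV top}, while Theorem~\ref{thm: RV convergence} identifies convergence with respect to $\RVMet^{\FEMet{\MeasFunct}}$ with~\ref{prop item: 3. recovering the local GHV top}. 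Since both metrics induce the common topology, the three statements are equivalent.

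It then remains to prove \ref{prop item: 3. recovering the local GHV top}~$\Leftrightarrow$~\ref{prop item: 4. recovering the local GHV top}. For \ref{prop item: 3. recovering the local GHV top}~$\Rightarrow$~\ref{prop item: 4. recovering the local GHV top}, given $M$ and the embeddings $f_n$ as in~\ref{prop item: 3. recovering the local GHV top}, I would set $x_n \coloneqq f_n(\rho_n)$, so that $x_n \to x_\infty$ in $M$. Since restriction to a closed ball commutes both with the image map and with the pushforward of measures (as computed in the proofs of Propositions~\ref{prop: local Hausdorff is stable} and~\ref{prop: vague is stable}), one has $f_n(X_n)|_{x_n}^{(r)} = \Image{f_n}(X_n^{(r)})$ and $((f_n)_*\mu_n)|_{x_n}^{(r)} = (f_n)_*(\mu_n^{(r)})$ for every $r > 0$. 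Hence, by Theorems~\ref{thm: convergence in the Fell topology} and~\ref{thm: convergence in vague} applied with the base points $x_n \to x_\infty$, for all but countably many $r$ we obtain $\Image{f_n}(X_n^{(r)}) \to \Image{f_\infty}(X_\infty^{(r)})$ in the Hausdorff topology and $(f_n)_*(\mu_n^{(r)}) \to (f_\infty)_*(\mu_\infty^{(r)})$ weakly. Combining this with $f_n(\rho_n) \to f_\infty(\rho_\infty)$ and with the compactness of $K^{(r)} \coloneqq \bigcup_{n \in \NN \cup \{\infty\}} \Image{f_n}(X_n^{(r)})$ (Lemma~\ref{lem: convergence in Hausdorff}), the restricted embeddings $f_n|_{X_n^{(r)}} \colon X_n^{(r)} \to K^{(r)}$ witness $\cX_n^{(r)} \to \cX_\infty^{(r)}$ with respect to $d_{\mathrm{pGHP}}$; by the definition~\eqref{eq: the GHV metric} of $d_{\mathrm{GHV}}$ and dominated convergence, $d_{\mathrm{GHV}}(\cX_n, \cX_\infty) \to 0$, i.e.~\ref{prop item: 4. recovering the local GHV top}.

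For \ref{prop item: 4. recovering the local GHV top}~$\Rightarrow$~\ref{prop item: 3. recovering the local GHV top}, statement~\ref{prop item: 4. recovering the local GHV top} forces, via~\eqref{eq: the GHV metric}, that $\cX_n^{(r)} \to \cX_\infty^{(r)}$ with respect to $d_{\mathrm{pGHP}}$ for all but countably many $r$; I would fix an increasing sequence $r_k \uparrow \infty$ of such radii, choose for each $k$ a compact rooted-and-measured space realizing the $d_{\mathrm{pGHP}}$-convergence of $(\cX_n^{(r_k)})_{n}$, and glue these ambient spaces---over both $n$ and $k$---into a single $\bcmAB$ space $M$ carrying compatible isometric embeddings $f_n \colon X_n \to M$. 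This is the standard inductive-gluing construction, which can be carried out just as in the non-measured case \cite[Proposition~5.9]{Athreya_Lohr_Winter_16_The_gap} (see also the gluing lemma \cite[Lemma~2.42]{Khezeli_23_A_unified} used in the proofs of Theorems~\ref{thm: RF convergence} and~\ref{thm: completeness of RF}); the measure terms require only the additional step of transporting $(f_n)_*(\mu_n^{(r_k)})$ along the gluing maps, after which Fell convergence of the $f_n(X_n)$, convergence of the roots, and---by Theorem~\ref{thm: convergence in vague}---vague convergence of $(f_n)_*\mu_n$ on $M$ all follow. Alternatively, \ref{prop item: 3. recovering the local GHV top}~$\Leftrightarrow$~\ref{prop item: 4. recovering the local GHV top} may be quoted from the embedding characterization of the local Gromov--Hausdorff-vague topology in \cite[Theorem~3.24]{Khezeli_20_Metrization} (cf.\ \cite[Theorem~2.6]{Abraham_Delmas_Hoscheit_13_A_note}). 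I expect this last direction---assembling the ambient spaces attached to a cofinal family of radii into one coherent boundedly-compact ambient space---to be the main obstacle; everything else is a routine translation between the ball-wise criteria of Theorems~\ref{thm: convergence in the Fell topology} and~\ref{thm: convergence in vague} and the integrated distance $d_{\mathrm{GHV}}$.
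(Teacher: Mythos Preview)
Your proposal is correct and follows essentially the same route as the paper's proof: the equivalence of \ref{prop item: 1. recovering the local GHV top}, \ref{prop item: 2. recovering the local GHV top}, \ref{prop item: 3. recovering the local GHV top} is read off from Theorems~\ref{thm: RF convergence} and~\ref{thm: RV convergence} together with the coincidence of the two topologies, and \ref{prop item: 3. recovering the local GHV top}~$\Leftrightarrow$~\ref{prop item: 4. recovering the local GHV top} is reduced to \cite[Proposition~5.9]{Athreya_Lohr_Winter_16_The_gap}. The paper's proof is terser---it simply cites that reference and notes the only difference is that those authors track the supports of the measures rather than the full underlying spaces---whereas you spell out the ball-wise argument for \ref{prop item: 3. recovering the local GHV top}~$\Rightarrow$~\ref{prop item: 4. recovering the local GHV top} and the gluing construction for the converse; this extra detail is fine and matches what \cite[Proposition~5.9]{Athreya_Lohr_Winter_16_The_gap} actually does.
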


\begin{proof} 
  The equivalence of \ref{prop item: 1. recovering the local GHV top}, \ref{prop item: 2. recovering the local GHV top}, and \ref{prop item: 3. recovering the local GHV top}
  is a consequence of Theorems~\ref{thm: RF convergence} and \ref{thm: RV convergence}.
  Following the proof of \cite[Proposition~5.9]{Athreya_Lohr_Winter_16_The_gap}, one can verify that \ref{prop item: 3. recovering the local GHV top} and \ref{prop item: 4. recovering the local GHV top} are equivalent.  
  (Indeed, the only difference is that the authors consider the Fell convergence of the supports of $\mu_n$, 
  rather than the underlying spaces $X_n$.)
  This completes the proof.
\end{proof}

\begin{thm} [Precompactness] \label{thm: precompactness in the local GHV top}
  Let $\mathscr{A}$ denote a non-empty index set.
  A subset $\{ \cX_\alpha = (X_\alpha, \rho_\alpha, \mu_\alpha) \mid \alpha \in \mathscr{A}\}$ 
  of $\rootedBCM(\MeasFunct)$ is precompact in the local Gromov--Hausdorff-vague topology if and only if 
  the following conditions are satisfied.
  \begin{enumerate} [label = \textup{(\roman*)}, leftmargin = *]
    \item \label{thm item: 1. precompactness in the local GHV}
      The set $\{(X_\alpha, \rho_\alpha) \mid \alpha \in \mathscr{A}\}$ 
      is precompact in the local Gromov--Hausdorff topology.
    \item \label{thm item: 2. precompactness in the local GHV top}
      For every $r>0$, it holds that $\sup_{\alpha \in \mathscr{A}} \mu_\alpha(X_\alpha|_{\rho_\alpha}^{(r)}) < \infty$.
  \end{enumerate}
\end{thm}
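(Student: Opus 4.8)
The plan is to deduce both implications from the general precompactness criterion of Theorem~\ref{thm: precompact in RF}, after noting via Proposition~\ref{prop: recovering the local GHV top} that the local Gromov--Hausdorff-vague topology on $\rootedBCM(\MeasFunct)$ coincides with the local GH-type topology, and recalling from Theorem~\ref{thm: Polishness of measure functor} that $\MeasFunct$ is continuous (in particular embedding-continuous, upper and lower semicontinuous) and (element-rooted, hence space-rooted) metrizable; since the latter topology is metrizable, precompactness is equivalent to sequential precompactness. For the necessity of the two conditions, condition~\ref{thm item: 1. precompactness in the local GHV} will follow at once from Corollary~\ref{cor: projection continuity in RF}, since the continuous image of a precompact set is precompact. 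For condition~\ref{thm item: 2. precompactness in the local GHV top} I would argue by contradiction, in the spirit of the proof of Theorem~\ref{thm: precompact for the point functor}: if $\sup_\alpha \mu_\alpha(X_\alpha|_{\rho_\alpha}^{(r)}) = \infty$ for some $r>0$, pick $\alpha_n$ with $\mu_{\alpha_n}(D_{X_{\alpha_n}}(\rho_{\alpha_n},r)) \to \infty$, pass to a subsequence converging to some $(X,\rho,\mu) \in \rootedBCM(\MeasFunct)$, and use Theorem~\ref{thm: RF convergence} to realize this convergence through root-preserving isometric embeddings $f_n \colon X_{\alpha_n} \to M$ and $f \colon X \to M$ into a common rooted $\bcmAB$ space $(M,\rho_M)$, with $(f_n)_*\mu_{\alpha_n} \to f_*\mu$ vaguely. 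Since $f_n$ is root-preserving and isometric, $f_n^{-1}(D_M(\rho_M,r)) = D_{X_{\alpha_n}}(\rho_{\alpha_n},r)$, so $(f_n)_*\mu_{\alpha_n}(D_M(\rho_M,r)) \to \infty$; choosing $r'>r$ with $\mu$ charging no $\rho_M$-sphere of radius $r'$ (true for all but countably many $r'$) and invoking the implication \ref{thm item: 5, convergence in vague} $\Rightarrow$ \ref{thm item: 2, convergence in vague} of Theorem~\ref{thm: convergence in vague} gives $(f_n)_*\mu_{\alpha_n}(D_M(\rho_M,r')) \to f_*\mu(D_M(\rho_M,r')) < \infty$, which contradicts the monotonicity bound $(f_n)_*\mu_{\alpha_n}(D_M(\rho_M,r')) \geq (f_n)_*\mu_{\alpha_n}(D_M(\rho_M,r))$.

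For sufficiency, assuming \ref{thm item: 1. precompactness in the local GHV} and \ref{thm item: 2. precompactness in the local GHV top}, I would set $C_r \coloneqq \sup_\alpha \mu_\alpha(X_\alpha|_{\rho_\alpha}^{(r)}) < \infty$ for each $r>0$ and, following Remark~\ref{rem: define subfunctor}, introduce a subfunctor $\tau'$ of $\FS{\MeasFunct}$ by
\[
  \tau'(X,\rho_X) \coloneqq \{\mu \in \Meas{X} \mid \mu(D_X(\rho_X,r)) \leq C_r \text{ for all } r>0\}.
\]
The subfunctor property is immediate: a root-preserving isometric embedding $f \colon (X,\rho_X) \to (Y,\rho_Y)$ satisfies $f^{-1}(D_Y(\rho_Y,r)) = D_X(\rho_X,r)$, whence $f_*\mu(D_Y(\rho_Y,r)) = \mu(D_X(\rho_X,r))$ and $f_*(\tau'(X,\rho_X)) \subseteq \tau'(Y,\rho_Y)$. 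By construction $\{\cX_\alpha \mid \alpha \in \mathscr{A}\} \subseteq \rootedBCM(\tau')$, so by Theorem~\ref{thm: precompact in RF} (using the upper semicontinuity of $\MeasFunct$) it remains only to verify that $\tau'$ is a precompact subfunctor, i.e.\ that each $\tau'(X,\rho_X)$ is precompact in $\Meas{X}$. For this I would apply Theorem~\ref{thm: precompactness in d_D}, legitimate since the restriction system for the vague metric is complete and satisfies Condition~3 by Lemma~\ref{lem: RS for vague}: precompactness of $\tau'(X,\rho_X)$ reduces to precompactness of $\tau'(X,\rho_X)|_{\rho_X}^{(r)}$ in $\cptMeas{X}$ for each $r>0$, and every element of the latter set is a finite Borel measure supported on the fixed compact set $D_X(\rho_X,r)$ with total mass at most $C_r$; such a family is precompact for the Prohorov metric by Prohorov's theorem, tightness being automatic on a compact space.

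I expect the main obstacle to be the measure-theoretic precompactness step in the sufficiency argument, namely checking carefully that a mass-bounded family of finite Borel measures on a fixed compact metric space is precompact for the Prohorov metric \emph{as a subset of $\cptMeas{X}$} --- this requires both Prohorov's theorem and the (routine but not entirely trivial) fact that, for measures supported on a compact $K \subseteq X$, the Prohorov metric of $\finMeas{X}$ restricted to such measures agrees with that of $\finMeas{K}$, so that Theorem~\ref{thm: precompactness in d_D} applies verbatim. A secondary point needing care is the portmanteau-type conversion of vague convergence on $M$ into convergence of masses of closed balls of generic radius in the necessity argument, which is exactly furnished by the equivalence in Theorem~\ref{thm: convergence in vague}.
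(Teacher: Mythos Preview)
Your proof is correct. The necessity argument matches the paper's: condition~\ref{thm item: 1. precompactness in the local GHV} via Corollary~\ref{cor: projection continuity in RF}, and condition~\ref{thm item: 2. precompactness in the local GHV top} by the contradiction argument patterned on Theorem~\ref{thm: precompact for the point functor}, which is exactly what the paper indicates.

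For sufficiency, your route genuinely differs from the paper's. The paper simply invokes the external results \cite[Theorem~2.6]{Abraham_Delmas_Hoscheit_13_A_note} and \cite[Theorem~3.28]{Khezeli_20_Metrization}, together with Theorem~\ref{thm: precompact in the local GH top}, to conclude precompactness directly in the local Gromov--Hausdorff-vague topology. You instead stay entirely within the paper's own framework: you build the mass-bounded subfunctor $\tau'$ of $\FS{\MeasFunct}$, verify its precompactness via Theorem~\ref{thm: precompactness in d_D} and Prohorov's theorem, and then apply the general criterion Theorem~\ref{thm: precompact in RF}. Your approach is more self-contained and illustrates that the abstract machinery of Section~\ref{sec: main results} is strong enough to recover this result without the external literature; the paper's approach is shorter but outsources the substance. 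The concern you flag about precompactness in $\cptMeas{X}$ versus $\finMeas{X}$ is handled correctly: any weak limit of measures supported on the fixed compact ball $D_X(\rho_X,r)$ is again supported there (Portmanteau), so the limit lies in $\cptMeas{X}$, and weak convergence is metrized by the Prohorov metric on the separable space $X$.
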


\begin{proof}
  If $\{\cX_\alpha \mid \alpha \in \mathscr{A}\}$ satisfies \ref{thm item: 1. precompactness in the local GHV} 
  and \ref{thm item: 2. precompactness in the local GHV top},
  then by Theorem~\ref{thm: precompact in the local GH top},
  \cite[Theorem 2.6]{Abraham_Delmas_Hoscheit_13_A_note} and \cite[Theorem 3.28]{Khezeli_20_Metrization},
  $\{\cX_\alpha \mid \alpha \in \mathscr{A} \}$ is precompact in the local Gromov--Hausdorff-vague topology.
  The converse direction is proved by contradiction, similarly to the proof of Theorem~\ref{thm: precompact for the point functor}.
\end{proof}

\begin{rem}
  To consider spaces equipped with finite Borel measures,  
  one may use the structure $\finMeasFunct$ defined as follows.  
  \begin{itemize}
    \item 
      For each $X \in \ob(\BCMcat)$,  
      set $\finMeasFunct(X) \coloneqq \finMeas{X}$ endowed with the weak topology.
    \item 
      For each $f \in \Hom_{\BCMcat}(X, Y)$,  
      set $(\finMeasFunct)_{f} \coloneqq f_*$.
  \end{itemize}  
  An analogue of Theorem~\ref{thm: Polishness of measure functor} holds for $\finMeasFunct$.
\end{rem}


\subsection{Cadlag functions}  \label{sec: structure for cadlag curves}
\newcommand{\SkorohodSt}{\tau_D}

Stochastic processes with cadlag paths are random cadlag functions,
and are fundamental objects of interest in probability theory.
In this subsection, we define a structure for cadlag functions.
Combined with the structure introduced in Section~\ref{sec: Laws of structures} below,
this structure provides a suitable topological framework for studying stochastic processes defined on different spaces.

We first recall the usual $J_1$-Skorohod topology and its complete metrization.  
For details, see \cite[Chapter~3]{Billingsley_99_Convergence} and \cite{Whitt_80_Some}.  
Given a metric space $S$ and an interval $I \subseteq \RNp$,  
we denote by $D(I, S)$ the set of cadlag functions $F \colon I \to S$.  
For each $t \in I$, we write $F(t-)$ for the left-hand limit of $F$ at~$t$.
For $t \in (0,\infty)$, the complete Skorohod metric on $D([0,t], S)$ is defined as follows:  
for each $F, G \in D([0,t], S)$,
\begin{equation}
  d^{J_1,t}_S(F, G) 
  \coloneqq 
  \inf_{\lambda \in \Lambda_t} 
  \left\{ 
    \sup_{0 \leq s_1 < s_2 \leq t} \left|\log \frac{\lambda(s_2) - \lambda(s_1)}{s_2 - s_1} \right| 
    \vee \sup_{s \in [0,t]} d_S\bigl( F(s), G(\lambda(s)) \bigr) 
  \right\}.
\end{equation}
where $\Lambda_t$ denotes the set of all increasing continuous bijections $\lambda \colon [0,t] \to [0,t]$.
For the unbounded interval $I = \RNp$, the complete Skorohod metric on $D(\RNp, S)$ is defined as follows:  
for each $F, G \in D(\RNp, S)$,
\begin{equation} \label{eq: Skorohod metric for unbounded domain}
  d^{J_1}_S(F, G) 
  \coloneqq 
  \int_0^\infty e^{-t} \left( 1 \wedge d^{J_1, t}_S\bigl(F|_{[0,t]}, G|_{[0,t]}\bigr) \right)\, dt.
\end{equation}
The topology on $D(I, S)$, where $I = [0,t]$ or $I = \RNp$, induced by the above metric is called the $J_1$-Skorohod topology.
Moreover, if $d_S$ is complete, the complete Skorohod metric is indeed complete.

Fix a Polish structure $\sigma$ that is stably metrizable and Polish with preserved roots,
and fix an interval $I \subseteq \RNp$ with the form $I = [0,t_\infty]$ for some fixed $t_\infty \in [0,\infty]$,
where we interpret $I = [0, \infty]$ as a shorthand for $\RNp = [0, \infty)$ when $t_\infty = \infty$.
We define a structure $\tau = \SkorohodSt(I, \sigma)$ as follows.
\begin{itemize}
  \item 
    For $X \in \ob(\BCMcat)$, 
    define $\tau(X) \coloneqq D(I, \sigma(X))$ equipped with the $J_1$-Skorohod topology. 
  \item 
    For each $f \in \Hom_{\BCMcat}(X, Y)$, 
    define $\tau_f(F) \coloneqq \sigma_f \circ F$ for $F \in D(I, \sigma(X))$.
\end{itemize}

\begin{prop} \label{prop: continuity of Skorohod st}
  The structure $\SkorohodSt(I, \sigma)$ is embedding-continuous.
  Moreover, if $\sigma$ is upper (resp.\ lower) semicontinuous, then so is $\SkorohodSt(I, \sigma)$.
\end{prop}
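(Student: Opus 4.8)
The plan is to verify the three continuity properties—embedding-continuity, upper semicontinuity, and lower semicontinuity—separately, in each case reducing the statement about $\SkorohodSt(I,\sigma)$ to the corresponding property of $\sigma$ together with standard facts about the $J_1$-Skorohod topology. Throughout, I would first treat the case of a compact interval $I = [0,t]$, and then pass to the unbounded interval $I = \RNp$ by the dominated convergence theorem, exploiting the formula \eqref{eq: Skorohod metric for unbounded domain} and the fact that a cadlag curve is close in $d^{J_1}_S$ precisely when its restrictions to $[0,t]$ are close in $d^{J_1,t}_S$ for all but countably many $t$ (namely those $t$ that are continuity points of the limit curve, in a suitable sense).

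For embedding-continuity, I would fix $\bcmAB$ spaces $X,Y$ and isometric embeddings $f_n \to f_\infty$ in the compact-convergence topology, and fix $F \in D(I,\sigma(X))$. By Assumption~\ref{assum: embedding continuity} for $\sigma$ (and Remark~\ref{rem: strong continuity of functors}), $\sigma_{f_n} \to \sigma_{f_\infty}$ uniformly on compact subsets of $\sigma(X)$. Since $F$ restricted to $[0,t]$ has relatively compact image in $\sigma(X)$ (it is cadlag, hence its range together with left limits is compact), the curves $\sigma_{f_n}\circ F$ converge to $\sigma_{f_\infty}\circ F$ uniformly on $[0,t]$, which is stronger than convergence in the $J_1$-Skorohod topology on $D([0,t],\sigma(Y))$ (take $\lambda = \mathrm{id}$). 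Passing to $I = \RNp$ via dominated convergence in \eqref{eq: Skorohod metric for unbounded domain} then gives $\tau_{f_n}(F) \to \tau_{f_\infty}(F)$ in $\tau(Y)$.

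For the semicontinuity statements, I would use the convention stated just before Assumption~\ref{assum: semicontinuity}: given $\bcmAB$ spaces $X_n \to X_\infty$ in the Fell topology inside a common $Y$, we regard each $\sigma(X_n)$ as a subspace of $\sigma(Y)$. For upper semicontinuity, suppose $F_n \in D(I,\sigma(X_n))$ converges in the $J_1$-Skorohod topology of $D(I,\sigma(Y))$ to some $F$; I want $F \in D(I,\sigma(X_\infty))$, i.e. $F(s) \in \sigma(X_\infty)$ for every $s$. For each fixed $s$, Skorohod convergence together with the cadlag property lets me extract, after time-change, points $s_n \to s$ with $F_n(s_n) \to F(s)$ (using values $F_n(s_n)$ or left limits $F_n(s_n-)$, each of which lies in the closed set $\sigma(X_n) \subseteq \sigma(Y)$, noting that $\sigma(X_n)$ is closed in $\sigma(Y)$ since $\sigma_{\,\cdot}$ is distance-preserving and—by completeness of the metrization—complete); then Assumption~\ref{assum: semicontinuity}\ref{assum item: upper semicontinuity} for $\sigma$ forces $F(s) \in \sigma(X_\infty)$. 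For lower semicontinuity, given $F \in D(I,\sigma(X_\infty))$, I would approximate $F$ by a step function taking finitely many values in $\sigma(X_\infty)$, lift each such value to $\sigma(X_{n_k})$ along a subsequence using Assumption~\ref{assum: semicontinuity}\ref{assum item: lower semicontinuity}, and assemble these into step functions $F_k \in D(I,\sigma(X_{n_k}))$ converging to $F$; here one must take care that a single subsequence works for all finitely many values simultaneously, which is routine.

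The main obstacle I anticipate is the upper semicontinuity argument: extracting, for a fixed time $s$, an approximating sequence $F_n(s_n) \to F(s)$ from $J_1$-convergence is slightly delicate because the time-changes $\lambda_n$ realizing the convergence need not fix $s$, and $F$ may be discontinuous at $s$, so one must argue via either the value or the left limit of $F_n$ near $\lambda_n^{-1}(s)$ and check that the relevant point still lands in $\sigma(X_n)$. This is a standard but careful manipulation of the Skorohod metric (cf.\ \cite[Chapter~3]{Billingsley_99_Convergence}), and once it is in place the reduction to the semicontinuity of $\sigma$ is immediate. The passage from compact to unbounded $I$ is in every case a routine application of dominated convergence and introduces no new difficulty.
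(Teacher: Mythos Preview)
Your proof is correct and follows the same route as the paper: embedding-continuity via uniform convergence of $\sigma_{f_n}$ on the compact range of $F|_{[0,t]}$, upper semicontinuity by extracting $s_n$ with $F_n(s_n)\to F(s)$ and invoking the upper semicontinuity of $\sigma$, and lower semicontinuity via step-function approximation and lifting the finitely many values. The obstacle you anticipate is not one: the time-change characterization of $J_1$-convergence gives $F_n(\lambda_n(s))\to F(s)$ directly for each $s$, so only values of $F_n$ (not left limits) are needed, and the closedness of $\sigma(X_n)$ in $\sigma(Y)$ plays no role.
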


\begin{proof}
  Since $\sigma$ is embedding-continuous by Lemma~\ref{lem: subfunctor embedding in RF},
  one can readily verify that $\SkorohodSt(I, \sigma)$ is also embedding-continuous.
  To prove the semicontinuity,
  let $X_n$, $n \in \mathbb{N}\cup\{\infty\}$, be $\bcmAB$ spaces
  that are embedded isometrically into a common $\bcmAB$ space $Y$
  in such a way that $X_n \to X_\infty$ in the Fell topology as subsets of $Y$.
  Assume that $\sigma$ is upper semicontinuous.
  Let $F_{n} \in D(I, \sigma(X_n))$ be 
  such that $F_{n} \to F$ in the usual $J_{1}$-Skorohod topology
  for some $F \in D(I, \sigma(Y))$.
  Then,
  for each $t \in I$,
  there exists $t_{n} \geq 0$
  such that $F_{n}(t_{n}) \to F(t)$ in $\sigma(Y)$.
  It follows from the upper semicontinuity of $\sigma$ that $F(t) \in \sigma(X_\infty)$.
  Therefore, we obtain that $F \in D(I, \sigma(X_\infty))$,
  which implies the upper semicontinuity of $\SkorohodSt(I, \sigma)$.

  Next, assume that $\sigma$ is lower semicontinuous.
  We note that every function in $D(I, X_\infty)$
  is approximated by a sequence of step functions, where a step function is 
  a function that can be written in the following form:
  \begin{equation}  \label{pr eq: 1. continuity of the Skorohod functor}
    F(t) 
    =
    \begin{cases}
      a_{k} & t \in [t_{k-1}, t_{k})\\
      a_{m+1} & t \in [t_{m}, t_\infty]
    \end{cases}
  \end{equation}
  for some $a_{k} \in \sigma(X_\infty)$, $k=1,2,\ldots,m+1$ 
  and $0=t_{0} < t_{1} <t_{2} < \cdots < t_{m}<t_\infty$.
  Using the lower semicontinuity of $\sigma$,
  for every step function $F$ in $D(I, \sigma(X_\infty))$,
  one can construct, for some subsequence $(n_k)_{k \geq 1}$, step functions in $D(I, \sigma(X_{n_k}))$ 
  converging to $F$.
  Therefore, $\SkorohodSt(I, \sigma)$ is lower semicontinuous.
\end{proof}

\begin{thm} \label{thm: Polishness of Skorohod structure}
  The structure $\SkorohodSt(I, \sigma)$ is stably metrizable and Polish with preserved roots.
\end{thm}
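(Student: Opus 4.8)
The plan is to show that the structure $\SkorohodSt(I,\sigma)$ inherits both stable metrizability and Polishness with preserved roots from $\sigma$, by constructing an explicit element-rooted metrization built from the complete Skorohod metric applied fiberwise. First I would assume, by Lemma~\ref{lem: subfunctor embedding in RF} and the hypotheses on $\sigma$, that $\sigma$ admits a stable element-rooted metrization $\FEMet{\sigma}$ which may be taken complete (replacing the metric on each $\sigma(X)$ by a complete one inducing the same topology does not affect the Skorohod topology, by the standard fact recalled in Section~\ref{sec: structure for cadlag curves} that completeness of $d_S$ gives completeness of $d^{J_1}_S$). Recall, however, that an element-rooted metrization of $\sigma$ gives, for each $X$, a metric $d^{\FEMet{\sigma}}_X$ on $\sigma(X) \times X$, not directly on $\sigma(X)$ itself; since $\SkorohodSt(I,\sigma)(X) = D(I, \sigma(X))$ requires a metric on $\sigma(X)$, I would first pass to the associated space-rooted metrization $\FSMet{\sigma}$ in the sense of Lemma~\ref{lem: ER metrization defines SR metrization}, giving for each rooted $\bcmAB$ space $(X,\rho_X)$ a complete metric $d^{\FSMet{\sigma}}_{X,\rho_X}$ on $\sigma(X)$.

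Then, for each $(X, \rho_X) \in \ob(\rBCMcat)$ I would equip $D(I, \sigma(X))$ with the complete Skorohod metric $d^{J_1}_{(\sigma(X), d^{\FSMet{\sigma}}_{X,\rho_X})}$ (using $d^{J_1,t_\infty}$ when $t_\infty < \infty$ and $d^{J_1}$ of \eqref{eq: Skorohod metric for unbounded domain} when $t_\infty = \infty$), and define the element-rooted metrization $\FEMet{\SkorohodSt(I,\sigma)}$ by taking, on $D(I,\sigma(X)) \times X$, the max of this Skorohod metric and $d_X$ restricted to the root components. Following Remark~\ref{rem: construction of ER metrization}, I must check: (i) this metric induces the product topology — immediate from the fact that the Skorohod metric induces the $J_1$-topology and that the construction is consistent with Lemma~\ref{lem: ER metrization defines SR metrization}; and (ii) for each isometric embedding $f \colon X \to Y$, the product map $\tau_f \times f$ is distance-preserving — this follows because $\sigma_f$ is distance-preserving, and post-composition by a distance-preserving map commutes with the $\inf$ over time-changes $\lambda$ defining the Skorohod metric. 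Completeness of $\FEMet{\SkorohodSt(I,\sigma)}$ follows from completeness of $d^{\FSMet{\sigma}}$ together with the standard completeness of the Skorohod metric. For stability, given metric spaces $X, Y, M_1, M_2$ and embeddings with $d_{M_2}(f_2(x), g_2(y)) \le d_{M_1}(f_1(x), g_1(y)) + \varepsilon$, I would apply the stability of $\FEMet{\sigma}$ pointwise in $t$: each pair of trajectory values gets distorted by at most $\Dist{\FEMet{\sigma}}(\varepsilon)$, and since the time-change component of the Skorohod metric is untouched by the spatial deformation, taking the supremum over $t$ (or integrating, in the unbounded case) yields a distortion $\Dist{\FEMet{\SkorohodSt(I,\sigma)}}(\varepsilon) = \Dist{\FEMet{\sigma}}(\varepsilon)$ for the bounded case and $1 \wedge \Dist{\FEMet{\sigma}}(\varepsilon)$ for the unbounded case, which tends to $0$ as $\varepsilon \to 0$ by \ref{dfn item: 1, stability}.

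For Polishness with preserved roots, I would combine Proposition~\ref{prop: continuity of Skorohod st} (which gives continuity of $\SkorohodSt(I,\sigma)$ whenever $\sigma$ is continuous, and $\sigma$ is continuous as a consequence of being Polish with preserved roots via Lemma~\ref{lem: subfunctor embedding in RF} applied to its root-preserving Polish system) with the construction above. The cleanest route: show directly that $\SkorohodSt(I,\sigma)$ satisfies the hypotheses of Corollary~\ref{cor: metrically Polish Type 1} when $\sigma$ admits a \emph{complete} space-rooted metrization — but since $\sigma$ is only Polish with preserved roots, its natural metrization need not be complete, so instead I would exhibit a root-preserving Polish system for $\SkorohodSt(I,\sigma)$. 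Let $(\tilde\sigma, \eta, (\FS{\tilde\sigma}_k)_{k\ge 1})$ be a root-preserving Polish system for $\sigma$; set $\tilde\tau \coloneqq \SkorohodSt(I, \tilde\sigma)$, which is continuous (by Proposition~\ref{prop: continuity of Skorohod st}, since $\tilde\sigma$ is continuous) and admits a complete space-rooted metrization by the construction above applied to the complete metrization of $\tilde\sigma$. The topological embedding $\eta \colon \sigma \Rightarrow \tilde\sigma$ induces a topological embedding $\SkorohodSt(I,\sigma) \Rightarrow \SkorohodSt(I,\tilde\sigma)$ by post-composition $F \mapsto \eta_X \circ F$ (a homeomorphism onto its image in $D(I,\tilde\sigma(X))$ since $\eta_X$ is a topological embedding and this is compatible with the $J_1$-topology). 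Finally, I would define subfunctors $\FS{\tilde\tau}_{k,m}$ of $\FS{\tilde\tau}$ requiring, roughly, that $F(t) \in \FS{\tilde\sigma}_k(X,\rho_X)$ for all $t \in I \cap [0,m]$ (and handling the atoms/jump times so that the condition is open and pullback-stable); the countable family of these captures exactly the cadlag functions taking values in $\bigcap_k \FS{\tilde\sigma}_k(X,\rho_X) = \eta_X(\sigma(X))$, i.e.\ $\graphmap$ of the image of $\SkorohodSt(I,\sigma)(X)$. Verifying that these subfunctors are genuinely \emph{open} in the $J_1$-topology is the step I expect to be the main obstacle, since openness must be checked with respect to the Skorohod metric rather than uniform convergence, and the time-change flexibility of $J_1$ interacts subtly with the pointwise membership condition — I would exploit that $\FS{\tilde\sigma}_k(X,\rho_X)$ is open in $\tilde\sigma(X)$ and that on any compact time interval a $J_1$-neighborhood of $F$ controls the values of nearby functions uniformly up to a small time-change, following the pattern of Lemma~\ref{lem: 1. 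Polishness of variable domains}. Once this is established, Theorem~\ref{thm: Polish with preserved roots} gives Polishness with preserved roots, and Theorem~\ref{thm: main result} then yields the coincidence of the two topologies and Polishness of the resulting topology.
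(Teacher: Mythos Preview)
Your approach to Polishness with preserved roots is essentially the paper's: lift a root-preserving Polish system $(\tilde\sigma,\eta,(\FS{\tilde\sigma}_k))$ for $\sigma$ to one for $\SkorohodSt(I,\sigma)$ via post-composition $F\mapsto\eta_X\circ F$, with subfunctors cutting out cadlag paths that eventually stay inside $\FS{\tilde\sigma}_k$. The paper makes the openness step precise by building in slack: $\FS{\tilde\tau}_k(X,\rho_X)$ consists of those $F$ for which there exists $\varepsilon_F\in(0,1/k)$ with $\{F(t),F(t-)\}\subseteq\FS{\tilde\sigma}_k(X,\rho_X)$ for all $t\in[0,(k-\varepsilon_F)\wedge t_\infty]$. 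The $\varepsilon_F$ and the inclusion of left limits are exactly what makes the condition $J_1$-open; your sketch anticipates this difficulty but does not yet supply the mechanism.

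There is, however, a genuine gap in your construction of the stable \emph{element-rooted} metrization. You pass from $\FEMet{\sigma}$ to the associated space-rooted metrization $\FSMet{\sigma}$ and then equip $D(I,\sigma(X))$ with the Skorohod metric built from $d^{\FSMet{\sigma}}_{X,\rho_X}$. But this metric depends on a choice of root $\rho_X$, so the resulting ``max with $d_X$'' on $D(I,\sigma(X))\times X$ is not a functor $\BCMcat\to\Metcat$: it is only defined once a root is fixed, i.e.\ it is a space-rooted metrization, not an element-rooted one. Stable metrizability, by Definition~\ref{dfn: stability}, requires a stable functor from $\BCMcat$ (unrooted) to $\Metcat$, so your construction does not establish what is claimed. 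The issue is not cosmetic: for structures like $\MeasFunct$ the metric on $\sigma(X)$ genuinely varies with the root, so the Skorohod metrics for different roots are different and no canonical max-product arises.

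The paper resolves this with a device you are missing: the embedding
\[
\iota^D_X\colon D(I,\sigma(X))\times X\;\longrightarrow\; D(I,\sigma(X)\times X),\qquad \iota^D_X(F,x)(t)\coloneqq(F(t),x),
\]
which carries the root $x$ along as a constant second coordinate. One then sets
\[
d^{\FEMet{\tau}}_X\bigl((F,x),(G,y)\bigr)\;\coloneqq\;d^{J_1}_{\sigma(X)\times X}\bigl(\iota^D_X(F,x),\iota^D_X(G,y)\bigr),
\]
where the Skorohod metric on $D(I,\sigma(X)\times X)$ is built from the element-rooted metric $d^{\FEMet{\sigma}}_X$ on $\sigma(X)\times X$ directly. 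This is root-free, and your pointwise stability heuristic then goes through cleanly: inside the Skorohod infimum, the supremum over $t$ of $d^{\FEMet{\sigma}}_{M_i}\bigl((\sigma_{f_i}F(t),f_i(x)),(\sigma_{g_i}G(\lambda(t)),g_i(y))\bigr)$ changes by at most $\Dist{\FEMet{\sigma}}(\varepsilon)$ when passing from $M_1$ to $M_2$, while the time-change term is untouched.
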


\begin{proof} 
  We first prove that $\SkorohodSt(I, \sigma)$ is stably metrizable.
  Let $\FEMet{\sigma}$ be an element-rooted metrization of $\sigma$ that is complete and stable.
  To construct an element-rooted metrization of $\tau$,
  we introduce, for each $X \in \ob(\BCMcat)$, 
  a topological embedding $\iota^D_X \colon D(I, \sigma(X)) \times X \to D(I, \sigma(X) \times X)$ defined by
  \begin{equation}
    \iota^D_X(F, x)(t) \coloneqq (F(t), x).
  \end{equation}
  We then define an element-rooted metrization $\FEMet{\tau}$ of $\tau$ by equipping,  
  for each $X \in \ob(\BCMcat)$, the space $\FE{\tau}(X) = D(I, \sigma(X)) \times X$ with the metric
  \begin{equation} \label{eq: ER metrization of Skorohod functor}
    d^{\FEMet{\tau}}_X \bigl((F, x), (G, y)\bigr)
    \coloneqq 
    d^{J_1}_{\sigma(X) \times X} \bigl( \iota^D_X(F, x), \iota^D_X(G, y) \bigr),
  \end{equation}
  where $d^{J_1}_{\sigma(X) \times X}$ is the complete Skorohod metric associated with the metric $d^{\FEMet{\sigma}}_{X}$ on $\sigma(X) \times X$.
  Then, by the definition of the complete Skorohod metric,
  we obtain the stability of $\FEMet{\tau}$.

  We next prove that $\SkorohodSt(I, \sigma)$ is Polish with preserved roots.
  Let $\mathfrak{P} = (\tilde{\sigma}, \eta, (\FS{\tilde{\sigma}}_k)_{k \geq 1})$ be a root-preserving Polish system of $\sigma$.
  Define $\tilde{\tau} \coloneqq \SkorohodSt(\tilde{\sigma})$.
  Then $\tau$ is a topological subfunctor of $\tilde{\tau}$.
  The associated topological embedding $\zeta \colon \tau \Rightarrow \tilde{\tau}$ is given as follows:
  for each $X \in \ob(\BCMcat)$, 
  we define $\zeta_X \colon \tau(X) \to \tilde{\tau}(X)$ by $\zeta_X(F) \coloneqq \eta_X \circ F$.  
  For each $k \geq 1$, we define a subfunctor $\FS{\tilde{\tau}}_k$ of $\FS{\tilde{\tau}}$ as follows.
  For each $(X, \rho_X) \in \ob(\BCMcat)$, define $\FS{\tilde{\tau}}_k(X, \rho_X)$ to be the set of $F \in \tilde{\tau}(X)$
  satisfying the following condition: there exists $\varepsilon_F \in (0, 1/k)$ such that
  \begin{equation}
    \{(F(t), x), (F(t-), x)\} \subseteq \FS{\tilde{\sigma}}_k(X, \rho_X)\ \text{for all}\ t \in [0, (k-\varepsilon_F) \wedge t_\infty].
  \end{equation}
  We will prove that $\mathfrak{Q} \coloneqq (\tilde{\tau}, \zeta, (\FS{\tilde{\tau}}_k)_{k \geq 1})$ is a Polish system of $\tau$.

  By Proposition~\ref{prop: continuity of Skorohod st} and the separability of the $J_1$-Skorohod topology,
  $\tilde{\tau}$ is continuous and separable.
  Let $\FSMet{\tilde{\sigma}}$ be a complete space-rooted metrization of $\tilde{\sigma}$.
  We define a space-rooted metrization of $\tilde{\tau}$ as follows:
  for each $(X, \rho_X) \in \ob(\rBCMcat)$, we equip $\tilde{\tau}(X)$ with the metric given by 
  \begin{equation}
    d^{\FSMet{\tilde{\tau}}}_{X, \rho_X} (F, G) 
    \coloneqq  
    d^{J_1}_{\tilde{\sigma}(X), \rho_X} (F, G),
  \end{equation}
  where $d^{J_1}_{\tilde{\sigma}(X), \rho_X}$ denotes the complete Skorohod metric 
  associated with the metric $d^{\FSMet{\tilde{\sigma}}}_{X, \rho_X}$ on $\tilde{\sigma}(X)$.
  The completeness of the complete Skorohod metric implies that $\FSMet{\tilde{\tau}}$ is complete.
  Hence, we obtain \ref{dfn item: 1. Polish functor in RF}.

  For each $(X, \rho_X) \in \ob(\rBCMcat)$ and $k \geq 1$,
  since $\FS{\tilde{\sigma}}_k(X, \rho_X)$ is open in $\tilde{\sigma}(X)$,  
  we deduce that $\FS{\tilde{\tau}}_k(X, \rho_X)$ is open in $\tilde{\tau}(X) = D(I, \tilde{\sigma}(X))$.
  Moreover, using the pullback-stability of $\FS{\tilde{\sigma}}_k$,  
  one can show that $\FS{\tilde{\tau}}_k$ is also pullback-stable.  
  Thus, \ref{dfn item: 2. Polish functor in RF} is satisfied.  
  Finally, that $\mathfrak{Q}$ satisfies \ref{dfn item: 3. Polish functor in RF} follows from the corresponding property of $\mathfrak{P}$.  
  Therefore, $\mathfrak{Q}$ is a root-preserving Polish system of $\tau$.
\end{proof}

\begin{exm} \label{exm: simple Skorohod st}
  As a simple example, consider the case where $\sigma = \PointFunc$, as defined in Section~\ref{sec: the functor for points}. 
  By Theorem~\ref{thm: Polishness of point functor} and~\ref{thm: Polishness of Skorohod structure},
  $\SkorohodSt(\RNp, \PointFunc)$ is stably metrizable and Polish with preserved roots.
  This structure, combined with the structure introduced in Section~\ref{sec: Laws of structures} below,  
  provides a suitable topological framework for studying metric spaces equipped with laws of stochastic processes.
\end{exm}

\begin{exm}
  As a more involved example, consider the case where $\sigma = \MeasFunct$, as defined in Section~\ref{sec: the functor for measures}.  
  In this case, for each $X \in \ob(\BCMcat)$, we have $\tau(X) = D(\RNp, \Meas{X})$,  
  i.e., the space of measure-valued cadlag functions.  
  By Theorems~\ref{thm: Polishness of measure functor} and~\ref{thm: Polishness of Skorohod structure},  
  $\tau$ is stably metrizable and Polish with preserved roots.  
  This structure, combined with the structure introduced in Section~\ref{sec: Laws of structures} below,   
  provides a suitable topological framework for studying metric spaces equipped with laws of measure-valued stochastic processes,  
  such as superprocesses, which are widely studied in probability theory (cf.~\cite{Li_22_Measure}).
\end{exm}

We then turn to a precompactness criterion for the space $\rootedBCM(\SkorohodSt(I, \sigma))$.
We only consider the case $I = \RNp$
because the same argument is also valid for a compact interval $I$
(see Remark~\ref{rem: precompactness for compact interval in Skorohod} below).
To this end,
we first recall a precompactness criterion for the $J_1$-Skorohod topology.
For $F \in D(\RNp, S)$, where $S$ is a metric space,
we define  
\begin{equation}
  w_{d_S}(F, h, t) 
  \coloneqq 
  \inf_{(I_{k})_{k=1}^m \in \Pi_t^h} \max_{1 \leq k \leq m} \sup_{r, s \in I_{k}} d_S(F(r), F(s)),
  \quad 
  t, h > 0,
\end{equation}
where $\Pi_t^h$ denotes the set of finite partitions of the interval $[0, t)$
into subintervals $I_{k} = [u_k, u_{k+1})$ with $u_{k+1} - u_k \geq h$ when $u_{k+1} < t$.

\begin{lem} [{\cite[Theorem A5.4]{Kallenberg_21_Foundations}}] \label{lem: precompact in Skorohod}
  Let $S$ be a Polish space.
  Fix a dense set $T \subseteq \RNp$ and an index set $\mathscr{A}$.
  A non-empty subset $\{ F_\alpha \mid \alpha \in \mathscr{A} \}$ of $D(\RNp, S)$ 
  is precompact in the $J_{1}$-Skorohod topology 
  if and only if the following conditions are satisfied.
  \begin{enumerate} [label = \textup{(\roman*)}, leftmargin = *]
    \item \label{lem item: 1. precompact in Skorohod}
      For each $t \in T$, 
      the set $\{F_\alpha(t) \mid \alpha \in \mathscr{A}\}$ is precompact in $S$.
    \item  \label{lem item: 2. precompact in Skorohod}
      It holds that 
      $\displaystyle \lim_{h \to 0} \sup_{\alpha \in \mathscr{A}} w_{d_S}(F_\alpha, h, t) = 0$
      for all $t >0$.
  \end{enumerate}
\end{lem}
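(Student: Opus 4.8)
The statement is the Arzel\`{a}--Ascoli-type precompactness criterion for the Skorohod space, which is classical; a complete proof is \cite[Theorem~A5.4]{Kallenberg_21_Foundations} (see also \cite[Chapter~3]{Billingsley_99_Convergence} and \cite{Whitt_80_Some}). If one wishes to present the argument rather than merely cite it, the plan is to split it into two parts: first reduce the problem on $\RNp$ to the compact intervals $[0,t]$, using the integral form of the metric \eqref{eq: Skorohod metric for unbounded domain}, and then prove the criterion on each $D([0,t], S)$. Throughout we may assume $d_S$ is complete, since $S$ is Polish.

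For the reduction I would argue by the same mechanism as in Section~\ref{sec: Metric for non-compact objects}: convergence $F_n \to F$ with respect to $d^{J_1}_S$ is equivalent to $F_n|_{[0,t]} \to F|_{[0,t]}$ with respect to $d^{J_1,t}_S$ for all but countably many $t > 0$ (the exceptions being the jump times of $F$, at which the restriction map $F \mapsto F|_{[0,t]}$ fails to be continuous), by dominated convergence in one direction and by unrolling the definition of $d^{J_1}_S$ with a large $t$ in the other, mirroring Theorem~\ref{thm: convergence in D times X}. Hence, for an increasing sequence $r_k \uparrow \infty$, the family $\{F_\alpha \mid \alpha \in \mathscr{A}\}$ is precompact in $D(\RNp, S)$ if and only if each $\{F_\alpha|_{[0,r_k]} \mid \alpha \in \mathscr{A}\}$ is precompact in $D([0,r_k], S)$, which is the analogue of Theorem~\ref{thm: precompactness in d_D}; since $t \mapsto w_{d_S}(F, h, t)$ is nondecreasing, this also transfers conditions \ref{lem item: 1. precompact in Skorohod} and \ref{lem item: 2. precompact in Skorohod} between the two settings.

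For $D([0,t], S)$ with $t$ fixed, the argument runs as follows. \emph{Necessity} of \ref{lem item: 1. precompact in Skorohod} is immediate once one notes that $F \mapsto F(s)$ is continuous at each continuity point $s$ of the argument and that continuous images of precompact sets are precompact, combined with the elementary fact $\lim_{h \to 0} w_{d_S}(F, h, t) = 0$ for each individual $F$ (a cadlag path has only finitely many jumps above any fixed size). \emph{Necessity} of \ref{lem item: 2. precompact in Skorohod} needs a quantitative upper semicontinuity of $w_{d_S}(\cdot, h, t)$ in the Skorohod metric: from the definition of $d^{J_1,t}_S$ via time-changes $\lambda \in \Lambda_t$ one shows that if $d^{J_1,t}_S(F, G)$ is small, then $w_{d_S}(G, h', t) \le w_{d_S}(F, h, t) + (\text{small})$ for a suitable $h' < h$; a finite cover of the compact closure of the family then produces the uniform bound. \emph{Sufficiency}: given a sequence $(F_{\alpha_n})$, for each $m$ choose $h_m$ with $\sup_n w_{d_S}(F_{\alpha_n}, h_m, t) < 1/m$, take a partition of $[0,t)$ into blocks of length about $h_m$ with endpoints in the dense set $T$, and replace $F_{\alpha_n}$ by the step function constant on each block equal to $F_{\alpha_n}$ at the left endpoint; conditions \ref{lem item: 1. precompact in Skorohod} and \ref{lem item: 2. precompact in Skorohod} force all these endpoint values into a common totally bounded subset of $S$, so the step functions vary in a precompact family parametrized by finitely many points, a diagonal extraction over $m$ yields a $d^{J_1,t}_S$-Cauchy subsequence of $(F_{\alpha_n})$, and completeness of $d^{J_1,t}_S$ gives convergence.

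The main obstacle is the necessity of condition \ref{lem item: 2. precompact in Skorohod}: upgrading the pointwise-in-$F$ statement $w_{d_S}(F, h, t) \to 0$ to a bound uniform over the precompact family. The subtlety is that $w_{d_S}$ does not behave additively under the Skorohod time-changes, so one cannot directly invoke ``continuity plus compactness''; the workable route is the quantitative comparison of $w_{d_S}(F, \cdot, t)$ and $w_{d_S}(G, \cdot, t)$ for $J_1$-close $F$ and $G$ sketched above, after which compactness of the closure finishes the argument. Since all of this is standard, in the paper itself I would simply cite \cite[Theorem~A5.4]{Kallenberg_21_Foundations}.
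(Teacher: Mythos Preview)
Your proposal is correct, and in fact goes beyond what the paper does: the paper states this lemma purely as a citation of \cite[Theorem~A5.4]{Kallenberg_21_Foundations} and provides no proof whatsoever. Your final sentence---that you would simply cite the reference---is exactly what the paper does.
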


From the above lemma,
we deduce a precompactness criterion for $\rootedBCM(\SkorohodSt(\RNp, \sigma))$ as follows.

\begin{thm} [{Precompactness}] \label{thm: precompact in Skorohod functor}
  Assume that $\sigma$ is upper semicontinuous.
  Fix a space-rooted metrization $\FSMet{\sigma}$,
  which exists by Lemma~\ref{lem: subfunctor embedding in RF}.
  Fix a dense set $T \subseteq \RNp$ and an index set $\mathscr{A}$.
  A non-empty subset $\{ \cX_\alpha = (X_\alpha, \rho_\alpha, F_\alpha) \mid \alpha \in \mathscr{A}\}$ 
  of $\rootedBCM(\SkorohodSt(\RNp, \sigma))$ is precompact in the local GH-type topology
  if and only if 
  the following conditions are satisfied.
  \begin{enumerate} [label = \textup{(\roman*)}, series = precompactness for the cadlag functor]
    \item \label{thm item: 1. precompact in Skorohod functor}
      The set $\{(X_\alpha, \rho_\alpha) \mid \alpha \in \mathscr{A}\}$ 
      is precompact in the local Gromov--Hausdorff topology. 
    \item \label{thm item: 2. precompact in Skorohod functor}
      For each $t \in T$,
      there exists a precompact subfunctor $\FS{\sigma}_t$ of $\FS{\sigma}$ 
      such that $F_\alpha(t) \in \FS{\sigma}_t(X_\alpha, \rho_\alpha)$ for all $\alpha \in \mathscr{A}$.
    \item \label{thm item: 3. precompact in Skorohod functor}
      It holds that 
      $\displaystyle \lim_{h \to 0} \sup_{\alpha \in \mathscr{A}} w_{d^{\FSMet{\sigma}}_{X_\alpha, \rho_\alpha}}(F_\alpha, h, t) = 0$
      for all $t >0$. 
  \end{enumerate}
\end{thm}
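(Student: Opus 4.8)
The plan is to reduce the precompactness statement for $\rootedBCM(\SkorohodSt(\RNp, \sigma))$ to the abstract precompactness criterion of Theorem~\ref{thm: precompact in RF}, combined with the concrete characterization of precompact sets in the $J_1$-Skorohod topology provided by Lemma~\ref{lem: precompact in Skorohod}. By Theorem~\ref{thm: precompact in RF}, since $\SkorohodSt(\RNp, \sigma)$ is embedding-continuous and (by Theorem~\ref{thm: Polishness of Skorohod structure}) space-rooted metrizable, and since we assume $\sigma$ is upper semicontinuous --- hence $\SkorohodSt(\RNp, \sigma)$ is upper semicontinuous by Proposition~\ref{prop: continuity of Skorohod st} --- the set $\{\cX_\alpha \mid \alpha \in \mathscr{A}\}$ is precompact if and only if \ref{thm item: 1. precompact in Skorohod functor} holds and there is a precompact subfunctor $\sigma'$ of $\FS{(\SkorohodSt(\RNp, \sigma))}$ containing all the $F_\alpha$. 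So the task is to translate ``there exists a precompact subfunctor of $\FS{\tau}$ containing $\{F_\alpha\}$'' into conditions \ref{thm item: 2. precompact in Skorohod functor} and \ref{thm item: 3. precompact in Skorohod functor}.

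First I would prove the ``if'' direction. Assuming \ref{thm item: 2. precompact in Skorohod functor} and \ref{thm item: 3. precompact in Skorohod functor}, I would define, for each $(X, \rho_X) \in \ob(\rBCMcat)$, the subset $\tau'(X, \rho_X) \subseteq D(\RNp, \sigma(X))$ consisting of those $F$ such that $F(t) \in \FS{\sigma}_t(X, \rho_X)$ for all $t \in T$ and such that $F$ satisfies the uniform modulus-of-continuity bound appearing in \ref{thm item: 3. precompact in Skorohod functor} (with the supremum replaced by the single function $F$, but quantified so as to be uniform over a given family). One should set this up carefully: the cleanest approach is to let $\tau'(X,\rho_X)$ be the closure in $D(\RNp, \sigma(X))$ of $\{F_\alpha \mid \alpha \in \mathscr{A},\ X_\alpha = X,\ \rho_\alpha = \rho_X\}$ together with a description via Lemma~\ref{lem: precompact in Skorohod} showing it is precompact; then verify $\tau'$ is preserved under the morphisms $\tau_f = \sigma_f \circ (\cdot)$, using that $\sigma_f$ is distance-preserving (so it preserves moduli of continuity) and that $\FS{\sigma}_t$ is a subfunctor (so $\sigma_f(\FS{\sigma}_t(X,\rho_X)) \subseteq \FS{\sigma}_t(Y,\rho_Y)$). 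By Remark~\ref{rem: define subfunctor} this yields a genuine subfunctor, and Lemma~\ref{lem: precompact in Skorohod} applied in each $\sigma(X)$ shows $\tau'(X,\rho_X)$ is precompact. Then Theorem~\ref{thm: precompact in RF} gives precompactness of $\{\cX_\alpha\}$.

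For the ``only if'' direction, I would assume $\{\cX_\alpha \mid \alpha \in \mathscr{A}\}$ is precompact. Condition \ref{thm item: 1. precompact in Skorohod functor} follows from Corollary~\ref{cor: projection continuity in RF}. For \ref{thm item: 2. precompact in Skorohod functor}, fix $t \in T$; I would use the continuous evaluation-type map $\cX_\alpha \mapsto (X_\alpha, \rho_\alpha, F_\alpha(t))$ from $\rootedBCM(\SkorohodSt(\RNp, \sigma))$ to $\rootedBCM(\sigma)$ --- or argue directly via Theorem~\ref{thm: RF convergence}: any convergent subsequence $\cX_{\alpha_n} \to \cX$ gives embeddings into a common $(M,\rho_M)$ with Skorohod convergence of the embedded cadlag functions, which forces convergence of $F_{\alpha_n}(t_n) \to F(t)$ for suitable $t_n \to t$, hence the set of values $\{F_\alpha(t)\}$ (embedded) is precompact, and by upper semicontinuity lies in the appropriate $\sigma$-subspace --- to conclude that $\{(X_\alpha, \rho_\alpha, F_\alpha(t)) \mid \alpha\}$ is precompact in $\rootedBCM(\sigma)$, which by the ``only if'' part of Theorem~\ref{thm: precompact in RF} yields the precompact subfunctor $\FS{\sigma}_t$. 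For \ref{thm item: 3. precompact in Skorohod functor}, I would argue by contradiction: if the uniform modulus bound failed at some $t > 0$, extract a sequence $\alpha_n$ realizing the failure, pass to a subsequence along which $\cX_{\alpha_n}$ converges (using precompactness), embed into a common space $(M, \rho_M)$, and derive a contradiction with the Skorohod convergence of the embedded functions via Lemma~\ref{lem: precompact in Skorohod}\ref{lem item: 2. precompact in Skorohod} --- here one uses that the complete space-rooted metrization of $\SkorohodSt(\RNp,\sigma)$ is built from the Skorohod metric on $D(\RNp, \sigma(X))$ associated with $d^{\FSMet{\sigma}}_{X,\rho_X}$, as in the proof of Theorem~\ref{thm: Polishness of Skorohod structure}, so the modulus $w_{d^{\FSMet{\sigma}}_{X_\alpha,\rho_\alpha}}$ is exactly the quantity controlled by Skorohod precompactness.

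The main obstacle I anticipate is the bookkeeping around ``uniformity over $\mathscr{A}$'' versus ``per-space'' conditions: the subfunctor $\tau'$ must be defined space-by-space, yet conditions \ref{thm item: 2. precompact in Skorohod functor}--\ref{thm item: 3. precompact in Skorohod functor} are uniform over the whole index family, and one must be careful that the embeddings $\sigma_f$ (resp. $\tau_f$) do not distort moduli of continuity --- which they do not, being distance-preserving --- and that the notion of ``$F_\alpha(t)$ lies in a precompact subfunctor'' interacts correctly with the Fell-convergence of the underlying spaces when passing along subsequences (this is where upper semicontinuity of $\sigma$ enters, exactly as in the ``only if'' half of Theorem~\ref{thm: precompact in RF}). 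A secondary but routine subtlety is handling the dense set $T$: precompactness of $\{F_\alpha(t)\}$ for $t \in T$ together with the modulus bound propagates to all $t$, just as in Lemma~\ref{lem: precompact in Skorohod}, so no additional argument is needed beyond citing that lemma inside the fiber $\sigma(X)$.

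\begin{rem} \label{rem: precompactness for compact interval in Skorohod}
  When $I = [0, t_\infty]$ is a compact interval, the same statement and proof hold verbatim, with Lemma~\ref{lem: precompact in Skorohod} replaced by the classical Arzel\`{a}--Ascoli-type precompactness criterion for the $J_1$-Skorohod topology on $D([0,t_\infty], S)$ (see \cite[Chapter~3]{Billingsley_99_Convergence}), and with condition~\ref{thm item: 3. precompact in Skorohod functor} imposed only for $t = t_\infty$.
\end{rem}
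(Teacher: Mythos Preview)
Your overall plan is correct and matches the paper's: both directions reduce to Theorem~\ref{thm: precompact in RF} combined with Lemma~\ref{lem: precompact in Skorohod}. Two points of your execution deserve comment, one in each direction.

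In the ``if'' direction, your first description of $\tau'(X,\rho_X)$ (via the pointwise and modulus conditions) is the right one; the paper makes this precise by setting $\delta(h,t) \coloneqq \sup_{\alpha} w_{d^{\FSMet{\sigma}}_{X_\alpha,\rho_\alpha}}(F_\alpha,h,t)$ and letting $\FS{\tau}_{\mathrm{sub}}(X,\rho_X)$ consist of those $F$ with $F(t)\in\FS{\sigma}_t(X,\rho_X)$ for $t\in T$ and $w_{d^{\FSMet{\sigma}}_{X,\rho_X}}(F,h,t)\le\delta(h,t)$. Your alternative ``cleanest'' proposal of taking the closure of $\{F_\alpha : X_\alpha=X,\ \rho_\alpha=\rho_X\}$ does \emph{not} define a subfunctor: for a morphism $f\colon (X,\rho_X)\to(Y,\rho_Y)$ there is no reason that $\tau_f(F_\alpha)$ lies in the closure of $\{F_\beta : X_\beta=Y\}$. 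Stick with the condition-based definition.

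In the ``only if'' direction for \ref{thm item: 2. precompact in Skorohod functor}, your ``continuous evaluation-type map'' does not exist: evaluation at a fixed $t$ is not $J_1$-continuous. Your alternative embedding argument can be made to work (once embedded, the convergent family is $J_1$-precompact, so Lemma~\ref{lem: precompact in Skorohod}\ref{lem item: 1. precompact in Skorohod} gives precompactness of the values at $t$, and upper semicontinuity of $\sigma$ places the limit in $\sigma(X_\infty)$), but the paper takes a cleaner route that avoids this detour: apply the ``only if'' half of Theorem~\ref{thm: precompact in RF} to $\tau=\SkorohodSt(\RNp,\sigma)$ itself to obtain a precompact subfunctor $\FS{\tau}_{\mathrm{sub}}$ of $\FS{\tau}$, and then simply set $\FS{\sigma}_t(X,\rho_X)\coloneqq\{F(t):F\in\FS{\tau}_{\mathrm{sub}}(X,\rho_X)\}$; precompactness of $\FS{\sigma}_t$ follows immediately from Lemma~\ref{lem: precompact in Skorohod}\ref{lem item: 1. precompact in Skorohod}. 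This bypasses any discussion of continuity of evaluation and of the map $\rootedBCM(\tau)\to\rootedBCM(\sigma)$.
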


\begin{proof}
  Assume that \ref{thm item: 1. precompact in Skorohod functor},  
  \ref{thm item: 2. precompact in Skorohod functor}, and \ref{thm item: 3. precompact in Skorohod functor} are satisfied.  
  For each $h > 0$ and $t > 0$, we set
  \begin{equation}
    \delta(h, t) 
    \coloneqq 
    \sup_{\alpha \in \mathscr{A}} w_{d^{\FSMet{\sigma}}_{X_\alpha, \rho_\alpha}}(F_\alpha, h, t).
  \end{equation}
  We then define a subfunctor $\FS{\tau}_{\mathrm{sub}}$ of $\FS{\tau}$ as follows:  
  for each $(X, \rho_X) \in \ob(\rBCMcat)$, we let $\FS{\tau}_{\mathrm{sub}}(X, \rho_X)$ be the set of all $F \in \tau(X) = D(\RNp, \sigma(X))$ such that
  \begin{itemize}
    \item $F(t) \in \sigma_t(X, \rho_X)$ for each $t \in T$,
    \item $w_{d^{\FSMet{\sigma}}_{X, \rho_X}}(F, h, t) \leq \delta(h, t)$ for all $t > 0$.
  \end{itemize}
  By Lemma~\ref{lem: precompact in Skorohod}, the functor $\FS{\tau}_{\mathrm{sub}}$ is a precompact subfunctor of $\FS{\tau}$,  
  and $\rootedBCM(\FS{\tau}_{\mathrm{sub}})$ contains $\{\cX_\alpha \mid \alpha \in \mathscr{A}\}$.  
  By Proposition~\ref{prop: continuity of Skorohod st},  
  we may apply Theorem~\ref{thm: precompact in RF} to conclude that the family $\{\cX_\alpha \mid \alpha \in \mathscr{A}\}$ is precompact.

  Conversely, assume that $\{\cX_\alpha \mid \alpha \in \mathscr{A}\}$ is precompact.
  Condition~\ref{thm item: 1. precompact in Skorohod functor} follows from Corollary~\ref{cor: projection continuity in RF}.
  Condition~\ref{thm item: 3. precompact in Skorohod functor} can be proven by contradiction, 
  similarly to the proof of Theorem~\ref{thm: precompact for the point functor}.
  To obtain \ref{thm item: 2. precompact in Skorohod functor},
  we let $\FS{\tau}_{\mathrm{sub}}$ be a precompact subfunctor of $\FS{\tau}$,
  which exists by Theorem~\ref{thm: precompact in RF}.
  For each $t \in T$,
  we define a subfunctor $\FS{\sigma}_t$ of $\FS{\sigma}$ as follows:
  for each $(X, \rho_X) \in \ob(\rBCMcat)$, we define 
  \begin{equation}
    \FS{\sigma}_t(X, \rho_X) \coloneqq \{F(t) \mid F \in \FS{\tau}_{\mathrm{sub}}(X, \rho_X)\}.
  \end{equation}
  By Lemma~\ref{lem: precompact in Skorohod}, we deduce that $\sigma_t$ is precompact.
  Hence, \ref{thm item: 2. precompact in Skorohod functor} is satisfied.
\end{proof}

\begin{rem} \label{rem: precompactness for compact interval in Skorohod}
  For a compact interval $I = [0, t_\infty]$ with a fixed $t_\infty \in (0, \infty)$,
  the only change occurs in \ref{thm item: 3. precompact in Skorohod functor}.
  The details go as follows.
  For $F \in D([0, t_\infty], S)$, where $S$ is a metric space,
  we define  
  \begin{equation}
    w_{d_S}(F, h) 
    \coloneqq 
    \inf_{(I_{k})_{k=1}^m \in \Pi^h} \max_{1 \leq k \leq m} \sup_{r, s \in I_{k}} d_S(F(r), F(s)),
    \quad 
    t, h > 0,
  \end{equation}
  where $\Pi^h$ denotes the set of partitions of the interval $[0, t_\infty]$
  into subintervals $I_{k} = [u_k, u_{k+1})$ with $u_{k+1} - u_k \geq h$ for all $k$.
  The Condition~\ref{thm item: 3. precompact in Skorohod functor} is then replaced by the following:
  \begin{enumerate} [resume* = precompactness for the cadlag functor]
    \item \label{thm item: 4. precompact in Skorohod functor}
      It holds that 
      $\displaystyle \lim_{h \to 0} \sup_{\alpha \in \mathscr{A}} w_{d^{\FSMet{\sigma}}_{X_\alpha, \rho_\alpha}}(F_\alpha, h) = 0$. 
  \end{enumerate}
  One can easily verify this by using a precompactness criterion for the Skorohod topology on $D([0,t_\infty], S)$
  (cf.\ \cite[Theorem~12.3]{Billingsley_99_Convergence}).
\end{rem}


\subsection{Continuous functions}  \label{sec: structure of continuous functions}
\newcommand{\taucT}{\tau^{C(T)}}
\newcommand{\taucCT}{\tau^{\cC(T)}}
\newcommand{\ContiFunct}{\tau_C}

\subsubsection{Continuous functions with a fixed domain} \label{sec: st for conti funct fixed}

In \cite{Gwynne_Miller_17_Scaling}, 
a Gromov--Hausdorff-type topology on a set of equivalence classes of metric spaces equipped with continuous curves 
was introduced,
where a continuous curve is used to capture the boundary of a space.
However, the focus was on length spaces for technical reasons.
In this subsubsection,
we define a class of structures which gives a natural generalization of that topology.

Fix a locally compact separable metric space $T$ and a structure $\sigma$ that is stably metrizable and Polish with preserved roots.
Define a structure $\tau = \ContiFunct(T, \sigma)$ as follows.
\begin{itemize}
  \item 
    For $X \in \ob(\BCMcat)$, 
    define $\tau(X) \coloneqq C(T, \sigma(X))$ equipped with the compact-convergence topology. 
  \item 
    For each $f \in \Hom_{\BCMcat}(X, Y)$, 
    define $\tau_f(F) \coloneqq \sigma_f \circ F$ for $F \in C(T, \sigma(X))$.
\end{itemize}

\begin{prop} \label{prop: continuity of conti fixed st}
  The structure $\ContiFunct(T, \sigma)$ is embedding-continuous.
  Moreover, if $\sigma$ is upper semicontinuous, then so is $\ContiFunct(T, \sigma)$.
\end{prop}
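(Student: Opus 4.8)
The plan is to follow the proof of Proposition~\ref{prop: continuity of Skorohod st}, using that precomposition of a continuous function with a map interacts well with uniform convergence on compact sets. I would begin by recording that, since $\sigma$ is Polish with preserved roots, Lemma~\ref{lem: subfunctor embedding in RF} shows that $\sigma$ is embedding-continuous (and space-rooted metrizable), so that Remark~\ref{rem: strong continuity of functors} may be applied to $\sigma$.

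For embedding-continuity, I would fix $\bcmAB$ spaces $X, Y$ and isometric embeddings $f_n \colon X \to Y$, $n \in \NN \cup \{\infty\}$, with $f_n \to f_\infty$ in the compact-convergence topology; Remark~\ref{rem: strong continuity of functors} then gives $\sigma_{f_n} \to \sigma_{f_\infty}$ uniformly on compact subsets of $\sigma(X)$. For a fixed $F \in \tau(X) = C(T, \sigma(X))$ and a compact $K \subseteq T$, the set $F(K)$ is compact in $\sigma(X)$, hence $\sigma_{f_n} \circ F \to \sigma_{f_\infty} \circ F$ uniformly on $K$; as $K$ ranges over the compact subsets of $T$ this yields $\tau_{f_n}(F) \to \tau_{f_\infty}(F)$ in $\tau(Y)$, which is precisely Assumption~\ref{assum: embedding continuity}.

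For upper semicontinuity, I would assume $\sigma$ is upper semicontinuous and let $X_n$, $n \in \NN \cup \{\infty\}$, be $\bcmAB$ spaces isometrically embedded into a common $\bcmAB$ space $Y$ with $X_n \to X_\infty$ in the Fell topology. Suppose $F_n \in \tau(X_n)$ converges to some $F \in \tau(Y)$. For each $t \in T$ the evaluation map $C(T, \sigma(Y)) \ni G \mapsto G(t) \in \sigma(Y)$ is continuous (uniform-on-compacts convergence forces pointwise convergence), hence $F_n(t) \to F(t)$ in $\sigma(Y)$ with $F_n(t) \in \sigma(X_n)$; applying Assumption~\ref{assum: semicontinuity}\ref{assum item: upper semicontinuity} for $\sigma$ gives $F(t) \in \sigma(X_\infty)$. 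Since $t$ was arbitrary and $\sigma(X_\infty)$ carries the subspace topology of $\sigma(Y)$, $F$ defines a continuous map $T \to \sigma(X_\infty)$, i.e. $F \in \tau(X_\infty)$, as required.

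I do not anticipate a genuine obstacle here; the only point needing care is the identification, following the convention fixed before Assumption~\ref{assum: semicontinuity}, of $\tau(X_n) = C(T, \sigma(X_n))$ with a subspace of $\tau(Y) = C(T, \sigma(Y))$ via $\tau_{\iota_n} = \sigma_{\iota_n} \circ (-)$, so that ``$F_n \to F$ in $\tau(Y)$'' and ``$F(t) \in \sigma(X_\infty)$ for all $t$'' combine to give $F \in \tau(X_\infty)$. I would also remark that, in contrast to the Skorohod case, lower semicontinuity of $\sigma$ is not expected to transfer to $\ContiFunct(T, \sigma)$, since continuous functions cannot be built from the step-function approximation used in the proof of Proposition~\ref{prop: continuity of Skorohod st}; this is why the present proposition only asserts the upper semicontinuous case.
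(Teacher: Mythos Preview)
Your proposal is correct and takes essentially the same approach as the paper, which simply says ``This is proven similarly to Proposition~\ref{prop: continuity of Skorohod st}.'' Your argument for upper semicontinuity is exactly the adaptation one expects (pointwise evaluation replaces the Skorohod-specific choice of $t_n$), and your remark on why lower semicontinuity fails matches the paper's own remark following the proposition.
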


\begin{proof}
  This is proven similarly to Proposition~\ref{prop: continuity of Skorohod st}.
\end{proof}

\begin{rem}
  Even when $\sigma$ is lower semicontinuous, $\ContiFunct(T, \sigma)$ may fail to be lower semicontinuous.
  This is because, in general, there is no natural discretization of continuous functions,  
  unlike the case of cadlag functions where step-function approximations are available as in \eqref{pr eq: 1. continuity of the Skorohod functor}.  
\end{rem}

\begin{thm} \label{thm: Polishness of conti fixed st}
  The structure $\ContiFunct(T, \sigma)$ is stably metrizable adn Polish with preserved roots.
\end{thm}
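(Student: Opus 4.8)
The plan is to follow the proof of Theorem~\ref{thm: Polishness of Skorohod structure} closely, replacing $D(I,\cdot)$ by $C(T,\cdot)$ throughout, and to spend most of the effort on the Polishness claim.

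\textbf{Stable metrizability.} Fix a complete stable element-rooted metrization $\FEMet{\sigma}$ of $\sigma$. Since $T$ is locally compact and separable it is hemicompact, so the compact-convergence topology on $C(T,S)$ is metrizable for any metric space $S$; fix a compact exhaustion $K_1 \subseteq K_2 \subseteq \cdots$ of $T$ with $T = \bigcup_n K_n$, and for a metric space $\Xi$ set $d^C_\Xi(F,G) \coloneqq \sum_{n \geq 1} 2^{-n}\bigl( 1 \wedge \sup_{t \in K_n} d_\Xi(F(t),G(t)) \bigr)$, which is complete whenever $d_\Xi$ is. Introduce the topological embedding $\iota^C_X \colon C(T,\sigma(X)) \times X \to C(T, \sigma(X) \times X)$, $\iota^C_X(F,x)(t) \coloneqq (F(t),x)$, and equip $\FE{\tau}(X) = C(T,\sigma(X)) \times X$ with the metric $d^{\FEMet{\tau}}_X\bigl((F,x),(G,y)\bigr) \coloneqq d^C_{\sigma(X) \times X}\bigl(\iota^C_X(F,x),\iota^C_X(G,y)\bigr)$, where $d^C_{\sigma(X) \times X}$ is built from $d^{\FEMet{\sigma}}_X$. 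Checking the conditions of Remark~\ref{rem: construction of ER metrization} is routine ($\iota^C_X$ is a homeomorphism onto its image, and distance-preservation of $\FE{\tau}_f$ follows from that of $\FE{\sigma}_f$); completeness is inherited from $\FEMet{\sigma}$; and applying the stability inequality for $\FEMet{\sigma}$ pointwise in $t$ and then taking $\sup_{K_n}$ and the weighted sum shows $\FEMet{\tau}$ is stable with distortion $\Dist{\FEMet{\tau}}(\varepsilon) = \sum_{n \geq 1} 2^{-n}(1 \wedge \Dist{\FEMet{\sigma}}(\varepsilon))$. Hence $\ContiFunct(T,\sigma)$ is stably metrizable.

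\textbf{Polishness with preserved roots.} Let $\mathfrak{P} = (\tilde{\sigma}, \eta, (\FS{\tilde{\sigma}}_k)_{k \geq 1})$ be a root-preserving Polish system of $\sigma$; by Remark~\ref{3. rem: decreasing Polish system} we may assume $(\FS{\tilde{\sigma}}_k(X,\rho_X))_k$ is decreasing. Unlike the cadlag case, the naive candidate ambient $\ContiFunct(T,\tilde{\sigma})$ need not be lower semicontinuous (cf.\ the Remark after Proposition~\ref{prop: continuity of conti fixed st}), so I would pass to a graph-type ambient. Replace the metric on $T$ by a proper one inducing the same topology (one exists since $T$ is locally compact, $\sigma$-compact and metrizable, and this does not change the compact-convergence topology), fix $\rho_T \in T$, and set $\tilde{\tau}(X) \coloneqq \GraphSp{T}{\tilde{\sigma}(X)}$, $\tilde{\tau}_f(E) \coloneqq (\id_T \times \tilde{\sigma}_f)(E)$. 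The compact-slice condition guarantees $(\id_T \times \tilde{\sigma}_f)(E)$ is again closed, so $\tilde{\tau}$ is a well-defined structure, and by Corollary~\ref{cor: topology on D}\ref{cor item: 3. topology on D} its topology is independent of the metric chosen on $\tilde{\sigma}(X)$. As for $\SubsetSt$ in Theorem~\ref{thm: Polishness of subset st}, $\tilde{\tau}$ is continuous: upper semicontinuity and embedding-continuity are routine, while lower semicontinuity follows because finite subsets are dense in $\GraphSp{T}{\Xi}$ and their $\tilde{\sigma}(X_\infty)$-coordinates can be moved to nearby points of $\tilde{\sigma}(X_n)$ via the lower semicontinuity of $\tilde{\sigma}$. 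Equipping $\tilde{\sigma}(X)$ with the complete metric $d^{\FSMet{\tilde{\sigma}}}_{X,\rho_X}$ and $\GraphSp{T}{\tilde{\sigma}(X)}$ with $\GraphMet{(T,\rho_T)}{\tilde{\sigma}(X)}$, Lemma~\ref{lem: RS for marked local Hausdorff} together with Corollary~\ref{cor: a summary of D} yields a complete space-rooted metrization of $\tilde{\tau}$, so \ref{dfn item: 1. Polish functor in RF} holds. The topological embedding $\zeta \colon \tau \Rightarrow \tilde{\tau}$ is $\zeta_X(F) \coloneqq \graphmap(\eta_X \circ F)$, which factors as $C(T,\sigma(X)) \xrightarrow{\eta_X \circ -} C(T,\tilde{\sigma}(X)) \hookrightarrow \hatC{T}{\tilde{\sigma}(X)} \xrightarrow{\graphmap} \GraphSp{T}{\tilde{\sigma}(X)}$, each map a topological embedding by (respectively) $\eta_X$ being one, Corollary~\ref{cor: embedding of C(X, Xi) into hatC(S, Xi)}, and Lemma~\ref{lem: embedding of hatC to cC(S, Xi)}; naturality of $\zeta$ follows from that of $\eta$.

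\textbf{The subfunctors and the main obstacle.} It remains to produce pullback-stable open subfunctors $(\FS{\tilde{\tau}}_k)_{k \geq 1}$ of $\FS{\tilde{\tau}}$ with $\zeta_X(\tau(X)) = \bigcap_k \FS{\tilde{\tau}}_k(X,\rho_X)$. Morally $\FS{\tilde{\tau}}_k(X,\rho_X)$ should consist of those $E$ for which there are $r > k$ and $\delta^{\mathrm{mod}},\delta^{\mathrm{dom}},\delta^{\mathrm{val}} \in (0,1/k)$ such that, writing $E^{(r)} \coloneqq E \cap (D_T(\rho_T,r) \times \tilde{\sigma}(X))$: (a) $(t,a),(s,b) \in E^{(r)}$ with $d_T(t,s) < \delta^{\mathrm{mod}}$ force $d^{\FSMet{\tilde{\sigma}}}_{X,\rho_X}(a,b) < \delta^{\mathrm{val}}$ (a modulus/graph condition); (b) $D_T(\rho_T,k)$ lies in the $\delta^{\mathrm{dom}}$-neighbourhood of the $T$-projection of $E^{(r)}$ (a relaxed ``full domain up to radius $k$''); and (c) $a \in \FS{\tilde{\sigma}}_k(X,\rho_X)$ for every $(t,a) \in E^{(r)}$. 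Openness would be proved as in Lemma~\ref{lem: 1. Polishness of variable domains}, keeping separate slack for the three thresholds ($\delta^{\mathrm{mod}}$ shrinks while $\delta^{\mathrm{val}}$ and $\delta^{\mathrm{dom}}$ grow under perturbation), and letting $k \to \infty$ (with the decreasing assumption on $(\FS{\tilde{\sigma}}_k)_k$ and the argument of Lemma~\ref{lem: 2. Polishness of variable domains}) shows $\bigcap_k \FS{\tilde{\tau}}_k(X,\rho_X)$ equals the set of graphs of continuous maps $T \to \tilde{\sigma}(X)$ with full domain and values in $\bigcap_k \FS{\tilde{\sigma}}_k(X,\rho_X) = \eta_X(\sigma(X))$, i.e.\ equals $\zeta_X(\tau(X))$. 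The hard part will be pullback-stability: conditions (a) and (b) are \emph{not} pullback-stable as subfunctors of $\tilde{\tau}$, since a non-graph, or a graph with small domain, stays such after any isometric embedding, so they cannot be imposed at the level of $\tilde{\tau}$. I would circumvent this by routing through the intermediate structure $\hatC{T}{\tilde{\sigma}(\cdot)}$: first, a functorial version of Lemmas~\ref{lem: 1. Polishness of variable domains} and~\ref{lem: 2. Polishness of variable domains} shows that $\hatC{T}{\tilde{\sigma}(\cdot)}$ is cut out of $\GraphSp{T}{\tilde{\sigma}(\cdot)}$ by pullback-stable open subfunctors (built from the sets $\subGraphSp{\rho_T}{k}{T}{\tilde{\sigma}(X)}$, ``saturated'' along embeddings to become pullback-stable), absorbing condition (a); second, since every element of $\hatC{T}{\tilde{\sigma}(X)}$ is already a partial continuous graph, the remaining conditions (b) and (c) \emph{are} pullback-stable open subfunctors at the level of $\hatC{T}{\tilde{\sigma}(\cdot)}$ and cut out $\tau$ inside it; finally one chains the two $G_\delta$ decompositions, using that open-in-a-$G_\delta$-subspace is $G_\delta$-in-the-ambient (and that this chaining can be arranged to preserve pullback-stability after a further saturation step), to obtain a single root-preserving Polish system of $\tau$ with ambient $\GraphSp{T}{\tilde{\sigma}(\cdot)}$. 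Making the defining subfunctors genuinely pullback-stable while still cutting out exactly $\zeta_X(\tau(X))$, in particular handling the ``full domain'' clause (which is not an open condition on its own), is the technically most delicate point of the argument. Once this is done, Theorems~\ref{thm: main result} then gives the coincidence of the two topologies and Polishness.
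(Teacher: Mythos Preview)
Your approach is essentially the paper's: the stable-metrizability half is identical (same $\iota^C_X$ and uniform-on-compacts metric), and the Polishness half starts out the same too --- the paper uses exactly your ambient $\tilde{\tau}(X) = \GraphSp{T}{\tilde{\sigma}(X)}$ with $\tilde{\tau}_f = \Image{\id_T \times \tilde{\sigma}_f}$, the same embedding $\zeta_X(F) = \graphmap(\eta_X \circ F)$, and subfunctors $\FS{\tilde{\tau}}_k$ built from precisely your three conditions (your (a), (b), (c) correspond to the paper's (iii), (i), (ii), each formulated with an $\varepsilon$-slack just as you indicate).

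The only real divergence is your claim that (a) and (b) are not pullback-stable. This is backwards. Pullback-stability of a subfunctor $\sigma' \subseteq \FS{\tilde{\tau}}$ means $\tilde{\tau}_f^{-1}\bigl(\sigma'(Y,\rho_Y)\bigr) = \sigma'(X,\rho_X)$, i.e.\ $\tilde{\tau}_f(D) \in \sigma'(Y,\rho_Y) \Rightarrow D \in \sigma'(X,\rho_X)$; the contrapositive is exactly your own observation that ``a non-graph, or a graph with small domain, stays such after any isometric embedding''. So your observation \emph{confirms} pullback-stability rather than refuting it. Concretely: under $\tilde{\tau}_f = \Image{\id_T \times \tilde{\sigma}_f}$ the $T$-projection of $D$ is unchanged, so (b) holds for $D$ iff it holds for $\tilde{\tau}_f(D)$; and since $\tilde{\sigma}_f$ is distance-preserving for the space-rooted metrics, the modulus inequality in (a) transfers in both directions as well. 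Condition (c) is pullback-stable by pullback-stability of $\FS{\tilde{\sigma}}_k$. Hence all three conditions define pullback-stable open subfunctors of $\FS{\tilde{\tau}}$ directly, and the paper disposes of this step in a line (``using the pullback-stability of $\FS{\tilde{\sigma}}_k$, one can show that $\FS{\tilde{\tau}}_k$ is also pullback-stable''). Your intermediate passage through $\hatC{T}{\tilde{\sigma}(\cdot)}$ with ``saturation'' and chaining of $G_\delta$ decompositions is therefore unnecessary; drop it and the proof is finished.
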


\begin{proof}
  Note that, if necessary, by replacing the metric $d_T$, we may assume that $(T, d_T)$ is boundedly compact
  (cf.\ \cite[Theorem~1]{Williamson_Janos_87_Construction}).
  We fix an element $o \in T$, which serves as the root of $T$.
  Given a metric space $S$, the compact-convergence topology on $C(T, S)$ is metrized by the following metric:
  \begin{equation} \label{pr eq: 1. Polishness of conti fixed st}
    d^C_{T, S}(F, G) 
    \coloneqq 
    \sum_{k=1}^\infty 2^{-k} \bigl( 1 \wedge \sup_{t \in D_T(o, k)} d_S(F(t), G(t)) \bigr).
  \end{equation}

  We first prove that $\ContiFunct(T, \sigma)$ is stably metrizable.
  Let $\FEMet{\sigma}$ be a stable element-rooted metrization of $\sigma$.
  To construct an element-rooted metrization of $\tau$,
  we introduce, for each $X \in \ob(\BCMcat)$, 
  a topological embedding $\iota^C_X \colon C(T, \sigma(X)) \times X \to C(T, \sigma(X) \times X)$ defined by
  \begin{equation}
    \iota^C_X(F, x)(t) \coloneqq (F(t), x).
  \end{equation}
  We then define an element-rooted metrization $\FEMet{\tau}$ of $\tau$ by equipping,  
  for each $X \in \ob(\BCMcat)$, the space $\FE{\tau}(X) = C(T, \sigma(X)) \times X$ with the metric
  \begin{equation} 
    d^{\FEMet{\tau}}_X \bigl((F, x), (G, y)\bigr)
    \coloneqq 
    d^C_{T, \sigma(X) \times X} \bigl( \iota^C_X(F, x), \iota^C_X(G, y) \bigr),
  \end{equation}
  where $d^C_{T, \sigma(X) \times X}$ is a metric defined in \eqref{pr eq: 1. Polishness of conti fixed st},
  with $d^{\FEMet{\sigma}}_{X}$ used as the  metric on $\sigma(X) \times X$.
  Then, by definition,
  one can readily verify that $\FEMet{\tau}$ is stable.

  We next prove that $\ContiFunct(T, \sigma)$ is Polish with preserved roots.
  Let $\mathfrak{P} = (\tilde{\sigma}, \eta, (\FS{\tilde{\sigma}}_k)_{k \geq 1})$ be a root-preserving Polish system of $\sigma$.
  Define a structure $\tilde{\tau}$ as follows.
  \begin{itemize}
    \item 
      For $X \in \ob(\BCMcat)$, 
      define $\tilde{\tau}(X) \coloneqq \GraphSp{T}{\tilde{\sigma}(X)}$
      (recall this space from Definition~\ref{dfn: marked Hausdorff space}). 
    \item 
      For each $f \in \Hom_{\BCMcat}(X, Y)$, 
      define $\tilde{\tau}_f \coloneqq \Image{\id_T \times f}$.
  \end{itemize}
  Then $\tau$ is a topological subfunctor of $\tilde{\tau}$.
  The associated topological embedding $\zeta \colon \tau \Rightarrow \tilde{\tau}$ is given as follows:
  for each $X \in \ob(\BCMcat)$, we define $\zeta_X(F) \coloneqq \graphmap (\eta_X \circ F)$ for $F \in \tau(X)$,
  where we recall the graph map $\graphmap$ from \eqref{eq: def of graphmap}.
  For each $k \geq 1$, we define a subfunctor $\FS{\tilde{\tau}}_k$ of $\FS{\tilde{\tau}}$ as follows:
  for each $(X, \rho_X) \in \ob(\rBCMcat)$, 
  we write $p^1_X \colon T \times \tilde{\sigma}(X) \to T$ and $p^2_X \colon T \times \tilde{\sigma}(X) \to \tilde{\sigma}(X)$
  for the projections;
  we then define $\FS{\tilde{\tau}}_k(X, \rho_X)$ to be the set of $D \in  \tilde{\tau}(X)$
  satisfying the following conditions;
  \begin{enumerate} [label = \textup{(\roman*)}]
    \item \label{pr item: 1. Polishness of conti fixed st}
      there exists $\varepsilon_1 = \varepsilon_1(D) \in (0, 1/k)$ 
      such that 
      \begin{equation}
        \HausMet{T}\Bigl( p^1_X \bigl( D|_o^{(k-\varepsilon, *)} \bigr), T|_o^{(k)} \Bigr) < 1/k,
      \end{equation}
    \item \label{pr item: 2. Polishness of conti fixed st}
      there exists $\varepsilon_1 = \varepsilon_1(D) \in (0, 1/k)$ 
      such that 
      \begin{equation}
        p^2_X \bigl( D|_o^{(k-\varepsilon, *)} \bigr)
        \subseteq \FS{\tilde{\sigma}}_k(X, \rho_X),
      \end{equation}
    \item \label{pr item: 3. Polishness of conti fixed st}
       $\displaystyle D \in \subGraphSp{\rho_X}{k}{T}{\tilde{\sigma}(X)}$
      (recall this space from Definition~\ref{dfn: graph subspace}).
  \end{enumerate} 
  We will prove that $\mathfrak{Q} \coloneqq (\tilde{\tau}, \zeta, (\FS{\tilde{\tau}}_k)_{k \geq 1})$ is a root-preserving Polish system of $\tau$.

  The separability of $\tilde{\tau}$ follows from Proposition~\ref{prop: graph metric}.
  Using the continuity of $\tilde{\sigma}$,
  one can prove the continuity of $\tilde{\tau}$ similarly to $\SubsetSt$ (see Theorem~\ref{thm: Polishness of subset st}),
  using the continuity of $\tilde{\sigma}$.
  Let $\FSMet{\tilde{\sigma}}$ be a complete space-rooted metrization of $\sigma$.
  We define a space-rooted metrization of $\tilde{\tau}$ as follows:  
  for each $(X, \rho_X) \in \ob(\rBCMcat)$, we equip $\tilde{\tau}(X)$ with the metric  
  \begin{equation}
    d^{\FSMet{\tilde{\tau}}}_{X, \rho_X}(F, G) 
    \coloneqq  
    \GraphMet{(T, o)}{\tilde{\sigma}(X)}(F, G),
  \end{equation}
  where $\GraphMet{(T, o)}{\tilde{\sigma}(X)}$ is the metric defined in \eqref{eq: the metric on graph sp},  
  with $d^{\FSMet{\tilde{\sigma}}}_{X, \rho_X}$ used as the metric on $\tilde{\sigma}(X)$.
  By Proposition~\ref{prop: graph metric}, the space-rooted metrization $\FSMet{\tilde{\tau}}$ is complete.
  Hence, we obtain \ref{dfn item: 1. Polish functor in RF}.

  Fix $(X, \rho_X) \in \ob(\rBCMcat)$.
  Assume that a sequence $(D_n)_{n \geq 1}$ in $\tilde{\tau}(X)$ converging to some $D$.
  By Theorem~\ref{thm: convergence in D},
  it holds that $D_n|_o^{(r, *)} \to D|_o^{(r, *)}$ in the Hausdorff topology for all but countably many $r > 0$.
  Thus, the set of $D \in \tilde{\tau}(X)$ not satisfying \ref{pr item: 1. Polishness of conti fixed st} is closed in $\tilde{\tau}(X)$.
  Moreover, using that $\FS{\tilde{\sigma}}_k(X, \rho_X)$ is open in $\tilde{\sigma}(X)$,  
  one can verify that the set of $D \in \tilde{\tau}(X)$ not satisfying \ref{pr item: 2. Polishness of conti fixed st} is also closed in $\tilde{\tau}(X)$.
  Combining these with Lemma~\ref{lem: 1. Polishness of variable domains},
  we deduce that $\FS{\tilde{\tau}}_k$ is a open subfunctor of $\FS{\tilde{\tau}}$.
  Moreover, using the pullback-stability of $\FS{\tilde{\sigma}}_k$,  
  one can show that $\FS{\tilde{\tau}}_k$ is also pullback-stable.  
  Thus, \ref{dfn item: 2. Polish functor in RF} is satisfied.
  
  To verify \ref{dfn item: 3. Polish functor in RF},
  fix $(X, \rho_X) \in \ob(\rBCMcat)$.
  Let $D \in \bigcap_{k \geq 1} \FS{\tilde{\tau}}_k(X, \rho_X)$.
  By Lemma~\ref{lem: 2. Polishness of variable domains},
  there exists $F \in \hatC{T}{\tilde{\sigma}(X)}$ such that $\graphmap(F) = D$.
  Condition~\ref{pr item: 1. Polishness of conti fixed st} implies that $\dom(F) = T$.
  Moreover, from \ref{pr item: 2. Polishness of conti fixed st} and \ref{dfn item: 3. Polish functor in RF} of the root-preserving Polish system $\mathfrak{P}$,
  we deduce that $F$ takes values in $\eta_X(\sigma(X))$.
  Thus, if we define $G \in \tau(X) = C(T, \sigma(X))$ by $G \coloneqq \eta_X^{-1} \circ F$,
  then it holds that $\zeta_X(G) = D$.
  This shows that $\zeta_X(\tau(X)) \supseteq \bigcap_{k \geq 1} \FS{\tilde{\tau}}_k(X, \rho_X)$.
  The converse inclusion is straightforward.
  Hence $\mathfrak{Q}$ satisfies \ref{dfn item: 3. Polish functor in RF},
  which completes the proof.
\end{proof}

\begin{exm}
  In the setting of \cite{Gwynne_Miller_17_Scaling},
  $T$ is the one-dimensional Euclidean metric space $\RN$.
  If one sets $T \coloneqq [0,1]/\{0,1\}$,
  then $C(T,X)$ is a set of loops in $X$.
  By taking the countably many products of copies of $\ContiFunct(T, \PointFunc)$,
  one obtains a structure for spaces equipped with countably many loops,
  which might be useful for studying random loop soups (e.g.\ \cite{Lawler_Werner_04_The_Brownian}).
\end{exm} 

Using a precompactness criterion for the compact-convergence topology (see \cite[Theorem A5.2]{Kallenberg_21_Foundations} for example),
we derive a precompactness criterion for the space $\rootedBCM(\ContiFunct(T, \sigma))$ 
in a similar way to the proof of Theorem~\ref{thm: precompact in Skorohod functor}.

\begin{thm} [Precompactness]\label{thm: precompactness for conti fixed st}
  Assume that $\sigma$ is upper semicontinuous.
  Fix a space-rooted metrization $\FSMet{\sigma}$.
  Fix a dense set $T' \subseteq T$
  and a non-empty index set $\mathscr{A}$.
  A subset $\{ \cX_\alpha = (X_\alpha, \rho_\alpha, F_\alpha) \mid \alpha \in \mathscr{A}\}$ 
  of $\rootedBCM(\ContiFunct(T, \sigma))$ is precompact if and only if 
  the following conditions are satisfied.
  \begin{enumerate} [label = \textup{(\roman*)}, leftmargin = *]
    \item \label{thm item: (continuous curves) spaces are precompact in the local GH top}
      The subset $\{(X_\alpha, \rho_\alpha) \mid \alpha \in \mathscr{A}\}$ of $\rootedBCM$
      is precompact in the local Gromov--Hausdorff topology. 
    \item \label{thm item: values of continuous functions at each time is bounded}
      For each $t \in T'$,
      there exists a precompact subfunctor $\FS{\sigma}_t$ of $\FS{\sigma}$ 
      such that $F_\alpha(t) \in \FS{\sigma}_t(X_\alpha, \rho_\alpha)$ for all $\alpha \in \mathscr{A}$.
    \item \label{thm item: uniform convergence of moduli continuity}
      It holds that 
      $\displaystyle \lim_{h \to 0} \sup_{\alpha \in \mathscr{A}} 
      \sup_{\substack{s,t \in K\\ d_T(s, t) \leq h}} 
      d^{\FSMet{\sigma}}_{X_\alpha, \rho_\alpha}\bigl( F_\alpha(s), F_\alpha(t) \bigr)
      = 0$
      for all non-empty compact subset $K \subseteq T$.
  \end{enumerate}
\end{thm}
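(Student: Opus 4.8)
The plan is to mirror the proof of Theorem~\ref{thm: precompact in Skorohod functor}, replacing the Skorohod precompactness criterion (Lemma~\ref{lem: precompact in Skorohod}) with the Arzel\`a--Ascoli-type criterion for the compact-convergence topology, namely \cite[Theorem~A5.2]{Kallenberg_21_Foundations}: a family $\{F_\alpha\}$ is precompact in $C(T,S)$ if and only if $\{F_\alpha(t)\}$ is precompact in $S$ for each $t$ in a dense set $T'\subseteq T$ and $\{F_\alpha\}$ is equicontinuous on every compact subset of $T$. First I would record the structural facts that make Theorem~\ref{thm: precompact in RF} applicable. Since $\sigma$ is stably metrizable and Polish with preserved roots, Lemma~\ref{lem: subfunctor embedding in RF} shows $\sigma$ is embedding-continuous and space-rooted metrizable, so that the metric $d^{\FSMet{\sigma}}_{X,\rho_X}$ appearing in the statement is meaningful; moreover, by Proposition~\ref{prop: continuity of conti fixed st}, $\tau \coloneqq \ContiFunct(T,\sigma)$ is embedding-continuous and, since $\sigma$ is assumed upper semicontinuous, also upper semicontinuous. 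Hence Theorem~\ref{thm: precompact in RF} applies in both directions, and it suffices to translate conditions (i)--(iii) into the two clauses of that theorem: that $\{(X_\alpha,\rho_\alpha)\}$ is precompact in the local Gromov--Hausdorff topology, and that $\{\cX_\alpha\}$ is contained in $\rootedBCM(\FS{\tau}_{\mathrm{sub}})$ for some precompact subfunctor $\FS{\tau}_{\mathrm{sub}}$ of $\FS{\tau}$.

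For the ``if'' direction, assuming (i)--(iii), I would set $\omega(h,K) \coloneqq \sup_{\alpha}\sup_{s,t\in K,\, d_T(s,t)\le h} d^{\FSMet{\sigma}}_{X_\alpha,\rho_\alpha}(F_\alpha(s),F_\alpha(t))$, which by (iii) tends to $0$ as $h\to 0$ for every compact $K\subseteq T$, and then, following Remark~\ref{rem: define subfunctor}, define a subfunctor $\FS{\tau}_{\mathrm{sub}}$ of $\FS{\tau}$ by declaring, for each $(X,\rho_X)\in\ob(\rBCMcat)$, that $\FS{\tau}_{\mathrm{sub}}(X,\rho_X)$ consists of those $G\in C(T,\sigma(X))$ with $G(t)\in\FS{\sigma}_t(X,\rho_X)$ for all $t\in T'$ and $\sup_{s,t\in K,\, d_T(s,t)\le h} d^{\FSMet{\sigma}}_{X,\rho_X}(G(s),G(t))\le\omega(h,K)$ for all $h>0$ and all compact $K\subseteq T$. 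This family is preserved by the morphisms of $\FS{\tau}$ because each $\sigma_f$ is distance-preserving for the space-rooted metrizations and each $\FS{\sigma}_t$ is a subfunctor. The Arzel\`a--Ascoli criterion then shows that each $\FS{\tau}_{\mathrm{sub}}(X,\rho_X)$ is precompact in $C(T,\sigma(X))$ (pointwise precompactness on $T'$ from precompactness of the $\FS{\sigma}_t(X,\rho_X)$, equicontinuity on compacts from the bound by $\omega(h,K)$), so $\FS{\tau}_{\mathrm{sub}}$ is a precompact subfunctor with $\{\cX_\alpha\}\subseteq\rootedBCM(\FS{\tau}_{\mathrm{sub}})$; since (i) is precisely the remaining clause, Theorem~\ref{thm: precompact in RF} (the implication using upper semicontinuity) gives precompactness.

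For the ``only if'' direction, assuming $\{\cX_\alpha\}$ precompact, Theorem~\ref{thm: precompact in RF} (the always-valid implication) gives directly that $\{(X_\alpha,\rho_\alpha)\}$ is precompact in the local Gromov--Hausdorff topology, which is (i), together with a precompact subfunctor $\FS{\tau}_{\mathrm{sub}}$ of $\FS{\tau}$ with $\{\cX_\alpha\}\subseteq\rootedBCM(\FS{\tau}_{\mathrm{sub}})$. For $t\in T'$ I would set $\FS{\sigma}_t(X,\rho_X)\coloneqq\{G(t)\mid G\in\FS{\tau}_{\mathrm{sub}}(X,\rho_X)\}$, which is a subfunctor of $\FS{\sigma}$ (morphism-preservation from $(\sigma_f\circ G)(t)=\sigma_f(G(t))$) and is precompact since it is the image of the precompact set $\FS{\tau}_{\mathrm{sub}}(X,\rho_X)$ under the continuous evaluation $G\mapsto G(t)$; as $F_\alpha\in\FS{\tau}_{\mathrm{sub}}(X_\alpha,\rho_\alpha)$, this yields (ii). To get (iii), I would argue by contradiction: if (iii) fails there are a compact $K\subseteq T$, $\varepsilon>0$, $h_n\downarrow0$, $\alpha_n\in\mathscr{A}$, and $s_n,t_n\in K$ with $d_T(s_n,t_n)\le h_n$ and $d^{\FSMet{\sigma}}_{X_{\alpha_n},\rho_{\alpha_n}}(F_{\alpha_n}(s_n),F_{\alpha_n}(t_n))>\varepsilon$; passing to a subsequence, $\cX_{\alpha_n}\to\cX=(X,\rho_X,F)$ in the local GH-type topology, so by Theorem~\ref{thm: RF convergence} there are a rooted $\bcmAB$ space $(M,\rho_M)$ and root-preserving isometric embeddings $f_n\colon X_{\alpha_n}\to M$, $f_\infty\colon X\to M$ with $\tau_{f_n}(F_{\alpha_n})\to\tau_{f_\infty}(F)$ in $C(T,\sigma(M))$, i.e.\ uniformly on compacts, and, passing further, $s_n\to s$, $t_n\to t$ in $K$ with $s=t$ since $d_T(s_n,t_n)\to0$. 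Uniform convergence on $K$ plus continuity of $\tau_{f_\infty}(F)$ forces $\sigma_{f_n}(F_{\alpha_n}(s_n))$ and $\sigma_{f_n}(F_{\alpha_n}(t_n))$ to converge to the common value $\tau_{f_\infty}(F)(s)$, so $d^{\FSMet{\sigma}}_{M,\rho_M}(\sigma_{f_n}(F_{\alpha_n}(s_n)),\sigma_{f_n}(F_{\alpha_n}(t_n)))\to0$; since each $f_n$ is root-preserving, $\sigma_{f_n}$ is distance-preserving between $d^{\FSMet{\sigma}}_{X_{\alpha_n},\rho_{\alpha_n}}$ and $d^{\FSMet{\sigma}}_{M,\rho_M}$, contradicting the lower bound $\varepsilon$.

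The main obstacle is this last step establishing (iii): one must carefully combine the uniform-on-compacts convergence of $\tau_{f_n}(F_{\alpha_n})$ with the convergence $s_n,t_n\to s=t$ to see that the two evaluations share a limit in $\sigma(M)$, and then transport the resulting estimate back to $\sigma(X_{\alpha_n})$ using that the embeddings supplied by Theorem~\ref{thm: RF convergence} are root-preserving, so that $\sigma_{f_n}$ is distance-preserving for the space-rooted metrizations. Everything else is a routine adaptation of the proof of Theorem~\ref{thm: precompact in Skorohod functor}, and in particular the verification that $\FS{\tau}_{\mathrm{sub}}$ and the $\FS{\sigma}_t$ are genuine (pullback of morphisms) subfunctors is straightforward.
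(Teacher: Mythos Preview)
Your proposal is correct and follows essentially the same approach as the paper, which simply states that the result is derived ``in a similar way to the proof of Theorem~\ref{thm: precompact in Skorohod functor}'' using the Arzel\`a--Ascoli criterion \cite[Theorem~A5.2]{Kallenberg_21_Foundations} in place of the Skorohod criterion. Your detailed contradiction argument for condition~(iii), embedding into a common space via Theorem~\ref{thm: RF convergence} and exploiting the distance-preserving property of $\sigma_{f_n}$ for root-preserving $f_n$, is exactly what the paper's reference to ``similarly to the proof of Theorem~\ref{thm: precompact for the point functor}'' amounts to.
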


It is known that
the restriction of the usual $J_{1}$-Skorohod topology to the set of continuous functions 
is the compact-convergence topology
(cf.\ \cite[Chapter VI. Proposition 1.17]{Jacod_Shiryaev_03_Limit}). 
The following result is a generalization of this fact.

\begin{prop}
  The structure $\ContiFunct(\RNp, \PointFunc)$ is topologically embedded into $\SkorohodSt(\RNp, \PointFunc)$,
  which is defined in Section~\ref{sec: structure for cadlag curves}.
  As a consequence, the following map is a topological embedding:
  \begin{equation}
    \rootedBCM(\ContiFunct(\RNp, \PointFunc)) \ni 
    (X, \rho_X, \xi_{X}) \mapsto (X, \rho_X, \xi_{X}) 
    \in \rootedBCM(\SkorohodSt(\RNp, \PointFunc)).
  \end{equation}
\end{prop}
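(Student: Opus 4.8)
The plan is to exhibit a natural transformation $\eta \colon \ContiFunct(\RNp, \PointFunc) \Rightarrow \SkorohodSt(\RNp, \PointFunc)$ which is a topological embedding, and then invoke Lemma~\ref{lem: subfunctor embedding in RF} to obtain the claimed topological embedding of $\rootedBCM$-spaces. First I would define, for each $X \in \ob(\BCMcat)$, the map $\eta_X \colon C(\RNp, X) \to D(\RNp, X)$ to be the inclusion map, noting that every continuous function is in particular cadlag. Checking that $\eta = \{\eta_X\}$ is a natural transformation amounts to verifying \ref{dfn item: 1. NT} and \ref{dfn item: 2. NT}: the former requires that $\eta_X$ be a topological embedding of $C(\RNp, X)$ (with the compact-convergence topology) into $D(\RNp, X)$ (with the $J_1$-Skorohod topology), and the latter is immediate since for an isometric embedding $f \colon X \to Y$ both $\ContiFunct(\RNp, \PointFunc)_f$ and $\SkorohodSt(\RNp, \PointFunc)_f$ act by post-composition $F \mapsto f \circ F$, so $\eta_Y \circ (f \circ \cdot) = (f \circ \cdot) \circ \eta_X$ trivially.

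The substantive point, and the main obstacle, is \ref{dfn item: 1. NT}: that $\eta_X$ is a topological embedding. For this I would cite the classical fact that on the subset $C(\RNp, X) \subseteq D(\RNp, X)$ the $J_1$-Skorohod topology coincides with the compact-convergence (i.e.\ locally uniform) topology (cf.\ \cite[Chapter VI, Proposition~1.17]{Jacod_Shiryaev_03_Limit}, as already referenced in the excerpt just above the proposition). Concretely, one uses that for $F, G \in C([0,t], X)$ one has $\sup_{s \le t} d_X(F(s), G(s))$ comparable, in the relevant topological sense, to $d^{J_1, t}_X(F, G)$ — more precisely, $d^{J_1,t}_X(F,G) \le \sup_{s\le t} d_X(F(s),G(s))$ always (take $\lambda = \id$), and conversely if $G_n \to F$ in $d^{J_1,t}_X$ with $F$ continuous, then the time-changes $\lambda_n$ can be taken close to the identity and, by uniform continuity of $F$ on the compact interval $[0,t]$, $\sup_{s \le t} d_X(F(s), G_n(s)) \to 0$. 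Passing from compact intervals $[0,t]$ to $\RNp$ via the definitions \eqref{eq: Skorohod metric for unbounded domain} and \eqref{pr eq: 1. Polishness of conti fixed st} of the respective metrics, and noting that $\eta_X(C(\RNp,X))$ consists exactly of those cadlag $F$ whose restrictions to each $[0,t]$ are continuous, one concludes that $\eta_X$ is a homeomorphism onto its image.

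With $\eta$ in hand, $\ContiFunct(\RNp, \PointFunc)$ is a topological subfunctor of $\SkorohodSt(\RNp, \PointFunc)$ in the sense of Definition~\ref{dfn: topological subfunctor}. By Theorem~\ref{thm: Polishness of point functor}, $\PointFunc$ is stably metrizable and Polish with preserved roots, so by Theorem~\ref{thm: Polishness of Skorohod structure} the structure $\SkorohodSt(\RNp, \PointFunc)$ is space-rooted metrizable and embedding-continuous. Applying Lemma~\ref{lem: subfunctor embedding in RF} with $\tau = \ContiFunct(\RNp, \PointFunc)$ and $\tilde{\tau} = \SkorohodSt(\RNp, \PointFunc)$ then yields directly that $\ContiFunct(\RNp, \PointFunc)$ is space-rooted metrizable and embedding-continuous, and that the map
\begin{equation}
  \rootedBCM(\ContiFunct(\RNp, \PointFunc)) \ni (X, \rho_X, \xi_X) \mapsto (X, \rho_X, \eta_X(\xi_X)) = (X, \rho_X, \xi_X) \in \rootedBCM(\SkorohodSt(\RNp, \PointFunc))
\end{equation}
is a topological embedding with respect to the local GH-type topologies with preserved roots. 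Since both structures are stably metrizable and Polish with preserved roots (by Theorems~\ref{thm: Polishness of point functor}, \ref{thm: Polishness of Skorohod structure}, \ref{thm: Polishness of conti fixed st}), Theorem~\ref{thm: main result} identifies these with the (unqualified) local GH-type topologies, completing the proof. I would also remark that $\eta_X(\xi_X) = \xi_X$ under the identification, which is why the displayed map in the statement is literally the identity on underlying data.
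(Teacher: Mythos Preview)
Your proposal is correct and follows essentially the same approach as the paper: define $\eta_X$ to be the inclusion $C(\RNp,X)\hookrightarrow D(\RNp,X)$, cite \cite[Chapter~VI, Proposition~1.17]{Jacod_Shiryaev_03_Limit} for the fact that this is a topological embedding, note that naturality is trivial since both functors act by post-composition, and then invoke Lemma~\ref{lem: subfunctor embedding in RF}. The paper's proof is terser (it omits the explicit verification of \ref{dfn item: 2. NT} and of the hypotheses on $\SkorohodSt(\RNp,\PointFunc)$ needed for Lemma~\ref{lem: subfunctor embedding in RF}), but your added details and the final identification of the preserved-root topology with the unqualified local GH-type topology via Theorem~\ref{thm: main result} are all correct and appropriate.
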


\begin{proof}
  For each $X \in \ob(\BCMcat)$,
  the inclusion map from $C(\RNp, X)$
  to $D(\RNp, X)$ is a topological embedding
  (see \cite[Chapter VI. Proposition 1.17]{Jacod_Shiryaev_03_Limit}).
  Therefore, we deduce that $\ContiFunct(\RNp, \PointFunc)$ is a topological subfunctor of $\SkorohodSt(\RNp, \PointFunc)$.
  The last assertion immediately follows from Proposition \ref{lem: subfunctor embedding in RF}.
\end{proof}


\subsubsection{Continuous functions with space-dependent domains}  \label{sec: functor for space-domain continuous maps}
\newcommand{\taukXi}{\tau^{k, \Xi}} 
\newcommand{\overXk}{\overline{X^{k}}}
\newcommand{\overYk}{\overline{Y^{k}}}
\newcommand{\ContiVFunc}{\tau_{\widehat{C}}}
\newcommand{\ConticVFunc}{\tau_{\widehat{C}_c}}

In \cite{Croydon_Hambly_Kumagai_12_Convergence},
a Gromov--Hausdorff-type topology on a set of equivalence classes of compact metric spaces $X$ equipped with heat-kernel-type functions  
was introduced,
where a heat-kernel-type function $f$ means a continuous function $f \colon  \RNpp \times X \times X \to \RN$.
In \cite{Angel_Croydon_Hernandez-Torres_Shiraishi_21_Scaling,Barlow_Croydon_Kumagai_17_Subsequential},
a Gromov--Hausdorff-type topology on a set of equivalence classes 
of real trees $X$ equipped with embedding maps,
where an embedding map means a continuous map from $X$ to some fixed metric space.
In these Gromov-Hausdorff-type topologies,
unlike the previous subsubsection,
the domain of continuous functions depends on the underlying space $X$.
In this subsubsection, 
we define a class of structures for such continuous functions,
which includes the above-mentioned examples. 
Moreover, 
combined with structures introduced in Section~\ref{sec: Laws of structures} below, 
it provides a suitable topological setting for studying local times of stochastic processes living on different spaces,
which is in used in \cite{Noda_pre_Convergence}.

Fix a stable space transformation $\Psi$ and a structure $\sigma$ that is stably metrizable and Polish with preserved roots.
Define a structure $\tau = \ContiVFunc(\Psi, \sigma)$ as follows.
\begin{itemize}
  \item 
    For $X \in \ob(\BCMcat)$, 
    define $\tau(X) \coloneqq \hatC{\Psi(X)}{\sigma(X)}$. 
  \item 
    For each $f \in \Hom_{\BCMcat}(X, Y)$, 
    define $\tau_f(F) \coloneqq \sigma_f \circ F \circ \Psi_f^{-1}$ with $\dom(\tau_f(F)) \coloneqq \Psi_f(\dom(F))$ 
    for $F \in \tau(X)$.
\end{itemize}

\begin{prop} \label{prop: continuity of conti variable st}
  The structure $\ContiVFunc(\Psi, \sigma)$ is embedding-continuous.
  Moreover, if $\sigma$ is upper semicontinuous, then so is $\ContiVFunc(\Psi, \sigma)$.
\end{prop}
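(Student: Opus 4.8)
The plan is to follow the pattern already established for the cadlag structure in Proposition~\ref{prop: continuity of Skorohod st}, since $\ContiVFunc(\Psi, \sigma)$ is built in the same spirit: it composes a ``variable-domain continuous functions'' construction with a space transformation $\Psi$ and an inner structure $\sigma$. First I would unwind the definition of $\tau_f$ for $f \in \Hom_{\BCMcat}(X,Y)$, namely $\tau_f(F) = \sigma_f \circ F \circ \Psi_f^{-1}$ with domain $\Psi_f(\dom(F))$, and observe that this is exactly the map $\Image{\Psi_f \times \sigma_f}$ acting on graphs, after identifying $F \in \hatC{\Psi(X)}{\sigma(X)}$ with $\graphmap(F) \in \GraphSp{\Psi(X)}{\sigma(X)}$ via Lemma~\ref{lem: embedding of hatC to cC(S, Xi)}. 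This reduces all continuity questions to the behaviour of graphs under the product map $\Psi_f \times \sigma_f$, and lets me invoke Theorem~\ref{thm: convergence in variable domains}, which characterizes convergence in $\hatC{\cdot}{\cdot}$ in terms of Fell convergence of domains plus pointwise convergence along converging points.

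For embedding-continuity, I would fix $\bcmAB$ spaces $X, Y$ and isometric embeddings $f_n \to f_\infty$ in the compact-convergence topology, and fix $F \in \tau(X) = \hatC{\Psi(X)}{\sigma(X)}$. Since $\Psi$ is a space transformation, hence continuous, Remark~\ref{rem: strong continuity of functors} gives $\Psi_{f_n} \to \Psi_{f_\infty}$ uniformly on compacts; similarly the embedding-continuity of $\sigma$ (available through Lemma~\ref{lem: subfunctor embedding in RF}, since $\sigma$ is Polish with preserved roots) gives $\sigma_{f_n} \to \sigma_{f_\infty}$ pointwise, and in fact uniformly on compacts by the same Arzel\`a--Ascoli argument. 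From this I would verify the conditions of Theorem~\ref{thm: convergence in variable domains}\ref{thm item: 5. convergence in variable domains}: the domains $\Psi_{f_n}(\dom(F))$ converge to $\Psi_{f_\infty}(\dom(F))$ in the Fell topology (using that each $\Psi_{f_n}$ is an isometric embedding and $\Psi(X_n)$ is closed in $\Psi(Y)$, as noted in the proof of Proposition~\ref{prop: continuity of composition}), and if $\Psi_{f_n}(x_n) \to \Psi_{f_\infty}(x)$ with $x_n, x \in \dom(F)$ then $x_n \to x$ in $\Psi(X)$, whence $\sigma_{f_n}(F(x_n)) \to \sigma_{f_\infty}(F(x))$ by the joint continuity afforded by uniform-on-compacts convergence of $\sigma_{f_n}$ together with continuity of $F$. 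This yields $\tau_{f_n}(F) \to \tau_{f_\infty}(F)$ in $\hatC{\Psi(Y)}{\sigma(Y)}$.

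For upper semicontinuity, I would assume $\sigma$ is upper semicontinuous, let $X_n \to X_\infty$ in the Fell topology inside a common $\bcmAB$ space $Y$, and note (again as in Proposition~\ref{prop: continuity of composition}) that $\Psi(X_n) \to \Psi(X_\infty)$ in the Fell topology inside $\Psi(Y)$. Given $F_n \in \tau(X_n) = \hatC{\Psi(X_n)}{\sigma(X_n)}$ converging to some $F \in \hatC{\Psi(Y)}{\sigma(Y)}$, I must show $F \in \hatC{\Psi(X_\infty)}{\sigma(X_\infty)}$, i.e.\ that $\dom(F) \subseteq \Psi(X_\infty)$ and $F$ takes values in $\sigma(X_\infty)$. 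By Theorem~\ref{thm: convergence in variable domains}, $\dom(F_n) \to \dom(F)$ in the Fell topology and for each $x \in \dom(F)$ there exist $x_n \in \dom(F_n) \subseteq \Psi(X_n)$ with $x_n \to x$; since $\Psi(X_n) \to \Psi(X_\infty)$ in the Fell topology, Theorem~\ref{thm: convergence in the Fell topology} (condition~\ref{lem item: 1. convergence in Hausdorff}) forces $x \in \Psi(X_\infty)$, so $\dom(F) \subseteq \Psi(X_\infty)$. Likewise $F_n(x_n) \to F(x)$ in $\sigma(Y)$ with $F_n(x_n) \in \sigma(X_n)$, so the upper semicontinuity of $\sigma$ gives $F(x) \in \sigma(X_\infty)$. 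Hence $F \in \tau(X_\infty)$, establishing upper semicontinuity.

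The main obstacle I anticipate is the bookkeeping in the embedding-continuity step: one needs the joint continuity of $(g, a) \mapsto g(a)$ when $g_n \to g$ uniformly on compacts and $a_n \to a$, which requires knowing that $\{F(x) : x \in \dom(F) \text{ near } x_\infty\}$ stays in a compact set and that the $\sigma_{f_n}$ are equicontinuous there — but this is automatic since each $\sigma_{f_n}$ is distance-preserving (being a morphism in $\Metcat$ after metrization) and $F$ is continuous on the compact pieces of its domain. Care is also needed that the space transformation $\Psi_f^{-1}$ appearing in the definition of $\tau_f$ is only defined on $\Psi_f(\dom F)$, but this causes no trouble once everything is phrased via graphs and the image map $\Image{\Psi_f \times \sigma_f}$. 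Beyond that, the argument is a routine adaptation of Propositions~\ref{prop: continuity of Skorohod st} and~\ref{prop: continuity of composition}, so I would state it concisely and refer back to those proofs for the repeated parts.
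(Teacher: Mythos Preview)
Your proposal is correct and follows essentially the same approach that the paper intends: the paper's own proof is the single line ``This is proven similarly to Proposition~\ref{prop: continuity of conti fixed st}'' (which in turn defers to Proposition~\ref{prop: continuity of Skorohod st}), and your write-up is precisely the natural adaptation of that argument to the variable-domain setting, using Theorem~\ref{thm: convergence in variable domains}\ref{thm item: 5. convergence in variable domains} in place of the $J_1$-Skorohod characterization and invoking the Fell convergence $\Psi(X_n)\to\Psi(X_\infty)$ from the proof of Proposition~\ref{prop: continuity of composition}. The bookkeeping you flag (recovering $x_n\to x$ from $\Psi_{f_n}(x_n)\to\Psi_{f_\infty}(x)$ via boundedness and injectivity of the isometric embeddings, and the joint continuity of $(g,a)\mapsto g(a)$ via equicontinuity of the distance-preserving $\sigma_{f_n}$) is handled correctly.
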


\begin{proof}
  This is proven similarly to Proposition~\ref{prop: continuity of conti fixed st}.
\end{proof}

\begin{thm} \label{thm: Polishness of conti variable st}
  The structure $\ContiVFunc(\Psi, \sigma)$ is stably metrizable and Polish with preserved roots.
\end{thm}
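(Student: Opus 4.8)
The plan is to follow the same two-step pattern as in the proof of Theorem~\ref{thm: Polishness of conti fixed st}, replacing the fixed-domain spaces $C(T,\cdot)$ and $\GraphSp{T}{\cdot}$ by the variable-domain spaces $\hatC{\Psi(X)}{\cdot}$ and $\GraphSp{\Psi(X)}{\cdot}$ developed in Section~\ref{sec: variable domains}, and using the rooting system $\RootSystem$ of $\Psi$ to supply the required roots in $\Psi(X)$. First I would fix a stable element-rooted metrization $\FEMet{\sigma}$ of $\sigma$ and a root-preserving Polish system $\mathfrak{P}=(\tilde{\sigma},\eta,(\FS{\tilde{\sigma}}_k)_{k\geq 1})$ of $\sigma$, both of which exist by hypothesis; by Remark~\ref{3. rem: decreasing Polish system} I may assume $(\FS{\tilde{\sigma}}_k(X,\rho_X))_{k\geq 1}$ is decreasing.

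For stable metrizability, I would construct an element-rooted metrization $\FEMet{\tau}$ of $\tau=\ContiVFunc(\Psi,\sigma)$ as follows. For $(F,x)\in\tau(X)\times X=\hatC{\Psi(X)}{\sigma(X)}\times X$, let $\widehat{F}_x\in\hatC{\Psi(X)}{\sigma(X)\times X}$ be given by $\widehat{F}_x(\psi)\coloneqq(F(\psi),x)$ with $\dom(\widehat{F}_x)\coloneqq\dom(F)$, and set
\begin{equation*}
  d^{\FEMet{\tau}}_X\bigl((F,x),(G,y)\bigr)
  \coloneqq
  \hatCMet{\Psi(X)}{\sigma(X)\times X}\bigl((\widehat{F}_x,\RootSystem_X(x)),(\widehat{G}_y,\RootSystem_X(y))\bigr)
  \vee d_X(x,y),
\end{equation*}
where $\hatCMet{\Psi(X)}{\sigma(X)\times X}$ is the product metric of Proposition~\ref{prop: productt hatC metric} built from the metric $d^{\FEMet{\sigma}}_X$ on $\sigma(X)\times X$. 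Using Proposition~\ref{prop: productt hatC metric} and property~\ref{dfn item: 1. rooted} of $\RootSystem$, this induces the product topology on $\FE{\tau}(X)$. That $\FE{\tau}_f=\tau_f\times f$ is distance-preserving for $f\in\Hom_{\BCMcat}(X,Y)$ follows from the identity $\widehat{(\tau_f F)}_{f(x)}=(\sigma_f\times f)\circ\widehat{F}_x\circ\Psi_f^{-1}$, the fact that $\sigma_f\times f$ is distance-preserving (since $\FEMet{\sigma}$ is an element-rooted metrization), that $\Psi_f$ is an isometric embedding with $\Psi_f\circ\RootSystem_X=\RootSystem_Y\circ f$ by~\ref{dfn item: 2. rooted}, and the compatibility of the graph map with restrictions (Lemma~\ref{lem: embedding of hatC to cC(S, Xi)}). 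Finally, since $\hatCMet{\cdot}{\cdot}$ is an integral over radii of truncated Hausdorff distances between graphs, stability of $\FEMet{\tau}$ is inherited from the stability of $\Psi$ and of $\FEMet{\sigma}$ by the argument of Proposition~\ref{prop: local Hausdorff is stable} and Lemma~\ref{lem: Hausdorff is stable}, with distortion $\Dist{\FEMet{\tau}}=1\wedge(\Dist{\FEMet{\sigma}}\circ\Dist{\Psi})$.

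For Polishness with preserved roots, I would introduce the auxiliary structure $\tilde{\tau}$ with $\tilde{\tau}(X)\coloneqq\GraphSp{\Psi(X)}{\tilde{\sigma}(X)}$ and $\tilde{\tau}_f\coloneqq\Image{\Psi_f\times\tilde{\sigma}_f}$, and the natural transformation $\zeta\colon\tau\Rightarrow\tilde{\tau}$ with $\zeta_X(F)\coloneqq\graphmap(\eta_X\circ F)$; that $\zeta_X$ is a topological embedding follows from Lemma~\ref{lem: embedding of hatC to cC(S, Xi)} together with the fact, via Theorem~\ref{thm: convergence in variable domains}, that post-composition with the topological embedding $\eta_X$ induces a topological embedding of $\hatC{\Psi(X)}{\sigma(X)}$ into $\hatC{\Psi(X)}{\tilde{\sigma}(X)}$. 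For each $k\geq 1$ and $(X,\rho_X)\in\ob(\rBCMcat)$, I would let $\FS{\tilde{\tau}}_k(X,\rho_X)$ be the set of $D\in\GraphSp{\Psi(X)}{\tilde{\sigma}(X)}$ lying in $\subGraphSp{\RootSystem_X(\rho_X)}{k}{\Psi(X)}{\tilde{\sigma}(X)}$ (Definition~\ref{dfn: graph subspace}) and such that, in addition, there exists $\varepsilon\in(0,1/k)$ with the projection onto $\tilde{\sigma}(X)$ of $D|_{\RootSystem_X(\rho_X)}^{(k-\varepsilon,*)}$ contained in $\FS{\tilde{\sigma}}_k(X,\rho_X)$. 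It then remains to verify that $\mathfrak{Q}\coloneqq(\tilde{\tau},\zeta,(\FS{\tilde{\tau}}_k)_{k\geq 1})$ is a root-preserving Polish system of $\tau$: condition~\ref{dfn item: 1. Polish functor in RF} follows from the continuity of $\tilde{\sigma}$ and $\Psi$ as in the proof of Theorem~\ref{thm: Polishness of subset st}, together with Proposition~\ref{prop: graph metric} applied to $\GraphMet{(\Psi(X),\RootSystem_X(\rho_X))}{\tilde{\sigma}(X)}$ built from a complete space-rooted metrization of $\tilde{\sigma}$; condition~\ref{dfn item: 2. Polish functor in RF} follows from Lemma~\ref{lem: 1. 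Polishness of variable domains} for the graph-subspace part, an analogous closedness argument using openness of $\FS{\tilde{\sigma}}_k$ and Theorem~\ref{thm: convergence in variable domains} for the projection condition, and the pullback-stability of $\FS{\tilde{\sigma}}_k$ combined with the fact that $\Psi_f$ is root-preserving with respect to $\RootSystem_X(\rho_X),\RootSystem_Y(\rho_Y)$ by~\ref{dfn item: 2. rooted}; and condition~\ref{dfn item: 3. Polish functor in RF} follows from Lemma~\ref{lem: 2. Polishness of variable domains} together with~\ref{dfn item: 3. Polish functor in RF} for $\mathfrak{P}$, namely $\eta_X(\sigma(X))=\bigcap_{k}\FS{\tilde{\sigma}}_k(X,\rho_X)$.

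The main obstacle I expect is the careful verification that each $\FS{\tilde{\tau}}_k$ is open and pullback-stable: one must combine the open/$G_\delta$ structure coming from the variable-domain machinery of Definition~\ref{dfn: graph subspace} and Lemmas~\ref{lem: 1. Polishness of variable domains} and~\ref{lem: 2. Polishness of variable domains} with the subfunctor conditions inherited from $\mathfrak{P}$, all while keeping track of which root of $\Psi(X)$ is used and ensuring compatibility with the morphisms $\Psi_f$. The remainder is a routine adaptation of the proof of Theorem~\ref{thm: Polishness of conti fixed st}, with the fixed space $T$ replaced by the variable space $\Psi(X)$.
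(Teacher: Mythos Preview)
Your proposal is correct and follows essentially the same approach as the paper: the same element-rooted metrization via the embedding $(F,x)\mapsto\widehat{F}_x$ into $\hatC{\Psi(X)}{\sigma(X)\times X}$, the same auxiliary structure $\tilde{\tau}(X)=\GraphSp{\Psi(X)}{\tilde{\sigma}(X)}$ with $\tilde{\tau}_f=\Image{\Psi_f\times\tilde{\sigma}_f}$, the same embedding $\zeta_X(F)=\graphmap(\eta_X\circ F)$, and the same two-condition definition of the subfunctors $\FS{\tilde{\tau}}_k$. One small correction: your distortion formula $\Dist{\FEMet{\sigma}}\circ\Dist{\Psi}$ is not quite right, since $\sigma$ is applied to $X$ (not to $\Psi(X)$), so the deformations in the domain $\Psi(X)$ and in the codomain $\sigma(X)\times X$ contribute in parallel rather than via composition; the correct distortion is of the form $1\wedge\bigl(\Dist{\Psi}(\varepsilon)\vee\Dist{\FEMet{\sigma}}(\varepsilon)\bigr)$, possibly with an additional $\varepsilon$ from the $d_X$ term.
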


\begin{proof}
  The proof is similar to that of Theorem~\ref{thm: Polishness of conti fixed st}.
  We first prove that $\ContiFunct(T, \sigma)$ is stably metrizable.
  Let $\FEMet{\sigma}$ be a stable element-rooted metrization of $\sigma$.
  To construct an element-rooted metrization of $\tau$,
  we introduce, for each $X \in \ob(\BCMcat)$, 
  a topological embedding $\iota^{\widehat{C}}_X \colon \hatC{\Psi(X)}{\sigma(X)} \times X \to \hatC{\Psi(X)}{\sigma(X) \times X}$ defined by
  \begin{equation}
    \iota^{\widehat{C}}_X(F, x)(\alpha) \coloneqq (F(\alpha), x).
  \end{equation}
  We then define an element-rooted metrization $\FEMet{\tau}$ of $\tau$ by equipping,  
  for each $X \in \ob(\BCMcat)$, the space $\FE{\tau}(X) = \hatC{\Psi(X)}{\sigma(X)} \times X$ with the metric
  \begin{equation} 
    d^{\FEMet{\tau}}_X \bigl((F, x), (G, y)\bigr)
    \coloneqq 
    \hatCMet{\Psi(X)}{\FE{\sigma}(X)} \bigl( \iota^{\widehat{C}}_X(F, x), \iota^{\widehat{C}}_X(G, y) \bigr),
  \end{equation}
  where $\hatCMet{\Psi(X)}{\sigma(X) \times X}$ is the metric defined in \eqref{eq: the metric on product hatC},
  with $d^{\FEMet{\sigma}}_{X}$ used as the  metric on $\sigma(X) \times X$.
  Then, similarly to the proof of Theorem~\ref{thm: Polishness of conti fixed st},
  one can verify that $\FEMet{\tau}$ is stable, using the stability of $\Psi$ and $\FEMet{\sigma}$.

  We next prove that $\ContiVFunc(\Psi, \sigma)$ is Polish with preserved roots.
  Let $\mathfrak{P} = (\tilde{\sigma}, \eta, (\FS{\tilde{\sigma}}_k)_{k \geq 1})$ be a root-preserving Polish system of $\sigma$.
  Define a structure $\tilde{\tau}$ as follows.
  \begin{itemize}
    \item 
      For $X \in \ob(\BCMcat)$, 
      define $\tilde{\tau}(X) \coloneqq \GraphSp{\Psi(X)}{\tilde{\sigma}(X)}$. 
    \item 
      For each $f \in \Hom_{\BCMcat}(X, Y)$, 
      define $\tilde{\tau}_f \coloneqq \Image{\Psi_f \times \tilde{\sigma}_f}$.
  \end{itemize}
  Then $\tau$ is a topological subfunctor of $\tilde{\tau}$.
  The associated topological embedding $\zeta \colon \tau \Rightarrow \tilde{\tau}$ is given as follows:
  for each $X \in \ob(\BCMcat)$, we define $\zeta_X(F) \coloneqq \graphmap (\eta_X \circ F)$ for $F \in \tau(X)$.
  For each $k \geq 1$, we define a subfunctor $\FS{\tilde{\tau}}_k$ of $\FS{\tilde{\tau}}$ as follows:
  for each $(X, \rho_X) \in \ob(\rBCMcat)$, 
  we define $\FS{\tilde{\tau}}_k(X, \rho_X)$ to be the set of $D \in  \tilde{\tau}(X)$
  satisfying the following conditions;
  \begin{enumerate} [label = \textup{(\roman*)}]
    \item $\displaystyle D \in \subGraphSp{\RootSystem_X(\rho_X)}{k}{\Psi(X)}{\tilde{\sigma}(X)}$,
    \item there exists $\varepsilon = \varepsilon(D) \in (0, 1/k)$ such that 
      \begin{equation}
        p^2_X\bigl( D|_{\RootSystem_X(\rho_X)}^{(k-\varepsilon, *)} \bigr) 
      \subseteq \FE{\tilde{\sigma}}_k(X, \rho_X),
      \end{equation}
      where $p^2_X \colon \Psi(X) \times \tilde{\sigma}(X) \to \tilde{\sigma}(X)$ denotes the projection.
  \end{enumerate} 
  Similarly to the proof of Theorem~\ref{thm: Polishness of conti fixed st},
  one can check that $\mathfrak{Q} \coloneqq (\tilde{\tau}, \zeta, (\tilde{\tau}^\times_k)_{k \geq 1})$ is a root-preserving Polish system of $\tau$,
  which completes the proof.
\end{proof}

\begin{rem}
  To discuss convergence of spaces equipped with heat-kernel-type functions,  
  consider the following setting:  
  we define $\Psi$ to be the space transformation given by $\Psi(X) \coloneqq \RNpp \times X \times X$ for each $X \in \ob(\BCMcat)$  
  (see Example~\ref{exm: 2. space transformation}),  
  and set $\sigma \coloneqq \Fixedst{\RNp}$ (recall this from Section~\ref{sec: Fixed structures}).  
  Then the space $\rootedBCM(\ContiVFunc(\Psi,\sigma))$ consists of rooted $\bcmAB$ spaces $(X, \rho_X)$ equipped with continuous functions  
  $p \in \hatC{\RNpp \times X \times X}{\RNp}$, where $p$ serves as the heat kernel of a stochastic process on $X$.
  By Theorem~\ref{thm: Polishness of conti variable st},
  the local GH-type topology on $\rootedBCM(\ContiVFunc(\Psi, \Fixedst{\RNp}))$ is Polish.

  Another way to view a heat-kernel-type function $p \in C(\RNp \times X \times X, \RNp)$ is as a family of density functions $p(t, x, \cdot)$.  
  That is, one may regard $p$ as a continuous map  
  \begin{equation}
    \RNpp \times X \ni (t,x) \mapsto p(t,x, \cdot) \in C(X, \RNp).
  \end{equation} 
  In this case, we take the space transformation $\Psi$ given by $\Psi(X) = \RNpp \times X$ for each $X \in \ob(\BCMcat)$,  
  and set $\sigma \coloneqq \ContiVFunc(\PointFunc, \tau_{\RNp})$.  
  Then the space $\rootedBCM(\ContiVFunc(\Psi,\sigma))$ consists of rooted $\bcmAB$ spaces $(X, \rho_X)$  
  equipped with continuous functions $p \in \hatC{\RNp \times X}{\hatC{X}{\RNp}}$,  
  and is Polish.
\end{rem}

By using Theorem~\ref{thm: precompactness in variable domains},
we obtain a precompactness criterion for the space $\rootedBCM(\ContiVFunc(\Psi, \sigma))$
in a similar way to the proof of Theorem~\ref{thm: precompact in Skorohod functor}.

\begin{thm} [Precompactness]\label{thm: precompactness for conti variable st}
  Assume that $\sigma$ is upper semicontinuous.
  Fix a space-rooted metrization $\FSMet{\sigma}$.
  Write $\RootSystem$ for a rooting system of $\Psi$.
  Fix a non-empty index set $\mathscr{A}$.
  A subset $\{ \cX_\alpha = (X_\alpha, \rho_\alpha, F_\alpha) \mid \alpha \in \mathscr{A} \}$ 
  of $\rootedBCM(\ContiVFunc(\Psi, \sigma))$ is precompact if and only if the following conditions are satisfied.
  \begin{enumerate} [label= (\roman*)]
    \item \label{thm item: space-indexed functions, the space is pre-cpt}
      The subset $\{ (X_\alpha, \rho_\alpha) \mid \alpha \in \mathscr{A}\}$ of $\rootedBCM$
      is precompact in the local Gromov--Hausdorff topology.
    \item \label{thm item: space-indexed functions, functions are relatively compact pointwise}
      For every $r>0$, 
      there exists a precompact subfunctor $\FS{\sigma}_r$ of $\FS{\sigma}$ 
      such that 
      \begin{equation}
        \Bigl\{ F_\alpha(x) \mid x \in \dom(F_\alpha)|_{\RootSystem_{X_\alpha}(\rho_\alpha)}^{(r)} \Bigr\}
        \subseteq 
        \FS{\sigma}_r(X_\alpha, \rho_\alpha),
        \quad 
        \forall \alpha \in \mathscr{A}.
      \end{equation}
    \item \label{thm item: space-indexed functions, equicontinuity of functions}
      For every $r>0$, 
      \begin{equation}
        \lim_{\delta \to 0}
        \sup_{\alpha \in \mathscr{A}}
        \sup_{\substack{x,y \in \dom(F_\alpha)|_{\RootSystem_{X_\alpha}(\rho_\alpha)}^{(r)} \\ d_{\Psi(X_\alpha)}(x,y) \leq \delta}}
        d_\Xi (F_\alpha(x), F_\alpha(y)) 
        = 0.
      \end{equation}
  \end{enumerate}
\end{thm}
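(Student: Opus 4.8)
The plan is to reduce the precompactness criterion for $\rootedBCM(\ContiVFunc(\Psi, \sigma))$ to the abstract criterion of Theorem~\ref{thm: precompact in RF}, exactly as was done in the proof of Theorem~\ref{thm: precompact in Skorohod functor}. First I would note that, by Propositions~\ref{prop: continuity of conti variable st} and~\ref{prop: continuity of Skorohod st}-style arguments, $\tau = \ContiVFunc(\Psi, \sigma)$ is space-rooted metrizable and embedding-continuous, so that Theorem~\ref{thm: precompact in RF} applies; moreover, the upper semicontinuity assumption on $\sigma$ propagates to $\tau$ by Proposition~\ref{prop: continuity of conti variable st}, which is exactly what is needed for the nontrivial direction of Theorem~\ref{thm: precompact in RF}. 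Thus precompactness of $\{\cX_\alpha \mid \alpha \in \mathscr{A}\}$ is equivalent to the conjunction of: (a) precompactness of $\{(X_\alpha, \rho_\alpha) \mid \alpha \in \mathscr{A}\}$ in the local Gromov--Hausdorff topology, which is condition~\ref{thm item: space-indexed functions, the space is pre-cpt}; and (b) the existence of a precompact subfunctor $\tau'$ of $\FS{\tau}$ containing all the $\cX_\alpha$.

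The core of the argument is therefore to translate the existence of such a precompact subfunctor $\tau'$ into the pointwise/equicontinuity conditions~\ref{thm item: space-indexed functions, functions are relatively compact pointwise} and~\ref{thm item: space-indexed functions, equicontinuity of functions}. For this I would invoke Theorem~\ref{thm: precompactness in variable domains}, the precompactness criterion for $\hatC{Y}{\Xi}$: a subset of $\hatC{Y}{\Xi}$ (with a fixed base point $\rho$) is precompact if and only if, for each $r>0$, the image values over $\dom(f)|_\rho^{(r)}$ are precompact in $\Xi$ and the functions are uniformly equicontinuous on $\dom(f)|_\rho^{(r)}$. Applying this fibrewise, with the fixed base point $\RootSystem_{X_\alpha}(\rho_\alpha) \in \Psi(X_\alpha)$ and with $\Xi = \sigma(X_\alpha)$ replaced by the varying target, one sees that a precompact subfunctor $\FS{\tau}'$ of $\FS{\tau}$ may be built from a family of precompact subfunctors $\FS{\sigma}_r$ of $\FS{\sigma}$ (one for each $r > 0$) together with uniform moduli of continuity. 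Concretely, for the forward implication I would extract, from a precompact $\tau'$ with $\cX_\alpha \in \rootedBCM(\tau')$, the subfunctor $\FS{\sigma}_r(X, \rho_X) \coloneqq \{F(x) \mid F \in \FS{\tau}'(X, \rho_X),\ x \in \dom(F)|_{\RootSystem_X(\rho_X)}^{(r)}\}$ and verify its precompactness via Theorem~\ref{thm: precompactness in variable domains}; for the reverse implication I would define $\FS{\tau}'(X, \rho_X)$ to be the set of $F \in \tau(X)$ whose restriction to each $\dom(F)|_{\RootSystem_X(\rho_X)}^{(r)}$ takes values in $\FS{\sigma}_r(X, \rho_X)$ and obeys the uniform modulus $\delta \mapsto \sup_\alpha \sup_{d_{\Psi(X_\alpha)}(x,y)\le\delta} d_\Xi(F_\alpha(x), F_\alpha(y))$, checking pullback-stability and precompactness. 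One must also confirm, as in Theorem~\ref{thm: precompact in Skorohod functor}, that the ``domains are precompact in the Hausdorff topology'' clause of Theorem~\ref{thm: precompactness in variable domains} is automatic here because $\dom(F_\alpha)|_{\RootSystem_{X_\alpha}(\rho_\alpha)}^{(r)}$ is contained in the compact ball $D_{\Psi(X_\alpha)}(\RootSystem_{X_\alpha}(\rho_\alpha), r)$.

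A technical point that needs care is the bookkeeping between the target spaces $\sigma(X_\alpha)$ and a common ambient space. When one embeds all the $X_\alpha$ into a common $\bcmAB$ space $M$ (along a convergent subsequence), $\Psi(X_\alpha)$ embeds into $\Psi(M)$ and $\sigma(X_\alpha)$ into $\sigma(M)$, and one works with the embedded functions in $\hatC{\Psi(M)}{\sigma(M)}$; this is where upper semicontinuity of $\sigma$ (hence of $\tau$) is used to ensure limits stay within the correct fibre. The stability of $\Psi$ is what guarantees that the modulus-of-continuity condition, phrased with $d_{\Psi(X_\alpha)}$, transfers coherently under embeddings — the metric on $\Psi(X_\alpha)$ and on $\Psi(M)$ agree on the image. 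I expect the main obstacle to be precisely this compatibility: making sure that the subfunctor $\FS{\tau}'$ one writes down is genuinely pullback-stable and that the precompactness of $\FS{\sigma}_r$ (a statement about each fibre) assembles into precompactness of $\FS{\tau}'$ in the sense required by Theorem~\ref{thm: precompact in RF}, rather than merely fibrewise. Once these compatibilities are checked — and they are essentially the same as in the cadlag case of Theorem~\ref{thm: precompact in Skorohod functor}, with $w_{d_S}$ replaced by the modulus of uniform continuity on balls — the equivalence follows, and the proof can be kept brief by citing the analogous steps. Therefore I would omit the routine verifications and refer to the proofs of Theorems~\ref{thm: precompact in Skorohod functor} and~\ref{thm: precompactness for conti fixed st}.
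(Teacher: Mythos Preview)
Your proposal is correct and follows essentially the same approach as the paper, which omits the proof entirely and simply remarks that the result is obtained from Theorem~\ref{thm: precompactness in variable domains} ``in a similar way to the proof of Theorem~\ref{thm: precompact in Skorohod functor}.'' Your outline---reducing to Theorem~\ref{thm: precompact in RF} via the upper semicontinuity supplied by Proposition~\ref{prop: continuity of conti variable st}, and then translating the precompact-subfunctor condition using the Arzel\`a--Ascoli-type criterion of Theorem~\ref{thm: precompactness in variable domains}---is exactly the intended argument, and your explicit construction of $\FS{\sigma}_r$ and $\FS{\tau}'$ in the two directions mirrors the corresponding steps in the proof of Theorem~\ref{thm: precompact in Skorohod functor}.
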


\begin{rem}
  When one wants to consider only continuous functions with compact domains,
  one may use the structure $\tau = \ConticVFunc(\Psi, \sigma)$ defined as follows.  
  \begin{itemize}
    \item 
      For $X \in \ob(\BCMcat)$, 
      define $\tau(X) \coloneqq \hatCc{\Psi(X)}{\sigma(X)}$. 
    \item 
      For each $f \in \Hom_{\BCMcat}(X, Y)$, 
      define $\tau_f(F) \coloneqq \sigma_f \circ F \circ \Psi_f^{-1}$ with $\dom(\tau_f(F)) \coloneqq \Psi_f(\dom(F))$ 
      for $F \in \tau(X)$.
  \end{itemize}
  An analogue of Theorem~\ref{thm: Polishness of conti variable st} holds for $\ConticVFunc(\Psi, \sigma)$.
\end{rem}

\subsection{Measurable functions}  \label{sec: Measurable functions}
\newcommand{\LzeroMet}[2]{d^{L^0_\mu}_{#1, #2}}
\newcommand{\LzeroFunct}[2]{\tau_{L_{#2}^0(#1)}}

In certain scaling limit problems such as \cite{Croydon_Muirhead_15_Functional,Fontes_Mathieu_14_On}, 
the usual $J_1$-Skorohod topology is too strong to capture convergence of the processes of interest. 
For example, in \cite{Croydon_Muirhead_15_Functional}, which studies the Bouchaud trap model with slowly varying traps,
during the time that the process is based in a certain deep trap,
it makes many short excursions away from that trap. 
These excursions vanish on the time scale of the limit but have macroscopic spatial size, 
preventing convergence in $J_1$ (and even in the coarser $M_1$ topology). 
The $L_{1,\mathrm{loc}}$ topology (see Example~\ref{exm: L_1,loc topology} below) accommodates such behaviour
and is a natural choice in settings 
where macroscopic, but short-time oscillations occur around the main limiting trajectory.
In what follows, we discuss a generalization of the $L_{1,\mathrm{loc}}$ topology
and apply our main results to this setting.

Fix a measurable space $(T, \mathcal{T})$ equipped with a Borel probability measure $\mu$.
We assume that $\mathcal{T}$ is \emph{countably generated}, that is,
there exists a countable subfamily $\mathcal{T}' \subseteq \mathcal{T}$ that generates $\mathcal{T}$.
For instance, any separable metric space $T$ with Borel $\sigma$-algebra $\mathcal{T}$ satisfies this assumption.
Given a Polish metric space $S$, we write $L^0_\mu(T, S)$ for the set of measurable functions $f \colon T \to S$.
We adopt the convention that any functions $\xi$ and $\eta$ in $L^0_\mu(T, S)$ are identified whenever $f = g$, $\mu$-a.e.
We then define a metric $\LzeroMet{T}{S}$ on $L^0_\mu(T, S)$ by setting, for each $f, g \in L^0_\mu(T, S)$,
\begin{equation}
  \LzeroMet{T}{S}(\xi, \eta) \coloneqq \int_T \bigl( 1 \wedge d_S(\xi(t), \eta(t)) \bigr)\, \mu(dt).
\end{equation}
Convergence with respect to $\LzeroMet{T}{S}$ coincides with convergence in probability
(see \cite[Lemma~5.2]{Kallenberg_21_Foundations}).
We endow $L^0_\mu(T, S)$ with the topology induced by $\LzeroMet{T}{S}$.
We note that this topology is independent of the choice of $d_S$ and depends only on the topology of $S$,
which can be readily verified from \cite[Lemma~5.2(iii)]{Kallenberg_21_Foundations}.

\begin{exm} \label{exm: L_1,loc topology}
  Let $T \coloneqq \RNp$, and $\mu(dt) \coloneqq e^{-t}\, dt$, where $dt$ on the right-hand side denotes the Lebesgue measure. 
  Then the space $D(\RNp, S)$ of cadlag functions can be viewed as a subset of $L^0_\mu(T, S)$, 
  and the topology on $L^0_\mu(T, S)$ induces the relative topology on $D(\RNp, S)$. 
  This is weaker than the $J_1$-Skorohod topology, 
  and is referred to as the $L_{1,\mathrm{loc}}$ topology in \cite{Croydon_Muirhead_15_Functional}.
\end{exm}

\begin{prop} \label{prop: metric for conv in prob}
  The space $L^0_\mu(T, S)$ is separable, and the metric $\LzeroMet{T}{S}$ is complete.
\end{prop}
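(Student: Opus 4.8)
The plan is to treat the two assertions separately. For completeness, I would take a Cauchy sequence $(\xi_n)_{n \geq 1}$ in $(L^0_\mu(T, S), \LzeroMet{T}{S})$ and pass to a rapidly converging subsequence $(\xi_{n_k})_{k \geq 1}$ with $\LzeroMet{T}{S}(\xi_{n_k}, \xi_{n_{k+1}}) \leq 2^{-k}$. Since $\LzeroMet{T}{S}(\xi_{n_k}, \xi_{n_{k+1}}) = \int_T \bigl(1 \wedge d_S(\xi_{n_k}(t), \xi_{n_{k+1}}(t))\bigr)\, \mu(dt)$, an application of Markov's inequality and the Borel--Cantelli lemma shows that for $\mu$-a.e.\ $t \in T$, the sequence $(\xi_{n_k}(t))_{k \geq 1}$ is Cauchy in $S$; fixing a complete metric $d_S$ inducing the topology of $S$ (which exists since $S$ is Polish, and which we may use since the topology on $L^0_\mu(T,S)$ is independent of the choice of $d_S$), this sequence converges to a limit, which we call $\xi(t)$ (setting $\xi(t)$ arbitrarily on the null set). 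Measurability of $\xi$ follows as it is the a.e.\ pointwise limit of measurable functions into a metric space. Then $1 \wedge d_S(\xi_{n_k}(t), \xi(t)) \to 0$ $\mu$-a.e., and by the dominated convergence theorem (dominated by the constant $1$, and $\mu$ is a probability measure) we get $\LzeroMet{T}{S}(\xi_{n_k}, \xi) \to 0$. A standard argument then upgrades convergence of the subsequence to convergence of the whole Cauchy sequence, establishing completeness.

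For separability, I would fix a countable dense subset $\{s_j\}_{j \geq 1}$ of $S$ and use the hypothesis that $\mathcal{T}$ is countably generated: let $\mathcal{T}'$ be a countable generating subfamily, which we may assume is closed under finite intersections (a countable operation), and consider the countable collection of simple functions of the form $\sum_{i=1}^m s_{j_i} \mathbf{1}_{A_i}$ where $A_1, \dots, A_m$ is a finite partition of $T$ into sets from the algebra generated by $\mathcal{T}'$. I claim this countable family is dense. First, simple functions are dense: any $\xi \in L^0_\mu(T, S)$ can be approximated in $\LzeroMet{T}{S}$ by functions taking finitely many values among $\{s_j\}$, since $d_S(\xi(t), s_{j(t)})$ can be made small on the image and $1 \wedge d_S$ is bounded, again invoking dominated convergence. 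Second, the indicator sets arising in such a simple function (being preimages under $\xi$ of balls, hence arbitrary measurable sets up to refinement) can be approximated in measure by sets from the generated algebra: here I would invoke the standard approximation theorem stating that for a countably generated $\sigma$-algebra and a finite measure, every measurable set is approximated in symmetric difference by sets from the generating algebra (this is a consequence of the monotone class theorem / the fact that such approximable sets form a $\sigma$-algebra). Replacing each $\mathbf{1}_{A_i}$ by $\mathbf{1}_{A_i'}$ with $\mu(A_i \triangle A_i') $ small changes the $\LzeroMet{T}{S}$-distance by at most a controlled amount, which completes the density argument.

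The main obstacle is the density half rather than completeness: one must be careful in combining the two approximation steps (range discretization and domain-set approximation) so that the errors are genuinely controlled simultaneously, and in verifying that the approximating class is honestly countable — in particular that one only needs finite partitions into sets from the \emph{algebra} generated by the countable family $\mathcal{T}'$, which is itself countable. The set-approximation lemma for countably generated $\sigma$-algebras is the key external input; everything else is routine measure theory with dominated convergence, using crucially that $\mu$ is a probability (hence finite) measure so that the constant $1$ is integrable.
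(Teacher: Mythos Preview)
Your proposal is correct and follows essentially the same route as the paper: the paper cites \cite[Lemma~5.6]{Kallenberg_21_Foundations} for completeness (which is precisely your Borel--Cantelli plus dominated-convergence argument) and, for separability, likewise builds a countable dense family of simple functions valued in a countable dense $S' \subseteq S$ with level sets drawn from the countable algebra generated by $\mathcal{T}'$, invoking the same set-approximation lemma you identify. The only minor difference is in how a general $\xi$ is approximated by simple functions: the paper first exhausts $S$ by compact sets and partitions them into small-diameter pieces, whereas your direct ball-covering-and-truncation works without assuming $S$ is $\sigma$-compact.
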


\begin{proof}
  The assertion regarding completeness follows from \cite[Lemma~5.6]{Kallenberg_21_Foundations}.
  Separability can be established similarly to that of $L^p$ spaces for $p \geq 1$ (cf.\ \cite[Proposition~3.4.5]{Cohn_13_Measure}).
  However, since the range is now a general space $S$, some arguments must be handled with care, so we provide a complete proof here.
  Because $\mathcal{T}$ is countably generated,
  there exists a countable subfamily $\mathcal{T}_0 \subseteq \mathcal{T}$
  that generates $\mathcal{T}$ and forms an algebra (not a $\sigma$-algebra); see \cite[Proof of Proposition~3.4.5]{Cohn_13_Measure}.
  Fix a countable dense subset $S' \subseteq S$.
  Let $\mathcal{D}$ be the subset of $L^0_\mu(T, S)$ consisting of
  simple functions $\xi$ with values in $S'$ and measurable with respect to $\mathcal{T}_0$,
  that is,
  $\xi$ has the form:
  there exist $n \in \mathbb{N}$, a disjoint family $\{A_i\}_{i=1}^n \subseteq \mathcal{T}_0$ with $\bigcup_{i=1}^n A_i = T$, 
  and a collection $\{a_i\}_{i=1}^n$ of points in $S'$
  such that 
  \begin{equation}
    \xi(t) = a_i \quad \text{if} \quad t \in A_i .
  \end{equation}
  Note that $\mathcal{D}$ is countable.
  We claim that $\mathcal{D}$ is dense in $L^0_\mu(T, S)$.
  For any simple function $\xi$,
  one can construct a sequence in $\mathcal{D}$ converging to $\xi$ in probability
  by applying \cite[Lemma~3.4.6]{Cohn_13_Measure}.
  Thus, it remains to show that any measurable function can be approximated by simple functions.
  This is well known for $\mathbb{R}$-valued measurable functions,
  and the same approximation works in our setting as follows.
  Since $S$ is locally compact and separable, 
  we can find an increasing sequence of non-empty compact sets $(K_n)_{n \geq 1}$ with $\bigcup_{n \geq 1} K_n = S$.
  For each $n \geq 1$, let $(K_n^i)_{i = 1}^{k_n}$ be a finite partition of $K_n$ into non-empty sets with $\diam(K_n^i) \leq 1/n$,
  where $\diam$ denotes the diameter.
  Fix an element $a_n^i \in K_n^i$ for each $n \geq 1$ and $i \in \{1, \dots, k_n\}$.
  Given a measurable function $\xi \colon T \to S$,
  for each $n \geq 1$ define a simple function $\xi_n$ by
  \[
    \xi_n(t) \coloneqq 
    \begin{cases}
      a_n^i, & \text{if } \xi(t) \in K_n^i \text{ for some } i \in \{1, \dots, k_n\}, \\
      a_1^1, & \text{otherwise}.
    \end{cases}
  \]
  It is straightforward to check that $\xi_n \to \xi$ pointwise,
  which completes the proof.
\end{proof}

Define a structure $\tau = \LzeroFunct{T}{\mu}$ as follows.
\begin{itemize}
  \item 
    For $X \in \ob(\BCMcat)$, 
    define $\tau(X) \coloneqq L^0_\mu(T, X)$.
  \item 
    For each $f \in \Hom_{\BCMcat}(X, Y)$,
    define $\tau_f(\xi) \coloneqq f \circ \xi$ for each $\xi \in L^0_\mu(T, X)$.
\end{itemize}
This admits a natural metrization $\FMet{\LzeroFunct{T}{\mu}}$ given by equipping each $\LzeroFunct{T}{\mu}(X) = L^0_\mu(T, X)$ 
with the metric $\LzeroMet{T}{X}$.

\begin{thm} \label{thm: continuity of Lzero st}
  The structure $\LzeroFunct{T}{\mu}$ is continuous and separable,
  and its metrization $\FMet{\LzeroFunct{T}{\mu}}$ is complete and stable with distortion $\Dist{\PointFunc}(\varepsilon) = 1 \wedge \varepsilon$.
\end{thm}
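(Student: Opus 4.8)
The plan is to verify the four properties—continuity (meaning embedding-continuity plus upper and lower semicontinuity), separability, completeness of the metrization, and stability with the stated distortion—mostly by reducing to facts already established about $L^0_\mu$ in Proposition~\ref{prop: metric for conv in prob} and about convergence in probability via \cite[Lemma~5.2]{Kallenberg_21_Foundations}. First, for \textbf{embedding-continuity}: if $f_n \to f_\infty$ in the compact-convergence topology as isometric embeddings $X \to Y$, then for any $\xi \in L^0_\mu(T, X)$ the functions $f_n \circ \xi$ converge to $f_\infty \circ \xi$ pointwise $\mu$-a.e.\ (since $\xi(t) \in X$ is fixed and lies in a compact subset after restricting to a set of large $\mu$-measure, on which $f_n \to f_\infty$ uniformly); dominated convergence applied to $1 \wedge d_Y(f_n(\xi(t)), f_\infty(\xi(t)))$ then gives $\LzeroMet{T}{Y}(f_n \circ \xi, f_\infty \circ \xi) \to 0$, which is exactly $\tau_{f_n}(\xi) \to \tau_{f_\infty}(\xi)$ in $\tau(Y)$.

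For the \textbf{semicontinuity} conditions, fix $\bcmAB$ spaces $X_n$, $n \in \NN \cup \{\infty\}$, isometrically embedded in a common $Y$ with $X_n \to X_\infty$ in the Fell topology. For upper semicontinuity: if $\xi_n \in L^0_\mu(T, X_n)$ and $\xi_n \to \xi$ in $\tau(Y)$, pass to a subsequence along which $\xi_n(t) \to \xi(t)$ for $\mu$-a.e.\ $t$ (convergence in probability yields an a.e.-convergent subsequence by \cite[Lemma~5.2]{Kallenberg_21_Foundations}); since each $X_n$ is closed in $Y$ (a $\bcmAB$ subspace is closed) and $X_n \to X_\infty$ in the Fell topology, condition~\ref{lem item: 1. convergence in Hausdorff} of Painlev\'e--Kuratowski convergence forces $\xi(t) \in X_\infty$ for a.e.\ $t$, so $\xi \in L^0_\mu(T, X_\infty) = \tau(X_\infty)$. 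For lower semicontinuity: given $\xi \in L^0_\mu(T, X_\infty)$, first approximate $\xi$ by a simple function $\zeta$ with finitely many values $a_1, \dots, a_m \in X_\infty$ (possible by the approximation argument in the proof of Proposition~\ref{prop: metric for conv in prob}); by \ref{lem item: 2'. convergence in Hausdorff} choose $a_i^{(n)} \in X_n$ with $a_i^{(n)} \to a_i$, and let $\zeta_n \in L^0_\mu(T, X_n)$ be the simple function with the corresponding values on the same partition. Then $\zeta_n \to \zeta$ in $\tau(Y)$, and a diagonal argument over the approximation produces the required subsequence converging to $\xi$.

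For \textbf{separability} and \textbf{completeness of the metrization}, these are immediate from Proposition~\ref{prop: metric for conv in prob} applied with $S = X$ for each $\bcmAB$ space $X$, since every $\bcmAB$ space is Polish and locally compact separable. (The topology induced by $\LzeroMet{T}{X}$ coincides with the given topology on $\tau(X)$ trivially, since that is how $\tau(X)$ was defined.) Finally, \textbf{stability with distortion $\Dist{\FMet{\LzeroFunct{T}{\mu}}}(\varepsilon) = 1 \wedge \varepsilon$}: given isometric embeddings $f_i \colon X \to M_i$, $g_i \colon Y \to M_i$ with $d_{M_2}(f_2(x), g_2(y)) \leq d_{M_1}(f_1(x), g_1(y)) + \varepsilon$ for all $x,y$, then for $\xi \in L^0_\mu(T, X)$ and $\eta \in L^0_\mu(T, Y)$ we have $d_{M_2}(f_2(\xi(t)), g_2(\eta(t))) \leq d_{M_1}(f_1(\xi(t)), g_1(\eta(t))) + \varepsilon$ for every $t$, hence $1 \wedge d_{M_2}(\cdots) \leq (1 \wedge d_{M_1}(\cdots)) + (1 \wedge \varepsilon)$ pointwise; integrating against the probability measure $\mu$ gives $\LzeroMet{T}{M_2}(f_2 \circ \xi, g_2 \circ \eta) \leq \LzeroMet{T}{M_1}(f_1 \circ \xi, g_1 \circ \eta) + (1 \wedge \varepsilon)$, which is \ref{dfn item: 2, stability}, and \ref{dfn item: 1, stability} is clear. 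The main obstacle I anticipate is the lower semicontinuity step: one must be careful that the simple-function approximation can be carried out uniformly enough to run a diagonal argument, and that the chosen partition of $T$ is the same across $n$ so that $\zeta_n \to \zeta$ in probability follows from pointwise convergence of the finitely many values; the other three properties are essentially bookkeeping on top of the cited $L^0_\mu$ facts.
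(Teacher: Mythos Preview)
Your proposal is correct and follows essentially the same approach as the paper's proof: separability and completeness are read off from Proposition~\ref{prop: metric for conv in prob}, stability and embedding-continuity are handled by the pointwise estimate plus dominated convergence, upper semicontinuity is obtained by passing to an a.e.-convergent subsequence and invoking the Fell convergence of $X_n \to X_\infty$, and lower semicontinuity is done via simple-function approximation with values pushed into $X_n$ along the Fell convergence. Your write-up is in fact more detailed than the paper's (which dispatches each step in one or two lines); the only superfluous step is the ``restrict to a set of large $\mu$-measure'' in the embedding-continuity argument, since $f_n(\xi(t)) \to f_\infty(\xi(t))$ already holds for every $t$ and dominated convergence applies directly.
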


\begin{proof}
  The separability of $\LzeroFunct{T}{\mu}$ and the completeness of $\FMet{\LzeroFunct{T}{\mu}}$ follow from Proposition~\ref{prop: metric for conv in prob}.  
  The stability of $\FMet{\LzeroFunct{T}{\mu}}$ is straightforward.  
  The embedding-continuity readily follows from the dominated convergence theorem.
  To prove the semicontinuity,  
  let $X_n$, $n \in \mathbb{N}\cup\{\infty\}$, be $\bcmAB$ spaces
  that are embedded isometrically into a common $\bcmAB$ space $Y$
  in such a way that $X_n \to X_\infty$ in the Fell topology as subsets of $Y$.

  Let $\xi_n$, $n \geq 1$, be elements of $L^0_\mu(T, X_n)$ 
  converging to an element $\xi \in L^0_\mu(T, Y)$ in probability.
  Then there exists a subsequence $(n_k)_{k \geq 1}$ such that 
  $\xi_{n_k} \to \xi$ almost surely.
  This, combined with the Fell convergence of $X_n$ to $X_\infty$,
  implies that $\xi$ takes values in $X_\infty$ almost surely. 
  Therefore, $\LzeroFunct{T}{\mu}$ is upper semicontinuous.
  The lower semicontinuity follows from an argument similar to that in the proof of Theorem~\ref{thm: Polishness of measure functor}.
  Let $\xi$ be a simple function taking values in $X_\infty$.
  Using the Fell convergence of $X_n$ to $X_\infty$,
  it is not difficult to construct simple functions $\xi_n$ taking values in $X_n$
  such that $\xi_{n}$ converges to $\xi$ in probability.
  Since such simple functions $\xi$ are dense in $L^0_\mu(T, X_\infty)$,
  we obtain the lower semicontinuity of $\LzeroFunct{T}{\mu}$.
\end{proof}


\subsection{Laws of structures}  \label{sec: Laws of structures}
\newcommand{\ProbFunct}[1]{\tau_{\mathcal{P}}(#1)}

In this subsection,
we define a class of structures which provides a topological setting 
suitable for studying random objects in different spaces.

Let $\sigma$ be a structure that is stable and Polish with preserved roots.
Define a structure $\tau = \ProbFunct{\sigma}$ as follows.
\begin{itemize}
  \item 
    For $X \in \ob(\BCMcat)$, 
    define $\tau(X) \coloneqq \Prob{\sigma(X)}$,
    i.e., the set of probability measures on $\sigma(X)$ equipped with the weak topology. 
  \item 
    For each $f \in \Hom_{\BCMcat}(X, Y)$,
    define $\tau_f \coloneqq (\sigma_f)_*$, i.e., the pushforward map given by $\sigma_f$.
\end{itemize}

\begin{prop} \label{prop: continuity of prob st}
  The structure $\ProbFunct{\sigma}$ is embedding-continuous.
  Moreover, if $\sigma$ is upper (resp.\ lower) semicontinuous, then so is $\ProbFunct{\sigma}$.
\end{prop}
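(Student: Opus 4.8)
The statement asserts that $\ProbFunct{\sigma}$ is embedding-continuous, and that it inherits upper (resp.\ lower) semicontinuity from $\sigma$. For embedding-continuity, I would verify Assumption~\ref{assum: embedding continuity} directly. Fix $\bcmAB$ spaces $X, Y$ and isometric embeddings $f_n \colon X \to Y$, $n \in \NN \cup \{\infty\}$, with $f_n \to f_\infty$ in the compact-convergence topology. Since $\sigma$ is embedding-continuous (recall that a structure that is Polish with preserved roots is embedding-continuous by Lemma~\ref{lem: subfunctor embedding in RF}), we have $\sigma_{f_n}(a) \to \sigma_{f_\infty}(a)$ in $\sigma(Y)$ for every $a \in \sigma(X)$; equivalently, by Remark~\ref{rem: strong continuity of functors}, $\sigma_{f_n} \to \sigma_{f_\infty}$ in the compact-convergence topology on $C(\sigma(X), \sigma(Y))$. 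In particular $\sigma_{f_n} \to \sigma_{f_\infty}$ pointwise, so for any $\mu \in \Prob{\sigma(X)}$ and any bounded continuous $\varphi \colon \sigma(Y) \to \RN$, dominated convergence gives $\int \varphi \circ \sigma_{f_n}\, d\mu \to \int \varphi \circ \sigma_{f_\infty}\, d\mu$, i.e., $(\sigma_{f_n})_*\mu \to (\sigma_{f_\infty})_*\mu$ weakly. This is exactly $\tau_{f_n}(\mu) \to \tau_{f_\infty}(\mu)$ in $\tau(Y)$, so $\ProbFunct{\sigma}$ is embedding-continuous.

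For the semicontinuity claims, let $X_n$, $n \in \NN \cup \{\infty\}$, be $\bcmAB$ spaces isometrically embedded into a common $\bcmAB$ space $Y$ with $X_n \to X_\infty$ in the Fell topology. Since $\sigma$ is continuous (being Polish with preserved roots, hence embedding-continuous and both upper and lower semicontinuous when $\sigma$ is assumed so), and since $\sigma_f \colon \sigma(X_n) \to \sigma(Y)$ is distance-preserving with complete domain metric under a complete metrization, $\sigma(X_n)$ is closed in $\sigma(Y)$; thus, via Remark~\ref{rem: relation to PK convergence} together with Assumption~\ref{assum: semicontinuity} for $\sigma$, we obtain $\sigma(X_n) \to \sigma(X_\infty)$ in the Fell topology as subsets of $\sigma(Y)$. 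I would then reduce the semicontinuity of $\ProbFunct{\sigma}$ to the following two standard facts about weak convergence of probability measures on a Polish space $\sigma(Y)$, applied to the closed subsets $\sigma(X_n)$: first, if $\mu_n \in \Prob{\sigma(X_n)}$ converges weakly to $\mu \in \Prob{\sigma(Y)}$ and the $\sigma(X_n)$ converge in Painlev\'e--Kuratowski sense to $\sigma(X_\infty)$, then $\mu$ is supported on $\sigma(X_\infty)$ (hence $\mu \in \Prob{\sigma(X_\infty)}$), giving upper semicontinuity; second, every $\mu \in \Prob{\sigma(X_\infty)}$ is a weak limit of a sequence $\mu_{n_k} \in \Prob{\sigma(X_{n_k})}$ along a subsequence, giving lower semicontinuity.

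For the upper semicontinuity argument I would use a portmanteau-type estimate: if $x \in \sigma(Y) \setminus \sigma(X_\infty)$, then since $\sigma(X_\infty)$ is closed there is $\varepsilon > 0$ with $B_{\sigma(Y)}(x, \varepsilon) \cap \sigma(X_\infty) = \emptyset$, and by the Fell convergence $\sigma(X_n) \cap B_{\sigma(Y)}(x, \varepsilon) = \emptyset$ for all large $n$; since $\mu_n$ is supported on $\sigma(X_n)$, $\mu_n(B_{\sigma(Y)}(x, \varepsilon)) = 0$ eventually, so $\mu(B_{\sigma(Y)}(x,\varepsilon)) \le \liminf_n \mu_n(B_{\sigma(Y)}(x,\varepsilon)) = 0$ by the portmanteau theorem; a covering argument on the open set $\sigma(Y) \setminus \sigma(X_\infty)$ then yields $\mu(\sigma(Y) \setminus \sigma(X_\infty)) = 0$. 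For the lower semicontinuity, given $\mu \in \Prob{\sigma(X_\infty)}$, I would first approximate $\mu$ weakly by finitely supported measures $\nu = \sum_{i=1}^m p_i \delta_{a_i}$ with $a_i \in \sigma(X_\infty)$ (possible since such measures are weakly dense in $\Prob{\sigma(X_\infty)}$ for Polish $\sigma(X_\infty)$), and then use the lower semicontinuity of $\sigma$ to pick, along a subsequence $(n_k)$, points $a_i^{(k)} \in \sigma(X_{n_k})$ with $a_i^{(k)} \to a_i$; setting $\nu_k \coloneqq \sum_i p_i \delta_{a_i^{(k)}} \in \Prob{\sigma(X_{n_k})}$ gives $\nu_k \to \nu$ weakly, and a diagonal argument over the approximating $\nu$'s produces the desired approximating sequence for $\mu$. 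The main obstacle is the bookkeeping in this last diagonal step — one must interleave the approximation of $\mu$ by finitely supported measures with the subsequence extraction coming from lower semicontinuity of $\sigma$, and arrange the indices so that a single subsequence of the $X_n$ works; this is routine but requires care, and is handled exactly as in the proof of the lower semicontinuity half of Theorem~\ref{thm: Polishness of measure functor}.
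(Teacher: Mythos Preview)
Your embedding-continuity and lower semicontinuity arguments are fine and match the paper's approach (dominated convergence for the former, approximation by finitely supported measures for the latter, exactly as in Theorem~\ref{thm: Polishness of measure functor}).

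The upper semicontinuity argument, however, has a genuine gap. You claim that if $B_{\sigma(Y)}(x,\varepsilon)\cap\sigma(X_\infty)=\emptyset$ then $B_{\sigma(Y)}(x,\varepsilon)\cap\sigma(X_n)=\emptyset$ for all large $n$, justifying this ``by the Fell convergence'' of $\sigma(X_n)$ to $\sigma(X_\infty)$. But $\sigma(Y)$ is only Polish, not boundedly (or even locally) compact in general---think of $\sigma=\SkorohodSt(\RNp,\PointFunc)$, where $\sigma(Y)=D(\RNp,Y)$. In a non-locally-compact space, upper semicontinuity of $\sigma$ (condition~\ref{lem item: 1. convergence in Hausdorff}) does \emph{not} imply that open sets disjoint from the limit are eventually missed: if for infinitely many $n$ there were points $a_n\in\sigma(X_n)\cap B_{\sigma(Y)}(x,\varepsilon)$, you cannot extract a convergent subsequence to reach a contradiction. (Your invocation of Remark~\ref{rem: relation to PK convergence} is also off: that remark assumes both halves of Assumption~\ref{assum: semicontinuity}, whereas here only upper semicontinuity of $\sigma$ is given; and the Fell-topology equivalence there relies on local compactness.) The portmanteau route you borrow from the proof of Theorem~\ref{thm: Polishness of measure functor} works there precisely because the ambient space $Y$ is boundedly compact.

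The paper avoids this by using the Skorohod representation theorem: given $P_n\to P$ weakly in $\Prob{\sigma(Y)}$, realize them as random elements $\xi_n\to\xi$ almost surely on a common probability space; then for a.e.\ $\omega$ the sequence $\xi_n(\omega)\in\sigma(X_n)$ actually converges, so upper semicontinuity of $\sigma$ applies pointwise to give $\xi(\omega)\in\sigma(X_\infty)$, whence $P$ is supported on $\sigma(X_\infty)$. This bypasses any need for local compactness of $\sigma(Y)$.
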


\begin{proof}
  The embedding-continuity is straightforward by the dominated convergence theorem.
  To prove the semicontinuity,  
  let $X_n$, $n \in \mathbb{N}\cup\{\infty\}$, be $\bcmAB$ spaces
  that are embedded isometrically into a common $\bcmAB$ space $Y$
  in such a way that $X_n \to X_\infty$ in the Fell topology as subsets of $Y$.
 
  Assume that $\sigma$ is upper semicontinuous.
  Let $P_n$, $n \geq 1$, be probability measures on $\sigma(X_n)$  
  converging weakly to some probability measure $P$ on $\sigma(Y)$.  
  By the Skorohod representation theorem,  
  there exists a probability space $(\Omega, \mathcal{F}, \mathbb{P})$  
  and random elements $\xi_n$ taking values in $\sigma(X_n)$ and $\xi$ in $\sigma(Y)$  
  such that  
  $\mathbb{P}(\xi_n \in \cdot) = P_n$,  
  $\mathbb{P}(\xi \in \cdot) = P$,  
  and  
  $\xi_n \to \xi$ in $\sigma(Y)$ almost surely.  
  The upper semicontinuity of $\sigma$ yields that $\xi \in \sigma(X_\infty)$ almost surely,
  which implies that $P$ is supported on $\sigma(X_\infty)$.  
  Therefore, $\ProbFunct{\sigma}$ is upper semicontinuous.
  The lower semicontinuity is proven similarly to the proof of Theorem~\ref{thm: Polishness of measure functor}.
\end{proof}

\begin{thm} \label{thm: Polishness of prob st}
  The structure $\ProbFunct{\sigma}$ is stably metrizable and Polish with preserved roots.
\end{thm}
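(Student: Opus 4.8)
The plan is to establish the two required properties — stable metrizability and Polishness with preserved roots — separately, in each case by transporting the corresponding structure on $\sigma$ through the weak topology on probability measures, exactly as was done in Theorems~\ref{thm: Polishness of Skorohod structure}, \ref{thm: Polishness of conti fixed st}, and \ref{thm: Polishness of conti variable st}. Throughout we already know from Proposition~\ref{prop: continuity of prob st} that $\ProbFunct{\sigma}$ is embedding-continuous (and upper/lower semicontinuous when $\sigma$ is), and from Proposition~\ref{prop: metric for conv in prob}-type reasoning that $\Prob{S}$ is a Polish space whenever $S$ is; these will be used freely.

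For stable metrizability, I would fix a stable element-rooted metrization $\FEMet{\sigma}$ of $\sigma$, which exists by hypothesis and may be taken complete. For each $X \in \ob(\BCMcat)$ I equip $\FE{\tau}(X) = \Prob{\sigma(X)} \times X$ with the metric obtained by first mapping $(P, x)$ to the pushforward $(\iota_x)_* P \in \Prob{\sigma(X) \times X}$, where $\iota_x(a) \coloneqq (a, x)$, and then taking the Prohorov metric associated with $d^{\FEMet{\sigma}}_X$ on $\sigma(X) \times X$. Concretely, $d^{\FEMet{\tau}}_X\bigl((P,x),(Q,y)\bigr) \coloneqq \ProhMet{\sigma(X) \times X}\bigl((\iota_x)_* P, (\iota_y)_* Q\bigr)$, using the notation of Section~\ref{sec: the vague topology}. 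That this induces the product topology follows since weak convergence $P_n \to P$ together with $x_n \to x$ is equivalent to $(\iota_{x_n})_* P_n \to (\iota_x)_* P$ weakly; that $\FE{\tau}_f$ is distance-preserving follows because $\sigma_f \times \id$ is distance-preserving on $\sigma(X) \times X$ and pushforward is functorial, using Lemma~\ref{lem: Prohorov is stable} in the degenerate form where the deformation parameter is zero. For the stability estimate, given isometric embeddings $f_i, g_i$ as in Definition~\ref{dfn: stability} with distortion $\varepsilon$ at the level of the underlying spaces, the stability of $\FEMet{\sigma}$ gives the inequality \eqref{assum item eq: 2, stability} for $d^{\FEMet{\sigma}}$ with $\Dist{\FEMet{\sigma}}(\varepsilon)$; then I invoke Lemma~\ref{lem: Prohorov is stable} applied to the spaces $\sigma(M_i) \times M_i$ and the isometric embeddings induced by $\FE{\sigma}_{f_i}, \FE{\sigma}_{g_i}$ — whose distance distortion is exactly $\Dist{\FEMet{\sigma}}(\varepsilon)$ — to conclude that $\FEMet{\tau}$ is stable with distortion $\Dist{\FEMet{\tau}} = \Dist{\FEMet{\sigma}}$. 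Completeness of $\FEMet{\tau}$ when $\FEMet{\sigma}$ is complete follows from Lemma~\ref{lem: basics of Prohorov}.

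For Polishness with preserved roots, I would produce a root-preserving Polish system following the pattern of the earlier proofs. Let $\mathfrak{P} = (\tilde{\sigma}, \eta, (\FS{\tilde{\sigma}}_k)_{k \geq 1})$ be a root-preserving Polish system of $\sigma$. The natural candidate for $\tilde{\tau}$ is the structure $X \mapsto \Prob{\tilde{\sigma}(X)}$ with pushforward morphisms; by Proposition~\ref{prop: continuity of prob st} it is continuous, and it admits a complete space-rooted metrization via the Prohorov metric associated with $\FSMet{\tilde{\sigma}}$, so \ref{dfn item: 1. Polish functor in RF} holds. The topological embedding $\zeta_X \colon \Prob{\sigma(X)} \to \Prob{\tilde{\sigma}(X)}$ is $(\eta_X)_*$, which is a topological embedding because $\eta_X$ is. The delicate point is the choice of the open, pullback-stable subfunctors $\FS{\tilde{\tau}}_k$ whose intersection picks out exactly those probability measures supported on $\eta_X(\sigma(X)) = \bigcap_k \FS{\tilde{\sigma}}_k(X, \rho_X)$: a measure $P \in \Prob{\tilde{\sigma}(X)}$ is supported on this $G_\delta$ set iff $P\bigl(\tilde{\sigma}(X) \setminus \FS{\tilde{\sigma}}_k(X,\rho_X)\bigr) = 0$ for every $k$, i.e.\ $P\bigl(\FS{\tilde{\sigma}}_k(X,\rho_X)\bigr) = 1$. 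Since $\FS{\tilde{\sigma}}_k(X,\rho_X)$ is open, the map $P \mapsto P\bigl(\FS{\tilde{\sigma}}_k(X,\rho_X)\bigr)$ is lower semicontinuous, so the set $\{P : P(\FS{\tilde{\sigma}}_k(X,\rho_X)) > 1 - 1/j\}$ is open. I would therefore set $\FS{\tilde{\tau}}_{k}(X, \rho_X) \coloneqq \{P \in \Prob{\tilde{\sigma}(X)} : P(\FS{\tilde{\sigma}}_k(X,\rho_X)) > 1 - 1/k\}$ (re-indexing a double sequence into a single one, as in Remark~\ref{3. rem: decreasing Polish system}), check it is a subfunctor by \ref{dfn item: 2, subfunctors} using $\tilde{\tau}_f(P)(\FS{\tilde{\sigma}}_k(Y,\rho_Y)) = P(\tilde{\sigma}_f^{-1}(\FS{\tilde{\sigma}}_k(Y,\rho_Y))) = P(\FS{\tilde{\sigma}}_k(X,\rho_X))$ by pullback-stability of $\FS{\tilde{\sigma}}_k$, which simultaneously gives pullback-stability of $\FS{\tilde{\tau}}_k$, and verify \ref{dfn item: 3. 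Polish functor in RF}. The main obstacle I anticipate is precisely this last verification — showing $\bigcap_k \FS{\tilde{\tau}}_k(X,\rho_X) = \zeta_X(\Prob{\sigma(X)})$ — which amounts to the measure-theoretic fact that a Borel probability measure on a Polish space gives full mass to a $G_\delta$ set $G$ precisely when it gives full mass to each of countably many open sets whose intersection is $G$ (the inner-regularity/tightness input), together with the identification of measures supported on $\eta_X(\sigma(X))$ with pushforwards from $\Prob{\sigma(X)}$, which uses that $\eta_X$ is a homeomorphism onto its image and that a probability measure supported on a Borel subset of a Polish space restricts to a Borel measure there. Once the root-preserving Polish system is in hand, the conclusion follows from Theorem~\ref{thm: Polish with preserved roots}, and combining with the first part and Theorem~\ref{thm: main result} yields that the two local GH-type topologies on $\rootedBCM(\ProbFunct{\sigma})$ coincide and are Polish.
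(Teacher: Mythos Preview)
Your proposal is correct and follows essentially the same approach as the paper's proof. The paper defines the element-rooted metric as $d_X(x,y) \vee \ProhMet{\FE{\sigma}(X)}(P \otimes \delta_x, Q \otimes \delta_y)$, i.e., with an explicit $d_X(x,y) \vee$ in front, whereas you omit this term; both versions work and lead to the same stability argument via Lemma~\ref{lem: Prohorov is stable}. For the Polish system, your choice of $\tilde{\tau} = \ProbFunct{\tilde{\sigma}}$, $\zeta_X = (\eta_X)_*$, and $\FS{\tilde{\tau}}_k(X,\rho_X) = \{P : P(\FS{\tilde{\sigma}}_k(X,\rho_X)) > 1 - 1/k\}$ is exactly what the paper uses, including the appeal to Remark~\ref{3. rem: decreasing Polish system} to assume the $\FS{\tilde{\sigma}}_k$ are decreasing when verifying \ref{dfn item: 3. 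Polish functor in RF}.
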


\begin{proof}
  We first prove that $\tau = \ProbFunct{\sigma}$ is stably metrizable.
  Let $\FEMet{\sigma}$ be a stable element-rooted metrization of $\sigma$ with distortion $\Dist{\FEMet{\sigma}}$.
  We then define an element-rooted metrization $\FEMet{\tau}$ of $\tau$ as follows:
  for each $X \in \ob(\BCMcat)$, 
  we equip $\FE{\tau}(X)$ with a metric given by   
  \begin{equation}
    d^{\FEMet{\tau}}_X\bigl( (P, x), (Q, y) \bigr)
    \coloneqq 
    d_X(x,y)
    \vee
    \ProhMet{\FE{\sigma}(X)}( P \otimes \delta_x,  Q \otimes \delta_y),
  \end{equation}
  where $\otimes$ denotes the product of two measures, 
  $\delta_x$ denotes the Dirac probability measure at $x$,
  and $\ProhMet{\FE{\sigma}(X)}$ denotes the Prohorov metric constructed from the metric $d^{\FEMet{\sigma}}_X$ on $\FE{\sigma}(X)$.
  Fix $\bcmAB$ spaces $X, Y, M_1$, and $M_2$.
  Let $f_i \colon X \to M_i$ and $g_i \colon Y \to M_i$, $i = 1,2$, be isometric embeddings.
  Assume that there exists $\varepsilon \in \RNp$ such that, for all $x \in X$ and $y \in Y$,
  \begin{equation} \label{pr eq: 1. stability of prob functor}
    d_{M_2}(f_2(x), g_2(y)) \leq d_{M_1}(f_1(x), g_1(y)) + \varepsilon,
  \end{equation}
  The stability of $\FEMet{\sigma}$ implies that, for all $(a, x) \in \FE{\sigma}(X)$ and $(b, y) \in \FE{\sigma}(Y)$,
  \begin{equation}
    d_{\FE{\sigma}(M_2)} \bigl( \FE{\sigma}_{f_2}(a, x), \FE{\sigma}_{g_2}(b, y) \bigr)
    \leq   
    d_{\FE{\sigma}(M_1)} \bigl( \FE{\sigma}_{f_1}(a, x), \FE{\sigma}_{g_1}(b, y) \bigr) + \Dist{\FEMet{\sigma}}(\varepsilon).
  \end{equation}
  It then follows from Lemma~\ref{lem: Prohorov is stable} that, for any $\mu \in \Prob{\FE{\sigma}(X)}$ and $\nu \in \Prob{\FE{\sigma}(Y)}$,
  \begin{equation}
    \ProhMet{\FE{\sigma}(M_2)} \bigl( (\FE{\sigma}_{f_1})_* \mu, (\FE{\sigma}_{g_2})_* \nu \bigr)
    \leq   
    \ProhMet{\FE{\sigma}(M_1)} \bigl( (\FE{\sigma}_{f_1})_* \mu, (\FE{\sigma}_{g_1})_* \nu \bigr) 
    + 
    \Dist{\FEMet{\sigma}}(\varepsilon).
  \end{equation}
  Now, fix $(P, x) \in \Prob{\sigma(X)} \times X$ and $(Q, y) \in \Prob{\sigma(Y)} \times Y$.
  Setting $\mu = P \otimes \delta_x$ and $\nu = Q \otimes \delta_y$ in the above inequality,
  we deduce the stability of $\FEMet{\tau}$.

  We next prove that $\tau$ is Polish with preserved roots.
  Let $\mathfrak{P} = (\tilde{\sigma}, \eta, (\FS{\tilde{\sigma}}_k)_{k \geq 1})$ be a root-preserving Polish system of $\sigma$.
  Define $\tilde{\tau} \coloneqq \ProbFunct{\tilde{\sigma}}$.
  Then $\tau$ is a topological subfunctor of $\tilde{\tau}$.
  The associated topological embedding $\zeta \colon \tau \Rightarrow \tilde{\tau}$ is given as follows:
  for each $X \in \ob(\BCMcat)$, we define $\zeta_X \coloneqq (\eta_X)_*$,
  i.e, the pushforward map given by $\eta_X$.
  For each $k \geq 1$, we define a subfunctor $\FS{\tilde{\tau}}_k$ of $\FS{\tilde{\tau}}$ as follows:
  for each $(X, \rho_X) \in \ob(\rBCMcat)$, 
  we define 
  \begin{equation} \label{pr eq: 1. Polishness of prob functor in RV}
    \FS{\tilde{\tau}}_k(X)
    \coloneqq 
    \bigl\{
      \mu \in \tilde{\tau}(X) 
      \mid 
      \mu(\FS{\tilde{\sigma}}_k(X, \rho_X)) > 1- 1/k
    \bigr\},
  \end{equation}
  We will prove that $\mathfrak{Q} \coloneqq (\tilde{\tau}, \zeta, (\FS{\tilde{\tau}}_k)_{k \geq 1})$ is a root-preserving Polish system of $\tau$.

  The separability of $\tilde{\tau}$ follows from the separability of the weak topology,
  and the continuity follows from Proposition~\ref{prop: continuity of prob st}.
  We define a space-rooted metrization of $\tilde{\tau}$ as follows:  
  for each $(X, \rho_X) \in \ob(\rBCMcat)$, we equip $\tilde{\tau}(X)$ with the metric  
  \begin{equation}
    d^{\FSMet{\tilde{\tau}}}_{X, \rho_X}(\mu, \nu) 
    \coloneqq  
    \ProhMet{\tilde{\sigma}(X), \rho_X}(\mu, \nu),
  \end{equation}
  where $\ProhMet{\tilde{\sigma}(X), \rho_X}$ denotes the Prohorov metric,  
  with $d^{\FSMet{\tilde{\sigma}}}_{X, \rho_X}$ used as the metric on $\tilde{\sigma}(X)$.
  By the completeness of the Prohorov metric,
  we deduce that the space-rooted metrization $\FSMet{\tilde{\tau}}$ is complete.
  Hence, we obtain \ref{dfn item: 1. Polish functor in RF}.

  Fix $(X, \rho_X) \in \ob(\rBCMcat)$.
  Suppose that a sequence $(\mu_n)_{n \geq 1}$ in $\tilde{\tau}(X) \setminus \FS{\tilde{\tau}}_{k}(X,\rho_X)$ 
  converges weakly to $\mu \in \tilde{\tau}(X)$.
  Since $\FS{\tilde{\sigma}}_{k}(X)$ is open in $\tilde{\sigma}(X)$,
  we deduce from the Portmanteau theorem that 
  \begin{equation} \label{pr eq: 3. Polishness of prob functor in RV}
    \mu( \FS{\tilde{\tau}}_{k}(X) )
    \leq 
    \liminf_{n \to \infty} 
    \mu_n( \FS{\tilde{\tau}}_{k}(X) )
    \leq 
    1 - k^{-1},
  \end{equation}
  which implies that $\mu \in \tilde{\tau}(X) \setminus \FS{\tilde{\tau}}_{k}(X)$.
  Hence, $\FS{\tilde{\tau}}_k$ is an open subfunctor of $\FS{\tilde{\tau}}$.
  Moreover, using the pullback-stability of $\FS{\tilde{\sigma}}_k$,  
  one can show that $\FS{\tilde{\tau}}_k$ is also pullback-stable.  
  Thus, \ref{dfn item: 2. Polish functor in RF} is satisfied. 

  Finally, we verify \ref{dfn item: 3. Polish functor in RF}.
  Fix $(X, \rho_X) \in \ob(\rBCMcat)$ and $\mu \in \bigcap_{k \geq 1} \FS{\tilde{\tau}}_{k}(X, \rho_X)$.
  Since we may assume that $\FS{\tilde{\sigma}}_{k}(X, \rho_X)$ is decreasing to $\eta_X (\sigma(X))$
  (see Remark \ref{3. rem: decreasing Polish system}),
  it follows that 
  \begin{equation}
    \mu \bigl( \eta_X(\sigma(X)) \bigr) 
    = 
    \lim_{k \to \infty} \mu(\FS{\tilde{\sigma}}_k(X))
    = 1. 
  \end{equation}
  Thus, we can define a probability measure $P$ on $\sigma(X)$ by setting 
  $P(\cdot) \coloneqq \mu(\eta_X(\cdot))$.
  It then holds that $\zeta_X(P) = P \circ \eta_X^{-1} = \mu$,
  which implies $\zeta_X(\tau(X)) \supseteq \bigcap_{k \geq 1} \FS{\tilde{\tau}}_k(X, \rho_X)$.
  The converse inclusion is straightforward and hence \ref{dfn item: 3. Polish functor in RF} is satisfied.
  This completes the proof.
\end{proof}

Using Prohorov's theorem (cf.\ \cite[Theorem~5.1]{Billingsley_99_Convergence}),
we obtain the following sufficient condition for precompactness in the space $\rootedBCM(\ProbFunct{\sigma})$

\begin{thm} [Precompactness]\label{thm: precompactness for prob st}
  Assume that $\sigma$ is upper semicontinuous.
  Fix a space-rooted metrization $\FSMet{\sigma}$.
  Fix a non-empty index set $\mathscr{A}$.
  A subset $\{ \cX_\alpha = (X_\alpha, \rho_\alpha, P_\alpha) \mid \alpha \in \mathscr{A} \}$ 
  of $\rootedBCM(\ProbFunct{\sigma})$ is precompact if the following conditions are satisfied.
  \begin{enumerate} [label= \textup{(\roman*)}]
    \item \label{thm item: 1. precompactness for prob st}
      The subset $\{ (X_\alpha, \rho_\alpha) \mid \alpha \in \mathscr{A}\}$ of $\rootedBCM$
      is precompact in the local Gromov--Hausdorff topology.
    \item \label{thm item: 2. precompactness for prob st}
      For every $\varepsilon \in (0,1)$, 
      there exists a precompact subfunctor $\sigma_\varepsilon$ of $\FS{\sigma}$ 
      such that 
      \begin{equation}
        P_\alpha(\sigma_\varepsilon(X_\alpha, \rho_\alpha)) > 1 - \varepsilon,
        \quad 
        \forall \alpha \in \mathscr{A}.
      \end{equation}
  \end{enumerate}
\end{thm}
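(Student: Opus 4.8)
The plan is to deduce the statement from the converse implication in Theorem~\ref{thm: precompact in RF}, applied to the structure $\tau = \ProbFunct{\sigma}$. First I would record that $\tau$ meets the standing hypotheses there: by Theorem~\ref{thm: Polishness of prob st} the structure $\ProbFunct{\sigma}$ is stably metrizable, hence (via Lemmas~\ref{lem: metrization defines space and ER metrization} and~\ref{lem: ER metrization defines SR metrization}) space-rooted metrizable, and by Proposition~\ref{prop: continuity of prob st} it is embedding-continuous and, since $\sigma$ is assumed upper semicontinuous, also upper semicontinuous. Thus it suffices to verify condition~\ref{thm item: 2. precompact in RF} of Theorem~\ref{thm: precompact in RF}: that $\{(X_\alpha, \rho_\alpha) \mid \alpha \in \mathscr{A}\}$ is precompact in the local Gromov--Hausdorff topology, which is exactly hypothesis~\ref{thm item: 1. precompactness for prob st}, and that there exists a precompact subfunctor $\tau'$ of $\FS{\tau}$ with $\{\cX_\alpha \mid \alpha \in \mathscr{A}\} \subseteq \rootedBCM(\tau')$.

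The construction of $\tau'$ is the heart of the argument. For each $k \geq 1$, apply hypothesis~\ref{thm item: 2. precompactness for prob st} with $\varepsilon = 1/k$ to obtain a precompact subfunctor $\sigma_k$ of $\FS{\sigma}$ with $P_\alpha(\sigma_k(X_\alpha, \rho_\alpha)) > 1 - 1/k$ for all $\alpha$. Since $\sigma_k(X, \rho_X)$ is precompact in $\sigma(X)$, its closure $K_k(X, \rho_X) \coloneqq \closure_{\sigma(X)}(\sigma_k(X, \rho_X))$ is compact. I then define, for each $(X, \rho_X) \in \ob(\rBCMcat)$,
\[
  \tau'(X, \rho_X)
  \coloneqq
  \bigl\{ P \in \Prob{\sigma(X)} \mid P\bigl( K_k(X, \rho_X) \bigr) \geq 1 - 1/k \ \text{for all } k \geq 1 \bigr\},
\]
with the relative topology and morphisms inherited from $\FS{\tau}$. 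To see (via Remark~\ref{rem: define subfunctor}) that $\tau'$ is a subfunctor, I would check it is preserved under morphisms: if $f \colon (X, \rho_X) \to (Y, \rho_Y)$ is root-preserving isometric, then $\sigma_f(\sigma_k(X, \rho_X)) \subseteq \sigma_k(Y, \rho_Y)$ because $\sigma_k$ is a subfunctor of $\FS{\sigma}$, whence $K_k(X, \rho_X) \subseteq \sigma_f^{-1}(K_k(Y, \rho_Y))$ by continuity of $\sigma_f$, and therefore $(\sigma_f)_* P(K_k(Y, \rho_Y)) \geq P(K_k(X, \rho_X)) \geq 1 - 1/k$ for every $P \in \tau'(X, \rho_X)$. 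The family $\tau'(X, \rho_X)$ is uniformly tight, with the compact sets $K_k(X, \rho_X)$ as witnesses, so Prohorov's theorem (cf.\ \cite[Theorem~5.1]{Billingsley_99_Convergence}), applied in the separable metric space $\sigma(X)$, shows $\tau'(X, \rho_X)$ is relatively compact in the weak topology on $\Prob{\sigma(X)}$; hence $\tau'$ is a precompact subfunctor of $\FS{\tau}$. Finally $P_\alpha(K_k(X_\alpha, \rho_\alpha)) \geq P_\alpha(\sigma_k(X_\alpha, \rho_\alpha)) > 1 - 1/k$ for all $k$, so $\cX_\alpha \in \rootedBCM(\tau')$ for all $\alpha$.

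With condition~\ref{thm item: 2. precompact in RF} of Theorem~\ref{thm: precompact in RF} verified, its converse implication gives precompactness of $\{\cX_\alpha \mid \alpha \in \mathscr{A}\}$ in the local GH-type topology with preserved roots; since $\ProbFunct{\sigma}$ is stably metrizable and Polish with preserved roots, Theorem~\ref{thm: main result} identifies this with the local GH-type topology, completing the proof. I expect the main obstacle to be the handling of $\tau'$: making sure the closures $K_k(X, \rho_X)$ remain compatible with morphisms so that $\tau'$ is genuinely a subfunctor, and checking that Prohorov's theorem applies, which uses that $\sigma(X)$ is a separable metric space — this follows from $\sigma$ being Polish with preserved roots, so that $\sigma(X)$ topologically embeds into a Polish space via the embedding of Definition~\ref{dfn: Polish in RF}. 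The remaining verifications are routine.
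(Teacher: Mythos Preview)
Your proposal is correct and follows essentially the same route as the paper: build a precompact subfunctor $\tau'$ of $\FS{\tau}$ from the tightness-type hypothesis~\ref{thm item: 2. precompactness for prob st} via Prohorov's theorem, and then invoke Theorem~\ref{thm: precompact in RF}. The paper defines $\tau'(X,\rho_X)$ directly as $\{P:\ P(\sigma_\varepsilon(X,\rho_X))>1-\varepsilon\ \text{for all }\varepsilon\in(0,1)\}$, whereas you discretize to $\varepsilon=1/k$ and pass to closures $K_k(X,\rho_X)$; your version is slightly more explicit about why Prohorov applies and why $\tau'$ is preserved under morphisms, but the argument is the same in substance.
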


\begin{proof}
  Assume that \ref{thm item: 1. precompactness for prob st} and \ref{thm item: 2. precompactness for prob st} are satisfied.  
  For each $h > 0$ and $t > 0$, we set
  \begin{equation}
    \delta(h, t) 
    \coloneqq 
    \sup_{\alpha \in \mathscr{A}} w_{d^{\FSMet{\sigma}}_{X_\alpha, \rho_\alpha}}(F_\alpha, h, t).
  \end{equation}
  We then define a subfunctor $\tau'$ of $\FS{\tau}$ as follows:  
  for each $(X, \rho_X) \in \ob(\rBCMcat)$, we let $\tau'(X, \rho_X)$ be the set of all $P \in \tau(X) = \Prob{\sigma(X)}$ such that 
  \begin{equation}
    P(\sigma_\varepsilon(X, \rho_X)) > 1- \varepsilon,
    \quad 
    \forall \varepsilon \in (0,1).
  \end{equation}
  By Prohorov's theorem (cf.\ \cite[Theorem~5.1]{Billingsley_99_Convergence}), 
  the functor $\tau'$ is a precompact subfunctor of $\FS{\tau}$,  
  and $\rootedBCM(\tau')$ contains $\{\cX_\alpha \mid \alpha \in \mathscr{A}\}$.  
  By Proposition~\ref{prop: continuity of prob st},  
  we may apply Theorem~\ref{thm: precompact in RF} to conclude that the family $\{\cX_\alpha \mid \alpha \in \mathscr{A}\}$ is precompact.
\end{proof}

In practice, however, 
a more useful precompactness criterion can be obtained 
by using a tightness criterion for each concrete $\sigma$ and applying Theorem~\ref{thm: precompact in RF} directly. 
Here we give a precompactness criterion in the case where $\sigma = \SkorohodSt(\RNp, \PointFunc)$, 
which is useful for studying convergence of stochastic processes living on different spaces.
We first recall a tightness criterion for probability measures on cadlag functions.
Note that given a random element $\xi$ we denote by $P_\xi$ its underlying probability measure.

\begin{lem} [{Tightness in the Skorohod topology, \cite[Theorem 23.4]{Kallenberg_21_Foundations}}]
  \label{lem: tightness in the Skorohod}
  Fix a dense set $T \subseteq \RNp$,
  a rooted $\bcmAB$ space $(S, \rho_{S})$,
  and a non-empty index set $\mathscr{A}$.
  A family $\{\xi_\alpha\}_{\alpha \in \mathscr{A}}$ of random elements of $D(\RNp, S)$ 
  is tight if and only if the following conditions are satisfied.
  \begin{enumerate} [label = \textup{(\roman*)}, series = tightness for stochastic processes with cadlag paths]
    \item \label{4. lem item: tightness of values of each point}
      For each $t \in T$, 
      it holds that 
      $\displaystyle
        \lim_{r \to \infty} 
        \sup_{\alpha \in \mathscr{A}}
        P_{\xi_\alpha}\left( \xi_\alpha(t) \notin S|_{\rho_S}^{(r)} \right) 
        = 0.
      $
    \item \label{4. lem item: uniform convergence in probability}
      For each $t > 0$, it holds that, for all $\varepsilon > 0$,
      $\displaystyle
        \lim_{h \to 0}
        \sup_{\alpha \in \mathscr{A}} 
        P_{\xi_\alpha}\bigl( w_{d_S}(\xi_\alpha, h, t) > \varepsilon \bigr) 
        = 0
      $.
  \end{enumerate}
  In that case, the following result stronger than \ref{4. lem item: tightness of values of each point} holds.
  \begin{enumerate} [resume* = tightness for stochastic processes with cadlag paths]
    \item \label{4. lem item: tightness of values on compact intervals}
      For each $t \geq 0$, 
      it holds that 
      $\displaystyle
        \lim_{r \to \infty} 
        \sup_{\alpha \in \mathscr{A}} 
        P_{\xi_\alpha}\left( \xi_\alpha(s) \notin S|_{\rho_S}^{(r)},\, \forall s \leq t \right) 
        = 0.
      $
  \end{enumerate}
\end{lem}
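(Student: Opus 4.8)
The plan is to prove the final statement of the tightness lemma, namely the implication that the two tightness conditions~\ref{4. lem item: tightness of values of each point} and~\ref{4. lem item: uniform convergence in probability} imply the stronger uniform containment property~\ref{4. lem item: tightness of values on compact intervals}. This is the ``In that case'' clause of Lemma~\ref{lem: tightness in the Skorohod}, so I am entitled to assume that the family $\{\xi_\alpha\}_{\alpha \in \mathscr{A}}$ is tight (equivalently, that \ref{4. lem item: tightness of values of each point} and \ref{4. lem item: uniform convergence in probability} hold). The goal is to upgrade control of $\xi_\alpha(t)$ at a \emph{single} time $t$ to uniform control of $\xi_\alpha(s)$ over the whole interval $s \in [0,t]$, which is the natural supremum-type reinforcement that tightness should yield.

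First I would fix $t \geq 0$ and $\varepsilon > 0$, and unpack the target quantity: I need to bound $\sup_\alpha P_{\xi_\alpha}(\exists s \leq t : \xi_\alpha(s) \notin S|_{\rho_S}^{(r)})$ and show it tends to $0$ as $r \to \infty$. The key device is to combine the modulus-of-continuity control from \ref{4. lem item: uniform convergence in probability} with pointwise containment on a finite mesh. Concretely, fix a small $h>0$ and a finite set of times $0 = t_0 < t_1 < \cdots < t_m = t$ drawn from the dense set $T$ with consecutive gaps at most $h$; the modulus $w_{d_S}(\xi_\alpha, h, t)$ controls the oscillation of $\xi_\alpha$ within each mesh subinterval (using the definition of $w_{d_S}$ via partitions $\Pi_t^h$). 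On the event that $w_{d_S}(\xi_\alpha, h, t) \leq \eta$ and that each $\xi_\alpha(t_i)$ lies in $S|_{\rho_S}^{(r)}$, every value $\xi_\alpha(s)$ for $s \leq t$ lies within distance $\eta$ of some $\xi_\alpha(t_i)$ along the partition witnessing the modulus, hence $\xi_\alpha(s) \in S|_{\rho_S}^{(r+\eta)}$. This is the geometric heart of the argument: oscillation control plus finitely many pinned values forces the whole path into a slightly enlarged ball.

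Next I would assemble the probabilistic estimate via a union bound. Choosing $\eta = 1$ (say), condition~\ref{4. lem item: uniform convergence in probability} gives, for $h$ small enough, $\sup_\alpha P_{\xi_\alpha}(w_{d_S}(\xi_\alpha,h,t) > 1) < \varepsilon/2$ uniformly in $\alpha$. With this $h$ (and hence the mesh $t_0,\dots,t_m$) now fixed, condition~\ref{4. lem item: tightness of values of each point} applied at each of the finitely many times $t_i$ gives an $r_0$ so that $\sup_\alpha P_{\xi_\alpha}(\xi_\alpha(t_i) \notin S|_{\rho_S}^{(r_0)}) < \varepsilon/(2(m+1))$ for each $i$. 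Then for $r \geq r_0 + 1$, the union bound over the bad oscillation event and the $m+1$ pointwise-escape events yields $\sup_\alpha P_{\xi_\alpha}(\exists s \leq t : \xi_\alpha(s) \notin S|_{\rho_S}^{(r)}) < \varepsilon$, and letting $r \to \infty$ (with $\varepsilon$ arbitrary) gives the claim.

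The main obstacle I expect is the careful handling of the modulus of continuity $w_{d_S}$ at the endpoints and near jump times: the partition achieving (or nearly achieving) the infimum in the definition of $w_{d_S}(\xi_\alpha, h, t)$ consists of \emph{half-open} subintervals $I_k = [u_k, u_{k+1})$, and I must ensure the mesh points $t_i$ can be aligned with (or related to) such a partition so that each $s \leq t$ is genuinely comparable to a pinned value. Since the infimum over $\Pi_t^h$ need not be attained and the partition is adapted to $\xi_\alpha$ rather than being the fixed mesh, I would argue more robustly: fix a deterministic mesh of spacing at most $h$, note that it belongs to $\Pi_t^h$, and use that the modulus is an infimum hence bounded by the oscillation over \emph{this} fixed mesh; this sidesteps the non-attainment issue and guarantees that controlling $w_{d_S}$ controls oscillation over the chosen deterministic mesh. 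A secondary subtlety is whether the chosen mesh points lie in $T$; since $T$ is dense and \ref{4. lem item: tightness of values of each point} is stated for $t \in T$, I would select $t_i \in T$ with gaps at most $h$, which is possible by density, and handle the right endpoint $t$ (possibly not in $T$) by taking $t_m \in T$ slightly less than $t$ together with the oscillation bound on $[t_m, t]$.
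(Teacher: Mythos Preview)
The paper does not give a proof of this lemma; it is simply cited from Kallenberg, so there is no ``paper's own proof'' to compare against. Your overall plan for the ``in that case'' clause is the standard one and is correct: on the event $\{w_{d_S}(\xi_\alpha,h,t)\le\eta\}$ there is a (random, $\xi_\alpha$-dependent) partition in $\Pi_t^h$ whose cells have length at least $h$, and if the deterministic mesh $\{t_i\}\subseteq T$ has gaps strictly less than $h$ then each such cell contains some $t_i$; hence every $\xi_\alpha(s)$ with $s\le t$ lies within $\eta$ of some $\xi_\alpha(t_i)$, and a union bound over the modulus event and the finitely many pointwise-escape events finishes the job.

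Your proposed ``more robust'' fix in the final paragraph, however, is wrong on both counts and should be dropped in favour of the argument you had first. A mesh with spacing at \emph{most} $h$ does not belong to $\Pi_t^h$: that set consists of partitions into subintervals of length at \emph{least} $h$. And the inequality you invoke, ``the modulus is an infimum hence bounded by the oscillation over this fixed mesh'', goes the wrong way for what you need --- it says $w_{d_S}\le(\text{oscillation over the fixed mesh})$, whereas you need the reverse to conclude that small $w_{d_S}$ forces small oscillation on your chosen mesh. Stick with the random partition witnessing the modulus. The only genuine care required is the final cell $[u_{m-1},t)$, which may have length $<h$ and hence contain no mesh point; handle it by noting that $u_{m-1}$ lies in the preceding cell $[u_{m-2},u_{m-1})$ of length $\ge h$, so $\xi_\alpha(u_{m-1}-)$ (and by a further $\eta$, $\xi_\alpha(u_{m-1})$) is within $2\eta$ of some $\xi_\alpha(t_j)$, and then every $s\in[u_{m-1},t)$ satisfies $d_S(\xi_\alpha(s),\xi_\alpha(t_j))\le 3\eta$. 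Adjusting the final radius from $r$ to $r+3\eta$ is harmless.
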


Using the above tightness criterion,
we deduce the following useful precompactness criterion for the space $\rootedBCM(\ProbFunct{\SkorohodSt(\RNp, \PointFunc)})$.

\begin{thm} [Precompactness in $\rootedBCM(\ProbFunct{\SkorohodSt(\RNp, \PointFunc)})$]
  Fix a dense set $T \subseteq \RNp$,
  and a non-empty index set $\mathscr{A}$.
  Let $\{\cX_\alpha = (X_\alpha, \rho_\alpha, P_\alpha)\}_{\alpha \in \mathscr{A}}$ 
  be a family of elements of $\rootedBCM(\ProbFunct{\SkorohodSt(\RNp, \PointFunc)})$. 
  Write $\cX_\alpha = (X_\alpha, \rho_\alpha, P_\alpha)$.
  For each $\alpha$,
  we set $\xi_\alpha$ to be a random element whose law coincides with $P_\alpha$.
  Then the family $\{\cX_\alpha\}_{\alpha \in \mathscr{A}}$ is precompact 
  if and only if the following conditions are satisfied.
  \begin{enumerate} [label= \textup{(\roman*)}, series = precompactness for prob st with cadlag st]
    \item \label{thm item: law of stoc. proc., the spaces are pre-cpt}
      The family $\{(X_\alpha, \rho_\alpha)\}_{\alpha \in \mathscr{A}}$ in $\rootedBCM$
      is precompact in the local Gromov--Hausdorff topology.
    \item \label{thm item: law of stoc. proc., values at each point is tight}
      For each $t \in T$, 
      it holds that  
      $\displaystyle
        \lim_{r \to \infty} 
        \sup_{\alpha \in \mathscr{A}} 
        P_{\xi_\alpha}\left( \xi_\alpha(t) \notin X_\alpha|_{\rho_\alpha}^{(r)} \right) 
        = 0.
      $
      \item \label{thm item: law of stoc. proc., uniform convergence in probability}
      For each $t > 0$, it holds that, for all $\varepsilon > 0$,
      $\displaystyle
        \lim_{h \to 0}
        \sup_{\alpha \in \mathscr{A}} 
        P_{\xi_\alpha}( w_{d_{X_\alpha}}(\xi_\alpha, h, t) > \varepsilon) 
        = 0
      $
  \end{enumerate}
  In that case, the following result stronger than \ref{thm item: law of stoc. proc., values at each point is tight} holds.
  \begin{enumerate} [resume* = precompactness for prob st with cadlag st]
    \item \label{4. lem item: law of stoc. proc., tightness of values on compact intervals}
      For each $t \geq 0$, 
      it holds that 
      $\displaystyle
        \lim_{r \to \infty} 
        \sup_{\alpha \in \mathscr{A}} 
        P_{\xi_\alpha}\left( \xi_\alpha(s) \in X_\alpha|_{\rho_\alpha}^{(r)},\, \forall s \leq t \right) 
        = 0.
      $
  \end{enumerate}
\end{thm}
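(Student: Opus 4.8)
The plan is to derive the criterion from the general precompactness theorem, Theorem~\ref{thm: precompact in RF}, combined with the classical tightness characterisation for the $J_1$-Skorohod topology recalled in Lemma~\ref{lem: tightness in the Skorohod}. First I would fix notation: write $\sigma \coloneqq \SkorohodSt(\RNp, \PointFunc)$ and $\tau \coloneqq \ProbFunct{\sigma}$, so that $\sigma(X) = D(\RNp, X)$ with the $J_1$-Skorohod topology, $\tau(X) = \Prob{D(\RNp, X)}$ with the weak topology, and, for an isometric embedding $f \colon X \to Y$, the morphism $\tau_f$ is the pushforward of measures along $F \mapsto f \circ F$. By Theorems~\ref{thm: Polishness of point functor}, \ref{thm: Polishness of Skorohod structure} and \ref{thm: Polishness of prob st}, $\tau$ is stably metrizable and Polish with preserved roots, and by Propositions~\ref{prop: continuity of Skorohod st} and \ref{prop: continuity of prob st} it is embedding-continuous and upper semicontinuous; in particular the local GH-type topology on $\rootedBCM(\tau)$ is well defined (the two versions coinciding), and Theorem~\ref{thm: precompact in RF} applies in both directions. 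I would also note, from the constructions in the proofs of Theorems~\ref{thm: Polishness of Skorohod structure} and \ref{thm: Polishness of prob st}, that for a rooted space $(X, \rho_X)$ the space-rooted metric on $\sigma(X)$ is the complete $J_1$-Skorohod metric built from $d_X$ and the one on $\tau(X)$ is the Prohorov metric built from it; since $D(\RNp, X)$ is Polish, precompactness in $\tau(X)$ is equivalent to uniform tightness by Prohorov's theorem, and tightness of a family in $D(\RNp, X)$ is characterised by Lemma~\ref{lem: tightness in the Skorohod}.

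For the ``if'' direction I would, assuming \ref{thm item: law of stoc. proc., the spaces are pre-cpt}--\ref{thm item: law of stoc. proc., uniform convergence in probability}, set $\phi(t, r) \coloneqq \sup_{\alpha} P_{\xi_\alpha}(\xi_\alpha(t) \notin X_\alpha|_{\rho_\alpha}^{(r)})$ for $t \in T$, $r > 0$, and $\psi(t, h, \varepsilon) \coloneqq \sup_{\alpha} P_{\xi_\alpha}(w_{d_{X_\alpha}}(\xi_\alpha, h, t) > \varepsilon)$ for $t, h, \varepsilon > 0$; by hypothesis $\phi(t, r) \to 0$ as $r \to \infty$ and $\psi(t, h, \varepsilon) \to 0$ as $h \to 0$. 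I would then define a subfunctor $\tau'$ of $\FS{\tau}$ by letting $\tau'(X, \rho_X)$ be the set of those $P \in \Prob{D(\RNp, X)}$ for which a random element $\xi \sim P$ satisfies $P(\xi(t) \notin X|_{\rho_X}^{(r)}) \le \phi(t, r)$ for all $t \in T$, $r > 0$, and $P(w_{d_X}(\xi, h, t) > \varepsilon) \le \psi(t, h, \varepsilon)$ for all $t, h, \varepsilon > 0$. Since a root-preserving isometric embedding $f \colon X \to Y$ satisfies $f(X|_{\rho_X}^{(r)}) = f(X) \cap Y|_{f(\rho_X)}^{(r)}$ and $w_{d_Y}(f \circ \xi, h, t) = w_{d_X}(\xi, h, t)$, the collection $\tau'$ is preserved by the morphisms of $\FS{\tau}$ and thus is a genuine subfunctor by Remark~\ref{rem: define subfunctor}; by Lemma~\ref{lem: tightness in the Skorohod} together with Prohorov's theorem, each $\tau'(X, \rho_X)$ is precompact in $\tau(X)$. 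Since $\{\cX_\alpha \mid \alpha \in \mathscr{A}\} \subseteq \rootedBCM(\tau')$, the implication \ref{thm item: 2. precompact in RF} $\Rightarrow$ \ref{thm item: 1. precompact in RF} of Theorem~\ref{thm: precompact in RF} (valid since $\tau$ is upper semicontinuous) yields that $\{\cX_\alpha \mid \alpha \in \mathscr{A}\}$ is precompact.

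For the ``only if'' direction I would argue by contradiction, in the spirit of the proof of Theorem~\ref{thm: precompact for the point functor}. Condition \ref{thm item: law of stoc. proc., the spaces are pre-cpt} is immediate from Corollary~\ref{cor: projection continuity in RF}. If \ref{thm item: law of stoc. proc., values at each point is tight} failed, there would be $t \in T$, $\varepsilon_0 > 0$, indices $\alpha_n$, and radii $r_n \uparrow \infty$ with $P_{\xi_{\alpha_n}}(\xi_{\alpha_n}(t) \notin X_{\alpha_n}|_{\rho_{\alpha_n}}^{(r_n)}) > \varepsilon_0$; by precompactness I may pass to a subsequence along which $\cX_{\alpha_n} \to \cX_\infty$ in the local GH-type topology, and Theorem~\ref{thm: RF convergence} then provides a rooted $\bcmAB$ space $(M, \rho_M)$ and root-preserving isometric embeddings $f_n$ of $X_{\alpha_n}$ and $f_\infty$ of $X_\infty$ into $M$ such that $\tau_{f_n}(P_{\alpha_n}) \to \tau_{f_\infty}(P_\infty)$ weakly, where $\tau_{f_n}(P_{\alpha_n})$ is the law of $f_n \circ \xi_{\alpha_n}$. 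Being weakly convergent, $\{\tau_{f_n}(P_{\alpha_n})\}_n$ is tight in $D(\RNp, M)$, so Lemma~\ref{lem: tightness in the Skorohod} applied to $(M, \rho_M)$ gives $\lim_{r \to \infty} \sup_n P(f_n(\xi_{\alpha_n}(t)) \notin M|_{\rho_M}^{(r)}) = 0$; transferring through the root-preserving isometries, $f_n(\xi_{\alpha_n}(t)) \notin M|_{\rho_M}^{(r)}$ if and only if $\xi_{\alpha_n}(t) \notin X_{\alpha_n}|_{\rho_{\alpha_n}}^{(r)}$, and, using the monotonicity of these events in $r$ together with $r_n \uparrow \infty$, this contradicts the choice of the $\alpha_n$. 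Condition \ref{thm item: law of stoc. proc., uniform convergence in probability} is obtained by the same scheme, now using $w_{d_M}(f_n \circ \xi_{\alpha_n}, h, t) = w_{d_{X_{\alpha_n}}}(\xi_{\alpha_n}, h, t)$ and the corresponding item of Lemma~\ref{lem: tightness in the Skorohod}; the stronger assertion \ref{4. lem item: law of stoc. proc., tightness of values on compact intervals} follows from the same isometric translation applied to Lemma~\ref{lem: tightness in the Skorohod}\ref{4. lem item: tightness of values on compact intervals}.

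The step I expect to be the main obstacle is exactly this necessity argument: Theorem~\ref{thm: precompact in RF} only delivers a subfunctor that is precompact on each rooted space \emph{separately}, whereas \ref{thm item: law of stoc. proc., values at each point is tight}--\ref{thm item: law of stoc. proc., uniform convergence in probability} are uniform over all $\alpha \in \mathscr{A}$, i.e.\ over a whole family of unrelated spaces. Recovering this uniformity forces the subsequential reduction to a common ambient space $(M, \rho_M)$ via Theorem~\ref{thm: RF convergence}, followed by the bookkeeping that transports tightness of the weakly convergent laws back to the original spaces along the root-preserving isometries, keeping careful track of the monotonicity of the relevant events against $r_n \uparrow \infty$ and $h_n \downarrow 0$; this is fiddly but not conceptually deep, the rest being routine applications of the results already established.
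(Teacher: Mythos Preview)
Your proposal is correct and follows essentially the same route as the paper: the paper's proof is a one-line pointer to Lemma~\ref{lem: tightness in the Skorohod} combined with the template of Theorem~\ref{thm: precompact in Skorohod functor}, and your plan unpacks exactly that template --- building a precompact subfunctor from the uniform bounds for the ``if'' direction and running the contradiction-plus-common-embedding argument (as in Theorem~\ref{thm: precompact for the point functor}) for the ``only if'' direction. The concern you flag about recovering uniformity across different spaces is precisely what the subsequential embedding into a common $(M,\rho_M)$ is there to handle, and your outline already does this correctly.
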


\begin{proof}
  Using Lemma~\ref{lem: tightness in the Skorohod},
  one can prove the desired result similarly to Theorem~\ref{thm: precompact in Skorohod functor}.
\end{proof}

\appendix

\section{Omitted proofs}  \label{appendix: proofs}

In this appendix,
we provide the omitted proofs in Section~\ref{sec: metrization of several topologies}, regarding restriction systems.
Throughout this appendix, we fix a $\bcmAB$ space $X$.

\subsection{Section~\ref{sec: the Fell topology}} \label{appendix: local Hausdorff}

Recall that the restriction system $R$ from $\Closed{X}$ to $\Compact{X}$ is defined as follows:
for each $r > 0$ and $x \in X$,
\begin{equation}
  R^{(r)}_{X, x}(A) = A|_\rho^{(r)} \coloneqq A \cap D_X(x, r),
  \quad 
  A \in \Closed{X}.
\end{equation}
The aim of this subsection is to prove Proposition~\ref{prop: RS for local Hausdorff},
that is, the restriction system is complete and satisfies Condition~4
(recall it from Section~\ref{sec: Metric for non-compact objects}).

The following result asserting the continuity of the restriction system at almost every radius is straightforward.

\begin{lem} \label{lem: RS for local Hausdorff is conti}
  For each $x \in X$ and $A \in \Closed{X}$, the map $(0,\infty) \ni r \mapsto A|_x^{(r)} \in \Compact{X}$ 
  is cadlag and the left limit at $r$ is the closure of $A \cap B_X(x, r)$.
\end{lem}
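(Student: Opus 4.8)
\textbf{Proof plan for Lemma~\ref{lem: RS for local Hausdorff is conti}.}
The plan is to fix $x\in X$ and $A\in\Closed{X}$ and to analyze the monotone family $(A|_x^{(r)})_{r>0}$ of compact sets, writing $D_r\coloneqq D_X(x,r)$ and $B_r\coloneqq B_X(x,r)$. Since $X$ is a $\bcmAB$ space, every $A|_x^{(r)}$ is indeed compact, so the map is well-defined into $\Compact{X}$. The family is increasing in $r$, and the total object controlling everything is the compact set $A|_x^{(r+1)}$, inside which all the relevant sets for radii $\le r+1$ live; this lets me work within a fixed compact ambient set and freely use Lemma~\ref{lem: convergence in Hausdorff} (the Painlev\'e--Kuratowski characterization of Hausdorff convergence) together with Lemma~\ref{lem: precompact in Hausdorff}.

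\textbf{Right-continuity.} I would first show $A|_x^{(s)}\to A|_x^{(r)}$ in the Hausdorff topology as $s\downarrow r$. For the upper bound, $A|_x^{(s)}\subseteq A|_x^{(r)}{}^{\varepsilon}$ whenever $s\le r+\varepsilon$, since any point of $A$ at distance $\le s$ from $x$ lies within $s-r\le\varepsilon$ of the point on the geodesic-free sense — more carefully, one argues directly: a point $y\in A$ with $d_X(x,y)\le s$ need not be close to $A\cap D_r$ in a general metric space, so instead I use Painlev\'e--Kuratowski. Concretely: by Lemma~\ref{lem: convergence in Hausdorff} applied inside the compact set $A|_x^{(r+1)}$, it suffices to check (PK\,1) and (PK\,2$'$). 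Condition (PK\,2$'$): any $y\in A|_x^{(r)}$ already lies in $A|_x^{(s)}$ for all $s\ge r$, so the constant sequence works. Condition (PK\,1): if $y_{s_n}\in A|_x^{(s_n)}$ with $s_n\downarrow r$ and $y_{s_n}\to y$, then $y\in A$ (as $A$ is closed) and $d_X(x,y)=\lim d_X(x,y_{s_n})\le\lim s_n=r$, hence $y\in A\cap D_r=A|_x^{(r)}$. This gives right-continuity; the argument needs the remark that $\bigcup_{s\in(r,r+1]}A|_x^{(s)}\subseteq A|_x^{(r+1)}$ is precompact, which is immediate.

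\textbf{Existence of the left limit and its identification.} For $s\uparrow r$ the family $A|_x^{(s)}$ is increasing and contained in the compact set $A|_x^{(r)}$, so by Lemma~\ref{lem: precompact in Hausdorff} the net is precompact; to get an actual limit I identify the Painlev\'e--Kuratowski limit as $L\coloneqq\closure(A\cap B_r)$ and then invoke that monotone increasing families of compacts converge to the closure of their union. The inclusion $A\cap B_r\subseteq\bigcup_{s<r}A|_x^{(s)}$ is clear (if $d_X(x,y)<r$ pick $s$ with $d_X(x,y)\le s<r$), so $\bigcup_{s<r}A|_x^{(s)}=A\cap B_r$ and hence $\closure\bigl(\bigcup_{s<r}A|_x^{(s)}\bigr)=L$. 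For the Hausdorff convergence $A|_x^{(s)}\to L$ I again verify (PK\,1),(PK\,2$'$) inside $A|_x^{(r)}$: (PK\,2$'$) for $y\in L$ follows by approximating $y$ by points of $A\cap B_r$, each of which sits in some $A|_x^{(s_n)}$ with $s_n<r$; (PK\,1) is as before using closedness of $L$ and the estimate $d_X(x,y)=\lim d_X(x,y_{s_n})\le r$ together with the fact that a limit of points in $A\cap B_{s_n}$ with $s_n<r$ lies in $\closure(A\cap B_r)=L$. This shows the left limit exists and equals $\closure(A\cap B_X(x,r))$, completing the proof.

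\textbf{Main obstacle.} The only subtlety — and the step I would be most careful about — is that in a general (non-geodesic) metric space one cannot control Hausdorff distance between $A|_x^{(s)}$ and $A|_x^{(r)}$ by $|s-r|$ directly; neighborhoods of $A\cap D_r$ need not swallow $A\cap D_s$. The clean workaround is to abandon $\varepsilon$-neighborhood estimates entirely and argue exclusively via Painlev\'e--Kuratowski convergence of monotone families of compact sets inside a fixed compact ambient ball, which is exactly what Lemmas~\ref{lem: convergence in Hausdorff} and~\ref{lem: precompact in Hausdorff} are built to support.
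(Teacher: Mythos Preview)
Your proposal is correct. The paper itself does not give a self-contained argument here but simply defers to \cite[Lemma~3.2(i)]{Khezeli_20_Metrization}; your route via the Painlev\'e--Kuratowski characterization (Lemma~\ref{lem: convergence in Hausdorff}) applied to the monotone family $(A|_x^{(s)})$ inside the fixed compact ambient set $A|_x^{(r+1)}$ is precisely the natural way to fill in that reference, and your observation that $\varepsilon$-neighborhood estimates fail in non-geodesic spaces (forcing the PK approach) is exactly the point.
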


\begin{proof}
  This is proven similarly to \cite[Lemma 3.2(i)]{Khezeli_20_Metrization}
\end{proof}

\begin{lem} \label{lem: RS for local Hausdorff satisfies 2}
  The restriction system from $\Closed{X}$ to $\Compact{X}$ satisfies Assumption~\ref{assum: metrization of D}\ref{assum item: 2. metrization of D}.
\end{lem}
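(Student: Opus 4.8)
The goal is to verify Assumption~\ref{assum: metrization of D}\ref{assum item: 2. metrization of D} for the restriction system $A \mapsto A|_x^{(r)} = A \cap D_X(x,r)$ on $\Closed{X}$ with values in $\Compact{X}$. So I fix elements $(A_n, x_n)$, $n \in \NN \cup \{\infty\}$, of $\Closed{X} \times X$ with $x_n \to x_\infty$, an increasing sequence $(r_n)_{n \geq 1}$ with $r_n \uparrow \infty$, and I assume
$\HausMet{X}(A_n|_{x_n}^{(r_n)}, A_\infty|_{x_\infty}^{(r_n)}) \to 0$. The plan is to show that for all but countably many $r > 0$ one has $\HausMet{X}(A_n|_{x_n}^{(r)}, A_\infty|_{x_\infty}^{(r)}) \to 0$. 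Since $X$ is boundedly compact, the relevant balls are compact and the Hausdorff metric is finite on them, so I work entirely inside a fixed compact ball eventually; this lets me pass freely between Hausdorff convergence and Painlev\'e--Kuratowski convergence via Lemma~\ref{lem: convergence in Hausdorff}.

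First I would reduce to a convenient set of radii: by Lemma~\ref{lem: RS for local Hausdorff is conti}, the map $r \mapsto A_\infty|_{x_\infty}^{(r)}$ is cadlag, hence the set $N$ of $r$ at which $A_\infty|_{x_\infty}^{(r)} \neq \closure(A_\infty \cap B_X(x_\infty, r))$ is countable; I only need to prove convergence for $r \notin N$. Fix such an $r$, and fix $n_0$ with $r_{n_0} > r$, so that $A_n|_{x_n}^{(r)} = (A_n|_{x_n}^{(r_{n_0})})|_{x_n}^{(r)}$ for all $n \geq n_0$ by \ref{dfn item: RS. 1}. I then verify the two Painlev\'e--Kuratowski conditions for $A_n|_{x_n}^{(r)} \to A_\infty|_{x_\infty}^{(r)}$. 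For the lower condition \ref{lem item: 2'. convergence in Hausdorff}: take $y \in A_\infty|_{x_\infty}^{(r)}$; since $r \notin N$, I may assume $d_X(x_\infty, y) < r$ (the points on the sphere are limits of interior points, and $A_\infty|_{x_\infty}^{(r)}$ is their closure). From $\HausMet{X}(A_n|_{x_n}^{(r_n)}, A_\infty|_{x_\infty}^{(r_n)}) \to 0$ and $r_n \geq r$ eventually, I get points $y_n \in A_n|_{x_n}^{(r_n)}$ with $y_n \to y$; because $d_X(x_\infty, y) < r$ and $x_n \to x_\infty$, eventually $d_X(x_n, y_n) \leq r$, so in fact $y_n \in A_n|_{x_n}^{(r)}$, giving the required approximating sequence. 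For the upper condition \ref{lem item: 1. convergence in Hausdorff}: suppose $z_n \in A_n|_{x_n}^{(r)}$ with $z_n \to z$; then $z_n \in A_n|_{x_n}^{(r_n)}$, so by the assumed convergence $z \in A_\infty|_{x_\infty}^{(r)}$ (using $d_X(x_\infty, z) = \lim d_X(x_n, z_n) \leq r$ and Lemma~\ref{lem: convergence in Hausdorff} applied to the sequence $A_n|_{x_n}^{(r_n)}$, whose union lies in a fixed compact ball since $x_n$ is bounded and one restricts to, say, $n$ large enough that $r_n$ exceeds a common bound — but actually one only needs the tail, and a fixed compact ball $D_X(x_\infty, r+1)$ contains all $A_n|_{x_n}^{(r)}$ eventually). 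Closing the union issue carefully is where I must be slightly careful: $\bigcup_n A_n|_{x_n}^{(r)}$ is contained in $D_X(x_\infty, r + \sup_n d_X(x_n,x_\infty))$, which is compact, so Lemma~\ref{lem: convergence in Hausdorff} genuinely applies.

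The main obstacle, and the step deserving the most care, is handling the boundary sphere $\{y : d_X(x_\infty, y) = r\}$ correctly and uniformly in $n$ — points of $A_n$ that lie just inside radius $r_n$ but whose limit sits exactly at distance $r$ from $x_\infty$, and the slight shift of center from $x_n$ to $x_\infty$, together could in principle spoil the lower inclusion. This is precisely why the exceptional countable set $N$ is introduced: restricting to $r \notin N$ guarantees $A_\infty|_{x_\infty}^{(r)}$ has no ``extra'' sphere part, so every target point can be taken with $d_X(x_\infty,y) < r$, and the strict inequality absorbs both the perturbation $d_X(x_n,x_\infty) \to 0$ and the fact that $r_n \geq r$ eventually. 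Once the two Painlev\'e--Kuratowski conditions are established for each fixed $r \notin N$, Lemma~\ref{lem: convergence in Hausdorff} yields $A_n|_{x_n}^{(r)} \to A_\infty|_{x_\infty}^{(r)}$ in the Hausdorff topology, i.e.\ $\HausMet{X}(A_n|_{x_n}^{(r)}, A_\infty|_{x_\infty}^{(r)}) \to 0$, for all but the countably many $r \in N$, which is exactly Assumption~\ref{assum: metrization of D}\ref{assum item: 2. metrization of D}.
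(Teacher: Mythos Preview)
Your proposal is correct and follows essentially the same route as the paper: identify the countable exceptional set of radii via the cadlag property of $r \mapsto A_\infty|_{x_\infty}^{(r)}$ (Lemma~\ref{lem: RS for local Hausdorff is conti}), then for each good $r$ verify the two Painlev\'e--Kuratowski conditions and conclude via Lemma~\ref{lem: convergence in Hausdorff}. You are in fact slightly more careful than the paper in explicitly checking that $\bigcup_n A_n|_{x_n}^{(r)}$ lies in a fixed compact ball, which is needed to invoke Lemma~\ref{lem: convergence in Hausdorff}; your momentary detour through ``Lemma~\ref{lem: convergence in Hausdorff} applied to $A_n|_{x_n}^{(r_n)}$'' is unnecessary (that sequence has no fixed limit), but you immediately self-correct to the direct argument, which matches the paper's: pick $w_n \in A_\infty$ with $d_X(z_n,w_n) \le \varepsilon_n$, deduce $z \in A_\infty$, and combine with $d_X(x_\infty,z) \le r$.
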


\begin{proof}
  Let  $(A_n, x_n)$, $n \in \NN \cup \{\infty\}$, be elements of $\Closed{X} \times X$
  such that $x_n \to x_\infty$,
  and 
  $(r_n)_{n \geq 1}$ be an increasing sequence of positive numbers with $r_n \uparrow \infty$.
  Suppose that 
  \begin{equation} \label{pr eq: 1. RS for local Hausdorff satisfies 2}
    \varepsilon_n \coloneqq \HausMet{X}(A_n|_{x_n}^{(r_n)}, A_\infty|_{x_\infty}^{(r_n)}) \to 0.
  \end{equation}
  Fix $r > 0$ which is a continuity point of the map  $s \mapsto A_\infty|_{x_\infty}^{(s)} \in \Compact{X}$.
  Assume that elements $y_n \in A_n|_{x_n}^{(r)}$ converge to some $y \in X$.
  By \eqref{pr eq: 1. RS for local Hausdorff satisfies 2},
  we can find $z_n \in A_\infty$ such that $d_X(y_n, z_n) \leq \varepsilon_n$
  for all sufficiently large $n$.
  It then follows that $y_n \to y$ and $y \in D_X(x_\infty, r)$.
  Since $A$ is closed, we have $y \in A$.
  Hence, $y \in A|_{x_\infty}^{(r)}$,
  which establishes \ref{lem item: 1. convergence in Hausdorff}.
  Next, fix $y \in A_\infty|_{x_\infty}^{(r)}$.
  Since $r$ is a continuity point,
  there exists a sequence $y^{(k)} \in A \cap B_X(x_\infty, r)$ converging to $y$ as $k \to \infty$.
  For each $y^{(k)}$, by \eqref{pr eq: 1. RS for local Hausdorff satisfies 2},
  we can find elements $z^{(k)}_n \in A_n$ converging to $y^{(k)}$ as $n \to \infty$.
  Since $y^{(k)} \in B_X(x_\infty, r)$ and $x_n \to x_\infty$,
  we have $z^{(k)}_n \in A_n|_{x_n}^{(r)}$ for all sufficiently large $n$.
  Hence, one can find a subsequence $(n_k)_{k \geq 1}$ such that 
  $z^{(k)}_{n_k} \to y$ as $k \to \infty$ and $z^{(k)}_{n_k} \in A_{n_k}|_{x_{n_k}}^{(r)}$,
  which establishes \ref{lem item: 2. convergence in Hausdorff}.
  Thus, by Lemma~\ref{lem: convergence in Hausdorff}, we obtain that $A_n|_{x_n}^{(r)} \to A_\infty|_{x_\infty}^{(r)}$ in the Hausdorff topology.
  This completes the proof.
\end{proof}

\begin{lem} \label{lem: RS for local Hausdorff satisfies 4}
  The restriction system from $\Closed{X}$ to $\Compact{X}$ satisfies Assumption~\ref{assum: metrization of D}\ref{assum item: 4. metrization of D}.
\end{lem}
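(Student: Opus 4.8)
\textbf{Proof plan for Lemma~\ref{lem: RS for local Hausdorff satisfies 4}.}
Recall that Assumption~\ref{assum: metrization of D}\ref{assum item: 4. metrization of D} asks the following: if $(A_n, x_n)_{n \in \NN}$ is a sequence in $\Closed{X} \times X$ with $x_n \to x_\infty$ for some $x_\infty \in X$, and if $\HausMet{X}(A_n|_{x_n}^{(r_n)}, A_{n+1}|_{x_{n+1}}^{(r_n)}) < 2^{-n}$ for all sufficiently large $n$ (where $(r_n)$ is an increasing sequence with $r_n \uparrow \infty$), then $\{A_n|_{x_n}^{(r)}\}_{n \in \NN}$ is precompact in $\Compact{X}$ for every $r > 0$. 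By Lemma~\ref{lem: precompact in Hausdorff}, precompactness in the Hausdorff topology is equivalent to the existence of a single compact set $K \subseteq X$ containing all the sets $A_n|_{x_n}^{(r)}$. So the plan is: fix $r > 0$, and produce such a $K$.

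The key observation is that since $X$ is boundedly compact, the closed ball $D_X(x_\infty, R)$ is compact for every $R > 0$; hence it suffices to show that for each fixed $r > 0$ there is a radius $R = R(r) < \infty$ with $A_n|_{x_n}^{(r)} \subseteq D_X(x_\infty, R)$ for all $n$. First I would note that since $x_n \to x_\infty$, the quantity $s_0 \coloneqq \sup_n d_X(x_n, x_\infty)$ is finite, so $A_n|_{x_n}^{(r)} = A_n \cap D_X(x_n, r) \subseteq D_X(x_\infty, r + s_0)$ automatically. This already gives the result immediately: take $K \coloneqq D_X(x_\infty, r + s_0)$, which is compact by bounded compactness of $X$, and each $A_n|_{x_n}^{(r)}$ is a closed subset of $K$, hence compact and contained in $K$. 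By Lemma~\ref{lem: precompact in Hausdorff}, $\{A_n|_{x_n}^{(r)}\}_{n \in \NN}$ is therefore precompact in $\Compact{X}$.

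Thus the Cauchy-type hypothesis on the Hausdorff distances is in fact not even needed for this particular verification; the conclusion follows purely from the boundedly-compact structure of $X$ together with the convergence $x_n \to x_\infty$. (This mirrors the situation for the Prohorov restriction system in Lemma~\ref{lem: RS for vague}, where bounded compactness again does the heavy lifting.) I would write the proof as: fix $r > 0$, set $s_0 \coloneqq \sup_{n \geq 1} d_X(x_n, x_\infty) < \infty$, observe $A_n|_{x_n}^{(r)} \subseteq D_X(x_\infty, r + s_0)$ for all $n$, invoke bounded compactness to see $D_X(x_\infty, r + s_0) \in \Compact{X}$, and conclude via Lemma~\ref{lem: precompact in Hausdorff}. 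Combined with Lemma~\ref{lem: RS for local Hausdorff is conti} (which yields Condition~1), Lemma~\ref{lem: RS for local Hausdorff satisfies 2} (Condition~2), and the standard inverse-limit construction of closed sets from compatible sequences of compact sets (giving completeness of the restriction system), this establishes Proposition~\ref{prop: RS for local Hausdorff}. The only point requiring any care is confirming that \ref{assum item: 4. metrization of D} indeed implies \ref{assum item: 3. metrization of D} here, but that was already remarked in the text and is immediate from \ref{dfn item: RS. 2}; so there is no real obstacle in this lemma.
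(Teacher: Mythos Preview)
Your proof is correct and takes essentially the same approach as the paper: both arguments observe that $A_n|_{x_n}^{(r)} \subseteq D_X(x_\infty, r + s_0)$ (or a ball around some fixed reference point) for a uniform radius depending only on $r$ and the convergent sequence $(x_n)$, then invoke bounded compactness of $X$ together with Lemma~\ref{lem: precompact in Hausdorff}. Your remark that the Cauchy-type hypothesis on the Hausdorff distances plays no role is accurate and is implicitly reflected in the paper's proof as well; note, however, that your parenthetical comparison with Lemma~\ref{lem: RS for vague} is slightly misleading, since in the Prohorov case the Cauchy hypothesis \emph{is} genuinely needed to control the total masses.
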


\begin{proof}
  Let $x_n \in X$, $n \in \NN$ be such that $d(x_n, x_{n+1}) < 2^{-n}$
  and $A_n \in \Closed{X}$, $n \in \NN$.
  Fix $r > 0$.
  Write $c \coloneqq \sup_{n \geq 1} d_X(x_n, \rho_1)$, which is finite.
  It then holds that 
  \begin{equation}
    A_n|_{x_n}^{(r)} = A_n \cap D_X(x_n, r) \subseteq D_X(\rho_1, r + c),
    \quad 
    \forall n \in \NN.
  \end{equation}
  Since $D_X(x_\infty, r + c)$ is compact,
  the set $\{A_n|_{x_n}^{(r)}\}_{n \in \NN}$ is precompact in the Hausdorff topology,
  which shows Assumption~\ref{assum: metrization of D}\ref{assum item: 4. metrization of D}.
\end{proof}

\begin{lem} \label{lem: RS for local Hausdorff is complete}
  The restriction system from $\Closed{X}$ to $\Compact{X}$ is complete.
\end{lem}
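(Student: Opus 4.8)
The plan is to realise the inverse limit explicitly as the union of the members of the compatible sequence. Fix $x \in X$ and a compatible sequence $(a_k, r_k)_{k \geq 1}$ rooted at $x$; thus $a_k \in \Compact{X}$, the $r_k$ increase to $\infty$, and $a_k = a_{k'} \cap D_X(x, r_k)$ whenever $k \leq k'$. Taking $k' = k$ gives $a_k \subseteq D_X(x, r_k)$, and comparing the defining identities for consecutive indices gives $a_k = a_{k+1} \cap D_X(x, r_k) \subseteq a_{k+1}$, so $(a_k)_{k \geq 1}$ is an increasing sequence of compact sets. I will set $a \coloneqq \bigcup_{k \geq 1} a_k$ and show that $a \in \Closed{X}$ and $a|_x^{(r_k)} = a_k$ for every $k$, which is exactly condition~\ref{2. dfn item: existence of the inverse limit}.

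The one nontrivial point — and the only place where bounded compactness of $X$ enters — is that $a$ is closed. Given $y_n \in a$ with $y_n \to y$, the sequence $d_X(x, y_n)$ converges, hence is bounded by some $C$; picking $k$ with $r_k > C$, every $y_n$ lies in $D_X(x, r_k)$. Writing $y_n \in a_{j_n}$, either $j_n \geq k$, so $y_n \in a_{j_n} \cap D_X(x, r_k) = a_k$ by compatibility, or $j_n < k$, so $y_n \in a_{j_n} \subseteq a_k$ by monotonicity; in both cases $y_n \in a_k$. Since $a_k$ is compact, hence closed, $y \in a_k \subseteq a$, so $a$ is closed.

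Finally I will check $a \cap D_X(x, r_k) = a_k$. The inclusion $\supseteq$ is immediate from $a_k \subseteq a$ and $a_k \subseteq D_X(x, r_k)$. For $\subseteq$, take $y \in a \cap D_X(x, r_k)$; then $y \in a_j$ for some $j$, and if $j \geq k$ then $y \in a_j \cap D_X(x, r_k) = a_k$ by compatibility, while if $j < k$ then $y \in a_j \subseteq a_k$ by monotonicity. Hence $a|_x^{(r_k)} = a_k$ for all $k$, so the restriction system is complete. (Only finitely many $r_k$ can equal $0$ since $r_k \uparrow \infty$; one may either discard them or note that the argument above is unaffected.) I do not anticipate any real obstacle: the closedness of $a$ is the sole step using the standing hypothesis that $X$ is boundedly compact, and the remaining verifications are straightforward bookkeeping with the two defining relations of a compatible sequence.
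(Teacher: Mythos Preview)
Your proof is correct and follows exactly the same approach as the paper: define $A \coloneqq \bigcup_{k \geq 1} A_k$, verify closedness via the observation that a convergent sequence in $A$ is eventually contained in a single $A_k$, and check $A|_x^{(r_k)} = A_k$. The paper merely sketches these steps (``one can verify'', ``it is also easy to check''), whereas you have written them out in full; one minor remark is that bounded compactness of $X$ is not actually used in your closedness argument, since compactness of each $a_k$ is already part of the hypothesis.
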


\begin{proof}
  Let $(r_{k}, A_{k})_{k \geq 1}$ be a compatible sequence rooted at $x \in X$ (see Definition~\ref{dfn: complete RS}).
  Define $A \coloneqq \bigcup_{k \geq 1}A_{k}$.
  If a sequence in $A$ is convergent, then it is contained in $D_X(x, r)$ for some $r > 0$.
  From this and $(r_{k}, A_{k})_{k \geq 1}$ being compatible, one can verify that $A$ is closed.
  It is also easy to check that $A|_x^{(r_{k})} = A_{k}$,
  which shows that $R$ is complete.
\end{proof}

By the above lemmas, we obtain Proposition~\ref{prop: RS for local Hausdorff}.

\begin{proof} [{Proof of Proposition~\ref{prop: RS for local Hausdorff}}]
  By Lemmas~\ref{lem: RS for local Hausdorff is conti}, \ref{lem: RS for local Hausdorff satisfies 2}, 
  \ref{lem: RS for local Hausdorff satisfies 4}, and \ref{lem: RS for local Hausdorff is complete},
  the desired result follows.
\end{proof}


\subsection{Section~\ref{sec: the vague topology}} \label{appendix: vague}

Recall that the restriction system $R$ from $\Meas{X}$ to $\cptMeas{X}$ is defined as follows:
for each $r > 0$ and $x \in X$,
\begin{equation}
  R_x^{(r)}(\mu)(\cdot) = \mu|_x^{(r)}(\cdot) \coloneqq \mu(\cdot \cap D_X(x, r)),
  \quad 
  \mu \in \Meas{X}.
\end{equation}
The aim of this subsection is to prove Proposition~\ref{prop: RS for local Hausdorff},
that is, the restriction system is complete and satisfies Condition~4.

The following result is an analogue of Lemma~\ref{lem: RS for local Hausdorff is conti}.

\begin{lem} \label{lem: RS for vague is conti}
  For each $\rho \in X$ and $\mu \in \cptMeas{X}$, the map $(0,\infty) \ni r \mapsto \mu|_\rho^{(r)} \in \cptMeas{X}$ 
  is cadlag and the left limit at $r$ is $\mu(\cdot \cap B_X(\rho, r))$.
\end{lem}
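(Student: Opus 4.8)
The statement is the measure-theoretic analogue of Lemma~\ref{lem: RS for local Hausdorff is conti}, and I would prove it by essentially copying the structure of the proof of \cite[Lemma 3.2(i)]{Khezeli_20_Metrization}, adapting to our notation. Fix $\rho \in X$ and $\mu \in \cptMeas{X}$. Set $c_\mu \colon r \mapsto \mu(D_X(\rho, r))$, a nondecreasing, bounded, real-valued function; it is therefore cadlag in the scalar sense, and its left limit at $r$ equals $\mu(B_X(\rho, r))$ by monotone convergence of $D_X(\rho, s) \uparrow B_X(\rho, r)$ as $s \uparrow r$. The point is to upgrade this scalar statement to a statement about the measure-valued map $r \mapsto \mu|_\rho^{(r)}$ with respect to the Prohorov metric $\ProhMet{X}$.

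\textbf{Right-continuity.} First I would fix $r > 0$ and show $\mu|_\rho^{(s)} \to \mu|_\rho^{(r)}$ in $\ProhMet{X}$ as $s \downarrow r$. For $s > r$ the measure $\mu|_\rho^{(s)} - \mu|_\rho^{(r)}$ is the nonnegative measure $\mu(\cdot \cap (D_X(\rho,s) \setminus D_X(\rho,r)))$, whose total mass is $c_\mu(s) - c_\mu(r) \to 0$ as $s \downarrow r$ since $c_\mu$ is right-continuous (again by $D_X(\rho,s) \downarrow D_X(\rho,r)$ and finiteness of $\mu$ on compacts). Since $\mu|_\rho^{(s)}$ and $\mu|_\rho^{(r)}$ agree on all Borel subsets of $D_X(\rho,r)$ and $\mu|_\rho^{(s)}$ assigns the small extra mass $c_\mu(s) - c_\mu(r)$ outside, a direct estimate with the definition of $\ProhMet{X}$ gives $\ProhMet{X}(\mu|_\rho^{(s)}, \mu|_\rho^{(r)}) \leq c_\mu(s) - c_\mu(r)$: indeed, for any Borel $A$, $\mu|_\rho^{(r)}(A) \leq \mu|_\rho^{(s)}(A)$ and $\mu|_\rho^{(s)}(A) \leq \mu|_\rho^{(r)}(A) + (c_\mu(s) - c_\mu(r))$. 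This handles right-continuity cleanly.

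\textbf{Existence of the left limit.} Next I would show that $\mu|_\rho^{(s)} \to \nu \coloneqq \mu(\cdot \cap B_X(\rho, r))$ in $\ProhMet{X}$ as $s \uparrow r$. Here $\mu|_\rho^{(s)} - \nu$ is a signed measure, but since $D_X(\rho,s) \subseteq B_X(\rho,r)$ for $s < r$, actually $\nu - \mu|_\rho^{(s)} = \mu(\cdot \cap (B_X(\rho,r) \setminus D_X(\rho,s)))$ is nonnegative with total mass $\mu(B_X(\rho,r)) - c_\mu(s)$, which tends to $0$ as $s \uparrow r$ because $c_\mu(r-) = \mu(B_X(\rho,r))$. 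The same two-sided inequality argument as above then yields $\ProhMet{X}(\mu|_\rho^{(s)}, \nu) \leq \mu(B_X(\rho,r)) - c_\mu(s) \to 0$. One small point to record is that $\nu$ is indeed compactly supported (it is supported on $D_X(\rho,r)$) so it lies in $\cptMeas{X}$, making the statement well-posed. I do not expect any serious obstacle here; the only mild care needed is the bookkeeping that the Prohorov distance between two finite measures, one dominating the other on a set and the larger carrying a small amount of extra mass, is bounded by that extra mass — which follows immediately by unwinding the definition of $\ProhMet{X}$, taking $\varepsilon$ slightly larger than the mass difference and using $A \subseteq A^\varepsilon$. If one prefers, this sublemma can be cited from standard references on the Prohorov metric, or simply verified in one line.
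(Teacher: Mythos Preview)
Your proof is correct and follows essentially the same approach as the paper, which simply defers to \cite[Lemma~3.2(i)]{Khezeli_20_Metrization}; you have written out explicitly what that reference would give, using the elementary bound $\ProhMet{X}(\mu_1,\mu_2)\le |\mu_1-\mu_2|(X)$ when one measure dominates the other.
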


\begin{proof}
  This is proven similarly to \cite[Lemma 3.2(i)]{Khezeli_20_Metrization}
\end{proof}

\begin{lem} \label{lem: RS for vague satisfies 2}
  The restriction system from $\Meas{X}$ to $\cptMeas{X}$ satisfies Assumption~\ref{assum: metrization of D}\ref{assum item: 2. metrization of D}.
\end{lem}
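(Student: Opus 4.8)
The plan is to mirror the structure of the proof of Lemma~\ref{lem: RS for local Hausdorff satisfies 2}, replacing the Painlev\'{e}--Kuratowski characterization of Hausdorff convergence by a Portmanteau-type characterization of weak convergence of finite measures, as was used in the proof of Theorem~\ref{thm: convergence in vague}. So I take $\mu_n \in \Meas{X}$, $n \in \NN \cup \{\infty\}$, points $x_n \to x_\infty$, and an increasing sequence $(r_n)_{n \geq 1}$ with $r_n \uparrow \infty$ such that $\ProhMet{X}(\mu_n|_{x_n}^{(r_n)}, \mu_\infty|_{x_\infty}^{(r_n)}) \to 0$. Setting $\delta_n \coloneqq d_X(x_n, x_\infty) \to 0$, the hypothesis gives a sequence $\varepsilon_n \downarrow 0$ such that, for all $n$ large and all Borel $A \subseteq X$,
\begin{equation}
  \mu_n|_{x_n}^{(r_n)}(A) \leq \mu_\infty|_{x_\infty}^{(r_n)}(A^{\varepsilon_n}) + \varepsilon_n,
  \qquad
  \mu_\infty|_{x_\infty}^{(r_n)}(A) \leq \mu_n|_{x_n}^{(r_n)}(A^{\varepsilon_n}) + \varepsilon_n .
\end{equation}
Call $r > 0$ \emph{good} if $\mu_\infty(\{y \in X \mid d_X(x_\infty, y) = r\}) = 0$; since $\mu_\infty(D_X(x_\infty, R)) < \infty$ for each $R \in \NN$ and, for fixed $R$, the spheres $\{d_X(x_\infty, \cdot) = r\}$ with $r \in (0,R)$ are pairwise disjoint subsets of $D_X(x_\infty, R)$, only countably many $r > 0$ fail to be good. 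It therefore suffices to show $\mu_n|_{x_n}^{(r)} \to \mu_\infty|_{x_\infty}^{(r)}$ weakly for every good $r$. Observe that for $n$ large we have $r < r_n$, so by \ref{dfn item: RS. 1} the measure $\mu_n|_{x_n}^{(r)}$ is the restriction of $\mu_n|_{x_n}^{(r_n)}$ to $D_X(x_n,r)$ (and similarly for $\mu_\infty$), which is what lets the displayed inequalities feed into the argument.

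Fix a good $r$. For the upper bound on a closed set $C \subseteq X$, I would use $D_X(x_n, r) \subseteq D_X(x_\infty, r + \delta_n)$ together with the first inequality applied to $A = C \cap D_X(x_\infty, r + \delta_n)$ and the containment $\bigl(C \cap D_X(x_\infty, r+\delta_n)\bigr)^{\varepsilon_n} \subseteq C^{\varepsilon_n} \cap D_X(x_\infty, r + \delta_n + \varepsilon_n)$; since $\mu_\infty|_{x_\infty}^{(r_n)}$ agrees with $\mu_\infty$ on balls of radius at most $r_n$, this yields, for $n$ large,
\begin{equation}
  \mu_n|_{x_n}^{(r)}(C) \leq \mu_\infty\bigl( C^{\varepsilon_n} \cap D_X(x_\infty, r + \delta_n + \varepsilon_n) \bigr) + \varepsilon_n .
\end{equation}
Letting $n \to \infty$ and using $\bigcap_{\eta > 0}\bigl( C^{\eta} \cap D_X(x_\infty, r+\eta)\bigr) = C \cap D_X(x_\infty, r)$ together with continuity from above of the finite measure $\mu_\infty$ on $D_X(x_\infty, r+1)$, I obtain $\limsup_n \mu_n|_{x_n}^{(r)}(C) \leq \mu_\infty|_{x_\infty}^{(r)}(C)$. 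Taking $C = X$ gives the upper bound for the total mass; for the matching lower bound I would run the symmetric argument from the second inequality applied to $A = D_X(x_\infty, r - \delta_n - \varepsilon_n)$, using $A^{\varepsilon_n} \subseteq D_X(x_n, r)$ and continuity from below, where goodness of $r$ enters through $\mu_\infty(B_X(x_\infty, r)) = \mu_\infty(D_X(x_\infty, r))$, to get $\liminf_n \mu_n|_{x_n}^{(r)}(X) \geq \mu_\infty|_{x_\infty}^{(r)}(X)$. Combining the resulting total-mass convergence with the $\limsup$ bound on closed sets, \cite[Theorem~A.2.3.II]{Daley_Jones_03_Vol_1} gives $\mu_n|_{x_n}^{(r)} \to \mu_\infty|_{x_\infty}^{(r)}$ weakly, i.e.\ in $\ProhMet{X}$, which is the assertion.

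Every step above is routine once the bookkeeping is in place; the only point that requires genuine care is the interchange of the limit $n \to \infty$ (which sends $\varepsilon_n, \delta_n$ to $0$) with the monotone limits in the radius, which is exactly why one must restrict to \emph{good} radii and invoke continuity of $\mu_\infty$ from above and from below on compact balls separately for the two one-sided bounds. I do not expect any obstacle beyond this.
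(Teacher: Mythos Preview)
Your proof is correct and takes a genuinely different route from the paper's. The paper works directly with the Prohorov metric: for a continuity point $r$ of $s \mapsto \mu_\infty|_{x_\infty}^{(s)}$ and a given $\varepsilon>0$, it chooses $\delta$ so that $\ProhMet{X}(\mu_\infty|_{x_\infty}^{(s)},\mu_\infty|_{x_\infty}^{(r)})<\varepsilon$ for $|s-r|\le 2\delta$, and then, unpacking the definition of $\ProhMet{X}$ with $A$ ranging over Borel sets, obtains the two-sided Prohorov inequalities showing $\ProhMet{X}(\mu_n|_{x_n}^{(r)},\mu_\infty|_{x_\infty}^{(r)})\le 2\varepsilon$ for large $n$. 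Your approach instead passes through the Portmanteau characterization (closed-set $\limsup$ bound plus total-mass convergence) and invokes \cite[Theorem~A.2.3.II]{Daley_Jones_03_Vol_1}, exactly as in the proof of Theorem~\ref{thm: convergence in vague}. The paper's argument is more self-contained and yields an explicit quantitative bound; yours is conceptually cleaner and economizes by reusing machinery already present in the paper. Note that your ``good'' radii coincide with the paper's continuity points by Lemma~\ref{lem: RS for vague is conti}. One cosmetic point: you write $\varepsilon_n \downarrow 0$, but the Prohorov distances need not decrease monotonically; this is harmless since you may pass to a monotone majorant without affecting the argument.
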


\begin{proof}
  Let  $(\mu_n, x_n)$, $n \in \NN \cup \{\infty\}$, be elements of $\Meas{X} \times X$
  such that $x_n \to x_\infty$,
  and 
  $(r_n)_{n \geq 1}$ be an increasing sequence of positive numbers with $r_n \uparrow \infty$.
  Suppose that $\ProhMet{X}(\mu_n|_{x_n}^{(r_n)}, \mu_\infty|_{x_\infty}^{(r_n)}) \to 0$.
  Fix $r > 0$ which is a continuity point of the map  $s \mapsto \mu_\infty|_{x_\infty}^{(s)} \in \cptMeas{X}$.
  Fix $\varepsilon > 0$ arbitrarily.
  Since $r$ is a continuity point,
  there exists $\delta \in (0, \varepsilon \wedge (r/2))$ such that 
  \begin{equation} \label{pr eq: 2. RS for vague}
    \ProhMet{X}(\mu_\infty|_{x_\infty}^{(s)}, \mu_\infty|_{x_\infty}^{(r)}) < \varepsilon,
    \quad 
    \forall s \in [r - 2\delta, r + 2\delta].
  \end{equation}
  For all sufficiently large $n$,
  we have that 
  \begin{equation} \label{pr eq: 3. RS for vague}
    r_n  > r,
    \quad
    d_X(x_n, x_\infty) \vee \ProhMet{X}(\mu_n|_{x_n}^{(r_n)}, \mu_\infty|_{x_\infty}^{(r_n)}) < \delta.
  \end{equation}
  Fix such an $n$.
  It is enough to show that 
  \begin{equation} \label{pr eq: 4. RS for vague}
    \ProhMet{X}(\mu_n|_{x_n}^{(r)}, \mu_\infty|_{x_\infty}^{(r)}) \leq 2\varepsilon.
  \end{equation}
  Fix a Borel subset $A \subseteq X$.
  Using \eqref{pr eq: 3. RS for vague}, we deduce that 
  \begin{align}
    \mu_n|_{x_n}^{(r)}(A) &= \mu_n|_{x_n}^{(r_n)}(A \cap D_X(x_n, r))\\
                             &\leq \mu_\infty|_{x_\infty}^{(r_n)}\bigl( (A \cap D_X(x_n, r))^\delta \bigr) + \delta \\
                             &\leq  \mu_\infty|_{x_\infty}^{(r_n)}( A^\delta \cap D_X(x_n, r+\delta) ) + \delta
  \end{align}
  Since $d_X(x_n, x_\infty) < \delta$, we have $D_X(x_n, r + \delta) \subseteq D_X(\rho, r+ 2\delta)$.
  This, combined with \eqref{pr eq: 2. RS for vague}, yields that 
  \begin{equation}
    \mu_\infty|_{x_\infty}^{(r_n)}( A^\delta \cap D_X(x_n, r+\delta) ) 
    \leq     
    \mu_\infty|_{x_\infty}^{(r+2\delta)}( A^\delta )
    \leq    
    \mu_\infty|_{x_\infty}^{(r)}(A^{\delta + \varepsilon}) + \varepsilon,
  \end{equation}
  and hence 
  \begin{equation}
    \mu_n|_{x_n}^{(r)}(A) 
    \leq 
    \mu_\infty|_{x_\infty}^{(r)}(A^{\delta + \varepsilon}) + \varepsilon + \delta
    \leq    
    \mu_\infty|_{x_\infty}^{(r)}(A^{2\varepsilon}) + 2\varepsilon.
  \end{equation}
  Similarly, one can check that 
  \begin{equation}
    \mu_\infty|_{x_\infty}^{(r)}(A) \leq \mu_n|_{x_n}^{(r)}(A^{2\varepsilon}) + 2\varepsilon.
  \end{equation}
  Therefore, we obtain \eqref{pr eq: 4. RS for vague}.
\end{proof}

\begin{lem} \label{lem: RS for vague satisfies 4}
  The restriction system from $\Meas{X}$ to $\cptMeas{X}$ satisfies Assumption~\ref{assum: metrization of D}\ref{assum item: 4. metrization of D}.
\end{lem}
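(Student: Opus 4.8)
\textbf{Proof plan for Lemma~\ref{lem: RS for vague satisfies 4}.}
The statement to establish is that the restriction system $R$ from $\Meas{X}$ to $\cptMeas{X}$ satisfies Assumption~\ref{assum: metrization of D}\ref{assum item: 4. metrization of D}: namely, if $\ProhMet{X}(\mu_n|_{x_n}^{(r_n)}, \mu_{n+1}|_{x_{n+1}}^{(r_n)}) < 2^{-n}$ for all sufficiently large $n$ (with $x_n \to x_\infty$ and $r_n \uparrow \infty$), then $\{\mu_n|_{x_n}^{(r)}\}_{n \in \NN}$ is precompact in $\cptMeas{X}$ for every $r > 0$. The plan is to mimic the proof of Lemma~\ref{lem: RS for local Hausdorff satisfies 4}, but since precompactness of a family of finite measures in the Prohorov metric is equivalent (by Prohorov's theorem, as $X$ is Polish) to tightness together with a uniform bound on total masses, the key point reduces to producing such a uniform bound. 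Tightness will be automatic because each $\mu_n|_{x_n}^{(r)}$ is supported on the fixed compact set $D_X(x_\infty, r + c)$, where $c \coloneqq \sup_{n} d_X(x_n, x_\infty) < \infty$ (finite since $x_n$ converges); indeed $\mu_n|_{x_n}^{(r)}$ is concentrated on $D_X(x_n, r) \subseteq D_X(x_\infty, r+c)$.

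The main work is the uniform mass bound. First I would fix $r > 0$ and choose $N$ with $\ProhMet{X}(\mu_n|_{x_n}^{(r_n)}, \mu_{n+1}|_{x_{n+1}}^{(r_n)}) < 2^{-n}$ for all $n \geq N$, and pick $k_0 \geq N$ large enough that $r_{k_0} > r + c + 1$ (possible since $r_n \uparrow \infty$). For $n \geq k_0$, I claim $\mu_n|_{x_n}^{(r)}$ agrees with the restriction of $\mu_n|_{x_n}^{(r_n)}$ to the ball $D_X(x_n, r)$, so its total mass is at most $\mu_n|_{x_n}^{(r_n)}\bigl(D_X(x_{k_0}, r + c)\bigr)$, which is at most $\mu_n|_{x_n}^{(r_n)}\bigl(X\bigr)$ restricted to a fixed compact set. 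I then control $\mu_n|_{x_n}^{(r_n)}(D_X(x_{k_0}, r+c))$ in terms of $\mu_{k_0}|_{x_{k_0}}^{(r_{k_0})}$ by telescoping: from $\ProhMet{X}(\mu_j|_{x_j}^{(r_j)}, \mu_{j+1}|_{x_{j+1}}^{(r_j)}) < 2^{-j}$ and the restriction identity $R_{x}^{(r)} \circ R_{x}^{(s)} = R_{x}^{(s \wedge r)}$ (\ref{dfn item: RS. 1}), one gets for a Borel set $A \subseteq D_X(x_{k_0}, r+c)$ the bound
\begin{equation}
  \mu_{j+1}|_{x_{j+1}}^{(r_j)}(A) \leq \mu_j|_{x_j}^{(r_j)}(A^{2^{-j}}) + 2^{-j},
\end{equation}
and chaining these from $k_0$ up to $n-1$ while tracking the inflation of the enlargement radius (which stays bounded, e.g.\ by $\sum_{j \geq k_0} 2^{-j} < 1$, so all the sets stay inside $D_X(x_{k_0}, r + c + 1) \subseteq D_X(x_\infty, r_{k_0})$) yields
\begin{equation}
  \mu_n|_{x_n}^{(r_n)}(D_X(x_{k_0}, r+c))
  \leq
  \mu_{k_0}|_{x_{k_0}}^{(r_{k_0})}\bigl(D_X(x_\infty, r_{k_0})\bigr) + 1,
\end{equation}
a bound independent of $n$. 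The finitely many remaining terms $\mu_n|_{x_n}^{(r)}$ with $n < k_0$ are each a finite measure on a compact set, hence contribute only finitely many extra values, so the whole family has uniformly bounded total mass.

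The final step is to assemble: the family $\{\mu_n|_{x_n}^{(r)}\}_{n \in \NN}$ is uniformly supported on the compact set $D_X(x_\infty, r+c)$ (tightness, trivially) and has uniformly bounded total masses (by the telescoping estimate above), so by Prohorov's theorem it is precompact in the Prohorov topology on $\finMeas{X}$; since all these measures are compactly supported, they lie in $\cptMeas{X}$, and precompactness there follows because $\cptMeas{X}$ carries the relative Prohorov topology. I expect the main obstacle to be the bookkeeping in the telescoping argument --- specifically, verifying carefully that the enlarged sets $A^{\varepsilon}$ appearing in the iterated Prohorov inequalities remain inside a fixed compact ball so that one may freely pass between $\mu_j|_{x_j}^{(r_j)}$ and its further restrictions; this needs the choice $r_{k_0} > r + c + 1$ and repeated use of \ref{dfn item: RS. 1} and \ref{dfn item: RS. 4}. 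Once the radii are chosen generously enough, each individual estimate is routine. With Lemma~\ref{lem: RS for vague is conti}, Lemma~\ref{lem: RS for vague satisfies 2}, Lemma~\ref{lem: RS for vague satisfies 4}, and the completeness of the restriction system (proved analogously to Lemma~\ref{lem: RS for local Hausdorff is complete}), Lemma~\ref{lem: RS for vague} follows.
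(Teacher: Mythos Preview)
Your proposal is correct and follows essentially the same approach as the paper: both obtain a uniform total-mass bound by telescoping the Prohorov inequalities and then combine this with the observation that all the restricted measures are supported in a single compact ball. The only cosmetic difference is that the paper iterates the estimate $\mu_{n+1}|_{x_{n+1}}^{(r)}(X) \leq \mu_n|_{x_n}^{(r + 2^{-n+1})}(X) + 2^{-n}$ (letting the restriction radius grow along the chain) rather than tracking an enlarging set $A^{\varepsilon}$ as you do, but the content is identical; your caveat about choosing the radii ``generously enough'' is exactly right, and tightening the constants (e.g.\ $r+c$ versus $r+2c$) is routine.
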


\begin{proof}
  Let $(\mu_n, x_n) \in \Meas{X} \times X$, $n \geq 1$, be such that 
  \begin{equation} \label{pr eq: 5. RS for vague}
    d_X(x_n, x_{n+1}) \vee \ProhMet{X}(\mu_n|_{x_n}^{(r_n)}, \mu_{n+1}|_{x_{n+1}}^{(r_n)}) < 2^{-n},
    \quad 
    \forall n \geq 1.
  \end{equation}
  For $r > 0$ and $n \in \NN$ such that $r_n \geq r + 2^{-n+1}$,
  we have that 
  \begin{align}
    \mu_{n+1}|_{x_{n+1}}^{(r)}(X) = \mu_{n+1}|_{x_{n+1}}^{(r_n)}(D_X(x_{n+1}, r))
                                     \leq \mu_n|_{x_n}^{(r_n)}(D_X(x_{n+1}, r+2^{-n})) + 2^{-n}.
  \end{align}
  Since $d_X(x_n, x_{n+1}) < 2^{-n}$ and $r_n \geq r + 2^{-n+1}$, it holds that 
  \begin{equation}
    D_X(x_{n+1}, r+2^{-n}) \subseteq D_X(x_n, r+2^{-n+1}) \subseteq D_X(x_n, r_n).
  \end{equation}
  Hence,
  \begin{equation} \label{pr eq: 6. RS for vague}
    \mu_{n+1}|_{x_{n+1}}^{(r)}(X) \leq \mu_n|_{x_n}^{(r + 2^{-n+1})}(X) + 2^{-n}
    \quad 
    \text{if}\ r_n \geq r + 2^{-n+1}.
  \end{equation}
  Now, fix $r > 0$ arbitrarily.
  Let $N \in \NN$ be such that $r_N > r+3$.
  For any $k \geq 1$,
  we have from \eqref{pr eq: 6. RS for vague} that 
  \begin{align}
    \mu_{N+k}|_{\rho_{N+k}}^{(r)}(X) 
    &\leq 
    \mu_{N+k-1}|_{\rho_{N+k-1}}^{(r + 2^{-N-k+2})}(X) + 2^{-N-k+1}\\
    &\leq    
    \mu_{N+k-2}|_{\rho_{N+k-2}}^{(r + 2^{-N-k+2} + 2^{-N-k+3})}(X) + 2^{-N-k+1} + 2^{-N-k+2}\\
    &\leq     
    \mu_N|_{x_N}^{(r+ \sum_{l=1}^k 2^{-N-l+2})}(X) + \sum_{l=1}^k 2^{-N-l+1}\\
    &\leq
    \mu_N|_{x_N}^{(r+2)}(X) + 1.
  \end{align}
  Hence, $\{\mu_n|_{x_n}^{(r)}(X)\}_{n \geq 1}$ is bounded.
  By \eqref{pr eq: 5. RS for vague}, we have $d_X(x_1, x_n) \leq 1$ for all $n \in \NN$,
  which implies that the supports of $\mu_n|_{x_n}^{(r)}$ are contained in the compact set $D_X(x_1, r+1)$.
  Therefore, we deduce that $\{\mu_n|_{x_n}^{(r)}\}_{n \geq 1}$ is precompact in the weak topology.
  Obviously, the limit of any convergent subsequence is supported on the compact set $D_X(x_1, r+1)$.
  Thus, $\{\mu_n|_{x_n}^{(r)}\}_{n \geq 1}$ is precompact in $\cptMeas{X}$,
  which shows Assumption~\ref{assum: metrization of D}\ref{assum item: 4. metrization of D}.
\end{proof}

We now complete the proof of Lemma~\ref{lem: RS for vague}.

\begin{proof} [{Proof of Lemma~\ref{lem: RS for vague}}]
  By Lemmas~\ref{lem: RS for vague is conti}, \ref{lem: RS for vague satisfies 2}, and \ref{lem: RS for vague satisfies 4},
  it remains to show the completeness of the restriction system.
  Let $(\mu_k, r_k)_{k \geq 1}$ be a compatible sequence rooted at $x \in X$.
  For any Borel subset $A \subseteq X$ and any $k \leq k'$,
  we have that 
  \begin{equation}  \label{pr eq: 1. RS for vague}
    \mu_k(A) = \mu_{k'}|_x^{(r_k)}(A) = \mu_{k'}(A \cap D_X(x, r_k)) \leq \mu_{k'}(A).
  \end{equation}
  Thus, we can define a Borel measure $\mu$ on $X$ by setting $\mu(\cdot) \coloneqq \lim_{k \to \infty} \mu_k(\cdot)$.
  By \eqref{pr eq: 1. RS for vague},
  we have that $\mu(A) = \mu_k(A)$ for any $A \subseteq D_X(x, r_k)$.
  This yields that $\mu$ is a Radon measure and $\mu|_x^{(r_k)} = \mu_k$ for any $k$.
  Hence, the restriction system is complete.
\end{proof}


\subsection{Section~\ref{sec: variable domains}} \label{appendix: variable domains}

Recall the setting of Section~\ref{sec: variable domains}.
The aim of this appendix is to prove Lemmas~\ref{lem: RS for variable domains} and \ref{lem: RS for marked local Hausdorff}.
We first prove Lemma~\ref{lem: RS for variable domains}.
The following result is an analogue of Lemma~\ref{lem: RS for local Hausdorff is conti}.

\begin{lem} \label{lem: RS for variable domains is conti}
  For each $\rho \in X$ and $f \in \hatC{X}{\Xi}$, the map $(0,\infty) \ni r \mapsto f|_\rho^{(r)} \in \hatCc{X}{\Xi}$ 
  is cadlag and the left limit at $r$ is $f|_{\closure(B_X(\rho, r))}$.
\end{lem}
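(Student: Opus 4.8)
The plan is to identify each function with its graph and thereby reduce the assertion to a statement about closed subsets of $X \times \Xi$. Recall the graph map $\graphmap$ from \eqref{eq: def of graphmap}; as in the proof of Lemma~\ref{lem: embedding of hatC to cC(S, Xi)}, one has $\graphmap(f|_\rho^{(r)}) = \graphmap(f) \cap (D_X(\rho,r) \times \Xi)$ for all $r > 0$, and $\graphmap \colon \hatCc{X}{\Xi} \to \Compact{X \times \Xi}$ is a distance-preserving (hence injective, hence isometric) embedding by the very definition of $\hatCcMet{X}{\Xi}$. Since $\graphmap(f)$ lies in $\GraphSp{X}{\Xi}$ (recall Definition~\ref{dfn: marked Hausdorff space}), the lemma will follow once I prove the following for a fixed $E \in \GraphSp{X}{\Xi}$: writing $E_r \coloneqq E \cap (D_X(\rho,r) \times \Xi)$ and $E_{r-} \coloneqq \closure(E \cap (B_X(\rho,r) \times \Xi))$, the map $(0,\infty) \ni r \mapsto E_r \in \Compact{X \times \Xi}$ is cadlag with left limit $E_{r-}$ at each $r$. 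This runs in parallel with the proof of Lemma~\ref{lem: RS for local Hausdorff is conti} and of \cite[Lemma~3.2(i)]{Khezeli_20_Metrization}; the one new feature is that here one cuts along the cylinders $D_X(\rho,r)\times\Xi$ rather than along balls, and the point that makes the same argument work is that $E_r$ is genuinely compact, being $E \cap (K \times \Xi)$ for the compact set $K = D_X(\rho,r) \subseteq X$.

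The proof of this auxiliary assertion proceeds by elementary compactness arguments, treating right-continuity and the existence and value of the left limit separately, in each case first disposing of the degenerate case in which the relevant set is empty (for which I would use that $E \cap (D_X(\rho,r+1)\times\Xi)$ is compact, so that, if $E_r = \emptyset$, then $E_s = \emptyset$ for all $s$ in a right-neighbourhood of $r$, and symmetrically on the left). For right-continuity at $r$ with $E_r \neq \emptyset$: as $E_r \subseteq E_s$ for $s \geq r$, it suffices to show $\sup_{p \in E_s} d_{X\times\Xi}(p, E_r) \to 0$ as $s \downarrow r$, and if this failed one could choose $s_n \downarrow r$ and $p_n \in E_{s_n}$ with $d_{X\times\Xi}(p_n, E_r) \geq \varepsilon$; the $p_n$ lie in the compact set $E_{s_1}$, so along a subsequence $p_n \to p \in E$, and since the first coordinate of $p$ is at distance at most $r$ from $\rho$ we get $p \in E_r$, a contradiction. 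For the left limit at $r$ with $E_{r-} \neq \emptyset$: using $B_X(\rho,r) = \bigcup_{s<r} D_X(\rho,s)$ one has $\bigcup_{s<r} E_s = E \cap (B_X(\rho,r)\times\Xi)$ as an increasing union, and $E_s \subseteq E_{r-}$ for $s < r$; if $\sup_{p \in E_{r-}} d_{X\times\Xi}(p, E_s)$ did not tend to $0$ as $s \uparrow r$, then extracting a convergent subsequence from points of the compact set $E_r \supseteq E_{r-}$ witnessing the failure, and using monotonicity of $s \mapsto E_s$, one would obtain a point of $E_{r-}$ at positive distance from $\bigcup_{s<r} E_s = E \cap (B_X(\rho,r)\times\Xi)$, contradicting $E_{r-} = \closure(E \cap (B_X(\rho,r)\times\Xi))$.

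Finally, I would transport the conclusion back through $\graphmap$. Taking $E = \graphmap(f)$, the sets $E_r = \graphmap(f|_\rho^{(r)})$ all lie in $\graphmap(\hatCc{X}{\Xi})$, and one checks, using that $\dom(f)$ is closed and $f$ is continuous on it, that $E_{r-}$ equals $\{(x,f(x)) \mid x \in \closure_X(\dom(f) \cap B_X(\rho,r))\}$, that is, the graph of the restriction of $f$ to the compact set $\closure_X(\dom(f) \cap B_X(\rho,r)) \subseteq \dom(f)$, so that $E_{r-}$ also lies in $\graphmap(\hatCc{X}{\Xi})$; since $\graphmap$ is an isometric embedding, cadlag-ness and the value of the left limit pass back to $\hatCc{X}{\Xi}$, giving that $r \mapsto f|_\rho^{(r)}$ is cadlag with left limit at $r$ the function $f|_{\closure(B_X(\rho,r))}$, understood as the restriction of $f$ to $\closure_X(\dom(f) \cap B_X(\rho,r))$. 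The step requiring the most care is precisely this last identification: the compactness half of the argument uses only $E \in \GraphSp{X}{\Xi}$, whereas knowing that $E_{r-}$ is again the graph of a function — and getting the correct description of it — needs the function-specific input (continuity of $f$ together with closedness of $\dom(f)$), and one must keep track of the empty map $\emptyset_\Xi$, which, being at infinite $\hatCcMet{X}{\Xi}$-distance from every other element, is an isolated point of $\hatCc{X}{\Xi}$ and may occur as a one-sided limit distinct from the value at $r$; these instances are, however, exactly the empty-set cases already handled above.
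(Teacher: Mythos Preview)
Your proof is correct but proceeds along a genuinely different route from the paper's. The paper argues directly in $\hatCc{X}{\Xi}$: it first invokes Lemma~\ref{lem: RS for local Hausdorff is conti} to get that the domains $\dom(f)|_\rho^{(r)}$ form a cadlag family in the Hausdorff topology, and then feeds this domain convergence together with the continuity of $f$ into the characterization of convergence in $\hatCc{X}{\Xi}$ given by Theorem~\ref{thm: convergence in uniform variable domains} (condition~\ref{thm item: 3. convergence in uniform variable domains}). This is short and modular, reusing two results already in place. Your approach instead passes to graphs and reproves, from scratch, a cylindrical analogue of Lemma~\ref{lem: RS for local Hausdorff is conti} inside $\Compact{X\times\Xi}$ — essentially the content behind Lemma~\ref{lem: RS for marked local Hausdorff} specialized to graphs — before pulling back through the isometric embedding $\graphmap$. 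This is more self-contained and, as a byproduct, makes explicit a point the paper's statement glosses over: the domain of the left limit is $\closure_X(\dom(f)\cap B_X(\rho,r))$, which need not coincide with $\dom(f)\cap\closure_X(B_X(\rho,r))$ (the literal reading of $f|_{\closure(B_X(\rho,r))}$). For the paper's purposes only cadlag-ness matters, so the discrepancy is harmless, but your identification is the more accurate one.
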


\begin{proof}
  Fix $r > 0$.
  Let $r_n > r$, $n \geq 1$, be such that $r_n \downarrow r$.
  By Lemma~\ref{lem: RS for local Hausdorff is conti}, 
  the domains $\dom(f|_\rho^{(r_n)}) = \dom(f)|_\rho^{(r_n)}$ converge to $\dom(f)|_\rho^{(r)} = \dom(f|_\rho^{(r)})$ in the Hausdorff topology.
  Let $x_n \in \dom(f|_\rho^{(r_n)})$, $n \geq 1$, be elements converging to an element $x \in \dom(f|_\rho^{(r)})$.
  By the continuity of $f$, it holds that $f(x_n) \to f(x)$.
  Hence, we deduce from Theorem~\ref{thm: convergence in variable domains} that $f|_\rho^{(r_n)} \to f|_\rho^{(r)}$ in $\hatCc{X}{\Xi}$.
  Similarly, one can check that the left limit at $r$ is $f|_{\closure(B_X(\rho, r))}$.
\end{proof}

\begin{lem} \label{lem: RS for variable domains satisfies 2}
  The restriction system from $\hatC{X}{\Xi}$ to $\hatCc{X}{\Xi}$ satisfies Assumption~\ref{assum: metrization of D}\ref{assum item: 2. metrization of D}.
\end{lem}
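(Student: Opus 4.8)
The plan is to transport everything to graphs and run a Painlev\'e--Kuratowski argument entirely parallel to the proof of Lemma~\ref{lem: RS for local Hausdorff satisfies 2}, the only genuinely new features being that one intersects with cylinders $D_X(x,r)\times\Xi$ rather than with balls, and that $\Xi$ carries no compactness. Put $G_n\coloneqq\graphmap(f_n)$; each $G_n$ is a closed subset of $X\times\Xi$ with $G_n\cap(K\times\Xi)$ compact for every compact $K\subseteq X$, since $f_n$ is continuous on its closed domain. Because $\hatCcMet{X}{\Xi}$ is by definition the Hausdorff metric transported through $\graphmap$, and $\graphmap(f|_x^{(r)})=G_n\cap(D_X(x,r)\times\Xi)$, the hypothesis reads $\varepsilon_n\coloneqq\HausMet{X\times\Xi}\bigl(G_n\cap(D_X(x_n,r_n)\times\Xi),\,G_\infty\cap(D_X(x_\infty,r_n)\times\Xi)\bigr)\to0$, and the target is the same statement with $r_n$ replaced by a fixed $r>0$, for all but countably many $r$.

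First I would fix $r$ to be a continuity point of $s\mapsto f_\infty|_{x_\infty}^{(s)}$; by the cadlag property in Lemma~\ref{lem: RS for variable domains is conti} this excludes only countably many $r$, and since $\bigcup_{s<r}D_X(x_\infty,s)=B_X(x_\infty,r)$ the left limit of $s\mapsto G_\infty\cap(D_X(x_\infty,s)\times\Xi)$ at $r$ is $\closure\bigl(G_\infty\cap(B_X(x_\infty,r)\times\Xi)\bigr)$, so at a continuity point $A_\infty\coloneqq G_\infty\cap(D_X(x_\infty,r)\times\Xi)=\closure\bigl(G_\infty\cap(B_X(x_\infty,r)\times\Xi)\bigr)$, which is compact. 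Writing $A_n\coloneqq G_n\cap(D_X(x_n,r)\times\Xi)$, I would verify \ref{lem item: 1. convergence in Hausdorff} and \ref{lem item: 2. convergence in Hausdorff} of Lemma~\ref{lem: convergence in Hausdorff} for $A_n\to A_\infty$. For \ref{lem item: 1. convergence in Hausdorff}: if $(y_n,f_n(y_n))\in A_n$ converges to $(y,\xi)$, choose $(y_n',f_\infty(y_n'))\in G_\infty\cap(D_X(x_\infty,r_n)\times\Xi)$ within $\varepsilon_n$; then $y_n'\to y$, so $y\in\dom(f_\infty)$ and $f_\infty(y_n')\to f_\infty(y)$ by closedness of the domain and continuity of $f_\infty$, which forces $\xi=f_\infty(y)$, while $d_X(y,x_\infty)\le r$ follows from $d_X(y_n,x_n)\le r$; hence $(y,\xi)\in A_\infty$. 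For \ref{lem item: 2. convergence in Hausdorff}: given $(y,f_\infty(y))\in A_\infty$, the continuity-point identity gives $y^{(k)}\in\dom(f_\infty)\cap B_X(x_\infty,r)$ with $(y^{(k)},f_\infty(y^{(k)}))\to(y,f_\infty(y))$; each such point lies in $G_\infty\cap(D_X(x_\infty,r_n)\times\Xi)$ once $r_n>r$, so there is $(y^{(k)}_n,f_n(y^{(k)}_n))\in G_n\cap(D_X(x_n,r_n)\times\Xi)$ within $\varepsilon_n$, and since $d_X(y^{(k)},x_\infty)<r$ strictly one obtains $y^{(k)}_n\in D_X(x_n,r)$ for $n$ large depending on $k$; a diagonal argument over $k$ then produces a subsequence $(n_k)$ and points of $A_{n_k}$ converging to $(y,f_\infty(y))$.

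The estimate used for \ref{lem item: 1. convergence in Hausdorff} also shows that every sequence drawn from $\bigcup_n A_n$ subconverges in $X\times\Xi$: its $X$-coordinates lie eventually in the compact ball $D_X(x_\infty,r+1)$, and its $\Xi$-coordinates are forced to converge through the continuity of $f_\infty$ along a convergent sequence of points, with no compactness of $\Xi$ needed; hence $\bigcup_n A_n\cup A_\infty$ is compact and Lemma~\ref{lem: convergence in Hausdorff} yields $A_n\to A_\infty$ in the Hausdorff topology, i.e.\ $\hatCcMet{X}{\Xi}(f_n|_{x_n}^{(r)},f_\infty|_{x_\infty}^{(r)})\to0$, which is exactly Assumption~\ref{assum: metrization of D}\ref{assum item: 2. metrization of D}. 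The main points requiring care are the diagonal construction in \ref{lem item: 2. convergence in Hausdorff} and the bookkeeping of ball radii (ensuring the transferred points land inside $D_X(x_n,r)$ and that the $\Xi$-values remain controlled through $f_\infty$ rather than through a crude neighbourhood bound); everything else is a transcription of the proof of Lemma~\ref{lem: RS for local Hausdorff satisfies 2}.
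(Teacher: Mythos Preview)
Your proposal is correct and follows essentially the same approach as the paper, which simply says ``using Lemma~\ref{lem: RS for variable domains is conti}, the result can be proved in the same way as in the proof of Lemma~\ref{lem: RS for local Hausdorff satisfies 2}.'' You have carried out exactly that transcription via the graph map, and you correctly isolate the one point that requires extra care beyond the closed-set case: verifying that $\bigcup_n A_n$ is relatively compact despite $\Xi$ not being compact, by routing the $\Xi$-coordinate through the continuity of $f_\infty$ rather than through an ambient compactness argument.
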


\begin{proof}
  Using Lemma~\ref{lem: RS for variable domains is conti},  
  the result can be proved in the same way as in the proof of Lemma~\ref{lem: RS for local Hausdorff satisfies 2}.
\end{proof}

\begin{lem} \label{lem: RS for variable domains satisfies 3}
  The restriction system from $\hatC{X}{\Xi}$ to $\hatCc{X}{\Xi}$ satisfies Assumption~\ref{assum: metrization of D}\ref{assum item: 3. metrization of D}.
\end{lem}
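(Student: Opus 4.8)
\textbf{Proof plan for Lemma~\ref{lem: RS for variable domains satisfies 3}.}
The statement to verify is Assumption~\ref{assum: metrization of D}\ref{assum item: 3. metrization of D} for the restriction system $R_x^{(r)}(f) = f|_{D_X(x,r)}$ on $\hatC{X}{\Xi}$: given elements $(f_n, x_n) \in \hatC{X}{\Xi} \times X$, $n \in \NN \cup \{\infty\}$, with $x_n \to x_\infty$, and given $r > 0$ such that $f_n|_{x_n}^{(r)} \to f_\infty|_{x_\infty}^{(r)}$ in $\hatCc{X}{\Xi}$, one must show that $\{f_n|_{x_n}^{(s)}\}_{n \in \NN}$ is precompact in $\hatCc{X}{\Xi}$ for every $s \in (0, r]$. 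The natural approach is to invoke the generalized Arzel\`a--Ascoli criterion, Theorem~\ref{thm: precompactness uniform in variable domains}, which reduces precompactness of a family in $\hatCc{X}{\Xi}$ to three conditions: precompactness of the domains in the Hausdorff topology, precompactness of the union of the ranges in $\Xi$, and uniform equicontinuity of the family.

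First I would fix $s \in (0, r]$ and set $g_n \coloneqq f_n|_{x_n}^{(s)} = (f_n|_{x_n}^{(r)})|_{x_n}^{(s)}$ (using \ref{dfn item: RS. 1}), so that it suffices to treat the family obtained by restricting the convergent sequence $h_n \coloneqq f_n|_{x_n}^{(r)}$ to the balls $D_X(x_n, s)$; one should also remember to include the limit $f_\infty|_{x_\infty}^{(s)}$ to make the family indexed by $\NN \cup \{\infty\}$, which is harmless. For the domain condition, note that $\dom(g_n) = \dom(h_n) \cap D_X(x_n, s) \subseteq D_X(x_n, s) \subseteq D_X(x_\infty, s + \sup_m d_X(x_m, x_\infty))$, and the latter set is compact since $X$ is boundedly compact and $\sup_m d_X(x_m, x_\infty) < \infty$; hence by Lemma~\ref{lem: precompact in Hausdorff} the domains are precompact in the Hausdorff topology. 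For the range condition, since $h_n \to h_\infty$ in $\hatCc{X}{\Xi}$, the graphs $\graphmap(h_n)$ form a precompact family in $\Compact{X \times \Xi}$, hence by Lemma~\ref{lem: precompact in Hausdorff} are all contained in a common compact set $K' \times \Xi'$; then $\{g_n(y) \mid y \in \dom(g_n)\} \subseteq \Xi'$, which is precompact. For the equicontinuity condition, the key point is that convergence in $\hatCc{X}{\Xi}$ implies precompactness, so by the ``only if'' direction of Theorem~\ref{thm: precompactness uniform in variable domains} the family $\{h_n\}_{n \in \NN \cup \{\infty\}}$ already satisfies the uniform modulus-of-continuity bound; restricting each $h_n$ to a subdomain can only decrease the relevant suprema, so the same bound holds for $\{g_n\}$.

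The main obstacle — though a mild one — is the equicontinuity step: one must be careful that it is precisely convergence of $h_n$ (equivalently, their precompactness together with a limit) that furnishes the uniform equicontinuity, rather than trying to prove it directly from continuity of each individual $f_n$, which would fail without uniformity. Once the uniform modulus of continuity is extracted from Theorem~\ref{thm: precompactness uniform in variable domains}\ref{thm item: 3. precompactness uniform in variable domains} applied to the convergent family $\{h_n\}$, the restriction to $D_X(x_n, s)$ passes it along immediately, and an application of the ``if'' direction of Theorem~\ref{thm: precompactness uniform in variable domains} to $\{g_n\}$ completes the proof. I would also remark that this is entirely parallel to the proofs of Lemmas~\ref{lem: RS for local Hausdorff satisfies 2} and \ref{lem: RS for local Hausdorff satisfies 4} in Appendix~\ref{appendix: local Hausdorff}, where boundedly compact containing balls do the analogous work.
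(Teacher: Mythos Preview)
Your proposal is correct and follows essentially the same approach as the paper: both reduce to Theorem~\ref{thm: precompactness uniform in variable domains}. The paper's proof is a one-liner (``one can easily verify \ldots\ by using Theorem~\ref{thm: precompactness uniform in variable domains}''), whereas you spell out the verification of each of the three conditions, which is exactly what that sentence is gesturing at. One small remark: including $n=\infty$ in the family is unnecessary since the assumption only requires precompactness of $\{f_n|_{x_n}^{(s)}\}_{n\in\NN}$, and your closing analogy to Lemmas~\ref{lem: RS for local Hausdorff satisfies 2} and~\ref{lem: RS for local Hausdorff satisfies 4} is slightly off (those concern conditions~\ref{assum item: 2. metrization of D} and~\ref{assum item: 4. metrization of D}, not~\ref{assum item: 3. metrization of D}), but neither affects the argument.
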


\begin{proof}
  Let $(f_n, x_n) \in \hatC{X}{\Xi} \times X$, $n \in \NN \cup \{\infty\}$ and $r > 0$
  be such that $x_n \to x_\infty$ and $f_n|_{x_n}^{(r)} \to f_\infty|_{x_\infty}^{(r)}$ in $\hatCc{X}{\Xi}$.
  For each $s \leq r$, one can easily verify that $\{f_n|_{x_n}^{(s)}\}_{n \in \NN}$ is precompact in $\hatCc{X}{\Xi}$
  by using Theorem~\ref{thm: precompactness uniform in variable domains}.
  This completes the proof.
\end{proof}

\begin{proof} [{Proof of Lemma~\ref{lem: RS for variable domains} }]
  The desired result follows from Lemmas~\ref{lem: RS for variable domains is conti}, \ref{lem: RS for variable domains satisfies 2}, and \ref{lem: RS for variable domains satisfies 3}.
\end{proof}

Next, we prove Lemma~\ref{lem: RS for marked local Hausdorff}.

\begin{proof} [{Proof of Lemma~\ref{lem: RS for marked local Hausdorff}}]
  In a similar manner to Lemmas~\ref{lem: RS for local Hausdorff is conti} and~\ref{lem: RS for local Hausdorff is complete}, 
  one can verify that the restriction system is complete and Condition~2.
  Now, assume that $d_\Xi$ is complete.
  It remains to prove Assumption~\ref{assum: metrization of D}\ref{assum item: 4. metrization of D}.
  Let $(E_n, x_n) \in \GraphSp{X}{\Xi}$, $n \geq 1$, be such that 
  \begin{equation} \label{pr eq: 1. RS for marked local Hausdorff}
    d_X(x_n, x_{n+1}) \vee \HausMet{X \times \Xi}(E_n|_{x_n}^{(r_n, *)}, E_{n+1}|_{x_{n+1}}^{(r_{n+1}, *)}) < 2^{-n},
    \quad 
    \forall n \geq 1.
  \end{equation}
  Fix $r > 0$ and $n \in \NN$ such that $r_n \geq r + 2^{-n+1}$.
  For each $(x, a) \in E_{n+1}|_{x_{n+1}}^{(r, *)}$,
  by \eqref{pr eq: 1. RS for marked local Hausdorff},
  we can find $(y, b) \in E_n|_{x_n}^{(r_n, *)}$ satisfying
  \begin{equation}
    d_X(x,y) \vee d_\Xi(a, b) < 2^{-n}.
  \end{equation}
  The triangle inequality yields that 
  \begin{equation}
    d_X(x_{n+1}, y) \leq d_X(x_{n+1}, x_n) + d_X(x_n, x) + d_X(x, y) \leq r + 2 \cdot 2^{-n},
  \end{equation}
  which implies that $(y, b) \in E_n|_{x_n}^{(r+2\cdot 2^{-n}, *)}$.
  It thus follows that 
  \begin{equation}
    E_{n+1}|_{x_{n+1}}^{(r)} \subseteq \bigl( E_n|_{x_n}^{(r+2\cdot 2^{-n}, *)} \bigr)^{2^{-n}}
    \quad 
    \text{for any}\ r_n  \geq r,
  \end{equation}
  where we recall the $\varepsilon$-neighborhood from \eqref{2. eq: e-neighborhood}.
  Now, fix $r > 0$ arbitrarily.
  Let $N \in \NN$ be such that $r_N > r+3$.
  For any $k \geq 1$,
  we have from \eqref{pr eq: 6. RS for vague} that 
  \begin{align}
    E_{N+k}|_{\rho_{N+k}}^{(r, *)}
    &\subseteq 
    \bigl( E_{N+k-1}|_{\rho_{N+k-1}}^{(r + 2 \cdot 2^{-N-k+1}, *)} \bigr)^{2^{-N-k+1}}\\
    &\subseteq    
    \Bigl( \bigl( E_{N+k-2}|_{\rho_{N+k-2}}^{(r + 2 \cdot 2^{-N-k+1} + 2 \cdot 2^{-N-k+2}, *)} \bigr)^{2^{-N-k+2}} \Bigr)^{2^{-N-k+1}}\\
    &\subseteq    
    \bigl( E_{N+k-2}|_{\rho_{N+k-2}}^{(r + 2 \cdot 2^{-N-k+1} + 2 \cdot 2^{-N-k+2}, *)} \bigr)^{2^{-N+k+2} + 2^{-N-k+1}}\\
    &\subseteq
    \bigl( E_N|_{x_N}^{(r+ 2 \cdot \sum_{l=1}^k 2^{-N-l + 1}, *)} \bigr) ^{\sum_{l=1}^k 2^{-N-l+1}}\\
    &\subseteq
    \bigl( E_N|_{x_N}^{(r+2^{-N+2}, *)} \bigr)^{2^{-N+1}}.
  \end{align}
  From this,
  we deduce that $U \coloneqq \bigcup_{n \geq 1} E_{n}|_{\rho_{n}}^{(r, *)}$ is totally bounded.
  Since the max product metric $d_{X \times \Xi}$ is complete,
  it follows that $K \coloneqq \closure(U)$ is compact in $X \times \Xi$.
  All the sets $E_n|_{x_n}^{(r,*)}$, $n \geq 1$, are contained in $K$,
  and thus $\{E_n|_{x_n}^{(r,*)}\}_{n \geq 1}$ is precompact in $\Compact{X \times \Xi}$,
  which shows that Assumption~\ref{assum: metrization of D}\ref{assum item: 4. metrization of D} is satisfied.
\end{proof}


\section{Metrization of Gromov--Hausdorff-type topologies on rooted compact metric spaces}  \label{appendix: GH metrization in the compact case}

In \cite[Section~2]{Khezeli_23_A_unified},
Khezeli established a method for metrization of Gromov--Hausdorff-type topologies for compact metric spaces 
that are not necessarily rooted. 
Applying this method to rooted compact metric spaces, 
one obtains a metric that induces the GH-type topology with non-preserved roots. 
On the other hand, one can also consider the GH-type topology with preserved roots, as discussed in Section~\ref{sec: main results}. 
In this appendix,
we present the modifications of our main results in the compact setting.
Since the proofs are identical to those in the boundedly-compact case, we omit them here.

We first define two categories as follows:
\begin{itemize}
  \item \label{item: rCM, category}
    $\rCMcat$ denotes the category whose objects are rooted compact metric spaces  
    and whose morphisms are root-preserving isometric embeddings;
  \item \label{item: CM, category}
    $\CMcat$ denotes the category whose objects are compact metric spaces and whose morphisms are isometric embeddings.
\end{itemize}
Fix a functor $\tau \colon \CMcat \to \MTopcat$,
which we refer to as a \emph{structure on compact metric spaces}.
The notion for structures introduced in Section~\ref{sec: main results},
such as metrization, stability, and continuity, applies to $\tau$ with the obvious modifications, 
e.g., replacing $\BCMcat$ by $\CMcat$ wherever they occur in the relevant definitions.
Accordingly, in this appendix, the notation introduced in that section also applies to $\tau$.
For example, we set $\FE{\tau} \coloneqq \tau \times \Gamma_{\CMcat \to \MTopcat}$,
where $\Gamma_{\CMcat \to \MTopcat} \colon \CMcat \to \MTopcat$ denotes the forgetful functor.
Assume that $\tau$ admits an element-rooted metrization $\FEMet{\tau}$, i.e.,
a functor from $\CMcat$ to $\Metcat$ making the following diagram commute.
\begin{equation} \label{dfn eq: cpt ER metrization}
  \begin{tikzcd}
                                                                & \Metcat \arrow[d] \\
    \CMcat \arrow[r, "\FE{\tau}"'] \arrow[ru, "\FEMet{\tau}"]  & \MTopcat 
  \end{tikzcd}
\end{equation}
Here, the vertical arrow denotes the forgetful functor.
We write $\FSMet{\tau} \colon \rCMcat \to \Metcat$ for the associated space-rooted metrization of $\tau$.

Define $\rfrakK(\tau)$ to be the set of rooted-$\tau$-isometric equivalence classes of $(X, \rho_X, a_X)$,
where $(X, \rho_x)$ is a rooted compact metric spaces and $a_X \in \tau(X)$
(cf.\ Proposition~\ref{prop: existence of rootedBCM with additional elements}).
Following Definitions~\ref{dfn: Generalized RF metric} and \ref{dfn: Generalized RV metric},
we introduce two functions that measure the distances between elements of $\rfrakK(\tau)$.

\begin{dfn} \label{dfn: cpt Generalized RF metric}
  We define, for each $\cX \allowbreak =\allowbreak (X, \rho_X, a_X),\,\allowbreak \cY \allowbreak =\allowbreak (Y, \rho_Y, a_Y) \in \rfrakK(\tau)$,
  \begin{equation}
    \cRFMet^{\FSMet{\tau}} (\cX, \cY)
    \coloneqq
    \inf_{f, g, Z}
    \Bigl\{
      \HausMet{Z}(f(X), g(Y)) \vee  d^{\FSMet{\tau}}_{Z, \rho_Z}(\tau_{f}(a_X), \tau_{g}(a_{Y}))  
    \Bigr\},
  \end{equation}
  where the infimum is taken 
  over all rooted compact metric spaces $(Z, \rho_{Z})$ 
  and root-preserving isometric embeddings $f \colon  X \to Z$ and $g \colon Y \to Z$.
\end{dfn}

\begin{dfn} \label{dfn: cpt Generalized RV metric}
  We define, for $\cX=(X, \rho_X, a_X), \cY=(Y, \rho_Y, a_Y) \in \rfrakK(\tau)$,
  \begin{equation}  \label{dfn eq: cpt element-rooted GH-type metric}
    \cRVMet^{\FEMet{\tau}}(\cX, \cY)
    \coloneqq
    \inf_{f, g, Z}
    \Bigl\{
      \HausMet{Z} \bigl( f(X), g(Y) \bigr)
      \vee
      d_Z(f(\rho_X), g(\rho_Y)) 
      \vee
      d^{\FEMet{\tau}}_Z \bigl( \FE{\tau}_f(a_X, \rho_X), \FE{\tau}_g(a_Y, \rho_Y) \bigr)
    \Bigr\}
    ,
  \end{equation}
  where the infimum is taken 
  over all compact metric spaces $Z$ 
  and isometric embeddings $f \colon  X \to Z$ and $g \colon Y \to Z$.
\end{dfn}

\begin{rem}
  The function $\cRVMet^{\FEMet{\tau}}$ coincides with the metric 
  obtained by applying Khezeli's framework \cite[Section~2]{Khezeli_23_A_unified}.
\end{rem}

One can easily verify analogues of Theorems~\ref{thm: RF convergence} and \ref{thm: RV convergence}, as shown below.

\begin{thm}
  Let $\cX_n = (X_n, \rho_{X_n}, a_{X_n}), n \in \NN \cup \{\infty\}$ be elements of $\rfrakK(\tau)$.
  The following statements are equivalent with each other:
  \begin{enumerate} [label = \textup{(\roman*)}]
    \item $\cX_n \to \cX_\infty$ with respect to $\cRFMet^{\FSMet{\tau}}$;
    \item there exist a rooted compact metric space $(K, \rho_K)$
    and root-preserving isometric embeddings $f_n \colon X_n \to K$, $n \in \NN \cup \{\infty\}$,
    such that $f_n(X_n) \to f_\infty(X_\infty)$ in the Hausdorff topology as subsets of $K$,
    and $\tau_{f_n}(a_{X_n}) \to \tau_{f_\infty}(a_{X_\infty})$ in $\tau(K)$.
  \end{enumerate}
\end{thm}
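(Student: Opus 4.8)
The plan is to mirror the proof of Theorem~\ref{thm: RF convergence} in the compact setting, with the Hausdorff topology playing the role of the Fell topology. The implication (ii) $\Rightarrow$ (i) is immediate from the definition of $\cRFMet^{\FSMet{\tau}}$: given the embeddings $f_n$ into $(K, \rho_K)$, each pair $(f_n, f_\infty)$ witnesses $\cRFMet^{\FSMet{\tau}}(\cX_n, \cX_\infty) \leq \HausMet{K}(f_n(X_n), f_\infty(X_\infty)) \vee d^{\FSMet{\tau}}_{K, \rho_K}(\tau_{f_n}(a_{X_n}), \tau_{f_\infty}(a_{X_\infty})) \to 0$.

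For the converse (i) $\Rightarrow$ (ii), I would set $\varepsilon_n \coloneqq \cRFMet^{\FSMet{\tau}}(\cX_n, \cX_\infty) \to 0$ and $\varepsilon_n' \coloneqq \varepsilon_n + n^{-1}$. For each $n$, pick a rooted compact metric space $(Y_n, \rho_{Y_n})$ and root-preserving isometric embeddings $f_n \colon X_n \to Y_n$, $g_n \colon X_\infty \to Y_n$ with $\HausMet{Y_n}(f_n(X_n), g_n(X_\infty)) \vee d^{\FSMet{\tau}}_{Y_n, \rho_{Y_n}}(\tau_{f_n}(a_{X_n}), \tau_{g_n}(a_{X_\infty})) < \varepsilon_n'$. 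Then glue the spaces $Y_n$ along the images of $X_\infty$ as in the proof of Theorem~\ref{thm: RF convergence} (cf.\ \cite[Lemma~2.42]{Khezeli_23_A_unified}), producing a metric space $M$ with canonical isometric embeddings $\iota_n \colon Y_n \to M$ such that $\iota_n \circ g_n =: h_\infty$ is independent of $n$. The one point requiring a small additional observation compared with the boundedly-compact case is that $M$ must be compact; this follows because the images $h_n(X_n) = \iota_n(f_n(X_n))$ all lie within Hausdorff distance $\varepsilon_n' \to 0$ of the fixed compact set $h_\infty(X_\infty)$, so $\overline{\bigcup_n h_n(X_n) \cup h_\infty(X_\infty)}$ is totally bounded, hence compact after completion, and one takes $M$ to be this completion. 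Setting $\rho_M \coloneqq h_\infty(\rho_{X_\infty})$ makes all $h_n$ root-preserving, and then $\HausMet{M}(h_n(X_n), h_\infty(X_\infty)) < \varepsilon_n'$ together with $d^{\FSMet{\tau}}_{M, \rho_M}(\tau_{h_n}(a_{X_n}), \tau_{h_\infty}(a_{X_\infty})) = d^{\FSMet{\tau}}_{Y_n, \rho_{Y_n}}(\tau_{f_n}(a_{X_n}), \tau_{g_n}(a_{X_\infty})) < \varepsilon_n'$ (using that $\iota_n$ is a root-preserving isometric embedding, so $\tau_{\iota_n}$ is distance-preserving by Remark~\ref{rem: construction of SR metrization}) gives exactly (ii), since $h_n(X_n) \to h_\infty(X_\infty)$ in the Hausdorff topology and $\tau_{h_n}(a_{X_n}) \to \tau_{h_\infty}(a_{X_\infty})$ in $\tau(M)$.

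The main obstacle, such as it is, lies in verifying the compactness of the glued space $M$: in the boundedly-compact case one glues along the images of $X_\infty$ and the resulting space is automatically $\bcmAB$, but here one must additionally confirm total boundedness of the union of all embedded copies. This is straightforward because a finite union of $\varepsilon$-nets (one for $h_\infty(X_\infty)$, refined by the Hausdorff-closeness of the $h_n(X_n)$ for $n$ large, plus finitely many nets for the finitely many remaining $h_n(X_n)$) yields an $\varepsilon$-net for $\bigcup_n h_n(X_n)$. Everything else — symmetry, the gluing lemma, the behavior of $\tau$ under the canonical embeddings — transfers verbatim from the proof of Theorem~\ref{thm: RF convergence}, so the argument is essentially routine.
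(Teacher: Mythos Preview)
Your proposal is correct and takes essentially the same approach as the paper, which omits the proof entirely and simply refers back to Theorem~\ref{thm: RF convergence}. You have correctly identified and handled the one extra ingredient needed in the compact setting—verifying that the glued ambient space can be taken compact—and your total-boundedness argument via $\varepsilon$-nets is sound (implicitly one may assume each $Y_n = f_n(X_n) \cup g_n(X_\infty)$, so that the glued space before completion is exactly $\bigcup_n h_n(X_n) \cup h_\infty(X_\infty)$).
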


\begin{thm}
  Let $\cX_n = (X_n, \rho_{X_n}, a_{X_n}), n \in \NN \cup \{\infty\}$ be elements of $\rfrakK(\tau)$.
  The following statements are equivalent with each other:
  \begin{enumerate} [label = \textup{(\roman*)}]
    \item $\cX_n \to \cX_\infty$ with respect to $\cRVMet^{\FEMet{\tau}}$;
    \item there exist a compact metric space $K$
    and isometric embeddings $f_n \colon X_n \to K$, $n \in \NN \cup \{\infty\}$,
    such that $f_n(X_n) \to f_\infty(X_\infty)$ in the Hausdorff topology as subsets of $K$,
    $f_n(\rho_{X_n}) \to f_\infty(\rho_{X_{\infty}})$ in $K$,
    and $\tau_{f_n}(a_{X_n}) \to \tau_{f_\infty}(a_{X_\infty})$ in $\tau(K)$.
  \end{enumerate}
\end{thm}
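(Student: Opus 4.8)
The plan is to run the same two-step argument used (but omitted) for Theorem~\ref{thm: RV convergence}, which itself parallels the proof of Theorem~\ref{thm: RF convergence}: the implication from the embedding picture to metric convergence is immediate from the definition of $\cRVMet^{\FEMet{\tau}}$, while the converse is obtained by a gluing construction, the only new wrinkle being that one must verify the glued space is \emph{compact} rather than merely boundedly compact.

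First I would treat (ii) $\Rightarrow$ (i). Given a compact metric space $K$ and isometric embeddings $f_n \colon X_n \to K$, $n \in \NN\cup\{\infty\}$, realizing the three convergences in (ii), the definition of $\cRVMet^{\FEMet{\tau}}$ gives the bound $\cRVMet^{\FEMet{\tau}}(\cX_n,\cX_\infty) \le \HausMet{K}(f_n(X_n), f_\infty(X_\infty)) \vee d_K(f_n(\rho_{X_n}), f_\infty(\rho_{X_\infty})) \vee d^{\FEMet{\tau}}_K(\FE{\tau}_{f_n}(a_{X_n},\rho_{X_n}), \FE{\tau}_{f_\infty}(a_{X_\infty},\rho_{X_\infty}))$. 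The first two terms tend to $0$ by hypothesis. Since $\FE{\tau}_{f_n}(a_{X_n},\rho_{X_n}) = (\tau_{f_n}(a_{X_n}), f_n(\rho_{X_n}))$ and, by \eqref{dfn eq: cpt ER metrization}, the metric $d^{\FEMet{\tau}}_K$ induces the product topology on $\FE{\tau}(K) = \tau(K)\times K$, the convergences $\tau_{f_n}(a_{X_n}) \to \tau_{f_\infty}(a_{X_\infty})$ in $\tau(K)$ and $f_n(\rho_{X_n}) \to f_\infty(\rho_{X_\infty})$ in $K$ force the third term to $0$ as well; hence $\cRVMet^{\FEMet{\tau}}(\cX_n,\cX_\infty) \to 0$.

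For (i) $\Rightarrow$ (ii) I would set $\varepsilon_n \coloneqq \cRVMet^{\FEMet{\tau}}(\cX_n,\cX_\infty)$ and $\varepsilon_n' \coloneqq \varepsilon_n + n^{-1}$, and for each $n \in \NN$ choose a compact metric space $Z_n$ with isometric embeddings $f_n \colon X_n \to Z_n$ and $g_n \colon X_\infty \to Z_n$ such that $\HausMet{Z_n}(f_n(X_n), g_n(X_\infty)) \vee d_{Z_n}(f_n(\rho_{X_n}), g_n(\rho_{X_\infty})) \vee d^{\FEMet{\tau}}_{Z_n}(\FE{\tau}_{f_n}(a_{X_n},\rho_{X_n}), \FE{\tau}_{g_n}(a_{X_\infty},\rho_{X_\infty})) < \varepsilon_n'$; replacing $Z_n$ by $f_n(X_n)\cup g_n(X_\infty)$ we may assume $Z_n$ is the union of these two compact sets. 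I would then glue the $Z_n$ along the images of $X_\infty$ (identifying $g_n$ with $g_m$ via $g_m\circ g_n^{-1}$) and pass to the completion $M$, with canonical isometric embeddings $\iota_n \colon Z_n \to M$ (cf.\ \cite[Lemma~2.42]{Khezeli_23_A_unified}). The map $h_\infty \coloneqq \iota_n\circ g_n$ is independent of $n$; setting $h_n \coloneqq \iota_n\circ f_n$, the fact that each $\iota_n$ is distance-preserving, together with functoriality of $\tau$ and $\FE{\tau}$ and the metrization property of $\FEMet{\tau}$ (so that $\FE{\tau}_{\iota_n}$ is distance-preserving), yields $\HausMet{M}(h_n(X_n), h_\infty(X_\infty)) < \varepsilon_n'$, $d_M(h_n(\rho_{X_n}), h_\infty(\rho_{X_\infty})) < \varepsilon_n'$, and $d^{\FEMet{\tau}}_M(\FE{\tau}_{h_n}(a_{X_n},\rho_{X_n}), \FE{\tau}_{h_\infty}(a_{X_\infty},\rho_{X_\infty})) < \varepsilon_n'$. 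Letting $n\to\infty$ and again using that $d^{\FEMet{\tau}}_M$ induces the product topology on $\tau(M)\times M$, I recover precisely the embedding description in (ii) with $K = M$.

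I expect the main (and only genuinely new) obstacle to be verifying that $M$ is compact. It is complete by construction, and I would argue total boundedness as follows: given $\delta > 0$, pick $N$ with $\varepsilon_n' < \delta/2$ for all $n > N$; then $M$ is the union of the compact sets $h_\infty(X_\infty), h_1(X_1), \dots, h_N(X_N)$ and the set $\bigcup_{n>N} h_n(X_n)$, and the latter is contained in the closed $\delta/2$-neighborhood of $h_\infty(X_\infty)$ in $M$, so a finite $\delta/2$-net of $h_\infty(X_\infty)$ covers it within $\delta$; combining this with finite $\delta$-nets of the first $N+1$ compact pieces gives a finite $\delta$-net of $M$. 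Hence $M$ is totally bounded and complete, so compact, which closes the argument. (In the main-text Theorem~\ref{thm: RV convergence} this point is absorbed into Khezeli's gluing lemma, while in the compact setting it follows from the vanishing of the $\varepsilon_n'$.)
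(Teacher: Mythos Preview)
Your proposal is correct and follows exactly the approach the paper indicates: the paper omits the proof, noting it is analogous to Theorems~\ref{thm: RF convergence} and~\ref{thm: RV convergence}, and your argument is precisely that analogue, including the gluing construction of \cite[Lemma~2.42]{Khezeli_23_A_unified}. The only point genuinely specific to the compact setting---verifying that the glued-and-completed space $M$ is compact---you handle correctly via total boundedness; the minor phrasing ``$M$ is the union'' should strictly be ``$M$ is the closure of the union'', but your $\delta$-net argument shows the pre-completion union is totally bounded, which is exactly what is needed.
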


Similarly to Theorems~\ref{thm: space-rooted metric} and \ref{thm: element-rooted metric},
the embedding-continuity ensures that the above-defined functions are metrics.

\begin{thm}
  If $\tau$ is embedding-continuous, 
  then the functions $\cRFMet^{\FSMet{\tau}}$ and $\cRVMet^{\FEMet{\tau}}$ are metrics on $\rfrakK(\tau)$.
\end{thm}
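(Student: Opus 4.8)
The plan is to mirror, in the compact setting, the arguments used for the boundedly-compact case in Section~\ref{sec: main results}, specifically Theorems~\ref{thm: space-rooted metric} and~\ref{thm: element-rooted metric}. I would first treat $\cRFMet^{\FSMet{\tau}}$. Symmetry is immediate from the definition. For positive definiteness, suppose $\cRFMet^{\FSMet{\tau}}(\cX, \cY) = 0$; then by the convergence characterisation theorem stated just above, there exist a rooted compact metric space $(K, \rho_K)$ and root-preserving isometric embeddings $f_n \colon X \to K$ and $g \colon Y \to K$ with $f_n(X) \to g(Y)$ in the Hausdorff topology and $\tau_{f_n}(a_X) \to \tau_g(a_Y)$ in $\tau(K)$. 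Since each $f_n$ is distance-preserving and sends $\rho_X$ to $\rho_K$, and $K$ is compact, the Arzel\`a--Ascoli theorem gives a subsequence $f_{n_k}$ converging uniformly to some $f \in C(X, K)$, which is easily checked to be a root-preserving isometric embedding. Embedding-continuity of $\tau$ then yields $\tau_{f_{n_k}}(a_X) \to \tau_f(a_X)$, so $\tau_f(a_X) = \tau_g(a_Y)$, and from Hausdorff convergence $f(X) = g(Y)$; hence $\cX$ and $\cY$ are rooted-$\tau$-isometric, i.e.\ equal in $\rfrakK(\tau)$.

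For the triangle inequality for $\cRFMet^{\FSMet{\tau}}$, I would use the standard gluing construction: given $\cX_1, \cX_2, \cX_3 \in \rfrakK(\tau)$ with $\cRFMet^{\FSMet{\tau}}(\cX_1, \cX_2) < r$ and $\cRFMet^{\FSMet{\tau}}(\cX_2, \cX_3) < s$, pick near-optimal ambient spaces $(Z_{12}, \rho_{Z_{12}})$ and $(Z_{23}, \rho_{Z_{23}})$ with the respective root-preserving embeddings, then glue $Z_{12}$ and $Z_{23}$ along the common copy of $X_2$ (identifying the roots), obtaining a rooted compact metric space $(Z, \rho_Z)$ into which $X_1, X_2, X_3$ embed root-preservingly. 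Because $\HausMet{Z}$ and $d^{\FSMet{\tau}}_{Z, \rho_Z}$ are preserved (or only decrease) under the canonical isometric embeddings $Z_{12} \to Z$ and $Z_{23} \to Z$ --- the former since isometric embeddings are $\HausMet{}$-isometric, the latter by the functoriality of $\FSMet{\tau}$ --- one reads off $\cRFMet^{\FSMet{\tau}}(\cX_1, \cX_3) < r + s$. (Here I should note that the gluing of two compact metric spaces along a common compact subspace is again compact, a minor point replacing the completion argument used in the $\bcmAB$ case via \cite[Lemma~2.42]{Khezeli_23_A_unified}.)

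The argument for $\cRVMet^{\FEMet{\tau}}$ is entirely parallel: symmetry is clear; positive definiteness follows from the corresponding convergence characterisation plus Arzel\`a--Ascoli applied to the embeddings $f_n \colon X \to K$ (which now need not be root-preserving, but $f_n(\rho_X)$ converges in the compact space $K$, so the limit map $f$ still satisfies $f(\rho_X) = \lim f_n(\rho_X)$) together with embedding-continuity of $\tau$ applied to $\FE{\tau} = \tau \times \Gamma_{\CMcat \to \MTopcat}$; the triangle inequality again uses gluing, this time without identifying roots, controlling the three terms $\HausMet{Z}(\cdot,\cdot)$, $d_Z(\cdot,\cdot)$, and $d^{\FEMet{\tau}}_Z(\cdot,\cdot)$ via the preservation of each under isometric embeddings.

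I expect the main obstacle --- such as it is --- to be the positive-definiteness step, where one must extract a limiting root-preserving isometric embedding from the sequence of near-optimal embeddings. The subtlety is that the ambient space $K$ and the target-side embedding $g$ are fixed, but the source-side embeddings $f_n$ vary; compactness of $K$ makes Arzel\`a--Ascoli available, and one must check that the uniform limit $f$ remains an isometric (hence injective, distance-preserving) embedding and is root-preserving, and then invoke embedding-continuity to transfer the convergence of $\tau_{f_n}(a_X)$ onto $\tau_f(a_X)$. All of this is routine given the tools already in the excerpt, so I would present it compactly and refer to the proofs of Theorems~\ref{thm: space-rooted metric} and~\ref{thm: element-rooted metric} for the details that are verbatim identical.
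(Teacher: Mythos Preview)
Your proposal is correct and takes essentially the same approach as the paper: the paper omits the proof entirely, stating that the arguments are identical to those of Theorems~\ref{thm: space-rooted metric} and~\ref{thm: element-rooted metric} in the boundedly-compact case, which is precisely what you outline. Your remark about the gluing of compact spaces remaining compact (so no completion is needed) is the only genuine adaptation required, and you have identified it.
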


\begin{dfn} 
  When $\tau$ is embedding-continuous,
  we refer to the topology on $\rootedBCM(\tau)$ induced by $\cRFMet^{\FSMet{\tau}}$ (resp.\ $\cRVMet^{\FEMet{\tau}}$) 
  as the \emph{GH-type topology with preserved (resp.\ non-preserved) roots}.
\end{dfn}

As seen in Section~\ref{sec: Coincidence of the two topologies},
the stability of $\FEMet{\tau}$ suffices to ensure that the topologies defined above on $\rfrakK(\tau)$ coincide.
For clarity, we record the definition of stability for $\FEMet{\tau}$ in the compact case below.

\begin{dfn} \label{dfn: stability in cpt}
  We say that the functor $\FEMet{\tau}$ is \emph{stable} 
  if and only if there exists a function $\Dist{\FEMet{\tau}} \colon \RNp \to \RNp$ satisfying the following conditions.
  \begin{enumerate} [label = \textup{(\roman*)}, leftmargin = *]
    \item \label{dfn item: 1, cpt stability}
      It holds that $\lim_{\varepsilon \to 0} \Dist{\FEMet{\tau}}(\varepsilon) = \Dist{\FEMet{\tau}}(0) = 0$.
    \item \label{dfn item: 2, cpt stability}
      Fix compact metric spaces $X, Y, K_1,$ and $K_2$.
      Let $f_i \colon X \to K_i$ and $g_i \colon Y \to K_i$, $i = 1,2$, be isometric embeddings.
      If there exists $\varepsilon \in \RNp$ such that, for all $x \in X$ and $y \in Y$,
      \begin{equation} \label{assum item eq: 1, cpt stability}
        d_{K_2}(f_2(x), g_2(y)) \leq d_{K_1}(f_1(x), g_1(y)) + \varepsilon,
      \end{equation}
      then, for all $(a,x) \in \FE{\tau}(X)$ and $(b, y) \in \FE{\tau}(Y)$,
      \begin{equation} \label{assum item eq: 2, cpt stability}
        d^{\FEMet{\tau}}_{K_2}\bigl( \FE{\tau}_{f_2}(a, x), \FE{\tau}_{g_2}(b, y) \bigr) 
        \leq     
        d^{\FEMet{\tau}}_{K_1}\bigl( \FE{\tau}_{f_1}(a, x), \FE{\tau}_{g_1}(b, y) \bigr)  
        + 
        \Dist{\FEMet{\tau}}(\varepsilon),
      \end{equation}
      where $d^{\FEMet{\tau}}_{K_1}$ and $d^{\FEMet{\tau}}_{K_2}$ denote the metrics on $\FE{\tau}(K_1)$ and $\FE{\tau}(K_2)$, 
      respectively.
  \end{enumerate}
  We call such a function $\Dist{\FEMet{\tau}}$ a \emph{distortion} of $\FEMet{\tau}$.
\end{dfn}

The following can be deduced by the same argument as in Theorem~\ref{thm: coincidence of RF and RV}.

\begin{thm} \label{thm: cpt coincidence}
  If $\FEMet{\tau}$ is stable with distortion $\Dist{\FEMet{\tau}}$, then 
  \begin{equation}  \label{thm eq: 1, cpt coincidence}
    \cRVMet^{\FEMet{\tau}}(\cX, \cY) 
    \leq
    \cRFMet^{\FEMet{\tau}}(\cX, \cY) 
    \leq     
    2\,\cRVMet^{\FEMet{\tau}}(\cX, \cY)    
    +
    \Dist{\FEMet{\tau}} \bigl( 2\, \cRVMet^{\FEMet{\tau}}(\cX, \cY) \bigr),
    \quad
    \forall \cX, \cY \in \rfrakK(\tau).
  \end{equation}
  In particular, under this assumption, the GH-type topologies with preserved roots and with non-preserved roots coincide.
\end{thm}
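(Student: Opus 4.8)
The plan is to prove Theorem~\ref{thm: cpt coincidence} by mirroring the proof of Theorem~\ref{thm: coincidence of RF and RV}, adapting the gluing construction to compact metric spaces. The left inequality $\cRVMet^{\FEMet{\tau}}(\cX, \cY) \leq \cRFMet^{\FEMet{\tau}}(\cX, \cY)$ is immediate from the definitions, since every root-preserving isometric embedding is in particular an isometric embedding for which the roots coincide, so the infimum defining $\cRFMet^{\FEMet{\tau}}$ is taken over a subclass of the configurations used for $\cRVMet^{\FEMet{\tau}}$ (note $\HausMet{Z}(f(X), g(Y)) \vee d_Z(f(\rho_X), g(\rho_Y)) = \HausMet{Z}(f(X), g(Y))$ when $f(\rho_X) = g(\rho_Y)$). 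The substance of the proof is the right inequality.

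For the right inequality, I would first establish the implication
\begin{equation}
  \cRVMet^{\FEMet{\tau}}(\cX, \cY) < \varepsilon
  \quad \Longrightarrow \quad
  \cRFMet^{\FEMet{\tau}}(\cX, \cY) < 2\varepsilon + \Dist{\FEMet{\tau}}(\varepsilon),
\end{equation}
for any $\cX, \cY \in \rfrakK(\tau)$ and $\varepsilon > 0$. Given such an $\varepsilon$, choose a compact metric space $K_1$ and isometric embeddings $f_1 \colon X \to K_1$, $g_1 \colon Y \to K_1$ realizing the bound $\varepsilon$ in \eqref{dfn eq: cpt element-rooted GH-type metric}. Then define a metric $d_Z$ on the disjoint union $Z \coloneqq X \sqcup Y$ by $d_Z|_{X \times X} \coloneqq d_X$, $d_Z|_{Y \times Y} \coloneqq d_Y$, and $d_Z(x, y) \coloneqq d_{K_1}(f_1(x), g_1(y)) + d_{K_1}(f_1(\rho_X), g_1(\rho_Y))$ for $x \in X$, $y \in Y$; the triangle inequalities are checked exactly as in the proof of Theorem~\ref{thm: coincidence of RF and RV}, and $Z$ is compact since it is a finite union of compact pieces. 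Glue $\rho_X$ and $\rho_Y$ together to obtain a compact metric space $K_2$ with quotient map $q \colon Z \to K_2$ and root $\rho_{K_2} \coloneqq q(\rho_X) = q(\rho_Y)$; then $f_2 \coloneqq q \circ \iota_X$ and $g_2 \coloneqq q \circ \iota_Y$ are root-preserving isometric embeddings, and by construction $d_{K_2}(f_2(x), g_2(y)) \leq d_Z(f_1(x), g_1(y)) + \varepsilon < d_{K_1}(f_1(x), g_1(y)) + 2\varepsilon$. Wait — I need the deformation inequality \eqref{assum item eq: 1, cpt stability} relating $K_2$ to $K_1$ with some distortion parameter; here the correct estimate is $d_{K_2}(f_2(x), g_2(y)) \leq d_{K_1}(f_1(x), g_1(y)) + \varepsilon$, using that $d_{K_1}(f_1(\rho_X), g_1(\rho_Y)) < \varepsilon$. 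Applying stability of $\FEMet{\tau}$ with this $\varepsilon$ yields $d^{\FEMet{\tau}}_{K_2}(\FE{\tau}_{f_2}(a_X, \rho_X), \FE{\tau}_{g_2}(a_Y, \rho_Y)) \leq d^{\FEMet{\tau}}_{K_1}(\FE{\tau}_{f_1}(a_X, \rho_X), \FE{\tau}_{g_1}(a_Y, \rho_Y)) + \Dist{\FEMet{\tau}}(\varepsilon) < \varepsilon + \Dist{\FEMet{\tau}}(\varepsilon)$, and using the compact-space analogue of Lemma~\ref{lem: Hausdorff is stable}, $\HausMet{K_2}(f_2(X), g_2(Y)) < 2\varepsilon$. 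Since $f_2, g_2$ are root-preserving, this gives $\cRFMet^{\FEMet{\tau}}(\cX, \cY) < 2\varepsilon + \Dist{\FEMet{\tau}}(\varepsilon)$ (the $d^{\FEMet{\tau}}_{K_2}$-term is bounded by $\varepsilon + \Dist{\FEMet{\tau}}(\varepsilon)$, which is dominated by $2\varepsilon + \Dist{\FEMet{\tau}}(\varepsilon)$). Finally, \eqref{thm eq: 1, cpt coincidence} follows by taking $\varepsilon = 2\,\cRVMet^{\FEMet{\tau}}(\cX, \cY)$ when $\cX \neq \cY$, and is trivial when $\cX = \cY$.

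For the last sentence of the theorem, the coincidence of topologies follows formally: \eqref{thm eq: 1, cpt coincidence} shows that $\cRVMet^{\FEMet{\tau}}(\cX_n, \cX) \to 0$ if and only if $\cRFMet^{\FEMet{\tau}}(\cX_n, \cX) \to 0$, using $\Dist{\FEMet{\tau}}(\varepsilon) \to 0$ as $\varepsilon \to 0$ from \ref{dfn item: 1, cpt stability}; since both are metrics on $\rfrakK(\tau)$, they induce the same topology. I expect the main obstacle to be purely bookkeeping: verifying that the glued space $K_2$ is genuinely a compact metric space (rather than a pseudometric space), that the quotient does not collapse points of $X$ or of $Y$ among themselves (which is the triangle-inequality computation $d_Z(x, \rho_X) + d_Z(\rho_Y, x') \geq d_X(x, x')$ appearing in the boundedly-compact proof), and that all the stability and Hausdorff-deformation lemmas carry over verbatim to the compact setting — which they do, since Lemma~\ref{lem: Hausdorff is stable} and Lemma~\ref{lem: Prohorov is stable} are already stated for arbitrary metric spaces, and compact metric spaces are boundedly compact.
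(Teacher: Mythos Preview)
Your proposal is correct and follows exactly the approach the paper intends: the paper simply states that Theorem~\ref{thm: cpt coincidence} ``can be deduced by the same argument as in Theorem~\ref{thm: coincidence of RF and RV},'' and you have faithfully carried out that adaptation, including the correct replacement of Proposition~\ref{prop: local Hausdorff is stable} by Lemma~\ref{lem: Hausdorff is stable} and the observation that the glued space remains compact. The minor notational slip $d_Z(f_1(x), g_1(y))$ (which should read $d_Z(x,y)$) does not affect the argument, and your self-correction to the deformation bound $d_{K_2}(f_2(x), g_2(y)) \leq d_{K_1}(f_1(x), g_1(y)) + \varepsilon$ is right.
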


The notion of semicontinuity introduced in Definition~\ref{dfn: Semicontinuity} can be applied to $\tau$ 
in the compact setting considered here, after minor modifications.
The only change is to replace, in Assumption~\ref{assum: semicontinuity}, 
``$X_n \to X_\infty$ in the Fell topology'' with ``$X_n \to X_\infty$ in the Hausdorff topology''.
Imposing semicontinuity on $\tau$ then allows one to repeat the arguments of Section~\ref{sec: topological properties} and, 
in particular, to verify the Polishness of the GH-type topology with preserved roots.

Since $\CMcat$ is a subcategory of $\BCMcat$,
any functor $\tau \colon \BCMcat \to \MTopcat$ defines, by restriction, a functor $\tau^c \colon \CMcat \to \MTopcat$.
It is straightforward to check that if $\tau$ is embedding-continuous (resp.\ stably metrizable, upper/lower semicontinuous),
then so is $\tau^c$ in the sense discussed above.
In particular, 
for any functor $\tau$ (and products of such functors) introduced in Section~\ref{sec: Examples of functors},
the GH-type topologies with preserved roots and non-preserved roots on $\rfrakK(\tau^c)$ coincide, 
and the common topology is Polish.

\section*{Acknowledgement}
I would like to thank my supervisor, Dr David Croydon, for his support and fruitful discussions,  
and Dr Ali Khezeli for his valuable advice, which has significantly improved the results.
This work was supported by 
JSPS KAKENHI Grant Number JP 24KJ1447
and 
the Research Institute for Mathematical Sciences, 
an International Joint Usage/Research Center located in Kyoto University.

\bibliographystyle{amsplain}
\bibliography{references_Gromov}

\end{document}